%
%
%
%
\documentclass[a4paper, 10 pt]{memoir}
\usepackage{theorem} 
\usepackage{amssymb,amsmath, amscd, bbm} 
\usepackage{latexsym} 
 \usepackage[dvips]{graphicx} 
\usepackage{color}
\usepackage{psfrag} 
\usepackage[all, cmtip]{xy} 
\usepackage{xspace}
\usepackage[applemac]{inputenc}
\usepackage[T1]{fontenc}
\usepackage{lmodern}
\usepackage{textcomp}
\usepackage{url}
\input{Z.layout.tex}
\input{Z.definitions.tex}
\graphicspath{{figures(colour)/}}
%
%
%
%
\usepackage{makeidx}
\makeindex

%
\begin{document}
\thispagestyle{empty}
{
\vspace*{1cm}
\usefont{T1}{cmss}{m}{n} \fontsize{14pt}{0pt} \selectfont 
\begin{center}
Robert Bieri and Ralph Strebel
\end{center}
\vspace*{1cm}
\usefont{T1}{cmss}{bx}{n}  \fontsize{17.0pt}{0pt} \selectfont
\begin{center}
On Groups of PL-homeomorphisms \\[3mm]
of the Real Line
\end{center}
\vspace{1 cm}
\usefont{T1}{cmss}{m}{sl} \fontsize{12pt}{0pt} \selectfont 
\begin{center}
with Preface and Notes by the second author
\end{center}

\vspace*{2cm}

\renewcommand{\rmdefault}{\sfdefault}
\begin{abstract}
\noindent
Richard J. Thompson invented his group $F$ in the 60s;
it is a group full of surprises:
it has a finite presentation with 2 generators and 2 relators,
and a derived group that is simple;
it admits a peculiar infinite presentation
and has a local definition 
which implies that $F$ is dense in the topological group of all orientation preserving homeomorphisms of the unit interval.

In this monograph groups $G(I;A,P)$ are studied
which generalize the local definition of Thompson's group $F$ 
in the following manner:
the group $G(I;A,P)$ consists of all orientation preserving PL-auto-homeomorphisms of the real line with support in the interval $I$, 
slopes in the multiplicative subgroup $P$ of the positive reals
and breaks in a \emph{finite} subset of the additive $\Z[P]$-submodule 
$A$ of $\R_{\add}$.
If $I$ is the unit interval, 
$A$ the subring $\Z[1/2]$ and $P$ the cyclic group generated by the integer 2,
one recovers Thompson's group $F$. 

A first aim of the monograph is to investigate
in which form familiar properties of $F$ continue to hold for these groups.
Here is a sample: the group $F$ is known to be simple by abelian;
we shall see that the group $G(I;A,P)$ has the same form 
if $I$ is a compact interval with endpoints in $A$
and that it is, in general, simple by soluble of derived length at most 3.
Main aims of the monograph are the determination of isomorphisms among the groups $G(I;A,P)$ and the study of their automorphism groups. 
Complete answers are obtained 
if the group $P$ is not cyclic or if the interval $I$ is the full line. 
In the case of automorphisms, 
these answers are the only known results about $\Aut G(I;A,P)$,
save for findings due to M. G. Brin and to Brin-Guzm{\'a}n 
that deal with special cases not covered by the monograph.
\end{abstract}
}
\newpage
%

\frontmatter
%
\chapterstyle{front}
\pagestyle{frontP}
\renewcommand{\theequation}{\arabic{equation}}
%
\chapter{Preface}
\label{chap:Preface}
\markboth{Preface}{Preface}
\addtocontents{toc}{\setcounter{tocdepth}{0}}
The main part of this monograph is a corrected and augmented version
of the Bieri-Strebel memoir \cite{BiSt85}.
The manuscript of that memoir was completed in the autumn of 1985; 
it was then typed and the typescript was distributed to a few mathematicians.
At the beginning of 2014,
I started to revise it, 
my original plan being to stay close to the original text.
Little by little,
I had to abandon this idea for reasons explained in Section 
\ref{sec:Differences-summary}
of the chapter called \emph{Notes}.
That chapter contains two further sections:
Section \ref{ssec:Changes-details} details the differences 
between the memoir and this monograph,
and Section \ref{sec:Related-articles} surveys results 
obtained after 1985 
and related to the topics of the memoir. 

At the end of 2014,
a corrected version of the memoir 
was published in the repository \emph{arXiv} as \cite{BiSt14}.
For this version the text has been reworked once more,
the chapter \emph{Notes} and the bibliography have been updated, 
a Subject Index has been added,
and a number of misprints have been emended.

\section*{Origin of the memoir}
\label{sec:Origin-BiSt85}

In May 1984, Robert Bieri and I attended a meeting at the \emph{Mathematisches Forschungsinstitut Oberwolfach} (Germany).
There Ross Geoghegan introduced us to some recent discoveries of Matthew Brin and Craig Squier 
concerning groups of piecewise linear homeomorphisms of the real line; 
many of these findings were to be published later on in \cite{BrSq85}.
\index{Geoghegan, R.}%
\index{Brin, M. G.}%
\index{Squier, C. C.}%

These discoveries are a fascinating mixture of general results 
and explicit computations with specific groups.
To describe them I need a bit of notation.
Let $\PL_o(\R)$ denote the group of all orientation-preserving piecewise linear homeomorphisms 
of the real line
with only finitely many breaks.
\label{notation:PLo(R)}%
\index{Group PLo(R)@Group $\PL_o(\R)$!definition|textbf}%
It is easy to see that this group is torsion-free and  locally indicable.
Brin and Squier establish that it satisfies no laws
and that it contains no non-abelian free subgroups.
\index{Group PLo(R)@Group $\PL_o(\R)$!properties}%

The group $\PL_o(\R)$ has intriguing subgroups:
first a finitely presented subgroup, 
discovered by Richard J. Thompson in about 1965, 
\index{Thompson, R. J.}%
later rediscovered independently by Freyd and Heller (from a homotopy viewpoint) 
\index{Freyd, P.}%
\index{Heller, A.}%
and by Dydak and Minc (from a shape theory viewpoint)
(see \cite{FrHe93} and \cite{Dyd77}).
\index{Dydak, J.}%
\index{Minc, P.}%
The group has a very peculiar infinite presentation,
namely
\begin{equation}
\label{eqP:Infinite-presentation-F}
\index{Thompson's group F@Thompson's group $F$!infinite presentation}%
F = \langle x_0, x_1, x_2, \ldots \mid \act{x_i}x_j = x_{j+1} \text{ for all indices } i < j \rangle.
\end{equation}
(I use left action, as employed in the memoir \cite{BiSt85}.)

According to Thompson's manuscript \cite{Tho74},
\index{Thompson, R. J.}%
the derived group of $F$ is a minimal normal subgroup of $F$;
it is actually simple,
a fact discovered around 1969 by Freyd and Heller 
\index{Freyd, P.}%
\index{Heller, A.}%
(see part (T3) of the Main Theorem in \cite{FrHe93}), 
but known to Thompson, too.

Brin and Squier studied a second subgroup $G$ of $\PL_o(\R)$.
\index{Brin, M. G.}%
\index{Squier, C. C.}%
It consists of all PL-homomorphisms of $\R$ 
with supports contained in the interval $[0, \infty[$\,,
having only finitely many singularities, all at dyadic rationals, 
and with slopes that are powers of 2.
The group $G$ has the infinite presentation
\begin{equation}
\label{eq:Presentation-G}
\langle x_0, x_1, x_2, \ldots \mid\act{x_i} x_j =   x_{2j-i}\text{ for all indices } i < j \rangle,
\end{equation}
but it has also the finite presentation
\begin{equation}
\label{eq:Finite-presentation-G}
\langle x_0, x_1,  \mid \act{x_1 x_0}x_1 =   \act{x_0^2} x_1\text{ and  } 
\act{x_1 x_0^2}x_1 =   \act{x_0^4} x_1\rangle.
\end{equation}

The subgroups of $\PL_o(\R)$  exhibit further surprises.
In \cite{Tho74}, 
R. J. Thompson  \index{Thompson, R. J.}%
represents the group $F$ by PL-homeomorphisms 
with supports in the unit interval $[0,1]$;
as discovered by him and, 
independently, by P. Freyd and A. Heller, 
it can also be realized by PL-homeomorphisms
with supports in the half line $[0,\infty[$ or with supports in the real line.
\footnote{see the \emph{First Canonical Representation} and the \emph{Second Canonical Representation}, defined on pages 100 and 102 in \cite{FrHe93}.}
\index{Freyd, P.}%
\index{Heller, A.}%

\section*{Layout of the memoir}
\label{sec:Layout-of-BiSt85}
The groups $F$ and $G$ have explicit finite presentations 
with generating sets made up of concretely given PL-homeomorphisms.
In addition,
they have definitions that might be called \emph{local}:
the group $F$ consists of all PL-homeomorphisms 
with supports in the unit interval $I = [0,1]$,
slopes in the multiplicative group $P$ generated by 2
and breaks in the $\Z[P]$-submodule $\Z[1/2]$ of the additive group of $\R$,
while $G$ is made up of all PL-homeomorphisms with supports in the half line
$I = [0,\infty[$,
slopes in $P = \gp(2)$ and breaks in $A = \Z[1/2]$.

These local definitions of $F$ and of $G$ form the starting point of the Bieri-Strebel memoir \cite{BiSt85}.
Similarly to what M. G. Brin and C. C. Squier propose on page 490 of \cite{BrSq85},
\index{Brin, M. G.}%
\index{Squier, C. C.}%
the authors consider subgroups of $\PL_o(\R)$, 
depending on three parameters $I$, $A$ and $P$,
where  $I \subseteq \R$ is a closed interval, 
\label{notation:I}%
$P$ is a subgroup of the multiplicative group of the positive reals 
$\R^\times_{> 0}$
\label{notation:P}%
and  $A$ is a $\Z[P]$-submodule 
\label{notation:Z[P]}%
of the additive group $\R_{\add}$.
\label{notation:A}%
To every such triple they attach the subset
\begin{equation*}
\label{eq:Definition-G(I;A, P)}
\index{Group G(I;A,P)@Group $G(I;A,P)$!definition|textbf}%
\index{Group G(R;A,P)@Group $G(\R;A,P)$!definition|textbf}%
\index{Group G([0,infty[;A,P)@Group $G([0, \infty[\;;A,P)$!definition|textbf}%
\index{Group G([a,c];A,P)@Group $G([a,c];A,P)$!definition|textbf}%
G (I; A, P) = \{ g \in \PL_o(\R) \mid  g \text{ has support in $I$,  slopes in  $P$, breaks in }A\}.
\end{equation*}
This subset is closed under composition of functions 
and under passage to the inverse
\footnote{If $I = \R$ this holds only if one requires, in addition,  
that the PL-homeomorphisms maps $A$ onto itself. 
I shall impose this condition from now on.}
and $G (I; A, P) $ equipped with this composition is a group which,
by abuse of notation, will again be denoted by $G (I; A, P)$.
In order to avoid trivialities, the authors require, in addition, 
that $I$ have positive length, that $P \neq \{1\}$ and that $A \neq \{0\}$.
These non-triviality assumptions imply 
that $A$ is a dense subgroup of $\R$.

\subsection*{Themes}
\label{ssec:Themes}
\index{Main themes of the monograph}%
The group $F$ has many striking properties;
the following five are chosen by R. Bieri and R. Strebel as themes of their memoir: 
\begin{enumerate}[(i)]
\item $F$ is dense in the space of all orientation preserving homeomorphisms 
of the compact interval $[0,1]$,
\footnote{with respect to the $L_\infty$-norm}
\item its derived group is simple,
\item  $F$ is finitely generated, 
\item  $F$ admits a finite presentation, and
\item $F$  is isomorphic to subgroups of $\PL_o(\R)$ 
with supports in $[0, \infty[$\;. 
\end{enumerate}
Analogues of some of these properties hold in great generality,
analogues of others only in rather restricted settings.
In the sequel, 
I briefly comment on the findings obtained in \cite{BiSt85}
and reproduced in the main part of this monograph.
\subsection*{Construction of PL-homeomorphisms}
\label{ssec:Construction-PL-homeos}
I begin with property (i).
The fundamental question here is whether, 
given positive numbers $b$, $b'$  in $A$, 
there exists a PL-homeomorphism $g$ with slopes in $P$ and breaks in $A$ such that $g([0,b]) = [0, b']$.
The answer involves a submodule of $A$ 
that is familiar from the Homology Theory of Groups,
\index{Homology Theory of Groups!submodule IPA@submodule $IP \cdot A$}%
namely
\begin{equation}
\label{eqP:definition-IPA}
IP \cdot A = \left\{ a \in A \mid \exists p_j \in P  \text{ and } \exists a_j \in A
\text{ with  } a = \sum\nolimits_{j} (p_j-1) \cdot a_j\right\}.
\end{equation}
In view of the non-triviality hypothesis, this submodule is dense in $\R$.
\index{Submodule IPA@Submodule $IP \cdot A$!definition|textbf}%
\index{Submodule IPA@Submodule $IP \cdot A$!density property}%

The answer now reads like this (see Theorem \ref{TheoremA}):
\begin{thmP}
\label{thm:Constructing-homeos}
\index{Submodule IPA@Submodule $IP \cdot A$!significance}%
\index{Construction of PL-homeomorphisms!basic result}%
\index{Theorem \ref{TheoremA}!statement}
A PL-homeomorphism with the stated properties exists if,
and only if, $b'-b \in IP \cdot A$.
\end{thmP}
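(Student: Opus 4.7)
\emph{Proof proposal.}
For the necessity direction, the plan is a short telescoping argument. Given any such $g$, let $0 = t_0 < t_1 < \cdots < t_n = b$ be its breaks in $[0,b]$ together with the endpoints, and let $p_i \in P$ denote the slope of $g$ on $[t_{i-1}, t_i]$. Since $g(0) = 0$, one has $b' = g(b) = \sum_{i=1}^n p_i(t_i - t_{i-1})$, and subtracting $b = \sum_{i=1}^n (t_i - t_{i-1})$ yields $b' - b = \sum_{i=1}^n (p_i - 1)(t_i - t_{i-1})$, which lies in $IP \cdot A$ because every $t_i - t_{i-1}$ lies in the additive subgroup $A$.

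For sufficiency, suppose $b' - b = \sum_{j=1}^n (p_j - 1) a_j$ with $p_j \in P$ and $a_j \in A$. My first step is to normalize the representation using the identity $(p-1)(-a) = (p^{-1}-1)(pa)$, legitimate because $A$ is a $\Z[P]$-module and $p^{-1} \in P$; this reduces to the case $a_j > 0$ for every $j$. I then aim to construct $g$ as a composition of elementary PL-homeomorphisms, each changing the current target interval $[0, L_{k-1}]$ by one summand. The elementary block I have in mind is explicit: for $p \in P$ and $L, a \in A$ with $0 < a \leq L$ and $L + (p-1)a > 0$, the map which is the identity on $(-\infty, 0]$, has slope $p$ on $[0, a]$, slope $1$ on $[a, L]$, and equals $x \mapsto x + (p-1)a$ on $[L, \infty)$ belongs to $\PL_o(\R)$, has slopes in $P$ and breaks in $\{0, a, L\} \subset A$, and sends $[0, L]$ onto $[0, L + (p-1)a]$.

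The main obstacle is ensuring the elementary block applies at every step: the next $a$ must be dominated by the current length $L_{k-1}$ and no $L_k$ may collapse to $0$. I would handle both by first refining the sum. Using density of $A$ in $\R$, I split each $a_j$ as $a_j = a_j^{(1)} + \cdots + a_j^{(N_j)}$ with $a_j^{(i)} \in A$ strictly positive and each bounded by $b'/2$, by choosing the first $N_j - 1$ summands in $A$ close to $a_j/N_j$ and letting the last be the resulting positive remainder. A greedy reordering of the sub-terms $c_k = (p_j - 1) a_j^{(i)}$, which at each step picks any remaining sub-term whose addition keeps $L_k \geq b'/2$, is always available by a short contradiction argument exploiting the hypothesis $b' > 0$. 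This ordering ensures $L_k \geq b'/2$ throughout, and since each sub-$a$ is $\leq b'/2 \leq L_{k-1}$, the elementary block applies at every step; the composition of these blocks then produces the desired $g$.
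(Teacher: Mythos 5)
Your necessity argument is the same telescoping computation as the paper's. Your sufficiency argument is sound in outline but takes a genuinely different route: the paper first reduces to a single increment $(p-1)a$ with $p>1$ (via the identity $(1-p)a=(1-p^{-1})(-pa)$ and a reordering of the summands keeping all partial sums positive), and then absorbs that one increment with an elementary map whose breaks sit at $0$, $a'/p$, $a'$ for $a'=a/p^{k}$ with $k$ large --- the increment is scaled down by $p^{k}$ so as to fit inside $[0,b]$, at the price of an interior slope $p^{k+1}$ (Lemma \ref{lem:TheoremA}). You instead keep the slopes equal to $p$ and $1$, and pay for it with the hypothesis $a\le L$ in your elementary block, which is what forces your subdivision-and-reordering step. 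Both routes are legitimate; the paper's choice of block makes the domination issue disappear entirely.

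There is, however, a concrete gap in your greedy step. Your invariant is $L_k\ge b'/2$, and the block applies because each sub-$a$ is $\le b'/2\le L_{k-1}$. But this invariant cannot be initialized when $b<b'/2$: then $L_0=b<b'/2$, and your contradiction argument (which needs $b'/2-L_{k-1}\le 0$ to close) does not get off the ground; indeed, if all increments are small and positive, no single one lifts $L_0$ up to $b'/2$, while a sub-$a$ as large as $b'/2$ may exceed $L_0=b$, so the very first block need not apply. The repair is to replace $b'/2$ by $\varepsilon=\min(b,b')/2$ throughout: bound the sub-$a$'s by $\varepsilon$, note $L_0=b\ge\varepsilon$, and run the same contradiction --- if every remaining increment $c$ satisfied $L_{k-1}+c<\varepsilon$ with $L_{k-1}\ge\varepsilon$, then every such $c$ would be negative, so the remaining sum $b'-L_{k-1}$ would be $<\varepsilon-L_{k-1}$, giving $b'<\varepsilon\le b'/2$, which is absurd. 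With that one change your construction goes through.
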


It follows, first of all, that property (i) extends to all groups $G(I;P,A)$ with $I$ a compact interval
(see Corollary \ref{CorollaryA5}).
Other consequences hold for groups with  intervals $I$ distinct from $\R$,
in particular the following ones (see Corollary \ref{CorollaryA4}):
every orbit $\Omega$ of $G = G(I;A,P)$ 
that is contained in the intersection of $A$ 
and the interior $\Int(I)$ of $I$  \label{notation:Int(I)}
has the form $(a + IP \cdot A) \cap \Int(I)$
and it is dense in $\Int(I)$. 
In addition,
the group $G$ acts transitively on the collection of all ordered lists 
$a_1 < a_2 < \cdots < a_\ell$ with elements $a_i$ in $\Omega$.
(Here $\ell$ is a positive integer.)
This addendum will be a key to the answer to item (ii).
\index{Multiple transitivity}%
\index{Group G(I;A,P)@Group $G(I;A,P)$!multiple transitivity}%
\index{Orbits of G(I;A,P)@Orbits of $G(I;A,P)$}%

\subsection*{Simplicity of the derived group}
\label{ssec:Simplicity}
In the preceding section, 
a property of the groups $G(I;A,P)$ has been stated
that holds for all choices of the parameters $A$, $P$
and which can be formulated in terms of the group $P$, 
the module $A$,  
and the closely related submodule $IP \cdot A$.
In this section, 
I explain how property (ii) can be rephrased in such a way 
that it holds for all groups of the form $G(I;A,P)$.

The derived group of Thompson's group $F$ coincides with the subgroup $B$ 
made up of the elements that are the identity near 0 and near 1.
For every choice of $I$, $A$ and $P$,
the group $G(I;P,A)$ contains an analogously defined subgroup,
namely:
\begin{equation}
\label{eq:Definition-B(I;A,P)}
B(I;P,A) = \{g \in G (I; A, P) \mid g \text{ is the identity near the endpoints of } I \}.
\end{equation}
If $I$ is the line or a half line, 
the stated requirement for $g$ is to be interpreted suitably; 
see section \ref{ssec:2.3} for a precise formulation.

The group $G(I;A,P)$ can be viewed as a permutation group of the linearly ordered set $\R$
and the same is true for its subgroups.
The facts enunciated in the paragraph following Theorem 
\ref{thm:Constructing-homeos} imply 
that the normal subgroup $B(I;P,A)$ acts doubly transitively on its orbits $\Omega$ in $A \cap \Int(I)$;
more precisely, 
it acts transitively on the subset of all ordered pairs $\{a_1, a_2\} \in \Omega^2$ 
with $a_1 < a_2$.
\index{Doubly transitive action}%
Since each orbit is dense in $I$, 
the action of $B(I;P,A)$ on an orbit is faithful;
a theorem of G. Higman's  thus allows one to infer
that the \emph{derived group of $B = B(I;A,P)$ is non-abelian and simple}.
(For more information, see section \ref{ssec:10.2}).
\index{Higman, G.}%
\index{Simplicity result for [B,B]@Simplicity result for $[B,B]$}%
\index{Subgroup B(I;A,P)@Subgroup $B(I;A,P)$!simplicity of [B,B]@simplicity of $[B,B]$}%

The quotient group $G/B$ can be expressed
 in terms of the parameters $A$ and $P$:
if $I$ is a compact interval with endpoints in $A$
it is abelian and isomorphic to $P \times P$;
if $I$ is the real line $\R$ or the half line $[0, \infty[$ it is metabelian, 
but not abelian
(a fuller description is given by Corollary \ref{CorollaryA3}).
It follows, in particular,  that $G(I;A,P) $ is an extension of a simple group 
by the soluble group $G/B'$.

In order to arrive at a more detailed description of the group $G/B'$
 one has to determine the abelianization $B_{\ab}$ of $B$ and the action of $G/B$ on $B_{\ab}$.
These topics  are investigated in Section \ref{sec:12}.
\index{Subgroup B(I;A,P)@Subgroup $B(I;A,P)$!abelianization}%
A first finding is 
that the isomorphism type of $B$ does not depend on the interval  $I$ 
(see Proposition \ref{PropositionC1}).
\index{Subgroup B(I;A,P)@Subgroup $B(I;A,P)$!independence on I@independence on $I$}%
In the analysis of $B_{\ab}$ one can therefore assume that $I = [0,\infty[$ 
whence $G = G(I;A,P)$ has an explicitly known presentation; 
for details  see section \ref{ssec:14.1}.

The results obtained in Section \ref{sec:12} 
can be summarized as follows:
by Proposition \ref{PropositionC4} 
the abelian group $B_{\ab}$ is the middle term of a right exact sequence
\begin{equation}
\label{eq:Describing-B/B'}
\index{Subgroup B(I;A,P)@Subgroup $B(I;A,P)$!abelianization}
\index{Subgroup K(A,P)@Subgroup $K(A,P)$!significance}%
\bar{L} \to B_{\ab} \xrightarrow{\bar{\mu}} K(A,P) \to 0.
\end{equation}
The group $K(A,P)$ is studied in section \ref{ssec:12.3New}.
\index{Subgroup K(A,P)@Subgroup $K(A,P)$!properties}%
By Proposition \ref{prp:LemmaC5},
it is \emph{trivial} if, and only if, $A = IP \cdot A$; 
it is  \emph{finitely generated} if, and only if,  
either $A = IP \cdot A$, or $A/(IP \cdot A )$ is finite 
and $P$ is a finitely generated group.
Moreover, if $K(A,P)$ is finitely generated 
it is free abelian of rank $\rk P \cdot (\card(A/(IP \cdot A)) -1)$.
\label{notation:rk}%

The group $\bar{L}$ can  be described in terms of homology groups;
in some cases, 
this description allows one to deduce that  $\bar{L} $ is reduced to 0.
Section \ref{sssec:12.3c} has some details on this matter.
%
%
\section*{Finiteness properties}
\label{sec:Finiteness-properties}
%
I come now to a third topic treated in the memoir,
finiteness properties of the groups $G(I;A,P)$.
Actually only the properties of \emph{finite generation},
of \emph{finite generation of the abelianized group}  
and of \emph{finite presentation} are studied.
%
\subsection*{Finite Generation}
\label{ssec:Finite-generation-group-of-G}
I begin with a remark.
One of the peculiarities of the Bieri-Strebel memoir is the fact
that the influence of the interval $I$ 
on the groups $G(I;A,P)$ and on their subgroups 
is studied systematically.
For some questions, this influence is small:
one knows, for instance,
that the subgroup $B(I;A,P)$ is isomorphic to $B(\R;A,P)$ 
for every interval $I$ 
(with non-empty interior; see Proposition \ref{PropositionC1}).
In addition,
the isomorphism type of the groups $G([0, b];A, P)$ 
does not depend on $b \in A_{>0}$
whenever $P$ is cyclic (Theorem \ref{TheoremE07}).
\index{Theorem \ref{TheoremE07}!consequences}%
For other questions, however, the answers depend strongly on the type of interval.
This is the case for the determination of isomorphisms and automorphism groups,
a topic discussed later on in this Preface,
but also for the problems of finding out when a group $G(I;A,P)$ is finitely generated
or  when it admits a finite presentation.

The question of finite generation is the subject matter of Chapter \ref{chap:B};
here the interval $I$ plays a crucial rôle.
Suppose first that $I$ is either the line or the half line $[0, \infty[$\,.
Then the ordered pair $(A,P)$ 
for which the group $G(I;A,P)$ is finitely generated
can be characterized in a simple manner.
Indeed,
by combining Theorems \ref{TheoremB2} and \ref{TheoremB4} 
one arrives at
\begin{thmP}
\label{thm:Finite-generation}
\index{Group G(R;A,P)@Group $G(\R;A,P)$!finite generation}%
\index{Group G([0,infty[;A,P)@Group $G([0, \infty[\;;A,P)$!finite generation}%
\index{Finiteness properties of!G(R;A,P)@$G(\R;A,P)$}%
\index{Finiteness properties of!G([0,infty[;A,P)@$G([0, \infty[\;;A,P)$}%
\index{Submodule IPA@Submodule $IP \cdot A$!significance}%
\index{Homology Theory of Groups!H0(P,A)@$H_0(P,A)$}%
Let $I$ be one of the intervals $\R$ or $[0, \infty[\,$. 
Then $G(I;A,P)$ is finitely generated if, and only if,
\begin{enumerate}[(i)]
\item $P$ is a finitely generated group,
\item $A$ is a finitely generated $\Z[P]$-module, and 
\item the quotient module $ A/(IP \cdot A)$ is finite or  $I = \R$.
\end{enumerate}
\end{thmP}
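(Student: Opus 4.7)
The plan is to prove the two implications separately.

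For necessity, I construct explicit surjective homomorphisms out of $G = G(I;A,P)$ whose finite generation forces (i)--(iii). Every $g \in G$ is eventually affine at each end of $I$: near $+\infty$ it equals $x \mapsto \sigma_+(g)\,x + \beta_+(g)$ with $\sigma_+(g) \in P$ and $\beta_+(g) \in A$, and analogously at the opposite end. For $I = [0,\infty[\,$, the assignment $g \mapsto (\sigma_0(g),\sigma_+(g),\beta_+(g) + IP\cdot A)$ defines a homomorphism onto $P \times \bigl(P \ltimes (A/(IP\cdot A))\bigr)$; surjectivity follows from Theorem~\ref{thm:Constructing-homeos}, since germs of any prescribed slope and intercept can be realized on a small initial or terminal subinterval and extended by the identity. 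For $I = \R$, a telescoping sum over the breaks of $g$ shows $\beta_+(g) - \beta_-(g) \in IP\cdot A$, and the analogous homomorphism surjects onto a group of the form $P^2 \ltimes A$. If $G$ is finitely generated, so is every such image; this immediately forces $P$ to be finitely generated, yielding (i), and, by the standard criterion for finite generation of a split extension $P \ltimes M$ with $P$ finitely generated, it forces $A$ to be finitely generated as a $\Z[P]$-module, yielding (ii). In the half-line case, the appearance of the quotient $A/(IP\cdot A)$ in the target additionally forces (iii).

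For sufficiency, assume (i)--(iii) and build an explicit finite generating set. Fix finite generators $p_1,\ldots,p_r$ of $P$, finite $\Z[P]$-generators $a_1,\ldots,a_s$ of $A$, and, when $I = [0,\infty[\,$, finitely many coset representatives of $A/(IP\cdot A)$. Invoking Theorem~\ref{thm:Constructing-homeos}, realize finitely many PL-homeomorphisms in $G$ whose germs at the endpoints of $I$ exhibit all of these data, and adjoin a finite collection of compactly supported ones. To show the resulting finite set $S$ generates $G$, proceed in two stages: first, given any $g \in G$, multiply by an $S$-word to produce an element of the bounded-support subgroup $B(I;A,P)$; second, argue that the remaining compactly supported generators in $S$ already generate $B$, using a direct bump-by-bump argument once the support has been pushed into a fixed compact subinterval $J \subset \Int(I)$ by applying the transitivity consequences of Theorem~\ref{thm:Constructing-homeos}.

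The main obstacle is precisely this sufficiency argument, and within it the step that compresses arbitrary elements of $B(I;A,P)$ into a finite pool of support and break data. A priori the supports of elements of $B$ sit deep inside $I$ and their breaks are scattered throughout $A$; conditions (i) and (ii) must be exploited, via repeated conjugation by the endpoint generators, to push every support into a fixed compact $J$ and every break into a fixed finite $\Z$-span of the $a_j$'s, all without leaving the subgroup $\langle S \rangle$ and without introducing slopes outside $\langle p_1,\ldots,p_r \rangle$. The uniform bookkeeping required to carry out this reduction in a bounded number of steps is the technical heart of the proof.
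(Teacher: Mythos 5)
There are two genuine gaps, one in each direction.

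\emph{Necessity.} Your surjection onto $P \times \bigl(P \ltimes (A/(IP\cdot A))\bigr)$ for $I=[0,\infty[$ does not exist: the translation part $\tau_+(g)$ of the germ at $+\infty$ of any $g \in G([0,\infty[\,;A,P)$ lies automatically in $IP\cdot A$. Indeed, if $g$ coincides with $t \mapsto \sigma_+(g)t+\tau_+(g)$ on $[a,\infty[$ with $a \in A$, then $g(a)-a = (\sigma_+(g)-1)a+\tau_+(g)$ lies in $IP\cdot A$ because $g$ fixes a neighbourhood of $0$ and orbits in $A\cap\Int(I)$ are cosets of $IP\cdot A$ (Corollary \ref{CorollaryA1}); this is exactly why the image of $\rho$ is $\Aff(IP\cdot A,P)$ in Corollary \ref{CorollaryA2}\,(iii). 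So the coordinate $\beta_+(g)+IP\cdot A$ is identically zero, your ``germs of any prescribed intercept can be realized'' claim is false, and condition (iii) is not forced by your quotients. The paper obtains (ii) and (iii) not from quotients but from the ascending-chain argument of Proposition \ref{PropositionB1}: if, say, $A/(IP\cdot A)$ is infinite and $I\neq\R$, then $G$ is the union of the properly ascending chain of subgroups $G_X$, where $X$ runs over finite unions of orbits. (A quotient that does detect (iii) is the abelianization, computed via the homomorphism $\nu$ recording the jump of slopes across each orbit, Corollary \ref{crl:PropositionC2} --- but that is a different homomorphism from yours.)

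\emph{Sufficiency.} Your second stage asks to generate $B(I;A,P)$ from finitely many compactly supported elements after compressing supports into a fixed compact $J$; but this requires $G(J;A,P)\subseteq\langle S\rangle$ for a compact interval $J$, and there is no induction on the number of breaks available there --- elements of $G(J;A,P)$ have at least two breaks and cannot be built from elements with fewer. This is precisely the obstruction that leaves the compact-interval case of the theorem open (see the discussion following Theorem 2 in the Preface and the preamble to Section \ref{sec:9}), so a ``direct bump-by-bump argument'' cannot be expected to close it. The paper's proof never passes through $B$: for $I=\R$ and $I=[0,\infty[$ the group is generated by its elements with at most one break (induction on the number of breaks, after Brin--Squier), these one-break generators $g(b,p)$ have \emph{unbounded} support, and the entire content of Proposition \ref{PropositionB3} is a delicate induction using the relations \eqref{eq:8.2} and \eqref{eq:8.3} to show that every $g(b,p)$ lies in the subgroup generated by the finite set $\{g(0,p)\mid p\in\PP\}\cup\{g(b,p)\mid b\in\Acal\cup\TT,\ p\in\PP\}$. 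You would need to replace your reduction to $B$ by an argument of this kind.
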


The proof depends on a result of Brin and Squier, 
\index{Brin, M. G.}%
\index{Squier, C. C.}%
according to which the groups $G(\R; A, P)$ and $G([0, \infty[\,; A, P)$
admit convenient (infinite) presentations 
whose generators are PL-homeomorphisms 
with at most one break (see \cite[Corollary 2.8]{BrSq85}).

No similar result is known for compact intervals $I = [0,b]$ with $b \in A_{>0}$. 
\label{page:Difficulty-of-finding-generators}
Conditions (i) through (iii) listed in Theorem \ref{thm:Finite-generation} 
are still necessary for finite generation (see Proposition \ref{PropositionB1}),
but they may no longer be sufficient.
To date, the finite generation property seems to have been established 
only for the following special choices of $A$ and $P$:
\index{Group G([a,c];A,P)@Group $G([a,c];A,P)$!finite generation}%
\index{Submodule IPA@Submodule $IP \cdot A$!significance}%
\begin{enumerate}
\index{Group G([a,c];A,P)@Group $G([a,c];A,P)$!finite generation|(}%
\index{Finiteness properties of!G([a,c];A,P)@$G([a,c];A,P)$|(}%
\item[(e)] $P$ is generated by positive integers $p_1$, \ldots,  $p_k$
and $A = \Z[1/(p_1 \cdots p_k)]$ 
(K. S. Brown (unpublished), 
\cf{}\cite[Thm.\,2.5]{Ste92},  or Corollary \ref{CorollaryB10});
\index{Brown, K. S.}%
\index{Stein, M.}%
\item[(f)] $P = \gp(u)$ and $A = \Z[u, u^{-1}]$ 
with $u$ an algebraic integer of the form
$u = (\sqrt{n^2 + 4}-n)/2 $ and $n$ a positive integer
(S. Cleary \cite{Cle95} and \cite[Section 4]{Cle00}).
\index{Group G([a,c];A,P)@Group $G([a,c];A,P)$!finite generation|)}%
\index{Finiteness properties of!G([a,c];A,P)@$G([a,c];A,P)$|)}%
\index{Cleary, S.}%
\end{enumerate}
In addition, a general fact is known:
whether or not a group $G([0,b];A, P)$  is finitely generated does not depend on $b \in A_{>0}$
(see Lemma \ref{LemmaB6}).
(Groups with distinct parameters $b_1$, $b_2$ may not be isomorphic; 
see Corollary  \ref{CorollaryE12}).
%
\subsection*{Finite generation of the abelianized group}
\label{ssec:Finitely-generated-abelianization}
In the memoir,
the abelianization of the groups $G = G(I;A,P)$ 
is only studied for intervals that are bounded from below or from above,
but not on both sides, in particular for $I = [0, \infty[\,$.
The following facts induced the authors to concentrate on this special case:
the isomorphism type of $B = B(I;A,P)$ does not depend on $I$ 
(Proposition \ref{PropositionC1}),
the study of $B_{\ab}$ is relevant for the question 
whether $B$ is simple,
the extension $B \triangleleft G \epi G/B$ 
and methods from the \emph{Homology Theory of Groups} allow 
one to relate the abelianizations of $B$, $G$ and $G/B$, 
and last, but not least, $G([0,\infty[\;;A,P)$ 
has a very convenient, infinite presentation.

In section \ref{ssec:12.1}, 
this presentation is used to obtain a description 
of the abelianization of $G = G([0, \infty[\;; A, P)$ in terms of $P$ 
and the abelian group $A/(IP \cdot A)$
(see Proposition \ref{PropositionC2} and its proof).
The cited proposition allows one then to obtain
a criterion for the finite generation of $G_{\ab}$:
\begin{crlP}[Corollary \ref{crl:PropositionC2}]
\label{crlP:PropositionC2}
\index{Submodule IPA@Submodule $IP \cdot A$!significance}%
\index{Homology Theory of Groups!H0(P,A)@$H_0(P,A)$}%
If $I$ is a half line,
the abelianization of $G = G(I;A,P)$ is \emph{finitely generated} if, and only if,
$P$ is finitely generated and and $IP \cdot A$ has finite index in $A$.
If $G_{\ab}$ is finitely generated, it is free abelian of rank 
\[
 \rk P \cdot \left(1 + \card A/(IP \cdot A)\right) 
\]
if the end point of $I$ lies in $A$, 
and otherwise of rank $ \rk P \cdot \left( \card A/(IP \cdot A) \right) $.
\end{crlP}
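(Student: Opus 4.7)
The approach is to deduce the corollary directly from Proposition \ref{PropositionC2}, which furnishes, for $I = [0, \infty[$, an explicit description of $G_{\ab}$ in terms of $P$ and the group $A/(IP \cdot A)$. Since $P$ is a torsion-free abelian group (being a subgroup of $\R^{\times}_{>0}$), the description exhibits $G_{\ab}$ as a direct sum of copies of $P$; the corollary is then essentially a bookkeeping exercise on this direct sum.

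First, I would reduce the general half-line case to $I = [0, \infty[$. The reflection $x \mapsto -x$ provides an isomorphism $G(\,]-\infty, c]; A, P) \cong G([-c, \infty[\,; A, P)$, so one may assume $I = [b, \infty[$. When $b \in A$, translation by $-b$ yields an isomorphism $G([b, \infty[\,; A, P) \cong G([0, \infty[\,; A, P)$ that preserves the parameters $A$ and $P$. Proposition \ref{PropositionC2} then applies and exhibits $G_{\ab}$ as a direct sum of $1 + \card(A/(IP \cdot A))$ copies of $P$; from this one reads off that $G_{\ab}$ is finitely generated if, and only if, $P$ is finitely generated and $IP\cdot A$ has finite index in $A$, and that in this case $G_{\ab}$ is free abelian of rank $\rk P \cdot (1 + \card(A/(IP \cdot A)))$.

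For the case $b \notin A$, the key observation is that no element $g \in G([b, \infty[\,; A, P)$ can have a break at $b$, so the right-hand slope of $g$ at $b$ must equal $1$; hence each $g$ is already the identity on a right neighbourhood of $b$. The inclusion $G([b, \infty[\,; A, P) \hookrightarrow G([b', \infty[\,; A, P)$ (valid for any $b' \in A$ with $b' < b$, since no new breaks are created at $b$) then identifies the former group with the kernel of the slope-at-$b'$ homomorphism $\lambda_{b'}\colon G([b', \infty[\,; A, P) \to P$. Since $\lambda_{b'}$ corresponds precisely to the copy of $P$ attached to the finite endpoint in the direct-sum description, exactly one copy of $P$ is lost upon passing to $G([b, \infty[\,; A, P)_{\ab}$, while the remaining interior summands, indexed by the cosets of $IP \cdot A$ in $A$, survive. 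This yields the second rank formula $\rk P \cdot \card(A/(IP \cdot A))$.

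The main obstacle is verifying rigorously that the direct-sum decomposition from Proposition \ref{PropositionC2} really does restrict to $\ker \lambda_{b'}$ in a way that removes exactly the endpoint copy of $P$ and leaves the remaining summands unchanged. A minor secondary point is noting that, when $A/(IP\cdot A)$ is infinite, $G_{\ab}$ cannot be finitely generated; this is immediate, since the non-triviality hypothesis $P \neq \{1\}$ ensures that each of the infinitely many summands is non-trivial.
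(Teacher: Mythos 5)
Your handling of the case where the endpoint of $I$ lies in $A$ is exactly the paper's: Proposition \ref{PropositionC2} gives $G_{\ab}\cong \Z[(A\cap I)_\sim]\otimes P$, the orbit set consists of the fixed endpoint together with one orbit per coset of $IP\cdot A$ (Corollary \ref{CorollaryA1}), and the finite-generation criterion and the rank are read off from the resulting direct sum of copies of $P$. The case of an endpoint $b\notin A$, however, contains a genuine error. The inclusion $G([b,\infty[\,;A,P)\hookrightarrow G([b',\infty[\,;A,P)$ for $b'\in A$ with $b'<b$ does \emph{not} identify the smaller group with $\ker\bigl(\sigma_-\colon G([b',\infty[\,;A,P)\to P\bigr)$ (your $\lambda_{b'}$): that kernel consists of all elements that are the identity near $b'$, and it contains, for instance, elements whose support lies in $]b',b[$\,, which are not in $G([b,\infty[\,;A,P)$. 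The inclusion is proper for every choice of $b'$. The two groups are indeed abstractly isomorphic (this is the type classification, \cf{}Proposition \ref{PropositionE6}), but only via an infinitary PL-homeomorphism, so no bookkeeping of summands can be read off the inclusion. Moreover the step you flag as "the main obstacle" --- that passing to $\ker\sigma_-$ removes exactly the endpoint copy of $P$ and leaves the interior summands intact --- is not a formality you can defer: in the paper it is established inside the proof of Proposition \ref{PropositionC2} by combining the splitting of $G_1\lhd G\epi P$, the triviality of the $G$-action on $(G_1)_{\ab}$, and the five-term exact sequence in homology, followed by a colimit argument expressing $G([b,\infty[\,;A,P)$ as the union of the groups $\ker\bigl(\sigma_-\colon G([a,\infty[\,;A,P)\to P\bigr)$ over $a\in A$ with $a>b$.

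The shortest repair is to note that Proposition \ref{PropositionC2} is stated for \emph{every} interval bounded on exactly one side, including half lines whose endpoint does not lie in $A$; there is nothing to reduce. In both cases the corollary is then just the count of $G$-orbits in $A\cap I$: there are $1+\card A/(IP\cdot A)$ of them when the endpoint is in $A$ (the endpoint being a singleton orbit) and $\card A/(IP\cdot A)$ of them otherwise, and $\Z[(A\cap I)_\sim]\otimes P$ is the corresponding direct sum of copies of $P$, from which your finite-generation and rank statements follow as you describe.
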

%
\subsection*{Finite presentation}
\label{ssec:Finite-presentation-of-group-G}
%
Chapter \ref{chap:D} of the Bieri-Strebel memoir addresses the question 
whether a  group of the form $G(I;A,P)$ admits a finite presentation.
The obtained results are far less general than Theorem \ref{thm:Finite-generation};
but, again,  they are more satisfactory for the line or a half line than for compact intervals.

A first general insight is provided by the next proposition;
it is an immediate consequence of  Propositions \ref{PropositionD2} 
and \ref{PropositionD5}.
\begin{prpP}
\label{prp:D2-D5-preface}
\index{Group G(R;A,P)@Group $G(\R;A,P)$!finite presentation}%
\index{Group G([0,infty[;A,P)@Group $G([0, \infty[\;;A,P)$!finite presentation}%
Suppose $I = \R$ or $I = [0,\infty[\,$.
If $G(I;A,P)$ admits a finite presentation 
so does the metabelian group $\Aff(A,P) \iso A \rtimes P$.
\end{prpP}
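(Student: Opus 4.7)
The plan is to exhibit $\Aff(A,P)$ as a retract of $G(I;A,P)$ and then invoke the classical fact that a retract of a finitely presented group is finitely presented. The key starting observation is that every $g \in G(I;A,P)$, having only finitely many breaks, is affine on a neighborhood of $+\infty$: there exist $p_g \in P$ and $a_g \in A$ with $g(x) = p_g \cdot x + a_g$ for $x \gg 0$. The assignment $\rho: g \mapsto (p_g, a_g)$ defines a homomorphism $\rho: G(I;A,P) \to \Aff(A,P)$, which is surjective by Theorem \ref{TheoremA}.

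When $I = \R$ the retract is immediate: every globally affine map $x \mapsto p x + a$ with $p \in P$ and $a \in A$ is itself an element of $G(\R;A,P)$ (it has slope $p \in P$, no breaks in its interior, and preserves $A$ onto $A$), so these maps form a subgroup of $G(\R;A,P)$ canonically isomorphic to $\Aff(A,P)$, on which $\rho$ restricts to the identity. When $I = [0,\infty[\,$, affine maps with nonzero translation do not preserve $I$, so I must construct a homomorphic section $\sigma: \Aff(A,P) \to G(I;A,P)$ of $\rho$ by other means. My plan is to build $\sigma(p,a)$ as a PL-homeomorphism of $[0,\infty[$ that is linear with slope $p$ through the origin on a neighborhood of $0$, coincides with $x \mapsto p x + a$ on a neighborhood of $+\infty$, and transitions between these two behaviors through a suitably chosen, uniform family of intermediate breaks in $A$ with slopes in $P$. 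The slope-$p$ behavior near $0$ realizes the natural projection $\Aff(A,P) \to P$ as the near-$0$ slope of $\sigma$, while the affine behavior near $+\infty$ ensures $\rho \circ \sigma = \id_{\Aff(A,P)}$.

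Once the retraction $\rho$ with its section $\sigma$ is in place, a finite presentation $\langle X \mid R \rangle$ of $G(I;A,P)$ yields a finite presentation of $\Aff(A,P)$ with generators $\{\sigma\rho(x) : x \in X\}$ and relations obtained by applying $\sigma\rho$ to each $r \in R$ — the standard argument that a retract of a finitely presented group is finitely presented. The main obstacle is the construction of $\sigma$ in the case $I = [0,\infty[\,$: the transition region has to be chosen so that $\sigma(p_1,a_1) \circ \sigma(p_2,a_2)$ equals $\sigma(p_1 p_2,\, p_1 a_2 + a_1)$ on the nose, rather than merely modulo $\ker \rho$. Achieving this rigidity may require factoring $\sigma$ through a fixed auxiliary PL-conjugation that normalizes the intermediate region in a way compatible with the group law of $\Aff(A,P)$; as a fallback, one can aim to reduce the half-line case to the line case via the natural inclusion $G([0,\infty[;A,P) \hookrightarrow G(\R;A,P)$ combined with the retract already established in the first case.
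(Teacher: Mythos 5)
Your argument for $I = \R$ is fine and is essentially the paper's: there $\Aff(A,P)$ sits inside $G(\R;A,P)$ as the subgroup of break-free elements, $\rho$ (equivalently $\lambda$) retracts $G(\R;A,P)$ onto it, and a retract of a finitely presented group is finitely presented (the paper phrases this by noting that $\ker\lambda$ is the normal closure of the finite set $\{g(0,p)\mid p\in\PP\}$).

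The half-line case, however, contains a genuine gap: your claim that $\rho\colon G([0,\infty[\,;A,P)\to\Aff(A,P)$ is surjective is false whenever $IP\cdot A\neq A$. Every $g\in G([0,\infty[\,;A,P)$ fixes $0$, so by Theorem \ref{TheoremA} the difference $g(b)-b$ lies in $IP\cdot A$ for all $b\in A$; if $g(x)=px+a$ near $+\infty$ this forces $a\in IP\cdot A$. Thus $\im\rho=\Aff(IP\cdot A,P)$ (this is the mirror of Corollary \ref{CorollaryA2}(iii)), and the section $\sigma$ you propose to build --- linear through the origin near $0$ and equal to $x\mapsto px+a$ near $+\infty$ --- simply cannot exist for $a\in A\smallsetminus IP\cdot A$; Theorem \ref{TheoremA} is an obstruction to your construction, not its justification. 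Your fallback via the inclusion $G([0,\infty[\,;A,P)\hookrightarrow G(\R;A,P)$ does not help either, since finite presentability of a subgroup says nothing about the ambient group. The paper's route for the half line is of a different nature: since $G([0,\infty[\,;A,P)$ has no non-abelian free subgroups, \cite[Theorem 5.5]{BiSt80} guarantees that \emph{every} metabelian quotient of a finitely presented such group is finitely presented; applied to $\im\rho=\Aff(IP\cdot A,P)\cong IP\cdot A\rtimes P$ this gives finite presentability of that group, and one then passes to $A\rtimes P$ using that $IP\cdot A$ has finite index in $A$ (Proposition \ref{PropositionB1}). Some input of this kind --- beyond a retraction, which is not available --- is needed to close your argument.
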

As is well-known,
a finitely generated metabelian group of the form $A \rtimes P$ 
need not admit a finite presentation
(see, \eg{}\cite{BiSt80} or \cite{Str84}).
\index{Bieri, R.}%
\index{Strebel, R.}%
The above proposition thus implies 
that many groups of the form $G(\R;A,P)$ or $G([0,\infty[\,;A,P)$ are finitely generated, 
but infinitely related, for instance the group $G(\R; \Z[1/6], \gp(3/2))$.

No analogue of Proposition \ref{prp:D2-D5-preface}
is known for the interval $I = [0, b]$;
indeed, it seems that every group of the form $G([0,b];A,P)$ 
that has been shown to be finitely generated 
is actually finitely presented and of type $\FP_\infty$.
\index{Finiteness properties of!G([a,c];A,P)@$G([a,c];A,P)$}
\index{Group G([a,c];A,P)@Group $G([a,c];A,P)$!finite presentation}
\index{Group G([a,c];A,P)@Group $G([a,c];A,P)$!type FPinfty@type $\FP_\infty$}
\smallskip

Now to some examples of groups of the form $G(I;A,P)$ 
that have been \emph{proved to admit a finite presentation}.
Their list is quite short; 
in displaying it,
I denote by $\gp(\XX)$ the group generated by the set $\XX$. \label{notation:gp(XX)}%
\index{Group G(R;A,P)@Group $G(\R;A,P)$!finite presentation}%
\index{Group G([0,infty[;A,P)@Group $G([0, \infty[\;;A,P)$!finite presentation}%
\index{Group G([a,c];A,P)@Group $G([a,c];A,P)$!finite presentation}%
\index{Finiteness properties of!G(R;A,P)@$G(\R;A,P)$}%
\index{Finiteness properties of!G([0,infty[;A,P)@$G([0, \infty[\;;A,P)$}%
\index{Finiteness properties of!G([a,c];A,P)@$G([a,c];A,P)$}%
\begin{enumerate}[(a)]
\item $I =\R$, $P$ freely generated by a finite set of integers $p_j \geq 2$ 
and $A = \Z[P]$ 
(Proposition \ref{PropositionD4} and example 1 in section \ref{sssec:13.3cNew};
the cited proposition yields actually some further examples);
\item $I = [0, \infty[$, $P = \gp(p)$  with $p \geq 2$ an integer 
and $A = \Z[1/p]$ (\cite[\S{}2]{BrSq85});
\index{Brin, M. G.}%
\index{Squier, C. C.}%
\item $I = [0, \infty[$, $P = \gp(p_1, \ldots, p_k)$ with each $p_j$ a positive integer and  $A = \Z[P]$ (Theorem \ref{TheoremD6});
 \item $I = [0, b]$ with $b \in A$,   and $P = \gp(p)$ with $p > 1$ an integer, 
 and  $A = \Z[1/p]$ 
 (\cite[Theorem 4.17]{Bro87a} or Proposition \ref{PropositionD10});
 \index{Brown, K. S.}%
\item  $I = [0, b]$ with $b \in A$, $P = \gp(p_1, \ldots, p_k)$ with each $p_j$ a positive integer 
and $A = \Z[P]$ (K. S. Brown (unpublished); 
\cf{}\cite[Theorem 2.5]{Ste92});
\index{Stein, M.}%
\item $I = [0, b]$ with $b \in A = \Z[u, u^{-1}]$, $P = \gp(u)$ 
where $u$ is the algebraic integer $ (\sqrt{n^2 + 4}-n)/2 $ 
with $n$ a positive integer 
(see \cite{Cle95} and \cite[Section 4]{Cle00}).
\index{Cleary, S.}%
\end{enumerate}
Notice that every module $A$ figuring in the above list 
is cyclic and is actually the module underlying the subring $\Z[P]$,
and that the group $P$ is always generated by rational numbers, except in case (f).
\section*{Isomorphisms and automorphisms}
\label{sec:Isos-and-autos}
Isomorphisms and automorphisms are topics
for which very satisfactory results are known;
Chapter \ref{chap:E} focusses on them.
All isomorphisms are induced 
by conjugation by homeomorphisms.

These homeomorphisms are obtained by two methods.
In the first one,
the homeomorphism $\varphi \colon \Int(I) \iso \Int(\bar{I})$ 
is the result of an explicit construction.
In the simplest case, $\varphi$  is (the restriction of) an \emph{affine map} 
$t \mapsto s\cdot t + b$.
Translations allow one to see that, for every $a \in A$, 
the group $G([0, \infty[\,; A, P)$ is isomorphic to $G([a, \infty[\,; A, P)$  
and $G([0, b]; A, P)$ is isomorphic to $G([a, a + b]; A, P)$.
The homothety $\alpha \colon t \mapsto s \cdot t$ with $s > 0$ implies
that $G([0, \infty[\,; A, P)$ is isomorphic to $G([0, \infty[\,; s \cdot A, P)$ 
and that $G([0, b]; A, P)$ is isomorphic to $G([0, s \cdot b]; s \cdot A, P)$.
Farther reaching are the effects of \emph{PL-homeomorphisms 
with infinitely many breaks} 
that do not accumulate in the interior of $I$.
With the help of such homeomorphisms one can show 
that the isomorphism type of $B(I;A,P)$ does not depend on the interval $I$ 
(see Proposition \ref{PropositionC1}),
and  that the isomorphism type of  $G([0, b]; A, P)$ with $b \in A$ and $P$ cyclic 
does not depend on $b$
(see Theorem \ref{TheoremE07}). 
\index{Theorem \ref{TheoremE07}!consequences}%
A further application will be discussed on page  
\pageref{sssec:Isomorphisms-between-groups-containing-B} below.

The second route leading to homeomorphisms $\varphi$ 
makes use of the proof of the Main Theorem in S. H. McCleary's paper \cite{McC78b} \index{McCleary, S. H.}%
and of transitivity properties of the subgroup $B(I;A,P)$ (established in Section \ref{sec:5}). 
This route is detailed in sections \ref{ssec:16.1} through \ref{ssec:16.3} 
and culminates in
\begin{thmP}[Theorem \ref{TheoremE04}]
\label{thm:TheoremE04-intro}
\index{Representation Theorem for isomorphisms!main result}%
\index{Theorem \ref{TheoremE04}!statement}
Assume $G$ is a subgroup of $G(I;A,P)$  containing the derived group $B'$ of $B = B(I;A,P)$
and that $\bar{G}$ is a subgroup of $G(\bar{I}; \bar{A}, \bar{P})$ 
with the analogous property. 
If $\alpha \colon G \iso \bar{G}$  is an isomorphism of groups
there exists a unique homeomorphism $\varphi \colon \Int(I) \iso \Int(\bar{I}) $
that induces $\alpha$ by conjugation;
more precisely, 
the equation
\begin{equation}
\label{eq:Inducing-iso-groups}
\alpha(g) {\restriction{\Int(\bar{I})  }} = \varphi \circ 
\left( 
g {\restriction{\Int (I)} } 
\right) 
\circ \varphi^{-1}
\end{equation}
holds for every $g \in G$. 
In addition, $\varphi$ maps $A \cap \Int(I)$ onto $\bar{A} \cap  \Int(\bar{I})$.
\end{thmP}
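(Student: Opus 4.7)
The plan is to apply the Main Theorem of McCleary \cite{McC78b} to the restricted actions of $G$ on the densely ordered set $X = A \cap \Int(I)$ and of $\bar{G}$ on $\bar{X} = \bar{A} \cap \Int(\bar{I})$, and then to extend the resulting bijection by continuity to all of $\Int(I)$. The multiple transitivity required by McCleary's hypothesis is supplied by the derived group $B' \subseteq G$. Indeed, the $B'$-orbits in $X$ lie inside the sets $\Omega = (a + IP \cdot A) \cap \Int(I)$, each of which is dense in $\Int(I)$, and Section \ref{sec:5} establishes that $B'$ acts on each such orbit transitively on ordered $\ell$-tuples for every $\ell \geq 1$.

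By McCleary's theorem, $\alpha$ is induced by an order-preserving bijection $\varphi_0 \colon X \iso \bar{X}$, i.e.\ $\alpha(g)(\varphi_0(x)) = \varphi_0(g(x))$ for every $g \in G$ and every $x \in X$. The bijection is order-preserving rather than order-reversing because every element of $G$ and of $\bar{G}$ is orientation-preserving, so that conjugation by an order-reversing map would produce orientation-reversing elements. The built-in fact that $\varphi_0$ maps $X$ onto $\bar{X}$ delivers the second assertion of the theorem at once. Since $X$ is dense in $\Int(I)$ and $\bar{X}$ is dense in $\Int(\bar{I})$, the monotone bijection $\varphi_0$ extends uniquely to a homeomorphism $\varphi \colon \Int(I) \iso \Int(\bar{I})$; for each fixed $g \in G$, both sides of \eqref{eq:Inducing-iso-groups} are then continuous in $x$ and agree on the dense subset $X$, so the identity holds throughout $\Int(I)$.

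For uniqueness, suppose $\varphi'$ is another homeomorphism inducing $\alpha$. Then $\varphi^{-1} \circ \varphi'$ commutes with the restriction to $\Int(I)$ of every element of $G$; since $B' \subseteq G$ contains bumps supported in arbitrarily small intervals, this forces $\varphi^{-1} \circ \varphi'$ to fix every point of $\Int(I)$.

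The chief technical obstacle lies in verifying the hypotheses of McCleary's theorem in the present setting. McCleary's statement applies most directly to a group acting multiply transitively on a \emph{single} orbit, whereas $X$ decomposes in general into several $B'$-orbits, indexed by the cosets of $IP \cdot A$ in $A$. One must therefore first show that $\alpha$ respects this decomposition — which it does, since the orbit partition is group-theoretically definable from the normal subgroup $B' \cap G$ — and then either invoke a version of McCleary's theorem allowing several orbits, or apply the single-orbit version orbit by orbit and paste the partial bijections together in a globally order-preserving fashion. Arranging this pasting so that the resulting $\varphi_0$ is simultaneously order-preserving across orbits \emph{and} compatible with the action of the full group $G$ (not only of $B'$) is the delicate point of the argument.
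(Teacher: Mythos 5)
Your overall strategy --- reduce to a McCleary-type representation theorem on a dense invariant subset and then extend by continuity --- is indeed the route the paper takes, but two of your steps contain genuine problems. First, your claim that the bijection produced must be order-\emph{preserving} is false, and so is the reason you give for it: if $\varphi$ is order-reversing and $g$ is orientation-preserving, then $\varphi \circ g \circ \varphi^{-1}$ is again orientation-preserving (decreasing composed with increasing composed with decreasing is increasing), so conjugation by an order-reversing map does \emph{not} produce orientation-reversing elements. Order-reversing $\varphi$ genuinely occur --- the reflection $t \mapsto b-t$ normalizes $G([0,b];A,P)$ and induces a ``decreasing'' automorphism --- so any argument forcing $\varphi_0$ to be increasing proves too much. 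The correct conclusion, both in McCleary's Main Theorem and in the paper's substitute for it (Theorem \ref{TheoremE3}), is that $\varphi$ is \emph{monotonic}, and the proof must treat the decreasing case separately (the paper does this by conjugating with $-\id$).

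Second, the ``chief technical obstacle'' you identify --- that $X = A \cap \Int(I)$ splits into several $B'$-orbits, so McCleary's single-orbit transitivity hypothesis is not met --- is real, and your proposal leaves it unresolved; the orbit-by-orbit pasting you sketch is precisely what the paper avoids. The paper instead proves its own variant, Theorem \ref{TheoremE3}, whose transitivity hypothesis (axiom Ax4, an approximate 6-fold transitivity on the whole open interval) is phrased in terms of approximating arbitrary real tuples and is therefore satisfied simply because each $B'$-orbit is \emph{dense} in $\Int(I)$, with no need to single out one orbit or to match orbits of $G$ with orbits of $\bar{G}$ under $\alpha$. The candidate values of $\varphi$ are then read off not from points of an orbit but from the set $\Bfr$ of reals $b$ admitting $g_\ell, g_u \in G$ with $\sup(\supp g_\ell) = b = \inf(\supp g_u)$, which is defined purely in terms of supports. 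A further hypothesis of McCleary's theorem that you do not verify is the existence in $B'$ of a \emph{strictly positive} element of bounded support (Ax1); this was an open point in the 1985 memoir and is settled in the monograph by an explicit commutator construction inside the proof of Theorem \ref{TheoremE04} --- the same construction that is then reused to show $\varphi(A \cap \Int(I)) = \bar{A} \cap \Int(\bar{I})$, a step which in your write-up is attributed to the black box rather than proved.
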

\begin{remarkP}
\label{remark:Improvements-of-TheoremE04}
Theorem \ref{thm:TheoremE04-intro} is deduced from Theorem \ref{TheoremE3}, 
a result in the spirit of the Main Theorem 4 of  \cite{McC78b};
\index{McCleary, S. H.}%
Theorem \ref{TheoremE3} deals with certain orientation preserving permutation groups of an open interval $J \subseteq \R$.
As pointed out on page 15 of \cite{McRu05}, 
the proof of Theorem \ref{TheoremE3} can be adjusted 
so as to yield a sharpening of the Main Theorem 4 of  \cite{McC78b}.
\index{McCleary, S. H.}%
\index{Rubin, M.}%
In the more recent paper \cite{McRu05}, 
S. McCleary and M. Rubin generalize this sharpened result further; 
see Theorem 4.1 in  \cite{McRu05}.
\end{remarkP}

Theorem \ref{thm:TheoremE04-intro} has many consequences; 
some of them will be mentioned in this Preface.
A first one deals with the subgroup $B(I;A,P)$.
This group is normal in $G(I;A,P)$. 
Theorem \ref{TheoremE04} now implies 
that $B$ is mapped onto itself 
by every automorphism of a subgroup $G$ of $G(I;A,P)$
that contains $B$; 
so $B$ is a characteristic subgroup of every subgroup $G \supset B$ 
(for more details, see Corollary \ref{CorollaryE5}). 
\index{Subgroup B(I;A,P)@Subgroup $B(I;A,P)$!properties}%

One of the main problems studied in Chapter \ref{chap:E}  is 
whether or not the homeomorphisms $\varphi \colon \Int(I) \iso \Int(\bar{I})$ 
are piecewise linear,
possibly with infinitely many breaks.
The answers are simpler and more uniform when $P$ is not cyclic 
and hence a dense subgroup of $\R^\times_{>0}$.
I begin therefore by reporting on this case.
%
\subsection*{Case 1: $P$ is not cyclic}
\label{sssec:P-not-cyclic}
The basic result here is
\begin{thmP}[Theorem \ref{TheoremE10}]
\label{thm:TheoremE10}
\index{Group G(I;A,P)@Group $G(I;A,P)$!isomorphisms}%
\index{Theorem \ref{TheoremE10}!statement}
Suppose  $G$ is a subgroup of $G(I;A,P)$ 
that contains the derived group of $B(I;A,P)$
and $\bar{G}$ is a subgroup of $G(\bar{I}; \bar{A}, \bar{P})$ 
with the analogous property. 
Assume $G$ and $\bar{G}$ are isomorphic 
and let $\varphi \colon \Int(I) \iso \Int(\bar{I})$ be a homeomorphism 
inducing an isomorphism $\alpha \colon G \iso \bar{G}$.
If $P$ is \emph{not cyclic} the following statements hold:
\begin{enumerate}[(i)] 
\item $ \bar{P} = P$;
\item there exists a non-zero real $s$ such that  $\bar{A} = s \cdot A$
and $\varphi$ is a PL-homeo\-mor\-phism 
with slopes in the coset $s \cdot P$,
breaks in $A$
and its set of breaks is a discrete subset of $\Int(I)$.
\end{enumerate}
\end{thmP}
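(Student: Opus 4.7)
The starting point is Theorem~\ref{TheoremE04}, which already produces a homeomorphism $\varphi\colon \Int(I) \iso \Int(\bar I)$ that induces $\alpha$ and carries $A \cap \Int(I)$ onto $\bar A \cap \Int(\bar I)$. The remaining task is to upgrade this to the asserted PL-statement and to read off $\bar P = P$ and $\bar A = s \cdot A$ from the upgrade. The decisive hypothesis is the non-cyclicity of $P$: as a non-cyclic subgroup of $\R^\times_{>0}$, $P$ is automatically dense there, which converts the discrete equivariance conditions at the group-theoretic level into continuous functional equations.

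The heart of the argument is a local analysis at a fixed point $t_0 \in A \cap \Int(I)$. Using Theorem~\ref{TheoremA} and the transitivity properties of $B = B(I;A,P)$, one constructs, for each $p \in P$, an element $g_p \in B' \subseteq G$ (arranged as a commutator of two one-bump elements of $B$) that fixes $t_0$, equals the identity on a left-neighbourhood of $t_0$, and acts as $t \mapsto t_0 + p(t - t_0)$ on a right-neighbourhood of $t_0$. The conjugate $\bar g_p = \varphi \circ g_p \circ \varphi^{-1}$ lies in $\bar G \subseteq G(\bar I;\bar A,\bar P)$, is the identity on a left-neighbourhood of $\varphi(t_0)$, and hence has a well-defined right-slope $\sigma(p) \in \bar P$ at $\varphi(t_0)$. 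Setting $\phi(u) := \varphi(t_0 + u) - \varphi(t_0)$, the intertwining $\varphi \circ g_p = \bar g_p \circ \varphi$ yields the functional equation
\[
\phi(p\,u) = \sigma(p)\,\phi(u)\qquad\text{for } u>0 \text{ sufficiently small.}
\]
The map $\sigma\colon P \to \bar P$ is a group homomorphism, and because $P$ is dense in $\R^\times_{>0}$ and $\phi$ is continuous and strictly monotone, $\sigma$ extends to a continuous homomorphism $\R^\times_{>0} \to \R^\times_{>0}$, hence $\sigma(p) = p^r$ for some $r > 0$. Substituting back yields $\phi(u) = c\,u^r$ on an interval $(0,\eta)$ for some $c > 0$, and a symmetric analysis handles a left-neighbourhood of $t_0$.

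To force $r = 1$, repeat the analysis at a nearby $t_1 = t_0 + v$ with $v \in A$, $0 < v < \eta$, obtaining $\varphi(t_1 + u) - \varphi(t_1) = c'\,u^{r'}$ for small $u > 0$. Matching this with the expression $\varphi(t_0) + c(v+u)^r$ inherited from $t_0$ yields the identity $c(v+u)^r - c\,v^r = c'\,u^{r'}$ valid for all small $u > 0$; a Taylor expansion of the left-hand side around $u = 0$ forces first $r' = 1$ and then $r(r-1) = 0$, i.e., $r = 1$. Consequently $\varphi$ is affine on a neighbourhood of every point of the dense set $A \cap \Int(I)$, so the break set $\Sigma$ of $\varphi$ is closed and nowhere dense in $\Int(I)$, and on each component $J$ of its complement $\varphi$ is an affine map $t \mapsto s_J\,t + b_J$. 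Discreteness of $\Sigma$ is obtained by contradiction: an accumulation of breaks of $\varphi$ would, translated via $\varphi \circ g = \bar g \circ \varphi$ for a $g \in G$ affine near the accumulation point, produce infinitely many breaks of $\bar g \in \bar G$, contradicting the finite-break condition imposed by $G(\bar I;\bar A,\bar P)$. Finally, fixing any one affine component $J_0$ and writing $s := s_{J_0}$, the identity $\bar g = \varphi \circ g \circ \varphi^{-1}$ applied to $g \in G$ of slope $p \in P$ on $J_0$ and on an adjacent component $J$ yields simultaneously $p \in \bar P$ and $s_J / s \in P$, so all slopes of $\varphi$ lie in $s \cdot P$ and $P \subseteq \bar P$; symmetry gives $\bar P = P$, and $\bar A = \varphi(A \cap \Int(I)) = s \cdot A$. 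The principal obstacle is the rigidity step that fuses the local power-function form with the matching at neighbouring base points to force $r = 1$, supported by a careful construction of the local elements $g_p$ inside the derived subgroup $B'$ rather than merely in $B$.
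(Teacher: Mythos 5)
Your proof follows essentially the same route as the paper's: Theorem \ref{TheoremE04} supplies $\varphi$; the local functional equation $\varphi(a+pu)=\varphi(a)+\sigma(p)\bigl(\varphi(a+u)-\varphi(a)\bigr)$ with $\sigma(p)=p^{r}$ is exactly the content of Proposition \ref{PropositionE8}; density of the non-cyclic $P$ turns $\varphi$ into a power function on a one-sided neighbourhood of $a$; and the propagation of local affineness by conjugating with elements of $B'$ is Proposition \ref{PropositionE9}. Your two-point Taylor matching to force $r=1$ is a mild variant of the paper's observation that the power function is differentiable on the open interval, after which Proposition \ref{PropositionE9} extracts $r=1$ from the existence of a one-sided derivative at a point of $A$.

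One step needs tightening: in the discreteness argument, ``a $g\in G$ affine near the accumulation point'' is not enough ($g=\id$ qualifies and yields no contradiction), and without further care the breaks of $\varphi$ at $t$ could cancel against breaks of $\varphi$ at $g(t)$ in the product formula for the one-sided slopes of $\bar g=\varphi\circ g\circ\varphi^{-1}$. You must use the transitivity of $B'$ (Remark \ref{remark:5.3}) to choose $g$ carrying a whole neighbourhood of the accumulation point into an interval on which $\varphi$ is already known to be affine; then $\varphi=\alpha(g)^{-1}\circ\varphi\circ g$ exhibits $\varphi$ as finitary PL near that point. This is precisely how Proposition \ref{PropositionE9} proceeds, and the same conjugation simultaneously yields the slope coset $s\cdot P$, $\bar P=P$ and $\bar A=s\cdot A$ (for the last equality note that Theorem \ref{TheoremE04} only gives $\varphi(A\cap\Int(I))=\bar A\cap\Int(\bar I)$; one still multiplies by elements of $P=\bar P$ to pass from this to $\bar A=s\cdot A$).
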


In the preceding theorem, 
$G$  is allowed to be $B(I;A,P)$ and similarly for $\bar{G}$.
Then the PL-homeomorphism $\varphi$ can well have infinitely many breaks.
If $G$ is all of $G(I;A,P)$ sharper conclusions can be drawn:
\begin{supplementP}[Supplement \ref{SupplementE11New}]
\label{supplement:SupplementE11New}
\index{Group G(R;A,P)@Group $G(\R;A,P)$!isomorphisms}%
\index{Group G([0,infty[;A,P)@Group $G([0, \infty[\;;A,P)$!isomorphisms}%
\index{Group G([a,c];A,P)@Group $G([a,c];A,P)$!isomorphisms}%
\index{Automorphisms of G(R;A,P)@Automorphisms of $G(\R;A,P)$!properties}%
\index{Automorphisms of G([0,infty[;A,P)@Automorphisms of $G([0, \infty[\;;A,P)$!properties}%
\index{Automorphisms of G([a,c];A,P)@Automorphisms of $G([a,c];A,P)$!properties}%
Assume that $P$ is not cyclic and that $\alpha$ is an isomorphism of $G(I;A, P)$ 
onto a subgroup $\bar{G}$ of $G(\bar{I}; \bar{A}, \bar{P})$
that contains the derived group of $B(\bar{I}; \bar{A}, \bar{P})$.
Let $\varphi \colon \Int(I) \iso \Int(\bar{I})$ 
be the unique homeomorphism inducing $\alpha$.
Assume, furthermore,
that $I$ is the line $\R$ or a half line $[a, \infty[$ with $a \in A$ 
or a compact interval with endpoints in $A$,
and that the intervals $I$and $\bar{I}$ are similar.

Then $\varphi$ is a finitary PL-homeomorphism and $\bar{G} = G(\bar{I}; \bar{A}, \bar{P})$.
\end{supplementP}

This supplement allows one to work out the automorphism group of 
$G =G(I;A,P)$.
The result involves a homomorphism $\eta$
which is obtained as follows:
by part (ii) in Theorem \ref{thm:TheoremE10},
every automorphism $\alpha$ of $G$ gives rise to a well-defined coset 
$ s \cdot P$ of $\R^\times_{>0}/P$.
Since $s \cdot A = A$, multiplication by  $s$ is an automorphism of $A$. 
These automorphisms of $A$ form the group
\begin{equation}
\label{eqP:Definition-Aut(A)}
\index{Group Aut(A)@Group $\Aut(A)$!definition|textbf}%
\Aut(A) = \{ s \in \R^\times \mid s \cdot A = A\}.
\end{equation}
The assignment $\alpha \mapsto \varphi \mapsto s \cdot P$ 
then gives rise to the homomorphism of groups
\begin{equation}
\label{eq:Definition-eta}
\index{Homomorphism!07-eta@$\eta$}%
\eta \colon \Aut G  \to \Aut(A)/P.
\end{equation}
The automorphism group $\Aut G $ of the group $G = G(I;A,P)$ 
can now be described like this:
\begin{crlP}[Corollary \ref{CorollaryE13}]
\label{crl:CorollaryE13-Preface}
\index{Automorphism group of G(R;A,P)@Automorphism group of $G(\R;A,P)$!description}%
\index{Automorphism group of G([0,infty[;A,P)@Automorphism group of $G([0, \infty[\;;A,P)$!description}%
\index{Automorphism group of G([a,c];A,P)@Automorphism group of $G([a,c];A,P)$!description}%
\index{Homomorphism!07-eta@$\eta$}%
\index{Group Aut(A)@Group $\Aut(A)$!significance}%
\index{Subgroup Auto(A)@Subgroup $\Aut_o(A)$!significance}%
\index{Subgroup Qb@Subgroup $Q_b$!definition|textbf}%
\index{Subgroup Qb@Subgroup $Q_b$!significance}%
Set $G = G(I;A,P)$.
The kernel of $\eta$ is the subgroup $\Inn G $ of inner automorphisms of $G$;
its image depends on $I$:  
for $I = \R$ it is $\Aut(A)/P$,
for $I$ the half line $[0, \infty[$ it is  $\Aut_o(A)/P$ 
where $\Aut_o(A) = \Aut(A) \cap \R^\times_{>0}$;
if, finally, $I = [0, b]$ with $b \in A$, 
the image of $\eta$ depends on $b$ and is 
\begin{equation}
\label{eq:Definition-Qb}
Q_b/P \quad \text{with} \quad Q_b =  \left\{ s \in \Aut(A)\mid (|s| -1) \cdot b \in IP \cdot A \right\}.
\end{equation}
\end{crlP}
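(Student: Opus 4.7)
The plan is to invoke Supplement \ref{supplement:SupplementE11New}: for every $\alpha \in \Aut G$ it supplies a unique finitary PL-homeomorphism $\varphi \colon \Int(I) \iso \Int(I)$ inducing $\alpha$ by conjugation, together with a non-zero real $s$ with $s \cdot A = A$ such that the slopes of $\varphi$ lie in the coset $s \cdot P$ and its breaks lie in $A$. By construction $\eta(\alpha) = s \cdot P$ in $\Aut(A)/P$, so the analysis splits into identifying $\ker \eta$ with $\Inn G$ and computing the image in each of the three cases.

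For the kernel, any inner automorphism $\iota_g$ with $g \in G$ is induced by $g$ itself, whose slopes belong to $P$, so $\eta(\iota_g) = P$ is trivial. Conversely, if $\eta(\alpha) = P$, then the representative $s$ lies in $P \subset \R^\times_{>0}$, and so $\varphi$ is an orientation-preserving, finitary PL-homeomorphism of $\Int(I)$ with slopes in $P$ and breaks in $A$. Its leftmost and rightmost linear pieces must pass through the corresponding finite endpoints of $I$, so $\varphi$ extends continuously to $I$ fixing those endpoints; extending further by the identity outside $I$ yields an element of $G(I;A,P) = G$, showing $\alpha = \iota_\varphi$ is inner.

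To compute the image in the two unbounded cases I would use affine dilations. For $I = \R$ and any $s \in \Aut(A)$, the map $\varphi(t) = s \cdot t$ is a homeomorphism of $\R$; conjugation by $\varphi$ preserves slopes (by the chain rule) and carries breaks of $g$ into $s \cdot A = A$, so it realises every coset in $\Aut(A)/P$. For $I = [0,\infty[$ the inducing $\varphi$ must preserve the boundary point $0$ and is therefore orientation-preserving, whence $s > 0$ and $s \in \Aut_o(A)$; conversely, $\varphi(t) = s t$ with $s \in \Aut_o(A)$ realises every coset in $\Aut_o(A)/P$.

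The compact case $I = [0,b]$ is the main obstacle, since no affine dilation preserves $[0,b]$ and Theorem \ref{thm:Constructing-homeos} must enter. For surjectivity of $\eta$ onto $Q_b/P$, given $s \in Q_b$ with $s > 0$, I would use that $IP \cdot A$ is stable under multiplication by $\Aut(A)$ (indeed $s \cdot (p-1) a = (p-1)(s a)$ and $sA = A$) to rewrite the condition $(s-1) b \in IP \cdot A$ as $b - b/s \in IP \cdot A$; Theorem \ref{thm:Constructing-homeos} then supplies a PL-homeomorphism $\psi \colon [0, b] \iso [0, b/s]$ with slopes in $P$ and breaks in $A$, and $\varphi(t) = s \cdot \psi(t)$ is an orientation-preserving PL-homeomorphism of $[0, b]$ with slopes in $s \cdot P$ and breaks in $A$. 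For $s < 0$ in $Q_b$, I would first build $\tilde\varphi$ for $|s|$ as above and then set $\varphi(t) = b - \tilde\varphi(t)$, yielding an orientation-reversing PL-homeomorphism of $[0, b]$ with slopes in $-|s| \cdot P = s \cdot P$ and breaks in $A$. In both cases conjugation by $\varphi$ defines an automorphism $\alpha$ of $G$ with $\eta(\alpha) = s \cdot P$. Conversely, decomposing a given inducing $\varphi$ into linear pieces of lengths $\ell_i \in A$ and slopes $s p_i$, the identities $\sum_i \ell_i = b$ and $\sum_i s p_i \ell_i = \pm b$ (sign according to the orientation of $\varphi$) yield, after division by $s$, $\sum_i (1 - p_i) \ell_i = b - b/|s|$, whence $b - b/|s| \in IP \cdot A$ and therefore $(|s|-1) b \in IP \cdot A$, i.e., $s \in Q_b$.
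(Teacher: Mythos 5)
Your proposal is correct and follows essentially the same route as the paper: Supplement \ref{SupplementE11New} supplies the finitary PL-homeomorphism $\varphi$ with slopes in a coset $s\cdot P$, the kernel is identified with $\Inn G$ by extending $\varphi$ to an element of $G$ when $s\in P$, homotheties (resp.\ affine maps fixing the endpoint) realise the image for $I=\R$ (resp.\ a half line), and Theorem \ref{TheoremA} governs the compact case. The only cosmetic difference is in case $I=[0,b]$: the paper factors $\varphi$ as a homothety followed by an element of $G(\R;A,P)$ mapping $[0,sb]$ onto $[0,b]$ and quotes Theorem \ref{TheoremA} once, whereas you unroll the same slope--length computation by hand; both amount to the identity $\sum_i(1-p_i)\ell_i = b - b/|s|$.
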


The answer given by Corollary \ref{crl:CorollaryE13-Preface} is very satisfactory.
To see this, 
assume first that $A \neq \{0\}$ is a subgroup of $\R_{\add}$ 
for which the group 
\begin{equation}
\label{eqP:Definition-Auto(A)}
\Aut_o(A) = \{s \in \R^\times_{>0} \mid s \cdot A = A \}
\end{equation} 
is not cyclic.
If $I = \R$,
the outer automorphism group of $G = G(I; A, \Aut_o(A))$ is of order 2 
and $\Aut G$ consists of the inner automorphisms of $G$
 and of the compositions of inner automorphisms 
 with the the automorphism induced by the reflection $t \mapsto -t$.
 The situation is similar if $I = [0, b]$ with $b \in A$: 
 then $\Inn G $ has again index 2 in $\Aut G$ 
 and the automorphism by conjugation by the reflection $t \mapsto b - t$ 
 represents the coset $\Aut G  \smallsetminus \Inn G $.
 If, finally, $I$ is a half line with endpoint in $A$, 
 then $\Aut G = \Inn G $ 
 and the group $G(I;A; \Aut_o(A))$ is complete.
 
 The assumptions that $P$ is all of $\Aut_o(A)$ 
 and that it is not cyclic hold, in particular, 
for $\PL_o(\R) = G(\R; \R, \R^\times_{>0})$, a group considered in Corollary 31 of \cite{McC78b}.
Corollary \ref{crl:CorollaryE13-Preface}
 thus puts McCleary's result into a larger context.
 \index{McCleary, S. H.}%

There is a second aspect of Corollary \ref{crl:CorollaryE13-Preface} that deserves mention,
namely the fact that, given parameters $I$, $A$ and $P$ as in the corollary,
$\Aut G(I;A,P) $ has a subgroup $\Aut_o G(I;A,P)$ of index at most 2,
which is a subgroup of the group $G(I;A, \Aut_o(A))$.
Here $\Aut_o G(I;A,P)$ denotes the subgroup of \emph{increasing} automorphisms,
the automorphisms induced by orientation preserving auto-homeo\-morphisms.
\label{notation:Auto-G(I;A,P)}%
\index{Group Auto G(I;A,P)@Group $\Aut_o G(I;A,P)$!definition|textbf}%
Some consequences of this fact are spelled out by Corollary
\ref{crl:Explicit-description-of-Aut(G)}.

Supplement \ref{SupplementE11New} allows one also to determine 
when two intervals $[0,b_1]$ and $[0,b_2]$ lead to isomorphic groups.
The answer depends once more on the automorphism group of $A$, 
more precisely on its index 2 subgroup  $\Aut_o(A)$.
\begin{prpP}[Corollary \ref{CorollaryE12}]
\label{preface:CorollaryE12}
\index{Subgroup Auto(A)@Subgroup $\Aut_o(A)$!significance}%
\index{Group G([a,c];A,P)@Group $G([a,c];A,P)$!isomorphisms}%
\index{Group G([a,c];A,P)@Group $G([a,c];A,P)$!dependence on [a,c]@dependence on $[a,c]$}%
Assume $P$ is not cyclic and $b_1$, $b_2$ are positive elements of $A$.
Then the groups $G([0,b_1];A,P)$ and $G([0,b_2];A,P)$ are isomorphic if, and only if,
$b_1 + IP \cdot A$ and $b_2 + IP \cdot A$ 
lie in the same orbit of $\Aut_o(A)/P$.
\end{prpP}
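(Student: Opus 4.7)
The plan is to prove the two directions separately, using Theorem \ref{thm:Constructing-homeos} to realise isomorphisms in the easy direction and the rigidity provided by Supplement \ref{SupplementE11New} (backed by Theorem \ref{thm:TheoremE10}) to extract the invariant in the other direction.

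For the \emph{sufficiency} I would argue as follows. Suppose $b_1 + IP \cdot A$ and $b_2 + IP \cdot A$ lie in the same $\Aut_o(A)/P$-orbit, and pick a representative $s \in \Aut_o(A)$ with $s \cdot b_1 - b_2 \in IP \cdot A$. Since $s \cdot A = A$, the homothety $t \mapsto s\cdot t$ carries $G([0,b_1];A,P)$ onto $G([0,s b_1];A,P)$ by conjugation. Theorem \ref{thm:Constructing-homeos} then furnishes a PL-homeomorphism $g \in \PL_o(\R)$ with slopes in $P$ and breaks in $A$ mapping $[0,s b_1]$ onto $[0,b_2]$, and conjugation by $g$ completes the chain $G([0,b_1];A,P) \iso G([0,s b_1];A,P) \iso G([0,b_2];A,P)$.

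For the \emph{necessity}, let $\alpha\colon G([0,b_1];A,P) \iso G([0,b_2];A,P)$ be an isomorphism. Since $P$ is not cyclic and both $[0,b_i]$ are compact intervals with endpoints in $A$, Supplement \ref{SupplementE11New} applies and produces a \emph{finitary} PL-homeomorphism $\varphi$ on the interiors (extending by continuity to $\varphi\colon [0,b_1] \iso [0,b_2]$) that induces $\alpha$ by conjugation; by Theorem \ref{thm:TheoremE10} the slopes of $\varphi$ lie in a single coset $s \cdot P$ with $s \cdot A = A$, and the breaks lie in $A$. Composing $\alpha$, if necessary, with the reflection automorphism $t \mapsto b_2 - t$ of $G([0,b_2];A,P)$, I may assume $\varphi$ is orientation-preserving, so $\varphi(0) = 0$, $\varphi(b_1) = b_2$, and $s \in \Aut_o(A)$.

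Writing the (finitely many) breaks of $\varphi$ as $0 = a_0 < a_1 < \cdots < a_n = b_1$ with $a_i \in A$ and the slope on $[a_{i-1}, a_i]$ as $sp_i$ with $p_i \in P$, a telescoping identity gives
\begin{equation*}
b_2 - s\cdot b_1 \;=\; \sum_{i=1}^n sp_i(a_i - a_{i-1}) \;-\; s\sum_{i=1}^n (a_i - a_{i-1}) \;=\; \sum_{i=1}^n (p_i - 1) \cdot s(a_i - a_{i-1}).
\end{equation*}
Because $s(a_i - a_{i-1}) \in s\cdot A = A$, the right-hand side lies in $IP \cdot A$; hence $s \cdot (b_1 + IP \cdot A) = b_2 + IP \cdot A$, i.e., the two cosets belong to the same $\Aut_o(A)/P$-orbit (the $P$-action descending trivially to $A/(IP \cdot A)$). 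The main obstacle is really the appeal to Supplement \ref{SupplementE11New}: once one knows $\varphi$ has only finitely many breaks and slopes in a fixed coset $s \cdot P$ with $s A = A$, both the construction in the sufficiency direction and the telescoping computation above are essentially automatic.
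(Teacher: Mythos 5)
Your proof is correct and follows essentially the same route as the paper: reduce to an increasing isomorphism via the reflection, invoke Supplement \ref{SupplementE11New} and Theorem \ref{TheoremE10} to get a finitary PL-homeomorphism with slopes in a coset $s\cdot P$ and $s\cdot A=A$, and then detect the class of $s\cdot b_1-b_2$ modulo $IP\cdot A$. The only cosmetic difference is that where you carry out the telescoping sum by hand, the paper factors $\varphi$ as the homothety $s\cdot\id$ followed by an element of $G(\R;A,P)$ and cites Theorem \ref{TheoremA} for both directions — your computation is just the proof of the easy half of that theorem inlined.
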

%
\subsection*{Case 2: $P$  is cyclic and $I$ is unbounded}
\label{ssec:P-cyclic-I-unbounded}
If $P$ is cyclic,
the homeomorphisms 
$
\varphi \colon \Int (I) \iso  \Int (\bar{I})
$
inducing isomorphisms $\alpha \colon G \iso \bar{G}$
are more diverse than those arising for non-cyclic groups $P$.
First of all, $\varphi$ can be an infinitary PL-homeomorphism 
even in the case of the groups   $G = G([0,b];A,P)$ and $\bar{G} = G([0,\bar{b}]; \bar{A}, \bar{P})$ 
(see Theorem \ref{TheoremE07}).
\index{Theorem \ref{TheoremE07}!consequences}%
Secondly, there exist embeddings
\[
\index{Group G([a,c];A,P)@Group $G([a,c];A,P)$!isomorphisms}%
\index{Group G([0,infty[;A,P)@Group $G([0, \infty[\;;A,P)$!isomorphisms}%
G([0,b];A,P) \mono G([0,\infty[\;; A, P) \mono G(\R;A,P),
\]
induced by infinitary PL-homeomorphisms, 
that map the corresponding subgroups of bounded homeomorphisms isomorphically 
onto each other  (see section \ref{ssec:18.2}).

In spite of these facts,
isomorphisms  $\alpha \colon G(I;A,P) \iso G(\bar{I}; \bar{A}, \bar{P})$ 
where $I$ is the line or a half line are still fairly manageable.
Indeed,
most of the conclusions of Theorem \ref{thm:TheoremE10} 
and Supplement \ref{supplement:SupplementE11New} 
continue to be valid:
\begin{thmP}
\label{thm:TheoremE14}
\index{Group G(R;A,P)@Group $G(\R;A,P)$!isomorphisms}%
\index{Group G([0,infty[;A,P)@Group $G([0, \infty[\;;A,P)$!isomorphisms}%
Set $G = G(I;A,P)$, $\bar{G} = G(\bar{I};\bar{A},\bar{P})$
and assume there exists an isomorphism $\alpha \colon G \iso \bar{G}$.
If $I$ and $\bar{I}$ are both the line $\R$ or both the half line $[0, \infty[$
the following assertions hold:
\begin{enumerate}[(i)]
\item $\bar{P} = P$;
\item there exist a non-zero real $s$ and a unique PL-homeomorphism $\varphi \colon \Int{I} \iso \Int(\bar{I})$ 
so that $\bar{A} = s \cdot A$, that $\alpha$ is induced by conjugation by $\varphi$ and $\varphi$ has slopes in the coset $s \cdot P$. 
\item If $I = \R$ then $\varphi$ has only finitely many singularities; 
if $I = [0,\infty]$ its singularities may be infinite in number,
but they can only accumulate in 0.
\end{enumerate}
\end{thmP}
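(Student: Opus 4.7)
The plan is to start from Theorem~\ref{thm:TheoremE04-intro}, which supplies the unique homeomorphism $\varphi \colon \Int(I) \iso \Int(\bar{I})$ inducing $\alpha$ by conjugation and which already sends $A \cap \Int(I)$ bijectively onto $\bar{A} \cap \Int(\bar{I})$. When $P$ is not cyclic, the intervals $I$ and $\bar{I}$ are similar (each is $\R$, or each is a half line) and Theorem~\ref{TheoremE10} together with Supplement~\ref{SupplementE11New} yield assertions (i)--(iii) directly. So I may assume $P = \gp(p)$ with $p>1$ and focus on this cyclic case.

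The crux is to prove that $\varphi$ is piecewise linear with slopes in a single coset of $\bar{P}$. Fix $g \in G$ that is affine of slope $p^{k}$ on an open interval $J \subset \Int(I)$. The conjugation identity $\alpha(g) \circ \varphi = \varphi \circ g$, differentiated at any $x \in J$ where both $\varphi$ and $\varphi \circ g$ are differentiable, gives
\[
\varphi'(g(x)) \;=\; \frac{\alpha(g)'(\varphi(x))}{p^{\,k}}\,\varphi'(x),
\]
so $\varphi'(g(x))/\varphi'(x) \in \bar{P} \cdot p^{-k}$ for almost every $x \in J$. Exploiting the rich supply of affine pieces provided by Theorem~\ref{TheoremA} --- in particular the multiple transitivity of $B(I;A,P)$ on its orbits in $A \cap \Int(I)$ --- one can move any two points of $\Int(I)$ onto one another along carefully chosen affine segments. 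A density/covering argument then forces $\varphi'$ to be locally constant off a closed set of exceptional points, hence $\varphi$ is piecewise linear and all its slopes lie in a single coset $s\cdot \bar{P}$ for some $s>0$.

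Assertions (i) and (ii) follow. Indeed, once $\varphi$ is PL with slopes in $s\bar{P}$, conjugating an element of $G$ with slope $p^{k}$ on an interval by $\varphi$ produces slopes of the shape $(s\bar p_2) \cdot p^{k} \cdot (s\bar p_1)^{-1} = p^{k}\,\bar p_2\,\bar p_1^{-1} \in \bar{P}$, so $P \subseteq \bar{P}$. Applying the same reasoning to $\alpha^{-1}$ --- whose inducing homeomorphism is $\varphi^{-1}$ by the uniqueness clause of Theorem~\ref{TheoremE04} --- yields the reverse inclusion. Combining the PL structure of $\varphi$ with $\varphi(A \cap \Int(I)) = \bar{A} \cap \Int(\bar{I})$ (compare Theorem~\ref{thm:TheoremE04-intro}) then reads off $\bar{A} = s \cdot A$.

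Finally, for (iii) I exploit the existence of \emph{globally} affine elements in $G$ when $I$ is unbounded. If $I = \R$, then every map $\tau \colon t \mapsto p\,t + a$ with $a \in A$ belongs to $G$; its conjugate $\alpha(\tau) = \varphi \circ \tau \circ \varphi^{-1}$ lies in $\bar{G}$ and hence has only finitely many breaks in $\R$. A direct chain-rule computation shows that the break set of $\alpha(\tau)$ contains $\varphi$ applied to the union of the break set of $\varphi$ and its $\tau$-translate; iterating $\tau$ and $\tau^{-1}$ drives any potential accumulation of breaks of $\varphi$ to infinity, and then to a contradiction with the finiteness of $\alpha(\tau)$'s break set. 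If $I = [0,\infty[\,$, the dilation $\tau \colon t \mapsto p\,t$ still lies in $G$; the same computation shows the breaks of $\varphi$ are finite in number on every compact subinterval of $\,]0,\infty[\,$, so they can only accumulate at $0$. The main obstacle is the passage in the second paragraph from the pointwise identity $\alpha(g)\circ\varphi = \varphi\circ g$ to genuine piecewise linearity of $\varphi$: because $P$ is cyclic, the argument must admit infinitely many breaks for $\varphi$, so one cannot quote the non-cyclic conclusion of Theorem~\ref{TheoremE10} and must instead extract the local PL structure directly from the transitivity results of Chapter~\ref{chap:B}.
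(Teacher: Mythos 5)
There is a genuine gap at the heart of your second paragraph. You differentiate the identity $\alpha(g)\circ\varphi=\varphi\circ g$ "at any $x\in J$ where both $\varphi$ and $\varphi\circ g$ are differentiable" and then invoke an unspecified "density/covering argument" to conclude that $\varphi'$ is locally constant. But at this stage $\varphi$ is only known to be an increasing homeomorphism: it is differentiable almost everywhere, yet its derivative may vanish at every point of differentiability (as for singular homeomorphisms of Minkowski type), so the relation $\varphi'(g(x))=\alpha(g)'(\varphi(x))\,p^{-k}\,\varphi'(x)$ carries no information until you have first exhibited a single point of $A\cap\Int(I)$ at which $\varphi$ has a \emph{non-zero} one-sided derivative. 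That is precisely the hypothesis of Proposition \ref{PropositionE9}, and producing such a point is the entire content of the hard step; your multiplicative/chain-rule route gives no way to manufacture it. (There is also a minor circularity in your deduction of (i): you use that the slopes of $\varphi$ lie in a single coset $s\cdot\bar P$ to prove $P=\bar P$, but that coset statement is part of what the PL step must deliver.)

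The paper gets around this by working additively at infinity rather than multiplicatively on slopes. The derived group of $\im\rho$ consists of translations with amplitudes running over the dense subgroup $IP^2\cdot A$ (or $IP\cdot A$ when $I=\R$); conjugation by $\varphi$ carries these to translations near $+\infty$ in $\bar G$, and the induced map on amplitudes is a strictly increasing isomorphism between dense subgroups of $\R_{\add}$, hence extends to multiplication by a positive real $s_2$. This yields the functional equation $\varphi(t+b)=\varphi(t)+s_2\,b$ for all $t\geq t_*$ and all positive $b$ in a dense subgroup, and continuity alone then forces $\varphi$ to be \emph{affine} on a half line $[t_*,\infty[$ --- no differentiability is assumed anywhere. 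Proposition \ref{PropositionE9} then propagates this local affineness across $\Int(I)$ using the double transitivity of $B'$ (Remark \ref{remark:5.3}, not Chapter \ref{chap:B}), giving $\bar P=P$, $\bar A=s\cdot A$, slopes in $s\cdot P$, and the finiteness statements of (iii); for $I=\R$ one applies the half-line argument at both ends to get finitely many singularities overall. Your claim in (iii) that the break set of $\alpha(\tau)$ "contains" the image of the break set of $\varphi$ is also not safe as stated, since breaks of $\varphi$ and of $\varphi^{-1}$ can cancel in the composition; the paper's arrangement avoids having to argue this.
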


Theorem \ref{thm:TheoremE14} 
allows one to work out the automorphism group of $G(I;A,P)$  
with $I$ a line or half line.
If $I = \R$, the automorphism group is as described by Corollary \ref{crl:CorollaryE13-Preface}: 
it is an extension of $G(\R;A,P)$ by the abelian group $\Aut(A)/P$ 
(see section \ref{ssec:19.2a}).
\index{Automorphism group of G(R;A,P)@Automorphism group of $G(\R;A,P)$!description}%
If $I$ is a half line,
the automorphism group of $G =  G(I;A,P)$ is more complicated on account of the fact 
that the PL-homeomorphisms inducing automorphisms of $G$ may have infinitely many singularities 
that accumulate in 0. 
\index{Automorphism group of G([0,infty[;A,P)@Automorphism group of $G([0, \infty[\;;A,P)$!description}%
All the same, 
the outer automorphism group of $G$ admits of a concrete description; 
details can be found in section \ref{sssec:19.2b}
(see, in particular, Proposition \ref{crl:PropositionE19}).
%
\subsection*{Case 3:  $P$ is cyclic and  $I$ is compact}
\label{ssec:Cyclic-P-bounded-interval}
We are left with the case 
where $P$ is cyclic and $I$ is a compact interval, 
say  $I = [0,b]$ with  $b \in A$.
The methods developed in the memoir \cite{BiSt85} are then not sufficiently powerful
to determine the automorphism group of $G([0,b]; A, P)$;
in particular, 
they give no answer to the question 
whether every auto-homeomorphism $\varphi \colon ]0,b[\, \iso  \,]0, b[$ 
inducing an automorphism of $G([0,b]; A, P)$ is necessarily piecewise linear. 
They permit one, however, to solve some easier problems.
\subsubsection*{Dependence on $b$.}
\label{sssec:P-cyclic-dependence-in-b}
If $P$ is cyclic all groups in the family 
\[
\{G([0,b];A,P) \mid b \in A\, \cap \; ]0, \infty[ \, \}
\]
are isomorphic to each other (see Theorem \ref{TheoremE07}; 
\index{Group G([a,c];A,P)@Group $G([a,c];A,P)$!isomorphisms}%
\index{Theorem \ref{TheoremE07}!consequences}%
the proof relies on the construction of infinitary PL-homeomorphisms).
This finding is in contrast with the situation holding for groups with non-cyclic $P$ 
(see Proposition \ref{preface:CorollaryE12}).

Replacing, if need be, the given submodule $A$ by a multiple $s \cdot A$ with $s \in \R_{>0}$,
one can thus reduce to the case where $1 \in A$; 
in principle,
it suffices therefore to consider the unit interval $[0,1]$.
%
\subsubsection*{Isomorphisms between subgroups containing $B$.}
\label{sssec:Isomorphisms-between-groups-containing-B}
%
A surprising consequence of Supplement \ref{supplement:SupplementE11New} is this:
an injective endomorphism $\mu$ of the group $G(I; A, P)$ is bijective 
whenever its image contains the derived group of $B = B(I; A, P)$ 
and $P$ is \emph{not} cyclic.
The analogous statement for cyclic $P$ is false.

Indeed,
for every integer $m \geq 1$ 
there exists an infinitary PL-homeomorphism 
$\varphi_m \colon ]0, b[\, \iso \,]0, b[$
which induces a  monomorphism 
$\mu_m \colon G = G([0,b];A,P) \mono G$  whose image has index $m$; 
the image consists of all PL-homeomorphisms in $G$
whose \emph{leftmost slope} is a power of $p^m$;
here $p$ denotes the generator of $P$ with $p > 1$.
\label{Endomomorphism-mu-m}%
\index{Endomorphism mum@Endomorphism $\mu_m$!properties}%
Similarly,
there exists for every $n \geq 1$ an infinitary PL-homeomorphism $\psi_n \colon ]0, b]\, \iso \,]0, b[$
which induces a  monomorphism 
$\nu_n \colon G = G([0,b];A,P) \mono G$ with image of index $n$ 
that consists of all PL-homeomorphisms in $G$ 
whose \emph{rightmost slope} is a power of $p^n$;
see section 18.5.
\label{Endomorphism nun}
\index{Endomorphism nun@Endomorphism $\nu_n$!properties}%

The compositions $\nu_n \circ \mu_m \colon G \mono G$ have images 
with index $m \cdot n$ that contain the bounded subgroup $B = B([0,b];A,P)$.
They permit one to work out
when two subgroups of $G([0,b];A,P)$ with finite indices and containing $B([0,b];A,P)$ 
are isomorphic; see Theorem \ref{thm:Isomorphism-types-class-II}.
It follows, in particular, 
that a subgroup of finite index and containing $B$ is isomorphic to  $G([0,b];A,P)$  if, and only if,
it is the image of one of the monomorphisms 
$\nu_n \circ \mu_m \colon G \mono G$. 
\begin{remarkP}
\label{remark:Classification-subgroups}
\index{Endomorphism mum@Endomorphism $\mu_m$!applications}%
\index{Endomorphism nun@Endomorphism $\nu_n$!applications}%
\index{Thompson's group F@Thompson's group $F$!subgroups of finite index}%
\index{Group G([a,c];A,P)@Group $G([a,c];A,P)$!classification of subgroups with finite index}%
If  $I = [0,1]$, $A = \Z[1/2]$ and $P = \gp(2)$
the group $G(I;A,P)$ is (isomorphic to) Thompson's group $F$ 
and each of its  subgroups with finite index contains $B$,
for  $B = F'$ is an infinite, minimal normal subgroup.
The results reported in the previous section provide therefore a classification of the subgroups of $F$ with finite index.
\footnote{see section \ref{sssec:Notes-ChapterE-Bleak-Wassink} 
for a comment on this classification}
\end{remarkP}
%
\subsubsection*{Automorphism group of $G([0,b]; A, P)$.}
\label{sssec:Automorphism-group-of-G}
%
I come, finally, to the identification of the automorphisms of $G = G([0,b]; A,P)$.
A basic question here is 
whether an auto-homeomorphism $\varphi$ of the open interval $]0,b[$ is piecewise linear
if it induces an automorphism of $G$. 
As a preliminary step towards an answer,
Bieri and Strebel introduce the subgroup
\begin{equation}
\label{eqP:Aut-PL}
\index{Group Autfr G([0,1];A,P)@Group $\Autfr G([0,1];A,P)$!definition|textbf}%
\Aut_{\PL} G = \{\alpha \in \Aut G \mid \alpha \text{ is induced by a PL-homeomorphism } \varphi \}
\end{equation}
and describe the subgroup $\Aut_{\PL} G/\Inn(G)$ of the outer automorphism group of $G$
(see Proposition \ref{crl:PropositionE20New-part-II}).
This subgroup contains a copy of the group $G$ and is thus far from being abelian,
in contrast to the outer automorphism group of a group with non-cyclic group $P$
considered in Corollary \ref{crl:CorollaryE13-Preface}.
%
\section*{Acknowledgments}
\label{sec:Concluding-remarks}
I started to revise the memoir at the beginning of 2014,
but I have been incited to do so much earlier.
Robert Bieri encouraged me for years to update 
and publish at least part of the memoir,
foremost Chapter E on isomorphisms and automorphisms,
\index{Bieri, R.}%
and Vlad Sergiescu,
during a conference in Les Diablerets in 2013,
urged me anew to bring out the memoir.
\index{Sergiescu, V.}%

It is a pleasure to acknowledge the help 
I have received from many sides.
I am deeply indebted to Matthew Brin.
\index{Brin, M. G.}%
He furthered the revision in various ways,
by his interest in the project,
by sharing with me his thoughts about the history of the subject, 
and by answering my numerous queries.
Other colleagues assisted me as well.
A discussion with Markus Brodmann led to improvements in Illustration
\ref{illustration:4.3}; 
\index{Brodmann, M.}%
Sean Cleary advanced my comprehension of his papers 
\cite{Cle95} and \cite{Cle00};
\index{Cleary, S.}%
Ross Geoghegan provided me with details about the early history of Thompson's group $F$; 
\index{Geoghegan, R.}%
Jakub Gismatullin pointed out a gap in the proof of Proposition
\ref{prp:Higmans-Theorem1} published in \cite{BiSt14},
\index{Gismatullin, J.}%
and John Groves suggested the elementary proof of Lemma 
\ref{lemma:Quotients-A/IPA-and-IPA/IP2A}. 
\index{Groves, J. R. J.}%
Robert Bieri, finally, has been an understanding correspondent of my many mails, 
\index{Bieri, R.}%
at the beginning of 2014 
when I sorted out the plan of the revision and, later on, 
when a claim of \cite{BiSt85} seemed irreparably flawed.

A number of papers, often quite long ones,  
contain results 
that are based on, or related to, findings of the memoir \cite{BiSt85}.
Many of these connections are described  in the \emph{Notes} 
at the end of this monograph. 
In putting them together, 
I was fortunate to receive advice:
I am grateful to Melanie Stein for answering my queries about her paper \cite{Ste92}, \index{Stein, M.}%
to Matt Brin for supplying me with details about the articles \cite{Bri96} 
and \cite{BrGu98},
\index{Brin, M. G.}%
and to Swjat Gal for his explanation of the context of the paper \cite{GaGi16},
written jointly with Jakub Gismatullin.
\index{Gal, \'{S}. R.}%
\index{Gismatullin, J.}%

The original manuscript was typed by Frau Aquilino in 1985.
In the spring of 2014, then,
Marge Pratt, 
secretary at the Department of Mathematical Sciences in Binghamton,
coded the typescript into \LaTeX.
I thank Marge for her excellent job, 
in particular for the rendition of the many commutative diagrams,
and the agreeable collaboration.
The figures were created with \emph{Mathematica}.

To all these and others who have assisted me I express my gratitude.
\bigskip

\hfill Ralph Strebel

\pagestyle{frontToC}
\def\contentsname{Table of Contents}
\tableofcontents

%
%
\mainmatter
%
\chapterstyle{main}
\pagestyle{main}
\setsecnumdepth{subsubsection}
\renewcommand{\theequation}{\thesection.\arabic{equation}}

\markright{Introduction}
\thispagestyle{plain}
%
\chapter*{Introduction}
\label{chap:Introduction}
\markboth{Introduction}{}
\addtocontents{toc}{\setcounter{tocdepth}{1}}
\addcontentsline{toc}{chapter}{Introduction}
\renewcommand{\thesection}{\arabic{section}}
%
\section{Background}
\label{sec:1}
This paper has its roots in investigations carried out in various areas of mathematics: 
work of R. J. Thompson on logic and his discovery of two infinite simple groups with finite presentations; \index{Thompson, R. J.}%
next research of J. Dydak, P. Freyd, R. Geoghegan, H. M. Hastings and A. Heller 
on unpointed homotopy idempotents
\footnote{see \cite[Section 9.2]{Geo08} for a summary of this research.}
\index{Dydak, J.}%
\index{Freyd, P.}%
\nocite{FrHe79}%
\index{Geoghegan, R.}%
\index{Hastings, H. M.}%
\nocite{HaHe82}%
\index{Heller, A.}%
and the subsequent study of K. S. Brown and R. Geoghegan of infinite dimensional, torsion-free groups with type $\FP_\infty$.
\index{Brown, K. S.}%
Thirdly the result of G. Higman \index{Higman, G.}%
that a group of order preserving, bounded permutations 
of a totally ordered set $\Omega$
has a derived group which is simple and not reduced to the identity,
whenever the group acts 2-fold transitively on $\Omega$,
and a similar result of D. B. A. Epstein 
\index{Epstein, D. B. A.}
about the group of homeomorphisms with bounded support of a connected manifold.
Fourthly, the result, due to S. H. McCleary, 
\index{McCleary, S. H.}%
that under suitable hypotheses each group isomorphism 
$\alpha \colon G \iso \bar{G}$ of two ordered permutation groups 
is induced by conjugation by a homeomorphism $\varphi \colon \Omega \iso \bar{\Omega}$ 
of the underlying totally ordered sets, and similar theorems, 
due to J. V. Whittaker in the case of manifolds and to C. Holland in
the case of lattice-ordered permutation groups. 
\index{Holland, C.}%
\index{Whittaker, J. V.}%
Finally, 
the discovery, made recently by M. G. Brin and C. C. Squier, 
that the group of (finitary) piecewise linear homeomorphisms of the real line has no non-abelian free subgroups
and the introduction of generalized Thompson groups by these authors.
\index{Brin, M. G.}%
\index{Squier, C. C.}%
%
\subsection{Richard Thompson's discovery}
\label{ssec:1.1}
 In 1965, 
 R. J. Thompson discovered  \index{Thompson, R. J.}%
 that the 2-generator 2-relator group
 \begin{equation}
 \label{eq:Finite-presentation-F}
 \index{Thompson's group F@Thompson's group $F$!finite presentation}%
\left\langle 
 x_0, x_1 \mid \act{x_1^{-1} x_0^{-1}} x_1 = \act{x_0^{-2}}x_1,  
 \act{x_1^{-1}x_0^{-2}} x_1 = \act{x_0^{-3}} x_1 
 \right\rangle
 \end{equation}
has some surprising properties. 
Firstly,
 it has an intriguing infinite presentation
\begin{equation}
 \label{eq:Infinite-presentation-F}
  \index{Thompson's group F@Thompson's group $F$!definition|textbf}%
  \index{Thompson's group F@Thompson's group $F$!infinite presentation}%
 \left\langle 
 x_0, x_1, x_2, \ldots  \mid x_i^{-1}x_j x_i= x_{j+1} \text{ for all } i < j \right\rangle
 \end{equation}
 (see \cite{CFP96}, Example 1.1 and Theorem 3.4).
\footnote{We shall stick to the usual convention of analysts for composing functions 
and accordingly use left actions throughout; in particular, $\act{a} b$ is short for $aba^{-1}$.\label{notation:f-circ-g}%
The authors of \cite{CFP96} follow the convention of analysts for composing functions,
but use right action for conjugation.}

Next, 
Thompson observed that each element of \eqref{eq:Infinite-presentation-F}
can be written in a special form, \index{Thompson, R. J.}%
namely
\begin{equation}
\label{eq:1.3}
x_{i_1}x_{i_2} \cdots x_{i_k} \cdot x_{j_\ell}^{-1} \cdots x_{j_2}^{-1} x_{j_1}^{-1}
\end{equation}
with $i_1 \leq i_2 \leq \cdots \leq i_k$ and $j_1 \leq j_2 \leq \cdots \leq j_\ell$ and $i_k \neq j_\ell$.
 \index{Thompson's group F@Thompson's group $F$!properties}

The group, given by the presentation \eqref{eq:Finite-presentation-F}, 
can be realized by PL-homeo\-mor\-phisms, 
\ie{}by piecewise affine homeomorphisms, 
of the unit interval. 
Indeed, sending $x_0$, $x_1$ to the affine interpolations $f_0$, $f_1$ of the assignments depicted below,
gives such a realization (see \cite[Thm.\,3.4]{CFP96}).

\begin{equation}
\label{eq:Generating-elements-f0-f1}
\index{Thompson's group F@Thompson's group $F$!rectangle diagrams}%
\begin{minipage}[c]{8cm}
\psfrag{1}{\hspace*{-1.7mm}  \small  $0$}
\psfrag{2}{  \hspace*{-3.0mm} \small  $\tfrac{1}{2}$}
\psfrag{3}{\hspace*{-1.5mm}  \small  $\tfrac{3}{4}$}
\psfrag{4}{\hspace*{-1.2mm} \small   $1$}
\psfrag{11}{\hspace*{-1.2mm}  \small   $0$}
\psfrag{12}{  \hspace*{-2.0mm} \small   $\tfrac{1}{4}$}
\psfrag{13}{\hspace*{-0.9mm}  \small  $\tfrac{1}{2}$}
\psfrag{14}{\hspace*{-0.8mm}  \small   $1$}
\psfrag{21}{\hspace*{-1mm}  \small   $\tfrac{1}{2}$}
\psfrag{22}{\hspace*{-1.0mm}  \small   $1$}
\psfrag{23}{\hspace*{-1mm}  \small   2}
\psfrag{31}{\hspace*{-0.5mm} \small $0$}
\psfrag{32}{  \hspace*{-1.0mm}\small $\tfrac{1}{2}$}
\psfrag{33}{\hspace*{-1.3mm} \small $\tfrac{3}{4}$}
\psfrag{34}{\hspace*{0.3mm}\small $\tfrac{7}{8}$}
\psfrag{35}{\hspace*{0.3mm}\small  $1$}
\psfrag{41}{\hspace*{-0.2mm} \small  $0$}
\psfrag{42}{  \hspace*{-1.8mm}   \small   $\tfrac{1}{2}$}
\psfrag{43}{\hspace*{-0.6mm} \small   $\tfrac{5}{8}$}
\psfrag{44}{\hspace*{-0.5mm} \small   $\tfrac{3}{4}$}
\psfrag{45}{\hspace*{-0.3mm} \small   $1$}

\psfrag{51}{\hspace*{-1mm} \small   $1$}
\psfrag{52}{\hspace*{-1.0mm} \small $\tfrac{1}{2}$ }
\psfrag{53}{\hspace*{-0.5mm} \small   $1$}
\psfrag{54}{\hspace*{-0.7mm} \small   $2$}

\psfrag{la1}{\hspace{-9mm}$x_0 \longmapsto f_0 =$}
\psfrag{la2}{\hspace{-9mm}$x_1 \longmapsto f_1 =$}
\includegraphics[width = 7cm]{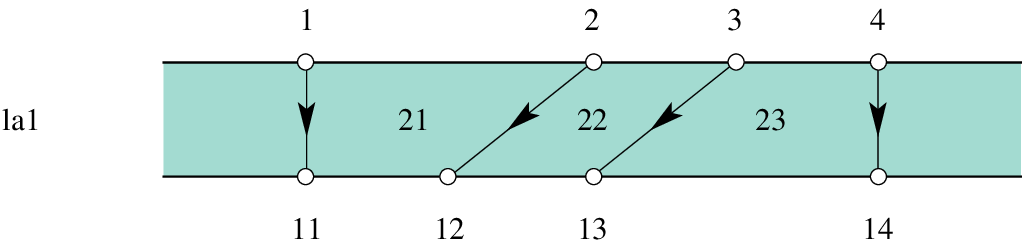}
\\[2mm]
\includegraphics[width= 7cm]{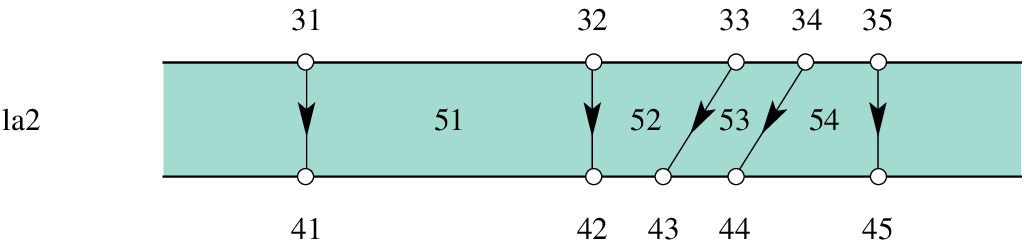} 
\end{minipage}
\end{equation}
Thompson concluded \index{Thompson, R. J.}%
that \eqref{eq:Finite-presentation-F} 
defines a torsion-free, non-abelian group $F$
and that the realization is faithful. 
He proceeded to deduce from the semi-normal form \eqref{eq:1.3} 
that every proper homomorphic image of $F$ is abelian or, to put it differently, 
that the derived group $F'$ of $F$ is the unique minimal normal subgroup $\neq \{\id\}$ of $F$. 
He used this fact to establish
that the group given by the presentation $T = \langle x_0, x_1, c_1 \mid \RR \rangle$  is simple
if $\RR$ is the following set of relations
\[
\index{Thompson's group T@Thompson's group $T$!presentation}%
 \index{Thompson's group T@Thompson's group $T$!definition|textbf} %
 \index{Thompson, R. J.}%
\act{x_1^{-1} x_0^{-1}} x_1 = \act{x_0^{-2}}x_1,  
 \act{x_1^{-1}x_0^{-2}} x_1 = \act{x_0^{-3}} x_1, 
c_1 = x_1c_2,  
c_2x_2 =x_1c_3,
c_1x_0 = c_2^2,
c_1^3 = 1.
\]
Here $x_2$, $c_2$ and $c_3$ are short for 
$\act{x_0^{-1}}x_1$, $x_0^{-1}c_1x_1$ and $x_0^{-2} c_1 x_1^2$, respectively
(see Theorem 5.8 and Corollary 5.9 in \cite{CFP96} ).
To ascertain that $T$ is  a supergroup of $G$, 
Thompson gave a realization of $T$ by PL-homeomorphisms of the circle $\R/\Z$, 
associating to $x_0$, $x_1$ the homeomorphisms obtained from $f_0$, $f_1$ 
by identifying the endpoints of $[0,1]$, 
and to $c_1$ the PL-homeomorphism of $\R/\Z$ induced 
by the infinitary PL-homeomorphism depicted below (see \cite[Ex.\,5.1]{CFP96})
\begin{equation*}
\index{Thompson's group T@Thompson's group $T$!rectangle diagrams}%
\psfrag{61}{\hspace*{-1.2mm}  \small }
\psfrag{62}{  \hspace*{-2.5mm}  \small  }
\psfrag{63}{\hspace*{-3.0mm}   \small   $-\tfrac{1}{2}$}
\psfrag{64}{\hspace*{-3.5mm}  \small   $-\tfrac{1}{4}$}
\psfrag{65}{\hspace*{-0.7mm}   \small  0}
\psfrag{66}{  \hspace*{-2.3mm}  \small  $\tfrac{1}{2}$}
\psfrag{67}{\hspace*{-1.0mm}   \small  $\tfrac{3}{4}$}
\psfrag{68}{\hspace*{-0.7mm}  \small   1}
\psfrag{69}{\hspace*{-0.8mm}  \small   $\tfrac{3}{2}$}
\psfrag{70}{\hspace*{-0.6mm}  \small   $\tfrac{7}{4}$}
\psfrag{71}{\hspace*{-0.7mm}  \small  2}
\psfrag{81}{\hspace*{-3.2mm}  \small $-\tfrac{1}{2}$}
\psfrag{82}{\hspace*{-3.1mm}  \small   $-\tfrac{1}{4}$}
\psfrag{83}{\hspace*{-0.4mm}   \small  0}
\psfrag{84}{  \hspace*{-2.0mm}  \small  $\tfrac{1}{2}$}
\psfrag{85}{\hspace*{-0.8mm}   \small  $\tfrac{3}{4}$}
\psfrag{86}{\hspace*{-0.5mm}  \small   1}
\psfrag{87}{\hspace*{-0.8mm}  \small   $\tfrac{3}{2}$}
\psfrag{88}{\hspace*{-0.6mm}  \small   $\tfrac{7}{4}$}
\psfrag{89}{\hspace*{-0.8mm}  \small  2}
\psfrag{52}{\hspace*{0.8mm} \small  $\tfrac{1}{2}$}
\psfrag{53}{\hspace*{-1mm} \small  $2$}
\psfrag{54}{\hspace*{-0.5mm} \small  $1$}
\psfrag{55}{\hspace*{0.8mm} \small  $\tfrac{1}{2}$}
\psfrag{56}{\hspace*{-1mm} \small  $2$}
\psfrag{57}{\hspace*{-0.5mm} \small  $1$}
\psfrag{58}{\hspace*{0.8mm} \small  $\tfrac{1}{2}$}
\psfrag{59}{\hspace*{-1mm} \small  $2$}
\psfrag{62}{\hspace*{-1mm} \small}
\includegraphics[width= 11cm]{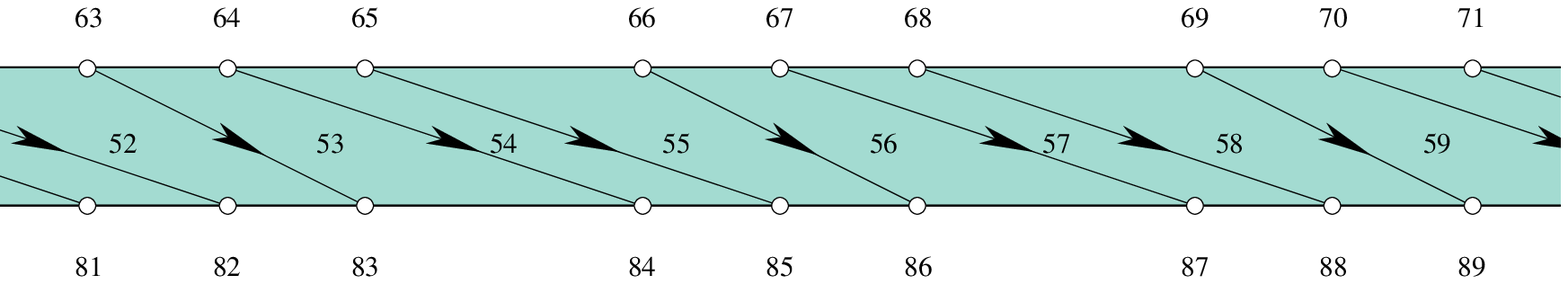}
\end{equation*}
%
\subsubsection{Remarks on the history of Thompson's discovery}
\label{sssec:Remarks-history-discovery}
\index{Thompson, R. J.}%
%
The history of Thompson's discovery is recounted on pages 475 and 476 of \cite{McTh73}.
In translating the definitions from \cite{McTh73} and \cite{Tho74} 
to the present set-up, 
it must be borne in mind that in both papers 
right composition of homeomorphisms is used. 
The group $F$ is called $P$ in both \cite{McTh73} and \cite{Tho74} 
with generators denoted $D$, $E$ in \cite{McTh73} and $D$, $R$ in \cite{Tho74}.
The group $T$ is denoted by $C$ in section  \emph{Part Two} of \cite{Tho74}; 
it is not mentioned in  \cite{McTh73}, 
where a larger group $C'$ is described in its stead. 
The group $C'$ occurs  in Part Four of \cite{Tho74} under the name $V$,
and is Thompson's second finitely presented, simple group;
it is isomorphic to the automorphism group called $G_{2,1}$ 
in \cite[Section 8]{Hig74}. \index{Higman, G.}%
The group $C'$ can be realized by homeomorphisms of the Cantor set.
%
\subsubsection{Rediscovery by homotopy theorists}
\label{sssec:1.2}
The presentation \eqref{eq:Infinite-presentation-F}
has been rediscovered by homotopy theorists, 
notably by J. Dydak, P. Freyd, A. Heller, and H. Hastings, 
 \index{Freyd, P.}%
 \index{Heller, A.}%
 \index{Dydak, J.}%
\index{Hastings, H. M.}%
when studying \emph{unpointed homotopy idempotents}. 
Their interest in the group $F$ was prompted by the fact 
that it admits an (actually injective) endomorphism $\psi$, 
taking $x_i$ to $x_{i+1}$ 
and satisfying $\psi^2(g) = x_0^{-1} \psi(g) x_0 $ for $g \in G$,
and that the triple $(F, \psi, x_0)$ is universal among all triples $(H, \varphi,y)$,
where $\varphi$ is an endomorphism of $H$ 
satisfying $\varphi^2(h)  = \varphi(h)^y$ for $h \in H$
(see, \eg{}\cite[p.\,82 ff]{DySe78},
\cite[pp. 26--30]{FrHe79} or \cite[9.2]{Geo08}). 
In the course of their investigations, 
the topologists rediscovered the semi-normal form \eqref{eq:1.3}
and found a free abelian subgroup of infinite rank; 
this latter fact implies that $F$ is a torsion-free group of infinite cohomological dimension. 
Later, 
K. S. Brown and R. Geoghegan, building on results about unpointed homotopy idempotents, 
were able to show that $\Z$ admits a $\Z{F}$-free resolution $\mathbf{F}_* \epi \Z$ 
with $\mathbf{F}_0 = \Z{G}$  and $\mathbf{F}_1$, $\mathbf{F}_2$, \ldots free $\Z{F}$-modules of rank 2 
(\cite[Section 6]{BrGe84}).
\index{Brown, K. S.}%
\index{Geoghegan, R.}%
This revealed that $F$ is a torsion-free, infinite dimensional group 
of type  $\FP_\infty$, 
the first known example of this kind. 
Moreover, in \cite{BrGe85} the same authors verified
that the cohomology groups $H^j (F, \Z{F})$ are trivial in all dimensions. 
They noticed also
that $F'$ is not merely a minimal normal subgroup of $G$, 
but actually simple.
%
\subsubsection{Local description of the groups of Thompson}
\label{ssec:1.3}
\index{Thompson, R. J.}
A further result of Thompson's lay dormant for quite a while: 
\index{Thompson, R. J.}%
stated in terms of the realization \eqref{eq:Generating-elements-f0-f1} it asserts 
that the image of $F$ in the group of PL-homeomorphisms is as large as could be hoped for: 
it is the group of all PL-homeomorphisms $f$ of $\R$ which satisfy the following conditions:
\begin{align}
&\text{the support of $f$ is contained in $]0,1[$}
\label{eq:Meaning-I},\\
&\text{the slopes of $f$ are powers of 2, and}
\label{eq:Meaning-P} \\
&\text{the singularities of $f $ lie the subring } \Z[1/2] \text{ of } \Q.
\label{eq:Meaning-A}
\end{align}
(This is what the proof given on pp.\;476--477 in \cite{McTh73} 
amounts to in the present setup;
see also the proof of Theorem 1.4 in \cite{Tho80}.) 

The stated fact is crucial 
for the further work of K. S. Brown and R. Geoghegan 
on groups of type $\FP_\infty$. 
\index{Brown, K. S.}%
\index{Geoghegan, R.}%
It is related to the discovery of W. P. Thurston 
\index{Thurston, W. P.}%
that the group $T$ maps under the given realization
isomorphically onto the group of all PL-homeomorphisms of the circle $\R/\Z$ 
satisfying \eqref{eq:Meaning-P}  and \eqref{eq:Meaning-A}. 

%
\subsection{Starting point of our article}
\label{ssec:1.4}
%
The discovery that
the group $F$ can be described 
by properties \eqref{eq:Meaning-I}, \eqref{eq:Meaning-P} and \eqref{eq:Meaning-A} 
is the starting point of the present memoir. 
It suggests to view $F$, and similarly defined groups, 
as ordered permutation groups 
and to bring results about such groups to bear on the case at hand. 
As a sample, take the afore-mentioned fact that $F'$ is simple. 
A theorem of G. Higman's \cite{Hig54a} states 
\index{Higman, G.}%
\index{Higman's simplicity result}%
that every group $B$
which acts 2-fold transitively on an infinite, totally ordered set $\Omega$ 
by order preserving permutations
has a derived group $[B,B]$ that is non-abelian and simple, 
provided it consists of permutations with bounded support. 
With the help of G. Higman's theorem the fact that $F'$ is simple 
can be explained as follows. 
Let $B$ denote the subgroup consisting of those homeomorphisms in $F$ 
which have slope 1 near the endpoints 0 and 1. 
Then the derived group $F'$ equals $B$ and all amounts to prove 
that $B/B'$ is trivial and that $B$ has an orbit 
which is dense in the open interval $]0,1[$ 
and on which $B$ acts 2-fold transitively.

There is a second topic 
which the theory of ordered permutations groups has much to say about, 
namely the existence of isomorphisms and automorphisms. 
The key idea is this: 
if $H$ is a group of homeomorphisms of $\R$ 
and if $\varphi \colon \R \iso \R$ is a homeomorphism, 
conjugation by $\varphi$ induces an isomorphism $\varphi_* \colon H \iso \act{\varphi}H$.
Similarly, 
if $N$ denotes the normalizer of $H$ in the group of all homeomorphisms of $\R$,
conjugation induces a homomorphism  $\vartheta \colon N \to \Aut H $.
Under suitable hypotheses converse statements hold: 
every isomorphism $\alpha \colon  H \iso H_1$ is induced by a unique homeomorphism $\varphi$ 
and $\vartheta$ is an isomorphism of groups. 
Results of this type 
are due to S. H. McCleary for ordered permutation groups \cite{McC78b}; 
\index{McCleary, S. H.}%
previously analogous results have been proved for homeomorphism groups of manifolds by J. V. Whittaker \cite{Whi63} 
\index{Whittaker, J. V.}%
and for lattice-ordered permutation groups by C. Holland \cite{Hol65c}. 
\index{Holland, C.}%
Using ideas of \cite{McC78b} we shall find, in particular, 
that the group of outer automorphisms $\Aut F /\Inn F $ of Thompson's group $F$, 
defined by \eqref{eq:Finite-presentation-F}, 
contains the direct square $T \times T$ of two copies of his simple group $T$.
%
\section{Outline of our investigation}
\label{sec:2}
%
In \cite{BrSq85} various classes of subgroups of the group $\PL_ o(\R)$
of order preserving finitary PL-homeomorphisms of the real line $\R$ 
are considered.
\index{Brin, M. G.}%
\index{Squier, C. C.}%
We shall deal with a similarly defined collection of groups.
%
\subsection{Definition of the group $G(I;A,P)$}
\label{ssec:2.1}
%
The subgroups of $\PL_o(\R)$ that we shall study will be singled out 
by restricting the support, the slopes and the singularities of their elements. 
Specifically, 
let $I$ be a closed interval of the real line, 
$P$ a subgroup of the multiplicative group $\R^\times_{>0}$ of the positive reals, 
and $A$ a subgroup of the additive group of $\R$ 
that is stable under multiplication by the elements of $P$; 
put differently, 
$A$ is required to be a $\Z[P]$-submodule of the additive group of $\R$, 
the action being the canonical one.
Consider now the set of all finitary PL-homeomorphisms $f \colon \R \iso \R$ 
which satisfy the three requirements:
\begin{align}
&f \text{ has support in $I$ and maps $A$ onto itself};
\label{eq:Condition-I}\\
&\text{the slopes of $f$ are elements of $P$,  and}
\label{eq:Condition-P}\\
&\text{the singularities of $f$ lie in $A$.}
\label{eq:Condition-A}
\end{align}
This set is closed under composition of functions, 
contains the identity and the inverses of its elements; 
it thus gives rise to a group 
that we denote by $G(I;A,P)$. 
In order to avoid that $G(I;A,P)$ is reduced to the identity 
we shall always impose the following non-triviality restrictions 
on the triple $(I,A,P)$:
\begin{equation}
\label{eq:Non-triviality-assumption}
\text{the interior of $I$ is non-empty, and $A$ as well as $P$ are non-trivial.}
\end{equation}
Note that requirement \eqref{eq:Non-triviality-assumption} implies 
that $A$, and each of its non-trivial $\Z[P]$-submod\-ules, 
contain arbitrarily small positive numbers
whence they are dense subgroups of the real line $\R$.
\index{Module A@Module $A$!density property}
\smallskip

We shall investigate five aspects of the groups $G = G(I;A,P)$:
\begin{enumerate}[(A)]
\index{Main themes of the memoir}%
\item the $G$-orbits $\Omega$ in $A$ and the multiple transitivity properties of $G$ on its orbits;
\item generating sets of $G$ and the question whether $G$ can be finitely generated;
\item the abelianization of the subgroup $B = B(I;A,P)$ of bounded homeomorphisms 
and the simplicity of its derived group $B'$; 
\item presentations of $G$ and the question whether $G$ admits a finite presentation; 
\item isomorphisms of certain subgroups of groups $G(I_1; A_1,P_1)$ and $G(I_2;A_2,P_2)$, 
and the automorphism groups of $B(I;A,P)$ and of $G(I;A,P)$.
\end{enumerate}

\subsection{The orbits of $G(I;A,P)$ in $A$}
\label{ssec:2.2}
\index{Group G(I;A,P)@Group $G(I;A,P)$!orbits in A cap I@orbits in $A \cap \Int(I)$}%
Fix $I$, $A$, $P$, 
and let $G$ be short for $G(I;A,P)$. 
It is clear from requirement \eqref{eq:Condition-I} 
that the intersection $A \cap  \Int(I)$ is stable under the action of $G$, 
but in general $G$ does not act transitively on this intersection. 
The reason for this failure is easily explained: 
if $a$, $a'$, $c$ and $c'$ are elements of $A$ 
and $f \colon [a,c] \iso  [a',c']$ is a PL-homeomorphism with slopes in $P$ and singularities in $A$, 
then $f$ is the affine interpolation of a sequence of the form
\[
 (b_0,b'_0)= (a,a'), \;
(b_1,b_1'), \ldots,  \;
(b_h, b'_h) =(c,c').
\]
So $c' - c = \sum_i p_i \cdot (b_i-b_{i-1})$ with  each $p_i \in P$,
while $a' - a = \sum_i  b_i-b_{i-1}$.
Consequently, 
the difference $(c'-c) - (a'-a)$ is in the subgroup $IP \cdot  A$ of $A $
generated by the elements of the form $(p-1)\cdot a$ with $p \in P$ and $a \in A$.

Actually, $IP \cdot A$ is a non-trivial $\Z{P}$-submodule of $A$, 
hence dense in $\R$; 
\index{Submodule IPA@Submodule $IP \cdot A$!density property}%
it is familiar from the Homology Theory of Groups.
\index{Homology Theory of Groups!submodule IPA@submodule $IP \cdot A$}%

A first basic result, 
Theorem \ref{TheoremA}, 
claims that this condition on $(c'-c) - (a'-a)$ is also sufficient 
for the existence of a PL-homeomorphism $f \colon [a,c] \iso [a',c']$ 
with properties as stated in the above. 
It follows that each $G$-orbit in $A \cap \Int(I)$ 
is the intersections of a coset $IP \cdot A+a$ with $\Int(I)$, 
provided the elements of $G$ have a common fixed point, 
\ie{}provided $I \neq \R$. 
\index{Submodule IPA@Submodule $IP \cdot A$!significance}%
Next the stabilizer of a point $a \in \Int(I)$ 
is the product $G(I_\ell;A,P) \cap G(I_r;A,P)$ 
where $I_\ell = I \cap (-\infty, a]$
and $I_r = I \cap [a, +\infty[$.
The above description of the $G$-orbits allows one to deduce 
that $G$ acts $\ell$-fold transitively on the $G$-orbits contained in $A \cap \Int(I)$ 
for  every $\ell \geq 1$, 
provided $I \neq \R$  or $A = IP \cdot A$
(see Corollary \ref{CorollaryA4}).
\index{Group G(I;A,P)@Group $G(I;A,P)$!multiple transitivity}%

Now to the orbits of $G(I;A,P)$ in $\Int(I)$.
If $I = \R$
they are the orbits of the affine group
\begin{equation}
\label{eq:Definition-Affine-group}
\index{Affine group Aff(A,P)@Affine group $\Aff(A,P)$!definition|textbf}%
\Aff (A,P) = (t \mapsto  p\cdot t + a \mid a \in A \text{ and  } p \in P);
\end{equation}
otherwise,
they are the intersections of $\Int(I)$ 
with the orbits of $\Aff(IP\cdot A,P)$; 
see Corollary \ref{CorollaryA1*}.
\index{Affine group Aff(A,P)@Affine group $\Aff(A,P)$!significance}%
%
\subsection{The subgroup of bounded homeomorphisms $B(I;A,P)$}
\label{ssec:2.3}
%
The space of orbits of $G(I;A,P)$ on $A \cap \Int(I)$ plays an unexpected rôle 
in the abelianization of the subgroup of bounded homeomorphisms $B(I;A,P)$. 
This rôle is explained by a homomorphism called $\nu$; 
as a preamble to its definition, 
we briefly discuss the homomorphisms $\lambda$ and $\rho$.
Both map the group $G = G(I;A,P)$ into $\Aff(A,P)$,
the affine group with slopes in $P$ and displacements in $A$. 

For the definition of $\rho$,
two cases will be distinguished 
depending on whether $\sup I$ is a real $c$ or $+\infty$. 
In the first case, 
let $\sigma_+ \colon G \to \R$ be the homomorphism 
that associates to $f$ the left derivative $f' (c_-)$ of $f$ in $c$
\index{Homomorphism!18-sigma-plus@$\sigma_+$}%
\label{notation:sigma-plus}%
and define $\rho(f)$ to be the linear map $t \mapsto \sigma_+(f) \cdot t$;
if $\sup I = +\infty$, 
define $\rho(f)$ to be the affine transformation 
that coincides with $f$ near $+\infty$. 
\label{notation:rho}%
\index{Homomorphism!17-rho@$\rho$}%
The homomorphisms $\sigma_-$  and $\lambda$ are defined analogously 
with regard to the left endpoint of the interval $I$ 
or with regard to $-\infty$. 
\label{notation:sigma-minus}%
\label{notation:lambda}%
\index{Homomorphism!11-lambda@$\lambda$}%
\index{Homomorphism!18-sigma-minus@$\sigma_-$}%

We turn now to the  subgroup  $B(I;A,P)$ of the ``bounded'' homeomorphisms  
in $G(I;A,P)$.
It is, by definition, the kernel of the homomorphism
\index{Subgroup B(I;A,P)@Subgroup $B(I;A,P)$!definition|textbf}%
\begin{equation}
\label{eq:2.5}
(\lambda, \rho) \colon  G = G(I;A,P) \to \Aff(A,P) \times \Aff(A,P).
\end{equation}

The definitions of $\sigma_-$ and $\sigma_+$ can be generalized as follows:
if $\Omega$ is a $G$-orbit in $A \cap \Int(I)$, 
define a function $\nu_\Omega \colon  G \to P$ by the formula
\begin{equation}
\label{eq:2.6}
\nu_\Omega(f) = \prod\nolimits_{a \in \Omega} f'(a_+)/f'(a_-),
\end{equation}
where $f'(a_+)$ and $f'(a_-)$ denote the right-hand and the left-hand derivatives of $f$ in $a$.
As each $f$ in $G$ has only finitely many singularities, 
the product on the right hand side of equation \eqref{eq:2.6} makes sense; 
the chain rule then shows that each $\nu_\Omega$ is a homomorphism. 

One can combine the various homomorphisms $\nu_\Omega$ 
into a single homomorphism $\nu$, 
defined by
\begin{equation}
\label{eq:2.7}
\index{Homomorphism!13-nu@$\nu$}
\nu \colon  G = G(I;A,P) \to \Z[(A \cap I)_\sim] \otimes P, 
\qquad f \mapsto  \sum\nolimits_\Omega \Omega \otimes \nu_\Omega(f).
\end{equation}
The existence of $\nu$ indicates
that the group $B(I;A,P)$ may not be perfect. 
The more detailed analysis of the restriction of $\nu$ to $B(I;A,P)$
is facilitated by two circumstances: 
the isomorphism type of $B(I;A,P)$ does not depend on $I$, 
it depends only on $A$ and $P$ (see Proposition \ref{PropositionC1}), 
\index{Subgroup B(I;A,P)@Subgroup $B(I;A,P)$!properties}%
and the groups
\[
G = G([0,\infty[\,;A,P) \quad\text{and}\quad G_1 = \ker(\sigma_- \colon  G \to P)
\]
admit very explicit infinite presentations. 
These presentations allow one to deduce 
that the abelianization
\[
\nu_{\ab} \colon (G_1)_{\ab} \longrightarrow \Z\left[\left(A\, \cap \,]0, \infty[ \right)_\sim \right] \otimes P
\]
of $\nu$ is an isomorphism.
Since $G_1$ acts trivially on $B_{\ab} = B([0,\infty[\, ;A,P)_{\ab}$
(by Lemma \ref{LemmaC3New}) 
the extension of groups $B \triangleleft G_1 \stackrel{\rho}{\epi} G_1/B$
gives rise to the exact sequence of homology groups
\[
\index{Homology Theory of Groups!5-term exact sequence}%
H_2(G_1) \xrightarrow {H_2(\rho\restriction{G_1})}
H_2(G_1/B) \xrightarrow {d_2}
 B_{ab} \xrightarrow{}
(G_1)_{\ab} \xrightarrow {(\rho\restriction{G_1})_{\ab}}
(G_1/B)_{\ab} \to 1.
\]
A closer look at $\ker(\rho \restriction{G_1})_{\ab}$ and $\coker H_2(\rho\restriction{G_1} )$ 
leads to
\begin{thmI}[see Theorem \ref{TheoremC10}]
\label{thm:C10}
\index{Submodule IPA@Submodule $IP \cdot A$!significance}%
If $B(I;A,P)$ is perfect then $A = IP \cdot A$. 
Conversely, $B(I;A,P)$ is perfect
if $A = IP \cdot A$ and $H_1(P,A) = 0$ and if, in addition, 
$A$ has rank 1  or $P$ contains a rational number $p = n/d > 1$ 
so that $A$ is divisible by $n^2 - d^2$.
\end{thmI}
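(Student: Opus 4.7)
This follows from the right-exact sequence \eqref{eq:Describing-B/B'}. If $B = B(I;A,P)$ is perfect, then $B_{\ab} = 0$, so the surjection $\bar\mu \colon B_{\ab} \epi K(A,P)$ forces $K(A,P) = 0$, and Proposition \ref{prp:LemmaC5} then yields $A = IP\cdot A$.

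\textbf{Converse, setup.} By Proposition \ref{PropositionC1} the isomorphism type of $B$ depends only on $(A,P)$, so fix $I = [0, \infty[\,$, $G = G(I;A,P)$, and $G_1 = \ker \sigma_-$. The restriction of $\rho$ to $G_1$ has kernel $B$, and its image equals $IP\cdot A \rtimes P \leq \Aff(A,P)$: this is a direct consequence of Theorem \ref{TheoremA}, applied to the interpolation joining the identity on $[0,a_0]$ to an affine tail $t \mapsto pt + a$. Under the standing hypothesis $A = IP\cdot A$ the image is all of $\Aff(A,P) \iso A \rtimes P$, so
\[
B \mono G_1 \epi A \rtimes P
\]
is short exact. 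The plan is now to feed this into the 5-term homology sequence
\[
H_2(G_1) \longrightarrow H_2(A \rtimes P) \xrightarrow{\;d_2\;} B_{\ab} \longrightarrow (G_1)_{\ab} \longrightarrow (A \rtimes P)_{\ab} \longrightarrow 0
\]
recorded in the Introduction, and to show (i) $(G_1)_{\ab} \to (A \rtimes P)_{\ab}$ is injective, and (ii) $H_2(G_1) \to H_2(A \rtimes P)$ is surjective; together these force $B_{\ab} = 0$.

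\textbf{Step (i).} Under $A = IP\cdot A$, Theorem \ref{TheoremA} and Corollary \ref{CorollaryA4} imply that $G$ acts transitively on $A \cap \,]0, \infty[\,$, so the isomorphism $\nu_{\ab}$ identifies $(G_1)_{\ab}$ with $P$. Simultaneously $(A \rtimes P)_{\ab} = A/(IP\cdot A) \oplus P = P$, and tracing the maps shows that both projections record the rightmost slope; the induced map $P \to P$ is the identity and $B_{\ab} \to (G_1)_{\ab}$ is the zero map. This is the homological shadow of the vanishing of $K(A,P)$.

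\textbf{Step (ii) and main obstacle.} Apply the Lyndon--Hochschild--Serre spectral sequence to $A \mono A \rtimes P \epi P$. Since $A$ is torsion-free abelian, $H_1(A;\Z) = A$ and $H_2(A;\Z) = A \wedge_{\Z} A$, giving the three $E_2$-terms of total degree $2$,
\[
E_2^{2,0} = H_2(P), \qquad E_2^{1,1} = H_1(P, A), \qquad E_2^{0,2} = (A \wedge A)_P.
\]
By hypothesis $E_2^{1,1} = 0$. The term $E_2^{0,2}$ vanishes in either of the two cases: if $\rk A = 1$, any two elements of $A$ are $\Z$-linearly dependent, and a short argument using that $A \otimes_{\Z} A$ is torsion-free of rank $1$ forces $A \wedge A = 0$; if instead $P$ contains $p = n/d > 1$ with $(n^2 - d^2)\cdot A = A$, then $p$ acts on $A \wedge A$ as multiplication by $p^2$, so in coinvariants $(p^2 - 1)\cdot x = 0$, whence $(n^2 - d^2)$ annihilates $(A \wedge A)_P$, while divisibility renders $(A \wedge A)_P$ itself $(n^2 - d^2)$-divisible, and a group simultaneously annihilated and divisible by a nonzero integer is trivial. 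Moreover $E_2^{0,1} = A_P = A/(IP\cdot A) = 0$, so the differential $d_2 \colon E_2^{2,0} \to E_2^{0,1}$ vanishes automatically. Therefore $H_2(A \rtimes P) = H_2(P)$. To lift this into $H_2(G_1)$, construct an explicit section of $G_1 \epi A \rtimes P \epi P$: fix $a_0 \in A_{>0}$ and, for each $p \in P$, let $f_p \in G_1$ be the identity on $[0, a_0]$ and equal to $t \mapsto p(t - a_0) + a_0$ on $[a_0, \infty[\,$. A direct check gives $f_p \circ f_q = f_{pq}$, so $p \mapsto f_p$ is an embedding $P \mono G_1$ whose composition with $G_1 \to P$ is the identity; by functoriality $H_2(G_1) \to H_2(A \rtimes P) = H_2(P)$ is surjective, hence $d_2 = 0$ and $B_{\ab} = 0$. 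The hard piece is the vanishing of $E_2^{0,2}$: the two auxiliary hypotheses in the theorem are tailored precisely to kill this coinvariant, and without at least one of them there is no evident reason for it to vanish, leaving room for a nonzero $d_2$ and hence for nontrivial $B_{\ab}$. The $H_2(P)$ contribution, by contrast, is disposed of cleanly by the explicit section $p \mapsto f_p$.
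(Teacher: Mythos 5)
Your proposal is correct and follows essentially the same route as the paper's: the forward direction is identical, and your converse --- $K(A,P)=0$ via Proposition \ref{prp:LemmaC5}, the five-term sequence of $B \mono G_1 \epi A\rtimes P$ (whose middle term is $B_{\ab}$ rather than $B/[G_1,B]$ thanks to Lemma \ref{LemmaC3New}), the Lyndon--Hochschild--Serre computation killing $H_1(P,A)$ and $(A\wedge A)_P$ under the two auxiliary hypotheses, and the section $p\mapsto g(a_0,p)$ disposing of $H_2(P)$ --- is precisely the content of Propositions \ref{PropositionC2} and \ref{PropositionC4} and of diagram \eqref{eq:Diagram-for-Bab}, inlined rather than cited. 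The one detail to tidy is the rank-one case: torsion-freeness of $A\otimes_{\Z}A$ does not pass to the quotient $A\wedge A$, so argue instead that a torsion-free abelian group of rank $1$ is locally cyclic, whence $a\wedge b = mn\,(c\wedge c)=0$ whenever $a=mc$ and $b=nc$.
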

Note that by Higman's result (mentioned in section \ref{ssec:1.4})
and the multiple transitivity of $B(I;A,P)$
pointed out in section \ref{ssec:2.2} imply
that the derived group of $B(I;A,P)$ is a torsion-free, non-abelian \emph{simple} group.
\index{Simplicity result for [B,B]@Simplicity result for $[B,B]$}%
\index{Subgroup B(I;A,P)@Subgroup $B(I;A,P)$!simplicity of [B,B]@simplicity of $[B,B]$}%

\subsection{Types of intervals}
\label{ssec:2.4}
 Two groups $G(I_1;A,P)$ and $G(I_2;A,P)$ with supports in distinct intervals
 may be isomorphic.
 The precise classification into isomorphism types requires a good deal of work 
 that will be carried out in Chapter \ref{chap:E}. 
But by lumping together intervals 
 which produce isomorphic groups for rather obvious reasons
 one obtains already a fairly good overview of the situation.
The present section furnishes such an overview. 
 
 Six types of intervals arise.
The first type is made up of the single interval
\begin{equation}
\label{eq:Type-1}
I = \R. \tag{1}
\end{equation}
Next, 
consider an unbounded, interval with one endpoint $a$.
If $a$ is in $A$, 
the interval belongs to the type of
\begin{equation}
\label{eq:Type-2}
I =[0, \infty[\tag{2}
\end{equation}
All the groups with an unbounded interval but one endpoint in $\R \smallsetminus A$
are isomorphic to each other. 
As they are also isomorphic to the kernel of the homomorphism 
$\sigma_- \colon G([0,\infty[\, ;A,P) \to P$ 
we introduce the open interval
\begin{equation}
\label{eq:Type-3}
I = \;]0,\infty[ \text{ with associated group }  \ker\left (\sigma_- \colon G([0,\infty[\, ;A,P) \to P\right).
\tag{3}
\end{equation}
For bounded intervals we distinguish three cases, 
according as to whether both endpoints are in $A$, 
a single one of them is in $A$ or none of them has this property. 
The first of these types gives rise at this preliminary stage to the class of intervals
\begin{equation}
\label{eq:Type-4}
I = [0,b] \text{ with $b$ a positive element of }  A .
\tag{4}
\end{equation}
The groups with an interval of the second kind are all isomorphic to each other. 
Since they are also isomorphic to the kernel of $\sigma_- \colon  G([0, b_0];A,P) \to P$
where $b_0$ is a fixed positive element of $A$, we introduce the half open interval
\begin{equation}
\label{eq:Type-5}
I =\, ]0,b_0]  \text{ with associated group } \ker \left(\sigma_- \colon  G([0, b_0];A,P) \to P\right).
 \tag{5}
\end{equation}
The group with an interval of the third kind coincide with the corresponding group of bounded homeomorphism, 
and are isomorphic to each other. 
They lead to
\begin{equation}
\label{eq:Type-6}
 I =\; ]0,b_0 [  \text{ with associated group } B([0,b_0];A,P)
 \tag{6}
\end{equation}
%
\subsection{Generating sets and presentations}
\label{ssec:2.5}
As pointed out in Section 2 of \cite{BrSq85} 
groups corresponding to the intervals $\R$ or $[0,\infty[$ 
can be generated by very explicit and natural subsets. 
For the intervals $[0,\infty[$ and $]0,\infty[$ 
one can take the homeomorphisms $g(a,p)$ with a single singularity defined by
\[
g(a,p) (t) = \begin{cases} t &\text{ if } t \leq a,\\ p(t-a) + a &\text{ if  }t > a.
\end{cases}
\]
Here $a \in A$ varies over $I$ and $p \in P \smallsetminus \{1\}$. 
For the interval $I = \R$, 
one has to add the elements with no singularity,
\ie{}the elements of the affine group $\Aff(A,P)$. 
To verify that these homeomorphisms generate the group in question,
one proceeds by a straightforward induction on the number of singularities. 
\index{Group G([0,infty[;A,P)@Group $G([0, \infty[\;;A,P)$!generating sets}%
\index{Group G(R;A,P)@Group $G(\R;A,P)$!generating sets}%

The mentioned groups admit also very explicit and pleasant presentations 
in terms of the given generators. 
For the interval $[0,\infty[$ or $]0,\infty[$ the relations
\begin{align*}
g(a,p) \circ g(a,p') &= g(a,p\cdot p'),\\
\act{g(a,p)}g(a',p') &= g(a+p(a'-a),p') \text{ for } a < a'
\end{align*}
suffice to define the group. 
\index{Group G([0,infty[;A,P)@Group $G([0, \infty[\;;A,P)$!infinite presentation}%

If $I = \R$,
one has to add the relations of the multiplication table presentation of  $\Aff(A,P)$, 
and the relations specifying the action of the affine transformations
$t \mapsto  pt+a$,
 on the set $\{g(a',p') \mid a' \in A \text{ and } p \in  P\smallsetminus \{1\}\; \}$. 
 The verification that the specified relations define the group in question 
 can be carried out by a normal form argument (see \cite[Section 2]{BrSq85}).
 \index{Group G(R;A,P)@Group $G(\R;A,P)$!infinite presentations}%
 
No generating sets of such a simple kind are known for the groups corresponding 
to the three types of bounded intervals. 
As mentioned in section \ref{ssec:1.1}, 
R. J. Thompson found generators for the group $F = G([0,1];\Z[1/2],\gp(2))$. 
In Section \ref{sec:9} we shall exhibit generating systems for the groups of the form
\[
G([0,b]; \Z[\gp(\PP)] , \gp(\PP))
\] 
where $\PP$ is a set of integers $> 1 $. 
Our proof will be in three steps: 
first we reduce the problem to the interval $[0,1]$ (see Theorem \ref{TheoremB7}). 
Next, we introduce the concepts of $\PP$-\emph{regular} 
and $\PP$-\emph{standard subdivisions of} $[0,1]$ 
\index{PP-regular subdivision@$\PP$-regular subdivision!definition|textbf}%
and establish Proposition \ref{prp:B8-intro} given below. 
The set of $\PP$-\emph{regular subdivisions} of $[0,1]$ is defined inductively:
a $\PP$-regular subdivision of level 1 is an equidistant subdivision of $[0,1]$ into p intervals 
where $p$ is an integer in $\PP$. 
If $D_\ell$ is a $\PP$-regular subdivision of level $\ell \geq 1$, 
every subdivision obtained from it 
by subdividing one of the intervals of $D_\ell$ into $p$ equidistant subintervals, 
with $p \in P$, is a $\PP$-regular subdivision of level $\ell + 1$ . 
A $\PP$-\emph{standard subdivision}
is a $\PP$-regular subdivision in whose formation process 
it is always the \emph{left most interval} that is subdivided. 
A crucial result, 
the basic idea of which goes back to Thompson \cite[pp.\;477--478]{McTh73}, 
is then
\begin{prpI}[see Proposition \ref{PropositionB8}]
\label{prp:B8-intro}
\index{PP-regular subdivision@$\PP$-regular subdivision!significance}%
 If $f$ is an element of the group 
 \begin{equation}
 \label{eq:Definition-G[PP]}
 G[\PP] = G([ 0,1] ;\Z[\gp(\PP) ], \gp(\PP))
 \end{equation} 
 there exist $\PP$-regular subdivisions $D$ and $D'$ with the same number of points 
 such that $f \restriction{[0,1]}$ is the affine interpolation of the list of points 
 with coordinates in  $D$ and  in $D'$, respectively. 
 Conversely, every couple of $\PP$-regular subdivisions with the same number of points 
 gives rise by affine interpolation to a function $f \colon [0,1] \to [0,1]$ 
 which extends uniquely to an element of $G[\PP]$.
 \end{prpI}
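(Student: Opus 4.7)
Of the two assertions, the converse---that every couple of $\PP$-regular subdivisions with the same number of intervals interpolates to an element of $G[\PP]$---is the easier and should be handled first. By induction on the level, each vertex of a $\PP$-regular subdivision lies in $\Z[\gp(\PP)]$: the level-1 vertices $k/p$ already lie in $\Z[1/p] \subseteq \Z[\gp(\PP)]$, and an equidistant $p$-refinement of an interval $[u,v]$ introduces only the vertices $u + k(v-u)/p$, which stay in $\Z[\gp(\PP)]$. The same induction shows each subinterval of $D$ has length of the form $1/(p_1 p_2 \cdots p_\ell)$ with $p_j \in \PP$. Consequently, given $\PP$-regular subdivisions $D$ and $D'$ with the same number of intervals, the slope of the affine interpolation on the $k$-th subinterval, being a quotient of two such unit fractions, lies in $\gp(\PP)$, while its breakpoints lie in $\Z[\gp(\PP)]$; so the interpolation defines an element of $G[\PP]$.

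For the direct assertion, given $f \in G[\PP]$ with breakpoints $0 = a_0 < a_1 < \cdots < a_n = 1$, images $a'_i = f(a_i)$, and slopes $s_i = A_i/B_i \in \gp(\PP)$ in lowest terms, my plan is to choose an integer $M$ that is a product of primes in $\PP$ and is divisible by every $B_i$ and by every denominator of the $a'_i$. Put $D' := \{k/M : 0 \leq k \leq M\}$; this equidistant codomain subdivision is $\PP$-regular, being obtained by refining $[0,1]$ successively according to the prime factorisation of $M$. Set $D := f^{-1}(D')$: it contains every $a_i$, and on each linearity interval $[a_{i-1}, a_i]$ it consists of $M(a'_i - a'_{i-1})$ equidistant subpoints of spacing $1/(Ms_i)$, which is a unit fraction $1/N_i$ with $N_i$ a product of primes in $\PP$ (thanks to $B_i \mid M$). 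Thus $D$ and $D'$ have the same number of intervals, and every subinterval length of $D$ is of the form that $\PP$-regularity requires.

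The main obstacle is then to verify that $D$ itself assembles into a $\PP$-tree on $[0,1]$ as a structural object, not merely as a subdivision with admissible lengths---the mere length condition being necessary but not sufficient, as small examples show. I would argue by induction on $n$, the number of linearity intervals of $f$: the base case $n = 1$ gives the identity, for which any $\PP$-regular $D = D'$ works. For the inductive step, one needs $a_1$ to sit at a position of $[0,1]$ that is both a vertex of some $\PP$-tree and compatible with recursively assembling the subtrees corresponding to $[0, a_1]$ and $[a_1, 1]$. I would enlarge $M$ further, still as a product of primes in $\PP$, adding enough prime-power factors so that the block structure of $D$ near every $a_i$ aligns with a node of the equidistant $\PP$-tree at the relevant depth, and then glue together the subtrees on the two sides of each $a_i$. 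This combinatorial enlargement, which realises the block structure of $D$ as a valid $\PP$-tree, is the technical heart of the forward direction and follows---with the single prime $2$ replaced by the set $\PP$---Thompson's dyadic normal-form argument sketched on pages 477--478 of \cite{McTh73}.
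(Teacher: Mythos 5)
Your treatment of the converse direction is correct and is essentially the paper's own argument. The forward direction, however, has a genuine gap at the step where you claim that the spacing $1/(Ms_i)$ of $D=f^{-1}(D')$ on a linearity interval is a unit fraction $1/N_i$ with $N_i$ ``a product of primes in $\PP$''. Two distinct things are being conflated here: the submonoid $\md(\PP)$ generated by $\PP$ (which is what $\PP$-regularity actually requires at every refinement step), and the set of positive integers lying in the group $\gp(\PP)$ (or, worse, products of primes dividing elements of $\PP$). These differ in general: for $\PP=\{4,6\}$ one has $9=6^2/4\in\gp(4,6)\cap\N$ but $9\notin\md(4,6)$, and $54=6\cdot 9$ is a product of primes dividing elements of $\PP$ yet is not in $\md(4,6)$ either --- this is exactly the phenomenon the paper's Example \ref{example:Search-for-regular-subdivisions} is built to exhibit. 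So even with $B_i\mid M$ and $M\in\md(\PP)$, the integer $N_i=Ms_i$ need only lie in $\gp(\PP)\cap\N$, not in $\md(\PP)$, and your subdivision $D$ may fail to be $\PP$-regular for arithmetic reasons that your final ``alignment/gluing'' paragraph does not touch. (The same confusion infects the choice of $M$ itself: refining ``according to the prime factorisation of $M$'' is not a $\PP$-regular process unless those primes belong to $\PP$; you need $M\in\md(\PP)$ and a factorisation of $M$ into elements of $\PP$.)

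The missing ingredient is the observation that every element of $\gp(\PP)\cap\Q_{>0}$ becomes an element of $\md(\PP)$ after multiplication by a suitable element of $\md(\PP)$, and that one must apply this correction \emph{interval by interval}. The paper's proof does this explicitly: it starts from an equidistant $\PP$-regular subdivision $D$ of the \emph{domain} with $p_0\in\md(\PP)$ points containing all breaks of $f$, passes to $f(D)$, encloses $f(D)$ in an equidistant $\PP$-regular $D'$, notes that each $[f(b_i),f(b_{i+1})]$ then consists of $m_i\in\gp(\PP)$ steps of $D'$, and finally refines each such step into $q_i$ equal parts where $q_i\in\md(\PP)$ is chosen so that $q_im_i\in\md(\PP)$; only then are both $D''$ and $f^{-1}(D'')$ genuinely $\PP$-regular, the latter because each interval of the original equidistant $D$ is cut into $m_iq_i\in\md(\PP)$ equal pieces. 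Your structural worry about assembling $D$ as an iterated refinement is legitimate, but it is resolved automatically once one anchors the construction to an equidistant $\md(\PP)$-grid through the breaks and only ever refines its cells by factors in $\md(\PP)$; the inductive ``subtree gluing'' you sketch is not needed, and as stated it does not supply the arithmetic correction that the argument actually requires.
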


In the final step, 
we show that affine interpolations given by special $\PP$-regular 
and $\PP$-standard subdivisions, or by special $\PP$-standard subdivisions, 
suffice to generate $G[\PP]$ (see Theorem \ref{TheoremB9} for more details).
\index{PP-regular subdivision@$\PP$-regular subdivision}%
\index{PP-standard subdivision@$\PP$-standard subdivision}%
\subsubsection{Finite generation}
\label{ssec:2.6}
Our main result concerning finite generation is the following combination of Proposition \ref{PropositionB1} and Theorems \ref{TheoremB2} and \ref{TheoremB4}:
\begin{thmI}
\label{TheoremB}
\index{Group G(R;A,P)@Group $G(\R;A,P)$!finite generation}%
\index{Group G([0,infty[;A,P)@Group $G([0, \infty[\;;A,P)$!finite generation}%
\index{Homology Theory of Groups!H0(P,A)@$H_0(P,A)$}%
If $G(I;A,P)$ is finitely generated then
\begin{enumerate}[(i)]
\item $I$ equals $\R$ or it is of type $[0,\infty[$ or of type $[0,b]$ with  $b \in A$,
\item $A$ is a finitely generated $\Z[P]$-module, and
\item $P$ is a finitely generated group,
\item either $I = \R$ or $A/(IP \cdot A)$ is finite.
\end{enumerate}
Conversely, 
if conditions (ii), (iii) and (iv) are satisfied and if $I$ equals $\R$ or is of type $[0,\infty [$,
 the group $G(I;A,P)$ can be generated by finitely many elements.
\end{thmI}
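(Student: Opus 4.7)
The plan is to prove necessity and sufficiency separately, handling necessity by descending along natural homomorphisms and sufficiency by pruning the explicit infinite presentation from Section \ref{ssec:2.5}.

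\emph{Necessity.} Suppose $G = G(I;A,P)$ is finitely generated. For conditions (ii) and (iii), I would use the homomorphism $\rho\colon G \to \Aff(A,P)=A\rtimes P$ recording the affine behavior of $g\in G$ near the right endpoint (or near $+\infty$). Combining $\rho$ with $\lambda$ (and using Theorem \ref{TheoremA} to construct preimages of prescribed affine data), the image is shown to project onto $P$ and to hit enough of $A$ so that $A\rtimes P$ itself must be finitely generated; by the standard criterion for finite generation of semidirect products this forces $P$ to be finitely generated and $A$ to be finitely generated as a $\Z[P]$-module. For condition (iv), assuming $I\neq\R$, I would invoke the homomorphism $\nu$ of \eqref{eq:2.7}; by Corollary \ref{CorollaryA4} the indexing set $(A\cap I)_\sim$ is in bijection with $A/(IP\cdot A)$, so the target contains a free abelian subgroup of rank $\mathrm{rk}\,P\cdot\bigl|A/(IP\cdot A)\bigr|$, which is infinitely generated unless $A/(IP\cdot A)$ is finite. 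For condition (i), I would rule out the interval types $]0,\infty[$, $]0,b_0]$, $]0,b_0[$ where one endpoint fails to lie in $A$; near such an endpoint every element of $G$ has slope $1$, and by localizing near the bad endpoint one extracts an infinite family of independent homomorphisms (e.g.\ left-derivative valuations at a decreasing sequence of points near the endpoint) to an abelian target, obstructing finite generation.

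\emph{Sufficiency.} Assume $I=\R$ or $I=[0,\infty[$ and that (ii)--(iv) hold. The group $G$ is generated, in both cases, by the one-break maps $g(a,p)$ with $a\in A\cap\mathrm{Int}(I)$, $p\in P\setminus\{1\}$, together with $\Aff(A,P)$ when $I=\R$. Pick $p_1,\dots,p_k$ generating $P$, a finite set of $\Z[P]$-generators $a_1,\dots,a_m$ of $A$, and coset representatives $c_1,\dots,c_\ell$ of $A/(IP\cdot A)$ (take $\ell=1$ when $I=\R$). The finite candidate generating set is $\{g(c_i,p_j)\}$ together with a finite generating set of $\Aff(A,P)$ (available by (ii), (iii)) in the case $I=\R$, or a finite generating set of $\Aff(IP\cdot A,P)$ sitting inside $G([0,\infty[;A,P)$ otherwise. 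Using $g(a,p)\,g(a,p')=g(a,pp')$, reduce the $p$-slot to products of $p_j^{\pm1}$. Using the defining relation $\act{g(a',q)}g(a'',p)=g(a'+q(a''-a''),p)$ (for $a'<a''$) and its mirror, iterated conjugation by already-produced generators shifts the break position $a''$ by arbitrary elements of $IP\cdot A$, and the $c_i$'s supply one representative in each coset of $A/(IP\cdot A)$; hence every $g(a,p)$ is produced.

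\emph{Main obstacle.} The routine direction of necessity, i.e.\ (ii) and (iii), is a clean consequence of passage to the quotient $\Aff(A,P)$, and condition (iv) follows comfortably from $\nu$. The hard point is the necessity of (i): showing that the three interval types with a non-$A$ endpoint cannot yield a finitely generated group requires more than abelianization invariants, and one must construct explicit obstructions from the local behavior at the exceptional endpoint. The sufficiency direction, while technical, is then a manipulation within the known infinite presentation, and the heart of the argument is the conjugation trick moving the break point across cosets of $IP\cdot A$.
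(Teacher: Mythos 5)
Your overall architecture (necessity by quotients/invariants, sufficiency by pruning the infinite generating set) is reasonable, and the sufficiency strategy is essentially the paper's (Theorem \ref{TheoremB2} and Proposition \ref{PropositionB3}/Theorem \ref{TheoremB4}). But two of your necessity arguments do not work as stated. First, condition (ii) for a compact interval $[0,b]$ with $b\in A$: there the images of $\lambda$ and $\rho$ are each just $\{p\cdot\id\mid p\in P\}\cong P$ (Corollary \ref{CorollaryA2}(ii)), so the image of $(\lambda,\rho)$ is $P\times P$ and carries no information whatsoever about $A$; there is no homomorphism from $G([0,b];A,P)$ onto $A\rtimes P$, so you cannot ``hit enough of $A$'' this way. (Even for $I=[0,\infty[$ the image of $\rho$ is only $\Aff(IP\cdot A,P)$, so you would at best get finite generation of $IP\cdot A$ and would need (iv) to recover $A$.) The paper's route, which works uniformly for all interval types, is the ascending-chain argument of Proposition \ref{PropositionB1}: if $A_1\subsetneq A$ is a $\Z[P]$-submodule, then $G(I;A_1,P)$ is a \emph{proper} subgroup of $G(I;A,P)$ because formula \eqref{eq:5.1} produces an element with a break at any prescribed $a\in(A\smallsetminus A_1)\cap\Int(I)$; hence a non-finitely-generated $A$ exhibits $G$ as a properly ascending union. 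Second, your obstruction for (i) is not well-founded: the jump ratio $f\mapsto f'(t_+)/f'(t_-)$ at a single interior point $t$ is \emph{not} a homomorphism (only the product over an entire orbit, $\nu_\Omega$, is, by the chain rule), so ``left-derivative valuations at a decreasing sequence of points'' do not give independent homomorphisms to an abelian group. The correct and much simpler argument: if, say, $\inf I=a\notin A$ is finite, then every $f\in G(I;A,P)$ is the identity near $a$ (Corollary \ref{CorollaryA2}(i)), so $G=\bigcup_n G([a_n,\sup I]\cap I;A,P)$ for any sequence $a_n\in A$ decreasing to $a$, again a properly ascending chain.

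Two smaller points. For (iv) you argue that the \emph{target} of $\nu$ is infinitely generated; what you need is that the \emph{image} of $\nu$ is, which requires exhibiting elements of $G$ hitting infinitely many distinct orbit-summands (this can be done, e.g.\ with the elements $b(a,\Delta;p)$ of Example \ref{example:Elements-in-B}, but the paper again just uses the chain of subgroups $G_X$ with $X$ a growing finite union of orbits). And in the sufficiency proof for $I=[0,\infty[$, the relation $\act{g(a,p)}g(a',p')=g(a+p(a'-a),p')$ is only available when $a<a'$; the real content of Proposition \ref{PropositionB3} is the positivity bookkeeping (the rescalings by large $p_\alpha,p_\omega\in P$) needed to keep all intermediate break points admissible while walking across a coset of $IP\cdot A$. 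Your sketch treats this as routine, but it is precisely the non-routine step.
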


If $I$ is an interval of type $[0,b]$ 
Theorem \ref{TheoremB} gives only necessary conditions for finite generation; 
the only sufficient condition we have been able to detect is described in
\begin{crlI}[see Corollary \ref{CorollaryB10}]
\label{crl:B10-intro}
\index{Group G([a,c];A,P)@Group $G([a,c];A,P)$!finite generation}%
Given a finite set $\PP$ of integers $\geq 2$,
set $P = \gp(\PP) $ and $A = \Z[P]$.
Then the group  $G([0,b];  A, P)$ is finitely generated for every $b \in A_{>0}$.
\end{crlI}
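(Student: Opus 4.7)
The corollary will follow from combining two earlier results from Chapter \ref{chap:B}. First, I would appeal to Lemma \ref{LemmaB6}: whether or not $G([0,b]; A, P)$ is finitely generated is independent of the choice of $b \in A_{>0}$. Since $1 \in A = \Z[P]$, it suffices to prove that $G[\PP] = G([0,1]; \Z[P], P)$ is finitely generated.

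For this, the plan is to invoke Theorem \ref{TheoremB9}. By Proposition \ref{PropositionB8}, every element of $G[\PP]$ is the affine interpolation of a pair of $\PP$-regular subdivisions of $[0,1]$ having the same number of points. Following Thompson's original strategy, the finite generating set I would propose consists of: first, for each $p \in \PP$, an elementary PL-homeomorphism associated with the operation of subdividing the leftmost subinterval into $p$ equal parts; and second, a small number of auxiliary PL-homeomorphisms that allow one to interchange adjacent intervals of a $\PP$-standard subdivision. Since $\PP$ is a finite set, this list is finite.

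The main obstacle lies in verifying that these finitely many PL-homeomorphisms actually generate all of $G[\PP]$. The approach would be by induction on the combinatorial complexity of the pair of $\PP$-regular subdivisions representing a given element: multiplication by the generators of the first type should reduce a general $\PP$-regular subdivision to a $\PP$-standard one, while multiplication by the generators of the second type should transform one $\PP$-standard subdivision into any other with the same cardinality. Controlling the inductive step, in particular ensuring that the combinatorial complexity is decreased (or at least bounded) at each stage, is the technical crux and is carried out in the proof of Theorem \ref{TheoremB9}; once that is in hand, the corollary is immediate.
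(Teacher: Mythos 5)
Your proposal is correct and follows essentially the same route as the paper: reduce to the interval $[0,1]$ via the Comparison Theorem \ref{TheoremB7} (itself a consequence of Lemma \ref{LemmaB6}), then observe that the generating set $\Fcal \cup \GG \cup \TT \cup \RR$ supplied by Theorem \ref{TheoremB9} for $G[\PP]$ is finite when $\PP$ is finite. Your description of the generators is a slightly looser paraphrase of the paper's four families, but since you defer the generation argument to Theorem \ref{TheoremB9} itself, the logic of the corollary is the same.
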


%
\subsubsection{Finite presentations}
\label{ssec:2.7}
If it comes to the question which of the groups $G(I;A,P)$ admit a finite presentation our results are meagre. 
We can prove that if $I$ is either $\R$ or is of type $[0,\infty[$
and if $G(I;A,P)$ is finitely presented, then
the metabelian group $\Aff(A,P) = A \rtimes P$ is finitely presented and $A/(IP \cdot A)$ is finite. 
As regards examples of finitely presented groups with $I$ being of one of these two types, 
we can show that for a finite set $\PP$ of integers $\geq 2$, 
the group $G(I;\Z[\gp(\PP)], \gp(\PP))$ has a finite presentation 
(see Propositions \ref{PropositionD4} and Theorem \ref{TheoremD6}). 
\footnote{If $I = \R$ one can do slightly better; 
see section \ref{sssec:13.3cNew}.}

If $I$ has type $[0,b]$ 
the only finitely presented examples we know of are given by
\begin{prpI} [Proposition \ref{PropositionD10}]
\label{PropositionD10-intro}
If $p\geq 2$ is an integer the group 
\begin{equation}
\label{eq:definition-G[p]}
\index{Group G[p]@Group $G[p]$!definition|textbf}%
\index{Group G[p]@Group $G[p]$!finite presentation}%
G[p] = G([0,1];\Z[1/p],\gp(p))
\end{equation}
is generated by elements $x = x_0$, $x_1$, \ldots , $x_{p-1}$
and defined in terms of these generators by the $p(p-1)$ relations
\begin{align*}
\act{x_i }x_j &= \act{x}x_j \text{ for } 1 \leq i < j \leq p-1,\\
\act{x_i\cdot x}x_j &= \act{x^2}x_j \text{ for } 1 \leq j < i+1 \leq p, \text{ and }\\
\act{x_{p-1}x^2}x_1 &= \act{x^3} x_1.
\end{align*}
\end{prpI}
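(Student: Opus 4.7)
The plan is to combine the explicit generation results of Section \ref{sec:9} with a Tietze-transformation argument that derives a convenient infinite presentation from the stated finite one, and then to close the argument via a Thompson-style normal form.

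First, I would identify the generators concretely. Take $x = x_0$ to be the canonical ``shift'' PL-homeomorphism of $[0,1]$ sending $[0, 1/p]$ onto $[0, 1/p^2]$ with slope $1/p$, having slope $1$ on a middle band, and slope $p$ near the right end; for $1 \leq i \leq p-1$ let $x_i$ be the identity on $[0, i/p]$ and a rescaled copy of $x_0$ on $[i/p, 1]$. Each $x_i$ visibly lies in $G[p]$. Since $\PP = \{p\}$ here, Proposition \ref{PropositionB8} and Theorem \ref{TheoremB9} describe $G[p]$ in terms of pairs of $\PP$-standard subdivisions, from which one reads off that the $x_i$ generate. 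The $p(p-1)$ relations themselves are verified by direct computation of PL-compositions on the relevant subintervals, essentially because $x_0^{-1} x_i$ and $x_j$ have disjoint supports (up to rescaling) whenever $i < j$, so they commute where both act nontrivially.

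Second, let $\hat{G}$ denote the abstract group on $x_0, \ldots, x_{p-1}$ with the $p(p-1)$ relations, and define inductively $x_k = \act{x_0} x_{k-1}$ for $k \geq p$. The goal is to derive in $\hat{G}$ an infinite family of conjugation relations of the Thompson type, typically of the form
\[
\act{x_i} x_j = x_{j + p - 1} \quad \text{for all } 0 \leq i < j,
\]
or its natural variant. The first block of finite relations reduces $x_i$-conjugation to $x_0$-conjugation whenever both indices lie in $\{1,\ldots,p-1\}$; the second block handles the mixed regime $j < i+1$; and the last relation $\act{x_{p-1} x^2} x_1 = \act{x^3} x_1$ serves as the bridge tying these together. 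Running the induction on $\max(i,j)$, one propagates control to all index ranges by successively conjugating lower-index relations by $x_0$ and appealing to the appropriate finite relation to re-index.

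Third, with the infinite presentation in hand, I would produce a semi-normal form
\[
x_{i_1} \cdots x_{i_k} \cdot x_{j_\ell}^{-1} \cdots x_{j_1}^{-1}
\]
with $i_1 \leq \cdots \leq i_k$, $j_1 \leq \cdots \leq j_\ell$, and an irreducibility condition analogous to Thompson's $i_k \neq j_\ell$. Such normal forms are in canonical bijection with pairs $(D, D')$ of $\PP$-standard subdivisions of $[0,1]$ of equal cardinality, which by Proposition \ref{PropositionB8} parametrize the elements of $G[p]$ exactly. Since the natural surjection $\hat{G} \twoheadrightarrow G[p]$ is bijective on these normal forms, it is an isomorphism.

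The hard part is unmistakably the second step: translating the compact bookkeeping of the $p(p-1)$ finite relations into the full infinite conjugation calculus. For $p = 2$ this is the classical manipulation used to pass from Thompson's two-relator presentation to the presentation \eqref{eq:Infinite-presentation-F}; for general $p$ the two blocks interact in a more intricate way, and the single bridging relation $\act{x_{p-1} x^2} x_1 = \act{x^3} x_1$ must be invoked at precisely the right moment to close the induction.
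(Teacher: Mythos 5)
Your plan matches the paper's proof in all essentials: the paper likewise establishes the infinite Thompson-type presentation $\act{x_i}x_j = x_{j+(p-1)}$ ($i<j$) for $G[p]$ by the generation results of Section \ref{sec:9} together with a semi-normal-form argument (section \ref{ssec:15.1}), eliminates the generators $x_k$ with $k\geq p$ via $x_{r+m(p-1)}=\act{x^m}x_r$, and then shows by an induction on the exponent $n$ in $\act{x_ix^n}x_j=\act{x^{n+1}}x_j$ — with the relation $\act{x_{p-1}x^2}x_1=\act{x^3}x_1$ needed exactly to start the $j=1$ cases — that the listed $p(p-1)$ relations imply all the rest. The only cosmetic difference is the order of the steps, and your "induction on $\max(i,j)$" is, after the elimination of redundant generators, the paper's induction on $n$.
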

This result covers slightly more ground than might appear at first sight,
since the group $G([0,b]; A,P)$ does not depend on  $b \in A$ 
if $P$ is cyclic
(see Theorem \ref{TheoremE07}).

Note that $G[p]_{\ab}$ is free abelian of rank $p$. 
This shows that $G[p]$ needs at least $p$ generators. 
Moreover, since $G[p]$ is a split extension $B[p]\rtimes  (P \times P)$ and as G[p] acts trivially on $B[p]_{\ab}$, 
it follows that $B(p]_{\ab}$ is free abelian of rank $p-2$;
so $B[p]$ is only perfect if $p=2$.
\index{Group G[p]@Group $G[p]$!abelianization}%
%
\subsection{Isomorphisms}
\label{ssec:2.8}
If it comes to isomorphisms of two groups $G(I_1;A_1,P_1)$ and $G(I_2;A_2,P_2)$, 
a better picture emerges if one considers not only the groups $G(I_i;A_i,P_i)$,
but all groups 
sandwiched between the simple groups $B(I_i;A_i,P_i)'$ and $G(I_i;A_i,P_i)$. 

For $i \in \{1,2\}$, 
let $(I_i,A_i,P_i)$ be a triple, as considered before, 
and let $G_i$ be a group satisfying  $B(I_i;A_i,P_i)' \leq G_i \leq G(I_i;A_i,P_i)$. 
Making heavy use of ideas of McCleary's paper \cite{McC78b} 
we shall prove in Theorem E4
that every group isomorphism $\alpha \colon G_1 \iso G_2$ is induced by conjugation 
by a unique homeomorphism $\varphi \colon \Int(I_1) \iso \Int(I_2)$.
One of the main questions left open by Theorem \ref{TheoremE04} is then
whether $\varphi$ is a, possibly \emph{infinitary}, PL-homeomorphism. 

Here the algebraic structure of $P$ plays a significant role: 
if $P$ is not cyclic, it is a dense subgroup of the multiplicative group $\R^\times_{> 0}$. 
Using this fact we have been able to determine the nature of $\varphi$:
\begin{thmI}[see Theorem \ref{TheoremE10} and Supplement \ref{SupplementE11New}]
\label{TheoremE10-intro}
\index{Group G(R;A,P)@Group $G(\R;A,P)$!isomorphisms}%
\index{Group G([0,infty[;A,P)@Group $G([0, \infty[\;;A,P)$!isomorphisms}%
\index{Group G([a,c];A,P)@Group $G([a,c];A,P)$!isomorphisms}%
Assume  $G_1$ and $G_2$ are as before, 
and $\alpha \colon G_1 \iso G_2$ is an isomorphism induced by conjugation by  
$\varphi \colon \Int(I_1) \iso \Int(I_2)$. 
If $P_1$ is not cyclic  then 
\begin{enumerate}[(i)]
\item $P_2 = P_1$;
\item there exists a positive real $s$ so that $A_2 = s\cdot A_1$
and $\varphi$ is a PL-homeomorphism with slopes in the coset $ s\cdot P$ of $P$ in $\R^\times$,
singularities in $A_1$ and set of singularities a discrete subset of $\Int (I_1)$;
\item if, in addition, $G_1 = G(I_1;A_1,P_1)$
and  $I_1$ and $I_2$ are both the line $\R$, or both of type $[0,\infty [$ 
or both of type $[0,b_i]$ with $b_i \in A_i$,
then $G_2 = G(I_2;A_2,P_2)$ and  $\varphi$ is a finitary PL-homeomorphism.
\end{enumerate}
\end{thmI}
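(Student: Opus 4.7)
Starting from Theorem \ref{TheoremE04}, we have a unique orientation-preserving homeomorphism $\varphi \colon \Int(I_1) \iso \Int(I_2)$ inducing $\alpha$ by conjugation, and $\varphi$ sends $A_1 \cap \Int(I_1)$ onto $A_2 \cap \Int(I_2)$. The task is to read off the algebraic data $P_2$ and $\bar{A}_2 = s \cdot A_1$ from $\varphi$, and then to prove that $\varphi$ itself is piecewise linear with slopes in the coset $s \cdot P$. The tools we plan to exploit are the abundance of elements in $B_1' = B(I_1;A_1,P_1)'$ provided by Theorem \ref{TheoremA} and Corollary \ref{CorollaryA4}, together with the hypothesis that $P_1$, being non-cyclic, is dense in $\R^\times_{>0}$.

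For (i), fix $a \in A_1 \cap \Int(I_1)$ and consider the stabilizer $(G_1)_a$. The ``germ slope'' homomorphism
\[
\sigma_a \colon (G_1)_a \to P_1 \times P_1, \qquad g \mapsto (g'(a_-),\, g'(a_+)),
\]
is surjective by Theorem \ref{TheoremA}. Its kernel consists of elements that are the identity on some neighborhood of $a$, and that property admits an algebraic characterization (for instance, $h$ lies in the kernel iff it commutes with every $g \in G_1$ supported in a sufficiently small interval around $a$). The isomorphism $\alpha$ carries $(G_1)_a$ onto $(G_2)_{\varphi(a)}$ and, respecting that algebraic characterization, descends to an isomorphism $P_1 \times P_1 \iso P_2 \times P_2$. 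Because $\varphi$ preserves orientation, the descended map preserves the two factors; this induces an isomorphism $P_1 \iso P_2$ of subgroups of $\R^\times_{>0}$. Varying $a$ and checking compatibility on overlapping stabilizers forces $P_2 = P_1$.

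For the heart of (ii), fix a reference point $a^* \in A_1 \cap \Int(I_1)$ and define a tentative slope coset $\mu(a) \in \R^\times_{>0}/P$ of $\varphi$ at each $a \in A_1 \cap \Int(I_1)$ via difference quotients $(\varphi(a')-\varphi(a))/(a'-a)$ taken along an approximating sequence in the $B_1'$-orbit of $a$. The main step is to show that $\mu$ is well defined and locally constant: given close orbit points $a_1 < a_2$, the stabilizer argument of the previous paragraph applied to elements of $B_1'$ which are affine of slope $p$ on $[a_1,a_2]$ forces the slopes of $\alpha(g)$ to lie in $P$, and the density of $P$ in $\R^\times_{>0}$ then forces the difference quotient of $\varphi$ across $[a_1,a_2]$ to lie in the single coset $\mu(a_1) \cdot P$. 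An induction on breakpoints then yields that $\varphi$ is piecewise affine with slopes in $s \cdot P$, breaks in $A_1$, and set of breaks discrete in $\Int(I_1)$, where $s$ is any lift of $\mu(a^*)$. The equality $\bar{A}_2 = s \cdot A_1$ follows by computing $\varphi(a)-\varphi(a^*) \in s \cdot A_1$ for $a \in A_1$ and running the same argument with $\varphi^{-1}$.

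For (iii), under the added hypotheses the set of breaks of $\varphi$ must be finite. When $I_1 = \R$, the affine group $\Aff(A_1,P_1)$ lies in $G_1$; for any $g \in \Aff(A_1,P_1)$ of infinite order, the conjugate $\alpha(g) \in G_2$ is a finitary PL-homeomorphism, yet its singularity set must contain the images under $\varphi$ of the breaks of $\varphi$ together with all iterates under the dynamics of $g$. An infinite break set for $\varphi$ would therefore produce infinitely many singularities in $\alpha(g)$, a contradiction. For $I_1 = [0,\infty[$ or $I_1 = [0,b]$ with $b \in A_1$, we apply the same device at each endpoint of $I_1$ lying in $A_1$, using elements of $G_1$ that act as non-trivial dilations near that endpoint. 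The hard part throughout is the third paragraph: proving that $\mu$ is well-defined and locally constant. It is precisely there that the non-cyclicity of $P$, and hence its density in $\R^\times_{>0}$, is indispensable.
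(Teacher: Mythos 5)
Your overall strategy (germ-slope homomorphisms on stabilizers, density of the non-cyclic $P$, propagation by the transitivity of $B_1'$) is the same as the paper's, which runs through Propositions \ref{PropositionE8} and \ref{PropositionE9}. But there is a genuine gap in the logical order, concentrated in your treatment of (i). The stabilizer argument does \emph{not} yield $P_2 = P_1$ directly: what it yields (this is Proposition \ref{PropositionE8}) is an order-preserving isomorphism $\vartheta_a \colon P_1 \iso P_2$ which, after extending $\ln \circ \vartheta_a \circ \exp$ from the dense subgroup $\ln P_1$ to a continuous order-preserving endomorphism of $\R$, must be of the form $p \mapsto p^r$ for some $r>0$, together with the functional equation $\varphi(a+p\cdot t) = \varphi(a) + p^r\bigl(\varphi(a+t)-\varphi(a)\bigr)$ for $p \in P_1\,\cap\,]0,1]$. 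Two abstractly (even order-) isomorphic dense subgroups of $\R^\times_{>0}$ need not coincide --- $\gp(2,3)$ and $\gp(4,9)$ are carried onto each other by $p \mapsto p^2$ --- and "varying $a$ and checking compatibility" cannot rule out a uniform exponent $r \neq 1$. The conclusion $P_2=P_1$ is exactly the statement $r=1$, and that is obtained only \emph{after} one knows $\varphi$ is differentiable somewhere with non-zero one-sided derivative: non-cyclicity makes $P_1\,\cap\,]0,1]$ dense, continuity of $\varphi$ extends the functional equation to all real $p\in\,]0,1[$, so $\varphi(a+t)-\varphi(a) = C t^r$ near $a$, whence $\varphi$ is differentiable on an interval and Proposition \ref{PropositionE9} forces $r=1$. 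So (ii) must be established before (i), not after.

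The same circularity infects your paragraph on (ii): the assertion that the difference quotients of $\varphi$ across small orbit intervals "lie in the single coset $\mu(a_1)\cdot P$" presupposes $r=1$; if $r\neq 1$ the functional equation makes those quotients range over a coset of $\theta_{r-1}(P)$ instead. Moreover "an induction on breakpoints" is not the mechanism that produces piecewise linearity --- the paper gets local affineness from the power-law form and then propagates it to all of $\Int(I_1)$ via the identity $\varphi = \alpha(f)^{-1}\circ\varphi\circ f$ with $f \in B_1'$ moving an arbitrary point into the interval where $\varphi$ is already affine (double transitivity from Remark \ref{remark:5.3}). Your part (iii) is closer to being repairable but is stated too loosely: the break set of $\alpha(g)$ is \emph{contained in} $\varphi(S)\cup\varphi(g^{-1}(S))$ with possible cancellation, not "must contain" the iterated images, and cancellation is precisely what happens for the infinitary conjugators of Theorem \ref{TheoremE07}; the correct deduction is that finitariness of $\alpha(g)$ for all translations (resp.\ dilations at an endpoint) forces the set of non-trivial break ratios of $\varphi$ to be invariant under a dense set of translations (resp.\ under multiplication by the dense group $P$), contradicting discreteness --- or, as the paper does in Part 2 of the proof of Supplement \ref{SupplementE11New}, one analyzes the induced map on amplitudes of translations via $\rho$ and concludes $\varphi$ is affine near $\pm\infty$.
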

This theorem permits one to decide when $G([0,b_1];A,P)$ and $G[0,b_2];A,P)$ are isomorphic.
\begin{crlI}[Corollary \ref{CorollaryE12}]
\label{crl:E12-intro}
\index{Subgroup Auto(A)@Subgroup $\Aut_o(A)$!definition|textbf}%
\index{Subgroup Auto(A)@Subgroup $\Aut_o(A)$!significance}%
\index{Submodule IPA@Submodule $IP \cdot A$!significance}%
\index{Group G([a,c];A,P)@Group $G([a,c];A,P)$!dependence on [a,c]@dependence on $[a,c]$}%
\index{Homology Theory of Groups!H0(P,A)@$H_0(P,A)$}%
Assume $P$ is not cyclic and $b_1$, $b_2$ are positive elements of $A$.
Then the groups $G([0,b_1];A,P)$  and $G([0,b_2];A,P)$ are isomorphic if, and only if, 
$b_1 + IP \cdot A$ and $b_2 + IP \cdot A$ lie in the same orbit of the group 
$\Aut_o(A) = \{s \in \R^\times_{>0} \mid s\cdot A = A\}$.
\end{crlI}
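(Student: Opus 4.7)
The plan is to establish the two directions separately, using the classification machinery developed in Chapter \ref{chap:E} for the hard direction and the basic construction result, Theorem \ref{TheoremA}, for the easy one. As a preliminary sanity check, I would first verify that $\Aut_o(A)$ acts on $A/(IP\cdot A)$: for $s\in\Aut_o(A)$ one has $s\cdot(IP\cdot A)=IP\cdot(sA)=IP\cdot A$, so multiplication by $s$ descends to an automorphism of $A/(IP\cdot A)$, and the condition ``$b_1+IP\cdot A$ and $b_2+IP\cdot A$ lie in the same $\Aut_o(A)$-orbit'' unambiguously means the existence of $s\in\Aut_o(A)$ with $b_2-s\cdot b_1\in IP\cdot A$.

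\emph{Sufficiency.} Assume there exists $s\in\Aut_o(A)$ with $b_2-s\cdot b_1\in IP\cdot A$. Conjugation by the homothety $t\mapsto s\cdot t$ delivers an isomorphism
\[
G([0,b_1];A,P)\iso G([0,s\cdot b_1];\,s\cdot A,\,P)=G([0,s\cdot b_1];A,P),
\]
since $s\cdot A=A$. It then suffices to connect $G([0,s\cdot b_1];A,P)$ to $G([0,b_2];A,P)$. Both endpoints $s\cdot b_1$ and $b_2$ belong to $A$ and their difference lies in $IP\cdot A$, so Theorem \ref{TheoremA} furnishes a PL-homeomorphism $g\colon [0,s\cdot b_1]\iso[0,b_2]$ with slopes in $P$ and breaks in $A$. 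Extending $g$ by an $A$-preserving finitary PL-homeomorphism of $\R$, conjugation by $g$ maps $G([0,s\cdot b_1];A,P)$ isomorphically onto $G([0,b_2];A,P)$: the slope calculation $(gfg^{-1})'(g(x))=g'(f(x))\cdot f'(x)/g'(x)$ keeps slopes inside $P$, and the set of breaks of $gfg^{-1}$ lies in $g(A\cup f^{-1}(A))\subseteq A$.

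\emph{Necessity.} Suppose $\alpha\colon G([0,b_1];A,P)\iso G([0,b_2];A,P)$ is an isomorphism. The intervals $[0,b_1]$ and $[0,b_2]$ are both compact with endpoints in $A$, hence similar; since $P$ is non-cyclic, Supplement \ref{SupplementE11New} applies and produces a finitary PL-homeomorphism $\varphi\colon{]0,b_1[}\iso{]0,b_2[}$ that induces $\alpha$ by conjugation, with $A=s\cdot A$ (so $s\in\Aut_o(A)$) and with slopes in the coset $s\cdot P$. Extending $\varphi$ continuously to the closed interval, write its breaks together with the endpoints as $0=a_0<a_1<\dots<a_k=b_1$ and let $s\cdot p_i$ be the slope of $\varphi$ on $[a_{i-1},a_i]$, with $p_i\in P$ and $a_i\in A$. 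The telescoping identities
\[
b_2=\sum_{i=1}^{k}s\cdot p_i\cdot (a_i-a_{i-1}),\qquad s\cdot b_1=\sum_{i=1}^{k}s\cdot (a_i-a_{i-1})
\]
then yield
\[
b_2-s\cdot b_1=\sum_{i=1}^{k}(p_i-1)\cdot s\cdot (a_i-a_{i-1}),
\]
and since $s\cdot (a_i-a_{i-1})\in s\cdot A=A$ while $p_i\in P$, every summand lies in $IP\cdot A$. Hence $b_2-s\cdot b_1\in IP\cdot A$, as claimed.

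The substantive step is the necessity direction: passing from an abstract group isomorphism $\alpha$ to a concrete finitary PL-homeomorphism $\varphi$ with slopes in a single coset of $P$ and breaks in $A$ is exactly what the Representation Theorem \ref{TheoremE04}, sharpened in the non-cyclic case by Supplement \ref{SupplementE11New}, provides. Once $\varphi$ is in hand the remaining argument is just bookkeeping with its breaks and slopes. On the sufficiency side the work is entirely absorbed by Theorem \ref{TheoremA}, which is precisely what makes the congruence $b_2\equiv s\cdot b_1\pmod{IP\cdot A}$ the right condition to guarantee an $A$-preserving, $P$-sloped PL-homeomorphism matching the two endpoints.
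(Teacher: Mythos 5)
Your proof is correct and follows essentially the same route as the paper's: Supplement \ref{SupplementE11New} for necessity, and a homothety followed by Theorem \ref{TheoremA} for sufficiency; your telescoping computation with the breaks of $\varphi$ is just an inlined version of the "only if" direction of Theorem \ref{TheoremA}, which the paper instead quotes after factoring $\varphi$ as $(s\cdot\id)$ followed by an element of $G(\R;A,P)$. The one point you gloss over is that Theorem \ref{TheoremE10} only gives a \emph{non-zero} real $s$, so your parenthetical "(so $s\in\Aut_o(A)$)" silently assumes $\varphi$ is increasing; the paper handles this by first composing $\alpha$ with conjugation by the reflection $t\mapsto b_2-t$ (which normalizes $G([0,b_2];A,P)$) to reduce to the increasing case, and you should add that one line.
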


This leaves us with the case of a cyclic group $P$.
Here we have
\begin{thmI}[see Theorem \ref{TheoremE14}]
\label{thm:E14-intro} 
Assume $P_1$ is cyclic
and consider an isomorphism $\alpha \colon G(I_1;A_1, P_1) \iso G_2$
onto a subgroup of $G(I_2; A_2, P_2)$
which contains the derived group of $B(I_2; A_2, P_2)$.
Let $\varphi \colon \Int(I_1) \iso \Int(I_2)$ be the unique homeomorphism inducing $\alpha$.
\begin{enumerate}[(i)]
\item If $I_1 = \R$ 
then $I_2= \R$ and $\varphi$ is a finitary PL-homeomorphism 
with slopes in a coset $s \cdot P_1$ and singularities in $A_1$. 
Moreover, $P_2= P_1$, $A_2 = s \cdot A_1$ and $G_2 = G(\R; A_2 ,P_2)$. 
\item If $I_1$ equals $[0, \infty[$ or  $\,]0 ,\infty[$  and if  $I_2 = I_1$,
then $\varphi$ is an increasing  PL-homeomor\-phism 
with slopes in a coset $s \cdot P_1$ and singularities in $A_1$.
Moreover, $P_2= P_1$, $A_2 = s \cdot A_1$ and, for each $\varepsilon > 0$,
the interval $[\varepsilon, \infty[$ contains only finitely many singularities of $\varphi$.  
\end{enumerate}
\end{thmI}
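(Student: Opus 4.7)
The starting point is Theorem \ref{TheoremE04}, which, without any cyclicity hypothesis on $P_1$, provides the unique order-preserving homeomorphism $\varphi\colon\Int(I_1)\iso\Int(I_2)$ inducing $\alpha$ and satisfying $\varphi(A_1\cap\Int(I_1))=A_2\cap\Int(I_2)$. The remaining task is to pin down the analytic nature of $\varphi$ when $P_1$ is cyclic. The arguments of Theorem \ref{TheoremE10} rely on the density of a non-cyclic $P_1$ in $\R^\times_{>0}$, which is now absent; the substitute I propose is a careful analysis exploiting the rich supply of affine and one-break elements of $G_1$, together with the homomorphisms $\lambda,\rho$ of section \ref{ssec:2.3}.

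To rule out $I_2\neq\R$ in case (i), suppose $I_2$ had a finite endpoint, say $0$. Then every $\alpha(f)\in G_2\subset G(I_2;A_2,P_2)$ fixes $0$ and, being finitary PL, is affine of some slope $p\in P_2$ immediately to one side of $0$. Applied to translations $\tau_b\in\Aff(A_1,P_1)\subset G_1$, the assignment $b\mapsto p_0(b)$ giving the slope of $\alpha(\tau_b)$ at $0^+$ is an additive-to-multiplicative homomorphism $A_1\to P_2$. Transporting the requirement $\tau_b^n(t)\to+\infty$ through $\varphi$ forces $p_0(b)>1$ whenever $b>0$, so this homomorphism is order-preserving. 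Since $A_1$ is dense in $\R$ while $P_2$ is cyclic, hence discretely embedded in $\R^\times_{>0}$, no such non-trivial order-preserving homomorphism can exist, a contradiction. Hence $I_2=\R$. In case (ii) the hypothesis $I_2=I_1$ is given outright. To identify $P_2$ and $s$ in case (i), restrict $\alpha$ to $\Aff(A_1,P_1)$ and post-compose with $\rho$ (and with $\lambda$) to obtain an injective homomorphism into $\Aff(A_2,P_2)$ whose slope component yields $P_2=P_1$ and produces a non-zero real $s$ with $A_2=s\cdot A_1$. Case (ii) follows by the same computation with the one-break generators $g(a,p)\in G_1$ in place of the affine elements.

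The central step shows that $\varphi$ is piecewise linear with slopes in the coset $s\cdot P_1$ and breaks in $A_1$. For any $a\in A_1\cap\Int(I_1)$ and any $p,q\in P_1$, Theorem \ref{TheoremA} allows the construction of an $f\in G_1$ supported in a short interval about $a$ which is affine of slope $p$ immediately to the left and of slope $q$ immediately to the right of $a$. Its image $\alpha(f)\in G_2$ is finitary PL, hence possesses well-defined one-sided slopes at $\varphi(a)$; expanding $\alpha(f)=\varphi\circ f\circ\varphi^{-1}$ via the chain rule on each affine piece of $f$ forces $\varphi$ itself to have well-defined one-sided derivatives at $a$, both lying in $s\cdot P_1$. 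Varying $a$ over $A_1\cap\Int(I_1)$ and applying the same chain-rule identity on each maximal interval between successive such points shows that $\varphi$ is affine of slope in $s\cdot P_1$ between consecutive points of a locally discrete subset of $\Int(I_1)$ contained in $A_1$.

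It remains to derive the finiteness statements and identify $G_2$. In case (i), apply the previous analysis to a translation $\tau_b\in G_1$: since $\alpha(\tau_b)$ is finitary PL, the local discreteness of breaks of $\varphi$ combined with its global behaviour at $\pm\infty$ forces $\varphi$ to have only finitely many breaks, whence $\varphi$ is a finitary $\PL$-homeomorphism of $\R$ with slopes in $s\cdot P_1$ and breaks in $A_1$; conjugation by $\varphi$ then carries $G(\R;A_1,P_1)$ onto $G(\R;A_2,P_2)$, so $G_2=G(\R;A_2,P_2)$. In case (ii) translations of all of $\R$ are unavailable, but for every $\varepsilon>0$ a finite family of bounded one-break elements of $G_1$ whose combined supports cover $[\varepsilon,\infty[$ suffices to force $\varphi$ to have only finitely many breaks in $[\varepsilon,\infty[\,$; accumulation of breaks at the left endpoint $0$ remains possible and is precisely what the statement permits. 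The principal obstacle is the central piecewise-linearity step: lacking the density of $P_1$, each test element $f$ probes $\varphi$ at only finitely many slopes, and one must combine Theorem \ref{TheoremA} with delicate chain-rule bookkeeping to extract enough local slope data to globalize. A secondary subtlety is the ruling-out of $I_2\neq\R$ in case (i), which rests on the non-existence of non-trivial order-preserving homomorphisms from a dense subgroup of $\R$ to a cyclic group.
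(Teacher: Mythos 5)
Your reduction to Theorem \ref{TheoremE04} and your observation that the difficulty is to establish piecewise linearity of $\varphi$ without the density of $P_1$ are both on target, but the central step of your argument has a genuine gap. From a test element $f\in G_1$ that fixes $a$ and is affine of slope $p$ on one side of $a$, the identity $\alpha(f)=\varphi\circ f\circ\varphi^{-1}$ yields only the functional equation $\varphi(a+p\cdot t)-\varphi(a)=q'\cdot\bigl(\varphi(a+t)-\varphi(a)\bigr)$ for $t$ in a small one-sided interval, where $q'$ is the corresponding one-sided slope of $\alpha(f)$ at $\varphi(a)$. When $P_1$ is cyclic this equation constrains $\varphi$ only on the orbit of a fundamental domain $[p\,t_0,t_0]$ and is satisfied by a large family of non-differentiable homeomorphisms (for instance $t\mapsto t^r$ with $p^r=q'$, and arbitrary continuous deformations thereof on the fundamental domain). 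The chain rule cannot be ``expanded'' here because differentiability of $\varphi$ is exactly what is in question; this is the content of Proposition \ref{PropositionE8}, which for cyclic $P_1$ delivers only the exponent $r$, not $r=1$. For the same reason your identification $P_2=P_1$ via the slope component of $\rho\circ\alpha$ is unjustified: both groups are infinite cyclic, so the induced abstract isomorphism is compatible with $\bar{p}=p^{\,r}$ for any $r$, and equality of $P_1$ and $P_2$ as subgroups of $\R^\times_{>0}$ only follows once differentiability of $\varphi$ at a single point of $A_1$ is known (Proposition \ref{PropositionE9}).

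The paper closes this gap by a mechanism you do not use: it recovers density not from the slopes but from the translation amplitudes at $+\infty$. The derived group of $\im\rho$ consists of translations with amplitudes in $IP^2\cdot A_1$, a non-trivial $\Z[P_1]$-submodule of $\R_{\add}$ and hence dense even when $P_1$ is cyclic (Part 2 of the proof of Supplement \ref{SupplementE11New}). Since $\alpha$ carries this group of translations-at-infinity to the corresponding group for $G_2$, the induced map on amplitudes is an order-preserving additive map between dense subgroups of $\R$, hence multiplication by a positive real $s_2$, and the resulting \emph{additive} functional equation $\varphi(t+b)=\varphi(t)+s_2\,b$ for all $b$ in a dense subgroup and all $t\ge t_*$ forces $\varphi$ to be affine on a half line $[t_*,\infty[\,$. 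Only then does Proposition \ref{PropositionE9} apply to propagate piecewise linearity, the coset $s\cdot P_1$ of slopes, $P_2=P_1$, $A_2=s\cdot A_1$, and the finiteness of breaks in compact subintervals; assertion (i) is then Corollary \ref{crl:Isomorphism-I-line}, obtained by running the same argument at both ends of $\R$. (Your exclusion of $I_2\neq\R$ in case (i) via the non-existence of an order-preserving homomorphism from the dense group $A_1$ into the discrete group $P_2$ is a workable alternative to the paper's use of Corollary \ref{CorollaryE5}, but it does not repair the main step.)
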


The previous theorem does not mention intervals of type $[0,b]$ with $b \in A$.
We do not know whether the homeomorphism $\varphi$ 
is necessarily piecewise linear in such a situation.
We can, however,  construct infinitary PL-homeomorphisms $\varphi \colon ]0,b_1[\; \iso\; ]0,b_2[$ 
that allow us to prove the surprising
\begin{thmI}[Theorem \ref{TheoremE07}]
\label{thm:E7-intro}
If $P$ is cyclic the groups
$G([0,b];A,P)$ with $b \in A_{>0}$ are isomorphic to each other.
\end{thmI}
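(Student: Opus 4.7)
The plan is to build an infinitary PL-homeomorphism $\varphi \colon \,]0,b[\, \iso \,]0,b'[\,$ that is self-similar at both endpoints, and to show that conjugation by $\varphi$ implements the isomorphism. Write $P = \gp(p)$ with $p > 1$, and pick $b_0 \in A \cap \,]0,b[$ and $b'_0 \in A \cap \,]0,b'[$; these sets are non-empty since $A$ is dense in $\R$.

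By Theorem \ref{TheoremA}, applied after translation, there is a finitary PL-homeomorphism $\varphi_0 \colon [b_0/p,\,b_0] \iso [b'_0/p,\,b'_0]$ with slopes in $P$ and breaks in $A$: the difference of the two lengths equals $(p-1) \cdot ((b'_0 - b_0)/p)$, which lies in $IP \cdot A$ since $(b'_0 - b_0)/p \in A$. Extend $\varphi_0$ to $\,]0, b_0]$ by scaling, by setting
\[
\varphi(t) \;=\; \varphi_0(p^n t)/p^n \quad \text{for } t \in [b_0/p^{n+1},\,b_0/p^n],\ n \geq 0.
\]
The analogous construction, performed in the coordinate $u = b - t$, provides the remaining half of $\varphi$ on $[b_0, b[$ with image $[b'_0, b'[$; the two halves agree at $b_0 \mapsto b'_0$. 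The resulting $\varphi$ is an infinitary PL-homeomorphism with slopes in $P$ and breaks in $A$ accumulating only at $0$ and at $b$, and it restricts to a bijection $A \cap \,]0,b[ \,\iso\, A \cap \,]0,b'[$.

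Now fix $g \in G([0,b];A,P)$ and set $h = \varphi \circ g \circ \varphi^{-1}$. The chain rule together with the above properties of $\varphi$ force the slopes of $h$ to lie in $P$ and the breaks in $A$; moreover, $h$ has only finitely many breaks on each compact subinterval of $\,]0,b'[$. The crucial finiteness statement, however, is at the endpoints. If $g(t) = p^k t$ for $t$ near $0$, then for $t' \in [b'_0/p^{n+1},\,b'_0/p^n]$ with $n$ sufficiently large the self-similar definition of $\varphi$ yields
\[
h(t') \;=\; \varphi_0\bigl(\varphi_0^{-1}(p^n t')\bigr)/p^{n-k} \;=\; p^k t',
\]
so $h$ is affine near $0$; by symmetry it is also affine near $b'$. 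Hence $h$ has finitely many breaks altogether, extends continuously to $[0,b']$, and lies in $G([0,b'];A,P)$. The same argument applied to $\varphi^{-1}$ shows that conjugation by $\varphi$ is a group isomorphism $G([0,b];A,P) \iso G([0,b'];A,P)$.

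The main obstacle is the preceding finiteness claim: the self-similar structure of $\varphi$ at $0$ and at $b$ has been arranged precisely so that it intertwines multiplication by the generator $p$ of $P$ with itself, which causes the infinitely many breaks of $\varphi$ to cancel inside $\varphi \circ g \circ \varphi^{-1}$ near the two endpoints. Once that cancellation is secured, the remaining properties (slopes in $P$, breaks in $A$, continuity at the endpoints) follow directly from the construction; the proof makes essential use of the cyclicity of $P$ only through the identification $P = \gp(p)$ that drives the scaling relations defining $\varphi$.
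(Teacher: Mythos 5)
Your proof is correct and follows essentially the same route as the paper: an explicit infinitary PL-homeomorphism whose breaks are organized self-similarly under the scaling $t \mapsto p\cdot t$ near the fixed endpoints, so that conjugation turns elements affine near the endpoints into elements affine near the endpoints and the infinitely many breaks cancel. The only (cosmetic) difference is that the paper first reduces via a homothety to nearby lengths and builds a conjugator that is globally $p$-equivariant with breaks accumulating at $0$ alone, whereas you glue two locally self-similar halves with breaks accumulating at both endpoints.
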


Intervals  $I_1$ and $I_2$ of different types may also
lead to isomorphic groups  $G_1$ and $G_2$, 
a fact illustrated by the group of bounded homeomorphisms 
discussed in section \ref{ssec:2.3}:
\begin{prpI}[Proposition \ref{PropositionC1}]
\label{prp:C1-intro}
For every interval $I$ that is not a singleton,
the group $B(I;A,P)$ is isomorphic to $B(\R; A,P)$.
\end{prpI}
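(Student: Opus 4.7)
The plan is to exhibit, for every non-singleton interval $I$, an infinitary PL-homeomorphism $\varphi \colon \Int(I) \iso \R$ whose slopes lie in $P$, whose set of breaks is a discrete subset of $\Int(I)$ contained in $A$, and which satisfies $\varphi(A \cap \Int(I)) = A$. Conjugation by $\varphi$ will then induce the desired isomorphism: an element $g \in B(I;A,P)$ has compact support $K \subset \Int(I)$, on which only finitely many breaks of $\varphi$ lie, so $\varphi \circ g \circ \varphi^{-1}$ is a finitary PL-homeomorphism of $\R$ with slopes in $P$ (by the chain rule together with the fact that $P$ is a group), breaks in $A$, and compact support in $\R$; hence it lies in $B(\R;A,P)$. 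The analogous statement for $\varphi^{-1}$ furnishes the inverse homomorphism, and conjugation is manifestly a group homomorphism.

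Since $B(I;A,P)$ depends only on $\Int(I)$, I would assume $I$ is open. The case $\Int(I)=\R$ is trivial, and the bounded case $\Int(I) = \,]0,b[\,$ is handled by performing the construction below at each of the two endpoints; so it suffices to treat $\Int(I) = \,]0,\infty[\,$. The non-triviality assumption implies that $A$ is dense in $\R$, so I can choose a strictly decreasing sequence $b_0 > b_1 > b_2 > \cdots$ in $A \cap \,]0,\infty[\,$ with $b_n \to 0$. Since $IP \cdot A$ is likewise dense in $\R$, I can then inductively choose a strictly decreasing sequence $c_0 = b_0 > c_1 > c_2 > \cdots$ in $A$ with $c_n \to -\infty$ and $(c_n - c_{n+1}) - (b_n - b_{n+1}) \in IP \cdot A$ for every $n \geq 0$. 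By Theorem \ref{TheoremA} there exists, for each $n$, a PL-homeomorphism $\varphi_n \colon [b_{n+1}, b_n] \iso [c_{n+1}, c_n]$ with slopes in $P$ and breaks in $A$. Piecing the $\varphi_n$ together with the identity on $[b_0, \infty[\,$ yields $\varphi$: the junction points $b_n$ all lie in $A$, so the piecewise definition is legitimate; the breaks can accumulate only at $0$, hence form a discrete subset of $\Int(I)$; and since each $\varphi_n$ maps $A \cap [b_{n+1}, b_n]$ onto $A \cap [c_{n+1},c_n]$ (a PL map between $A$-endpoints with slopes in $P$ and breaks in $A$ necessarily sends $A$-points to $A$-points), $\varphi$ maps $A \cap \Int(I)$ bijectively onto $A$.

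The main obstacle is the inductive construction of $\varphi$: one must simultaneously ensure that each partial PL-piece exists, that the chosen codomain intervals exhaust $\R$, and that the resulting assembly retains a PL structure with slopes in $P$ and breaks in $A$. The pivotal input is Theorem \ref{TheoremA}, which identifies the submodule $IP \cdot A$ as the exact obstruction to producing each $\varphi_n$; the density of $IP \cdot A$ in $\R$ furnishes the flexibility needed to drive $c_n$ to $-\infty$ while respecting the congruence constraint. Once $\varphi$ is in hand, the verification that conjugation by $\varphi$ carries $B(I;A,P)$ bijectively onto $B(\R;A,P)$ is a straightforward application of the chain rule together with the facts that $\varphi$ and $\varphi^{-1}$ preserve $A$ and have slopes in $P$.
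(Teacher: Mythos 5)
Your proof is correct, but it follows a different route from the paper's. The paper constructs a single infinitary PL-homeomorphism $\varphi \colon \R \iso \Int(I)$ in one stroke, as the affine interpolation of a doubly infinite sequence $\bigl((i\cdot a_0, b_i)\bigr)_{i \in \Z}$ in which the increments $b_{i+1}-b_i$ are chosen in $P\cdot a_0$; each linear piece then has slope $(b_{i+1}-b_i)/a_0 \in P$ by construction, so the only input is that $P$ contains elements arbitrarily close to $0$ (letting the $b_i$ converge to the endpoints of $I$, finite or infinite). This avoids both Theorem \ref{TheoremA} and any case distinction on the type of interval, and it sidesteps the question of where $IP\cdot A$ sits inside $A$. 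You instead glue together finitary pieces $\varphi_n \colon [b_{n+1},b_n] \iso [c_{n+1},c_n]$ supplied by Theorem \ref{TheoremA}, using the density of $IP\cdot A$ to arrange the congruences $(c_n - c_{n+1}) - (b_n - b_{n+1}) \in IP\cdot A$ while driving $c_n$ to $-\infty$; this costs you the reduction to the half-line case (with the bounded case handled at each endpoint separately) but is a perfectly sound application of the chapter's main theorem, and the gluing technique you use is in fact the one the paper deploys later for related constructions (e.g.\ in the proof of Theorem \ref{TheoremE07}). Your verification that conjugation carries $B(I;A,P)$ onto $B(\R;A,P)$ — finitely many breaks of $\varphi$ meet any compact support, slopes multiply within the group $P$, and $\varphi(A \cap \Int(I)) = A$ so breaks land in $A$ and $A$ is preserved — matches the paper's closing argument.
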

%
\subsection{Automorphism groups}
\label{ssec:2.9}
Again two cases arise,
depending on whether $P$ is cyclic or not. 
If $P$ is \emph{not cyclic}, 
the automorphism groups can be determined by Theorem \ref{TheoremE04}. 
Generalizing a result of McCleary's \cite[p.\,526, Corollary 31]{McC78b}
we have:
\begin{crlI}[Corollary \ref{CorollaryE13}]
\label{crl:E13-intro}
\index{Group G(R;A,P)@Group $G(\R;A,P)$!automorphism group}%
\index{Group G([0,infty[;A,P)@Group $G([0, \infty[\;;A,P)$!automorphism group}%
\index{Group G([a,c];A,P)@Group $G([a,c];A,P)$!automorphism group}%
\index{Automorphism group of G(R;A,P)@Automorphism group of $G(\R;A,P)$!description}%
\index{Automorphism group of G([0,infty[;A,P)@Automorphism group of $G([0, \infty[\;;A,P)$!description}%
\index{Automorphism group of G([a,c];A,P)@Automorphism group of $G([a,c];A,P)$!description}%
\index{Group Aut(A)@Group $\Aut(A)$!significance}%
\index{Subgroup Auto(A)@Subgroup $\Aut_o(A)$!significance}%
Assume $P$ is non-cyclic and $I$ is either $\R$, 
or is of type $[0,\infty[$, 
or is of type $[0,b]$, and set $G = G(I;A, P)$.
Then the kernel of the homomorphism
\[
\index{Homomorphism!07-eta@$\eta$}%
\index{Group Aut(A)@Group $\Aut(A)$!significance}%
\index{Subgroup Auto(A)@Subgroup $\Aut_o(A)$!significance}%
\index{Subgroup Qb@Subgroup $Q_b$!significance}%
\eta \colon \Aut(G) \to \Aut(A)/P, \qquad \alpha \mapsto (\text{slopes of } \varphi) \cdot P
\]
is the group of inner automorphism of $G$. 
If $I=\R$, then $\eta$ is surjective; 
if $I$ is of type $[0,\infty[$ the image of $\eta$ is the subgroup $Aut_o(A)/P$ of index two. 
If, thirdly, $I = [0,b]$ the image of $\eta$ can depend on $b$ and is the canonical image of the group
\[
\{s \in \Aut(A) \mid (|s|-1)\cdot b \in IP \cdot A \}.
\]
\end{crlI}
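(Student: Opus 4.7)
My plan is to derive the corollary from Theorem \ref{TheoremE10} and Supplement \ref{SupplementE11New}. Applied to an automorphism $\alpha$ of $G = G(I;A,P)$ (so $\bar{I} = I$, $\bar{A} = A$, $\bar{P} = P$), they produce a unique finitary PL-homeomorphism $\varphi \colon \Int(I) \iso \Int(I)$ inducing $\alpha$ by conjugation, with slopes in a single coset $s \cdot P$ for some $s \in \R^\times$. Since $\varphi$ carries $A \cap \Int(I)$ onto itself, $s \cdot A = A$; hence $s \in \Aut(A)$, and the coset $s \cdot P \in \Aut(A)/P$ depends only on $\alpha$. Uniqueness of $\varphi$ together with the chain rule then make $\eta \colon \alpha \mapsto s \cdot P$ a group homomorphism.

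To identify the kernel, I would observe that $\eta(\alpha) = P$ means $\varphi$ has slopes in $P$ and breaks in $A$; since $\varphi$ is finitary, it is linear near each endpoint of $I$, and hence extends (by continuity at any finite endpoint and by the identity outside $\Int(I)$) to an element of $G(I;A,P)$. Therefore $\alpha$ is inner. The converse is immediate: any inner automorphism is conjugation by some $g \in G$, whose slopes lie in $P$. Hence $\ker \eta = \Inn G$.

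For the image I treat the three intervals in turn. If $I = \R$, every $s \in \Aut(A)$ is attained by the linear map $\varphi(t) = s \cdot t$ (orientation reversing when $s < 0$): conjugation by $\varphi$ preserves $G(\R;A,P)$ because multiplication by $s$ is a $\Z[P]$-module automorphism of $A$, and so $\eta$ is surjective. If $I = [0, \infty[\,$, any finitary PL-homeomorphism of $\,]0, \infty[$ is affine for large $t$, hence has positive slope there and is orientation preserving; consequently $s > 0$, so the image lies in $\Aut_o(A)/P$. Conversely, the homothety $\varphi(t) = s \cdot t$ realises each $s \in \Aut_o(A)$, giving equality.

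For the compact case $I = [0, b]$, any $\varphi$ inducing $\alpha$ extends to a finitary PL self-homeomorphism of $[0, b]$. If it preserves orientation, the rescaled map $\psi(t) = \varphi(t)/s$ is a PL-homeomorphism $[0, b] \iso [0, b/s]$ with slopes in $P$ and breaks in $A$ (note $b/s = s^{-1} b \in A$). Theorem \ref{TheoremA} then forces $b/s - b \in IP \cdot A$; since multiplication by $s$ is a $\Z[P]$-module automorphism of $A$ and therefore stabilises $IP \cdot A$, this is equivalent to $(s-1) \cdot b \in IP \cdot A$, \ie{}$(|s|-1) \cdot b \in IP \cdot A$ when $s > 0$. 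If $\varphi$ reverses orientation, applying the same reasoning to $(t \mapsto b - t) \circ \varphi$, whose slopes lie in $|s| \cdot P$, yields the identical condition. The converse follows by running this construction backwards, using Theorem \ref{TheoremA} to produce the requisite $\psi$ and then setting $\varphi = s \cdot \psi$ (or composing with the reflection if $s < 0$). The main obstacle is the compact case: tracking the orientation and reducing to Theorem \ref{TheoremA} via the rescaling $\psi = \varphi/s$ is what pins down the precise subgroup $Q_b$, and verifying that multiplication by $s$ commutes with the $P$-action on $A$ (hence stabilises $IP \cdot A$) is the key identity underlying the equivalence $(s-1)\cdot b \in IP \cdot A \iff b/s - b \in IP \cdot A$.
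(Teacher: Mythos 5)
Your proof is correct and follows essentially the same route as the paper: deduce from Theorem \ref{TheoremE10} and Supplement \ref{SupplementE11New} that every automorphism is induced by a unique finitary PL-homeomorphism with slopes in a coset $s\cdot P$ of $\Aut(A)/P$, identify $\ker\eta$ with $\Inn G$ by extending a slope-$P$ homeomorphism to an element of $G$, and compute the image case by case, reducing the compact case to Theorem \ref{TheoremA} via a rescaling. The only cosmetic difference is that for the half line you argue directly that a finitary PL self-homeomorphism of $\,]0,\infty[$ must be increasing, whereas the paper deduces this from the non-isomorphism of $\im\sigma_-$ and $\im\rho$ via Corollary \ref{CorollaryE5}; both are valid.
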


We now move on to the case of a \emph{cyclic} group $P$. 
If $I$ is not bounded there are results similar to the corollary just stated 
(see Theorem \ref{TheoremE14}). 
If, however, $I$ is a compact interval we have not succeeded in proving 
that every automorphism is induced by conjugation by a PL-homeomorphism. 
Our investigation deals therefore only with two preliminary questions.

The first result describes the subgroup $\Out_{\PL}G$ of $\Out G$
whose elements are represented by automorphisms 
that are induced by conjugation by PL-homeo\-mor\-phisms of $]0,b[\,$.
\begin{crlI}[Corollary \ref{crl:PropositionE20New-part-II}]
\label{PropositionE20New-part-II-intro}
\index{Group Out-PLG@Group $\Out_{\PL}G([0,1];A,P)$!description}%
\index{Submodule IPA@Submodule $IP \cdot A$!significance}%
\index{Group T(R/Zpfr;A,P)@Group $T(\R/\Z \pfr;A,P)$!applications}%
Assume $P$ is cyclic, generated by $p > 1$ and set $G = G([0,1];A,P)$.
Then $\Out_{\PL}G $ is an extension of the form
\[
1 \to  L \longrightarrow \Out_{\PL}(G) \longrightarrow \Aut(A)/P  \to 1
\]
where $L$ is a subgroup of index $|A : IP \cdot A|$
 in the square of the generalized Thompson group $T(\R/\Z (p-1);A,P)$.
(The latter group consist of all PL-homeomorphisms of the circle $\R/\Z(p-1)$ with slopes in $P$ and vertices in $\R/\Z (p-1)$.)
\end{crlI}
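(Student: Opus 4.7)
The plan is to use Theorem \ref{TheoremE04} to attach to each $\alpha \in \Aut_{\PL}G$ its unique inducing PL-homeomorphism $\varphi \colon \,]0,1[\, \iso \,]0,1[\,$, whose breaks may accumulate only at the two endpoints, and then to exploit the local structure of $\varphi$ near 0 and near 1. First, by the same reasoning that proves part (ii) of Theorem \ref{TheoremE10}, all slopes of $\varphi$ lie in a single coset $s \cdot P$ with $s \in \Aut(A)$; this defines $\eta$ on $\Aut_{\PL}G$, and since elements of $G$ themselves have slopes in $P$, $\eta$ descends to $\Out_{\PL}G$. Surjectivity of $\eta$ is proved by explicit construction: for each $s \in \Aut(A)$, Theorem \ref{TheoremA} provides a finitary PL-homeomorphism between suitable subintervals with slopes in $sP$, which is then extended by self-similar infinitary PL-data near each endpoint to give a PL-homeomorphism of $\,]0,1[\,$ representing a preimage of $sP$.

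Now analyze the kernel. A class $\alpha \cdot \Inn G$ in $\ker \eta$ has a representative inducing $\varphi$ with all slopes in $P$. Composing with appropriate inner automorphisms, namely conjugations by elements of $G$ that are affine near the endpoints, we may normalize so that near 0 the germ of $\varphi$ satisfies $\varphi(pt) = p\,\varphi(t)$ for all sufficiently small $t$, and symmetrically near 1. After this normalization the only remaining data in the germ at 0 is a PL-self-map of a fundamental domain $[a, pa]$ of the $p$-scaling action (with $a \in A_{>0}$ small enough), subject to the matching condition $\varphi(pa) = p\,\varphi(a)$. Passing to the quotient circle $[a,pa]/(a \sim pa)$, of circumference $(p-1)a$, and rescaling by $1/a$ gives a PL-homeomorphism of $\R/\Z(p-1)$ whose slopes lie in $P$ and whose vertices are the projections of elements of $A$, i.e.\ an element of $T(\R/\Z(p-1);A,P)$. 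Symmetrically for the germ at 1, yielding an injection of $L$ into $T(\R/\Z(p-1);A,P)^2$.

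The final step is to identify the image. A candidate pair of germs $(g_0, g_1)$ is realized by a genuine $\varphi$ exactly when the two germs can be glued across the central compact subinterval: the total displacement needed to close $\varphi$ into a homeomorphism of $\,]0,1[\,$ that maps $A \cap \,]0,1[\,$ onto itself, computed additively as in the orbit analysis of Section \ref{ssec:2.2}, gives a well-defined class in $A/(IP \cdot A)$ and must vanish. This compatibility condition is the kernel of a surjective homomorphism $T(\R/\Z(p-1);A,P)^2 \to A/(IP \cdot A)$, and so $L$ is a subgroup of index $|A : IP \cdot A|$, as required.

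The main obstacle is the middle paragraph: one must verify that the normalization used to define the circle germ depends only on the coset $\alpha \cdot \Inn G$ (and not on the particular choice of normalizing inner automorphism), and that the induced map from germs to circle PL-homeomorphisms really surjects onto $T(\R/\Z(p-1);A,P)$ rather than landing in a proper subgroup. This amounts to a careful combinatorial analysis of self-similar infinitary PL-homeomorphisms of a half-open neighborhood of an endpoint, entirely parallel to the construction of the endomorphisms $\mu_m$ and $\nu_n$ in Section \ref{ssec:18.2}; the compatibility calculation of the last paragraph is then a bookkeeping exercise on how breaks and slopes translate displacements modulo $IP \cdot A$.
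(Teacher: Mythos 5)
Your proposal is correct in outline and its strategy coincides with the paper's: a surjection $\eta$ onto $\Aut(A)/P$ read off from the slope coset (Proposition \ref{PropositionE9}, which is the right citation here rather than Theorem \ref{TheoremE10}(ii) — the latter is stated for non-cyclic $P$, though for $\varphi \in \Aut_{\PL}G$ differentiability at a point of $A$ is automatic and Proposition \ref{PropositionE9} applies); then, for the kernel, the passage from the germ of $\varphi$ at each endpoint to a PL-homeomorphism of a circle, with $\Inn G$ acting trivially on these circle germs and the image pair constrained by the congruence of Theorem \ref{TheoremA} modulo $IP \cdot A$, whence the index $|A : IP \cdot A|$ in $T(\R/\Z(p-1);A,P)^2$. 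The difference is one of coordinates. The paper first transports $G$ to the subgroup $\bar{G} \subset G(\R;A,P)$ via the infinitary embeddings $\mu_{2,1}\circ \mu_{1,1}$ (Lemma \ref{LemmaE17}), so that the endpoint germs become germs at $\pm\infty$ commuting with the translation by $p-1$; the kernel of $\bar{\eta}$ is then exactly the group $H$ of eventually $(p-1)$-periodic elements of $G_\infty(\R;A,P)$ (Proposition \ref{PropositionE20New-part-I}), the maps to $T(\R/\Z(p-1);A,P)$ are the literal periodization homomorphisms $\zeta_\ell, \zeta_r$ with kernel $\bar{G}$, and the compatibility condition is $\bar{\gamma}_{p-1}(\bar{h}_\ell)=\bar{\gamma}_{p-1}(\bar{h}_r)$. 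You instead work multiplicatively at $0$ and $1$: here the relation $\varphi(pt)=p\varphi(t)$ near $0$ is in fact automatic (conjugation must carry the germ $t\mapsto pt$ of an element of $G$ to a germ $t \mapsto p^{\pm 1}t$, and monotonicity forces the exponent $+1$), so no normalization by inner automorphisms is needed, and the induced map on the quotient circle $\R_{>0}/p^{\Z}$ is insensitive to composing with elements of $G$ since their germs at $0$ are powers of $p$. Two details in your version need care: the rescaling of the fundamental domain $[a,pa]$ by $1/a$ does not preserve $A$ unless $a \in \Aut_o(A)$, so you should take $a=1$ (licit since $\Z \subseteq A$ under the standing hypotheses of Section \ref{sec:19}) and translate by $-1$ rather than rescale; and the surjectivity of the germ map onto $T(\R/\Z(p-1);A,P)$ requires the infinitary PL conjugator between the $p$-scaling and the $(p-1)$-translation, which is precisely the map $\varphi_1$ of section \ref{sssec:Embedding-mu1} — so the embeddings you hoped to avoid reappear at this point. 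What the paper's additive coordinates buy is that the homomorphism property and the identification of the quotient with $T(\R/\Z(p-1);A,P)$ are immediate; what your route buys is that well-definedness on outer classes is transparent from the start.
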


The second result, Proposition \ref{prp:TheoremE21},
states that the full automorphism group $\Aut G$ is generated 
by the subgroup $\Aut_{PL} G$, 
consisting of all automorphisms induced by PL-homeomorphisms, 
and a subgroup $\Aut_{\per} G$ 
that seems more amenable to further study than the full automorphism group.
%
\subsubsection*{Acknowledgements}
\label{ssec:Acknowledgements}
%
We have been launched onto this research by the enthusiastic talk given by Ross Geoghegan 
at the \emph{Oberwolfach Meeting on Group Theory} in 1984, 
and by a long series of discussions with him, 
in the course of which he generously explained to us many
of the results, by then unpublished, of his and Ken Brown's, and
gave details of the work of Matthew Brin and Craig Squier, 
which, at that time, had not yet reached its final form given in \cite{BrSq85}. 
We thank Ross for putting into our hands this rich body of results, observations
and speculations when we got ready for our investigation.
\index{Brin, M. G.}%
\index{Squier, C. C.}%

The second author was fortunate in finding in Bernold Fiedler (Heidelberg)
\footnote{now professor 
at the Fachbereich für Mathematik und Informatik der Freien Universität Berlin}
\index{Fiedler, B.}%
a patient listener and reader 
when he first struggled with the problem explained in Section 20
\footnote{This section is not  included in this monograph; 
see section \ref{ssec:Changes-E-Section20} for an explanation}. 
Last, not least, he is grateful to the
\emph{Deutsche Forschungsgemeinschaft} for financial support.

\begin{center}
$\star$ \qquad $\star$  \qquad  $\star$ \\[5mm]

{\small Manuscript completed on 5th of September 1985}
\end{center}

\setcounter{chapter}{0}
%
\chapter{Construction of Finitary PL-homeomorphisms}
\label{chap:A}
%
%
\setcounter{section}{2}
%
\section{Preliminaries}
\label{sec:3}
%
The aim of this section is to fix notation and terminology to be used in the sequel 
and to collect a few simple results.
\subsection{Piecewise affine homeomorphisms}
\label{ssec:3.1}
%
Let J be a non-empty, \emph{open} interval of $\R$. 
A real valued function $f \colon  J \to \R$ will be called \emph{piecewise affine}, 
PL for short, 
if $f$ is continuous and if there exists a discrete subset $S \subset J$ 
so that $f$ is differentiable on $J \smallsetminus S$  
and its derivative $f'$ is constant on each component of $J \smallsetminus S$.
\index{PL-homeomorphism!definition|textbf}%
If $f$ is PL, 
a number $s \in J$ where $f$ is not differentiable 
will be called a \emph{singularity} or a \emph{break} of $f$ 
\index{PL-homeomorphism!singularities}%
\index{PL-homeomorphism!breaks}%
\index{Break|see{PL-homeomorphism!breaks}}%
and the point $(s,f(s))$ in the graph of $f$ will be called a \emph{vertex} of $f$.
\index{PL-homeomorphism!vertices}%
The value of $f'$ on a component $]b,b'[$ of $J\smallsetminus S$ 
will be referred to as the \emph{slope} of $f$ on $]b,b'[$.
\index{PL-homeomorphism!slopes}%

If $f \colon \R \iso \R$ is a homeomorphism its \emph{support}  
is the subset $\{t \in \R\mid f(t) \neq t\}$; 
\index{Support of a homeomorphism}%
\index{PL-homeomorphism!support}%
\label{notation:supp}%
it is an open subset of $\R$ and will be denoted by $\supp f$.
A PL-function is allowed to have infinitely many singularities; 
if it has only finitely many of them it will be referred to as \emph{finitary}.
\index{Finitary PL-homeomorphism!definition|textbf}%
%
\subsection{The homomorphisms $\lambda$ and $\rho$}
\label{ssec:3.2}
Let $A$ be a subgroup of $\R$ and $P$ a subgroup of $\R^\times_{>0}$,
as described in section \ref{ssec:2.1}. 

The group $G(\R;A,P$) contains the subgroup $\Aff(A,P)$ 
consisting of all affine homeomorphisms,
\ie the PL-homeomorphisms of the form $t \mapsto p\cdot t + a$. 
By requirement \eqref{eq:Condition-I} in section \ref{ssec:2.1}, 
the translation part $a$ of an element in $\Aff(A,P)$ must be in $A $; 
it follows that the assignment $(a, p) \longmapsto (t \mapsto p\cdot t + a)$
defines an isomorphism of $A \rtimes P$ onto $\Aff(A,P)$.
\index{Affine group Aff(A,P)@Affine group $\Aff(A,P)$!significance}%

Consider now a closed interval $I$ that satisfies the non-triviality assumption \eqref{eq:Non-triviality-assumption}. 
The group $G(I;A,P)$ admits two geometrically defined homomorphisms 
$\lambda$ and $\rho$  into $\Aff(A,P)$. 
We begin with the definition of $\lambda$; 
it relies on two auxiliary functions $\sigma_-$ and $\tau_-$.
The function $\sigma_- \colon  G(I;A,P) \to P$ records the slope of a PL-homeomorphism 
at the left end of its domain of definition; it is given by the formula
\[
\index{Homomorphism!18-sigma-minus@$\sigma_-$}%
\sigma_-(f) = \lim\nolimits_{t \searrow \inf(I)} f'(t).
\]
The function  $\tau_-$ takes values in $A$ and is defined by
\[
\label{notation:tau-minus}%
\index{Homomorphism!19-tau-minus@$\tau_-$}%
\tau_-(f) = \begin{cases}
\lim\nolimits_{t \searrow  -\infty} (f(t) - \sigma_-(f) \cdot t) &\text{ if } \inf I = -\infty,\\
0 &\text{ if } I \text{ is bounded from below}.
\end{cases}
\]

The function $\lambda$ is then defined by
\begin{equation}
\label{eq:Definition-lambda}
\index{Homomorphism!11-lambda@$\lambda$}
\lambda \colon G(I;A,P) \longrightarrow \Aff(A, P), 
\qquad f \longmapsto 
(t \mapsto \sigma_-(f) \cdot t+ \tau_-(f))
\end{equation}

The functions $\sigma_-$ and $\lambda$ are actually homomorphisms, 
for $\sigma_-$ this can be checked by a straightforward verification;
for the other function it follows from the observation
that $\lambda$ assigns to $f$ the unique affine homeomorphism
that coincides with $f$ near the left end point of $I$.

The homomorphism $\rho \colon  G(I;A,P) \to \Aff(A,P)$ is defined similarly;
it uses the auxiliary functions $\sigma_+$ and $\tau_+$. 
The first of them is given by
\[
\sigma_+(f) = \lim\nolimits_{t \nearrow \sup(I)} f'(t),
\index{Homomorphism!18-sigma-plus@$\sigma_+$}%
\]
the second by 
\[
\label{notation:tau-plus}%
\index{Homomorphism!19-tau-plus@$\tau_+$}%
\tau_+(f) = \begin{cases}
\lim\nolimits_{t \nearrow  _\infty} (f(t) - \sigma_+(f)\cdot t) &\text{ if } \sup I = \infty,\\
0 &\text{ if } I \text{ is bounded from above}.
\end{cases}
\]

The function $\rho$, finally, is defined by
\begin{equation}
\label{eq:Definition-rho}
\rho \colon G(I;A,P) \longrightarrow \Aff(A, P), \qquad f \longmapsto 
(t \mapsto \sigma_+(f) \cdot t+ \tau_+(f)).
\end{equation}
One sees as before that $\sigma_+$ and $\rho$ are homomorphisms.

\index{Homomorphism!17-rho@$\rho$}

\begin{remarks}
\label{remarks:Definitions-lambda-and-rho}
(i) In spite of the fact that the definitions of $\lambda$ and $\rho$ are straightforward, 
these homomorphisms are very useful. 
They show, in particular,
that each finitely generated subgroup $L \neq \{\id\}$ of $G(\R;\R,\R^\times_{>})$
maps homomorphically onto $\Z$.
Indeed, 
let $I$ be the smallest closed interval 
that contains the supports of the elements of $L$.
The homomorphism
\[
L \incl G(\R;\R,\R^\times_{>0}) \xrightarrow{\lambda} \Aff(R,\R^\times_{>0}).
\]
maps $L$ onto a non-trivial finitely generated subgroup of the locally indicable group 
$\R \rtimes \R^\times_{>0}$.
The group  $G(\R;\R,\R^\times_{>0}$  is thus \emph{locally indicable}; 
for a discussion of some consequences of this fact, 
see, \eg{}\cite[Section 13.1 and p.\,638, Ex.\,13.9 (ii)]{Pas77}.
\index{Local indicability!examples}%
\index{Local indicability!consequences}%
\index{Passman, D. S.}%

(ii) The definitions of $\sigma_-$ and $\sigma_+$ will be generalized in section \ref{ssec:11.1}.
\end{remarks}
%
\subsection{Geometrically induced isomorphisms}
\label{ssec:3.3}
%
Let $I_1$ and $I_2$ be two closed intervals of $\R$ 
and let  $\homeo (I_1 ,I_2)$ denote the set of all homeomorphisms $f$ of $\R$ with  $f(I_1) = I_2$; 
this set may be empty. 
If $\varphi_1$ and $\varphi_2$ are homeomorphisms of $\R$
they induce a  map
\[
\label{eq:Set-homeo(I1,I2)}
\homeo(\varphi_1, \varphi_2) \colon \homeo(I_1, I_2) 
\longrightarrow \homeo\left(\varphi_1(I_1)), \varphi_2(I_2)\right),
\]
taking $f $ to $ \varphi_2 \circ f \circ \varphi^{-1}_1$.
This map is bijective.

In the sequel,
we shall use this bijection in two different contexts. 
In Section \ref{sec:4},
 we shall investigate when the set $\PLhomeo(I_1,I_2;A,P)$, 
consisting of all finitary PL-homeomorphisms $f$ of $\R $ with $f(I_1) = I_2$, 
vertices in $A^2$ and slopes in $P$, is non-empty. 
The intervals $I_1$, $I_2$ will often be the form 
$[a_i, c_i]$ with $a_i$ and $c_i$ in $A$; 
so the translations $\varphi_1$ and $\varphi_2$,  
given by  $\varphi_i (t) = t-a_i$,  
induce a bijection
\begin{equation}
\label{eq:3.1}
\PLhomeo(I_1,I_2;A,P) \iso  \PLhomeo([0,c_1-a_1],[0,c_2-a_2 ] ;A,P).
\end{equation}

More generally, 
if $\Aut(A)$ denotes the group $\{p \in \R^\times  \mid p \cdot A = A\}$  
and if $\varphi_1$, $\varphi_2$ are in $\Aff(A, \Aut(A))$,
these homeomorphisms induce a bijection
\begin{equation}
\label{eq:3.2}
\PLhomeo(I_1,I_2;A,P) \iso  \PLhomeo (\varphi_1(I_1), \varphi_2(I_2) ;A,P).
\end{equation}
Another application will be given in section \ref{ssec:16.4}; 
see Proposition \ref{PropositionE6}.
%
\section{The basic result}
\label{sec:4}
%
One of the basic results of Chapter \ref{chap:A} is an algebraic characterization 
 of the orbits of the action of $G(I;A,P)$ on  $ A \cap \Int(I)$. 
It will involve the $\Z[A]$-submodule $IP \cdot A$ of $A$,
generated by the products $(p-1)\cdot a$ with $p \in P$ and $a \in A$. 

As a first step towards this characterization
we determine when there exists a PL-homeomorphism 
$f \colon \R \iso \R$ in $G(\R;A,P) $
that maps an interval $[a, c]$ with endpoints in $A$ onto another such interval $[a',c']$. 
The answer is provided by
\begin{theorem}
\label{TheoremA}
\index{Group G(R;A,P)@Group $G(\R;A,P)$!construction of elements}%
\index{Submodule IPA@Submodule $IP \cdot A$!significance}%
\index{Theorem \ref{TheoremA}!statement|textbf}%
Let $a$, $a'$, $c$ and $c'$ be elements of $A$ with $a < c$ and $a' < c'$.
Then there exists an element $f \in G(\R;A,P)$ mapping $[a,c]$ onto $[a', c']$ if, and only if,
$c'-a'$ is congruent to $c-a$ \emph{modulo} $IP \cdot A$.
\end{theorem}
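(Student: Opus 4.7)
Plan. The plan is to prove necessity by a direct telescoping calculation and sufficiency by composing finitely many elementary one-break PL-homeomorphisms.

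Necessity: given $f \in G(\R;A,P)$ with $f([a,c]) = [a',c']$, I list the endpoints together with the breaks of $f$ inside $[a,c]$ as $a = b_0 < b_1 < \cdots < b_h = c$ and set $b_i' = f(b_i)$. Since $f$ maps $A$ onto $A$ and has breaks in $A$, all $b_i, b_i' \in A$, and the slope $p_i = (b_i' - b_{i-1}')/(b_i - b_{i-1})$ on $[b_{i-1}, b_i]$ lies in $P$. Telescoping yields
\[
(c' - a') - (c - a) = \sum_{i=1}^{h} (p_i - 1)(b_i - b_{i-1}) \in IP \cdot A.
\]

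For sufficiency, I conjugate by a translation in $\Aff(A,P)$ to reduce to $a = a' = 0$: I must then build $f_0 : [0,L] \iso [0,L']$ with $L, L' \in A_{>0}$ and $L' - L \in IP \cdot A$, and extend to $\R$. My building block will be the single-break PL-homeomorphism
\[
g_{c,p,L_0} : [0, L_0] \iso [0, L_0 + (p-1)c], \qquad t \mapsto
\begin{cases} pt & \text{if } 0 \le t \le c,\\ t + (p-1)c & \text{if } c \le t \le L_0, \end{cases}
\]
defined whenever $c \in A_{>0}$, $p \in P$, $c \le L_0$ and $L_0 + (p-1)c > 0$. Its break $c$ lies in $A$ and its slopes $p, 1$ lie in $P$, so a composition of finitely many such blocks yields a PL-homeomorphism in the required class whose total change in length is the algebraic sum $\sum_k (p_k - 1) c_k$ of the block contributions.

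To realize a prescribed $L' - L \in IP \cdot A$, I fix any decomposition $L' - L = \sum_k (p_k - 1) c_k$ with $p_k \in P$, $c_k \in A$; use the identity $(p - 1)(-a) = (p^{-1} - 1)(pa)$ to assume each $c_k > 0$; and use the density of $A$ in $\R$ to split each $c_k$ into smaller positive $A$-pieces, each strictly less than $\min(L, L')$. I then apply the resulting building blocks in the order: all blocks with $p_k > 1$ first, then all blocks with $p_k < 1$. Under this ordering the intermediate lengths rise monotonically from $L$ to $L + S_+$ and afterwards fall monotonically to $L'$, so they stay throughout in $[\min(L, L'), L + S_+]$; in particular every intermediate length is positive and the constraint that each piece fits into the current interval is automatic. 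The composition is the desired $f_0$, and extending by the identity on $(-\infty, 0]$ and by $t \mapsto t + (L' - L)$ on $[L, \infty)$ produces the required element of $G(\R; A, P)$. The delicate point is exactly this bookkeeping --- splitting the $c_k$'s finely and ordering the blocks by sign so that no intermediate interval collapses; the rest is routine.
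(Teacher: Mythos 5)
Your proof is correct, and while the overall architecture matches the paper's (necessity by the same telescoping sum over the breaks; sufficiency by reducing to $[0,L]\to[0,L']$ and composing elementary PL blocks), the key technical device in the sufficiency half is genuinely different. The paper first permutes the summands $(1-p_i)a_i$ so that all partial sums of the lengths stay positive, reduces to a \emph{single} summand $(p-1)a$ with $p>1$, and then confronts the fitting problem head-on: since $a$ may be far larger than the current length $b$, it replaces the one-break block by the three-break homeomorphism of Lemma \ref{lem:TheoremA}, with breaks at $0$, $a/p^{k+1}$, $a/p^k$ and slopes $1,p,p^{k+1},1$, which squeezes an arbitrarily large length change $(p-1)a$ into an arbitrarily short initial interval. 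You instead keep the one-break block and dissolve the fitting problem by subdividing each $c_k$ into $A$-pieces smaller than $\min(L,L')$ and ordering the blocks by the sign of $p_k-1$, so that every intermediate length stays at least $\min(L,L')$ and every break fits automatically. Your route buys a simpler building block and an explicit, self-justifying ordering (the paper's ``there exists a permutation $\pi$ with positive partial sums'' is asserted rather tersely); the paper's route buys a reusable lemma whose rescaling trick reappears elsewhere in the monograph. One small point worth making explicit in your write-up: ``density of $A$'' alone is not quite the splitting mechanism --- you should say that $A$ contains arbitrarily small positive elements $d$ (a consequence of the non-triviality hypotheses), and then write $c_k = Nd + r$ with $0\le r<d$ and $r\in A$, which yields the finite decomposition into small positive $A$-pieces.
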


\begin{proof}
Assume first that there exists $f \in G(\R;A,P)$ which maps $[a,c]$ onto $[a',c']$. 
Let $a = b_0$, $b_1$, \ldots, $b_h= c$  be an increasing sequence of elements of $A$ 
that includes all the singularities of $f$ inside $[a,c]$, 
and let $p _1$, \ldots, $p_h$ be the sequence of slopes of $f$ on the sequence of intervals
\[
]b_0, b_1[,\quad ]b_1, b_2[, \; \ldots, \; ]b_{h-1}, b_h[\;.
\]
Then $c'-a' = \sum\nolimits_{1 \leq i \leq h} p_i(b_i-b_{i-1})$.
As $c-a = \sum_{1 \leq i \leq h} b_i - b_{i-1}$
the difference $(c'-a') - (c-a) $ is an element of  $IP \cdot A$, as claimed.

Conversely, 
assume there exist elements $a_1$, \ldots, $a_h$ in $A $
and $p_1$, \ldots, $p_h$ in  $P$ so that
\[
c'-a'  = c-a  + \sum\nolimits_{1 \leq i \leq h} (1 - p_i) \cdot a_i.
\]
Set $b' = c' - a'$ and $b = c-a$.
By the bijection \eqref{eq:3.1} it suffices to find $f$ in $G(\R;A,P)$ 
which maps $[0,b]$ onto $[0,b']$.
If $b' = b$, the identity proves the contention;
so assume henceforth that $b' \neq b$ (and $h \geq 1)$.
There exists a permutation $\pi$ of $\{1, \ldots, h\}$ such that the partial sums
\[
b_j = b + \sum\nolimits_{1 \leq i \leq j} (1-p_{\pi(i)}) \cdot a_{\pi(i)}
\]
are positive for $j \in \{1,\ldots, h\}$.
If $f_1$, \ldots, $f_h$ are homeomorphisms in $G(\R;A,P)$ so 
that$ f_j([0, b_{j-1}]) = [0,b_j]$ for $j \in \{1,\ldots, h\}$, 
the composition $f_h \circ \cdots \circ f_1$
is a PL-homeomorphism mapping $[0,b] $ onto $[0,b']$.
It suffices therefore to prove the claim for $h = 1$. 
Moreover, as
$(1-p_1)a_1 = (1-p^{-1}_1) (-p_1 a_1)$,
we can assume that $p_1 > 1$. 
All taken together, 
it suffices to construct $f \in G(\R;A,P)$
 which maps $[0,b]$ onto $[0,b+(p-1)a]$  with  $p > 1$ , $a \neq 0$, 
 and $b$ as well as $b + (p-1)a$ positive.
 
 Suppose first that $a > 0$ and choose a large natural number $k$ 
 such that $a' = a/p^k$ 
is strictly smaller than $b$.
Lemma \ref{lem:TheoremA} below then shows 
that there is a PL-homeomorphisms $f \in G(\R;A,P)$, 
having singularities in 0, $a'/p$ and $a'$ and slopes 1, $p$, $p^{k+1}$, 1
which maps the interval $[0, b]$ onto $[0, b + (p-1)a]$.
If, on the other hand, $a$ is negative,
the lemma permits one to find an PL-homeomorphism  $f \in G(\R;A,P)$ 
which takes $[0,b+(p-1)a]$ onto 
\[
[0, (b+(p-1)a) + (p-1) \cdot (-a)] = [0,b]
\]
and so $f^{-1}$ establishes the validity of our contention.
\end{proof}

\begin{lemma}
\label{lem:TheoremA}
Assume $a$ and $b$ are positive elements of $A$ and $p > 1$ is a slope in $P$.
Choose a positive integer $k$ with $a/p^k < b$ and set $a' = a/p^k$.
Let $f$  be the PL-function
which interpolates  the assignments
\[
0 \mapsto 0, \quad (a'/p) \mapsto a', \quad a' \mapsto a' + (p-1)a
\]
linearly, and is the identity on $]-\infty, 0]$ and the translation with amplitude $(p-1)a$ on $[a', \infty[$.
Then $f$ is PL-homeomorphism with singularities in 0, $a'/p$, $a'$ 
and slopes 1, $p$, $p^{k+1}$, 1,
and $f$ maps the interval $[0,b]$ onto  $[0, b + (p-1)a]$.
\end{lemma}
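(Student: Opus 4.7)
The lemma is a verification: everything is defined by explicit formulas, and the claim is that (a) the resulting function is a well-defined PL-homeomorphism with breaks and slopes as stated, and (b) it takes $[0,b]$ onto $[0, b+(p-1)a]$. The plan is simply to check each of these assertions directly.

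First I would compute the slope on each of the four pieces. On $\,]-\infty,0]\,$ the slope is $1$ and on $[a',\infty[\,$ the slope is also $1$ by definition. On $\,]0, a'/p[\,$ the slope is $(a'-0)/(a'/p-0) = p$. The only non-trivial computation is the middle slope on $\,]a'/p, a'[\,$, which is
\[
\frac{(a'+(p-1)a)-a'}{a'-a'/p}=\frac{(p-1)a}{a'(1-1/p)}=\frac{(p-1)a}{a'}\cdot\frac{p}{p-1}=\frac{pa}{a'}.
\]
Substituting $a'=a/p^k$ gives $p\cdot a\cdot p^k/a = p^{k+1}$, as claimed. Since the pieces agree at the three junction points $0$, $a'/p$, $a'$ by construction, $f$ is continuous, hence PL with exactly the breaks $\{0, a'/p, a'\}$ and slopes $(1, p, p^{k+1}, 1)\in P^4$.

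Next I would verify that the breaks lie in $A$: we have $0\in A$, and because $A$ is a $\Z[P]$-module with $p^{-1}\in P$, both $a'=p^{-k}\cdot a$ and $a'/p = p^{-k-1}\cdot a$ belong to $A$. All four slopes are powers of $p$, hence lie in $P$. All slopes being strictly positive, $f$ is strictly increasing; combined with continuity and the fact that $f(t)\to\pm\infty$ as $t\to\pm\infty$, this shows $f$ is a homeomorphism of $\R$, so $f\in G(\R;A,P)$.

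Finally I would check the image of $[0,b]$. By hypothesis $b>a'$, so $b$ lies in the translation region where $f(t)=t+(p-1)a$; hence $f(b)=b+(p-1)a$. Since $f(0)=0$ and $f$ is increasing, $f([0,b])=[0,b+(p-1)a]$, as required. No step presents a genuine obstacle; the only point that needs care is the identification of the middle slope, where one must use $a'=a/p^k$ to recognize the value as $p^{k+1}$.
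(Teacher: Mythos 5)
Your proposal is correct and follows essentially the same route as the paper: the paper's proof consists precisely of the two slope verifications (the middle slope computed as $s_2 = \bigl((a'+(p-1)a)-a'\bigr)/(a'-a'/p) = (p-1)p^k a' \big/ \bigl((p-1)a'/p\bigr) = p^{k+1}$), leaving the remaining checks — continuity at the junctions, breaks lying in $A$, and $f([0,b]) = [0, b+(p-1)a]$ via $b > a'$ — as immediate. Your write-up just makes those routine verifications explicit.
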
 

\begin{proof}
It suffices to check 
that the slope on the interval $[0,a'/p]$ is $p$,
and that the slope $s_2$  on $[a'/p, a']$ is $p^{k+1}$. 
The first claim is obvious and the calculation
\[
s_2 = \frac{(a' + (p-1)a) - a'}{a' -a'/p} 
= 
\frac{(p-1)\cdot p^k \cdot a'}{(p-1)/p \cdot a'} = p^{k+1}
\]
establishes the second assertion.
\end{proof}

\begin{illustration}
\label{illustration:4.3}
\index{Quotient group A/IPA@Quotient group $A/(IP \cdot A)$!examples|(}%
\index{Module A@Module $A$!select examples}%
We determine the submodule $IP \cdot A$ 
for some specific values of $P$ and $A$,
focussing attention on a particular kind of module $A$,
namely the subring $\Z[P]$ generated, 
inside the field of real numbers, 
by the group $P$. 
The module $A$ is then cyclic
and so $A/(IP \cdot A)$, being a trivial $\Z[P]$-module, is a cyclic group.
Next, let $\PP$ be a generating set of $P$.
The augmentation ideal $I{P}$, viewed as a left module, 
is then generated by the subset $\{p-1 \mid p \in \PP\}$
(see, \eg{}\cite[Lemma VI.1.2(ii)]{HiSt97}) 
and so its image $I[P] = IP \cdot A $ 
is generated by the set $\{p-1 \mid p \in \PP\}$.
\index{Quotient group A/IPA@Quotient group $A/(IP \cdot A)$!examples}%

(i)
The preceding remarks hold, in particular, if $P$ is a cyclic group,
generated by the real number $p > 1$.
Then $I[P]$ is a principal ideal, generated by $p-1$.
Our aim now is to determine the order of the quotient ring 
$\Z[P]/I[P] = \Z[P]/\Z[P]\cdot (p-1)$.

Assume first that $p$ is an integer. 
Then $\Z[P]$ is the ring $\Z[1/p]$,
the localization of the ring $\Z$ with respect to the monoid 
$\md(p) = \{p^k \mid k \in \N \}$.
We claim that  $\Z[P]/I[P]$ is cyclic of order $p-1$.
Indeed, 
the canonical projection $\pi \colon \Z \epi \Z_{p-1} = \Z/\Z(p-1)$ 
maps the element $p$ onto the class $1 + \Z(p-1)$ 
which is invertible in the ring $\Z_{p-1}$.
It follows that $\pi$ extends uniquely to a ring homomorphism $\tilde{\pi}$ 
of $\Z[P] = \Z[1/p]$ onto $\Z_{p-1}$
(see, \eg{}\cite[Proposition 3.1]{AtMa69}).
Its kernel is the ideal generated by $p-1$
and thus the quotient ring $\Z[P]/I[P]$ is isomorphic with $\Z_{p-1}$, 
a ring whose additive group is cyclic of order $p-1$.

Suppose next that $p$  is a rational number, 
say $p = n/d$ with $n$, $d$ relatively prime positive integers.
As the numbers $n$, $d$ are relatively prime
the generalized euclidean algorithm allows one to find integers $x$ and $y$
that satisfy the relation 
\begin{equation}
\label{eq:Generalized-euclidan-algo}
1 = x \cdot n+ y \cdot d.
\end{equation}
This relation implies, first of all,  
that $1 = x \cdot (n-d) + (y  + x) \cdot d)$,
a relation showing
that $d$ maps onto an invertible element of $\Z_{n-d}= \Z/\Z(n-d)$
under the canonical epimorphism $\pi \colon \Z \epi \Z_{n-d}$.
One sees similarly 
that $\pi(n)$ is invertible in the ring $\Z_{n-d}$,
whence $\pi(n \cdot d)$ is invertible in that ring.
Thirdly, 
relation \eqref{eq:Generalized-euclidan-algo} implies
that the rational number $\frac{1}{d}$ 
equals $x \cdot \frac{n}{d} + y$ inside $\Q$ 
and so $d$ is invertible in $\Z[P]$;
one verifies in the same way that $n$ is invertible in $\Z[P]$.
We conclude 
that $\Z[P]$ is the localized ring $\Z[\frac{1}{n \cdot d}]$
and that the ideal $I[P]$,
which is generated by $n/d - 1$,
is also generated by $n-d$.
It then follows, 
as in the preceding paragraph,
that the canonical ring epimorphism $\pi \colon \Z \epi \Z_{n-d}$ 
extends to a ring epimorphism $\tilde{\pi} \colon \Z[P] \epi \Z_{n-d}$
and that its kernel is the ideal generated by $n-d$.
But if so,
the quotient ring $\Z[P]/I[P]$ is isomorphic to $\Z_{n-d}$ 
and so has order $|n-d|$.

Thirdly, 
let $p$ be an algebraic number.
The additive group of $\Z[P]$ is then a torsion-free group of finite rank;
as $I[P] = \Z[P] \cdot (p-1)$ has the same finite rank, 
$A/(IP \cdot A)$ is a torsion group;
being cyclic it is therefore finite.
If, finally, $p$ is a transcendental  number 
the ring $\Z[P]$ is isomorphic to the group ring $\Z{P}$
and so $A/IP \cdot  A$,
being isomorphic to $\Z{P}/IP$,
is infinite cyclic.

(ii) Suppose $P$ is generated by finitely many integers $p_1$, \ldots, $p_f$,
each of which is greater than 1.
Then $I[P]$ is the ideal generated by the integers 
\[
p_1 - 1, \quad \ldots, \quad p_f - 1.
\]
This ideal is contained in the principal ideal 
generated by the greatest common divisor $d > 0$ of the $p_i - 1$;
the generalized euclidean algorithm, on the other hand, shows 
that $d \in I[P]$ and so $I[P] =  \Z]P] \cdot d$.
Moreover,
as the ring $\Z[P]$ is isomorphic to the localized ring $\Z[1/(p_1 \cdots p_f)]$,
the inclusion $\Z \incl \Z[P]$ induces an isomorphism 
$\Z / d \Z \iso \Z[P]/I[P]$,
and so $\Z[P]/I[P]$ is cyclic of order $d$.
\index{Quotient group A/IPA@Quotient group $A/(IP \cdot A)$!examples|)}%
 \end{illustration}

\section{Applications}
\label{sec:5}

\subsection{Orbits of $G(I;A,P)$}
\label{ssec:5.1}
%
The first corollary of Theorem \ref{TheoremA} 
gives an algebraic characterizations of the orbits of $G(I;A,P)$ 
acting on $A \cap \Int(I)$.
\begin{crl}
\label{CorollaryA1}
\index{Group G(I;A,P)@Group $G(I;A,P)$!orbits in A cap I@orbits in $A \cap \Int(I)$}%
\index{Submodule IPA@Submodule $IP \cdot A$!significance}%
\index{Theorem \ref{TheoremA}!consequences}%
If $I = \R$ the canonical action of $G(I;A,P)$ on $A$ is transitive; 
otherwise each orbit in $A \cap \Int(I)$ has the form $(IP \cdot A + a) \cap \Int(I)$  with $a \in A$.
\end{crl}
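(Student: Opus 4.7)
The plan is to deduce both claims from Theorem \ref{TheoremA} after reducing everything to statements about intervals whose endpoints lie in $A$. If $I = \R$, given $a, a' \in A$ I would choose any $c \in A$ with $c > a$ and set $c' = a' + (c - a) \in A$. Then $c' - a' = c - a$, so the congruence hypothesis of Theorem \ref{TheoremA} holds trivially, and the theorem produces $f \in G(\R;A,P)$ with $f([a,c]) = [a',c']$; in particular $f(a) = a'$, proving transitivity.

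Now assume $I \neq \R$, so $I$ has at least one finite endpoint; fix $a \in A \cap \Int(I)$ and let $\Omega$ denote its orbit under $G(I;A,P)$. For the inclusion $\Omega \subseteq (IP \cdot A + a) \cap \Int(I)$, say $\beta = \sup I < \infty$ (the case of a finite left endpoint being symmetric). Given $a' \in \Omega$, pick $f \in G(I;A,P)$ with $f(a) = a'$, and, using that $A$ is a non-trivial and hence unbounded subgroup of $\R$, choose $c \in A$ with $c > \beta$; since $c \notin I$, we have $f(c) = c$. Viewing $f$ as an element of $G(\R;A,P)$ that maps $[a,c]$ onto $[a',c]$, the necessary direction of Theorem \ref{TheoremA} yields $(c - a') - (c - a) = a - a' \in IP \cdot A$, so $a' - a \in IP \cdot A$, as required.

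For the reverse inclusion, given $a' \in (IP \cdot A + a) \cap \Int(I)$ I would construct $f \in G(I;A,P)$ with $f(a) = a'$ by a gluing argument. Since $A$ is dense in $\R$ and $\Int(I)$ is open and contains both $a$ and $a'$, I can select $e_1, e_2 \in A \cap \Int(I)$ with $e_1 < \min(a,a')$ and $e_2 > \max(a,a')$. Applying Theorem \ref{TheoremA} to the pairs $([e_1,a],[e_1,a'])$ and $([a,e_2],[a',e_2])$---whose length differences are each $\pm(a'-a) \in IP \cdot A$---yields elements $g_1, g_2 \in G(\R;A,P)$ with $g_1([e_1,a]) = [e_1,a']$ and $g_2([a,e_2]) = [a',e_2]$. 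I then define $f$ to equal $g_1$ on $[e_1,a]$, to equal $g_2$ on $[a,e_2]$, and to be the identity outside $[e_1,e_2]$. The three pieces match continuously at $e_1$, $a$, and $e_2$, all of which lie in $A$, so $f$ is a finitary PL-homeomorphism whose slopes, breaks, and support all have the required form. The main obstacle, and where the bulk of the verification sits, is confirming that $f$ really belongs to $G(I;A,P)$---in particular that $f(A) = A$---which assembles piece by piece from the corresponding property of $g_1$ and $g_2$ together with the identity behavior outside $[e_1,e_2]$.
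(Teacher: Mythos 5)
Your proof is correct and follows essentially the same route as the paper's: the forward inclusion uses the necessary direction of Theorem \ref{TheoremA} applied to an interval reaching a point of $A$ fixed by the group (the paper uses a fixed point $a_0 \in A \smallsetminus I$ where you use a point $c \in A$ beyond the finite endpoint), and the reverse inclusion uses the identical two-piece gluing of maps supplied by Theorem \ref{TheoremA}. The only cosmetic difference is in the case $I = \R$, where the paper simply observes that the translation subgroup of $\Aff(A,P) \subseteq G(\R;A,P)$ already acts transitively on $A$, whereas you invoke Theorem \ref{TheoremA} with intervals of equal length; both are fine.
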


\begin{proof}
If $I =\R$ the affine group $\Aff(A,P)$ is a subgroup of $G(I;A,P)$
and it acts transitively on $A$. 
Otherwise, 
there exists an element $a_0 \in A\smallsetminus I$  which is fixed by $G(I;A,P)$. 
Let $b$ be an element of $A \cap  \Int(I)$ and consider $f \in G(I;A,P)$.
By Theorem \ref{TheoremA} the differences $f(b) - a$ and $b - a$ are congruent \emph{modulo} $IP \cdot  A$,
 whence $f(b) - b$ is in $IP \cdot A$. 
 This shows that each orbit of $G(I;A,P)$ is contained in a coset $IP \cdot A + a$ of $IP \cdot A$.
 Conversely, 
 assume $b$, $b'$ are elements of $A \cap \Int(I)$ which are congruent \emph{modulo} $IP \cdot A$. 
 Since $A$ is a dense subset of $\R$ there exist elements $a$, $c$ in $A \cap \Int(I)$ with
\[
a < \min(b,b') \quad \text{and}  \quad \max(b,b') < c .
\]
By Theorem \ref{TheoremA} 
there are homeomorphisms $f_1$  and $f_2$ in $G(\R;A,P)$ with
\[
f_1(a) = a, \quad f_1(b) = b' \quad \text{and} \quad f_2(b) = b', \quad f_2(c) = c.
\]
Then the function $f \colon \R \to \R$, given by
\[
f(t) 
= 
\begin{cases}  
t &\text{if }  t \notin [a,c],\\ 
f_1(t) & \text{if } a \leq  t \leq b ,\\ f_2(t) &\text{if } b < t \leq c
\end{cases} 
\]
is in $G(I;A,P)$ and takes $b$ to $b'$.
\end{proof}

Suppose $I$ is not the entire line $\R$.
Corollary \ref{CorollaryA1} tells one then
that each orbit $\Omega$ of $G = G(I;A,P)$ in $A \cap \Int(I)$
is the intersection of $\Int(I)$ and a coset $IP \cdot A + a_\Omega$ 
of the abelian group $IP \cdot A$.
There exists also an algebraic description of the $G$-orbits 
in $\Int(I) \smallsetminus A$.
It involves, however, not only the abelian group $A$, 
but also the canonical action of $P$ on $A$,
and reads thus:
\begin{crl}
\label{CorollaryA1*}
\index{Group G(I;A,P)@Group $G(I;A,P)$!orbits in int(I)@orbits in $\Int(I)$}%
\index{Orbits of G(I;A,P)@Orbits of $G(I;A,P)$}%
\index{Affine group Aff(A,P)@Affine group $\Aff(A,P)$!significance}%
\index{Theorem \ref{TheoremA}!consequences}%
If $I = \R$ 
the orbits of $G(I;A,P)$ coincide with those of the affine group  $\Aff(A,P)$.
If $I \neq \R$ 
the orbits of $G(I;A,P)$ are the intersections of $\Int(I)$ 
with the orbits of the group $\Aff(IP \cdot A,P)$.
\end{crl}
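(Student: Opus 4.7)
The plan is to give parallel arguments for the two cases, each consisting of a forward inclusion---every $G$-orbit is contained in the stated affine orbit---and a reverse inclusion---the stated affine orbit (intersected with $\Int(I)$) is contained in a single $G$-orbit. The reverse inclusion will rest on Theorem \ref{TheoremA}; the forward inclusion proceeds by inspecting the local affine form of an element $f \in G(I;A,P)$ and tracking its translation part.

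For $I = \R$, the inclusion $\Aff(A,P) \leq G(\R;A,P)$ yields one direction at once. For the other, fix $f \in G(\R;A,P)$ and $t \in \R$, and let $J$ be an affine piece of $f$ whose closure contains $t$, so that $f|_J$ is the restriction of an affine map $s \mapsto ps + c$ with $p \in P$. The translation $c$ lies in $A$: either $J$ is unbounded and $f|_J$ coincides with $\lambda(f)$ or $\rho(f)$ from section \ref{ssec:3.2}, both in $\Aff(A,P)$; or $J = (b_i, b_{i+1})$ is flanked by two breaks in $A$, giving $c = f(b_i) - p b_i \in A$. Hence $f(t) = pt + c$ lies in the $\Aff(A,P)$-orbit of $t$.

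For $I \neq \R$ and the forward inclusion, the same local decomposition $f(t) = p_i t + c_i$ applies, and the claim is that every translation $c_i$ lies in $IP \cdot A$. Since $I \neq \R$, at least one endpoint of $I$ is finite; by symmetry suppose $\inf I$ is finite. Then $f$ is the identity on $(-\infty, \inf I]$, so the leftmost affine piece of $f$ has slope $1$ and translation $0$. A telescoping argument using the continuity relation $c_{i+1} - c_i = (p_i - p_{i+1}) b_{i+1}$ at each break $b_{i+1} \in A$, starting from the vanishing of the leftmost translation, expresses each $c_i$ as a sum of terms of the form $(p_j - p_{j+1}) b_{j+1} = p_{j+1}\bigl((p_j/p_{j+1}) - 1\bigr) b_{j+1}$, each of which lies in $IP \cdot A$ since $(p_j/p_{j+1}) - 1 \in IP$, $b_{j+1} \in A$, and $IP \cdot A$ is stable under $P$. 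Consequently $f(t)$ belongs to $P \cdot t + IP \cdot A$, which is exactly the $\Aff(IP \cdot A, P)$-orbit of $t$.

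For the reverse inclusion when $I \neq \R$, suppose $t_1, t_2 \in \Int(I)$ satisfy $t_2 = p t_1 + a$ with $p \in P$ and $a \in IP \cdot A$. Using the density of $A$ in $\R$, choose $b_0 < b_1 < t_1 < b_2 < b_3$ in $A \cap \Int(I)$, close enough to $t_1$ that $p b_1 + a$ and $p b_2 + a$ lie in $\Int(I)$ and in the correct order (between $b_0$ and $b_3$). Define $f$ to be $s \mapsto ps + a$ on $[b_1, b_2]$ and the identity outside $[b_0, b_3]$; on the remaining intervals $[b_0, b_1]$ and $[b_2, b_3]$ use Theorem \ref{TheoremA} to insert elements of $G(\R;A,P)$ mapping these onto $[b_0, p b_1 + a]$ and $[p b_2 + a, b_3]$ respectively. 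The congruences required by that theorem, namely $(p-1) b_1 + a \in IP \cdot A$ and $(1-p) b_2 - a \in IP \cdot A$, are immediate from the hypotheses on $p$ and $a$. The assembled map lies in $G(I;A,P)$ and sends $t_1$ to $t_2$. The principal subtlety in the whole argument is the telescoping step in the forward inclusion for $I \neq \R$; everything else is a matter of careful local analysis and a direct appeal to Theorem \ref{TheoremA}.
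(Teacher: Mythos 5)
Your proof is correct, and its overall architecture matches the paper's: a forward inclusion obtained by inspecting the local affine form of $f$, and a reverse inclusion obtained by gluing an affine middle piece to two auxiliary homeomorphisms supplied by Theorem \ref{TheoremA}. The reverse inclusion is essentially identical to the paper's construction. Where you genuinely diverge is in the forward inclusion for $I \neq \R$: the paper writes $f(t_*) = f'(t_*)\,t_* + (f(a_1)-a_1) + (1-f'(t_*))\,a_1$ for a point $a_1 \in A \cap \Int(I)$ lying in the same affine piece as $t_*$, and then quotes Corollary \ref{CorollaryA1} to place $f(a_1)-a_1$ in $IP\cdot A$; you instead telescope the translation constants $c_{i+1}-c_i = (p_i - p_{i+1})b_{i+1}$ across the breaks, starting from the identity piece at the finite endpoint, and verify directly that each increment lies in $IP\cdot A$. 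The paper's route is shorter because it reuses Corollary \ref{CorollaryA1}; yours is more self-contained, in effect re-deriving the relevant half of that corollary, and it also handles points of $A \cap \Int(I)$ and of $\Int(I)\smallsetminus A$ uniformly, whereas the paper treats $t_* \in A$ as a separate final case via the identity $P\cdot t_* + IP\cdot A = t_* + IP\cdot A$. Both approaches are sound.
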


\begin{proof}
Assume first that $I \neq \R$
and consider a point $t_* \in \Int(I) \smallsetminus A$
and a function $f \in G = G(I;A,P)$.
Then $f$ is differentiable in $t_*$ 
and there exists a point $a_1 \in A \cap  \Int(I)$ with $a_1 < t_*$ 
so that $f$ has slope $f'(t_*)$ on $[a_1, t_*]$.
The image of $f$ in $t_*$ can then be expressed as follows:
\begin{align}
f(t_*) &=
\left(f(t_*)- f(a_1)\right)+ \left(f(a_1) - a_1\right) + a_1 
\notag\\
&=
f'(t_*) \cdot (t_*- a_1) +  \left(f(a_1) - a_1\right) + a_1 
\notag\\
&=
\label{eq:Expressing-f(tsubstar)}
 f'(t_*) \cdot t_* + \left(f(a_1) - a_1\right)  + (1 - f'(t_*))\cdot a_0
\end{align}
Since $I \neq \R$,
the difference $f(a_1) - a_1$ lies in the submodule $IP \cdot A$
by Corollary \ref{CorollaryA1} 
and the term $(1- f'(t_*))\cdot a_0$ lies in this submodule by definition.
The previous calculation and the fact that $f(t_*)$ must be inside $\Int(I)$ 
imply therefore that $f(t_*)$
and, more generally, the entire orbit $\Omega_{t_*}$ of $t_*$, 
is contained in the set
\[
O(t_*) = \left(P \cdot t_* + IP \cdot A\right) \cap \Int(I).
\]

We verify next $\Omega_{t_*}$ coincides with this set.
Let $p_* \in P$ and $b_* \in IP \cdot A$ be elements such 
that $p_* \cdot t_* + b_* \in \Int(I)$.
Choose elements $a_0$, $a_1$,  $a_2$ and  $a_3$ in $ A \cap \Int(I)$ 
which satisfy the restrictions 
\[
a_0 < a_1 < t_* < a_2 < a_3
\quad\text{and}\quad
a_0 < p_* a_1 + b_* < p_* a_2 + b_* < a_3.
\]
Since $p_* a_1 + b_* = a_1 + (p_* - 1) \cdot a_1 + b_* \in a_1 + IP \cdot A$,
Theorem \ref{TheoremA} allows one to find a PL-homeomorphism 
$f_1 \in G(\R; A, P)$
that maps the interval $[a_0, a_1]$ onto the interval $[a_0 , p_* a_1 + b_*]$.
Similarly, one can find a function $f_2 \in G(\R;A,P) $ with
$f_2([a_2, a_3]) = [p_* a_2 + b_*, a_3]$. 
The function
\begin{equation}
\label{eq:Constructing-f}
f =
\begin{cases}
f_1(t)                                                &\text{ if }   t \ \leq a_1\\ 
p_*(t-a_1) + p_* \cdot a_1 + b_*       & \text{ if } a_1 < t < a_2,\\ 
f_2(t)                                                &\text{ if }  t \geq a_2
\end{cases}
\end{equation}
is then a PL-homeomorphisms 
that belongs to $G(I;A, P)$  and
\[
f(t_*)  = p_* (t_* - a_1) + p_* a_1 + b_1 = p_* t_* + b_*.
\]
We conclude 
then $\Omega_{t_*} = \left( P \cdot t_* + IP \cdot A \right) \cap \Int(I)$.

Assume now that $I = \R$ and that $t_* \in \R \smallsetminus A$.
Then the first part of previous argument remains valid, save for the fact 
that the difference $f(a_1) - a_1$ occurring in the right hand side of equation 
\eqref{eq:Expressing-f(tsubstar)}
is an element of $A$, but not necessarily of $IP \cdot A$,
and so the first part shows only hat 
$\Omega_{t_*} \subseteq P \cdot t_* + A$.
The reverse of this inclusion is then an immediate consequence of the fact
that the affine map $t \mapsto p_* \cdot t + b_*$ lies in $G(\R;A,P)$
for every $(p_*, b_*) \in P \times A$
and so $\Omega_{t_*} = P \cdot t_* + A$.

So far we assumed that $t_* \in \Int(I)  \smallsetminus A$.
Consider now a point $t_* \in A \cap \Int(I)$.
Then $P \cdot t_* + IP \cdot A = t_* +  IP \cdot A$
and, by Corollary \ref{CorollaryA1}, 
this set is the orbit of $G$ through $t_*$ in case $I \neq \R$. 
If, on the other hand,
$I = \R$ then, according to Corollary \ref{CorollaryA1},
the orbit of $G$ through $t_*$ equals $A$,
and this set coincides with $O_{t_*} = P \cdot t_* + A$.

The proof is now complete except for the fact 
that the orbits have so far been described without recourse to affine groups.
This is easily put right,
for  
\[
P \cdot t_* + A =\Aff(A,P). t_*
\quad\text{and}\quad
P \cdot t_* + IP \cdot A   = \Aff(IP \cdot A,P). t_*. 
\] 
 \end{proof}
%
\subsection{The images of $\lambda$, $\rho$ and $(\lambda,\rho)$}
\label{ssec:5.2}
%
We begin by determining the image of the homomorphism 
\[
\lambda  \colon G(I;A,P) \to \Aff(A,P)
\]
defined in section \ref{ssec:3.2}. 
The answer depends on $I $, and of course on $( A,P)$,  
and is given by
\begin{crl}
\label{CorollaryA2}
\index{Image of the homomorphism!lambda@$\lambda$}%
The image of $\lambda  \colon G(I;A,P) \to \Aff(A,P) $ is as follows:
\begin{enumerate}[(i)]
\index{Homomorphism!11-lambda@$\lambda$}%
\item $\{\id \}$ if $I$  is bounded from below but $\inf I$ is not in $A$;
\item  $\{ p \cdot \id \mid p \in  P\}$  if $I$ is bounded from below and  $\inf I$ is in $A$;
\item 
$\Aff(IP\cdot A,P)$ if $I$ is not bounded from below but bounded from above;
\item
$\Aff(A,P)$ if $I = \R$.
\end{enumerate}
\end{crl}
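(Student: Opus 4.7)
The plan is to handle the four cases by combining a local analysis near $\inf I$ (or near $-\infty$) of the constraints on $f \in G(I;A,P)$ with a construction, based on Theorem \ref{TheoremA}, of elements realizing each admissible value of $\lambda(f)$. In all cases, the ``contained in'' direction reduces to a single observation about where the breaks of $f$ can lie and how they propagate the affine behaviour near $\inf I$; the ``contains'' direction is consistently supplied by Theorem \ref{TheoremA}, applied to a compact subinterval whose endpoints are in $A$.

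For case (i), $f$ has support in $I$, so $f(t)=t$ for $t<a=\inf I$ and $f(a)=a$ by continuity. Since all breaks of $f$ lie in $A$ and $a\notin A$, the point $a$ cannot be a break; thus $f$ is differentiable at $a$, forcing $\sigma_-(f)=1$. Combined with $\tau_-(f)=0$ (because $I$ is bounded below), this gives $\lambda(f)=\id$, and $\lambda(\id)=\id$ provides the converse. For case (ii), the same argument permits $\sigma_-(f)$ to be any $p\in P$ (since $a\in A$ now may legitimately be a break), while $\tau_-(f)=0$ still holds. To realize an arbitrary $p\in P$, I would pick $b\in A\cap\Int(I)$ close enough to $a$ that $b':=b+p(b-a)$ still lies in $I$ (possible by density of $A$), and define $f$ with slope $p$ on $[a,b]$, slope $1/p$ on $[b,b']$, and identity elsewhere; the necessary length identity follows from Theorem \ref{TheoremA} (or a direct check), and $\lambda(f)(t)=p\cdot t$.

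Case (iii) is the core. Write $\sup I=c$ and list the breaks of $f$ as $a_1<\cdots<a_n$ in $A$, with slopes $p_0=\sigma_-(f),p_1,\ldots,p_n,p_{n+1}=1$. Since the support lies in $]{-\infty},c]$, one has $f(a_n)=a_n$, and the recursion $f(a_{i-1})=f(a_i)-p_i(a_i-a_{i-1})$ telescopes to
\begin{equation*}
\tau_-(f)=f(a_1)-\sigma_-(f)\cdot a_1=(1-\sigma_-(f))\,a_1+\sum\nolimits_{j=2}^{n}(1-p_j)(a_j-a_{j-1}),
\end{equation*}
which manifestly lies in $IP\cdot A$. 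For the converse, given $(p,\tau)\in P\times(IP\cdot A)$ I would choose $M\in A$ sufficiently negative and $c'\in A\cap I$ sufficiently close to $c$ (using density of $A$ in $\R$ if $c\notin A$, or simply $c'=c$ if $c\in A$); the congruence $(c'-(pM+\tau))-(c'-M)=(1-p)M-\tau\in IP\cdot A$ is automatic since $M\in A$ and $\tau\in IP\cdot A$, so Theorem \ref{TheoremA} yields a PL-homeomorphism $g\colon[M,c']\iso[pM+\tau,c']$ in $G(\R;A,P)$. Extending $g$ by $t\mapsto pt+\tau$ on $]{-\infty},M]$ and by the identity on $[c',\infty[$ produces $f\in G(I;A,P)$ with support in $]{-\infty},c']\subseteq I$ and $\lambda(f)(t)=pt+\tau$.

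Case (iv) is immediate: $\Aff(A,P)\subseteq G(\R;A,P)$, and $\lambda$ restricts to the identity on this subgroup, so $\Aff(A,P)$ lies in the image; since no constraint near $\inf I=-\infty$ prevents arbitrary $(p,\tau)\in P\times A$ from occurring, this is the entire image. The main obstacle in the proof is the telescoping identity of case (iii) together with the verification that the construction based on Theorem \ref{TheoremA} can be arranged to have support in the prescribed interval $I$, which is where the density of $A$ in $\R$ and the flexibility in choosing the auxiliary endpoints $M,c'\in A$ are essential.
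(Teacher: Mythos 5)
Your proof is correct and follows essentially the same route as the paper: a local analysis of $f$ near $\inf I$ (respectively near $-\infty$) gives the containment of the image in each case, and gluing constructions based on Theorem \ref{TheoremA} supply the converse, with your case (ii) homeomorphism coinciding with the paper's $b(a,\Delta;p)$ and your case (iii) gluing matching the paper's almost verbatim. The only (inessential) divergence is that in case (iii) you establish $\tau_-(f)\in IP\cdot A$ by telescoping over the breaks directly, where the paper simply quotes Corollary \ref{CorollaryA1} to get $f(a)-a\in IP\cdot A$; your displayed identity is correct despite a small indexing slip in the list of slopes.
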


\begin{proof}
(i) Set $a = \inf I$ and consider $f \in G(I;A,P)$. 
Then $\supp f \subseteq [a, \infty[$ and so $f$ is the identity on the interval $]-\infty, a]$.
Since $a \in \R \smallsetminus A$ the point $a$ is not a singularity of $f$; 
so $\sigma_-(f) = f' (a) = 1$ and thus $\lambda(f) = \id$.
\smallskip

(ii) Set $a = \inf I$ and  choose $c \in \Int(I)$. 
Given $p \in P$ find a positive element $\Delta \in  A$ with $(1+p)\Delta < c - a$. 
Then define a function $b(a, \Delta;p) \colon \R \to \R$
by setting
\begin{equation}
\label{eq:5.1}
b(a, \Delta;p)t) 
=
\begin{cases}  
t                                        &\text{ if }   t \notin[a, a + (1+p) \Delta]\\ 
p(t-a) + a                          & \text{ if } a \leq t < a+\Delta,\\ 
p^{-1} (t - (a+\Delta)) + (a + p \cdot \Delta) &\text{ if } a + \Delta \leq t \leq a + (1 + p)\Delta.
\end{cases} 
\end{equation}
Then $f$ belongs to $G(I;A,P)$ and $\lambda(f) = p \cdot \id$.
\begin{remark}
\label{remark:Usefulness-of-this-function}
The homeomorphism $f$ defined by formula \eqref{eq:5.1} will find various uses in this monograph,
in particular in section \ref{ssec:Action-TildeG} and in the proof of Theorem \ref{TheoremE04}.
In these sections $f$ is called $b(a,\Delta;p)$ 
to stress the parameters involved in $f$.
Its rectangle diagram is displayed in Example \ref{example:Elements-in-B}
on page \pageref{notation:b(a,Del;p)}.
\end{remark}

(iii) Given  $f \in G(I;A,P)$ there exists $a \in  A$ so that $f$ coincides with the affine map 
$\lambda (f) $ on $]-\infty, a[$. 
Then
\[
f (a) - a = \lambda(f) (a) - a = \left(\sigma_-(f) \cdot  a + \tau_-(f)\right) -a \in IP \cdot A + \tau_-(f).
\]
Since $f(a) - a$ is in $IP \cdot A$ by Corollary \ref{CorollaryA1}, 
this chain of equalities reveals 
that  $\tau_-(f)$ is in $IP \cdot  A$ and so $\lambda(f)$  in $\Aff(IP \cdot A,P)$. 
Conversely, if $g$ is in $\Aff(IP \cdot A, P)$,
choose $c \in A\, \cap\, \Int(I)$ and then $b \in A\, \cap \; ]-\infty,c[$ so that $g(b) < c$.
Next, use Theorem \ref{TheoremA} to find an element $h \in G(\R;A,P)$ 
that maps $[b,c]$ onto $[g(b),c]$,
and define the function $f \colon \R \to \R$ by
\begin{equation}
\label{eq:5.2}
\index{Homomorphism!17-rho@$\rho$}%
f(t)
=
\begin{cases}  
g(t )        &\text{ if }   t < b\\ 
h(t)         & \text{ if } b \leq t \leq c,\\ 
t              &\text{ if }  t > c.
\end{cases} 
\end{equation}
Then $f$ is in $G(I;A,P)$ and $\lambda(f) = g$.

(iv) follows from the fact that $\lambda$ retracts $G(\R;A,P)$ onto $\Aff(A, P)$.
\end{proof}

The situation for $\rho$  is symmetric  to that of $\lambda$,
\index{Homomorphism!17-rho@$\rho$}%
\index{Image of the homomorphism!-(rho@$\rho$}%
and so we move on to the discussion of the image of $(\lambda, \rho)$.
\begin{crl}
\label{CorollaryA3}
\index{Image of the homomorphism!lambda,rho)@$(\lambda, \rho)$}%
The image of $(\lambda, \rho) \colon G(I;A,P) \to \Aff(A,P)^2$ 
is the product $\im \lambda \times \im \rho$ if $I \neq \R$
and, otherwise, the subgroup  
\[
\{(f, g)  \in \Aff(A;P)^2 \mid   \tau_+(g)  - \tau_-(f)  \in IP \cdot A\}.
\]
\end{crl}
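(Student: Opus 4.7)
The plan is to treat the two cases $I \neq \R$ and $I = \R$ separately, in each deducing one inclusion from an explicit construction based on Theorem \ref{TheoremA} and the other from a direct computation.

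For $I \neq \R$, so that $I$ has at least one finite endpoint, I would show that any pair $(\ell, r) \in \im \lambda \times \im \rho$ is realized as $(\lambda, \rho)(f_\ell \circ f_r)$, where $f_\ell \in G(I;A,P)$ is the identity near $\sup I$ with $\lambda(f_\ell) = \ell$, and $f_r$ is the identity near $\inf I$ with $\rho(f_r) = r$. Inspection of the constructions used to prove Corollary \ref{CorollaryA2} (and of the symmetric statements for $\rho$) shows that every element of $\im \lambda$ is realized by such a ``left-sided'' $f_\ell$ and every element of $\im \rho$ by a ``right-sided'' $f_r$; the density of $A$ in $\Int(I)$ allows one to separate the two regions of non-triviality, so that $\rho(f_\ell) = \id$ and $\lambda(f_r) = \id$. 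The composition then maps to $(\ell, r)$, giving the reverse inclusion of the obvious $\im(\lambda, \rho) \subseteq \im \lambda \times \im \rho$.

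For $I = \R$, necessity of the congruence is a direct calculation. Pick $f \in G(\R;A,P)$ and choose $b < c$ in $A$ such that $f$ coincides with $\lambda(f)$ on $(-\infty, b]$ and with $\rho(f)$ on $[c, \infty)$. Theorem \ref{TheoremA} applied to the restriction $f \colon [b,c] \iso [f(b), f(c)]$ yields $(f(c) - f(b)) - (c - b) \in IP \cdot A$. Expanding via $f(b) = \sigma_-(f) \cdot b + \tau_-(f)$ and $f(c) = \sigma_+(f) \cdot c + \tau_+(f)$ gives
\[
(f(c) - f(b)) - (c - b) = (\sigma_+(f) - 1) \cdot c - (\sigma_-(f) - 1) \cdot b + \tau_+(f) - \tau_-(f),
\]
and since the first two summands already lie in $IP \cdot A$, so does $\tau_+(f) - \tau_-(f)$.

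For sufficiency, given $(\ell, r) \in \Aff(A,P)^2$ with $\tau_+(r) - \tau_-(\ell) \in IP \cdot A$, I would pick $b < c$ in $A$ far enough apart that $\ell(b) < r(c)$, which is possible because $\ell$ and $r$ are orientation preserving and $A$ is dense in $\R$. The same expansion as above then gives $(r(c) - \ell(b)) - (c - b) \in IP \cdot A$, so Theorem \ref{TheoremA} supplies $h \in G(\R;A,P)$ with $h([b,c]) = [\ell(b), r(c)]$. Defining $f$ to equal $\ell$ on $(-\infty, b]$, $h$ on $[b, c]$, and $r$ on $[c, \infty)$ produces an element of $G(\R;A,P)$ with $\lambda(f) = \ell$ and $\rho(f) = r$. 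The only real obstacle is this algebraic bookkeeping in the $I = \R$ case: one must observe that the Theorem \ref{TheoremA} obstruction, when expanded, reduces precisely to the stated condition on $\tau_+(r) - \tau_-(\ell)$, the other contributions being automatically absorbed by $IP \cdot A$.
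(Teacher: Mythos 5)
Your proof is correct. For $I \neq \R$ your composition $f_\ell \circ f_r$ of a left-sided and a right-sided realization is exactly what the paper means when it says the constructions of Corollary \ref{CorollaryA2} ``can be carried out simultaneously.'' For $I = \R$ your route differs in organization from the paper's: you verify the congruence condition directly in both directions by expanding $(f(c)-f(b))-(c-b)$ and invoking Theorem \ref{TheoremA}, and you build the preimage by gluing $\ell$, $h$, $r$. The paper instead makes a group-theoretic reduction: since $\lambda$ and $\rho$ both retract onto $\Aff(A,P)$, the diagonal lies in the image, so the corollary is equivalent to the single identity $\rho(\ker\lambda) = \Aff(IP\cdot A, P)$, which follows from part (iii) of Corollary \ref{CorollaryA2} together with the observation that $\ker\lambda = \bigcup_{a\in A} G([a,\infty[\,;A,P)$. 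The paper's argument is shorter because it reuses a computation already done; yours is more self-contained and has the advantage of making visible why the obstruction is precisely $\tau_+(g)-\tau_-(f) \in IP\cdot A$ (the terms $(\sigma_\pm - 1)\cdot b$ and $(\sigma_\pm - 1)\cdot c$ being absorbed into $IP\cdot A$), whereas in the paper this translation between ``$f^{-1}g \in \Aff(IP\cdot A,P)$'' and the stated condition on the $\tau$'s is left implicit. Both are valid; the only point worth flagging in yours is that you should note explicitly that the increasing homeomorphism $h$ supplied by Theorem \ref{TheoremA} satisfies $h(b)=\ell(b)$ and $h(c)=r(c)$, so that the glued map is continuous.
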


\begin{proof}
If $I \neq \R$ 
the constructions given by equations \eqref{eq:5.1} and \eqref{eq:5.2} 
occurring in the proof of Corollary \ref{CorollaryA2},
and the analogous constructions needed to establish the corresponding claims for $\rho$, 
can be carried out simultaneously, whence the first assertion.

If $I = \R$,
both $\lambda$ and $\rho$ retract onto the affine subgroup $\Aff(A,P)$ 
whence the diagonal group  $\{(g,g) \mid g \in \Aff(A,P) \}$ is contained in the image of $(\lambda, \rho)$.
The claim is therefore equivalent to the assertion that 
\[
\rho(\ker \lambda) = \Aff(IP \cdot A , P).
\]
This assertion holds by part (iii) in Corollary \ref{CorollaryA2} and the fact 
that $\ker \lambda$ is the union  $\bigcup\nolimits_{a \in A} G([a, \infty[\, ; A,P)$.
\end{proof}

\subsection{Multiple transitivity}
\label{ssec:5.3}
%
The groups $G(I;A,P)$ are \emph{ordered permutation groups} $(G,\Omega)$,
\ie{}groups acting faithfully on a totally ordered set $\Omega$ 
by order preserving permutations. 
\index{Ordered permutation groups!definition|textbf}%
\footnote{For details on \emph{Ordered Permutation Groups}
we refer the interested reader to the account of A. M. W. Glass \cite{Gla81}.}
From the point of view of ordered permutation groups 
it is useful to know the multiple transitivity properties of the groups $G(I;A,P)$. 
Suppose $(G,\Omega)$ is an ordered permutation group and $\ell$ is  a positive integer.
One says $G$ acts \emph{$\ell$-fold transitively on} $\Omega$ 
\index{Ordered permutation groups!multiple transitivity}%
if, 
given a couple of $\ell$-tuples 
$(\omega_1, \ldots, \omega_\ell)$  and $(\omega'_1, \ldots, \omega'_\ell)$ 
satisfying $\omega_1 < \omega_2 < \cdots < \omega_\ell$ 
and $\omega'_1 < \omega'_2 < \cdots <  \omega'_\ell$,
there exists an element $g \in G$ with 
$g(\omega_1 ) = \omega_1'$, 
$g(\omega_2) = \omega'_2$, 
\ldots, 
$g(\omega_\ell) = \omega'_\ell$.

\begin{crl}
\label{CorollaryA4}
\index{Ordered permutation groups!multiple transitivity}%
\index{Doubly transitive action}%
\index{Group G(I;A,P)@Group $G(I;A,P)$!multiple transitivity}%
Let $G$ be the group $G(I;A,P$) 
and let $\Omega$ be a $G$-orbit contained in $A \cap \Int(I)$. 
Then $G$ acts faithfully on $\Omega$; 
moreover, 
the $G$-action on $\Omega$ is $\ell$-fold transitive for every $\ell \geq  1$, 
if $I \neq \R$ or $IP \cdot A = A$. 
If, however, $I = \R$ and $IP \cdot A < A$, 
the $G$-action is transitive but not 2-transitive.
\end{crl}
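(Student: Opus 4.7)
My plan is to prove faithfulness from density, $\ell$-fold transitivity by gluing PL-homeomorphisms supplied by Theorem \ref{TheoremA}, and failure of 2-transitivity in the exceptional case from a simple congruence invariant. For faithfulness I note that $IP \cdot A$ is a non-trivial $\Z[P]$-submodule of $A$ and hence dense in $\R$; by Corollary \ref{CorollaryA1} the orbit $\Omega$ is either all of $A$ (when $I = \R$) or a coset-intersection $(IP \cdot A + a) \cap \Int(I)$ (when $I \neq \R$), so in both cases $\Omega$ is dense in $\Int(I)$. Any $g \in G$ that fixes $\Omega$ pointwise is then the identity on $\Int(I)$ by continuity and the identity off $I$ by the support hypothesis, so $g = \id$.

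For the transitivity claims under the hypothesis that $I \neq \R$ or $IP \cdot A = A$, given ordered tuples $a_1 < \cdots < a_\ell$ and $a'_1 < \cdots < a'_\ell$ in $\Omega$, I would select auxiliary endpoints $a_0, a_{\ell+1} \in A$ satisfying
\[
a_0 < \min(a_1, a'_1), \qquad \max(a_\ell, a'_\ell) < a_{\ell+1},
\]
and, whenever $I$ is not the whole line, with $[a_0, a_{\ell+1}] \subseteq I$ (possible since $A$ is dense in $\R$ and $\Omega \subseteq \Int(I)$). Set $a'_0 = a_0$ and $a'_{\ell+1} = a_{\ell+1}$. For each $i \in \{0, 1, \ldots, \ell\}$, the difference
\[
(a'_{i+1} - a'_i) - (a_{i+1} - a_i) = (a'_{i+1} - a_{i+1}) - (a'_i - a_i)
\]
lies in $IP \cdot A$: for $1 \leq j \leq \ell$, the points $a_j$ and $a'_j$ lie in the same coset of $IP \cdot A$, while $a_0 = a'_0$ and $a_{\ell+1} = a'_{\ell+1}$ by construction; and in the case $I = \R$ with $IP \cdot A = A$, the difference automatically lies in $A = IP \cdot A$. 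Theorem \ref{TheoremA} then provides $f_i \in G(\R; A, P)$ with $f_i([a_i, a_{i+1}]) = [a'_i, a'_{i+1}]$. Gluing the $f_i$ at the common endpoints and extending by the identity on $\R \smallsetminus [a_0, a_{\ell+1}]$ yields a finitary PL-homeomorphism $g$ with breaks in $A$, slopes in $P$, support in $I$ and $g(a_i) = a'_i$ for every $i$.

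To establish the failure of 2-transitivity when $I = \R$ and $IP \cdot A \subsetneq A$, I would use the invariant
\[
g(b) - g(a) \equiv b - a \pmod{IP \cdot A} \qquad (g \in G(\R; A, P),\; a, b \in A),
\]
which is nothing but the telescoping sum over the breaks of $g$ in $[a, b]$ already exploited in the necessity direction of the proof of Theorem \ref{TheoremA}. Choosing $d > 0$ in $A \smallsetminus (IP \cdot A)$ (possible because $A \smallsetminus (IP \cdot A)$ is stable under $x \mapsto -x$), the pair $(0, d)$ cannot be mapped to $(0, 2d)$ by any $g \in G$, for otherwise $d = 2d - d \in IP \cdot A$; transitivity on $\Omega = A$ is already given by Corollary \ref{CorollaryA1}. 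The main technical nuisance, rather than a genuine obstacle, is the bookkeeping required to place $a_0, a_{\ell+1} \in A$ with $[a_0, a_{\ell+1}] \subseteq I$ across the various shapes of $I$ — compact or half-line, with endpoints in $A$ or not — but density of $A$ in $\R$ handles each case.
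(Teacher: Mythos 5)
Your proof is correct, but it is organized differently from the paper's. For the positive direction the paper fixes a point $\omega_0 \in \Omega$, observes that its stabilizer is the product $G(I_1;A,P) \times G(I_2;A,P)$ with $I_1 = I \cap\, ]-\infty,\omega_0]$ and $I_2 = I \cap [\omega_0,+\infty[$, invokes Corollary \ref{CorollaryA1} to see that $G(I_2;A,P)$ is transitive on $\Omega \cap \Int(I_2)$, and then iterates this stabilizer argument to climb from transitivity to $\ell$-fold transitivity; you instead produce the witnessing homeomorphism in one stroke, by choosing common outer anchors $a_0 = a'_0$ and $a_{\ell+1} = a'_{\ell+1}$, checking the congruence $(a'_{i+1}-a'_i)-(a_{i+1}-a_i) \in IP\cdot A$ on each subinterval, and gluing the $\ell+1$ maps supplied by Theorem \ref{TheoremA} (this is the same gluing device the paper uses inside the proof of Corollary \ref{CorollaryA1}, just with more pieces). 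Your version is more explicit and self-contained; the paper's is shorter because it reuses Corollary \ref{CorollaryA1} as a black box and never repeats the gluing. For the failure of $2$-transitivity when $I = \R$ and $IP\cdot A < A$, the paper argues that the stabilizer subgroup $G([0,\infty[\,;A,P)$ is not transitive on $A_{>0}$, while you exhibit the concrete obstruction $g(b)-g(a) \equiv b-a \pmod{IP\cdot A}$ on the pair $(0,d)$ versus $(0,2d)$; these are the same fact viewed from two sides, since the paper's non-transitivity claim rests on exactly that congruence via Corollary \ref{CorollaryA1}. The faithfulness argument is identical in both.
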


\begin{proof}
By Corollary \ref{CorollaryA1}
the $G$-orbit $\Omega$  is the intersection of a coset of $IP \cdot A$ in $A$ 
and $\Int(I)$
whenever $I \neq \R$,  and $A$ itself  if $I = \R$; 
in both cases, $\Omega$ is thus dense in $\Int(I)$.
Since the support of each homeomorphism in $G$ is contained in $\Int(I)$,
the group $G$ thus acts faithfully on $\Omega$.

Fix now a point $\omega_0 \in \Omega \subseteq A\, \cap  \,\Int(I)$.
Its stabilizer $H$ is the direct product 
\[
H = G(I_1;A,P) \times G(I_2;A,P)
\]
with $I_1 = I \, \cap \,] - \infty, \omega_0]$ 
and $I_2= I \,\cap\, [\omega_0, +\infty[$.
Assume next that $\Omega$ has the form 
\[
(IP \cdot A + a_0)\, \cap\, \Int(I)
\] 
for some $a_0 \in A$.
Then $\Omega_2 = \Omega \, \cap \Int(I_2)$ 
equals $(IP \cdot A + a_0)\, \cap\, \Int(I_2)$ 
and so $G_2$ acts transitively on $\Omega_2$ 
by  Corollary \ref{CorollaryA1}.
The group $G$ itself acts therefore 2-fold transitively on $\Omega$.
Now $\Omega_2$ has the form $(IP \cdot A + a_0)\, \cap\, \Int(I_2)$;
so the previous argument can be applied to $(G_2, I_2)$ in place of $(G, I)$.
It follows, first, that $G$ acts 3-fold transitively on $\Omega$ 
and then by, iteration,
that  $G$ acts $\ell$-fold transitively on $\Omega$ for every $\ell \geq 1$.

In the previous argument we assumed 
that the $G(I;A,P)$-orbit $\Omega$ has the form $(IP \cdot A + a_0 ) \cap \Int(I)$;
by Corollary \ref{CorollaryA1} this assumption is fulfilled, if and only if, 
$ I \neq \R$ or $A = IP \cdot A$.
Suppose, finally, that $I = \R$ and $A >IP \cdot A$.
By the corollary, $G = G(\R;A,P)$ acts then transitively on $\Omega =  A$,
but $G_2 = G([0, \infty[\;;A,P)$ does not act transitively on 
$A_{>0} = A \cap \;]0, \infty[$, 
and so $G$ does not act 2-fold transitively on the orbit $A$.
\end{proof}

\begin{remark}
\label{remark:5.3}
If $\Int(I)$ is approximated by compact subintervals $[a, c]$
the preceding corollary implies
that $B(I;A, P)$ acts $\ell$-fold transitively on each of its orbits in $A \cap \Int(I)$,
the number $\ell$ being an arbitrary positive integer.
\index{Subgroup B(I;A,P)@Subgroup $B(I;A,P)$!properties}%

Suppose now that $f$ and $g$ are homeomorphisms of $\R$ with 
\[
\supp f < g (\supp f).
\]
Then the commutator $[f,g]  = f \cdot (\act{g}f)^{-1}$ coincides with $f$ 
on $\supp f$.
This fact and  the multiple transitivity  properties of $B = B(I;A, P)$ 
imply therefore 
that derived group of $B$ acts also $\ell$-fold transitively 
on every orbit of $B$ in $A  \cap \Int(I)$.
\index{Subgroup B(I;A,P)@Subgroup $B(I;A,P)$!properties}%
\end{remark}

\subsection{A density result}
\label{ssec:5.4}
 In this final section of Chapter \ref{chap:A}
 we view $G(I;A,P)$ as a topological group 
 and show that it is dense in a far larger group.
 \begin{corollary}
\label{CorollaryA5}
\index{Group G([a,c];A,P)@Group $G([a,c];A,P)$!density property}%
\index{Theorem \ref{TheoremA}!consequences}%
Let $\Homeo_o(I)$ denote the topological group of all orientation preserving homeomorphisms
$g \colon \R \iso \R$ with support in $I$, 
equipped with the topology of uniform convergence on compact intervals of $\R$. 
If $I$, $A$ and $P$ satisfy the non-triviality requirement \eqref{eq:Non-triviality-assumption} 
then $G(I;A,P)$ is a dense subgroup of $\Homeo_o(I)$.
\end{corollary}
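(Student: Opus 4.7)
The plan is to construct, given $g \in \Homeo_o(I)$, a compact interval $K \subset \R$, and $\varepsilon > 0$, a finitary PL-homeomorphism $f \in G(I;A,P)$ with $|f(t) - g(t)| < \varepsilon$ for every $t \in K$. The core idea is to force $f$ to interpolate $g$ at a sufficiently fine partition of $A$-points inside $I$ and to invoke Theorem \ref{TheoremA} on each subinterval.

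I begin by reducing to a compact subinterval $K' \subseteq K \cap I$ bounded away from any endpoint of $I$ that does not lie in $A$; outside $I$ both $g$ and any $f \in G(I;A,P)$ are the identity, and at a finite endpoint of $I$ the map $g$ is continuous and fixes that endpoint, so a thin buffer zone near such an endpoint can be handled at the end by extending $f$ by the identity there. Using the density of $A$ in $\R$ (see section \ref{ssec:2.1}), I pick a partition $a_0 < a_1 < \cdots < a_n$ of $A$-points lying in $I$ with $K' \subseteq [a_0,a_n]$, fine enough that $g$ oscillates by less than $\varepsilon/3$ on each $[a_{i-1},a_i]$; I arrange moreover that $a_0$ (respectively $a_n$) equals a finite left (respectively right) endpoint of $I$ whenever that endpoint lies in $A$.

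Next, using the density of the submodule $IP \cdot A$ in $\R$ guaranteed by the non-triviality hypothesis \eqref{eq:Non-triviality-assumption}, I choose elements $\delta_i \in IP \cdot A$ with $|\delta_i - (g(a_i) - a_i)| < \varepsilon/3$, setting $\delta_0 = 0$ (respectively $\delta_n = 0$) whenever $a_0$ (respectively $a_n$) has been forced to equal an endpoint of $I$ belonging to $A$, which is compatible with the approximation goal because $g$ fixes such an endpoint. Put $a_i' = a_i + \delta_i$; after possibly refining the partition so that each gap $g(a_i) - g(a_{i-1})$ exceeds $2\varepsilon/3$, the sequence $a_0' < a_1' < \cdots < a_n'$ is strictly increasing and each $a_i'$ lies within $\varepsilon/3$ of $g(a_i)$. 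The telescoping identity
\[
(a_i' - a_{i-1}') - (a_i - a_{i-1}) = \delta_i - \delta_{i-1} \in IP \cdot A
\]
is precisely the hypothesis of Theorem \ref{TheoremA}, which therefore supplies, for each $i$, a homeomorphism in $G(\R;A,P)$ mapping $[a_{i-1},a_i]$ onto $[a_{i-1}',a_i']$. Restricting these homeomorphisms to the corresponding subintervals and splicing gives a PL-homeomorphism on $[a_0,a_n]$ sending $a_i$ to $a_i'$ with slopes in $P$ and breaks in $A$. I extend it to all of $\R$ by the identity on any side where $\delta_0 = 0$ or $\delta_n = 0$ holds, and by the translation $t \mapsto t + \delta_0$ (respectively $t \mapsto t + \delta_n$) on any side along which $I$ extends to infinity; the resulting $f$ has finitely many breaks, all in $A$, slopes in $P$, support contained in $I$, and maps $A$ onto $A$, hence lies in $G(I;A,P)$.

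The approximation estimate is then routine: for $t \in [a_{i-1},a_i] \cap K'$ both $f(t)$ and $g(t)$ lie in a band of width less than $\varepsilon$ around $g([a_{i-1},a_i])$, while for $t \in K \setminus K'$ close to a non-$A$ endpoint of $I$, both $f$ and $g$ differ from the identity by less than $\varepsilon$ by continuity. I expect the main obstacle to be the bookkeeping at the boundary of $I$: the extension of the spliced map outside $[a_0, a_n]$ must genuinely land in $G(I;A,P)$, which forces a case split according as an endpoint of $I$ is finite and in $A$, finite and not in $A$, or infinite. Once that careful case analysis is executed, the density of $IP \cdot A$ together with Theorem \ref{TheoremA} delivers the approximation with no further input.
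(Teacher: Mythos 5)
Your overall strategy coincides with the paper's: interpolate $g$ at a fine partition by points of $A$, use the density of $IP \cdot A$ to choose target points of $A$ congruent to the original ones modulo $IP \cdot A$, apply Theorem \ref{TheoremA} on each subinterval, splice, and extend. But one step, as written, cannot be carried out. You first require the partition to be fine enough that $g$ oscillates by less than $\varepsilon/3$ on each $[a_{i-1},a_i]$, and then, to force $a_0'<\cdots<a_n'$, you ask to refine it further so that each gap $g(a_i)-g(a_{i-1})$ \emph{exceeds} $2\varepsilon/3$. These two demands are incompatible: since $g$ is increasing, its oscillation on $[a_{i-1},a_i]$ equals $g(a_i)-g(a_{i-1})$, so the fineness condition already forces this gap to be less than $\varepsilon/3$ (and refining only shrinks it further). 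You cannot drop either condition: the fineness is what makes the final estimate work, and the strict monotonicity of the $a_i'$ is what makes Theorem \ref{TheoremA} applicable interval by interval. The repair — which is what the paper's proof does implicitly when it simply asserts the existence of the increasing sequence $b_0'<\cdots<b_h'$ — is to choose the targets \emph{inductively} rather than independently: having fixed $a_{i-1}'$ with $|a_{i-1}'-g(a_{i-1})|<\varepsilon/3$, pick $a_i'$ in the coset $a_i+IP\cdot A$ inside the interval $]\max\{a_{i-1}',\,g(a_i)-\varepsilon/3\},\; g(a_i)+\varepsilon/3[$\,, which is nonempty because $a_{i-1}'<g(a_{i-1})+\varepsilon/3<g(a_i)+\varepsilon/3$ and which meets the coset because $IP\cdot A$ is dense.

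A second, smaller gap concerns the boundary case you yourself flag but do not close: when an endpoint of $I$ is finite and not in $A$, the point $a_0$ lies strictly inside $I$, your rule produces a generally nonzero $\delta_0$, and neither of your two extension recipes (identity where $\delta=0$; translation where $I$ is unbounded) glues continuously to a map supported in $I$. The fix is to set $\delta_0=0$ in that case too and let the buffer zone absorb the error, since $|g(t)-t|$ is small near such an endpoint; the paper instead inserts an auxiliary point $a_*\in A\cap I$ below $a_0$ and interpolates once more on $[a_*,a_0]$ using Theorem \ref{TheoremA}. Either repair works, but as written your case analysis does not cover this configuration.
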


\begin{proof}
Let $[a, c]$ be a compact interval contained in $I$ having endpoints in $A$.
If $a$ is the left endpoint of $I$, set $a_* = a$; 
otherwise let $a_* \in I \cap A$ be a point with $a_* < a$.
Define $c_* \in I \cap A$ with $c \leq c_*$ similarly.

Fix now a homeomorphism $g \in \Homeo_o(I)$ 
and let $\varepsilon$ be a given positive real.
Since $g\restriction{ [a,c]}$ is uniformly continuous and $A$ is dense in $\R$,
there exists a strictly increasing sequence $b_0 = a$, $b_1$, \ldots, $b_h = c$ of elements in $A$ 
so that  
\[
|g(b_i) - g(b_{i-1}) | \leq \varepsilon /2 \text{ for } i \in \{1,2, \ldots, h \}.
\]
Our next aim is to define 
a strictly increasing sequence $b'_0 < b'_1 < \cdots < b'_h$ of elements in $A$
that will be the values of the  PL-homeomorphisms $f \in G(\R;A, P)$
on the sequence $b_0$, $b_1$, \ldots, $b_h$.
Several cases arise.

Assume first that $I = [a, c]$.
Then the homeomorphism $g$ fixes both $a = b_0$ and $c = b_h$.
As the submodule $IP \cdot A$ is dense in $\R$ 
there exists therefore a strictly increasing sequence 
$b'_ 0 =b_0 < b'_1 < \cdots < b'_h = b_h$ of elements in $A$ 
with the following properties:
\begin{equation}
\label{eq:Condition-on-sequence}
| b'_i - g(b_i) | \leq \varepsilon/2 
\quad \text{and} \quad 
b'_i - b_i \in  IP \cdot A \text{ for } i \in \{1, \ldots, h-1\}.
\end{equation}

Consider now an index $i \in \{1, \ldots, h\}$. 
By the choices of $b'_{i-1}$ and $b'_i$
the difference 
\[
(b'_i - b'_{i-1}) - (b_i - b_{i-1}) = (b'_i - b_i) - (b'_{i-1} - b_{i-1})
\]
 lies in $IP \cdot A$;
so Theorem \ref{TheoremA} allows us to find a function $f_i \in G(\R;A,P)$ 
sending the interval $[b_{i-1}, b_i]$ onto the interval $[b'_{i-1}, b'_i]$.
Let $f \colon \R \to \R$ be the function 
whose restriction to each of the intervals  $[b_{i-1},b_i]$ is $f_i$, 
 and which fixes every point outside of $[a,c]$.
 Then $f \in G(I;A,P)$.
 Moreover,
 given $t \in [a, c]$, 
 there exists an index $i$ with $b_{i-1} \leq t \leq b_i$ and either $g(t) \geq f(t)$ or $f(t) >g(t)$.
 In the first case, 
 the chain of inequalities
 \begin{align*}
|g(t ) - f(t)| =  g(t) - f(t) &\leq g(b_i)   - f(b_{i-1}) \\ 
&\leq
|g(b_i) - g(b_{i-1}) | + |g(b_{i-1}) - b'_{i-1} |
\leq \varepsilon/2 + \varepsilon/2
 \end{align*}
 holds and so $|g(t ) - f(t)| \leq \varepsilon$.
 In the second case, the claim follows from the chain of inequalities
 \begin{align*}
|g(t ) - f(t)| =  f(t) - g(t) &\leq f(b_i)   - g(b_{i-1}) \\ 
&\leq
|b'_i - g(b_i) |  + |g(b_i) - g(b_{i-1})|
\leq \varepsilon/2 + \varepsilon/2.
 \end{align*}
 
Assume next that $a_* < a$ and $c_* = c$.
Choose $b'_0 < b'_1 < \cdots < b'_h$ so that 
\[
a_* < b'_0, \quad |b'_0 - g(b_0)| \leq \varepsilon/2, 
\quad
b'_0 - b_0 \in  IP \cdot A.
\]
and so that the requirements \eqref{eq:Condition-on-sequence} are satisfied.
The choices of $a_*$ and $b'_0$ permit one to construct,
with the help of Theorem \ref{TheoremA},
\index{Theorem \ref{TheoremA}!consequences}%
a PL-homeomorphism $f_0$
that maps the interval $[a_*, b_0]$ onto the interval $[a_*, b'_0]$.
Define the functions $f_1$, \ldots, $f_h$ as before,
and let $f \colon \R \to \R$ be the function 
whose restrictions to the intervals  
\[
[a_*, a_0], \quad [b_0, b_1], \ldots, [b_{h-1}, b_h]
\] 
are $f_0$, $f_1$, \ldots, $f_h$, 
and which fixes every point outside of $[a_*,c]$.
Then $f \in G(I;A,P)$.
Moreover,
one checks, as in the previous case, 
that $|f(t) - g(t)| \leq \varepsilon$ for every $t \in [a, c]$.
The cases 
where either $a_* = a$ and $c < c_*$, or $a_* < a$ and $c < c_*$,
can be handled similarly to the second case.
\end{proof}
%

%
%
\chapter{Generating Sets}
\label{chap:B}
%
\setcounter{section}{5}
%
\section{Necessary conditions for finite generation}
\label{sec:6}
%
Suppose $P_1$ is a subgroup of $P$ 
and $A_1$ is a $\Z[P_1]$-submodule of $A$. 
Then $G(I;A_1,P_1 )$ is a subgroup of $G(I;A,P)$, 
and it is a proper subgroup unless $P_1 = P$ and $A_1 = A$.
Indeed, 
given $a \in  \Int(I) \cap A$ and $p \in P$, 
formula \eqref{eq:5.1} allows one to construct an element $f \in G(I;A,P)$ 
for which
\[
\label{notation:f-prime-at-a-minus}%
f'(a_-) = \lim_{t \nearrow a} f'(t) = 1 \quad \text{and} \quad f'(a_+) = \lim_{t \searrow a} f'(t) = p.
\]
Similarly, 
if $I_1$ is a proper, closed subinterval of $I$, 
then $G(I_1;A,P)$ is a proper subgroup of $G(I;A,P)$. 
Finally, if $X \subset  \Int(I) \cap A$ is a union of orbits
the set $G_X$ of all homeomorphisms $f \in G(I;A,P)$  with singularities in $X$
is a subgroup of $G$, 
and if $X_1 \subsetneqq X_2$  are two such subsets 
the group $G_{X_1}$ is properly contained in $G _{X_2}$.
These facts and Corollary \ref{CorollaryA1} yield a proof of
\begin{prp}
\label{PropositionB1}
\index{Group G(R;A,P)@Group $G(\R;A,P)$!finite generation}%
\index{Group G([0,infty[;A,P)@Group $G([0, \infty[\;;A,P)$!finite generation}%
\index{Group G([a,c];A,P)@Group $G([a,c];A,P)$!finite generation}%
\index{Submodule IPA@Submodule $IP \cdot A$!significance}
If $G(I;A,P)$ is finitely generated then
\begin{itemize}
\item $P$ is a finitely generated group;
\item $A$ is a finitely generated $\Z[P]$-module;
\item $A/(IP \cdot A)$ is finite or $I = \R$;
\item $\inf I$  and $\sup I$ lie  in $\{-\infty\} \cup  A \cup  \{+\infty\}$.
\end{itemize}
\end{prp}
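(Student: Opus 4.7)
The plan is to distill the finite generating set $\{f_1, \ldots, f_n\}$ of $G = G(I;A,P)$ into a smaller local datum and then show this datum must equal $(I,A,P)$ itself. Let $P_0 \le P$ be the subgroup generated by all slopes of the $f_i^{\pm 1}$ and let $A_0 \le A$ be the $\Z[P_0]$-submodule of $A$ generated by all breaks of the $f_i^{\pm 1}$ together with the values $f_i^{\pm 1}(b)$ at those breaks; then $P_0$ is finitely generated and $A_0$ is finitely generated as a $\Z[P_0]$-module. A short verification---on each maximal affine piece $[b_{j-1},b_j]$ of $f_i$ one has $f_i(a) = p_j(a - b_{j-1}) + f_i(b_{j-1}) \in A_0$ for $a \in A_0$, because $p_j \in P_0$, $b_{j-1} \in A_0$ and $f_i(b_{j-1}) \in A_0$---shows that $f_i \in G(I;A_0,P_0)$. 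Hence $G \subseteq G(I;A_0,P_0) \subseteq G(I;A,P) = G$, and so $G = G(I;A_0,P_0)$.

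Condition (i) is then immediate: given $p \in P$ and any $a \in A \cap \Int(I)$ (non-empty by density of $A$), formula~\eqref{eq:5.1} yields an element $f \in G = G(I;A_0,P_0)$ with slope $p$ just to the right of $a$, so $p \in P_0$; thus $P = P_0$ is finitely generated. Condition (iv) uses a support argument. If $\inf I = a_0 \in \R \smallsetminus A$ were finite, then every non-identity $f \in G$ would have leftmost break $b_0(f) \in A \cap \Int(I)$, and $f$ would be affine on $(-\infty, b_0(f)]$; combined with $f(t) = t$ for $t < a_0$, continuity forces $f$ to be the identity on $(-\infty, b_0(f)]$. Setting $b_* = \min_i b_0(f_i) \in A$, every product of the $f_i^{\pm 1}$---hence every element of $G$---is the identity on $(-\infty, b_*]$. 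Density of $A$ furnishes $t^* \in A$ with $a_0 < t^* < b_*$, and $b(t^*,\Delta;p)$ from~\eqref{eq:5.1} (with $\Delta$ small enough that $t^* + (1+p)\Delta \in \Int(I)$) is then an element of $G(I;A,P) = G$ that fails to be the identity at $t^*$, a contradiction; the symmetric argument handles $\sup I$.

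For condition (ii), we now know $P_0 = P$, so $A_0$ is a $\Z[P]$-submodule of $A$. The construction $b(a,\Delta;p)$ shows that every $a \in A \cap \Int(I)$ is a break of some element of $G$, hence lies in $A_0$. If $\inf I$ or $\sup I$ is finite, condition (iv) places it in $A$, and the same construction at $a = \inf I$ or $a = \sup I$ adjoins it to $A_0$. To extend $A_0 \supseteq A \cap J$ (where $J$ is $I$ together with its finite endpoints) to $A_0 = A$, take arbitrary $a \in A$; a small case analysis on the shape of $I$ produces $b \in A_0$ with $a + b \in A_0$. When $I$ is unbounded on one side it suffices to choose $b \in A$ large enough on that side so that both $b$ and $a+b$ fall into $A \cap J$; when $I = [a_-, c_+]$ is compact with $a_-, c_+ \in A$, the element $c_+ - a_- \in A_0$ is positive, and picking an integer $k$ with $a - k(c_+-a_-) \in [a_-, c_+]$ gives the decomposition $a = (a - k(c_+ - a_-)) + k(c_+ - a_-) \in A_0$. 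In each case $a \in A_0$, so $A = A_0$ is finitely generated over $\Z[P]$.

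Finally, for condition (iii), assume $I \ne \R$. By Corollary~\ref{CorollaryA1} each $G$-orbit in $A \cap \Int(I)$ is a coset of $IP \cdot A$ intersected with $\Int(I)$, and because $IP \cdot A$ is dense in $\R$ every coset meets $\Int(I)$; so the orbit set is in bijection with $A/(IP \cdot A)$. The breaks of $f_1,\ldots,f_n$ lie in finitely many $G$-orbits, whose union $\tilde X \subseteq A \cap \Int(I)$ is a $G$-invariant set. Since each $g \in G$ permutes the orbits, the breaks of a composition $fg$ (either breaks of $g$ or $g^{-1}$-preimages of breaks of $f$) lie in $\tilde X$ whenever those of $f, g$ do, whence $G \subseteq \{f \in G(I;A,P) \mid \text{breaks of } f \subseteq \tilde X\}$. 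If $\tilde X$ were properly contained in $A \cap \Int(I)$, a further application of~\eqref{eq:5.1} at a point outside $\tilde X$ would contradict this inclusion; thus $\tilde X = A \cap \Int(I)$, meaning there are only finitely many $G$-orbits, and $A/(IP \cdot A)$ is finite. The main obstacle, I expect, is the case analysis in condition (ii) bridging $A \cap \Int(I) \subseteq A_0$ to $A = A_0$: the argument must accommodate bounded, half-infinite, and full-line intervals separately.
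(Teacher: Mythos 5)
Your proof is correct and follows essentially the same route as the paper's (very terse) argument: both rest on the observation that the subgroups $G(I;A_0,P_0)$ with finitely generated data, the subgroups supported in proper subintervals, and the subgroups $G_X$ of elements with breaks in a union of orbits $X$ are proper unless the data is full, combined with the construction $b(a,\Delta;p)$ of \eqref{eq:5.1} and Corollary \ref{CorollaryA1}; you merely trap the finite generating set inside such a subgroup where the paper phrases the same facts via ascending chains. The only blemishes are cosmetic: $b(t^*,\Delta;p)$ fixes the point $t^*$ itself but moves points immediately to its right (which is what your contradiction actually needs), and adjoining $\sup I$ to $A_0$ requires the mirror image of the construction you use at $\inf I$.
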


%
\section{Generators for groups with supports in the line}
\label{sec:7}
%
For $a \in \R$  and $p \in \R_{>0}$,
we denote by $\aff(a,p)\colon \R \iso \R$ the affine homeomorphism 
which takes $t$ to $pt + a$, \label{notation:aff(a,p)}%
and by $g(a,p) \colon \R \iso \R$ the PL-homeomorphism given by
\[
\label{eq:g(a,p)}
g(a,p) (t) 
= 
\begin{cases} t &\text{ if } t \leq a,\\ 
p(t-a) + a &\text{ if } t \geq a. 
\end{cases}
\]
The group $G(\R;A,P)$ is generated by the subset
\begin{equation}
\index{Group G(R;A,P)@Group $G(\R;A,P)$!generating sets}%
\label{eq:7.1}
\Aff(A,P) \cup  \{g(a,p)\mid (a,p) \in A \times P \} ;
\end{equation}
this assertion can be verified by a straightforward induction on the number of singularities of the elements of $G(\R;A,P)$
(see \cite[Theorem 2.3]{BrSq85}). 
Note that the subset \eqref{eq:7.1} consists of all elements of $G(\R;A,P)$
with at most one singularity.

The generating set \eqref{eq:7.1} satisfies some simple relations:
\begin{align}
\aff(a,p) \cdot  \aff(a',p') &= \aff(a+pa',pp');
\label{eq:7.2}\\
g(a,p) \cdot  g(a,p') &= g(a,pp');
\label{eq:7.3}\\
\act{\aff(a,p)} g(a', p') &= g(a + p a', p');
\label{eq:7.4n}\\
\act{g(a,p)} g(a', p') &= g(a+p (a '-a), p') \quad \text{provided} \quad a < a'.
\label{eq:7.5n}
\end{align}

Actually, 
these relations define the group $G(\R;A,P)$ --- see section \ref{ssec:13.1}. 
\index{Group G(R;A,P)@Group $G(\R;A,P)$!infinite presentations}%
Our present interest in them lies in the fact 
that they enable us to prove the converse of Proposition \ref{PropositionB1}
for $I = \R$.

\begin{theorem}
\label{TheoremB2}
\index{Group G(R;A,P)@Group $G(\R;A,P)$!finite generation}%
\index{Finiteness properties of!G(R;A,P)@$G(\R;A,P)$}%
The group $G(\R;A,P)$ is finitely generated if, and only if, 
$P$ is a finitely generated group and $A$ is a finitely generated $\Z[P]$-module.
\end{theorem}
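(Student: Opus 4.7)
The plan is to combine Proposition \ref{PropositionB1} (which gives the forward implication, since the condition $A/(IP\cdot A)$ finite is vacuous when $I=\R$) with an explicit construction of a finite generating set using the description \eqref{eq:7.1} and the relations \eqref{eq:7.2}--\eqref{eq:7.5n}.

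Assume $P = \langle p_1, \ldots, p_k \rangle$ and $A = \Z[P]\cdot a_1 + \cdots + \Z[P]\cdot a_m$. I would propose the finite set
\[
S = \{\aff(a_j,1), \aff(0,p_i), g(0,p_i) \mid 1 \le j \le m,\ 1 \le i \le k\}
\]
and argue that $\langle S\rangle$ contains the generating set \eqref{eq:7.1}, whence $\langle S\rangle = G(\R;A,P)$.

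First I would check $\Aff(A,P) \subseteq \langle S\rangle$. Formula \eqref{eq:7.2} shows that conjugation of $\aff(a,1)$ by $\aff(0,p)$ yields $\aff(p\cdot a, 1)$ and that $\aff(a,1)\cdot \aff(a',1) = \aff(a+a',1)$, so the subgroup generated by the $\aff(a_j,1)$ together with the $\aff(0,p_i)^{\pm 1}$ contains $\aff(w\cdot a_j, 1)$ for every $w$ in the monoid generated by the $p_i^{\pm 1}$; by the hypothesis on $A$ as $\Z[P]$-module, this yields $\aff(a,1)$ for \emph{every} $a\in A$. Combined with the multiplicative generators $\aff(0,p_i)$, relation \eqref{eq:7.2} then produces all of $\Aff(A,P)$.

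Next I would verify that $g(a,p) \in \langle S\rangle$ for every $(a,p) \in A\times P$. Relation \eqref{eq:7.3} shows that the map $p \mapsto g(0,p)$ is a group homomorphism from $P$ into $G(\R;A,P)$ (since $g(0,1)$ is the identity and hence $g(0,p)^{-1} = g(0,p^{-1})$), so the elements $g(0,p_i)^{\pm 1}$ produce $g(0,p)$ for every $p \in P$. Finally, relation \eqref{eq:7.4n} specialized at $a'=0$ gives $\act{\aff(a,1)}g(0,p) = g(a,p)$, and since both $\aff(a,1)$ and $g(0,p)$ now lie in $\langle S\rangle$, so does $g(a,p)$.

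There is no serious obstacle here: the argument is essentially formal manipulation with the relations already established. The only point demanding a small amount of care is the passage from the generating set $\{p_1,\ldots,p_k\}$ of $P$ and the $\Z[P]$-generating set $\{a_1,\ldots,a_m\}$ of $A$ to the full affine group $\Aff(A,P)$, where one must use that conjugation by $\aff(0,p_i)$ transports the translation subgroup into itself multiplicatively by $p_i$, so that the $\Z[P]$-module structure of $A$ is realized on the nose inside $\Aff(A,P)\cap\langle S\rangle$. Relation \eqref{eq:7.5n} is not needed for this finite generation statement; it will be essential only later for the presentation results.
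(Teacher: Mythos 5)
Your proposal is correct and follows essentially the same route as the paper: necessity via Proposition \ref{PropositionB1}, and sufficiency by exhibiting the same finite set $\{\aff(0,p_i)\} \cup \{\aff(a_j,1)\} \cup \{g(0,p_i)\}$ and using relations \eqref{eq:7.2}, \eqref{eq:7.3} and \eqref{eq:7.4n} to recover the infinite generating set \eqref{eq:7.1}. You merely spell out in more detail the step the paper compresses into ``Relations \eqref{eq:7.2} then imply that the affine subgroup is generated by the finite set \eqref{eq:7.6}''; your observation that \eqref{eq:7.5n} is not needed here also matches the paper.
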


\begin{proof}
By Proposition \ref{PropositionB1}
the stated conditions are necessary  for finite generation.
To prove that they are sufficient, let $\PP$ be a finite set generating the group $P$ 
and $\Acal$ a finite set of $\Z[P]$-module generators of $A$. 
Relations \eqref{eq:7.2} then imply 
that the affine subgroup $\Aff(A,P)$ of $G(\R;A,P)$ is generated by the finite set
\begin{equation}
\label{eq:7.6}
\{\aff(0,p)\mid p \in \PP \} \cup \{\aff(a,1)\mid a \in  \Acal\}.
\end{equation}
Next, equation \eqref{eq:7.3} shows 
that the finite set
\begin{equation}
\label{eq:7.7} 
\{g(0,p)\mid p \in  \PP\}
\end{equation}
generates the subgroup $H = \{g(0,p) \mid p \in P\}$. 
Equation \eqref{eq:7.4n}, finally, reveals
that the translation $\aff(a, 1) \colon t\mapsto t +a$
conjugates $g(0,p')$ onto $g(a, p')$.
All taken together, 
this shows that $G(\R;A,P)$ is generated by the union of the finite sets 
\eqref{eq:7.6} and \eqref{eq:7.7}.
\end{proof}

\section{Generators for groups with supports in a half line}
\label{sec:8}
%
\subsection{Generators for $G = G([0, \infty[\;;A,P)$ 
and for $\ker(\sigma_- \colon G \to P)$}
\label{ssec:8.1}
%
Let $I$ be an interval which is bounded on one side but not on the other side. 
It then follows from bijection \eqref{eq:3.2}
that the group $G(I;A,P)$ is isomorphic to $G([a,\infty[\, ;A,P)$ 
for some $a \in \R$. 
Moreover, if $a \in A$ then the groups $G ([a,\infty[ \,;A,P)$ 
and $G ([0,\infty[\, ;A,P)$ are isomorphic.
A more detailed analysis, 
to be carried out in section \ref{ssec:16.4},
discloses 
that the groups $G([a, \infty [\;;A,P)$ with $a \in \R \smallsetminus A$ 
are all isomorphic to
\[
\ker(\sigma_- \colon  G([0,\infty[ \;;A,P)  \to P) 
\]
and that this kernel is not isomorphic to $G ([0,\infty[\;;A,P)$. 
We can therefore restrict attention to the groups 
$G = G ([ 0 ,\infty[ \,;A,P)$ and $G_1 = \ker \sigma_-$,
whenever this is convenient.

Given $b\in A$,
let $g(b,p)$ be the PL-homeomorphism
\begin{equation}
\label{eq:8.1}
g(b,p) (t) = \begin{cases} t & \text{ if } t \leq b,\\ p(t-b) + b & \text{ if } b \leq t. \end{cases}
\end{equation}
Fix $a \in A$.
An easy induction on the number of singularities shows 
that the group $G = G([a, \infty[\, ; A,P)$ is generated by the set
\begin{equation}
\label{eq:Generators-G}
\index{Group G([0,infty[;A,P)@Group $G([0, \infty[\;;A,P)$!generating sets}%
\XX_a = \{g(b,p) \mid b \geq a \text{ and } p \in P \}.
\end{equation}
Similarly, $G_1 = \ker(\sigma_- \colon G \to P)$ is generated by 
\begin{equation}
\label{eq:Generators-G1}
\XX_{>a} = \{g(b,p) \mid b > a \text{ and } p \in P \}.
\end{equation}
These generators satisfy the relations
\begin{align}
g(b,p)  \cdot g(b,p') &= g(b,pp') \label{eq:8.2}\\
\intertext{and}
\act{g(b,p)} g(b', p') &= g(b +p(b '- b),p')  \text{ whenever } b < b'.
\label{eq:8.3}
\end{align}
\index{Group G([0,infty[;A,P)@Group $G([0, \infty[\;;A,P)$!infinite presentation}%
The stated relations are actually defining, 
as can easily be deduced by a normal form argument 
due to M. G. Brin and C. C. Squier  
(see \cite[Thm.\,2.3]{BrSq85}). 

In this section, 
we use the above relations to obtain a more economical set of generators 
for $G([0, \infty[\;;A,P)$.
\begin{proposition}
\label{PropositionB3}
\index{Group G([0,infty[;A,P)@Group $G([0, \infty[\;;A,P)$!generating sets}%
Let $\PP$ be a set of generators of $P$ and
$\Acal$ a set of positive reals generating $A$ \emph{qua} $\Z[P]$-module, 
and let $\TT$ be a set of positive real numbers representing 
the cosets of $IP \cdot A$ in $A$. 
Then the set
\begin{equation}
\label{eq:8.4}
\{g(0,p)\mid p \in \PP\} \cup  \left\{g(b,p)\mid b \in \Acal \cup \TT  \text{ and  } p \in \PP \right\}
\end{equation}
generates the group $G([0, \infty[\;; A,P)$.
\end{proposition}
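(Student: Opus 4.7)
Write $H$ for the subgroup of $G := G([0,\infty[\,;A,P)$ generated by the set displayed in \eqref{eq:8.4}. Since the larger set \eqref{eq:Generators-G} with $a=0$ already generates $G$, the task is to prove that $g(b,p) \in H$ for every $b \in A_{\geq 0}$ and every $p \in P$. Relation \eqref{eq:8.2} shows that, for each fixed $b$, the assignment $p \mapsto g(b,p)$ is a group homomorphism $P \to G$; thus if $g(b,p)$ belongs to $H$ for $p$ in some generating set of $P$, it belongs to $H$ for every $p \in P$. Consequently, setting
\[
T \;:=\; \{\, b \in A_{\geq 0} \ :\ g(b,p) \in H \text{ for every } p \in P \,\},
\]
the proposition reduces to the equality $T = A_{\geq 0}$.

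By construction $\{0\} \cup \Acal \cup \TT \subseteq T$. Relation \eqref{eq:8.3} furnishes two closure properties of $T$. \emph{Scaling:} with $b=0$ in \eqref{eq:8.3}, $\act{g(0,p)}g(b',q) = g(pb',q)$, whence $b' \in T$ implies $pb' \in T$ for every $p \in P$; in particular $P\cdot(\Acal \cup \TT) \subseteq T$. \emph{Affine transit:} for $b<b'$ in $T$ and $p \in P$, $\act{g(b,p)}g(b',q) = g(b+p(b'-b),q)$, so $b+p(b'-b) \in T$.

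The remaining task is combinatorial: verify that the closure of $\{0\} \cup \Acal \cup \TT$ under scaling and affine transit is all of $A_{\geq 0}$. Given $a \in A_{>0}$, choose $t \in \TT \cup \{0\}$ representing the coset $a+IP\cdot A$, so $a-t \in IP \cdot A$. A direct argument shows $IP \cdot A$ is generated, as an abelian group, by $\{(p-1)\alpha : p \in P,\ \alpha \in \Acal\}$: we use that $\Acal$ generates $A$ as a $\Z[P]$-module, together with $-(p-1)\alpha = (p^{-1}-1)(p\alpha)$ and $p\alpha \in P \cdot \Acal$. Hence one can write
\[
a-t \;=\; \sum_{i=1}^{n}(p_i-1)\alpha_i, \qquad p_i \in P,\ \alpha_i \in \Acal.
\]
I then prove $a \in T$ by induction on $n$; the base case $a=t \in T$ is clear. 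For the inductive step, given $b \in T$ (a partial sum) and a further shift $(p-1)\alpha$, the identity $(b-\alpha) + p\bigl(b-(b-\alpha)\bigr) = b + (p-1)\alpha$ reduces the problem to exhibiting $b-\alpha \in T$ (assuming $b>\alpha$); affine transit then produces $b+(p-1)\alpha \in T$. To secure the auxiliary point $b-\alpha$, one orders the summands so that all intermediate partial sums remain positive and so that the required auxiliary members of $T$ have already been produced at earlier stages --- a permutation argument in the spirit of the proof of Theorem \ref{TheoremA}.

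The main obstacle is this last combinatorial step. The two closure operations produce new members of $T$ only to the right of a given pair, or at $P$-multiples of a given point; never below. Converting these local rules into the global statement $T=A_{\geq 0}$ forces one to orchestrate the induction carefully, both to respect the positivity constraint and to guarantee that each auxiliary point $b-\alpha$ demanded by the inductive step has itself already been shown to lie in $T$. This bookkeeping, rather than any deeper difficulty, is the real content of the proof.
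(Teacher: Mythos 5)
Your setup — the set $T$ of good break points, the scaling and affine-transit closure properties, and the reduction to writing $a-t$ as a sum of elements $(p_i-1)\alpha_i$ — coincides with the paper's strategy. But your inductive step has a genuine gap, and it is not the bookkeeping issue you describe. To pass from a partial sum $b\in T$ to $b+(p-1)\alpha$ you invoke affine transit on the pair $\bigl(b-\alpha,\,b\bigr)$, which requires $b-\alpha\in T$. As you yourself observe, the two closure operations never produce a point strictly below an existing one except for $P$-multiples, and $b-\alpha$ is not a $P$-multiple of anything known to lie in $T$: it is the current partial sum shifted down by a module generator. No reordering of the summands changes this — at every step of your induction you would need a point differing from the current partial sum by exactly $\alpha_i$, and the closure of $\{0\}\cup\Acal\cup\TT$ under your two operations gives no access to such points. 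So the "permutation argument in the spirit of Theorem \ref{TheoremA}" cannot close the step; the positivity of intermediate sums is a separate (and genuinely minor) matter.

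The repair is to choose the endpoints of the affine transit differently, so that both are \emph{already} known to lie in $T$. Write
\[
b+(p-1)\alpha \;=\; (1-p^{-1})(p\alpha)\;+\;p^{-1}\,(p\,b),
\]
i.e.\ apply transit with slope $p^{-1}$ to the pair $\bigl(p\alpha,\;p\,b\bigr)$: the left endpoint lies in $T$ because $\alpha\in\Acal$ and $T$ is $P$-stable, and the right endpoint lies in $T$ by the induction hypothesis and $P$-stability. (The case of a summand $-(p-1)\alpha$ is handled by the companion identity $b-(p-1)\alpha=(1-p)\alpha+p(p^{-1}b)$.) What remains is only the ordering constraint $p\alpha<p\,b$, i.e.\ $\alpha<b$, at each stage; this \emph{is} mere bookkeeping, and the paper arranges it by first replacing the coset representative $b_*$ by $p_\alpha b_*$ and the target $a_*$ by $p_\omega a_*$ for sufficiently large $p_\alpha,p_\omega\in P$ (legitimate since $T$ is $P$-stable in both directions), and undoing these dilations at the end. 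With your choice of endpoints the induction cannot start in general; with this one it goes through.
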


\begin{proof}
Let $H$ denote the subgroup generated by the set \eqref{eq:8.4} 
and let $\Afr$ denote the subset of $A\, \cap \;]0,\infty[$ defined by
\[
\Afr= \left\{a \in  A \, \cap \;]0, \infty[ \;\;  \big |  \; \;g(a, p) \in H \text{ for  each } p \in \PP \right\}.
\]
Our first aim is to show that $\Afr=  A \, \cap \;]0, \infty[$ ; 
the definition \eqref{eq:Generators-G} of $\XX_0$ 
and the facts that $\{g(0,p)\mid p \in P \}$ is included in the set \eqref{eq:8.4}
and that $\XX_0$ generates $G = G([0, \infty[\; ;A,P)$ 
will then immediately lead to the conclusion that $H = G$.

The set $\Afr$ enjoys the following properties.
\begin{enumerate}[(I)]
\item  $\Acal \cup  \TT \subset  \Afr$.
\item $P \cdot \Afr = \Afr$.
\item If $a$, $a'$ are in $\Afr$  and $a < a' $ 
then $(1-p)a + pa'$ belongs to $\Afr$ for each $p \in P$.
\end{enumerate}
Indeed, inclusion (I) holds by the definitions of $H$ and of $\Afr$.
Property (II) is a consequence of the inclusion of $\{g(0,p) \mid p \in P \}$ into the set \eqref{eq:8.4} 
 and of the relations
\[
\act{g(0,p)^\varepsilon} g(a,p') = g(p^\varepsilon \cdot a',p'),
\]
that are special instances of relations \eqref{eq:8.3}. 
Property (III) follows from relations  \eqref{eq:8.2} and \eqref{eq:8.3}.

Now to the proof of the equality $\Afr =  A\, \cap \;]0, \infty[$.
Let  $a_*$ be an element of $A\, \cap \;]0, \infty[$.
Find the representative $b_* \in \TT$ which is congruent to $a_*$ \emph{modulo} $IP \cdot  A$. 
For a reason that will only become clear at the end of the proof, 
$a_*$ should be larger than $b_*$.
To satisfy this requirement we replace $a_*$ by $p_\omega \cdot a_*$ 
with $p_\omega \in P$ a suitably large element.
If $p_\omega \cdot a_*$ can be shown to be in $\Afr$, then $a_* \in \Afr$ by property (II).
Note that $p_\omega\cdot a_* = a_* + (p_\omega-1) a_*$ is congruent to $b_*$
\emph{modulo} $IP \cdot  A$.

As an abelian group,
the submodule $IP \cdot A$ is generated by the products $(1-p) a$ 
with $p$  ranging over $P$ and $a$ varying over $\Acal$.
There exist therefore sequences
\[
(\varepsilon_1, \ldots,  \varepsilon_\ell), \quad (p_1, \ldots, p_\ell) \quad \text{and} \quad (a_1, \ldots, a_\ell)
\]
of elements in $\{1, -1\}$, $P$  or $\Acal$, respectively, 
so that
\begin{equation}
\label{eq:Representation-of-astar}
p_\omega \cdot a_* = b_* + \sum\nolimits_{1 \leq j \leq \ell} \varepsilon_j (1-p_j) a_j.
\end{equation}
Set $ b'_i  = b_*  + \sum\nolimits_{1 \leq j \leq i} \varepsilon_j (1-p_j) a_j$ 
for $i \in \{0, 1, \ldots, \ell\}$.
\smallskip

The idea now is to prove by induction on $i$ 
that each $b'_i$ lies in $\Afr$,  whence $p_\omega \cdot a_* = b'_\ell$ will be in $\Afr$. 
If $i = 0$ all is well,  for $b'_0 = b_*  \in \TT$ 
and so $b'_0$ lies in  $\Afr$  by property (I).
The inductive step, however, presents two problems: 
first of all, some of the members $b'_i$ may be negative, 
a difficulty that can be circumvented by rearranging the terms in equation 
\eqref{eq:Representation-of-astar};
secondly,  
in the proof of implication $b'_{i-1} \in \Afr \Longrightarrow b'_i \in \Afr$ 
we intend to use property (III).
Now 
\begin{equation}
\label{eq:Rewriting-inductive-relation}
b'_i = b'_{i -1}+ \varepsilon_i (1-p_i) a_i = 
\begin{cases} 
(1-p_i)a_i + p_i(p_i^{-1} b'_{i-1}) & \text{ if } \varepsilon_i = 1, \\
(1-p_i^{-1}) p_i a_i + p_i^{-1} (p_i b'_{i-1}) & \text{ if } \varepsilon_i = -1.
\end{cases}
\end{equation}
Property (III) is thus only of help if we can guarantee 
that $a_i < p_i^{-1} b'_{i-1}$ in the first case
and that $p_i a_i < p_i b'_{i-1}$ in the second case
or, equivalently,
that
\begin{equation}
\label{eq:Original-condition}
p_i a_i < b'_{i-1} \text{ if } \varepsilon_i = 1\quad\text{and} \quad a_i < b'_{i-1} \text{  if } \varepsilon_i = -1.
\end{equation}

This problem can be overcome as follows: 
one replaces $b_*$ by $p_\alpha \cdot b_*$ 
where $p_\alpha \in P$ is a large number and sets $b_i =  (p_\alpha-1)b_*+ b'_i$.
Then $b_0 = p_\alpha b_* \in \Afr$ by property (II) and condition \eqref{eq:Original-condition} becomes
\begin{equation}
\label{eq:New-condition}
p_i a_i < b_{i-1}  \text{ if } \varepsilon_i = 1\quad\text{and} \quad a_i < b_{i-1} \text{  if } \varepsilon_i = -1.
\end{equation}
Since the elements $\varepsilon_i$, $p_i$ and $a_i$ do not depend on the choice of $p_\alpha$,
these conditions are satisfied for every sufficiently large element $p_\alpha \in P$.
It follows that 
\[
b_\ell =  (p_\alpha - 1) b_* + b'_\ell =  (p_\alpha - 1) b_* + p_\omega \cdot a_* \in \Afr
\]
and so we are left with showing  
that $p_\omega \cdot a_* = (1-p_\alpha) b_* +  p_\alpha(p_\alpha^{-1}b_\ell) \in \Afr$.

According to property (III),  this conclusion will hold 
if  $b_* < p_\alpha^{-1} b_\ell $  
or, equivalently, if $p_\alpha b_* < b_\ell$.
 Now $b_\ell =   (p_\alpha-1) b_* + p_\omega a_*$  and $ b_* < p_\omega a_*$ by the choice of $p_\omega$,
 and thus the latter condition is satisfied.
\end{proof}

By combining Proposition \ref{PropositionB3}, 
the introductory remarks in section \ref{ssec:8.1} and Proposition \ref{PropositionB1}, 
one arrives at the following characterization of the finitely generated groups 
among the groups of the form $G = G([a, \infty[\;;A,P)$: 
\begin{theorem}
\label{TheoremB4}
\index{Group G([0,infty[;A,P)@Group $G([0, \infty[\;;A,P)$!finite generation}%
\index{Finiteness properties of!G([0,infty[;A,P)@$G([0, \infty[\;;A,P)$}\index{Submodule IPA@Submodule $IP \cdot A$!significance}%
Suppose $I$ is the half line $[a,\infty[$\,. 
Then $G(I;A,P)$ is finitely generated if, and only if, the following conditions hold:
\begin{enumerate}[(i)]
\item $P$ is finitely generated;
\item $A$ is a finitely generated $\Z[P]$-module;
\item $A/ (IP \cdot  A)$ is finite, and
\item $a \in A$.
\end{enumerate}
\end{theorem}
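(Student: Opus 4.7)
The plan is to show that the theorem follows almost immediately by combining Proposition \ref{PropositionB1} with Proposition \ref{PropositionB3} and the introductory remarks of section \ref{ssec:8.1}.

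First I would handle necessity. Assume $G(I;A,P)$ with $I = [a, \infty[$ is finitely generated. Then conditions (i), (ii) and (iii) are immediate consequences of Proposition \ref{PropositionB1} (noting that $I \neq \R$, so the alternative $I = \R$ in (iii) of that proposition is excluded). For (iv), the last bullet of Proposition \ref{PropositionB1} forces $\inf I = a$ to lie in $\{-\infty\} \cup A \cup \{+\infty\}$; since $a \in \R$, we conclude $a \in A$.

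For sufficiency, assume (i)--(iv). By (iv) the translation $t \mapsto t - a$ is an element of $\Aff(A, \{1\})$, and bijection \eqref{eq:3.2} shows that conjugation by it yields an isomorphism $G([a, \infty[\,; A, P) \iso G([0, \infty[\,; A, P)$. It therefore suffices to find a finite generating set of $G = G([0, \infty[\,; A, P)$. By (i), fix a finite set $\PP$ generating $P$; by (ii), fix a finite subset $\Acal \subset A$ generating $A$ as a $\Z[P]$-module; we may assume each element of $\Acal$ is positive, since $A$ is dense in $\R$ and positive multiples of generators still generate. Finally, by (iii) the quotient $A/(IP \cdot A)$ is finite, so there is a finite set $\TT$ of positive reals representing its cosets. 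Proposition \ref{PropositionB3} then asserts that the finite set
\[
\{g(0,p) \mid p \in \PP\} \cup \{g(b,p) \mid b \in \Acal \cup \TT, \; p \in \PP \}
\]
generates $G$, completing the proof.

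There is no real obstacle here: the theorem is essentially a repackaging of Proposition \ref{PropositionB1} (for necessity) and Proposition \ref{PropositionB3} (for sufficiency), with the mild bookkeeping step of reducing the half line $[a, \infty[$ to $[0, \infty[$ by an affine conjugation. The only point that requires any attention is checking that $\Acal$ and $\TT$ can be taken to consist of \emph{positive} reals, as required by Proposition \ref{PropositionB3}, but this is immediate from density of $A$ and from the fact that each coset of $IP \cdot A$ in $A$ meets $A_{>0}$.
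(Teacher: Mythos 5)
Your proof is correct and follows exactly the route the paper intends: the paper itself presents Theorem \ref{TheoremB4} as an immediate combination of Proposition \ref{PropositionB1} (necessity), Proposition \ref{PropositionB3} (sufficiency), and the introductory remarks of section \ref{ssec:8.1} reducing $[a,\infty[$ to $[0,\infty[$ by an affine conjugation. Your added care about choosing $\Acal$ and $\TT$ positive is a correct and worthwhile detail.
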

%
\subsection{Generators for the kernel of $(\sigma_-, \sigma_+)$}%
\label{ssec:8.2}
%
In section \ref{ssec:8.1},
generating sets $\XX_a$ and $\XX_{a,+}$ for  the groups $G = G([a, \infty[\,; A,P)$  
and $G_1 = \ker (\sigma_- \colon G \to P)$ have been found 
(see formulae \eqref{eq:Generators-G} and \eqref{eq:Generators-G1}).
The generating property of these subsets can be established 
by induction on the number of singularities. 
There exists a further subgroup where this approach is feasible,
the subgroup $G_2 = \ker (\sigma_-, \sigma_+)$. 
The groups $G_1$ and  $G_2$ are both infinitely generated:
the first of them is the union of the increasing chain of subgroups $G([a_1, \infty[\;; A, P)$ 
with $a_1 > a$ and the second is the union of a similarly defined chain.

Given positive elements $a$, $b$ of $A$ and an element $p \in P$,
let  $f(a,b;p)$ be the function given by the assignments
\begin{equation}
\label{eq:8.5}
t \longmapsto
\begin{cases}  
t &\text{if }  t <a,\\ 
p(t-a) + a & \text{if } a \leq t \leq  a+ b ,\\ 
t + (p-1)b &\text{if } t > a + b.
\end{cases} 
\end{equation}
Each of the functions $f(a,b;p)$ is in $G_2$;
 we claim they generate $G_2$.
Indeed,
if $f \in G_2$, there exists a sequence 
\begin{equation}
\label{eq:8.6}
S = (a; b_1, \ldots, b_h; p_1, \ldots, p_h),
\end{equation}
with $a$, $b_1$, \ldots, $b_h$ a strictly increasing list of elements of  $A$ 
and $p_1$, \ldots, $p_h$ elements in $P$, 
that describes $f$ in the sense that $f$ has
\begin{align*}
\text{slope } 1 \text{ on}\quad  &\, ]-\infty, a[\, \cup \,]a + b_1 + \cdots + b_h, +\infty[\, ,  \text{ and} \\
\text{slope } p_i \text{ on} \quad & \,]a + \sum\nolimits_{1 \leq j < i} b_j, a + \sum\nolimits_{1 \leq j \leq i} b_j[\, \text{ for } 
i \in \{1, \ldots, h\}.
\end{align*}
It is easily verified that $f$ is the composition
\[
f(a, b_1;p_1) \circ f(a+ b_1, b_2; p_2) \circ \cdots \circ f(a + b_1 + b_2 + \cdots + b_{h-1},b_h; p_h).
\]
If $f$ is described by the sequence \eqref{eq:8.6} 
its image under the homomorphism $\rho \colon G \to \Aff(A,P)$ is the translation with amplitude
\[
\sum\nolimits_i p_i b_i - \sum\nolimits_i b_i  =  \sum\nolimits_i (p_i-1) b_i.
\]

We claim that $(\rho \restriction{G_2}) \colon G_2 \to IP \cdot A$ factors over the multiplication map
\begin{equation}
\label{eq:Multiplication-map}
\mu \colon IP \otimes _{\Z[P]} A \epi IP \cdot A
\end{equation}
which takes $(p-1) \otimes a$ to $(p-1)a$.
To prove this contention,
we assign to each sequence \eqref{eq:8.6} an element of $IP \otimes_{\Z{P}}  A$,
namely $\sum_i (p_i-1) \otimes b_i$.
If a term $b_i$ is replaced by a sum $b'_i + b''_i$ of positive elements of $A$
and the sequence is replaced  by the subdivided sequence
\[
S' = (a; b_1,\ldots, b_{i-1}, b'_i, b''_i, b_{i+1}, \ldots, b_h; p_1, \ldots, p_{i-1}, p_i, p_i, p_{i+1}, \ldots,  p_h),
\]
then $S$ and $S'$ represent the same PL-homeomorphism
and they give rise to the same element in $IP \otimes_{\Z{P}}  A$.
This shows that there exists a function 
\begin{equation}
\label{eq:tilde(rho)}
\tilde{\rho} \colon G_2 \to IP \otimes_{\Z[P]} A,
\end{equation}
sending the composition 
$f(a, b_1;p_1) \circ \cdots \circ f(a + b_1 + b_2 + \cdots + b_{h-1}, b_h; p_h)$ 
to the sum $\sum_i (p_i-1) \otimes b_i$.
The function $\tilde{\rho}$ is a homomorphism;
for in computing $\tilde{\rho} (\bar{f} \circ f)$ 
we can arrange that $f$ and $\bar{f}$ are represented by sequences of the form
\begin{align*}
S &= (a; b_1, \cdots, b_h; p_1,\ldots, p_h),\\
\bar{S} &= (a; p_1b_1, \cdots , p_h b_h; q_1, \ldots, q_h).
\end{align*}
Then $\bar{f} \circ f$ can be represented by the sequence
\[
(a; b_1, \ldots, b_h; p_1q_1, \ldots,  p_hq_h),
\]
and the claim reduces to a very simple calculation.

Clearly, 
a sequence $S$ represents a bounded element 
if the sum 
\[
\sum\nolimits_i p_ib_i - \sum\nolimits_i b_i = \sum\nolimits_i (p_i-1)b_i
\] 
is equal to 0.
It follows that the subgroup $B([0,\infty[\,;A,P)$ maps under $\tilde{\rho}$ onto the kernel 
of the multiplication map $\mu \colon IP \otimes_{\Z{P}} A \epi IP \cdot A$, 
\ie{}onto the homology group $H_1(P,A)$.
The previous considerations establish therefore 
\begin{proposition}
\label{PropositionB5}
\index{Homology Theory of Groups!H1(P,A)@$H_1(P,A)$}%
The group $G_2 = \ker (\sigma_-, \sigma_+)$ is generated  by the homeomorphisms $f(a,b;p)$,
defined by equation \eqref{eq:8.5}, 
and the assignment 
\[
f(a,b;p) \mapsto (p-1)\otimes b
\] 
extends to an epimorphism $\tilde{\rho} \colon G_2 \epi IP \otimes_{\Z[P]} A$.

Let $\mu \colon IP \otimes_{\Z[P]} A  \epi IP \cdot A$ 
denote the multiplication map. 
The composition $\mu \circ \tilde{\rho}$ coincides then 
with $\rho {\restriction G_2} \colon G_2 \to IP \cdot A \incl \Aff(IP \cdot A, P)$.
In addition,
the subgroup $B([0,\infty[\, ; A,P)$ maps under $\tilde{\rho} $ 
onto the homology group $H_1(P,A) = \ker \mu$. 
\end{proposition}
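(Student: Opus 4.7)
The plan is to formalise the argument sketched in the discussion preceding the statement, in three stages.

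First, for the generating claim, given $f \in G_2$ I would list the endpoints of $\supp f$ together with the singularities of $f$ as $a_0 < a_1 < \cdots < a_h$, letting $p_i \in P$ denote the slope of $f$ on $(a_{i-1}, a_i)$. Writing $b_i = a_i - a_{i-1}$, a direct computation on each sub-interval will confirm that $f$ equals $f(a_0, b_1; p_1) \circ \cdots \circ f(a_{h-1}, b_h; p_h)$: the factors acting on later sub-intervals combine to the identity on $[a_{i-1}, a_i]$, while the earlier factors supply exactly the translation $\sum_{j < i}(p_j - 1)b_j$ needed to align the pieces.

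Next, to define $\tilde\rho$, I would assign to every representing sequence $S = (a; b_1, \dots, b_h; p_1, \dots, p_h)$ the element $\pi(S) := \sum_i (p_i - 1) \otimes b_i$ of $IP \otimes_{\Z[P]} A$. Two representations of the same $f$ can be joined by a chain of elementary refinements, each of which splits a single pair $(b_i, p_i)$ into $(b'_i, p_i), (b''_i, p_i)$ with $b'_i + b''_i = b_i$; bilinearity of the tensor product then gives $\pi(S) = \pi(S')$, and so $\tilde\rho(f) := \pi(S)$ is well-defined. For additivity, I would refine to represent $f$ on breakpoints $c_0 < \cdots < c_m$ with slopes $p_i$ on the $i$th sub-interval (of length $b_i$); then $\bar f$ is represented on the image breakpoints $f(c_0) < \cdots < f(c_m)$ with some slopes $q_i$ on sub-intervals of lengths $p_i b_i$, and $\bar f \circ f$ is represented on $c_0 < \cdots < c_m$ with slopes $q_i p_i$. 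The required identity
\[
\sum_i (q_i p_i - 1) \otimes b_i = \sum_i (q_i - 1) \otimes (p_i b_i) + \sum_i (p_i - 1) \otimes b_i
\]
then follows from the $\Z[P]$-module action by pulling $p_i$ across the tensor in the first sum on the right.

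For surjectivity, I would rewrite any element of $IP \otimes_{\Z[P]} A$ via the move $(p - 1) \otimes (-a) = (p^{-1} - 1) \otimes (pa)$ as a sum $\sum (p_i - 1) \otimes a_i$ with $a_i > 0$, and realise it by the PL-homeomorphism in $G_2$ with successive slopes $p_i$ on abutting sub-intervals of lengths $a_i$ starting at any positive point. The equality $\mu \circ \tilde\rho = \rho\restriction G_2$ is then immediate, since for $f$ represented by $S$ the affine map $\rho(f)$ is the translation by $\sum (p_i - 1) b_i = \mu(\tilde\rho(f))$. Since $B([0, \infty[\,; A, P) = G_2 \cap \ker \rho$, the image $\tilde\rho(B)$ is contained in $\ker \mu$, and the reverse inclusion follows because the $f$ produced in the surjectivity argument for $\xi \in \ker \mu$ has zero translation amplitude, hence bounded support. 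The hardest step is the well-definedness of $\tilde\rho$: I have to rule out that inserting spurious breakpoints (where no actual slope change occurs) alters the associated tensor, and once this is reduced via the elementary-refinement lemma to the bilinearity identity for the tensor product, the remaining pieces are either routine geometric checks or formal manipulations in $IP \otimes_{\Z[P]} A$.
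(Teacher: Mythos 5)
Your proposal is correct and follows essentially the same route as the paper: decompose $f\in G_2$ along its singularities into the generators $f(a,b;p)$, define $\tilde\rho$ on representing sequences, check well-definedness under elementary refinements via bilinearity of the tensor product, verify additivity by composing a sequence for $f$ with the image sequence for $\bar f$, and identify $\tilde\rho(B)$ with $\ker\mu$ using $\mu\circ\tilde\rho=\rho\restriction{G_2}$. The only difference is one of emphasis: you spell out the surjectivity of $\tilde\rho$ (via the move $(p-1)\otimes(-a)=(p^{-1}-1)\otimes(pa)$) and the reverse inclusion $\ker\mu\subseteq\tilde\rho(B)$, which the paper leaves largely implicit.
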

%
\section{Generators for groups with supports in a compact interval}
\label{sec:9}
%
If the interval $I$ is compact 
we know of no general method for building up a PL-homeomorphism with many singularities 
from PL-homeomorphisms with fewer breaks. 
(This idea is the key to the results of Sections \ref{sec:7} and \ref{sec:8}.) 
However, 
if $P$ is generated by positive integers and $A$ is the subring $\Z[P]$ of the rationals, 
we can use equidistant subdivisions of $[a,c]$, 
and variations thereof, 
to define a procedure for obtaining elements with many singularities from simpler ones.

Our discussion starts with a comparison of generating sets for different intervals; 
for this comparison $A$ and $P$ can be arbitrary; 
we require, however, that the endpoints $a$ and $c$ lie in $A$. 
The existence of a comparison result is of interest 
since groups $G(I_1;A,P)$ and $G(I_2;A,P)$ with intervals $I_1$ and $I_2$ of different lengths
need not be isomorphic if $P$ is not cyclic (see Corollary \ref{CorollaryE12}).

\subsection{The Comparison Theorem}
\label{ssec:9.1}
%
In the proof of the comparison result
we shall make use of
\begin{lemma}
\label{LemmaB6}
Let $a$, $b$ and $c$ be elements of $A$ with $a < b < c$.
If $\PP$ is a generating set of $P$,
then $G([a,c];A,P)$ is generated by $G([a,b]; A,P)$ and $1 + \card \PP $ auxiliary elements.
(The set $\PP$ is allowed to be infinite.)
\end{lemma}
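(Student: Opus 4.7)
The plan is to exploit the homomorphism $\sigma_+ \colon G([a,c];A,P) \to P$ defined in section \ref{ssec:3.2}. For each $p \in \PP$, I first choose a lift $\gamma_p \in G([a,c];A,P)$ with $\sigma_+(\gamma_p) = p$: concretely one can take $\gamma_p$ with two breaks $s' < s$ in $A$, the identity on $[a, s']$, of slope $p^{-1}$ on $[s', s]$, and of slope $p$ on $[s, c]$, where the relation $s - s' = p(c-s)$ ensures $\gamma_p(c) = c$ and density of $A$ in $\R$ provides the necessary room. These $\gamma_p$'s account for $\card \PP$ of the auxiliary elements. For $g \in G([a,c];A,P)$, writing $\sigma_+(g) = p_{i_1}^{\epsilon_1} \cdots p_{i_k}^{\epsilon_k}$ as a word in $\PP$ and setting $\gamma = \gamma_{p_{i_1}}^{\epsilon_1} \cdots \gamma_{p_{i_k}}^{\epsilon_k}$ yields $\sigma_+(g \gamma^{-1}) = 1$. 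Moreover, any element $g_0 \in \ker \sigma_+$ has left-slope $1$ at $c$ and only finitely many breaks, all lying in $A$, so it equals the identity on some $[c', c]$ with $c' \in A \cap [a, c[$; consequently $\ker \sigma_+ = \bigcup_{c' \in A \cap [a, c[} G([a, c']; A, P)$.

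The single remaining auxiliary element is an $\eta \in G([a,c];A,P)$ satisfying $\eta(t) > t$ for every $t \in \,]a, c[$. Pick $p_0 \in P$ with $p_0 > 1$ and $\delta \in A$ with $0 < \delta < (c-a)/(p_0+1)$; define $\eta$ by slope $p_0$ on $[a, a+\delta]$, slope $1$ on $[a+\delta,\, c - p_0\delta]$, and slope $p_0^{-1}$ on $[c - p_0\delta, c]$. A direct computation verifies $\eta \in G([a,c];A,P)$, that $\eta(c) = c$, and that $\eta(t) - t$ is strictly positive on each of the three pieces, so $\eta$ has no fixed point in $\,]a,c[$ and the increasing sequence $b_n := \eta^n(b)$ converges to $c$. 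Given $c' \in A$ with $c' < c$, choose $n$ so large that $b_n \geq c'$. Since $\eta^n$ lies in $G([a,c];A,P)$ and maps $[a, b]$ bijectively onto $[a, b_n]$, conjugation by $\eta^n$ carries $G([a, b]; A, P)$ isomorphically onto $G([a, b_n]; A, P)$, yielding the crucial equality
\[
G([a, b_n]; A, P) \;=\; \eta^n \, G([a, b]; A, P) \, \eta^{-n}.
\]
Hence $g_0 \in G([a, c']; A, P) \subseteq G([a, b_n]; A, P)$ lies in the subgroup generated by $G([a, b]; A, P) \cup \{\eta\}$, and so $g = g_0 \gamma$ lies in the subgroup generated by $G([a,b];A,P) \cup \{\eta\} \cup \{\gamma_p \mid p \in \PP\}$, as required.

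The main obstacle is the explicit construction of $\eta$ with no interior fixed point; this requires both $A$ to contain a sufficiently small positive element $\delta$ (guaranteed by the density of $A$ in $\R$, which follows from the non-triviality assumption \eqref{eq:Non-triviality-assumption}) and the existence of some $p_0 \in P$ with $p_0 > 1$ (furnished by $P \neq \{1\}$, together with the fact that $P \subseteq \R^\times_{>0}$ is closed under inversion).
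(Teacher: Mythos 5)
Your proof is correct and follows essentially the same route as the paper's: you split off the slope at $c$ via $\sigma_+$ using $\card\PP$ lifts of the generators of $P$, and then use one fixed-point-free auxiliary element whose powers conjugate $G([a,b];A,P)$ onto subgroups exhausting $\ker\sigma_+$. The paper's auxiliary element $h$ contracts toward $a$ (so that $h^m \circ g \circ h^{-m}$ lands in $G([a,b];A,P)$) where your $\eta$ expands toward $c$; these are mirror images of the same conjugation trick, and your version merely makes the constructions of the auxiliary homeomorphisms explicit where the paper leaves them implicit.
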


\begin{proof}
For each $p \in \PP$,
choose an element $g_p$ in $G = G([a,c];A,P)$ with $(g_p)'(c_-) = p$.
Then 
\[
\GG_c = \ker(\sigma_+ \colon G \to P) \cup \{ g_p \mid p \in \PP\}
\]
generates the group  $G = G([a,c];A,P)$.
Next, construct an element $h \in G$ so that $h(t) < t$ for each $t \in\, ](a+b)/2, c[$.
For every  $g \in \ker \sigma_+$ there exists then an exponent $m$ 
so that $h^m(\supp g)$ is contained in $]a,b[$\,,
whence $h^m \circ g \circ h^{-m} \in G([a,b]; A,P)$.
It follows that $G = G([a,c]; A,P)$ is generated by the set
\begin{equation*}
G([a,b];A,P) \cup \{h\} \cup \{g_p \mid p \in \PP\}. 
\end{equation*}
\end{proof}

As a first application of the previous lemma one has
\begin{theorem}
\label{TheoremB7}
\index{Group G([a,c];A,P)@Group $G([a,c];A,P)$!Comparison Theorem}%
\index{Group G([a,c];A,P)@Group $G([a,c];A,P)$!finite generation}%
\index{Group G([a,c];A,P)@Group $G([a,c];A,P)$!generating sets}%
\index{Finiteness properties of!G([a,c];A,P)@$G([a,c];A,P)$}%
Let $I_1$ and $I_2$ be compact intervals with endpoints in $A$.
Then the group $G(I_1;A,P)$ is finitely generated if, and only if, $G(I_2;A,P)$ is so.
This statement remains valid 
if the qualification ``finitely'' is replaced by ``countably''.
\end{theorem}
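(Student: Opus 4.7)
The plan is to reduce to intervals of the form $[0,d]$ with $d \in A_{>0}$, and then show that for two such intervals $[0,d_1]$ and $[0,d_2]$ with $d_1 < d_2$, finite generation of $G([0,d_1];A,P)$ is equivalent to finite generation of $G([0,d_2];A,P)$. Using the translation bijection from section \ref{ssec:3.3}, the group $G([a_i,c_i];A,P)$ is isomorphic to $G([0,c_i-a_i];A,P)$, and $c_i - a_i \in A$ by hypothesis; so WLOG $I_i = [0,d_i]$ with $d_i \in A_{>0}$, and if $d_1 \neq d_2$ we may assume $d_1 < d_2$.

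The easy direction is $G([0,d_1];A,P)$ finitely generated $\Rightarrow$ $G([0,d_2];A,P)$ finitely generated. If $G([0,d_1];A,P)$ is finitely generated then $P$ is finitely generated by Proposition \ref{PropositionB1}, so a finite generating set $\PP$ of $P$ is available; Lemma \ref{LemmaB6} with $a=0$, $b=d_1$, $c=d_2$ then exhibits $G([0,d_2];A,P)$ as generated by $G([0,d_1];A,P)$ together with $1 + \card \PP$ further elements, hence finite.

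The converse is the main obstacle, since $G([0,d_1];A,P)$ sits inside $G([0,d_2];A,P)$ as a subgroup of infinite index and a priori no descent is available. The key device is to replace $[0,d_1]$ by a \emph{shorter} interval $[0,d_0]$ whose group is actually isomorphic to $G([0,d_2];A,P)$ via a global conjugation. Assuming $G([0,d_2];A,P)$ finitely generated, Proposition \ref{PropositionB1} yields $P$ finitely generated; by the standing non-triviality assumption, $IP \cdot A$ is a non-trivial $\Z[P]$-submodule of $A$ and hence dense in $\R$. The coset $d_2 + IP \cdot A$ therefore meets the open interval $(0,d_1)$, so I pick $d_0 \in A$ with $0 < d_0 < d_1$ and $d_2 - d_0 \in IP \cdot A$. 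By Theorem \ref{TheoremA} (applied with $a=a'=0$, $c=d_0$, $c'=d_2$) there exists $\varphi \in G(\R;A,P)$ with $\varphi([0,d_0]) = [0,d_2]$; conjugation by $\varphi$ induces an isomorphism $G([0,d_0];A,P) \cong G([0,d_2];A,P)$, so $G([0,d_0];A,P)$ is finitely generated. Lemma \ref{LemmaB6} applied with $a=0$, $b=d_0$, $c=d_1$ and the finite set $\PP$ now concludes that $G([0,d_1];A,P)$ is finitely generated.

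For the countable version one may either rerun the argument with a countable generating set $\PP$ of $P$ (Lemma \ref{LemmaB6} explicitly permits this), or more simply observe that a group is countably generated iff it is countable and that $|G(I;A,P)| = \max(|A|,|P|,\aleph_0)$ depends only on $A$ and $P$, not on the interval $I$.
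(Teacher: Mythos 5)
Your proof is correct and follows essentially the same route as the paper: both use Theorem \ref{TheoremA} together with the density of $IP\cdot A$ to map one interval onto an initial subinterval of the other, then invoke Lemma \ref{LemmaB6} (with Proposition \ref{PropositionB1} supplying finite generation of $P$). The paper merely handles both directions at once by noting that the roles of $I_1$ and $I_2$ are interchangeable, where you split the argument into an easy and a hard direction.
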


\begin{proof}
By Theorem \ref{TheoremA} and the density of $IP \cdot A$ in $\R$,
\index{Theorem \ref{TheoremA}!consequences}%
there exists an element $f \in G(\R;A,P)$ which maps $I_1$ 
onto an initial subinterval $[a_2, b_2]$ of $I_2 = [a_2,c_2]$.
If the group $G_1 = G(I_1;A,P)$ is generated by $\aleph$ elements,
so is its isomorphic copy $G([a_2, b_2]; A,P)$; 
by Lemma \ref{LemmaB6}  
the group $G_2 = G(I_2;A,P)$ is thus generated by $\aleph + (1 + \card \PP)$ elements. 
Here $\PP$ denotes a generating set of $P$;
if $G_1$ is finitely generated, 
$\PP$ can be chosen to be finite (see Proposition \ref{PropositionB1}),
if $G_1$ is countably generated, so is $P$.
As the rôles of $G_1$ and $G_2$ are interchangeable, 
the claim is established.
\end{proof}
%
\subsection{Definitions of $\PP$-regular and of $\PP$-standard subdivisions}
\label{ssec:9.2}
\index{Subdivisions!examples}
%
From now on we work in the special set-up mentioned 
in the preamble to Section \ref{sec:9}. 
Thus $P$ is generated by a set $\PP$ of natural numbers 
\label{notation:PP}%
and $A$ is the subring $\Z[P]$ of $\Q$.
In view of the Comparison Theorem (Theorem \ref{TheoremB7}) 
we shall assume that $I$ is the unit interval $[0,1]$. 
Our first aim is to construct a convenient set of generators of the group 
$G[\PP] = G( [ 0 ,1 ] ;\Z[\gp(\PP)] , \gp(\PP))$. 
\label{notation:G[PP]}%
\index{Group G[PP]@Group $G[\PP]$!definition|textbf}%

In the process of searching for such a set we shall adhere to 
\begin{convention}
\label{convention-in-9.2}
Given a sequence
\begin{equation}
\label{eq:9.1}
( (0,0), (b_1, b'_1), \ldots, (b_h, b'_h) , (1,1)) 
\end{equation} 
of points in the unit square $[0,1]^2$ with $0 < b_1 < \cdots < b_h < 1$,
let $\check{f} \colon [0,1] \to [0,1]$ denote the affine interpolation of the sequence \eqref{eq:9.1}
and let $f \colon \R \to \R$ be the extension of $\check{f}$ 
that is the identity outside of $[0,1]$.
\emph{By abuse of language, we shall call $f$, rather than $\check{f}$,
the affine interpolation of the sequence \eqref{eq:9.1}.}
\end{convention}

We proceed to the definition of a $\PP$-\emph{regular subdivision of} $[0,1]$; 
the definition will be recursive, the recursion parameter being called \emph{level}. 
A $\PP$-regular subdivision $D_1$ of level 1 is a finite (strictly) increasing sequence of the form
\[
(0,\Delta,2\Delta, . . . , (p-1)\Delta, 1),
\]
where $\Delta= 1/p$ for some $p \in \PP$.
If $D_{\ell-1} = (b_0 = 0, b_1,\ldots, b_h, b_{h+1}= 1)$ 
is a $\PP$-regular subdivision of level $\ell-1$,
each finite sequence $D_\ell$ obtained from $D_{\ell-1}$ 
by replacing a couple $(b_j$, $b_{j+1})$ of adjacent points in $D_{\ell-1}$
by the sequence
\[
(b_j, b_j + \Delta, b_j + 2 \Delta, \ldots, b_j+(p-1) \Delta, b_{j+1})
\]
with $\Delta  = p^{-1}(b_{j+1} - b_j)$ and $p \in \PP$ is,  
by definition,  
a $\PP$-regular subdivision of level $\ell$.
Stated more picturesquely, 
the subdivision $D_\ell$ is obtained from $D_{\ell-1}$ 
by subdividing the interval $[b_j,b_{j+1}]$ of $D_{\ell-1}$  into $p$ equal parts. 

The construction of a $\PP$-regular subdivision $D_\ell$ of level $\ell$ 
can be coded by a sequence of the form
\begin{equation}
\label{eq:9.2}
(1,p_1;n_2,p_2;n_3,p_3; \ldots ;n_\ell,p_\ell)
\end{equation}
where the couple $(n_i, p_i)$ specifies 
that in passing from level $i-1$ to level $i$ in the creation of $D_\ell$ 
the $n_i$-th interval $[b_{n_i-1} , b_{n_i}] $ of $D_{i-1}$ is subdivided into $p_i$ equal parts.
A subdivision $D$ is called $\PP$-\emph{regular} 
if it is $\PP$-regular of some level $\ell$.
\index{PP-regular subdivision@$\PP$-regular subdivision!definition|textbf}%

The $\PP$-\emph{standard} subdivisions are those $\PP$-regular subdivisions 
in whose formation process it is always the \emph{left most} interval that gets subdivided. 
A $\PP$-standard subdivision of level $\ell$ is thus coded 
by a sequence of the form
\begin{equation}
\label{eq:9.3}
\index{PP-standard subdivision@$\PP$-standard subdivision!definition|textbf}%
(1,p_1;1,p_2;1,p_3; \ldots ;1,p_\ell)
\end{equation}
The $\PP$-standard subdivision with parameters 
given by expression \eqref{eq:9.3}
will be denoted by $\St(p_1,p_2, \ldots, p_\ell)$.

\begin{illustration}
\label{illustration:Subdivisions}
\index{PP-regular subdivision@$\PP$-regular subdivision!examples}%
\index{PP-standard subdivision@$\PP$-standard subdivision!examples}%
\index{Rectangle diagram!examples}%
If $\PP = \{2\}$, the $\PP$-regular subdivisions of levels 1, 2 and 3 are as follows.
\psfrag{la1}{\hspace*{-8mm} \small  $\St(2)= (0,\tfrac{1}{2}, 1)$}
\psfrag{la2}{\hspace*{-14.3mm}  \small $\St(2,2)= (0,\tfrac{1}{4 }, \tfrac{1}{2}, 1)$}
\psfrag{la3}{\hspace*{0.7mm} \small  $(0,\tfrac{1}{2 },\tfrac{3}{4}, 1)$}
\psfrag{la4}{\hspace*{-27mm}  \small  $\St(2,2, 2)= (0,\tfrac{1}{8}, \tfrac{1}{4 }, \tfrac{1}{2}, 1)$}
\psfrag{la5}{\hspace*{-9mm}    \small                     $(0,\tfrac{1}{4 },\tfrac{3}{8 },\tfrac{1}{2 }, 1)$}
\psfrag{la6}{\hspace*{-9mm}    \small                      $(0,\tfrac{1}{4 },\tfrac{1}{2 },\tfrac{3}{4}, 1)$}
\psfrag{la7}{\hspace*{-9mm}    \small                      $(0,\tfrac{1}{2 },\tfrac{5}{8 },\tfrac{3}{4}, 1)$}
\psfrag{la8}{\hspace{-9mm}      \small                       $(0,\tfrac{1}{2 },\tfrac{3}{4 },\tfrac{7}{8}, 1)$}
\begin{equation*}
\includegraphics[width= 11cm]{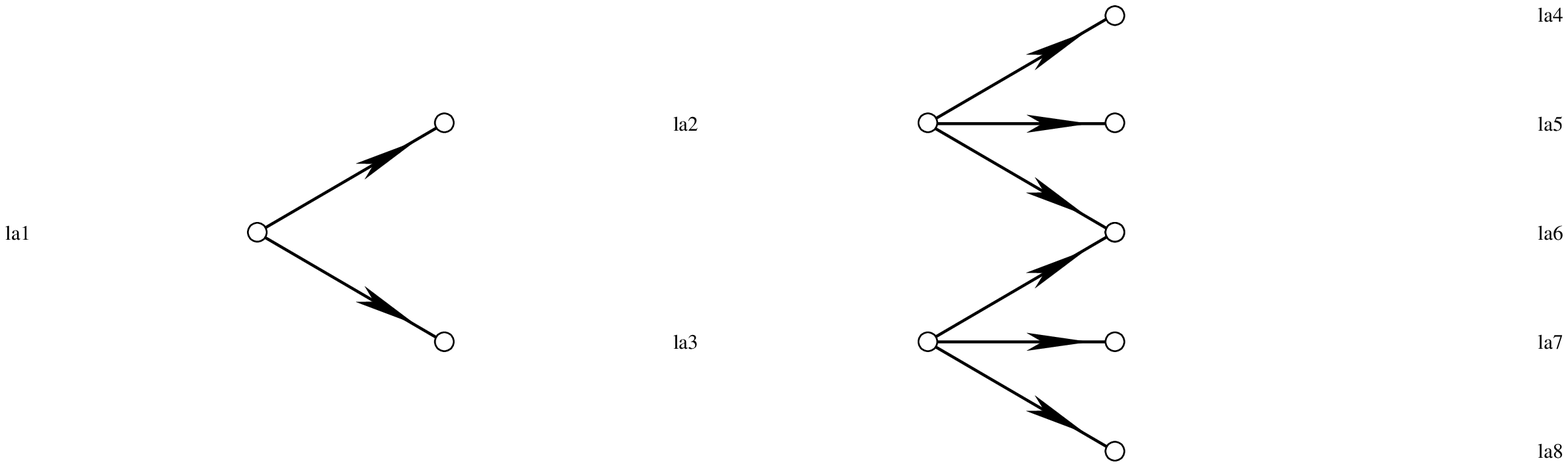}
 \end{equation*}
\end{illustration}
%
\subsection{Homeomorphisms associated to $\PP$-regular subdivisions}
\label{ssec:9.4}
%
As before, 
$\PP$ is a set of positive integers which generate the group $P$.
The reason why $\PP$-regular subdivisions are of interest is explained by
\begin{proposition}
\label{PropositionB8}
\index{PP-regular subdivision@$\PP$-regular subdivision!significance}%
\index{PP-regular subdivision@$\PP$-regular subdivision!associated homeomorphism}%
\index{Group G[PP]@Group $G[\PP]$!generating sets}%

Let
\[
D = (0 = b_0, b_1, \ldots, b_h, b_{h+1} = 1) 
\quad\text{and} \quad 
D' = (0 = b'_0, b'_1,\ldots,  b'_h, b'_{h+1} = 1)
\]
be two $\PP$-regular subdivisions with the same number of points.
Then the affine interpolation of the sequence of points
\begin{equation}
\label{eq:9.4}
D \boxtimes D' = \left((b_0, b'_0), (b_1, b'_1), \ldots, (b_h, b'_h), (b_{h+1}, b'_{h+1}) \right)
\end{equation}
is a PL-homeomorphism belonging to the group $G[\PP] = G([0,1]; \Z[\gp(\PP)], \gp(\PP))$.

Conversely,
every PL-homeomorphisms $f \in G[\PP]$ is the affine interpolation of some product $D \boxtimes D'$
of $\PP$-regular subdivisions of $[0,1]$.
\end{proposition}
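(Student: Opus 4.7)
I will show, by induction on the level of a $\PP$-regular subdivision, that (i) all its breakpoints lie in $A = \Z[P]$ and (ii) each of its intervals has length $1/\pi$ for some product $\pi$ of elements of $\PP$. Both properties are clear at level one; in the inductive step, subdividing an interval of length $1/\pi$ into $p$ equal parts (with $p \in \PP$) introduces new breakpoints of the form $b+k/(p\pi) \in A$ and produces intervals of length $1/(p\pi)$. Given $\PP$-regular subdivisions $D$ and $D'$ with the same number of breakpoints, the slope of the affine interpolation on the $i$-th piece is $(b'_{i+1}-b'_i)/(b_{i+1}-b_i) = \pi_i/\pi'_i$, which lies in $P$. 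Because the interpolation fixes $0$ and $1$ and is extended by the identity outside $[0,1]$, its support lies in $[0,1]$; hence it belongs to $G[\PP]$.

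\textbf{Converse direction.} Given $f \in G[\PP]$ with interior breakpoints $b_1 < \dots < b_h$, images $b'_i = f(b_i)$, and slopes $p_i \in P$ on $[b_i,b_{i+1}]$, I plan to take $D$ to be the equidistant subdivision of $[0,1]$ into $N$ parts and $D' = f(D)$, for an appropriately large positive integer $N$ that is itself a product of elements of $\PP$. The conditions on $N$ are twofold: first, $N b_i$ and $N b'_i$ should be integers for every $i$, so that $D$ refines the breakpoint sequence of $f$ and consequently $f$ coincides with the affine interpolation of $D \boxtimes D'$; second, writing each slope in the form $p_i = \alpha_i/\beta_i$ with $\alpha_i, \beta_i$ coprime products of elements of $\PP$, the quotient $N/\alpha_i$ should be an integer for every $i$, so that each interval of $D'$ has length $1/L_i$ with $L_i = \beta_i N / \alpha_i$ again a product of elements of $\PP$. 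Such $N$ exists because $A = \Z[P]$ is the localization of $\Z$ at the primes appearing in $\PP$.

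\textbf{Main obstacle.} The difficulty is to verify that $D' = f(D)$ is itself $\PP$-regular; the $\PP$-regularity of $D$ follows by factoring $N$ into elements of $\PP$ and subdividing by one factor at a time. I would approach the harder assertion through the following refinement lemma: given any $\PP$-regular subdivision $\tilde D$ and any point $c \in A \cap (0,1)$ that is not yet a breakpoint of $\tilde D$ but lies in an interval $[c_j,c_{j+1}]$ of it, there exists a $\PP$-regular refinement of $\tilde D$ containing $c$ as a breakpoint. Indeed, writing $(c-c_j)/(c_{j+1}-c_j) = m/n$ with $n$ a product of elements of $\PP$, one subdivides $[c_j,c_{j+1}]$ into $n$ equal parts by applying the $\PP$-factors of $n$ one at a time. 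Iterating this lemma produces a $\PP$-regular subdivision containing any prescribed finite subset of $A \cap (0,1)$ as breakpoints. The critical and most delicate step is then to verify that, for $N$ sufficiently large in the sense above, the $\PP$-regular subdivision built by first inserting the outer breakpoints $b'_1,\dots,b'_h$ and then subdividing each resulting piece $[b'_i,b'_{i+1}]$ equidistantly into $L_i(b'_{i+1}-b'_i)$ parts has breakpoint set equal to that of $f(D)$; this amounts to a careful bookkeeping argument tracking the intermediate breakpoints forced by each refinement step, and it is the most technical portion of the proof.
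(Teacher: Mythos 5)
Your forward direction is correct and is essentially the paper's argument: the interval lengths of a $\PP$-regular subdivision lie in $P$, so the interpolation of $D \boxtimes D'$ has slopes in $P$, vertices in $\Z[P]^2$, and support in $[0,1]$.

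The converse, however, has a genuine gap, and it sits exactly where you flag "the most technical portion": you never establish that $D' = f(D)$ is $\PP$-regular, and the conditions you impose on $N$ do not deliver it. Two concrete problems. First, your claim that $N/\alpha_i \in \Z$ forces $L_i = \beta_i N/\alpha_i$ to be a product of elements of $\PP$ is false: the monoid $\md(\PP)$ is generated by the $p_j$ themselves, not by their prime divisors, and it is not closed under division. With $\PP = \{4,6\}$ (the paper's own Example \ref{example:Search-for-regular-subdivisions}) the slope $9 = 36/4$ lies in $P$, so $1/9$ can occur as a slope of $f$; in lowest terms $1/9 = \alpha/\beta$ with $\alpha = 1$, so your divisibility condition is vacuous, yet $L = 9N$ need not lie in $\md(4,6)$ (for $N = 36$ one gets $L = 324 = 2^2\cdot 3^4$, which is not of the form $4^a 6^b$). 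Second, and more fundamentally, even if every $L_i$ did lie in $\md(\PP)$, a subdivision of $[0,1]$ that is merely equidistant with admissible step width on each piece $[b'_i, b'_{i+1}]$ need not be globally $\PP$-regular: $\PP$-regularity is a property of the whole inductive construction, and every intermediate breakpoint forced by your refinement lemma must land on a breakpoint of $f(D)$. You defer this as "careful bookkeeping", but it is precisely the hard part, and it is far from clear it can be made to work for a fixed equidistant $D$.

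The paper avoids the issue entirely by giving up on making $f(D)$ itself $\PP$-regular. It takes $D$ equidistant into $p_0 \in \md(\PP)$ parts containing all breaks of $f$, then chooses a $\PP$-regular \emph{equidistant} subdivision $D'$ of $[0,1]$, of step width $\Delta$, containing all the points of $f(D)$ (possible since those points lie in $\Z[P]$). The number $m_i$ of $\Delta$-intervals inside $[f(b_i),f(b_{i+1})]$ lies in $P \cap \N$, so some multiple $q_i m_i$ lies in $\md(\PP)$ with $q_i \in \md(\PP)$; subdividing each $\Delta$-interval of $[f(b_i),f(b_{i+1})]$ into $q_i$ equal parts yields a $\PP$-regular refinement $D''$ of $f(D)$. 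Its preimage $f^{-1}(D'')$ refines $D$ by cutting each interval of $D$ into $m_i q_i \in \md(\PP)$ equal parts, hence is also $\PP$-regular, and $f$ is the interpolation of $f^{-1}(D'') \boxtimes D''$. If you want to keep your route, you would have to either prove the global regularity of $f(D)$ for a suitably chosen $N$ (with the corrected condition $\beta_i N/\alpha_i \in \md(\PP)$) or, more simply, adopt this pull-back of a refinement.
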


\begin{proof}
If $D$ and $D'$ are $\PP$-regular subdivisions  
the lengths of the intervals $[b_i, b_{i+1}]$ and  $[b'_i, b'_{i+1}] $
 are numbers in $P = \gp(\PP)$
and so the affine map passing through the points $(b_i, b'_i)$ and $b_{i+1}, b'_{i+1})$ 
has slope in $P$.
It follows that the affine interpolation $\check{f}\colon [0,1] \iso [0,1]$ of $D \boxtimes D'$ 
has vertices in $\Z[P]^2$,
slopes in $P$ and that it maps $[0,1] \cap \Z[P]$ onto itself;
so it extends therefore uniquely to a PL-homeomorphism $f \in G[\PP]$.

Conversely, let $f$ be an element of $G[\PP]$.
A $\PP$-regular subdivision containing all the singularities of $f$ can then be found as follows.
To begin with,
let $\bar{p}_0$ be a common denominator of all the singularities of $f$.
Next choose a multiple  $p_0$ of $\bar{p}_0$ 
that belongs to the monoid $\md(\PP)$ 
\label{notation:md(XX)}
generated by the set $\PP$
and define $D$ to be the equidistant subdivision of the unit interval $[0,1]$ with $p_0 + 1$ points.
Then $D$ is a $\PP$-regular subdivision of $[0,1]$.
Let 
\[
f(D) = \{ f(b_0)= 0, f(b_1), \ldots, f(b_{p_0}), f(b_{p_0+1})= 1\}
\]
be the image of $D$ under $f$. 
Then $f$ is the affine interpolation of $D \boxtimes f(D)$ 
(extended by the identity outside of the unit interval)
and so $f$ is affine on each subinterval $[b_i,  b_{i+1}]$ with slope $s_i$, say.
The length of the  interval $[f(b_i), f(b_{i+1}]$  is thus $s_i (b_{i+1}-b_i)$ 
and this number lies in $P$.
Now, let $D'$ be a $\PP$-regular, equidistant subdivision of the interval $[0,1]$
which contains all the points of $f(D)$; 
let $\Delta$ be the step width of $D'$.
In passing from $f(D)$ to $D'$,
the interval $[f(b_i), f(b_{i+1})]$ is subdivided into $m_i$, say, equidistant subintervals of $D'$.
The number $m_i$ lies in $P$,
for it equals $s_i (b_{i+1}- b_i)/\Delta$.
Since each number in $P$ is a fraction of numbers belonging to the monoid $\md(\PP)$,
some $\md(\PP)$-multiple $q_i$ of $m_i$ belongs to $\md(\PP)$.
Subdivide now every subinterval of $[f(b_i), f(b_{i+1}]$ with step width $\Delta$
into $q_i$ equidistant intervals.
If this subdivision is carried out for each $i \in \{0,1,\ldots, h\}$,
one ends up with a $\PP$-regular subdivision $D''$ of $[0,1]$.
Its pre-image $f^{-1}(D'')$ is then $\PP$-regular.
Since $f$ is the affine interpolation of  $f^{-1}(D'') \boxtimes D''$, 
the contention is established.
\end{proof}

\begin{example}
\label{example:Search-for-regular-subdivisions}
\index{PP-regular subdivision@$\PP$-regular subdivision!examples}%
\index{PP-standard subdivision@$\PP$-standard subdivision!examples}%
The following example illustrates the search for $\PP$-regular subdivisions of the unit interval.
Suppose $\PP = \{4, 6\}$ and set $P = \gp(4,6)$.
Then  $\Z[P] = \Z[1/6]$. 
Note that $P \cap \N^\times_{>0}$ is the submonoid generated by 4, 6, and 9.

Let $f \colon \R \iso \R$ be the affine interpolation of
\[
\left((0,0), (\tfrac{1}{18}, \tfrac{1}{2}), (\tfrac{1}{2}, \tfrac{17}{18}), (1,1)\right ).
\]
Then $f$ has slopes $1$, $9$,  1, $\tfrac{1}{9}$, 1, 
and so it belongs the group $G([0,1]; \Z[1/6], \gp(4,6))$.

Let $D$ be the equidistant subdivision of $[0,1]$ with step width $\tfrac{1}{36}$.
This is a $\PP$-regular subdivision 
that contains the singularities 0,  $\tfrac{1}{18}$, $\tfrac{1}{2}$ and 1 of $f$;
it can be obtained by first subdividing the unit interval equidistantly into 6 intervals,
and then each of these 6 intervals into 6  subintervals of length $\tfrac{1}{36}$.

The image of $D$ under $f$ is the subdivision
\[
f(D) = \left(0, \tfrac{1}{4}, \tfrac{1}{2}, \tfrac{19}{36},  \tfrac{20}{36}, \ldots, \tfrac{34}{36}, 
\tfrac{34}{36} + \tfrac{1}{9 \cdot 36}, \ldots, 1- \tfrac{1}{9 \cdot 36}, 1 \right).
\]
Clearly, $f(D)$ is part of the equidistant subdivision of $[0,1]$ with step with $(9 \cdot 36)^{-1}$;
but as $9 \cdot 6^2$ does not lie in the monoid generated by $\PP = \{4,6\}$,
this subdivision is not $\PP$-regular. 
The equidistant subdivision $D'$ with step width $\Delta= (36)^{-2} = 6^{-4}$, however, is $\PP$-regular
and it contains all the points of $f(D)$.
Consider now the first subinterval $[b_0, b_1] = [0, \tfrac{1}{36}]$ of $D$ 
and its image $[f(b_0), f(b_1)] = [0, \tfrac{1}{4}]$ under $f$. 
In passing from $f(D)$ to $D'$ the interval $[0, \tfrac{1}{4}]$ is subdivided 
into intervals of length $\Delta= (36)^{-2}$ and so their number is $m_0 = 9 \cdot 36$.
The number $m_0$ lies in $P$, 
but it does not belong to the submonoid generated by $\PP = \{4, 6\}$.
We subdivide therefore each of the $m_0 = 9 \cdot 36$ 
small subintervals of $D'$ 
contained in $[0, \tfrac{1}{4}]$ into 4 equal parts, 
obtaining $q_0 = 36^2 = 6^4$ subintervals.
The resulting subdivision of $[f(b_0), f(b_1)] = [0, \tfrac{1}{4}]$
 is part of a $\PP$-regular subdivision of $D'$
and the preimages of these $6^4$ equidistant subintervals 
have lengths $\tfrac{1}{4} \cdot \tfrac{1}{36}$ 
and so they are part of a $\PP$-regular subdivision of $D$.

The preceding argument applies to each of the remaining 35 intervals of $f(D)$.
In the case of the last subinterval 
$[f(b_{35}), f(b_{36})] = [1- \tfrac{1}{9 \cdot 36}, 1]$, 
the details are as follows. 
In passing from $f(D)$ to $D'$ 
this interval is subdivided into 4 smaller subintervals.
Since 4 is a number in $\md(4, 6)$ 
the preimage of each of these intervals occurs in a suitable $\PP$-regular subdivision of $D$.
\end{example}
%
\subsection{A set of generators for $G[\PP] = G([0,1]; \Z[\gp(\PP)], \gp(\PP))$}
\label{ssec:9.5}
Our set of generators of $G[\PP]$ will involve four kinds of PL-homeo\-morphisms; 
they are defined next.
\subsubsection{Definitions of $f(q;p; r, p')$, 
$h(p_1, \ldots, p_\ell; p'_1, \ldots p'_k)$,  
$t(q;p,p')$ and $g(p_*,p)$}
\label{sssec:9.5.1}
%
We begin with affine interpolations of a $\PP$-regular and a $\PP$-standard  subdivision.
Let $(q,p,p')$ be a triple of elements in $\md(\PP) \times \PP \times \PP$,
with  $q = p_1 p_2 \cdots p_m$,
and let $r$ be an integer in $\{2,3, \ldots, p\}$.
Let $D$ to be the $\PP$-regular subdivision (of level $m + 2$) 
with parameter sequence
\[
(1,p_1; 1,p_2; \ldots; 1,p_m;  1, p; r,p')
\]
(see the lines following equation \eqref{eq:9.2} 
on page \pageref{eq:9.2}
for an explanation of this notation) 
and let $f(q;p;r,p')$ be the affine interpolation of   
\label{notation:f(q;p;r;p')}%
\[
D \boxtimes \St(p_1,p_2, \ldots,p_m, p, p').
\]
 As indicated by its name,
 the PL-homeomorphism $f(q;p; r, p')$ 
 does not depend on the order of the factors $p_i$ in  $q=p_1p_2 \cdots p_m$.
Note that the support of $f(q;p; r, p')$ is contained in the interval $[0, 1/q]$.

The remaining three kinds of PL-homeomorphisms 
are affine interpolations of two $\PP$-standard subdivisions.
Suppose $D = \St(p_1, \ldots, p_\ell)$ and $D' = \St(p'_1, \ldots k'_k)$
are $\PP$-standard subdivisions of $[0,1]$ with the same number of points;
this latter conditions holds, if and only if,
\begin{equation}
\label{eq:9.5}
2 +(p_1 -1) + \cdots +  (p_\ell -1)= 2 + (p'_1 -1) + \cdots  + (p'_k-1). 
\end{equation}
Define now $h(p_1, \ldots, p_\ell; p'_1, \ldots p'_k)$ to be the affine interpolation 
of $D \boxtimes D'$. 
\label{notation:h(p1,...,p'k)}

Two special cases of the type of PL-homeomorphism just defined 
will play an important rôle in the sequel.
The first of them is a kind of transposition.
Let $q = p_1 \cdots p_m$ be a product of elements in $\PP$ 
and assume $p$ and $p'$ are distinct elements of $\PP$. 
Set
\begin{equation}
\label{eq:9.6}
t(q;p,p') = h(p_1, \ldots, p_m, p, p'; p_1, \ldots, p_m, p', p).
\end{equation}
As is indicated by its name, 
the PL-homeomorphism $t(q;p,p')$ depends only on the triple $(q,p,p')$,
but not on the representation of $q$ as a product with factors in $\PP$.

The second special case involves two $\PP$-standard subdivisions of $[0,1]$
in each of which occurs only a single element of $\PP$.
Let $p_*$ be the smallest element of $\PP$.
\label{notation:p*}
(We assume, as we may, that 1 does not belong to $\PP$.)
Given $p \in \PP \smallsetminus \{p_*\}$, we put
\begin{equation}
\label{eq:9.7}
g(p_*, p) = h(p_*,\ldots, p_*; p, \cdots, p);
\end{equation}
here there are $p-1$ consecutive occurrences of $p_*$, 
followed by $p_*-1$ consecutive occurrences of $p$.
(Formula \eqref{eq:9.5} shows that this definition is licit.)
\begin{figure}
%
%
%
\begin{minipage}{12 cm}
%
%
\psfrag{1}{\hspace*{-1.7mm}   \footnotesize 0}
\psfrag{2}{\hspace*{-1.6mm}  \footnotesize $\tfrac{1}{3}$}
\psfrag{3}{\hspace*{-1.8mm} \footnotesize $\tfrac{1}{2}$}
\psfrag{4}{\hspace*{-0.5mm}\footnotesize  $\tfrac{2}{3}$}
\psfrag{5}{\hspace*{-1.1mm}  \footnotesize  1}
\psfrag{11}{\hspace*{-0.7mm}  \footnotesize  $0$}
\psfrag{12}{\hspace*{-0,9 mm}  \footnotesize$\tfrac{1}{6}$}
\psfrag{13}{\hspace*{-1.0mm}  \footnotesize $\tfrac{1}{3}$}
\psfrag{14}{\hspace*{-0.9mm}  \footnotesize$\tfrac{2}{3}$}
\psfrag{15}{\hspace*{-0.7mm}  \footnotesize $1$}
\psfrag{21}{\hspace*{-1.7mm} \footnotesize $\tfrac{1}{2}$}
\psfrag{22}{  \hspace*{-1.0mm}\footnotesize  1}
\psfrag{23}{\hspace*{-1mm} \footnotesize 2}
\psfrag{24}{\hspace*{-1.0mm} \footnotesize 1}
%
%
\psfrag{31}{\hspace*{-0.6mm}    \footnotesize $0$}
\psfrag{32}{\hspace*{-1.0mm}   \footnotesize $\tfrac{1}{3}$}
\psfrag{33}{\hspace*{-0.9mm}  \footnotesize $\tfrac{2}{3}$}
\psfrag{34}{\hspace*{-0.9mm} \footnotesize  $\tfrac{5}{6}$}
\psfrag{35}{\hspace*{-0.6mm}   \footnotesize  $1$}
\psfrag{41}{\hspace*{-0.2mm}  \footnotesize  $0$}
\psfrag{42}{\hspace*{-0.6mm}  \footnotesize$\tfrac{1}{6}$}
\psfrag{43}{\hspace*{-0.6mm}  \footnotesize $\tfrac{1}{3}$}
\psfrag{44}{\hspace*{-0.6mm}  \footnotesize$\tfrac{2}{3}$}
\psfrag{45}{\hspace*{-0.4mm}  \footnotesize $1$}
\psfrag{51}{\hspace*{-1.7mm}  \footnotesize $\tfrac{1}{2}$}
\psfrag{52}{  \hspace*{-2.5mm} \footnotesize  $\tfrac{1}{2}$}
\psfrag{53}{\hspace*{-1mm}  \footnotesize 2}
\psfrag{54}{\hspace*{-1.0mm}  \footnotesize 2}
%
%
\psfrag{61}{\hspace*{-0.5mm}   \footnotesize $0$}
\psfrag{62}{\hspace*{-0.7mm}  \footnotesize $\tfrac{1}{6}$}
\psfrag{63}{\hspace*{-0.9mm} \footnotesize $\tfrac{1}{3}$}
\psfrag{64}{\hspace*{-0.6 mm}\footnotesize   $\tfrac{5}{12}$}
\psfrag{65}{\hspace*{-1.0mm}  \footnotesize $\tfrac{1}{2}$}
\psfrag{66}{\hspace*{-0.6mm}  \footnotesize  $1$}
\psfrag{71}{\hspace*{-0.4mm} \footnotesize  $0$}
\psfrag{72}{\hspace*{-1.2mm} \footnotesize $\tfrac{1}{12}$}
\psfrag{73}{\hspace*{-0.4mm} \footnotesize $\tfrac{1}{6}$}
\psfrag{74}{\hspace*{-0.7mm} \footnotesize $ \tfrac{1}{3}$}
\psfrag{75}{\hspace*{-0.7mm}  \footnotesize $\tfrac{1}{2}$}
\psfrag{76}{\hspace*{-0.4mm} \footnotesize$1$}
\psfrag{81}{\hspace*{-0.8mm} \footnotesize $\tfrac{1}{2}$}
\psfrag{82}{  \hspace*{-1.3mm}\footnotesize  $\tfrac{1}{2}$}
\psfrag{83}{\hspace*{-0.5mm} \footnotesize 2}
\psfrag{84}{\hspace*{-0.5mm} \footnotesize 2}
\psfrag{85}{  \hspace*{-1.0mm}\footnotesize 1}
\psfrag{la1}{\hspace{-1mm}$f_1$}
\psfrag{la2}{\hspace{-2.5mm} $f_2$}
\psfrag{la3}{\hspace{-2mm}$f_3$}
\begin{gather*}
\includegraphics[width= 5.5cm]{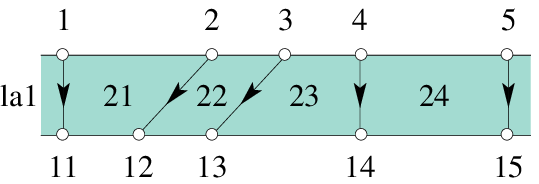}
\hspace*{6mm}
\includegraphics[width= 5.5cm]{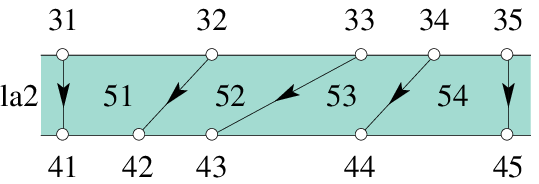}
\\
\includegraphics[width= 5.5cm]{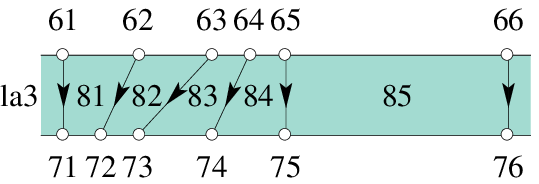}
\end{gather*}
\end{minipage}
%
\begin{minipage}{12cm}
%
%
\psfrag{1}{\hspace*{-1.4mm}   \small 0}
\psfrag{2}{\hspace*{-2.3mm}  \small $\tfrac{1}{15}$}
\psfrag{3}{\hspace*{-2.4mm} \small $\tfrac{2}{15}$}
\psfrag{4}{\hspace*{-1.2mm}\small  $\tfrac{3}{15}$}
\psfrag{5}{\hspace*{-2.5mm}  \small  $\tfrac{4}{15}$}
\psfrag{6}{\hspace*{-1.6mm}  \small  $\tfrac{1}{3}$}
\psfrag{7}{\hspace*{-1.7mm}  \small  $\tfrac{2}{3}$}
\psfrag{8}{\hspace*{-1.4mm}  \small  1}
\psfrag{11}{\hspace*{-0.7mm}  \small  $0$}
\psfrag{12}{\hspace*{-0.9mm}  \small$\tfrac{1}{7}$}
\psfrag{13}{\hspace*{-0.9mm}  \small $\tfrac{2}{7}$}
\psfrag{14}{\hspace*{-0.9mm}  \small$\tfrac{3}{7}$}
\psfrag{15}{\hspace*{-1.0mm}   \small $\tfrac{4}{7}$}
\psfrag{16}{\hspace*{-0.8mm}   \small$\tfrac{5}{7}$}
\psfrag{17}{\hspace*{-1.0mm}   \small $\tfrac{6}{7}$}
\psfrag{18}{\hspace*{-0.5mm}   \small $1$}
\psfrag{21}{\hspace*{-1.8mm}  \small $\tfrac{15}{7}$}
\psfrag{22}{\hspace*{-2.0mm} \small $\tfrac{15}{7}$}
\psfrag{23}{\hspace*{-1.8mm}   \small $\tfrac{15}{7}$}
\psfrag{24}{\hspace*{-2.0mm}  \small $\tfrac{15}{7}$}
\psfrag{25}{\hspace*{-1.0mm}  \small $\tfrac{15}{7}$}
\psfrag{26}{\hspace*{1.3mm}  \small $\tfrac{3}{7}$}
\psfrag{27}{\hspace*{1.5mm}  \small $\tfrac{3}{7}$}
\psfrag{laH357}{\hspace{-8mm}\small $h(3,5;7)$}
\begin{equation*}
\includegraphics[width= 10.5cm]{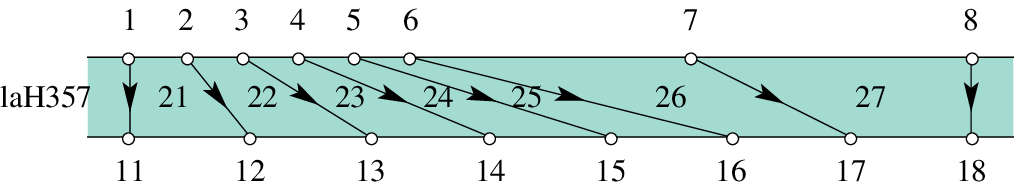}
\end{equation*}
\end{minipage}
%
%
\begin{minipage}{12 cm}
%
%
\psfrag{1}{\hspace*{-1.7mm}   \small 0}
\psfrag{2}{\hspace*{-2.5mm}  \small $\tfrac{1}{15}$}
\psfrag{3}{\hspace*{-2.5mm} \small $\tfrac{2}{15}$}
\psfrag{4}{\hspace*{-1.0mm}\small  $\tfrac{3}{15}$}
\psfrag{5}{\hspace*{-1.9mm}  \small  $\tfrac{4}{15}$}
\psfrag{6}{\hspace*{-1.8mm}  \small  $\tfrac{1}{3}$}
\psfrag{7}{\hspace*{-1.7mm}  \small  $\tfrac{2}{3}$}
\psfrag{8}{\hspace*{-1.5mm}  \small  1}
\psfrag{11}{\hspace*{-0.7mm}  \small  $0$}
\psfrag{12}{\hspace*{-1.7mm}  \small$\tfrac{1}{15}$}
\psfrag{13}{\hspace*{-1.7mm}  \small $\tfrac{2}{15}$}
\psfrag{14}{\hspace*{-1.0mm}  \small$\tfrac{1}{5}$}
\psfrag{15}{\hspace*{-1.0mm}   \small $\tfrac{2}{5}$}
\psfrag{16}{\hspace*{-0.8mm}   \small$\tfrac{3}{5}$}
\psfrag{17}{\hspace*{-1.0mm}   \small $\tfrac{4}{5}$}
\psfrag{18}{\hspace*{-0.7mm}   \small $1$}
\psfrag{21}{\hspace*{-0.8mm}  \small 1}
\psfrag{22}{\hspace*{-0.8mm} \small 1}
\psfrag{23}{\hspace*{-1mm}   \small 1}
\psfrag{24}{\hspace*{-2.0mm}  \small 3}
\psfrag{25}{\hspace*{-2.0mm}  \small 3}
\psfrag{26}{\hspace*{2.0mm}  \small $\tfrac{3}{5}$}
\psfrag{27}{\hspace*{-1.0mm}  \small $\tfrac{3}{5}$}
%
%
\psfrag{31}{\hspace*{-1.0mm}   \small 0}
\psfrag{32}{\hspace*{-1.5mm}  \small $\tfrac{1}{30}$}
\psfrag{33}{\hspace*{-2.7mm} \small }
\psfrag{34}{\hspace*{-0.5mm}\small  $\tfrac{1}{10}$}
\psfrag{35}{\hspace*{-1.7mm}  \small }
\psfrag{36}{\hspace*{-1.0mm}  \small  $\tfrac{1}{6}$}
\psfrag{37}{\hspace*{-0.8mm}  \small  $\tfrac{1}{3}$}
\psfrag{38}{\hspace*{-1.0mm}  \small  $\tfrac{1}{2}$}
\psfrag{39}{\hspace*{-0.5mm}  \small  1}

\psfrag{41}{\hspace*{-0.7mm}  \small  $0$}
\psfrag{42}{\hspace*{-1.5mm}  \small$\tfrac{1}{30}$}
\psfrag{43}{\hspace*{-1.6mm}  \small }
\psfrag{44}{\hspace*{-1.3mm}  \small$\tfrac{1}{10}$}
\psfrag{45}{\hspace*{-1.2mm}   \small $\tfrac{2}{10}$}
\psfrag{46}{\hspace*{-1.2mm}   \small$\tfrac{3}{10}$}
\psfrag{47}{\hspace*{-1.3mm}   \small $\tfrac{4}{10}$}
\psfrag{48}{\hspace*{-0.8mm}  \small  $\tfrac{1}{2}$}
\psfrag{49}{\hspace*{-0.3mm}  \small  1}
\psfrag{51}{\hspace*{-0.3mm}  \small 1}
\psfrag{52}{\hspace*{-0.3mm} \small 1}
\psfrag{53}{\hspace*{-0.3mm}   \small 1}
\psfrag{54}{\hspace*{-0.7mm}  \small 3}
\psfrag{55}{\hspace*{-1.0mm}  \small 3}
\psfrag{56}{\hspace*{-1.0mm}  \small $\tfrac{3}{5}$}
\psfrag{57}{\hspace*{-1.0mm}  \small $\tfrac{3}{5}$}
\psfrag{58}{\hspace*{-1.0mm}  \small 1}
\psfrag{laT135}{\small\hspace{-7mm}$t(1;3,5)$}
\psfrag{laT235}{\small \hspace{-7.5mm} $t(2;3,5)$}
\begin{gather*}
\includegraphics[width= 10.5cm]{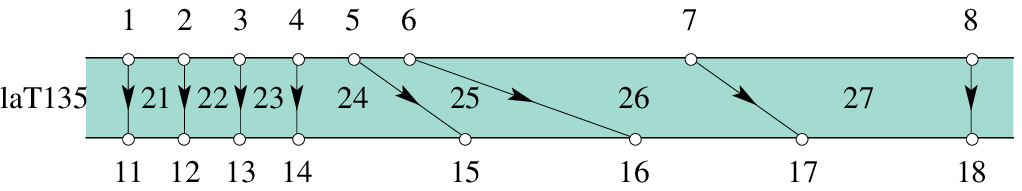}
\\
\includegraphics[width= 10.5cm]{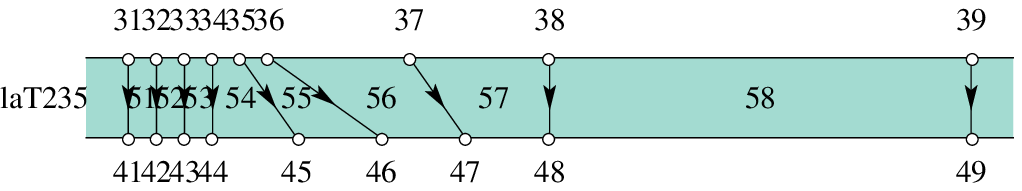}
\end{gather*}
\end{minipage}
%
%
\begin{minipage}{12cm}
%
%
\psfrag{1}{\hspace*{-1.5mm}   \small 0}
\psfrag{2}{\hspace*{-1.5mm}  \small $\tfrac{1}{4}$}
\psfrag{3}{\hspace*{-1.6mm} \small $\tfrac{1}{2}$}
\psfrag{4}{\hspace*{-1.2mm}  \small  1}
\psfrag{11}{\hspace*{-0.7mm}  \small  $0$}
\psfrag{12}{\hspace*{-0.9mm}  \small$\tfrac{1}{3}$}
\psfrag{13}{\hspace*{-0.9mm}  \small $\tfrac{2}{3}$}
\psfrag{14}{\hspace*{-0.6mm}   \small $1$}
\psfrag{21}{\hspace*{-1.7mm}  \small $\tfrac{4}{3}$}
\psfrag{22}{\hspace*{-1.5mm} \small $\tfrac{4}{3}$}
\psfrag{23}{\hspace*{-0.0mm}   \small $\tfrac{2}{3}$}
%
%
\psfrag{31}{\hspace*{-0.7mm}   \small 0}
\psfrag{32}{\hspace*{-1.6mm}  \small $\tfrac{1}{16}$}
\psfrag{33}{\hspace*{-0.8mm} \small $\tfrac{1}{8}$}
\psfrag{34}{\hspace*{0.35mm}\small  $\tfrac{1}{4}$}
\psfrag{35}{\hspace*{-0.7mm}  \small  $\tfrac{1}{2}$}
\psfrag{36}{\hspace*{-0.7mm}  \small  1}
\psfrag{41}{\hspace*{-0.2mm}  \small  $0$}
\psfrag{42}{\hspace*{-0.6mm}  \small$\tfrac{1}{5}$}
\psfrag{43}{\hspace*{-0.7mm}  \small $\tfrac{2}{5}$}
\psfrag{44}{\hspace*{-0.8mm}  \small$\tfrac{3}{5}$}
\psfrag{45}{\hspace*{-0.6mm}   \small $\tfrac{4}{5}$}
\psfrag{46}{\hspace*{-0.4mm}   \small $1$}
\psfrag{51}{\hspace*{-1.5mm}  \small $\tfrac{16}{5}$}
\psfrag{52}{\hspace*{-1.7mm} \small $\tfrac{16}{5}$}
\psfrag{53}{\hspace*{-0.8mm}   \small $\tfrac{8}{5}$}
\psfrag{54}{\hspace*{-0.0mm}  \small $\tfrac{4}{5}$}
\psfrag{55}{\hspace*{1.5mm}  \small $\tfrac{2}{5}$}
%
%
\psfrag{61}{\hspace*{-0.8mm}   \small 0}
\psfrag{62}{\hspace*{-2.7mm}  \small }
\psfrag{63}{\hspace*{-2.7mm} \small }
\psfrag{64}{\hspace*{-0.4mm}\small  $\tfrac{1}{27}$}
\psfrag{65}{\hspace*{-1.7mm}  \small  }
\psfrag{66}{\hspace*{-0.9mm}  \small  $\tfrac{1}{9}$}
\psfrag{67}{\hspace*{-1.2mm}  \small  $\tfrac{2}{9}$}
\psfrag{68}{\hspace*{-1.0mm}  \small  $\tfrac{1}{3}$}
\psfrag{69}{\hspace*{-1.0mm}  \small  $\tfrac{2}{3}$}
\psfrag{70}{\hspace*{-0.6mm}  \small  1}
\psfrag{71}{\hspace*{-0.7mm}  \small  $0$}
\psfrag{72}{\hspace*{-1.2mm}  \small$\tfrac{1}{25}$}
\psfrag{73}{\hspace*{-1.6mm}  \small }
\psfrag{74}{\hspace*{-1.6mm}  \small}
\psfrag{75}{\hspace*{-1.6mm}   \small }
\psfrag{76}{\hspace*{-0.4mm}   \small$\tfrac{1}{5}$}
\psfrag{77}{\hspace*{-0.6mm}   \small $\tfrac{2}{5}$}
\psfrag{78}{\hspace*{-0.6mm}   \small $\tfrac{3}{5}$}
\psfrag{79}{\hspace*{-0.6mm}   \small $\tfrac{4}{5}$}
\psfrag{80}{\hspace*{-0.5mm}   \small $1$}
\psfrag{81}{\hspace*{-2.2mm}  \small }
\psfrag{82}{\hspace*{-2.5mm}  \small }
\psfrag{83}{\hspace*{-2mm}   \small }
\psfrag{84}{\hspace*{-2.0mm}  \small }
\psfrag{85}{\hspace*{-1.5mm}  \small $\tfrac{27}{25}$}
\psfrag{86}{\hspace*{-1.05mm}  \small $\tfrac{9}{5}$}
\psfrag{87}{\hspace*{-1.7mm}  \small $\tfrac{9}{5}$}
\psfrag{88}{\hspace*{2.0mm}  \small $\tfrac{3}{5}$}
\psfrag{89}{  \hspace*{-1.2mm}\small $\tfrac{3}{5}$}
\psfrag{la2G3}{\hspace{-7mm} $g(2,3)$}
\psfrag{la2G5}{\hspace{-7mm} $g(2,5)$}
\psfrag{la3G5}{\hspace{-7mm} $g(3,5)$}
\begin{center}
\includegraphics[width= 5.2cm]{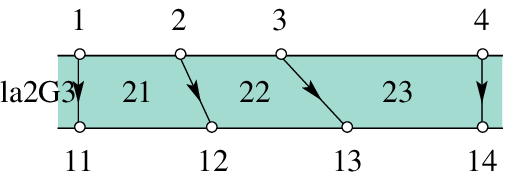}
\hspace*{8mm}
\includegraphics[width= 5.2cm]{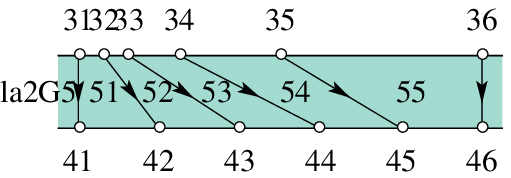}\par
\vspace*{2mm} \hspace*{5mm}
\includegraphics[width= 10.6cm]{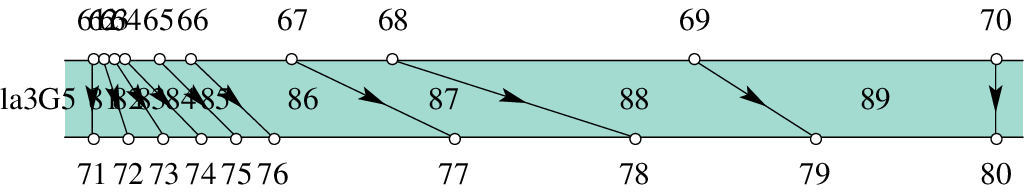}
\end{center}
\end{minipage}
\caption{PL-homeomorphisms of type $f$, $h$, $t$ and $g$}
\label{fig:Specimens-of-f-g-h-t}
\index{Rectangle diagram!examples}%
\index{PP-regular subdivision@$\PP$-regular subdivision!examples}%
\end{figure}

\begin{illustration}
\label{illustration:Prospective-generators}
The four kinds of PL-homeomorphisms just defined will be used repeatedly 
in the remainder of Section \ref{sec:9}.
In order to familiarize the reader with them, 
we exhibit some specimens. 
To do so, we use a graphical notation for PL-homeomorphisms,
called \emph{rectangle diagrams};
\index{Rectangle diagram!definition|textbf}%
these diagrams have already been used in section \ref{ssec:1.1}.
A rectangle diagram describes the affine interpolation $f$ of a sequence of points $(t_i,t'_i)$ 
by a rectangle
whose top and bottom lines are subdivided according to the coordinates $t_i$ and $t_i'$, respectively;
moreover, 
each point $t_i$ on the top line is joined to its image $t'_i$ on the bottom line
by a straight edge decorated by an arrow.
In the trapezoids bounded by these edges and bits of the top and bottom lines, 
the slopes can be inscribed.
\smallskip

\index{PP-regular subdivision@$\PP$-regular subdivision!examples|(}%
\index{PP-standard subdivision@$\PP$-standard subdivision!examples|(}%
\emph{1. PL-homeomorphisms $f$.}
These homeomorphisms are affine interpolations of a $\PP$-regular subdivision
and an associated $\PP$-standard subdivision of $[0,1]$. 
Three PL-homeomorphisms with $\PP = \{ 2,3\}$ are displayed in Figure
\ref{fig:Specimens-of-f-g-h-t},
namely the functions $f_1 = f(1,3; 2,2)$, $f_2 = f(1,3;3,2)$ 
and $f_3 = f(2;1,3; 3,2)$.

\emph{2. PL-homeomorphisms $h$.}
These homeomorphisms are affine interpolations of two $\PP$-standard subdivisions of $[0,1]$.
The two subdivisions are only restricted by the condition 
that they comprise the same number of intervals.
We illustrate this type of PL-homeomorphisms by the function $h(3,5;7)$;
it is the affine interpolation of $\St(3;5) \boxtimes \St(7)$.

\emph{3. PL-homeomorphisms $t$.}
This type of PL-homeomorphisms is a kind of transposition;
it permits one to exchange the order of subdivisions in $\PP$-standard subdivisions.
Every permutation of the set $\{1,2, \ldots, n\}$ 
is a composition of transpositions of adjacent numbers;
similarly, in order to express an arbitrary permutation of $\PP$-standard subdivisions
it suffices to consider PL-homeomorphisms of the form $t(q;p,p') = h(q;p,p')$.
In Figure \ref{fig:Specimens-of-f-g-h-t},
the rectangle diagrams of type $t$ depict transpositions with $q = 1$ and $q = 2$, respectively, 
and with $p = 3$, $p' = 5$.

\emph{4. PL-homeomorphisms $g$.}
The last kind of PL-homeomorphisms allows one to trade 
a $\PP$-standard subdivision involving only one element $p \in \PP$ 
for a $\PP$-standard subdivision
involving only the smallest element  $p_* \in \PP$.
In Figure \ref{fig:Specimens-of-f-g-h-t},
we display three such PL-homeomorphisms, 
with $p_* = 2$ for the first two functions
and with $p_* = 3$ for the third one.
\index{Rectangle diagram!examples}%
\index{PP-regular subdivision@$\PP$-regular subdivision!examples|)}%
\index{PP-standard subdivision@$\PP$-standard subdivision!examples|)}%
\end{illustration}

\begin{remark}
\label{remark:Number-of-singularities}
The rectangle diagrams of the PL-homeomorphisms  
displayed in Figure \ref{fig:Specimens-of-f-g-h-t}
have sometimes fewer singularities 
than the number of points in the subdivisions would lead one to expect.
We shall come back to this phenomenon in section \ref{sssec:preimages-yi}. 
\end{remark}
%
\subsubsection{Definition of a prospective set of generators}
\label{sssec:9.5.2}
We are now set for the definition of a prospective set of generators for the group $G[\PP]$.
This set will be the union of four subsets $\Fcal$, $\GG$, $\TT$, and $\RR$,
and it is defined as follows:
\begin{align*}
\Fcal       &= \{ f(q;p;r,p') \mid q \in \{1\} \cup \PP, (p,p') \in \PP^2 \text{ and } 1< r \leq p \};\\
\GG     &= \{ g(p_*,p)         \mid p \in \PP \smallsetminus \{p_*\}\, \}; \\
\TT      &= \{t(q;p,p') \mid q \in \{1\} \cup \PP, (p,p') \in \PP^2 \text{ and } p < p' \};
\intertext{and}
\RR &= \{ h(p_1,\ldots, p_\ell; p'_1, \ldots, p'_k) \mid  p_i \text{ and } p'_j \text{ satisfying a), b) and  c) } \}.
\end{align*}
The conditions referred to in the definition of the subset $\RR$ 
are the following ones:
\begin{enumerate}[a)]
\item $p_1 \leq p_2 \leq \cdots \leq p_\ell$ as well as $p'_1 \leq p'_2 \leq \cdots \leq p'_k$, 
and $p_1 < p'_1$;
\item the sets $\{p_i \mid 1 \leq i \leq \ell \}$ 
and $\{p'_j \mid 1 \leq j \leq k \}$ are disjoint;
\item if an element $p$ of $\PP$ occurs in $(p_1, p_2, \ldots, p_\ell)$ 
or in $(p'_1, \ldots, p'_k)$ at least $(p_*-1)$ times  then $p = p_*$.
\footnote{If $p_* = 2$ condition c) says 
that the members of $(p_1, \ldots, p_\ell)$ and $(p'_1, \ldots, p'_ k)$ are all equal to $p_*$,
whence condition b) implies that neither of the sequences has any member. 
The subset $\RR$ is thus empty.}
\end{enumerate}
Note that the subset $\RR$ is disjoint from $\Fcal \cup \GG \cup \TT$.
We shall prove
\begin{theorem}
\label{TheoremB9}
\index{Group G[PP]@Group $G[\PP]$!generating sets}%
\index{PP-regular subdivision@$\PP$-regular subdivision!significance}%
Assume $\PP$ is a set of integers greater than 1.
Then the group $G[\PP] = G([0,1]; \Z[\gp(\PP)], \gp(\PP))$ is generated by the set 
$\Fcal \cup \GG \cup \TT \cup \RR$.
\end{theorem}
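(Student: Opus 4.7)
The plan is to combine the characterization of $G[\PP]$ given by Proposition \ref{PropositionB8} with a layered reduction: first transform an arbitrary interpolation $D \boxtimes D'$ of equal-size $\PP$-regular subdivisions into an interpolation of two $\PP$-standard subdivisions, and then reduce the latter to a product of generators. Let $H$ denote the subgroup of $G[\PP]$ generated by $\Fcal \cup \GG \cup \TT \cup \RR$; the goal is to show $H = G[\PP]$, and by Proposition \ref{PropositionB8} it suffices to show that the affine interpolation of $D \boxtimes D'$ lies in $H$ for every pair of equal-size $\PP$-regular subdivisions.

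First I would establish a straightening lemma: for every $\PP$-regular subdivision $D$ there exist an element $\varphi_D \in H$ and a $\PP$-standard subdivision $D^{\mathrm{st}}$ with the same number of points so that $\varphi_D$ is the affine interpolation of $D \boxtimes D^{\mathrm{st}}$. The proof goes by induction on the level of $D$. Using the coding $(1,p_1; n_2, p_2; \ldots ; n_\ell, p_\ell)$, if every $n_i$ equals $1$ then $D$ is already standard and $\varphi_D = \id$ works. Otherwise, inspecting the last index $i$ with $n_i \neq 1$, the effect of step $i$ is, inside a single interval of $D_{i-1}$ of length $1/q$ (for an appropriate $q \in \md(\PP)$), to subdivide the $n_i$-th sub-piece rather than the leftmost one. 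A suitably located and rescaled copy of an $\Fcal$-element $f(q;p;r,p')$ straightens this single step; I would obtain this rescaled copy by conjugating $f(q;p;r,p')$ by an element of $H$ that has already been produced by the induction (namely a straightener for the truncated coding). Concatenating these straightening moves one level at a time yields $\varphi_D$.

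Applying the straightening lemma to both factors reduces the question to showing that, for any two $\PP$-standard subdivisions $\St(p_1, \ldots, p_\ell)$ and $\St(p'_1, \ldots, p'_k)$ with the same total number of points, the interpolation $h(p_1, \ldots, p_\ell; p'_1, \ldots, p'_k)$ lies in $H$. I would first observe that, thanks to the transpositions $t(q;p,p') \in \TT$, the element $h(p_1, \ldots, p_\ell; p'_1, \ldots, p'_k)$ depends only on the multisets $\{p_i\}$ and $\{p'_j\}$; this lets me assume both sequences are non-decreasing and, after canceling common entries via further $\TT$-moves interleaved on the two sides, that the multisets are disjoint and that $p_1 < p'_1$. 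Finally, using the $\GG$-generators $g(p_*, p)$ I would trade any run of $p_* - 1$ consecutive occurrences of a non-minimum prime $p$ on either side for $p$ consecutive occurrences of $p_*$. Once no non-minimum prime occurs $p_* - 1$ times or more, the remaining sequences satisfy conditions (a)--(c) defining $\RR$, so the interpolation is itself a generator.

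The main obstacle is the bookkeeping in the straightening step: the $\Fcal$-generator that undoes a non-leftmost subdivision lives in the unit interval at scale $1$, so to apply it inside a tiny subinterval $[a, a + 1/q]$ of $D_{i-1}$ one must conjugate it by a PL-homeomorphism sending the unit interval onto that subinterval, and this conjugating element has to belong to $H$. Arranging the induction so that the conjugators are themselves straighteners produced strictly earlier — that is, proving the lemma in a slightly stronger scale-equivariant form — is the crux; once this is set up cleanly, the second phase (sorting by $\TT$, pruning by $\GG$, recognizing membership in $\RR$) is a routine combinatorial check on standard-standard pairs.
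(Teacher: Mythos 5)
Your proposal follows essentially the same route as the paper: the reduction via Proposition \ref{PropositionB8} to (a) straightening each $\PP$-regular subdivision against its associated $\PP$-standard one and (b) handling interpolations of two $\PP$-standard subdivisions is exactly the paper's decomposition $f = \bar{g}^{-1}\circ h\circ g$ in section \ref{ssec:9.6}, with (a) being Lemma \ref{LemmaB12} and (b) being Lemma \ref{LemmaB11} (sort with $\gp(\TT^\sharp)$, prune with $\GG$, cancel repeated entries, land in $\RR\cup\RR^{-1}\cup\{\id\}$). The rescaling issue you single out as the crux is resolved in the paper by first enlarging $\Fcal$ and $\TT$ to $\Fcal^\sharp$ and $\TT^\sharp$ (with $q$ ranging over all of $\md(\PP)$) via conjugation by the elements $f(1;p_1;p_1,\bar{p})\in\Fcal$, and then running the induction on the level so that each step only needs top-level moves $f(1;p_1;m,q)$ together with the observation that conjugation by the homothety $(1/p_1)\id$ maps $\Fcal^\sharp$ and $\TT^\sharp$ into themselves.
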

The theorem will be established in sections \ref{ssec:9.6} through \ref{ssec:9.8}.
\smallskip

Consider now the special case where $\PP$ is \emph{finite}.
The subsets $\Fcal$, $\GG$ and $\TT$ are then obviously finite,
while $\RR$ is finite in view of conditions b) and c).
\footnote{Of course, condition \eqref{eq:9.5} is also required to hold.}
Theorem \ref{TheoremB9}, in conjunction with Theorem \ref{TheoremB7},
thus entails
\begin{corollary}
\label{CorollaryB10}
\index{Group G[PP]@Group $G[\PP]$!finite generation}%
\index{Finiteness properties of!G([a,c];A,P)@$G([a,c];A,P)$}%
If $\PP$ is a finite set of integers 
and $a$, $c$ are elements of  $\Z[\gp(\PP)]$, 
the group $G([a,c]; \Z[\gp(\PP)], \gp(\PP))$ is finitely generated.
\end{corollary}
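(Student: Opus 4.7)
The plan is to deduce Corollary \ref{CorollaryB10} directly from Theorem \ref{TheoremB9} combined with the Comparison Theorem \ref{TheoremB7}. The outline is: show that when $\PP$ is finite the generating set $\Fcal \cup \GG \cup \TT \cup \RR$ produced by Theorem \ref{TheoremB9} is \emph{finite}, hence $G[\PP] = G([0,1];\Z[\gp(\PP)], \gp(\PP))$ is finitely generated; then transport this along any interval $[a,c]$ with endpoints in $\Z[\gp(\PP)]$ by Theorem \ref{TheoremB7}.

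First I would argue that the interval $[0,1]$ is admissible, namely that $0$ and $1$ belong to $A = \Z[\gp(\PP)]$ (trivially true, since $\Z \subseteq A$), so Theorem \ref{TheoremB9} applies and gives the generating set $\Fcal \cup \GG \cup \TT \cup \RR$. Next I would check finiteness of each of the four subsets. The subsets $\Fcal$, $\GG$ and $\TT$ are indexed by finitely many parameters drawn from $\{1\} \cup \PP$, $\PP$, $\PP^2$ together with bounded integer ranges such as $1 < r \leq p$, hence are obviously finite.

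The only non-routine point—and the one I expect to be the main obstacle—is showing that $\RR$ is finite. Recall that elements of $\RR$ are parametrized by pairs of non-decreasing sequences $(p_1,\ldots,p_\ell)$ and $(p'_1,\ldots,p'_k)$ in $\PP$ satisfying conditions a), b), c) and relation \eqref{eq:9.5}. By condition c), every element of $\PP \smallsetminus \{p_*\}$ appears at most $p_* - 2$ times in each sequence; since by b) the values in the two sequences are disjoint and by a) we have $p_1 < p'_1$, the element $p_*$ can appear only in $(p_1,\ldots,p_\ell)$. Thus $(p'_1,\ldots,p'_k)$ has entries in the finite set $\PP\smallsetminus\{p_*\}$ each repeated at most $p_*-2$ times, so there are only finitely many such sequences, and each of them fixes the common value $S = \sum_j (p'_j - 1)$. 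The balancing relation \eqref{eq:9.5} then gives $\sum_i (p_i - 1) = S$, and since every $p_i \geq 2$ this bounds $\ell$ by $S$; consequently there are also only finitely many admissible sequences $(p_1,\ldots,p_\ell)$. Hence $\RR$ is finite, and $G[\PP]$ is finitely generated.

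Finally I would invoke Theorem \ref{TheoremB7}: since $[0,1]$ and $[a,c]$ are compact intervals with endpoints in $A = \Z[\gp(\PP)]$, the finite generation of $G([0,1];A,P)$ transfers to $G([a,c];A,P)$, completing the proof.
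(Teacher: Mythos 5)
Your proposal is correct and follows exactly the paper's route: Theorem \ref{TheoremB9} supplies the generating set $\Fcal \cup \GG \cup \TT \cup \RR$ of $G[\PP]$, finiteness of $\PP$ makes all four subsets finite (the paper asserts the finiteness of $\RR$ via conditions b), c) and \eqref{eq:9.5} in one line, which your counting argument merely spells out), and Theorem \ref{TheoremB7} transfers finite generation to $G([a,c];\Z[\gp(\PP)],\gp(\PP))$. No gaps.
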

%
\subsection{A reduction}
\label{ssec:9.6}
%
Let $f$ be an element of the group 
\[
G[\PP] = G([0,1];\Z[\gp(\PP)], \gp(\PP)).
\]
By Proposition \ref{PropositionB8} there exist then $\PP$-regular subdivisions $D$ and $D'$ of $[0,1]$ 
so that $f$, restricted to the unit interval, 
is the affine interpolation of $D \boxtimes D'$.
Suppose $D$ and $D'$ are described by the sequences
\[
(1,p_1; n_2, p_2; \ldots; n_\ell, p_\ell) \quad \text{and} \quad (1,p'_1; n'_2, p'_2; \ldots; n'_k,  p'_k),
\]
respectively, and let $D_0$ and $D'_0$ be the  $\PP$-standard subdivisions 
$\St(p_1, \ldots, p_\ell)$ and $\St(p'_1, \ldots, p'_k)$.
Then $D$, $D_0$,  $D'$ and $D'_0$ all have the same number of subintervals.
We can thus construct the affine interpolations of $D \boxtimes D_0$, 
of $D_0 \boxtimes D'_0$ and of $D' \boxtimes D'_0$;
call them $g$, $h = h(p_1, \cdots, p_\ell; p'_1, \ldots, p'_k)$ and $\bar{g}$.
By Proposition \ref{PropositionB8} these elements belong to $G[\PP]$; 
moreover, $f = \bar{g} ^{-1} \circ h \circ g$.

It suffices therefore to prove 
that each interpolation of a product $D \boxtimes D_0$ 
of a $\PP$-regular subdivision $D$,
say described by the sequence $(1,p_1; n_2,p_2; \ldots; n_\ell, p_\ell)$,
and the $\PP$-standard subdivision $\St(p_1, \ldots, p_\ell)$,  is an element of the group
\[
G_0 = \gp( \Fcal \cup \GG \cup \TT \cup \RR ),
\]
and that each PL-homeomorphism $h(p_1, \ldots, p_\ell; p'_1, \ldots, p'_k)$ belongs to $G_0$.
These verifications will be carried out in sections \ref{ssec:9.7} and \ref{ssec:9.8}.
Prior to moving on to them, we pause for an observation.

If $p_1$ and $\bar{p}$ are elements of $\PP$,
the PL-homeomorphism $f(1; p_1; p_1, \bar{p})$ is linear on $[0,1/p_1]$ with slope $1/\bar{p}$.
The PL-homeomorphisms $f(q;p;r,p')$ and $t(q;p,p')$,
on the other hand,
have supports contained in $[0, 1/q]$.
A glance at the definitions of these elements will then reveal 
that the relations
\begin{align*}
\act{f(1;p_1;p_1, \bar{p})} f(q;p; r, p') &= f(\bar{p} \cdot q; p;r, p')\\
\intertext{and}
\act{f(1;p_1; p_1, \bar{p})} t(q;p,  p') &=  t(\bar{p} \cdot q; p, p')
\end{align*}
are valid whenever $p_1 \leq q$.
It follows that  $G_0$ contains the subsets
\begin{align}
\Fcal^\sharp &= \{f(q;p;r, p') \mid q \in \md(\PP), \; (p,p') \in \PP^2 \text{ and } 1 < r \leq p \} 
\label{eq:FF-sharp}\\
\intertext{and}
\TT^\sharp &= \{t(q;p,p') \mid q \in \md(\PP), \; (p,p') \in \PP \text{ and } p < p' \}.
\label{eq:9.8}
\end{align}
%
\subsection[{Interpolations of $\PP$-standard subdivisions}]%
{Affine interpolations of $\PP$-standard subdivisions}
\label{ssec:9.7}
As before,  let $G_0$ denote the group generated by the subset $\Fcal \cup \GG \cup \TT \cup \RR$.
In section \ref{ssec:9.6},
the claim of Theorem \ref{TheoremB9} has been reduced to two verifications:
it suffices to prove
that each interpolation of a product $D \boxtimes D_0$  of a $\PP$-regular subdivision $D$
and an associated $\PP$-standard subdivision lies in $G_0$,
and that every PL-homeomorphisms $h(p_1, \ldots, p_\ell; p'_1, \ldots, p'_k)$ belongs to $G_0$

In this section the second claim will be established.
It is a consequence of
\begin{lemma}
\label{LemmaB11}
The group 
\[
H = \gp\left( \left\{ h(p_1,\ldots, p_\ell;p'_1,\ldots, p'_k) 
\mid 
\text{each } p_i \text{ and each } p'_j \text{ an element of } \PP \right\}\right)
\]
is generated by the subset $\GG \cup \TT^\sharp \cup \RR$.
\end{lemma}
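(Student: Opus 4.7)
The plan is to show that any generator $h(p_1,\ldots,p_\ell;p'_1,\ldots,p'_k)$ of $H$ can be rewritten, by successive manipulations of the underlying pair of $\PP$-standard subdivisions, as a product of elements in $\GG \cup \TT^\sharp \cup \RR$. The engine of the proof is the composition identity
\[
h(A_2;A_3)\circ h(A_1;A_2) \;=\; h(A_1;A_3),
\]
which lets me insert intermediate $\PP$-standard subdivisions of the right cardinality. I will then normalize each sequence in three stages: \emph{sort}, \emph{separate}, \emph{reduce multiplicities}.

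\textbf{Stage 1 (sorting via $\TT^\sharp$).} First I would prove that whenever $(b_1,\ldots,b_k)$ is a sequence in $\PP$ and $\sigma\in S_k$, the $h$-element $h(b_1,\ldots,b_k; b_{\sigma(1)},\ldots,b_{\sigma(k)})$ lies in $\langle \TT^\sharp\rangle$. Since every permutation decomposes into adjacent transpositions, it suffices to realise the swap at positions $(i,i+1)$. For $i=k-1$, this is exactly $t(b_1\cdots b_{k-2};b_{k-1},b_k)$, which is in $\TT^\sharp$ because $b_1\cdots b_{k-2}\in\md(\PP)$. For $i<k-1$, I would argue inductively on $k-i$: the element $t(b_1\cdots b_{i-1};b_i,b_{i+1})$ has support in $[0,1/(b_1\cdots b_{i-1})]$ and acts on that subinterval as a rescaled swap; comparing the refinements on $[0,1/(b_1\cdots b_{i+1})]$ coming from $\St(b_{i+2},\ldots,b_k)$ before and after the swap shows that one extra re-sorting of the deeper factors (handled by the inductive hypothesis) completes the job. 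Using the composition identity above and inversion, both the left and the right sequences of $h(A;B)$ can then be sorted modulo $\langle \TT^\sharp\rangle$.

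\textbf{Stage 2 (separation by induction on total length).} Now I can assume both sequences are sorted: $q_1\le\cdots\le q_\ell$ and $q'_1\le\cdots\le q'_k$. If the sets of entries intersect, pick a common value $p$; by further $\TT^\sharp$-moves (Stage 1) I may assume the common $p$ appears as $q_1=q'_1$. Then the two $\PP$-standard subdivisions share the initial partition $\{0,1/p,\ldots,1\}$, so the corresponding $h$-element is the identity on $[1/p,1]$ and on $[0,1/p]$ is a rescaled copy of $h(q_2,\ldots,q_\ell;q'_2,\ldots,q'_k)$. Since this latter element of shorter total length lies in $\langle\GG\cup\TT^\sharp\cup\RR\rangle$ by the induction hypothesis, I need to know that the rescaling $[0,1]\to[0,1/p]$ preserves membership; here I will use that each generator of $\GG$, $\TT^\sharp$, and $\RR$ is itself a rescalable $h$-element, and that putting a common prefix $p$ in front of both sequences realizes the rescaling inside $H$ (the resulting element of the enlarged subgroup will, after one more $\TT^\sharp$-sort to restore monotonicity, still be a product of the allowed generators).

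\textbf{Stage 3 (reducing multiplicities via $\GG$).} After Stage 2 the sequences are sorted and disjoint; only condition (c) of $\RR$ may fail. If some $p\ne p_*$ occurs at least $p_*-1$ times in, say, the first sequence, I can use Stage 1 to assemble $p_*-1$ consecutive occurrences of $p$ at the front, and then the generator $g(p_*,p)\in\GG$ replaces the corresponding sub-block $\St(p,\ldots,p)$ by $\St(p_*,\ldots,p_*)$ (note that $(p_*-1)(p-1)+1$ points appear on both sides, so the cardinalities match). A second round of Stage 1 re-sorts, and Stage 2 removes any $p_*$ now shared with the other sequence. Using as induction measure the lexicographic pair (maximum multiplicity of a non-$p_*$ entry, total length), each application of $g(p_*,p)$ strictly decreases it, so the process terminates with an element whose sequences satisfy (a), (b), (c), hence lies in $\RR$.

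\textbf{Expected obstacle.} The main difficulty is Stage 1 for long sequences: the element $t(q;p,p')$ is intrinsically a length-$(m+2)$ object, so realising an adjacent transposition in the interior of a longer sequence forces me to compare how $\St(b_1,\ldots,b_k)$ and $\St(b_1,\ldots,b_{i-1},b_{i+1},b_i,b_{i+2},\ldots,b_k)$ differ on $[0,1/(b_1\cdots b_{i+1})]$, and to absorb the discrepancy by an auxiliary sorting of the trailing factors $(b_{i+2},\ldots,b_k)$. A clean formulation almost certainly requires a double induction (on $k$, and for fixed $k$ on the position of the swap), coordinated with the rescaling argument of Stage 2, so I expect the bookkeeping rather than any single computation to be the delicate point.
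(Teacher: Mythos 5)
Your three-stage outline (sort with $\TT^\sharp$, strip common entries, reduce multiplicities with $\GG$) has the same shape as the paper's argument, but the single observation that makes everything close is missing, and its absence leaves a genuine hole in your Stage~2. The key fact is: \emph{appending the same element $p$ to the tail of both $\PP$-standard subdivisions does not change the PL-homeomorphism} — $h(p_1,\ldots,p_\ell,p;\,p'_1,\ldots,p'_k,p)$ and $h(p_1,\ldots,p_\ell;\,p'_1,\ldots,p'_k)$ are the \emph{same map}, because the extra points subdivide the leftmost intervals of source and target into $p$ equal parts and the interpolation there was already linear. The paper removes a common entry by sorting it to the \emph{end} of both sequences and invoking this identity; no rescaling and no induction on total length is needed. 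You instead sort the common entry to the \emph{front}, write $h(p,A';p,B')$ as the rescaled copy of $h(A';B')$, and then need the rescaled versions of the generators in $\GG$ and $\RR$ to lie in $\gp(\GG\cup\TT^\sharp\cup\RR)$. That is exactly the step you cannot justify as written: the rescaled generator is again of the form $h(p,\mathrm{seq}_1;p,\mathrm{seq}_2)$ with a common entry in both sequences, and removing that entry by your own Stage~2 procedure merely undoes the prefix you just added — the argument is circular. (Unlike $\Fcal^\sharp$ and $\TT^\sharp$, the sets $\GG$ and $\RR$ are not closed under rescaling by definition, so membership of the rescaled generators really does require proof.)

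The same tail-cancellation identity also dissolves the difficulty you flag in Stage~1: the interior adjacent transposition $h(b_1,\ldots,b_k;\,b_1,\ldots,b_{i-1},b_{i+1},b_i,b_{i+2},\ldots,b_k)$ is \emph{equal} to $t(b_1\cdots b_{i-1};b_i,b_{i+1})\in\TT^\sharp$, because $t(q;p,p')$ is the identity on the leftmost interval $[0,1/(qpp')]$ and the trailing refinements by $b_{i+2},\ldots,b_k$ of source and target therefore coincide point for point — there is no discrepancy to absorb and no double induction to run. Your Stage~3 agrees with the paper's first reduction step and is fine. So: add the tail-cancellation observation, replace the front-prefix/rescaling mechanism by "sort the common entry to the end and cancel", and the proof goes through; without it, Stage~2 does not terminate and Stage~1 is solving a problem that is not there.
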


\begin{proof}
Note first 
that the set $\GG \cup \TT^\sharp \cup \RR$ is a subset of $H$
and
that the subgroup $\gp( \TT^\sharp)$ contains all the ``permutations''
\[
h(p_1, \ldots, p_\ell; p_{\pi(1)}, \ldots, p_{\pi(\ell)})
\]
where $\ell \geq 2$, each $p_i$ is in $\PP$ and $\pi$ 
is a permutation of $\{1, 2,  \ldots, \ell\}$.

Let $L$ denote the group generated by $\GG \cup \TT^\sharp$.
Call two elements $h_1$ and $h_2$ of $H$ \emph{congruent}
if the double cosets $L \cdot h_1 \cdot L$ and $L \cdot h_2 \cdot L$ coincide.
We shall prove that every element $h(p_1, \ldots, p_\ell; p'_1, \ldots, p'_k)$ 
is congruent to an element in $\RR \cup \RR^{-1}\cup\, \{\id\}$.
 
 Let $h_0 = h(p_1, \ldots, p_\ell; p'_1, \ldots, p'_k)$ be given. 
 Suppose first there is some $p \in \PP \smallsetminus \{p_*\}$ 
 which occurs at least $p_*-1$ times in the sequence $(p_1, \ldots, p_\ell)$.
There exists then a ``permutation'' $t_1 $ in the subgroup $ \gp(\TT^\sharp)$
 so that the composition $h_0 \circ t_1$ takes on the form
 \[
 h(p,p,  \ldots p, \ldots; p'_1,\ldots,  p'_k) 
 \]
 with $p_* -1$ (or more) occurrences of $p$ in front.
 Then $\bar{h}_0 = h_0 \circ t_1 \circ  g(p_*, p)$ is congruent to $h_0$
 and the generator $p$ occurs less often in $\bar{h}_0$ than it does  in $h_0$.
 An analogous argument applies to the parameter sequence $(p'_1, \ldots, p'_k)$.
 By iteration we can therefore obtain an element $h_1$
 which is congruent to $h_0$
 and satisfies condition c) formulated 
 just before the statement of Theorem \ref{TheoremB9}.

  We may thus assume that the product
 $h_1 =h(p_1, \ldots,p_\ell; p'_1,\ldots,p'_k)$ 
 satisfies condition c).
 If $h_1  \neq \id$ and if $p_i = p'_j$ for some couple $(i,j)$,
 find two ``permutations'' in $\gp(\TT^\sharp)$ which transform $h_1$ into the congruent element
 \[
 h_2 = h(p_1,\ldots, p_{i-1}, p_{i+1}, \ldots p_\ell, p_i; p'_1, \ldots, p'_{j-1}, p'_{j+1}, \ldots, p'_k, p'_j);
 \]
 then observe that $h_2$ is the same PL-homeomorphism as is
 \[
 h(p_1,\ldots, p_{i-1}, p_{i+1}, \ldots p_\ell; p'_1, \ldots, p'_{j-1}, p'_{j+1}, \ldots, p'_k).
 \]
 By iteration we can then arrive at an element $h_2$ 
 that s congruent to $h_1$, hence to $h_0$,
 and which satisfies conditions b) and c).
 Finally,
 by reordering the parameters in $h_2$ by means of $\gp(\TT^\sharp)$, 
we can obtain an element $h_3  = h(q_1, \ldots, h_m ; q'_1, \ldots, q'_n)$ 
 which satisfies conditions b), c) and, in addition, the inequalities
 \[
 q_1 \leq q_2 \leq \cdots \leq q_m
 \quad \text{and} \quad
  q'_1 \leq q'_2 \leq \cdots \leq q_n.
  \]
  Thus $h_3$ or $h_3^{-1}$ will satisfy condition a);
  put differently, $h_3$ belongs to $\RR \cup \RR^{-1}$.
  All taken together,
  we have proved 
  that 
  \[
  H \subseteq L \cdot (\RR \cup  \RR^{-1} \cup \,\{\id\}) \cdot L \subseteq H.
  \]
  Since $L  = \gp(\GG \cup \TT^\sharp)$
  the group $H$ is thus generated by $\GG \cup \TT^\sharp \cup \RR$.
 \end{proof}
%
\subsection[Interpolation of $\PP$-regular and $\PP$-standard subdivisions]%
{Affine interpolation of a $\PP$-regular and a $\PP$-standard subdivision}
\label{ssec:9.8}
%
The next lemma takes care of the last missing bit in the proof of Theorem \ref{TheoremB9}.
\begin{lemma}
\label{LemmaB12}
Let $D$ be the $\PP$-regular subdivision described by the sequence
\begin{equation}
\label{eq:9.9}
S = (1,p_1; n_2, p_2; \ldots; n_\ell, p_\ell)
\end{equation}
and let $D_0$ be the $\PP$-standard subdivision $\St(p_1, p_2, \ldots, p_\ell)$.
Then the affine interpolation $f$ of $D \boxtimes D_0$ is an element of the group
generated by $\Fcal^\sharp \cup \TT^\sharp$.
\end{lemma}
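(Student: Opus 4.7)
I would prove the lemma by induction on the level $\ell$ of $D$. The base case $\ell = 1$ is immediate: $D = \St(p_1) = D_0$ and $f = \id$. For the inductive step, I strip off the last subdivision. Let $D^{(\ell-1)}$ be the level-$(\ell-1)$ $\PP$-regular subdivision obtained from $D$ by deleting the pair $(n_\ell, p_\ell)$, and let $f^{(\ell-1)}$ be the affine interpolation of $D^{(\ell-1)} \boxtimes \St(p_1,\ldots,p_{\ell-1})$. By the inductive hypothesis, $f^{(\ell-1)} \in \gp(\Fcal^\sharp \cup \TT^\sharp)$. Since $f^{(\ell-1)}$ is affine on the $n_\ell$-th interval of $D^{(\ell-1)}$ and maps it onto the $n_\ell$-th interval of $\St(p_1,\ldots,p_{\ell-1})$, it carries the $p_\ell$-fold equidistant subdivision of the former to that of the latter. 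Setting $E$ to be $\St(p_1,\ldots,p_{\ell-1})$ with its $n_\ell$-th interval subdivided into $p_\ell$ equal parts, I obtain the factorization $f = g \circ f^{(\ell-1)}$, where $g$ is the interpolation of $E \boxtimes D_0$. The task reduces to showing $g \in \gp(\Fcal^\sharp \cup \TT^\sharp)$.

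I then distinguish two subcases according to the size of $n_\ell$. If $n_\ell \leq p_{\ell-1}$ (which is automatic when $\ell = 2$), the $n_\ell$-th interval of $\St(p_1,\ldots,p_{\ell-1})$ is exactly the $n_\ell$-th of the $p_{\ell-1}$ equal parts into which the leftmost super-interval $[0, 1/(p_1 \cdots p_{\ell-2})]$ of $\St(p_1,\ldots,p_{\ell-2})$ was subdivided at step $\ell-1$. Hence $g = \id$ when $n_\ell = 1$, while $g = f(p_1 \cdots p_{\ell-2}; p_{\ell-1}; n_\ell, p_\ell) \in \Fcal^\sharp$ when $2 \leq n_\ell \leq p_{\ell-1}$, directly from the definition of $\Fcal^\sharp$ (with the empty-product convention $q = 1$ when $\ell = 2$).

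The harder subcase is $n_\ell > p_{\ell-1}$. Here I set $r' = n_\ell - p_{\ell-1} + 1 \geq 2$ and observe that the $n_\ell$-th interval of $\St(p_1,\ldots,p_{\ell-1})$ coincides with the $r'$-th interval of $\St(p_1,\ldots,p_{\ell-2})$. The key point is that the two operations producing $E$ from $\St(p_1,\ldots,p_{\ell-2})$---subdividing its 1st interval into $p_{\ell-1}$ parts and subdividing its $r'$-th interval into $p_\ell$ parts---act on disjoint intervals and therefore commute. Performing them in the opposite order shows that $E$ equals $E'$ with its leftmost interval subdivided into $p_{\ell-1}$ equal parts, where $E'$ denotes $\St(p_1,\ldots,p_{\ell-2})$ with its $r'$-th interval subdivided into $p_\ell$ parts. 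Since $E'$ is a $\PP$-regular subdivision of level $\ell-1$ with parameter sequence $(p_1,\ldots,p_{\ell-2}, p_\ell)$, the inductive hypothesis (applied to this altered sequence) gives that the interpolation $g'$ of $E' \boxtimes \St(p_1,\ldots,p_{\ell-2}, p_\ell)$ lies in $\gp(\Fcal^\sharp \cup \TT^\sharp)$. Setting $E'' = \St(p_1,\ldots,p_{\ell-2}, p_\ell, p_{\ell-1})$, the interpolation of $E \boxtimes E''$ coincides with $g'$ (because $g'$ is affine on the leftmost interval of $E'$ and hence transports $p_{\ell-1}$-equidistant points to $p_{\ell-1}$-equidistant points), while the interpolation of $E'' \boxtimes D_0$ is, by definition, $t(p_1 \cdots p_{\ell-2}; p, p')^{\pm 1} \in \gp(\TT^\sharp)$ with $\{p, p'\} = \{p_{\ell-1}, p_\ell\}$ (or the identity if $p_{\ell-1} = p_\ell$). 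Composing, $g \in \gp(\Fcal^\sharp \cup \TT^\sharp)$, completing the induction.

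The main obstacle is precisely this second subcase: one has to spot that the ``misplaced'' last subdivision of $D$ (at a position $n_\ell$ lying outside the leftmost super-interval of level $\ell-1$) can legitimately be moved earlier in the construction by commuting it past the level-$(\ell-1)$ step, thereby bringing the residual problem into the range of the inductive hypothesis with a modified parameter sequence $(p_1,\ldots,p_{\ell-2}, p_\ell)$, and that the resulting mismatch between $\St(p_1,\ldots,p_{\ell-2}, p_\ell, p_{\ell-1})$ and the target $D_0 = \St(p_1,\ldots,p_{\ell-1}, p_\ell)$ is exactly the discrepancy absorbed by a single transposition generator from $\TT^\sharp$.
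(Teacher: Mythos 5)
Your proof is correct, but it organizes the induction quite differently from the paper's. The paper also inducts on the level $\ell$, but its reduction works ``from the outside in'': using elements $f(1;p_1;m,q)\in\Fcal$ it first sweeps all extra subdivision points of $D$ out of the intervals $[k/p_1,(k+1)/p_1]$ with $k\geq 1$, so that the image subdivision $D'$ has all its additional points inside the leftmost interval $[0,1/p_1]$ (the parameters $p_2,\ldots,p_\ell$ getting permuted along the way); the restriction to $[0,1/p_1]$ is then a rescaled level-$(\ell-1)$ instance, handled by the inductive hypothesis together with conjugation by the homothety $(1/p_1)\id$ (which preserves $\Fcal^\sharp$ and $\TT^\sharp$), and a final element of $\gp(\TT^\sharp)$ restores the order of the parameters. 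You instead peel off the last construction step $(n_\ell,p_\ell)$: the inductive hypothesis applied to $D^{(\ell-1)}$ transports the residual problem to ``standard subdivision with one extra interval subdivided'', which is either literally a generator $f(p_1\cdots p_{\ell-2};p_{\ell-1};n_\ell,p_\ell)\in\Fcal^\sharp$ or, after commuting the two final subdivision steps, an instance of the inductive hypothesis for the altered parameter sequence $(p_1,\ldots,p_{\ell-2},p_\ell)$ composed with a single transposition from $\gp(\TT^\sharp)$. Your route avoids the sweeping phase and the conjugation by $(1/p_1)\id$ at the cost of a case distinction on $n_\ell$, and it invokes the inductive hypothesis twice in the second case; the paper's route yields as a by-product the ordered factorization $f = f_2\circ f_1$ with $f_2\in\gp(\TT^\sharp)$ and $f_1\in\gp(\Fcal^\sharp)$, which your argument does not track --- but only membership in $\gp(\Fcal^\sharp\cup\TT^\sharp)$ is needed for Theorem \ref{TheoremB9}, so both arguments do the job.
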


\subsubsection{Proof of Lemma \ref{LemmaB12}: preliminary remark}
\label{sssec:9.8.1}
%
The subdivision $D$, given by the sequence \eqref{eq:9.9},
is a finite increasing sequence of points in the unit interval 
that is constructed in an iterative manner.
In the first step, one picks $p_1 \in \PP$,
adds the subdivision points $1/p_1$, $2/p_1$, \ldots, $(p-1)/p_1$  
to the couple $(0,1)$,
and obtains a $\PP$-regular subdivision $D_1$ of level 1.
In the next step, one picks $p_2 \in \PP$ and one of the $p_1$ subintervals created so far, 
and introduces in it $p_2-1$ equidistant subdivision points.
The result is a $\PP$-regular subdivision $D_2$ of level 2.
One continues in the same manner with $p_3$, \ldots, $p_\ell$ 
and ends up with a $\PP$-regular subdivision $D = D_\ell$ of level $\ell$.
This subdivision is a finite increasing sequence with
\[
\card(D) = 2 + (p_1-1) + (p_2-1) +\cdots+ (p_\ell-1)
\]
points. 
Let $D'$ be another $\PP$-regular subdivision with $\card(D)$ points.
By Proposition \ref{PropositionB8},
the affine interpolation of $D \boxtimes D'$ is then an element $f$ of 
\[
G = G([0,1]; \Z[\gp(\PP)], \gp(\PP)).
\]
This PL-homeomorphism $f$ depends only on the sequences $D$ and $D'$, 
but not on the sequential order in which these sequences are created.
This fact will be a crucial ingredient of the proof 
that we are about to describe.
%
\subsubsection{Proof of Lemma \ref{LemmaB12}: main argument}
\label{sssec:9.8.2N}
%
Let $f$ be the affine interpolation of the $\PP$-regular subdivision $D$, 
with parameter sequence \eqref{eq:9.9},
and of the $\PP$-standard subdivision $D_0 =\St(p_1, p_2, \ldots, p_\ell)$.
Our aim is to show that $f$ is a product $f_2 \circ f_1$ 
with $f_2 \in \gp(\TT^\sharp)$ and $f_1 \in \gp(F^\sharp)$.
The proof will be by induction on the level $\ell$ of $D$.
If $\ell = 1$, then $f =  \id$;
if $\ell = 2$, then $f \in \Fcal$.
Assume now that $\ell > 2$.
The subdivision $D$ contains then the points 
\[
0, \;1/p_1, \; 2/p_1,\;\ldots, \; (p_i-1)/p, \; 1
\]
and at least one of the intervals $[(k-1)/p_1, k/p_1]$ is again subdivided.
Let 
\[
[(m-1)/p_1,m/p]
\] 
be the right most of the subintervals that contain an additional point of $D$.

Assume first that $m > 1$.
Our aim is to construct $g\in  \gp(\Fcal)$ with the property 
that the intersection $g(D) \cap [1/p_1, 1]$ contains only the $p_1$ points 
\begin{equation}
\label{eq:Subdivision-points}
1/p_1, \; 2/p_1,\; \ldots, \; 1.
\end{equation}
Set $b= (m-1)/p_1$ and $c = m/p_1$.
Since the interval $[b, c]$ gets subdivided in the subdivision $D$,
there exists an index $i_2$ so that the intersection 
$D \cap [b,  c]$ contains the points
\[
b,\; b + \Delta, 
\;\ldots, 
\: b + (p_{i_2} - 1) \Delta, \; c, \quad \text{with} \quad
\Delta= \tfrac{1}{p_1 \cdot p_{i_2}}\,,
\]
and possibly some further points.
Set  $q_2 = p_{i_2}$,  put  $g_2 = f(1;p_1; m, q_2)$,
and consider $D_2 = g_2(D)$.
Since $g_2(b + (p_i-1) \Delta) = b$,
 the intersection $D_2 \cap [b, c]$ 
contains fewer points than does $D \cap [b, c]$.
By iteration we can thus find a sequence of maps $g_2$, $g_3$, \ldots $g_{k}$
and a sequence of elements $q_2$, \ldots, $q_k$, 
chosen among the members of the sequence $(p_2, \ldots, p_\ell)$,
so that the subdivision $D_k = (g_k \circ \cdots \circ g_2)(D)$ contains the points 
\[
b = (m-1)/p_1, \; m/p_1, \; \ldots, \; (p_1-1)/p_1, \; 1, 
\]
but no other point to the right of $b$.

The subdivision $D_k$ is a $\PP$-regular subdivision of level $\ell$
that involves the parameters $p_1$, $p_2$, \ldots, $p_\ell$,
not necessarily in the same order as in \eqref{eq:9.9},
and with $m(D_k) = m-1$.
By iteration we can therefore find an element $g \in \gp(\Fcal)$ with the properties
that $D' = g(D)$ is a $\PP$-regular subdivision of level $\ell$,
involving the numbers $p_1$, $p_2$, \ldots, $p_\ell$,
not necessarily in the same order as in \eqref{eq:9.9},
and that the intersection $D' \cap [1/p_1, 1]$ contains only the points \eqref{eq:Subdivision-points}.

The subdivision $D'$ is a $\PP$-regular subdivision of level $\ell$ with $m(D') = 1$. 
It is described by a sequence of the form
\begin{equation}
\label{eq:9.9-prime}
S' = (1,p_1; 1, q_2; \ldots; n'_\ell, q_\ell)
\end{equation}
where the list $(q_2,  \ldots, q_\ell)$ is a permutation of the list 
$(p_2,  \ldots, p_\ell)$.

Consider now the truncated sequence 
$\bar{S}' = (1,q_2;n'_3, q_3; \ldots ; n'_\ell, q_\ell)$. 
It describes a $\PP$-regular subdivision $\bar{D}'$ of level $\ell-1$;
let $\bar{D}'_0$ be the associated standard subdivision $\St(q_2, \ldots, q_\ell)$
and define $\bar{h} \colon \R \iso \R$ to be the affine interpolation  of the sequence of points
$\bar{D}' \boxtimes \bar{D}'_0$.
Since the level of $\bar{D}'$ is $\ell-1$,
the induction hypothesis applies
and guarantees that $\bar{h}$ is a product $\bar{h}_2 \circ \bar{h}_1$ of PL-homeomorphisms 
with $\bar{h}_2 \in \gp(\TT^\sharp)$ and $\bar{h}_1 \in \gp(\Fcal^\sharp)$.
Let's now go back to the $\PP$-regular subdivision $D'$ 
and its associated standard subdivision $D'_0 = \St(p_1, q_2, \ldots, q_\ell)$.
Let $h \colon \R \iso \R$ be the affine interpolation of the sequence of points $D' \boxtimes D'_0$.
Then $h$ is nothing but a rescaled version of $\bar{h}$;
more precisely, $h = (1/p_1) \id \circ \bar{h} \circ ((1/p_1) \id)^{-1}$.
Since conjugation by $(1/p_1) \id$ maps $\Fcal^\sharp$ into $\Fcal^\sharp$ 
and $\TT^\sharp$ into $\TT^\sharp$,
the PL-homeomorphism $h$ is therefore a product $h_2 \circ h_1$ 
with $h_2 \in \gp(\TT^\sharp)$ and $h_1 \in \gp(\Fcal^\sharp)$.

The preceding analysis discloses that $f$, 
the affine interpolation of $D \boxtimes D_0$, 
is a composition of three affine interpolations,
the interpolation $g$ of $D \boxtimes D'$,
the interpolation $h$ of $D' \boxtimes D'_0$,
and the affine interpolation $H$  of $ \St(p_1, q_2, \ldots, q_\ell) \boxtimes D_0$.
The claim thus  follows from the facts that
\[
g \in \gp(\Fcal),
\quad h =h_2 \circ h_1 \in \gp(\TT^\sharp) \circ \gp(\Fcal^\sharp)
\text{ and }
H \in \gp(\TT^\sharp)\,.
\]

\begin{example}
\label{example:Proof-LemmaB12}
We illustrate the reasoning in the proof of Lemma \ref{LemmaB12}
by an example.
Choose $\PP = \{2,3\}$ and  $A = \Z[1/2, 1/3] = \Z[1/6]$, 
and consider the $\PP$-regular subdivision
\[
D = (0,1/3, 1/2, 2/3, 5/6, 8/9, 17/18, 1);
\]
it is displayed in the first horizontal line of the following figure.
\smallskip

\begin{equation*}
\psfrag{1}{\hspace*{-1.7mm}    \small  $0$}
\psfrag{6}{\hspace*{-1.7mm}    \small  $\tfrac{1}{6}$}
\psfrag{8}{\hspace*{-1.7mm}    \small   $\tfrac{1}{4}$}
\psfrag{9}{\hspace*{-1.7mm}    \small     $\tfrac{1}{3}$}
\psfrag{10}{\hspace*{-1.2mm}  \small   $\tfrac{1}{2}$}
\psfrag{11}{\hspace*{-1.2mm}  \small  $\tfrac{2}{3}$}
\psfrag{12}{\hspace*{-1.3mm}  \small  $\tfrac{7}{9}$}
\psfrag{13}{\hspace*{-1.1mm}  \small  $\tfrac{5}{6}$}
\psfrag{14}{\hspace*{-1.3mm}   \small   $\tfrac{8}{9}$}
\psfrag{15}{\hspace*{-1.9mm}  \small  $\tfrac{17}{18}$}
\psfrag{16}{\hspace*{-0.7mm}   \small   1}
\psfrag{21}{\hspace*{-1.0mm}   \small 0}
\psfrag{22}{\hspace*{-1.0mm} \small  }
\psfrag{23}{\hspace*{-1.5mm}     \small $\tfrac{1}{18}$}
\psfrag{24}{\hspace*{-2.0mm} \small }
\psfrag{25}{\hspace*{-0.5mm} \small $\tfrac{1}{9}$}
\psfrag{27}{\hspace*{-0.8mm} \small $\tfrac{2}{9}$}
\psfrag{29}{\hspace*{-0.5mm} \small $\tfrac{1}{3}$}
\psfrag{31}{\hspace*{-0.8mm} \small $\tfrac{2}{3}$}
\psfrag{36}{\hspace*{-0.4mm} \small $1$}
\psfrag{41}{\hspace*{-1.2mm}   \small $\tfrac{1}{2}$}
\psfrag{42}{\hspace*{-1.9mm} \small  $\tfrac{1}{2}$}
\psfrag{43}{\hspace*{-1.9mm}     \small $\tfrac{1}{2}$}
\psfrag{44}{\hspace*{-1.0mm} \small 2}
\psfrag{45}{\hspace*{-0.5mm} \small 2}
\psfrag{46}{\hspace*{-0.5mm} \small 2}
\psfrag{47}{\hspace*{-1.0mm} \small 2}
\psfrag{61}{\hspace*{-1.2mm}   \small $\tfrac{1}{3}$}
\psfrag{62}{\hspace*{-1.5mm} \small  }
\psfrag{63}{\hspace*{-1.7mm}     \small }
\psfrag{64}{\hspace*{-2.3mm} \small $\tfrac{1}{3}$}
\psfrag{65}{\hspace*{0.0mm} \small 1}
\psfrag{66}{\hspace*{-0.5mm} \small 3}
\psfrag{67}{\hspace*{-1.8mm} \small 3}
\psfrag{81}{\hspace*{-0.7mm}   \small $\tfrac{1}{2}$}
\psfrag{82}{\hspace*{-0.7mm} \small  1}
\psfrag{83}{\hspace*{-0.7mm}     \small 2}
\psfrag{84}{\hspace*{-1.0mm} \small 1}
\psfrag{85}{\hspace*{-1.5mm} \small 1}
\psfrag{86}{\hspace*{-1.5mm} \small 1}
\psfrag{87}{\hspace*{-1.8mm} \small 1}
\psfrag{la1}{\hspace{-3.5mm}$f(1;3;3,2)$}
\psfrag{la2}{\hspace{-4.5mm} $f(1;3;3,3)$}
\psfrag{la3}{\hspace{-3.8mm}$f(9;2;2,2)$}
\includegraphics[width= 10.5cm]{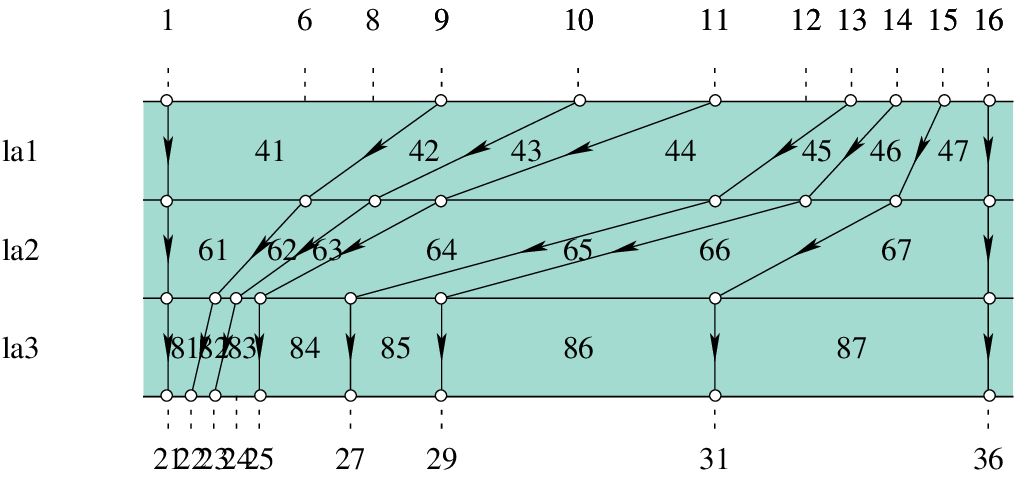}
\end{equation*}

\index{Rectangle diagram!examples}%

The subdivision $D$ can be obtained by dividing the unit interval first into 3 intervals of equal length,
then halving both the second and the third subinterval so obtained,
and finally subdividing the last of the created subintervals again into 3 intervals of equal length.
Sequences describing the subdivision $D$ are $(1,3;2,2; 4,2; 5,3)$, 
but also
\[
S = (1,3; 3,2;4, 3; 2,2) \quad \text{with}\quad  (p_1, p_2, p_3, p_4) =  (3,2,3,2).
\]
The level of the subdivision $D$ is 4.
The sequence $S$ tells us that the third subinterval of the initial subdivision $D_1 =(0,1/3,2/3,1)$ is cut into two subintervals of the same length.
We apply therefore the PL-homeomorphism $g_2 = f(1;3; 3,2)$ to $D$ 
and obtain the subdivision
\[
D_2 = (0, 1/6, 1/4, 1/3, 2/3, 7/9, 8/9, 1);
\]
it is shown by the second horizontal line of the figure.
It is again a subdivision of $D_1$, 
but this time the third subinterval is divided into three parts of equal length.
We transform therefore the subdivision $D_2 = g_2(D)$ 
with the help of the PL-homeomorphism $g_3 = f(1; 3; 3,3)$ 
and obtain the subdivision
\[
D_3 = (0, 1/18, 1/12, 1/9, 2/9, 1/3, 2/3, 1).
\]
In $D_3$, 
it is only the first interval of the division $D_1$ that is further subdivided; 
so $D_3 = D'$.
In the proof one passes therefore to the truncated subdivision 
\[
\bar{D}' = (0, 1/6, 1/4, 1/3, 2/3, 1).
\]
It has level 3 and is described by the sequence $\bar{S}' = (1, 3; 1, 2; 2, 2)$.
(The figure discloses a direct way of arriving at a standard subdivision.)
\end{example}
%
\subsection{Example 1: groups $G[\PP]$ with $\PP = \{p\}$ and $p \geq 2$}
\label{ssec:9.9}
\index{Group G[PP]@Group $G[\PP]$!examples}
If $\PP$ is a singleton,
 the generating set afforded by Theorem \ref{TheoremB9} simplifies considerably.
This is due to the fact 
that there is then only one $\PP$-standard subdivision for each level $\ell \geq 1$
and that the subsets $\GG$, $\TT$ and $\RR$, 
defined in section \ref{ssec:9.5},
are empty.
Hence $G[p] = G[\PP] = G([0,1]; \Z[1/p], \gp(p))$ is generated by the set 
\[
\Fcal = \{f(q; p;r;p) \mid q \in \{1, p\} \text{ and } 1 < r \leq p\};
\] 
it has $2(p-1)$ elements.
As will become clear later on,
it is useful to replace the finite generating set $\Fcal$ by the infinite set
\[
\Fcal^\sharp = \{ f(p^m;p; r, p) \mid m \in \N \text{ and } 1 < r \leq p\}.
\]
The PL-homeomorphism $f(m, r) = f(p^m; p; r,p)$ has support $]0, r/ p^{m+1}[$
and  it is linear on the interval $[0, (r-1)/p^{m+1}]$ with slope $1/p$.
The definition of $f(m,r)$ thus implies
that the relation
\[
\act{f(m,r)} f(m',r') = f(m' + 1,r')
\]
holds whenever, either $m < m'$, or $m = m'$ and $r > r'$.

The description of these relations simplifies 
if one enumerates the generators according to the decreasing size of their supports. 
We introduce therefore generators $x_i$ for $i \in \N$, 
express $i$ in the form $i = (p-r) + (p-1)m$ and set
\[
 x_i = f(m,r) = f(p^m;p, r, p).
\]
The above relations then take on the form $\act{x_i}x_j = x_{j + (p-1)}$;
they are valid for every couple of non-negative integers $(i,j)$ with $i < j$.
They imply, in particular, that $G[p]$ is generated by $p$ elements, 
namely $x_0$, $x_1$, \dots, $x_{p-1}$.

The following result summarizes the insights obtained so far:
\begin{corollary}
\label{CorollaryB13}
\index{Group G[p]@Group $G[p]$!generating set}%
\index{Group G[p]@Group $G[p]$!finite generation}%
For every integer $p$ greater than 1,
the group $G[p]$ is generated by elements $\{x_i \mid i \geq 0\}$
that satisfy the relations
\begin{equation}
\label{eq:9.12}
\act{x_i}x_j = x_{j +(p-1)}
\end{equation}
whenever $i < j$.
They show that $G[p]$ is generated by the subset $\{x_i \mid 0 \leq i < p \}$.

To relate the generators $x_i$ to the previously found generators, 
write $i$ in the form $(p-1)m + (p-r)$
with $1 < r \leq p$; then $x_i = f(p^m; p; r, p)$.
\end{corollary}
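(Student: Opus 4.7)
The plan is to specialize Theorem \ref{TheoremB9} to the case $\PP = \{p\}$ and then verify the commutation relations directly from the explicit geometry of the homeomorphisms $f(p^m;p;r,p)$.

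First I would observe that when $\PP$ is a singleton, the three subsets $\GG$, $\TT$, $\RR$ are empty: $\GG$ requires an element $p \in \PP\smallsetminus\{p_*\}$, $\TT$ requires two distinct elements $p < p'$ of $\PP$, and $\RR$ requires the disjointness condition~b) on two non-empty parameter lists in $\PP$. Hence Theorem \ref{TheoremB9} reduces to the assertion that $G[p]$ is generated by the $2(p-1)$ elements $\Fcal = \{f(q;p;r,p) \mid q \in\{1,p\},\; 1 < r \leq p\}$. The argument of section \ref{ssec:9.6} (conjugating by $f(1;p;p,p)$) then shows that $\gp(\Fcal)$ contains the larger set $\Fcal^\sharp = \{f(p^m;p;r,p) \mid m \in \N,\; 1 < r \leq p\}$, so relabeling by $f(m,r) := f(p^m;p;r,p)$ I can work with the indexed family $\{f(m,r) \mid m\in\N,\; 1 < r \leq p\}$.

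Next I would verify the key conjugation relation
\[
\act{f(m,r)} f(m',r') = f(m'+1,r') \qquad \text{whenever } m < m' \text{ or } (m = m' \text{ and } r > r').
\]
The geometric picture is this: $f(m,r)$ is a PL-homeomorphism of the unit interval supported in $[0,r/p^{m+1}]$ which coincides with multiplication by $1/p$ on the subinterval $[0,(r-1)/p^{m+1}]$. Meanwhile $f(m',r')$ has support $[0,r'/p^{m'+1}]$. Under the hypothesis one checks that $r'/p^{m'+1} \leq (r-1)/p^{m+1}$, so the entire support of $f(m',r')$ sits in the region on which $f(m,r)$ acts as the contraction $t \mapsto t/p$; conjugating a PL-homeomorphism by such a contraction rescales all its singularities and slopes in the appropriate way, and the result is exactly the homeomorphism $f(m'+1,r')$ with support $[0,r'/p^{m'+2}]$. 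This is an essentially routine calculation with slopes and break points.

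Once this relation is established, I introduce the reindexing $x_i := f(m,r)$ where $i = (p-1)m + (p-r)$ (so $i$ ranges over $\N$ as $(m,r)$ ranges over $\N\times\{2,\dots,p\}$). The two hypothesis cases $m < m'$ and ($m = m'$, $r > r'$) unify into the single condition $i < j$, and the new index of $f(m'+1,r')$ is $(p-1)(m'+1)+(p-r') = j + (p-1)$. Hence the displayed relation becomes $\act{x_i}x_j = x_{j+(p-1)}$ for all $i < j$, which is relation \eqref{eq:9.12}.

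Finally, to deduce that $\{x_0,\ldots,x_{p-1}\}$ already generates $G[p]$, I would argue by induction on $i$ that every $x_i$ lies in the subgroup $H$ generated by $\{x_0,\ldots,x_{p-1}\}$. Indeed, if $i \geq p$, write $i = j + (p-1)$ with $j \geq 1$, and note that $j < i$; by the inductive hypothesis $x_j \in H$ (and $x_0 \in H$ trivially), and then $x_i = \act{x_0}x_j \in H$ via \eqref{eq:9.12}. Since $\Fcal = \{x_0,\ldots,x_{2p-3}\} \subseteq H$ and $\Fcal$ generates $G[p]$, this completes the proof. The main obstacle is really just step two: carrying out the slope-and-break calculation cleanly enough to read off that conjugating $f(m',r')$ by the contraction $f(m,r)$ produces precisely $f(m'+1,r')$ rather than some homeomorphism that merely resembles it.
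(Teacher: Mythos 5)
Your proposal is correct and follows essentially the same route as the paper: specializing Theorem \ref{TheoremB9} to a singleton $\PP$ (so that $\GG$, $\TT$ and $\RR$ are empty), passing from $\Fcal$ to $\Fcal^\sharp$ via the conjugation trick of section \ref{ssec:9.6}, reading off the relation $\act{f(m,r)}f(m',r') = f(m'+1,r')$ from the support $]0, r/p^{m+1}[$ and the slope $1/p$ on $[0,(r-1)/p^{m+1}]$, and then reindexing by $i = (p-1)m + (p-r)$. The paper merely asserts the final generation statement where you spell out the short induction, but that is a matter of detail, not of method.
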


\begin{remarks}
\label{remarks:on-CorollaryB13}
(i) The  relations displayed in equation \eqref{eq:9.12} actually define $G[p]$ in terms of the $x_i$. 
A proof of this fact will be given in section \ref{ssec:15.1}.

(ii) For each $p \geq 2$,
the group $G[p]$ is not merely finitely generated, 
it is finitely presented (see section \ref{ssec:15.2}) 
and of type $\FP_\infty$ (K. S. Brown, unpublished.
\index{Finiteness properties of!G[p]@$G[p]$}%
\index{Brown, K. S.}%
\footnote{See section \ref{sssec:Notes-finiteness-properties-Brown-Stein} for an update.})

(iii) The group $G[2]$ is the notable group studied by R. J. Thompson (\cite{Tho74}),
P. Freyd and A. Heller (see \cite{FrHe93}), 
K. S. Brown and R. Geoghegan (\cite{BrGe84}),
and many others.
\index{Thompson, R. J.}%
\index{Brown, K. S.}%
\index{Geoghegan, R.}%
\index{Freyd, P.}%
\index{Heller, A.}%
It has a well-known finite presentation,
namely
\begin{equation}
\label{eq:9.13New}
\index{Thompson's group F@Thompson's group $F$!finite presentation}%
\index{Thompson's group F@Thompson's group $F$!local definition}%
\langle x = x_0, y = x_1 \mid \act{yx}y = \act{x^2} y, \act{yx^2}y = \act{x^3} y \rangle.
\end{equation}

(iv) The generators $x_i$ will crop up in section  \ref{sssec:preimages-yi}
in a quite different context.
\end{remarks}

\begin{illustration}
\label{illustration:9.9.Example1}
Corollary \ref{CorollaryB13} provides explicit generating sets with $p$ elements 
for each of the groups $G[p]$.
Below, the rectangle diagrams of the generators will be displayed 
for $p=2$ and $p = 3$. 

If $p= 2$, 
the generator $x_i$ coincides with $f(m, 2) = f(2^m;2;2,2)$; 
for $m = 0$ and $m = 1$, their rectangle diagrams are displayed below.
\index{Group G[p]@Group $G[p]$!examples}%
\index{PP-regular subdivision@$\PP$-regular subdivision!examples|(}%
\index{Rectangle diagram!examples}%
\smallskip

\begin{equation*}
\psfrag{1}{\hspace*{-1.7mm}  \small $0$}
\psfrag{2}{\hspace*{-1.6mm}  \small $\tfrac{1}{2}$}
\psfrag{3}{\hspace*{-1.7mm}  \small $\tfrac{3}{4}$}
\psfrag{4}{\hspace*{-1.2mm}  \small  $1$}
\psfrag{11}{\hspace*{-0.7mm}  \small $0$}
\psfrag{12}{\hspace*{-1.0mm}  \small $\tfrac{1}{4}$}
\psfrag{13}{\hspace*{-0.8mm}  \small $\tfrac{1}{2}$}
\psfrag{14}{\hspace*{-0.7mm}  \small  $1$}
\psfrag{21}{\hspace*{-1mm}  \small   $\tfrac{1}{2}$}
\psfrag{22}{\hspace*{-1.0mm}  \small  $1$}
\psfrag{23}{\hspace*{-0mm}  \small   $2$}
\psfrag{31}{\hspace*{-0.7mm}  \small $0$}
\psfrag{32}{\hspace*{-0.8mm}  \small $\tfrac{1}{4}$}
\psfrag{33}{\hspace*{-1.0mm}  \small $\tfrac{3}{8}$}
\psfrag{34}{\hspace*{-0.8mm}  \small $\tfrac{1}{2}$}
\psfrag{35}{\hspace*{-0.7mm}  \small  $1$}
\psfrag{41}{\hspace*{-0.3mm}  \small $0$}
\psfrag{42}{\hspace*{-0.7mm}  \small $\tfrac{1}{8}$}
\psfrag{43}{\hspace*{-0.7mm}  \small $\tfrac{1}{4}$}
\psfrag{44}{\hspace*{-0.5mm}  \small $\tfrac{1}{2}$}
\psfrag{45}{\hspace*{-0.4mm}  \small  $1$}
\psfrag{51}{\hspace*{-1.3mm} \small $\tfrac{1}{2}$}
\psfrag{52}{  \hspace*{-0.5mm}\small $1$}
\psfrag{53}{\hspace*{-0.5mm} \small $2$}
\psfrag{54}{\hspace*{-1mm} \small $1$}
\psfrag{la1222}{\hspace*{-1mm} \small $x_0$}
\psfrag{la2222}{\hspace*{-1mm} \small $x_1$}
\includegraphics[width= 5.5cm]{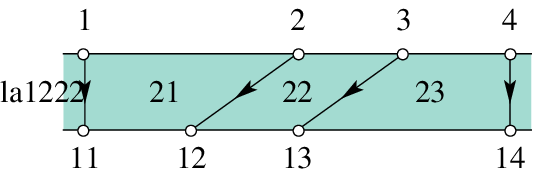}
\hspace*{12mm}
\includegraphics[width= 5.5cm]{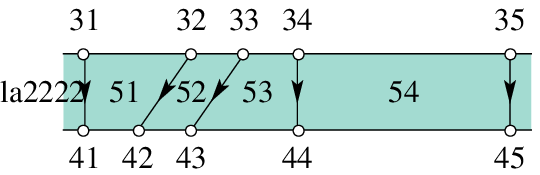}
\end{equation*}

If $p=3$, the generator $x_i$ is equal to $f(m, 3) = f(3^m;3,3,3)$ if $i = 2m$
and equal to $f(3^m; 3; 2,3)$ if $i = 2m + 1$. 
The rectangle diagrams of the three generators $x_0$, $x_1$ and $x_2$ 
are then as shown next.
\index{Rectangle diagram!examples}%
\smallskip

\begin{minipage}{12cm}
%
\psfrag{61}{\hspace*{-0.7mm} \small   $0$}
\psfrag{62}{\hspace*{-1.0mm} \small  $\tfrac{1}{3}$}
\psfrag{63}{  \hspace*{-1.0mm}\small  $\tfrac{2}{3}$}
\psfrag{64}{\hspace*{-1.0mm} \small   $\tfrac{7}{9}$}
\psfrag{65}{\hspace*{-1.1mm} \small  $\tfrac{8}{9}$}
\psfrag{66}{\hspace*{-0.7mm} \small   $1$}
\psfrag{71}{\hspace*{-0.7mm} \small   $0$}
\psfrag{72}{\hspace*{-0.7mm} \small  $\tfrac{1}{9}$}
\psfrag{73}{\hspace*{-0.7mm} \small  $\tfrac{2}{9}$}
\psfrag{74}{\hspace*{-0.7mm} \small  $\tfrac{1}{3}$}
\psfrag{75}{\hspace*{-0.7mm} \small  $\tfrac{2}{3}$}
\psfrag{76}{\hspace*{-0.5 mm} \small   $1$}
\psfrag{81}{  \hspace*{-2.8mm}  \small   $\tfrac{1}{3}$}
\psfrag{82}{\hspace*{-2.5mm}    \small   $\tfrac{1}{3}$}
\psfrag{83}{  \hspace*{-2.7mm}   }
\psfrag{84}{\hspace*{-0.7mm} \small   $3$}
\psfrag{85}{\hspace*{-0.7 mm} \small   $3$}
%
\psfrag{91}{\hspace*{-0.6mm} \small   $0$}
\psfrag{92}{\hspace*{-0.9mm} \small  $\tfrac{1}{3}$}
\psfrag{93}{  \hspace*{-0.7mm}\small  $\tfrac{4}{9}$}
\psfrag{94}{\hspace*{-0.7mm} \small   $\tfrac{5}{9}$}
\psfrag{95}{\hspace*{-0.8mm} \small  $\tfrac{2}{3}$}
\psfrag{96}{\hspace*{-0.5mm} \small   $1$}
\psfrag{101}{\hspace*{-0mm} \small   $0$}
\psfrag{102}{\hspace*{-0.3mm} \small  $\tfrac{1}{9}$}
\psfrag{103}{\hspace*{-0.1mm} \small  $\tfrac{2}{9}$}
\psfrag{104}{\hspace*{-0.2mm} \small  $\tfrac{1}{3}$}
\psfrag{105}{\hspace*{-0.25mm} \small  $\tfrac{2}{3}$}
\psfrag{106}{\hspace*{-0.1 mm} \small   $1$}
\psfrag{111}{  \hspace*{-2.4mm}   \small   $\tfrac{1}{3}$}
\psfrag{112}{\hspace*{-0.0mm} \small   1}
\psfrag{113}{  \hspace*{-1.5mm}   \small   1}
\psfrag{114}{\hspace*{-0.5mm} \small   3}
\psfrag{115}{\hspace*{-0.8mm} \small   1}
%
\psfrag{121}{\hspace*{-0.3mm} \small   $0$}
\psfrag{122}{\hspace*{-0.4mm} \small  $\tfrac{1}{9}$}
\psfrag{123}{  \hspace*{-0.1mm}\small  $\tfrac{2}{9}$}
\psfrag{124}{\hspace*{-1.7mm} \small   }
\psfrag{125}{\hspace*{-1.8mm} \small  }
\psfrag{126}{\hspace*{-0.3mm} \small  $\tfrac{1}{3}$}
\psfrag{127}{\hspace*{-0.4mm} \small  $\tfrac{2}{3}$}
\psfrag{128}{\hspace*{-0.1mm} \small   $1$}
\psfrag{131}{\hspace*{-0.1mm} \small   $0$}
\psfrag{132}{\hspace*{-0.7mm} \small  }
\psfrag{133}{\hspace*{-0.7mm} \small  }
\psfrag{134}{\hspace*{-0.2mm} \small  $\tfrac{1}{9}$}
\psfrag{135}{\hspace*{-0.0mm} \small  $\tfrac{2}{9}$}
\psfrag{136}{\hspace*{-0.2mm} \small  $\tfrac{1}{3}$}
\psfrag{137}{\hspace*{-0.2mm} \small  $\tfrac{2}{3}$}
\psfrag{138}{\hspace*{0.1mm} \small   $1$}

\psfrag{141}{  \hspace*{-1.0mm}   \small   $\tfrac{1}{3}$}
\psfrag{142}{\hspace*{-0.6mm}   \small   $\tfrac{1}{3}$}
\psfrag{143}{  \hspace*{-2.7mm}  }
\psfrag{144}{\hspace*{0.2mm} \small   3}
\psfrag{145}{\hspace*{0.2mm} \small   3}
\psfrag{146}{\hspace*{-1.0mm} \small   1}
\psfrag{147}{\hspace*{-1.0mm} \small   1}

\psfrag{la1333}{\hspace{-0mm}$x_0$}
\psfrag{la1323}{\hspace{-0mm}$x_1$}
\psfrag{la3333}{\hspace{-0mm}$x_2$}
\begin{gather*}
\includegraphics[width= 5.5cm]{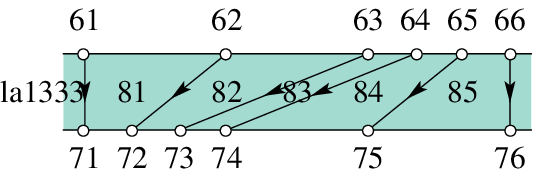}
\hspace*{7mm}
\includegraphics[width= 5.5cm]{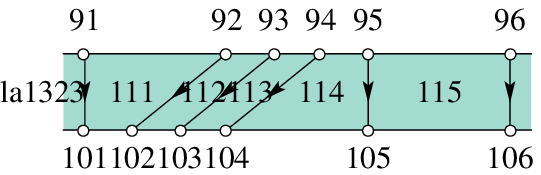}
\\
\includegraphics[width= 5.5cm]{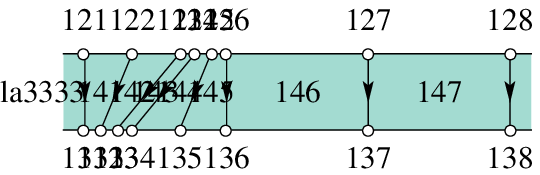}
\end{gather*}
\end{minipage}

\end{illustration}
%
\subsection{Example 2: group $G[\PP]$ with $\PP= \{2,3\}$}
\label{ssec:9.10}
\index{Group G[PP]@Group $G[\PP]$!examples}%
\index{Rectangle diagram!examples|(}%
%
If $\PP = \{2, 3\}$ the generating set provided by Theorem \ref{TheoremB9} is still so small 
that its elements can be displayed individually.
The set is the union of four subsets $\GG$, $\RR$,  $\TT$ and $\Fcal$.

The subset $\GG$ consists of a single element, namely $g(2,3)$.
The set $\RR$ has also only one element, namely $h(2,2;3)$, 
and this element coincides with $g(2, 3)$.
This homeomorphism is displayed next.
\smallskip

\begin{minipage}{10cm}
\psfrag{1}{\hspace*{-1.7mm}   \small $0$}
\psfrag{2}{\hspace*{-2.0mm} \small $\tfrac{1}{4}$}
\psfrag{3}{\hspace*{-1.7mm}  \small $\tfrac{1}{2}$}
\psfrag{4}{\hspace*{-1.2mm}  \small $1$}
\psfrag{11}{\hspace*{-0.8mm}   \small $0$}
\psfrag{12}{\hspace*{-0.8mm} \small $\tfrac{1}{3}$}
\psfrag{13}{\hspace*{-0.9mm}  \small $\tfrac{2}{3}$}
\psfrag{14}{\hspace*{-0.5mm}  $1$}
\psfrag{21}{\hspace*{-1mm}  \small   $\tfrac{4}{3}$}
\psfrag{22}{\hspace*{-1.0mm}  \small  $\tfrac{4}{3}$}
\psfrag{23}{\hspace*{-0.5mm}  \small  $\tfrac{2}{3}$}

\psfrag{g23}{\hspace{-7mm}$g(2,3)$}
\begin{equation*}
\includegraphics[width= 6.5cm]{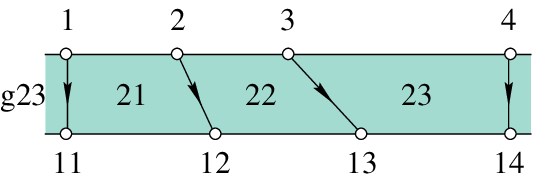}
\end{equation*}
\end{minipage}
\smallskip

The subset 
$\TT = \{t(q;p,p')
\mid 
q \in \{1,2,3\} \text{ and } (p,p') \in \PP^2  \text{ with } p < p' \}$
contains 3 elements; 
they are shown in Figure \ref{fig:PL-homeomorphisms-of-type-t}.
\begin{figure}[b]
\psfrag{1}{\hspace*{-1.7mm}  \small $0$}
\psfrag{2}{\hspace*{-2.8mm}  \small   $\tfrac{1}{12}$}
\psfrag{3}{\hspace*{-1.3mm}  \small    $\tfrac{1}{9}$}
\psfrag{4}{\hspace*{-1.5mm}  \small $\tfrac{1}{6}$}
\psfrag{5}{\hspace*{-2.0mm}  \small     $\tfrac{2}{9}$}
\psfrag{6}{\hspace*{-1.5mm}  \small   $\tfrac{1}{4}$}
\psfrag{7}{\hspace*{-1.5mm} \small  $\tfrac{1}{3}$}
\psfrag{8}{\hspace*{-1.8mm}   \small   $\tfrac{1}{2}$}
\psfrag{9}{\hspace*{-1.6mm}   \small   $\tfrac{2}{3}$} 
\psfrag{10}{\hspace*{-0.9mm}   \small $1$}
\psfrag{21}{\hspace*{-0mm}  \small   1}
\psfrag{22}{\hspace*{-0mm}  \small   1}
\psfrag{23}{\hspace*{-3.5mm}  \small  $\tfrac{3}{2}$}
\psfrag{24}{\hspace*{-0mm}  \small   $\tfrac{2}{3}$}
\psfrag{41}{\hspace*{-1.0mm} \small $1$}
\psfrag{42}{\hspace*{-1.0mm} \small $1$}
\psfrag{43}{\hspace*{-1.0mm}\small $\tfrac{3}{2}$}
\psfrag{44}{\hspace*{-0.7mm} \small    $\tfrac{2}{3}$}
\psfrag{45}{\hspace*{2.7mm} \small   $1$}
\psfrag{61}{\hspace*{-0.8mm} \small        $1$}
\psfrag{62}{\hspace*{-0.8mm} \small        $1$}
\psfrag{63}{  \hspace*{-2.3mm}   \small   $\tfrac{3}{2}$}
\psfrag{64}{\hspace*{-1.5mm} \small       $\tfrac{2}{3}$}
\psfrag{65}{\hspace*{-0.7mm} \small       $1$}
\psfrag{66}{\hspace*{-1.7mm} \small       $1$}
\psfrag{la1}{\hspace{-7mm} $t(1;2,3)$}
\psfrag{la2}{\hspace{-7mm} $t(2;2,3)$}
\psfrag{la3}{\hspace{-7mm} $t(3;2,3)$}
\begin{equation*}
\includegraphics[width= 8cm]{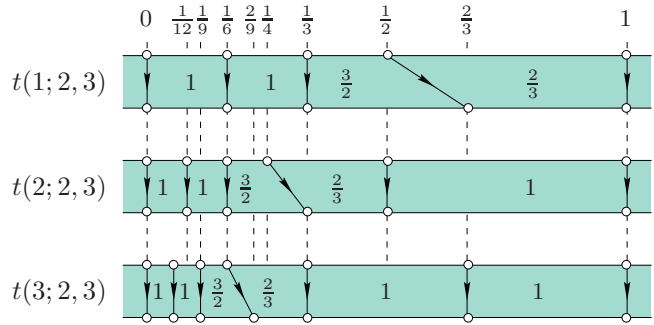}
\end{equation*}
\caption{The PL-homeomorphisms $t(1;2,3)$, $t(2;2,3)$ and $t(3;2,3)$}
\label{fig:PL-homeomorphisms-of-type-t}
\end{figure}

The subset
\[
\Fcal = \{ f(q; p; r, p') \mid q \in \{1,2,3\}, \quad (p,p') \in \{2,3\}^2 \text{ and } 1 < r \leq p \},
\]
finally,
has 18 elements.
The first six of them, with $q=1$, are shown in Figure \ref{fig:PL-homeomorphisms-of-type-f}.
The rectangle diagrams of the remaining 12 elements 
are obtained by reducing the above 6 diagrams 
by the factors $\tfrac{1}{2}$ and $\tfrac{1}{3}$, respectively, 
and extending by the identity on the right.
Altogether we see 
that the group $G[\{2,3\}]$ has a generating set with  $1 + 3 + 18= 22$ elements.
\begin{figure}
\psfrag{1}{\hspace*{-1.7mm}   $0$}
\psfrag{2}{\hspace*{-1.6mm}  $\tfrac{1}{9}$}
\psfrag{3}{\hspace*{-1.6mm}  $\tfrac{1}{6}$}
\psfrag{4}{\hspace*{-1.7mm}  $\tfrac{2}{9}$}
\psfrag{5}{\hspace*{-1.6mm} $\tfrac{1}{4}$}
\psfrag{6}{\hspace*{-1.6mm}  $\tfrac{1}{3}$}
\psfrag{7}{\hspace*{-1.7mm}  $\tfrac{4}{9}$}
\psfrag{8}{\hspace*{-1.6mm}  $\tfrac{1}{2}$}
\psfrag{9}{\hspace*{-1.5mm}  $\tfrac{5}{9}$}
\psfrag{10}{\hspace*{-1.2mm} $\tfrac{2}{3}$}
\psfrag{11}{\hspace*{-1.5mm}   $\tfrac{3}{4}$}
\psfrag{12}{\hspace*{-1.2mm} $\tfrac{7}{9}$}
\psfrag{13}{\hspace*{-1.2mm}  $\tfrac{5}{6}$}
\psfrag{14}{\hspace*{-1.2mm}  $\tfrac{8}{9}$}
\psfrag{15}{\hspace*{-1.2mm}   $1$}
\psfrag{21}{\hspace*{-5.5mm}  \small   $\tfrac{1}{2}$}
\psfrag{22}{\hspace*{-0.2mm}  \small  $1$}
\psfrag{23}{\hspace*{0.8mm}  \small   $2$}
\psfrag{41}{\hspace*{-4.2mm} \small $\tfrac{1}{3}$}
\psfrag{42}{  \hspace*{-1.3mm}\small $1$}
\psfrag{43}{\hspace*{-1.3mm} \small $1$}
\psfrag{44}{\hspace*{-0.4mm} \small $3$}
\psfrag{61}{\hspace*{-1.8mm} \small  $\tfrac{1}{2}$}
\psfrag{62}{  \hspace*{-1.7mm}   \small   $1$}
\psfrag{63}{\hspace*{0.5mm} \small   $2$}
\psfrag{64}{\hspace*{-0.3mm} \small   $1$}
\psfrag{81}{  \hspace*{-2.9mm}   \small $\tfrac{1}{2}$}
\psfrag{82}{\hspace*{-3.5mm} \small   $\tfrac{1}{2}$}
\psfrag{83}{\hspace*{2.0mm} \small   $2$}
\psfrag{84}{\hspace*{-3.0mm} \small   $2$}
\psfrag{101}{\hspace*{-3.2mm} \small   $\tfrac{1}{3}$}
\psfrag{102}{\hspace*{-0.50mm} \small   $1$}
\psfrag{103}{\hspace*{-0.5mm} \small   $1$}
\psfrag{104}{\hspace*{0.2mm} \small   $3$}
\psfrag{105}{\hspace*{-5mm} \small   $1$}
\psfrag{121}{\hspace*{-3.1mm} \small   $\tfrac{1}{3}$}
\psfrag{122}{\hspace*{-2.8mm} \small  $\tfrac{1}{3}$ }
\psfrag{123}{\hspace*{-1mm} \small   }
\psfrag{124}{\hspace*{0.4mm} \small   $3$}
\psfrag{125}{\hspace*{0.4mm} \small   $3$}
\psfrag{la1}{\hspace{-10mm}$f(1;2;2,2)$}
\psfrag{la2}{\hspace{-10mm} $f(1;2;2,3)$}
\psfrag{la3}{\hspace{-10mm}$f(1;3;2,2)$}
\psfrag{la4}{\hspace{-10mm}$f(1;3;3,2)$}
\psfrag{la5}{\hspace{-10mm}$f(1;3;2,3)$}
\psfrag{la6}{\hspace{-10mm}$f(1;3;3,3)$}
\begin{equation*}
\includegraphics[width= 8cm]{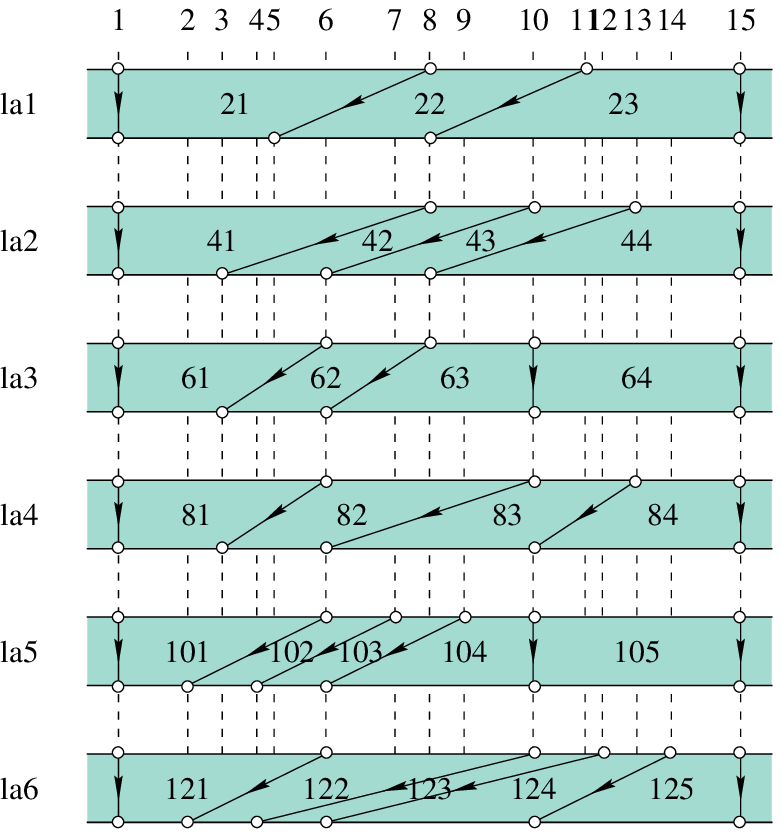}
\end{equation*}
\caption{PL-homeomorphisms of type $f$}
\label{fig:PL-homeomorphisms-of-type-f}
\end{figure}
\index{PP-regular subdivision@$\PP$-regular subdivision!examples|)}
\index{Rectangle diagram!examples|)}
\index{Group G[PP]@Group $G[\PP]$!generating sets}%
%

%
%
\chapter[The Subgroup of Bounded Homeomorphisms]%
{The Subgroup of Bounded Homeomorphisms $B$}
\label{chap:C}
\setcounter{section}{9}
%
Recall that $B(I;A,P)$,
the subgroup of bounded homeomorphisms,
is the kernel of the homomorphism
\begin{equation*}
(\lambda,\rho) \colon G(I;A,P) \longrightarrow \Aff(A,P) \times \Aff(A,P).
\end{equation*}
Alternatively, it can be described as the union
\begin{equation*}
\bigcup \, \{G(I_1;A,P)\mid I_1\ \text{a compact subinterval of int}(I)\}.
\end{equation*}
This description and Corollary \ref{CorollaryA1} show 
that the $B(I;A,P)$-orbits in $A \cap \Int(I)$ are of the form $(IP \cdot A + a) \cap \Int(I)$, 
even in case $I = \R$.
\index{Subgroup B(I;A,P)@Subgroup $B(I;A,P)$!characterization}%

The results of this chapter deal with three topics:
the independence of $B= B(I;A,P)$ on $I$, 
estimates of $B_{\ab} =B/[B,B]$ 
and the simplicity of $B' =[B,B]$.
%
\section{Simplicity of the derived group of the subgroup $B$}
\label{sec:10}
The main objective of this section is the proof
that the derived group of the subgroup $B(I;A,P)$ is simple,
no matter what the values of the parameters $I$, $A$ and $P$ are.
We begin by showing 
that the isomorphism type of $B(I;A,P)$ does not depend on the interval $I$.
%
\subsection{Independence on the interval containing the supports}
\label{ssec:10.1}
The fact that $B(I;A,P)$ does \emph{not} depend on $I$ is a consequence of
\begin{proposition}
\label{PropositionC1}
\index{Subgroup B(I;A,P)@Subgroup $B(I;A,P)$!independence on I@independence on $I$}%
\index{Isomorphisms!construction}%
\index{Proposition \ref{PropositionC1}!statement|textbf}%
The group $B(I;A,P)$ is isomorphic to $B(\R;A,P)$
for every  interval of positive length $I$. 
\end{proposition}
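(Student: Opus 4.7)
The plan is to construct an infinitary PL-homeomorphism $\varphi \colon \Int(I) \iso \R$ whose set of breaks is a discrete subset of $\Int(I)$ accumulating only at the endpoints of $I$, whose slopes lie in $P$, and which sends $A \cap \Int(I)$ onto $A$. Once such a $\varphi$ is in hand, the rule $g \longmapsto \varphi \circ g \circ \varphi^{-1}$ will furnish the required isomorphism $B(I;A,P) \iso B(\R;A,P)$, where $g \in B(I;A,P)$ is identified with its restriction to $\Int(I)$ before conjugation, and the image is extended by the identity on any complement.

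The case $I = \R$ is trivial, so assume $I \neq \R$. First, pick a doubly infinite strictly increasing sequence $(a_n)_{n \in \Z}$ in $A \cap \Int(I)$ with $a_n \to \inf I$ as $n \to -\infty$ and $a_n \to \sup I$ as $n \to +\infty$; such a sequence exists because $A$ is dense in $\R$. Next, select a similar sequence $(b_n)_{n \in \Z}$ in $A$ exhausting $\R$, subject to
\begin{equation*}
(b_{n+1} - b_n) - (a_{n+1} - a_n) \in IP \cdot A \quad \text{for every } n \in \Z.
\end{equation*}
This is feasible because $IP \cdot A$ is dense in $\R$: one builds the $b_n$ inductively outward from a base index, adjusting each step by a correction in $IP \cdot A$ that is small enough to preserve monotonicity and large enough to force $b_n \to \pm\infty$. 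Theorem~\ref{TheoremA} then supplies, for each $n \in \Z$, a finitary PL-homeomorphism $f_n \in G(\R;A,P)$ carrying $[a_n, a_{n+1}]$ onto $[b_n, b_{n+1}]$, and gluing the restrictions of the $f_n$ on their respective intervals yields $\varphi$.

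To confirm that conjugation by $\varphi$ delivers the desired isomorphism, one exploits the support-boundedness built into $B(I;A,P)$. Every $g \in B(I;A,P)$ has support in some compact $[c,d] \subset \Int(I)$, and on a neighbourhood of $[c,d]$ the map $\varphi$ has only finitely many breaks, since they form a discrete subset of $\Int(I)$. Consequently $\varphi \circ g \circ \varphi^{-1}$, viewed as a self-homeomorphism of $\R$, is a finitary PL-homeomorphism with support inside the compact interval $[\varphi(c), \varphi(d)] \subset \R$; the chain rule, combined with $\varphi(A \cap \Int(I)) = A$ and the slope condition on $\varphi$, guarantees that slopes stay in $P$ and breaks in $A$, so the image lies in $B(\R;A,P)$. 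The same argument applied to $\varphi^{-1}$ and the compact support of an arbitrary $h \in B(\R;A,P)$ provides a two-sided inverse.

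The main technical obstacle is the simultaneous construction of $(a_n)$ and $(b_n)$: monotonicity, exhaustion of $\Int(I)$ and of $\R$, the congruence $b_{n+1} - b_n \equiv a_{n+1} - a_n \pmod{IP \cdot A}$, and the requirement that all intermediate breaks produced by Theorem~\ref{TheoremA} lie in $A$ must all be met together. The density of both $A$ and $IP \cdot A$ in $\R$ makes this possible, but the inductive step deserves care, especially when an endpoint of $I$ is finite and lies in $\R \smallsetminus A$, where one must keep the $a_n$ strictly interior while still driving them toward the endpoint.
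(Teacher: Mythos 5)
Your proof is correct, and the overall strategy coincides with the paper's: build an infinitary PL-homeomorphism between $\Int(I)$ and $\R$ with slopes in $P$, breaks in $A$ accumulating only at the endpoints of $I$, and mapping $A \cap \Int(I)$ onto $A$, then conjugate and use the boundedness of supports to see that only finitely many breaks of $\varphi$ ever matter. Where you diverge is in the construction of $\varphi$ itself. You prescribe two exhausting sequences $(a_n)$ and $(b_n)$, impose the congruence $(b_{n+1}-b_n)-(a_{n+1}-a_n) \in IP\cdot A$, and invoke Theorem \ref{TheoremA} to supply a finitary PL-piece on each block $[a_n,a_{n+1}]$. The paper instead fixes a single positive $a_0 \in A$, chooses the partition points $b_i$ of $\Int(I)$ so that each gap $b_{i+1}-b_i$ lies in $P\cdot a_0$ (possible because $P$ contains elements arbitrarily close to $0$), and takes $\varphi$ to be the affine interpolation of the points $(i\cdot a_0, b_i)$; each piece is then a single affine segment whose slope $(b_{i+1}-b_i)/a_0$ is automatically in $P$, so Theorem \ref{TheoremA} and the module $IP\cdot A$ are not needed at all. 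Your version costs the extra bookkeeping you acknowledge at the end (the simultaneous inductive choice of both sequences and the intermediate breaks produced by Theorem \ref{TheoremA}), but it buys flexibility: the interior partition $(a_n)$ can be prescribed in advance, which is occasionally convenient when one wants $\varphi$ to interact with given data. For the statement at hand the paper's interpolation is the more economical route.
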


\begin{proof}
Let $a_0$ be a positive element of $A$ 
and choose an element $b_0$ in $A\cap \Int(I)$.  
Since each interval $]0,\frac{1}{m}[$ contains an element of $P$, 
we can find a doubly infinite increasing sequence $(b_i \mid i\in \Z)$ of elements in $A\cap \Int(I)$ 
satisfying properties (i) and (ii):
\begin{enumerate}[(i)]
\item  for each $i\in \Z$, the difference $b_{i+1} -b_i$ is in $P\cdot a_0$; 
\item  $\lim_{i\to-\infty}b_i = \inf(I)$ and $\lim_{i\to+\infty}b_i = \sup(I)$.
\end{enumerate}

Let $\varphi\colon \R \to \Int(I)$ be the affine interpolation of the doubly infinite sequence $((i\cdot a_0,b_i)\mid i\in\Z)$.  Then $\varphi$ is a $\PL$-homeomorphism of $\R$ onto $\Int(I)$ whose slopes are in $P$ 
and whose vertices are in $A^2$.  
If $f$ is an element of $B(\R;A,P)$ there exists a positive integer  $\ell$ such that 
the support of $f$ is contained in the interval $]-\ell a_0,\ell a_0[$.  
It follows that $\act{\varphi}f  = \varphi\circ f \circ \varphi^{-1}$
is a finitary $\PL$-auto-homeomorphism of $\Int(I)$
with support in $]b_{-\ell}, b_\ell[$, 
slopes in $P$ and vertices in $A^2$.  
Hence $(\act{\varphi} f)^\sim$,
the extension of $\act{\varphi}f$ that fixes every point outside of  $\Int(I)$, 
is an element of $B(I;A,P)$.  

Conversely,
if $g \in B(I;A,P)$ the conjugate $\varphi^{-1}\circ (g \restriction{\Int(I)})\circ \varphi$ is in $B(\R;A,P)$.  
From these facts we see 
that conjugation by $\varphi$ induces an isomorphism of $B(\R;A,P)$ onto $B(I;A,P)$.  
\end{proof}
%
\subsection{Proof of the simplicity of the derived group of $B$}
\label{ssec:10.2}
%
The simplicity of $B'$ will be a consequence of Proposition 
\ref{prp:Higmans-Theorem1} below;
its proof is a variation on G. Higman's proof of Theorem 1 in \cite{Hig54a}.
\index{Higman, G.}%
\begin{prp}
\label{prp:Higmans-Theorem1}
\index{Higman's simplicity result}%
\index{Simplicity result for [B,B]@Simplicity result for $[B,B]$}%
Let $B \neq \{1\}$ be a group satisfying the following condition.
\begin{equation}
\label{eq:Higmans-condition}
\left\{
\begin{minipage}{9.5cm}
For every ordered pair $(x,y) \in B^2$ and every $z \in B \smallsetminus \{1\}$ \\
there exists $u \in B$ such that the equation $[\act{u}x, \act{zu}y] = 1$ holds.
\end{minipage}
\right.
\end{equation}
Then $B'$ is simple and non-abelian, or $B'' = \{1\}$.
\end{prp}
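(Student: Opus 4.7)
My plan is to derive an explicit commutator identity from the hypothesis, use it to control normal subgroups of $B$, and then reduce the simplicity of $B'$ to a final step that is the real obstacle.

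First, given $x,y\in B$ and $z\in B\smallsetminus\{1\}$, pick $u\in B$ with $[\act{u}x,\act{zu}y]=1$ and put $a=\act{u}x$, $b=\act{u}y$. The relation $a(zbz^{-1})a^{-1}=zbz^{-1}$ expands as $(aza^{-1})(aba^{-1})(az^{-1}a^{-1})=zbz^{-1}$; solving for $aba^{-1}$ and a short manipulation of $(zbz^{-1})(aza^{-1})b^{-1}$ produce the four-conjugates identity
\[
[a,b]=(az^{-1}a^{-1})\cdot z\cdot(bz^{-1}b^{-1})\cdot((ba)\,z\,(ba)^{-1}).
\]
Since $[x,y]=u^{-1}[a,b]u$, every commutator of $B$ is a product of $B$-conjugates of $z^{\pm 1}$. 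Consequently $B'$ is contained in the $B$-normal closure of any non-identity element, so every non-trivial normal subgroup of $B$ contains $B'$. Applying this to the characteristic subgroup $B''\triangleleft B$ yields the dichotomy $B''=\{1\}$ (the alternative in the proposition) or $B'=B''$; I assume henceforth the latter and aim to prove $B'$ simple.

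Let $M\triangleleft B'$ be non-trivial and pick $z\in M\smallsetminus\{1\}$. For arbitrary $x,y\in B'$, apply the hypothesis to obtain $u\in B$ and form $a$, $b$, $ba$, all of which lie in $B'$ because $B'$ is normal in $B$. Each of the four factors in the identity is then a $B'$-conjugate of $z^{\pm 1}$, hence lies in $M$ by normality of $M$ in $B'$. This gives $u[x,y]u^{-1}=[a,b]\in M$ for every commutator of $B'$.

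The main obstacle is that $u$ is a priori only an element of $B$, not of $B'$, so the statement just obtained places $[x,y]$ in some $B$-conjugate $u^{-1}Mu$ of $M$ rather than in $M$ itself. To finish the proof I intend to show that $M$ must in fact be normal in $B$. If not, some conjugate $M_{1}:=u_{0}Mu_{0}^{-1}$ differs from $M$; both are normal in $B'$ and satisfy the standard inclusion $[M,M_{1}]\subseteq M\cap M_{1}$. Combining this with the fact that the $B$-normal closure of $M$ equals $B'$ (from the first paragraph), with the preceding ``conjugate-into-$M$'' property, and with the perfectness $B'=B''$, I expect to derive a contradiction: distinct $B$-conjugates of $M$ would have to commute pairwise and jointly generate $B'$, which is incompatible with $B'=B''\neq\{1\}$ unless all conjugates coincide. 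Once $M\triangleleft B$ is secured, the first paragraph forces $B'\subseteq M$, whence $M=B'$ and $B'$ is simple. Carrying out this last step cleanly is where I expect the real work to lie.
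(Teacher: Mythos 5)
Your first three paragraphs reproduce the paper's argument faithfully: the four-conjugates identity, the conclusion that $B'$ lies in every non-trivial normal subgroup of $B$, and the dichotomy $B''=\{1\}$ or $B'=B''$ are all correct, and you have put your finger on exactly the right difficulty — the element $u$ supplied by the hypothesis is only known to lie in $B$, so for $M\triangleleft B'$ the identity only places $[x,y]$ in the conjugate $\act{u^{-1}}M$ rather than in $M$. (This is precisely the gap that was present in the published 2014 version of this proposition.) However, your proposed way of closing it does not work. The inclusion $[M,M_1]\subseteq M\cap M_1$ for two normal subgroups of $B'$ yields pairwise commutation only when the intersection is trivial, and there is no reason for distinct $B$-conjugates of $M$ to intersect trivially here. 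Worse, even granting pairwise commutation, a perfect group can perfectly well be generated by pairwise commuting perfect normal subgroups (a direct product of non-abelian simple groups is perfect), so no contradiction with $B'=B''\neq\{1\}$ follows; the sketched argument cannot force the conjugates of $M$ to coincide.

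The device that actually closes the gap is a \emph{second} application of the hypothesis, to the triple $(z,u,z)$ in place of $(x,y,z)$: this produces $v\in B$ with $[\act{v}z,\act{zv}u]=1$, i.e.\ $z$ commutes with $u_1=\act{v^{-1}zv}u$. Setting $u_2=u_1^{-1}u=[v^{-1}zv,u^{-1}]$, one has $u_2\in B'$, and conjugating the relation $[\act{u}x,\act{zu}y]=1$ by $u_1^{-1}$ (using $zu_1=u_1z$) gives $[\act{u_2}x,\act{zu_2}y]=1$. Thus the element $u$ in the hypothesis may always be chosen inside $B'$; with that refinement your fourth paragraph goes through verbatim, every factor of the identity is a $B'$-conjugate of $z^{\pm1}$ and hence lies in $M$, so $B'=B''\subseteq M$ and $B'$ is simple. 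Without this (or some genuine substitute), your proof is incomplete at the step you yourself flagged.
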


\begin{proof}
We begin with a calculation.
Let $(x, y,z) \in B^3$  be a given triple.
If $z \neq 1$, there exists, by assumption,  an element $u \in B$ 
such that the equation  $[\act{u}x, \act{zu}y] = 1$ is valid;
by conjugating this equation by $u^{-1}$ and setting $w = u^{-1}zu$ 
one arrives at the equation $[x, \act{w}y] = 1$. 
It allows one to rewrite the commutator of $x$ and $y$ like this:
\begin{align*}
[x,y] 
&= 
xyx^{-1} \cdot \left( \act{w}y^{-1} \cdot \act{w}y \right) \cdot y^{-1}
=
xy  \cdot \left(x^{-1} \cdot \act{w}y^{-1} \right)\cdot \act{w}y \cdot   y^{-1}\\
&=
xy \cdot  \left(\act{w}y^{-1} \cdot x^{-1}\right) \cdot \act{w}y \cdot  y^{-1}
= xy w y^{-1} \cdot w^{-1} x^{-1} \cdot w \cdot y w^{-1} y^{-1}\\
&=
\act{xy} w \cdot \act{x}w^{-1} \cdot w \cdot \act{y} w^{-1}.
\end{align*}
Since $w$ is short for  $u^{-1}zu$ 
the calculation can be summarized by saying that
\begin{equation}
\label{eq:Commutator-relation}
[x,y] = \act{xyu^{-1}} z \cdot \act{xu^{-1}} z^{-1} \cdot \act{u^{-1}} z  \cdot \act{yu^{-1}} z^{-1}.
\end{equation}

Assume now that $N$ is a normal subgroup of $B$  and that $z \in N$.
Equation \eqref{eq:Commutator-relation} then shows 
that every commutator $[x,y]$ lies in $N$ and so $B' \subseteq N$.
It follows, first of all, that $B'$ is contained in every normal subgroup $N \neq \{1\}$
and is thus a minimal normal subgroup of $B$.
Two cases now arise.

If $B'' = \{1\}$ then $B$ is metabelian,
and either $B$ is abelian, or 
$B'$ is a minimal abelian normal subgroup of $B$ 
and thus a simple module over the ring $\Z{B_{\ab}}$.
Otherwise,
consider a normal subgroup $N \neq \{1\}$ contained in $B''$.
The calculation then implies
that $B' \subseteq N \subseteq B''$ and so $B' = B''$;
in addition, 
$B''$ is a minimal normal subgroup of $B$.
If we can show that $B''$ \emph{is a minimal normal subgroup of} $B'$
then $B' = B''$ will be a non-abelian simple group, as claimed by the proposition.

To achieve this goal,
we verify
that the element $u $ occurring in equation \eqref{eq:Commutator-relation} can be chosen inside $B'$.
Indeed,
given elements $x$, $y$  of $B$ and $z \in B \smallsetminus \{1\}$,
there exists, by statement \eqref{eq:Higmans-condition},
an element $u \in B$ with $[\act{u}x, \act{z u } y ] = 1$.
Next, with $(z, u, z)$ in the rôle of $(x,y,z)$, 
there exists $v \in B$ so that $[\act{v}z, \act{z v } u] = 1$. 
So $z$ commutes with $u_1 = \act{v^{-1} z v} u$.
Set $u_2 = u_1^{-1} \cdot u = [v^{-1} z v, u^{-1}]$.
Then $u_2 \in B'$
and the calculation
\begin{align*}
1 
&= 
u_1^{-1} \cdot  \left[\act{u}x, \act{z u } y \right]  \cdot u_1
=
\left[ u_1^{-1}\cdot\act{u} x\cdot u_1, u_1^{-1}\cdot \act{z u } y \cdot u_1 \right]
=
\left[\act{u_2} x, \act{z u_2} y\right ] 
\end{align*}
justifies our contention. 
\end{proof}

\begin{corollary}
\label{crl:Simplicity-of-B'}
\index{Subgroup B(I;A,P)@Subgroup $B(I;A,P)$!simplicity of [B,B]@simplicity of $[B,B]$}%
\index{Simplicity result for [B,B]@Simplicity result for $[B,B]$}%
For every choice of the triple $(I,A, P)$ the derived group of $B(I;A,P)$ is a simple, non-abelian group.
\end{corollary}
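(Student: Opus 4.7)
The plan is to deduce the result by applying Proposition \ref{prp:Higmans-Theorem1} to $B = B(I;A,P)$. Two things then require verification: first, that $B$ satisfies Higman's condition \eqref{eq:Higmans-condition}; second, that $B'' \neq \{1\}$, since the proposition otherwise permits the conclusion $B'' = \{1\}$ (in which case $B'$ could conceivably be abelian or even trivial).

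For the first point, given $x,y \in B$ and $z \in B \smallsetminus \{1\}$, the strategy is to produce $u \in B$ for which $\act{u}x$ and $\act{zu}y$ have disjoint supports, so that they automatically commute. Since $x$ and $y$ are bounded, their supports are contained in some compact interval $[a_1,a_2] \subset \Int(I)$. Because $z$ is not the identity and is continuous, there exists an open subinterval $J \subset \Int(I)$ whose closure $\overline{J}$ satisfies $\overline{J} \cap z(\overline{J}) = \emptyset$. By Corollary \ref{CorollaryA1} every $B$-orbit in $A \cap \Int(I)$ is a translate of $IP \cdot A$ intersected with $\Int(I)$; as $IP \cdot A$ is dense in $\R$, every such orbit is dense in $\Int(I)$. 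Hence one may arrange that $a_1, a_2$ lie in a common $B$-orbit $\Omega$ and then pick $b_1 < b_2$ in $\Omega \cap J$. The multiple transitivity of $B$ on $\Omega$, recorded in Corollary \ref{CorollaryA4} and the first part of Remark \ref{remark:5.3}, furnishes $u \in B$ with $u(a_1) = b_1$ and $u(a_2) = b_2$; since $u$ is order-preserving, $u([a_1,a_2]) = [b_1,b_2] \subset \overline{J}$, and therefore
\[
\supp(\act{u}x) \cap \supp(\act{zu}y) \subseteq u([a_1,a_2]) \cap z\bigl(u([a_1,a_2])\bigr) \subseteq \overline{J} \cap z(\overline{J}) = \emptyset.
\]

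For the second point, the last part of Remark \ref{remark:5.3} states that $B'$ itself acts $\ell$-fold transitively, for every $\ell \geq 1$, on each $B$-orbit contained in $A \cap \Int(I)$. Every such orbit is dense in $\Int(I)$ and hence infinite, so $B'$ admits a faithful, $2$-transitive action on an infinite set and cannot be abelian; that is, $B'' \neq \{1\}$. Proposition \ref{prp:Higmans-Theorem1} then delivers the desired conclusion that $B'$ is simple and non-abelian.

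The one delicate point in this plan is the transitivity bookkeeping that underlies the construction of $u$: multiple transitivity of $B$ is only available on a single orbit, so the four auxiliary points $a_1, a_2, b_1, b_2$ must all be fitted into one and the same $B$-orbit $\Omega$. The density of every $B$-orbit in $\Int(I)$ makes this harmless, but it relies essentially on the non-triviality assumption \eqref{eq:Non-triviality-assumption}; apart from this, the argument is a direct application of the machinery already assembled in Chapter \ref{chap:A} and in section \ref{ssec:10.2}.
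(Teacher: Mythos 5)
Your proposal is correct and follows the paper's route: both verify Higman's condition \eqref{eq:Higmans-condition} for $B=B(I;A,P)$ by conjugating so that the supports of $\act{u}x$ and $\act{zu}y$ become disjoint, and both then invoke Proposition \ref{prp:Higmans-Theorem1}. The paper builds the conjugator $u$ by hand from Theorem \ref{TheoremA} (piecing together three PL-homeomorphisms to squeeze $[b_1,b_2]$ into a small interval inside $\supp z$), whereas you obtain it in one stroke from the $2$-fold transitivity of $B$ on a dense orbit (Corollary \ref{CorollaryA4} and Remark \ref{remark:5.3}); since that transitivity is itself a consequence of Theorem \ref{TheoremA}, this is a repackaging rather than a new idea, though your version also handles the two cases $t_*<z(t_*)$ and $z(t_*)<t_*$ uniformly where the paper swaps $z$ for $z^{-1}$ and exchanges $x$ and $y$. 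The one genuine divergence is the verification that $B''\neq\{1\}$: the paper exhibits an explicit non-trivial double commutator $[[f,g],h]$ using translated supports, while you argue abstractly that $B'$ acts faithfully and $2$-fold order-transitively on an infinite dense orbit, and an abelian group acting faithfully and transitively must act with trivial point stabilizers, contradicting $2$-transitivity. Both arguments are sound; yours is slightly more conceptual, the paper's more constructive, and neither requires machinery beyond what Chapter \ref{chap:A} already provides.
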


\begin{proof}
We begin by verifying 
that $B$ satisfies the hypotheses of Proposition \ref{prp:Higmans-Theorem1}.
The idea underlying this verification is simple: 
 the union of the supports of elements $f$ and $g$ lies in a compact interval, 
 say  $I =[b_1, b_2]$.
 If $h \neq \id $ there exists a small interval $I' = [b'_1, b'_2]$ 
 in the support of $h$ such that $I'$ and $h(I')$ are disjoint. 
 Use Theorem \ref{TheoremA} to construct $u \in B$ so that  $u(I) \subseteq I'$.
\index{Theorem \ref{TheoremA}!consequences}%
 Then the supports of $\act{u} f$ and $\act{hu}g$ will be disjoint 
 and so these functions commute.
 
Here are the details:
given $(f, g) \in B^2$ there exists numbers $ b_1 < b_2$  in $\Acal =(IP \cdot A) \cap \Int(I) $ 
so that $\supp f \,\cup \,\supp g \subset [b_1,b_2]$;
given $h \neq \id$ pick  $t_* \in \supp h$.
Two cases now arise. 
If $t_* < h(t_*)$ the continuity of $h$ allows one to find  $b_1'$, $ b'_2$ in  $ \Acal$
so that 
$
b'_1 < t_* < b'_2 < h(b'_1).
$
By Theorem \ref{TheoremA} 
\index{Construction of PL-homeomorphisms!applications}%
there exist then a PL-homeomorphism $u_2$  in $G(\R;A,P)$ 
such that 
\[
u_2(b_1) = b'_1, \; u_2(b_2) = b'_2.
\]
\emph{A priori},
the function $u_2$  is only known to belong to $G(\R;A,P)$;
its support, however may not be bounded.
We modify therefore $u_2$ like this.
Choose numbers $a$ and $c$ in $\Acal$ with 
$a < \min \{b_1, b'_1\}$ and $\max\{b_1, b'_2\} < c$.
By Theorem \ref{TheoremA} there exist then functions $u_1$ and $u_3$ in $G(\R;A,P)$ 
satisfying the restrictions
\[
u_1(a) = a, \; u_1(b_1) = b'_1, \quad 
 \;\text{ and }\;
 u_3(b_2) = b'_2, \; u_3(c) = c.
 \]
A look at the definitions of  $u_1$, $u_2$, $u_3$ shows next
that there exists a function $u$
 which agrees with $u_1$ on $[a, b_1]$, 
 is equal to $u_2$ on $[b_1, b_2]$, 
 agrees with $u_3$ on $[b_2, c]$ and is the identity outside of $[a, c]$.
 This function $u$ is then a PL-homeomorphism in $B(I;A,P)$
  and its construction guarantees 
  that $\act{u}f$ and $\act{hu} g$ commutate.
 
 If $ h(t_*) < t_*$ then  $t_* < h^{-1}(t_*)$; 
 so the previous argument holds with $h^{-1}$ in place of $h$.
 It implies  that there exists $u \in B(\R;A,P)$
so  that $\act{u}f$ and $\act{h^{-1}u} g$ commute,
 whence $\act{hu}f$ and $\act{u} g$ commute.
 It suffices thus  to exchange $f$ and $g$ at the beginning of the argument.
 
Proposition \ref{prp:Higmans-Theorem1} therefore applies in both cases 
and proves that $B'$ is a simple, non-abelian group unless $B'' = \{\id \}$.
We are left with verifying that $B$ is not metabelian.
Let $f$ be an element of $B \smallsetminus \{\id\}$.
Then its support is bounded and so there exists $g \in B$ with $ \supp f \neq  g(\supp f)$,
whence $[f,g] = f \circ \act{g} f^{-1} \neq \id$. 
Similarly, there exists $h \in B$ with $\supp [f,g] \neq h(\supp [f,g])$,
whence $ [[f,g], h] \neq \id$.
Since $h$ can be chosen in $B'$ we have proved that $B'' \neq \{\id\}$. 
\end{proof}
%
\section{Construction of  homomorphisms into the slope group}
\label{sec:11}
%
The homomorphisms $\sigma_-$ and $\sigma_+$,
introduced in  section \ref{ssec:3.2},
record the slopes of the elements $g \in G(I;A,P)$ near the end points.
In this section, we construct similarly defined homomorphisms $\nu_\Omega$.
In Section \ref{sec:12}, then, 
they will be used to describe the abelianization of the group $G([0, \infty[\;;A,P)$.
%
\subsection{Definition of the homomorphism $\nu$}
\label{ssec:11.1}

Let $G$ be one of the groups $G(I;A,P)$, 
the parameters $I$, $A$ and $P$ being arbitrary 
except that they satisfy the non-triviality restrictions \eqref{eq:Non-triviality-assumption}. 
Each $G$-orbit $\Omega$ in $A$ gives rise to a function
\begin{equation}
\label{eq:11.1}
\index{Homomorphism!13-nu-Omega@$\nu_\Omega$}%
\nu_\Omega \colon G\to P,\quad 
f \mapsto \prod\nolimits_{\alpha\in \Omega} f'(a_+)/f'(a_-).
\end{equation}

In the above,
 $f'(a_+)$ and $f'(a_-)$ are short for 
$\lim_{t\searrow a}f'(t)$ and $\lim_{t\nearrow a}f'(t)$, respectively; 
the product $\prod f'(a_+)/f'(a_-)$ is defined since all but finitely many factors are equal to 1.  
Since $\Omega$ is a $G$-orbit the chain rule implies that $\nu_\Omega$ is a homomorphism. 
As each homeomorphism $f \in G(I;A,P)$ has only finitely many singularities,
all but finitely many values $\nu_\Omega(f)$ are equal to 1.

In order to combine these homomorphisms $\nu_\Omega$ 
into a single homomorphism $\nu$, 
we introduce the set $(A \cap I)_\sim$ of $G$-orbits of $A\cap I$ 
\label{notation:orbit-set}%
and the free abelian group on this set.
We denote the latter group by  $\Z[(A\cap I)_\sim]$  
\label{notation:Z-orbit-set}%
and then define the homomorphism $\nu$ like this:
\begin{equation}
\label{eq:11.2}
\index{Homomorphism!13-nu@$\nu$}%
\nu \colon  
\begin{cases}
G(I;A,P) &\longrightarrow \Z[(A\cap I)_\sim] \otimes_{\Z}  P
\\
\phantom{aaaa}f &\longmapsto \sum\nolimits_{\Omega \subset A\cap I}
\Omega\otimes\nu_\Omega(f).
\end{cases}
\end{equation}
%
\subsection{Properties of $\nu$}
\label{ssec:11.2}
\index{Homomorphism!13-nu@$\nu$!properties}%
%
(i)  If $I = \R$ then $G$ acts transitively on $A\cap I = A$ 
and so $\Z[(A\cap I)_\sim]$ is infinite cyclic; 
moreover, for each $f$ in $G$ one has
\begin{equation*}
\prod_{a\in A}f'(a_+)/f'(a_-) =
(\lim_{t\nearrow \infty}f'(t)) \cdot  (\lim_{t\searrow -\infty} f'(t))^{-1} =\sigma_+(f) \cdot \sigma_-(f)^{-1},
\end{equation*}
and so $\nu$ can be recovered from $\sigma_-$ and $\sigma_+$.

(ii) If $I$ is bounded on one side but not on both sides, 
$\nu$ is surjective, as we shall see in section \ref{ssec:12.1};  
if, however, $I$ is bounded $\nu$ is never surjective.  
Indeed, if $f$ has bounded support 
the product $\prod\nolimits _{a\in A \cap I} f'(a_+)/f'(a_-)$ equals 1; 
thus the image of $\nu$ is contained in the kernel of the map
\begin{equation}\label{eq:11.3}
\varepsilon \colon \Z[(A\cap I)_\sim] \otimes P\longrightarrow P,\quad \Omega \otimes p \mapsto p.
\end{equation}

\subsubsection{Naturality of  $\nu$}
\label{sssec:11.2a}
%
The construction of $\nu$ is natural in two situations detailed by the next two lemmata.
\begin{lemma}
\label{lem:Commutative-square-1}
\index{Homomorphism!13-nu@$\nu$!properties|(}%
Let $I_1$ be a closed subinterval of $I$, 
let $P_1$ be a subgroup of $P$ and let $A_1$ be a $\Z[P_1]$-submodule of $A$.  
Let $\mu\colon  G_1 = G(I_1;A_1,P_1) \incl G = G(I,A,P)$ denote the obvious inclusion 
and let
$\iota_* \colon (A_1\cap I_1)_{\sim_1} \to (A\cap I)_\sim $ be the map 
that is derived from the inclusion $\iota \colon (A_1 \cap I_1) \incl A \cap I$ by passage to the quotients.  
Then the square
\begin{equation}
\label{eq:11.4}
\xymatrix{
G(I_1;A_1,P_1) \ar@{->}[r]^-{\nu_1} \ar@{->}[d]^-\mu 
&\Z[(A_1\cap I_1)_{\sim_1}] \otimes P_1 \ar@{->}[d]^-{\Z[\iota_*]\otimes (P_1\incl P)}\\
G(I;A,P) \ar@{->}[r]^-\nu &\Z[(A\cap I)_\sim] \otimes P
}
\end{equation}
is commutative.
\end{lemma}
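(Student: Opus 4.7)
The plan is to unravel both paths around the square by evaluating them on an arbitrary element $f\in G_1$ and then reorganising the sums. Given the definition \eqref{eq:11.2} of $\nu_1$, applying the right vertical arrow to $\nu_1(f)$ yields
\[
\sum\nolimits_{\Omega_1\subset A_1\cap I_1}\,\iota_*(\Omega_1)\otimes\nu_{\Omega_1}(f),
\]
whereas the image of $f$ via the other route is
\[
\nu(\mu(f))\;=\;\sum\nolimits_{\Omega\subset A\cap I}\,\Omega\otimes\nu_\Omega(f).
\]
So the task reduces to identifying these two expressions in $\Z[(A\cap I)_\sim]\otimes P$.

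First I would record the crucial observation that every singularity of $f$ lies in $A_1\cap I_1$, since $f\in G(I_1;A_1,P_1)$. Consequently, in the formula \eqref{eq:11.1} for a $G$-orbit $\Omega\subset A\cap I$, the product $\prod_{a\in\Omega}f'(a_+)/f'(a_-)$ collapses to a product over $\Omega\cap(A_1\cap I_1)$. The set $\Omega\cap(A_1\cap I_1)$ is $G_1$-stable, hence a disjoint union of $G_1$-orbits, and these are exactly the $G_1$-orbits $\Omega_1$ with $\iota_*(\Omega_1)=\Omega$. Therefore
\[
\nu_\Omega(f)\;=\;\prod\nolimits_{\Omega_1:\,\iota_*(\Omega_1)=\Omega}\nu_{\Omega_1}(f),
\]
all factors but finitely many being trivial.

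Next I would substitute this into the expression for $\nu(\mu(f))$ and use the bilinearity of the tensor product, which turns the multiplicative group operation of $P$ into addition on the right factor of $\Z[(A\cap I)_\sim]\otimes_{\Z}P$:
\[
\Omega\otimes\prod\nolimits_{\Omega_1:\,\iota_*(\Omega_1)=\Omega}\nu_{\Omega_1}(f)
\;=\;\sum\nolimits_{\Omega_1:\,\iota_*(\Omega_1)=\Omega}\iota_*(\Omega_1)\otimes\nu_{\Omega_1}(f).
\]
Summing over $\Omega$ and reindexing by $\Omega_1$ then produces precisely the image of $\nu_1(f)$ under $\Z[\iota_*]\otimes(P_1\hookrightarrow P)$, establishing commutativity.

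The argument is essentially bookkeeping; no step looks genuinely hard. The only place one must be careful is the bilinearity passage, which relies on writing $P$ additively inside the tensor product while the $\nu_\Omega$ were originally defined as products in $P$. Once that convention is set, the partition of the $G$-orbits $\Omega$ into the $G_1$-orbits $\Omega_1$ with $\iota_*(\Omega_1)=\Omega$ does all the work.
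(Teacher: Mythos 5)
Your proof is correct and follows essentially the same route as the paper's: both arguments rest on the observations that the singularities of $f\in G_1$ lie in $A_1\cap I_1$ so the factors of $\nu_\Omega(f)$ are unchanged, that each $G$-orbit $\Omega$ decomposes into the finitely many relevant $G_1$-orbits mapping to it under $\iota_*$, and that bi-additivity of $-\otimes_{\Z}-$ converts the resulting product in $P$ into the required sum.
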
  

\begin{proof}
Let $\Omega_1$ be a $G_1$-orbit in $A_1 \cap \Int(I_1)$ 
and consider an element $f_1 \in G_1$.
By definition \eqref{eq:11.2} one has 
\begin{equation}
\label{eq:Formula-for-nu1-f1}
(\nu_1)_{\Omega_1}(f_1) = \prod\nolimits_{a_1 \in \Omega_1}  f_1'(a_{1, +})/ f'_1(a_{1, -}).
\end{equation}
If one passes from $f_1$ to $f = \mu(f_1)$, 
the factors in the above product do not change, 
but $\Omega_1$ may no longer be an orbit of the larger group $G$.
If this happens then several $G_1$-orbits, 
say $\Omega_{1,1}$, \ldots, $\Omega_{1,k}$,
 coalesce into a single orbit $\Omega$.
\footnote{Infinitely many orbits may coalesce, but as $f$ has only finitely many singularities,
only finitely many of these orbits need to be taken into account.}
Then 
\[
\nu_\Omega (f) = \nu_{\Omega_{1, 1}}(f_1) \dotsm  \nu_{\Omega_{1, k}}(f_1).
\]
Since the tensor product $-\otimes_{\Z} -$ is bi-additive,
this value coincides with the image of $\nu_1(f_1)$ under $\Z[\iota_*] \otimes (P_1 \incl P)$.
\end{proof}

\begin{lemma}
\label{lem:Commutative-square-2}
Suppose $I_1$, $I_2$ are intervals, $P$ is a subgroup of $\R^\times_{> 0}$ 
and $A_1$, $A_2$ are $\Z[P]$-submodules of $\R_{\add}$.  
For $i \in \{1,2\}$,
let $G_i\leq G(I_i;A_i,P)$ be a subgroup.
Assume there exists a \emph{finitary} PL-homeomorphism $\varphi \colon I_1\iso I_2$ 
which maps $A_1\cap I_1$ onto $A_2 \cap I_2$,
 and induces by conjugation an isomorphism
\begin{equation*}
\beta \colon  G_1\longrightarrow G_2,\quad g_1 \longmapsto \ \text{unique extension of } \act{\varphi}(g_1\restriction I_1)\ \text{in}\ G(I_2;A_2,P).
\end{equation*}
Then $\varphi$ sends $G_1$-orbits of $A_1\cap I_1$ onto $G_2$-orbits of $A_2\cap I_2$ 
and the square
\begin{equation}\label{eq:11.5Modified}
\xymatrix{
G_1\ar@{^{(}->}[r]  \ar@{->}[d]^-{\beta} 
& G(I_1;A_1,P) \ar@{->}[r]^-{\nu_1}
&\Z[(A_1\cap I_1)_{\sim_1}]\otimes P\ar@{->}[d]^-{\Z[\varphi_*]\otimes \id}\\
G_2 \ar@{^{(}->}[r]  
& G(I_2; A_2,P)  \ar@{->}[r]^-{\nu_2}
&\Z[(A_2\cap I_2)_{\sim_2}]\otimes P
}.
\end{equation}%
is commutative.
\end{lemma}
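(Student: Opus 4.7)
The plan is to split the claim into two parts: first verify that $\varphi$ descends to a bijection between the orbit spaces, and then carry out a chain-rule computation that reduces the commutativity to a cancellation over each orbit.

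First I would show that $\varphi$ carries $G_1$-orbits of $A_1\cap I_1$ onto $G_2$-orbits of $A_2\cap I_2$. By the definition of $\beta$, its restriction to $A_2 \cap \Int(I_2)$ agrees pointwise with $\varphi\circ g_1 \circ \varphi^{-1}$, and $\varphi$ is assumed to map $A_1 \cap I_1$ bijectively onto $A_2 \cap I_2$. Consequently $a,a'\in A_1\cap I_1$ lie in the same $G_1$-orbit iff $\varphi(a),\varphi(a')$ lie in the same $G_2$-orbit, and $\varphi_*$ is a bijection of orbit sets. This gives the induced map $\Z[\varphi_*]\otimes \id$ appearing in the square.

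Next, by bi-additivity of the tensor product and $\Z$-linearity of $\nu_i$, it suffices to check for a fixed $g_1\in G_1$ with image $g_2=\beta(g_1)$ and a fixed $G_1$-orbit $\Omega_1$ that
\[
(\nu_2)_{\varphi_*(\Omega_1)}(g_2) \;=\; (\nu_1)_{\Omega_1}(g_1)
\]
as elements of $P$. Write $\Omega_2 = \varphi_*(\Omega_1)$ and reindex the product defining $(\nu_2)_{\Omega_2}(g_2)$ along $\varphi$, so that both products run over $a\in \Omega_1$; the identity to prove then becomes
\[
\prod_{a\in \Omega_1} \frac{g_2'(\varphi(a)_+)}{g_2'(\varphi(a)_-)} \;=\; \prod_{a\in\Omega_1} \frac{g_1'(a_+)}{g_1'(a_-)}.
\]
Assume for notational convenience that $\varphi$ is orientation-preserving; the opposite case is handled by swapping $(\,\cdot\,)_+$ and $(\,\cdot\,)_-$ for $\varphi$ and $\varphi^{-1}$ throughout, and the combinatorial argument below is unchanged.

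Then I would apply the chain rule to $g_2 = \varphi\circ g_1 \circ \varphi^{-1}$ at the point $b=\varphi(a)$, using one-sided derivatives. Since $\varphi^{-1}$ is increasing, $\varphi^{-1}(b_\pm)$ approaches $a$ from the same side; so
\[
\frac{g_2'(\varphi(a)_+)}{g_2'(\varphi(a)_-)}
\;=\;
\frac{\varphi'(g_1(a)_+)}{\varphi'(g_1(a)_-)} \cdot \frac{g_1'(a_+)}{g_1'(a_-)} \cdot \frac{\varphi'(a_-)}{\varphi'(a_+)}.
\]
The middle factor is the desired one. The remaining two factors differ from $1$ only when $a$ (resp.\ $g_1(a)$) belongs to the \emph{finite} break set $S_\varphi$ of $\varphi$. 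I would therefore isolate the product of the $\varphi$-jump factors as
\[
\Bigl(\prod_{a\in \Omega_1\cap g_1^{-1}(S_\varphi)} \frac{\varphi'(g_1(a)_+)}{\varphi'(g_1(a)_-)}\Bigr)
\cdot
\Bigl(\prod_{a\in \Omega_1\cap S_\varphi} \frac{\varphi'(a_-)}{\varphi'(a_+)}\Bigr)
\]
and reindex the first product by $a'=g_1(a)$. Since $g_1$ permutes $\Omega_1$ bijectively, the new index set is $\Omega_1\cap S_\varphi$ and the first product equals the inverse of the second one. The two $\varphi$-contributions cancel, giving the required equality.

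The main obstacle is purely bookkeeping: tracking one-sided derivatives correctly through the chain rule, keeping careful watch on the orientation of $\varphi$, and verifying that the reindexing via $g_1$ genuinely matches the two finite index sets $\Omega_1\cap S_\varphi$ and $\Omega_1\cap g_1^{-1}(S_\varphi)$ without spurious over- or under-counting. Once this is set up cleanly, the cancellation is immediate and the lemma follows.
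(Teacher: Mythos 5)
Your proposal is correct and follows essentially the same route as the paper's own proof: establish the orbit correspondence from the conjugation identity, apply the one-sided chain rule to $g_2=\varphi\circ g_1\circ\varphi^{-1}$ to factor each jump ratio into a $g_1$-contribution and two $\varphi$-contributions, and cancel the latter by reindexing one finite product along the permutation $g_1$ of $\Omega_1$. Your explicit treatment of the orientation-reversing case is a small additional care the paper leaves implicit, but the argument is otherwise identical.
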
  

\begin{proof}
The equation $\varphi(g_1( t_1)) = (\act{\varphi} g_1) ( \varphi(t_1))$ 
holds for every $(g_1,t_1) \in G_1 \times I_1$ 
and shows that $\varphi$ maps  $G_1$-orbits in $I_1$ onto $G_2$-orbits in $I_2$.
Moreover,
if $t_1 \in A_1 \cap I_1$ then $\varphi(t_1)$ in $A_2 \cap A_2$ by hypothesis.
Fix now a $G_1$-orbit $\Omega_1$  in $A_1 \cap \Int(I_1)$, 
set $\Omega_2 = \varphi(\Omega_1)$
and pick $a_1 \in \Omega_1$ and $g_1 \in G_1$.
Set $a_2 = \varphi(a_1)$. 
The chain rule justifies then the computation
\begin{align*}
\left(\act{\varphi} g_1\right)({a_2}_+) 
&= 
\varphi'(g_1(\varphi^{-1}(a_2))_+) 
\cdot 
g_1' (\varphi^{-1}(a_2))_+) 
\cdot  
(\varphi^{-1})'({a_2}_+)\\
&= 
\varphi'(g_1(a_1)_+) \cdot g_1'({a_1} _+) / \varphi'({a_1}_+)
\end{align*}
for the right-hand derivative at $a_2$.
A similar result holds for the left-hand derivative at $a_2$.

Let $S_1 \subset \Omega_1$ denote the union of the set of singularities of $g_1$, 
the set of singularities of $\varphi$ 
and the preimage of the set of singularities of $\varphi$ under $g_1$.
Then $S_1$ is a finite set and so the following calculation is licit:
\begin{align*}
\nu_{\Omega_2}\left( \act{\varphi}g_1 \right)
&=
\prod\nolimits _{a_1 \in S_1} 
\frac{
\varphi'(g_1(a_1)_+) \cdot g_1'((a_1) _+) / \varphi'((a_1)_+)
}%
{\varphi'(g_1(a_1)_-) \cdot g_1'((a_1) _-) / \varphi'((a_1)_-)}\\
&=
\prod\nolimits _{a_1 \in S_1}  \frac{\varphi'(g_1(a_1)_+) } {\varphi'(g_1(a_1)_-)}
\cdot
\prod\nolimits _{a_1 \in S_1} \frac{g_1'(a_1)_+) } {g_1'(a_1)_-)}
\cdot
\prod\nolimits _{a_1 \in S_1} \frac{ \varphi'((a_1)_-)}{ \varphi'((a_1)_+)} \\
&=
\prod\nolimits _{a_1 \in S_1}  \frac{\varphi'(g_1(a_1)_+) } {\varphi'(g_1(a_1)_-)}
\cdot
\prod\nolimits _{a_1 \in S_1} \frac{ \varphi'((a_1)_-)}{ \varphi'((a_1)_+)} 
\cdot \nu_{\Omega_1}(g_1).
\end{align*}
A factor  $\varphi'(g_1(a_1)_+) / \varphi'(g_1(a_1)_-)$ in the first product in the last line 
is distinct from $1 \in P$ 
if, and only if, $g_1(a_1)$ is a singularity of $\varphi$; 
so the first product in nothing but the reciprocal of the second one,
and thus $\nu_{\Omega_2} (\act{\varphi}g_1)$ coincides with $\nu_{\Omega_1}(g_1)$.
\end{proof}

\subsubsection{Comment on Lemma \ref{lem:Commutative-square-2}}
\label{sssec:11.2b}
%
The hypotheses of  Lemma \ref{lem:Commutative-square-2} require,
\emph{inter alia},
that $\varphi$ be a \emph{finitary} PL-homeomorphism.
The question arises 
whether the conclusion of Lemma \ref{lem:Commutative-square-2} 
remains valid if $\varphi$ is an infinitary PL-homeomorphism,
the remaining hypotheses being as before.
The answer turns out to be in the negative.
\index{Homomorphism!13-nu@$\nu$}%

Indeed,
let $I$ be the unit interval $[0,1]$ 
and  set $G = G(I;\Z[1/2], \gp(2))$.
Our first aim is to construct a non-surjective monomorphism $\mu$ of $G$.
Define $\varphi \colon \R \to \R$  to be the function 
that is the identity outside of $\Int(I)$ 
and   the affine interpolation of the decreasing sequence of points  $n \mapsto s_n$
with $s_0 = (1,1)$ and 
\[
s_n = (a_n, b_n) 
=
 \left(
 \tfrac{3}{2} \left(\tfrac{1}{2}\right)^n,  \left(\tfrac{1}{2} \right)^{2n-1} 
 \right)
\]
in the interior of $I$.
The sequence $s$ begins thus: 
\[
(1,1), \quad \left(\tfrac{3}{4}, \tfrac{1}{2}\right), 
\quad \left(\tfrac{3}{8}, \tfrac{1}{8}\right),
\quad \left(\tfrac{3}{16}, \tfrac{1}{32}\right),
\quad \ldots.
\]
For each $n \in \N$, 
the slope of $\varphi$ on the interval $[a_{n+1}, a_n]$ is $(1/2)^{n-1}$,
as one easily verifies.
It follows, first,  that $\varphi$ maps $A = \Z[1/2]$ onto itself  and then 
that conjugation by $\varphi$ induces an automorphism of the kernel $N$ of
the homomorphism $\sigma_- \colon G \epi \gp(2)$.
Let  $f \in G$ be the PL-homeomorphism
whose rectangle diagram is displayed 
in the middle rectangle of the following figure;
it generates a complement of $N$ in $G$.
We claim that the conjugate $g = \act{\varphi}f$ of $f$ is a finitary PL-homeomorphism
and that it lies in $G$.
To verify this assertion,
we study the rectangle diagram of $g = \varphi \circ f \circ \varphi^{-1}$.

%
\begin{minipage}{12 cm}
\psfrag{1}{\hspace*{-1.6mm}     \small $0$}
\psfrag{2}{\hspace*{-1.5mm}    \small $\tfrac{1}{8}$}
\psfrag{3}{\hspace*{-1.8mm}     \small $\tfrac{1}{4}$}
\psfrag{4}{\hspace*{-2mm}     \small $\tfrac{1}{4}$}
\psfrag{4}{\hspace*{-1.6mm}     \small $\tfrac{3}{8}$}
\psfrag{5}{\hspace*{-1.8mm}     \small $\tfrac{1}{2}$}
\psfrag{6}{\hspace*{-1.9mm}     \small $\tfrac{3}{4}$}
\psfrag{7}{\hspace*{-1.3mm}     \small $1$}

\psfrag{11}{\hspace*{-2.5mm}     \small }
\psfrag{12}{\hspace*{-1.7mm}    \small $1$}
\psfrag{13}{\hspace*{-0.30mm}     \small $1$}
\psfrag{14}{\hspace*{-1.0mm}        \small $1$}
\psfrag{15}{\hspace*{1.9mm}     \small $\tfrac{1}{2}$}
\psfrag{16}{\hspace*{1mm}     \small $\tfrac{1}{2}$}
\psfrag{21}{\hspace*{-2mm}     \small }
\psfrag{22}{\hspace*{-1.3mm}     \small }
\psfrag{23}{\hspace*{-2.3mm}     \small }
\psfrag{24}{\hspace*{-1.6mm}     \small $\tfrac{5}{8}$}
\psfrag{25}{\hspace*{-4.5mm}     \small }
\psfrag{26}{\hspace*{-1.4mm}     \small $\tfrac{7}{8}$}
\psfrag{27}{\hspace*{-4.5mm}     \small }
\psfrag{31}{\hspace*{-4mm}     \small $\tfrac{1}{2}$}
\psfrag{32}{\hspace*{-1.7mm}    \small $\tfrac{1}{2}$}
\psfrag{33}{\hspace*{-1.0mm}     \small 1}
\psfrag{34}{\hspace*{-1.0mm}     \small 1}
\psfrag{35}{\hspace*{2.0mm}     \small 2}
\psfrag{36}{\hspace*{1.7mm}     \small 2}
\psfrag{41}{\hspace*{-3mm}     \small }
\psfrag{42}{\hspace*{-7.8mm}    \small $\tfrac{3}{16}$}
\psfrag{43}{\hspace*{-3.3mm}     \small}
\psfrag{44}{\hspace*{-3.3mm}     \small }
\psfrag{45}{\hspace*{-4.5mm}     \small }
\psfrag{46}{\hspace*{-4.5mm}     \small }
\psfrag{47}{\hspace*{-4.5mm}     \small }
\psfrag{51}{\hspace*{-2mm}     \small }
\psfrag{52}{\hspace*{-1.3mm}    \small $\tfrac{1}{2}$}
\psfrag{53}{\hspace*{-2.5mm}     \small $\tfrac{1}{2}$}
\psfrag{54}{\hspace*{-1.2mm}     \small $1$}
\psfrag{55}{\hspace*{-1.0mm}     \small $1$}
\psfrag{56}{\hspace*{-7.0mm}     \small $2$}
\psfrag{61}{\hspace*{-0.7mm}     \small 0}
\psfrag{62}{\hspace*{-1.7mm}    \small $\tfrac{1}{32}$}
\psfrag{63}{\hspace*{-1.5mm}     \small $\tfrac{1}{16}$}
\psfrag{64}{\hspace*{-1.4mm}     \small $\tfrac{1}{8}$}
\psfrag{65}{\hspace*{-1.2mm}     \small $\tfrac{1}{4}$}
\psfrag{66}{\hspace*{-1.3mm}     \small $\tfrac{1}{2}$}
\psfrag{67}{\hspace*{-0.4mm}     \small 1}

\psfrag{la1}{\hspace*{-6.5mm}  \small $\varphi^{-1}$}
\psfrag{la2}{\hspace*{-5.7mm}  \small $f$}
\psfrag{la3}{\hspace*{-5.7mm}  \small $\varphi$}
\begin{equation*}
\includegraphics[width= 11cm]{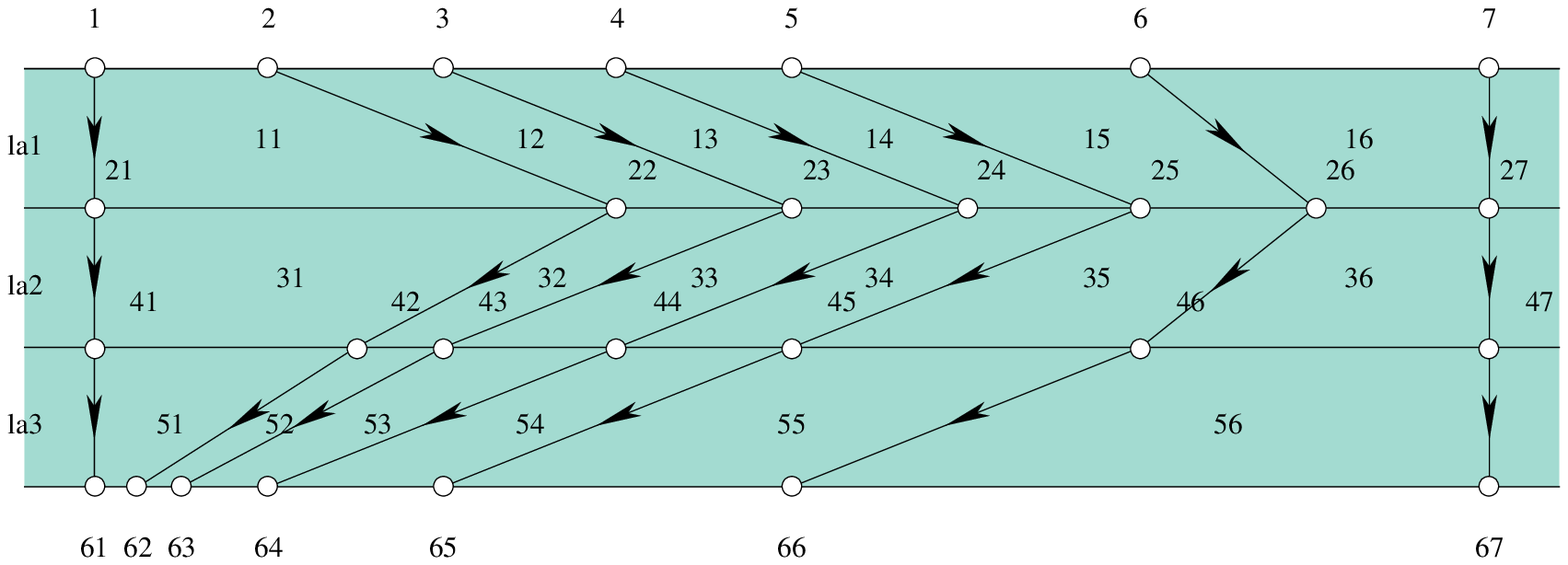}\\
\end{equation*}
\end{minipage}


We claim $g$ is the finitary PL-homeomorphism
with rectangle diagram as shown on the right of the next figure.
To justify this assertion 
we compute the image of $g$ at $b_n$ for $n \geq 2$.
Using the definitions of $g$, and of the numbers $a_j$, $b_j$,
the image $g(b_n)$ of $b_n$ can be computed for $n \geq 2$ like this:
\[
b_n \longmapsto a_n 
\longmapsto \tfrac{1}{2} a_n = a_{n+1} 
\longmapsto b_{n+1} = \tfrac{1}{4} b_n.
\]
It follows 
that $g$ is linear with slope $1/4$ on the interval $]0, b_2] = \;]0, 1/8[$.
Moreover, the above rectangle diagram allows one to see 
that the slope of $g$ on the interval $[1/8, 1/4[$ is $1/4$,
on $[1/4, 1/2]$ it  is $1/2$,
that it equals 1 on $[3/8, 3/4]$  
and 2  on $[3/4, 1]$.
The rectangle diagram of $\act{\varphi}f$ is therefore 
as depicted on the right of the following figure.
%
\[
\begin{minipage}[c]{12cm}
\psfrag{1}{\hspace*{-1.5mm}     \small $0$}
\psfrag{2}{\hspace*{-1.9mm}    \small $\tfrac{1}{2}$}
\psfrag{3}{\hspace*{-2.0mm}     \small $\tfrac{3}{4}$}
\psfrag{4}{\hspace*{-1.3mm}     \small $1$}

\psfrag{11}{\hspace*{-3mm}     \small $\tfrac{1}{2}$}
\psfrag{12}{\hspace*{-1.0mm}    \small $1$}
\psfrag{13}{\hspace*{-0.5mm}     \small $2$}
\psfrag{21}{\hspace*{-0.6mm}     \small 0}
\psfrag{22}{\hspace*{-0.8mm}    \small $\tfrac{1}{4}$}
\psfrag{23}{\hspace*{-0.9mm}     \small $\tfrac{1}{2}$}
\psfrag{24}{\hspace*{-0.5mm}     \small 1}
%
%
\psfrag{31}{\hspace*{-1.3mm}     \small $0$}
\psfrag{32}{\hspace*{-1.2mm}    \small $\tfrac{1}{4}$}
\psfrag{33}{\hspace*{-1.1mm}     \small $\tfrac{3}{8}$}
\psfrag{34}{\hspace*{-1.3mm}     \small $\tfrac{1 }{2}$}
\psfrag{35}{\hspace*{-1.7mm}     \small $\tfrac{3 }{4}$}
\psfrag{36}{\hspace*{-1.3mm}     \small 1}

\psfrag{41}{\hspace*{-1.8mm}     \small $\tfrac{1}{4}$ }
\psfrag{42}{\hspace*{-1.7mm}    \small $\tfrac{1}{2}$}
\psfrag{43}{\hspace*{-1.2mm}     \small $1$ }
\psfrag{44}{\hspace*{-0.9mm}    \small $1$}
\psfrag{45}{\hspace*{0.0mm}    \small $2$}
\psfrag{51}{\hspace*{-1.0mm}     \small $0$}
\psfrag{52}{\hspace*{-1.5mm}    \small $\tfrac{1}{16}$}
\psfrag{53}{\hspace*{-0.8mm}     \small $\tfrac{1}{8}$}
\psfrag{54}{\hspace*{-1.3mm}    \small $\tfrac{1}{4}$}
\psfrag{55}{\hspace*{-1.4mm}    \small $\tfrac{1}{2}$}
\psfrag{56}{\hspace*{-1.1mm}     \small 1}

\psfrag{la1}{\hspace*{-1.5mm}  \small $f$}
\psfrag{la2}{\hspace*{-3.5mm}  \small $\act{\varphi}f$}

\[
\includegraphics[width= 5.5cm]{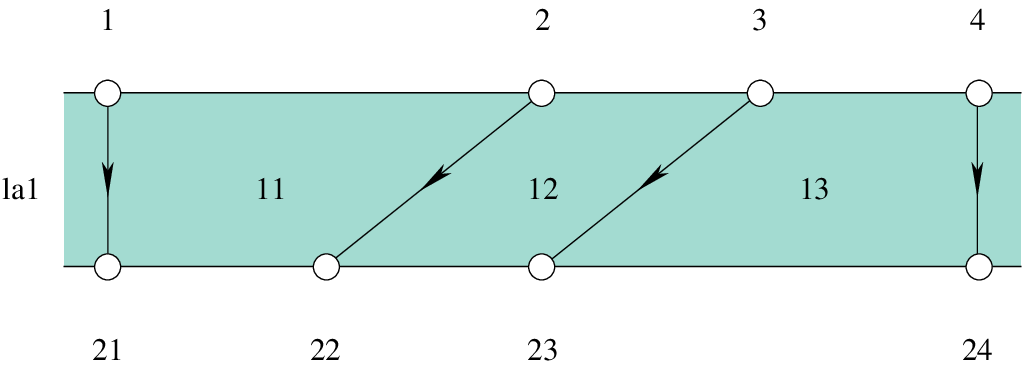}
\hspace{7mm}
\includegraphics[width= 5.5cm]{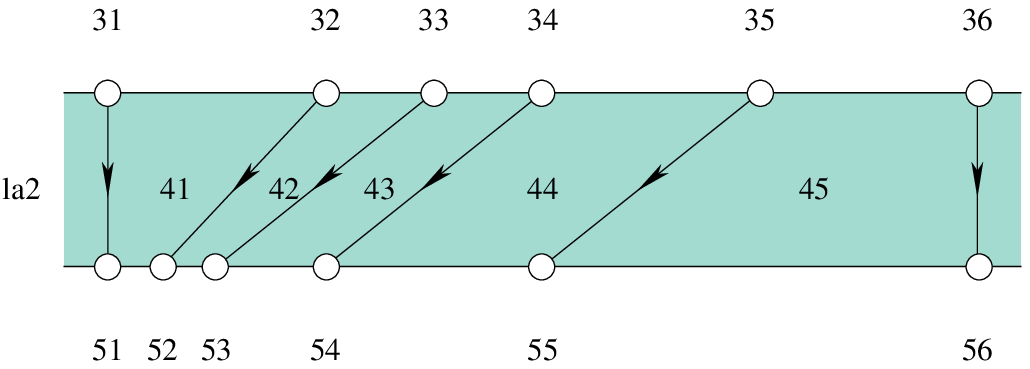}
\]
\end{minipage}
\]

%

We infer, first, 
that  $g = \act{\varphi}f \in G$ and so
$\im \mu = N \cdot \gp(\act{\varphi}f ) = N \cdot \gp(f^2)$
is a subgroup of index 2 in $G = N \cdot \gp(f)$.
Since $IP \cdot A = (2-1) \cdot A = A$, 
Corollary \ref{CorollaryA1} shows next
that the group $G = G([0,1];A,P)$ acts transitively on the set 
$\Omega = A \, \cap \; ]0,1[$;
as $\varphi (A) = A$ we see, in addition, that $\varphi$ maps the orbit $\Omega$ onto itself.

Set $G_1 = G([0,1]; \Z[1/2],\gp(2))$ and $G_2 = \im \mu$ 
and let $\beta \colon G_1 \iso G_2$ be the isomorphism induced by $\mu$.
Then $G_1$, $G_2$ and $\varphi$, $\beta$ satisfy the assumptions of Lemma 
\ref{lem:Commutative-square-2},
\emph{except for the fact that $\varphi$ has infinitely many breaks}.

The rectangle diagram of $f$ shows next that 
\[
(\nu_1)_{\Omega} (f) =  (1/\tfrac{1}{2}) \cdot (2/1)  = 4,
\]
while 
the rectangle diagram of $\act{\varphi} f$ shows
that
 \[
(\nu_2)_{\Omega} (\act{\varphi}f) 
=  
(\tfrac{1}{2}/\tfrac{1}{4})\cdot  (1/\tfrac{1}{2}) \cdot (2/1)  = 8.
\]
So $(\nu_1)_\Omega (f) \neq  (\nu_2)_\Omega (\act{\varphi}f)$.
Since $\Z[\varphi_*] \otimes \id$ is the identity of $\Z[(A \cap I)_\sim] \otimes P$
the square \eqref{eq:11.5Modified} is therefore not commutative.
\index{Homomorphism!13-nu@$\nu$!properties|)}%

\begin{remark}
\label{remark:Generalization-counterexample}
The above counter-example belongs to a larger class of examples.
Let $P$ be a cyclic group with generator $p < 1$ 
and $A$ a $\Z[P]$-module. 
Pick $b \in A_{>0}$ and consider the group $G_1 = G([0,b];A,P)$.
According to section \ref{sssec:18.5},
there exists then, for every integer $m \geq 2$, 
an increasing PL-homeomorphism $\varphi_m$, defined on the interval $[0,b]$,
that maps each $G_1$-orbit of $A \cap\, [0,b]$ onto itself
and induces by conjugation an (injective) endomorphism $\mu_m$ of $G_1$ 
with the following properties.
\begin{itemize}
\item $\mu_m$ maps the subgroup $B = B([0,b];A,P)$ onto itself;
\item $G_2= \im \mu_m$ has index $m$ in $G_1$;
\item $\mu_m(f)'(0_+)  = (f'(0_+)^m$ and $\mu_m(f)'(b_-)  = f'(b_-)$ 
for every $f \in G_1$;
\item $\Z[\varphi_*] \otimes \id$ is the identity of $\Z[(A \cap I)_\sim] \otimes P$.
\end{itemize}
If we set $I = I_1 = I_2 = [0,1]$, 
put $A = A_1 = A_2$ and $G_1 = G(I;A,P)$
all the hypotheses of Lemma \ref{lem:Commutative-square-2} are fulfilled,
except for the fact that the PL-homeomorphism $\varphi$ is infinitary.
The conclusion of the lemma does not hold;
indeed, as stated in point (ii) (at the top of page  \pageref{eq:11.3}),
the product $\prod\nolimits _{a\in A \cap I} f'(a_+)/f'(a_-)$ equals 1 for every $f \in G_1$.
Pick now $f \in G_1$ with $f'(0_+)  \neq 1$.
Then $\mu_m(f)'(0_+) \neq f'(0_+)$,
and as $\mu_m(f)'(b_-) = f'(b_-)$,
there must exist a $G_1$-orbit $\Omega_1$ in $A\; \cap \;]0, b[$
with image $\Omega_2$ so that
\[
(\nu_1)_{\Omega_1} (f) \neq (\nu_2)_{\Omega_2} (\act{\varphi}f).
\]
\end{remark}
%
\section{Investigation of the abelianization of the subgroup $B$}
\label{sec:12}
%
In this section, 
we determine the abelianizations of the groups
$G([0,\infty[\,;A,P)$ and $G_1  = \ker(\sigma_- \colon G([0,\infty[\,;A,P) \to P)$
and use then this knowledge to study the abelianization of $B([0,\infty[\; ; A, P)$.
\index{Homomorphism!13-nu@$\nu$!applications}%
\label{notation:G1}%
%
\subsection{Abelianization of $G([0, \infty[\, ; A,P)$}
\label{ssec:12.1}
%
The following result describes the abelianizations of $G = G([0,\infty[; A,P)$ and of $G_1$
with the help of the homomorphism $\nu$.
\begin{proposition}
\label{PropositionC2}
\index{Group G([0,infty[;A,P)@Group $G([0, \infty[\;;A,P)$!abelianization}%
\index{Homomorphism!13-nu@$\nu$!applications}%
\index{Submodule IPA@Submodule $IP \cdot A$!significance}%
Suppose $I$ is bounded on one side but not on both sides.  
Then the abelianization $\nu_{\ab}$ of $\nu \colon G(I;A,P)\to \Z[(A\cap I)_\sim]\otimes P$ is an isomorphism.
\end{proposition}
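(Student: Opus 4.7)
The plan is to exploit the explicit infinite presentation of $G = G(I;A,P)$, with generators $g(b,p)$ and relations \eqref{eq:8.2}, \eqref{eq:8.3} recalled in section \ref{ssec:8.1}, and construct the inverse of $\nu_{\ab}$ directly. Since $G(I;A,P)$ is isomorphic to $G([0,\infty[\,;A,P)$ when the endpoint of $I$ lies in $A$ and to $\ker(\sigma_-\colon G([0,\infty[\,;A,P)\to P)$ otherwise, it suffices to treat $I = [0,\infty[\,$; the other case is handled by the same pattern, working with the generators $g(b,p)$ for $b > 0$ only.

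The homomorphism $\nu$ is easily read off the generators: $g(b,p)$ has a single break at $b$ with left derivative $1$ and right derivative $p$, so $\nu(g(b,p)) = \Omega_b \otimes p$, where $\Omega_b$ is the $G$-orbit of $b$ in $A\cap I$. Since the $g(b,p)$ generate $G$, this already shows that $\nu_{\ab}$ is surjective.

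To produce a left inverse I abelianize the defining relations. Equation \eqref{eq:8.2} says that for each fixed $b$ the map $p \mapsto [g(b,p)]$ is a homomorphism $P \to G_{\ab}$, and equation \eqref{eq:8.3} says that $[g(b',p')] = [g(b + p(b'-b), p')]$ in $G_{\ab}$ whenever $0 \leq b < b'$ in $A$ and $p \in P$. Let $\equiv$ denote the equivalence relation on $A \cap I$ generated by these identifications. Then $[g(b,p)]$ depends only on the $\equiv$-class of $b$, so there is a well-defined homomorphism sending $[b]_\equiv \otimes p$ to $[g(b,p)]$; once $\equiv$ is shown to coincide with the $G$-orbit equivalence $\sim$, this becomes the sought two-sided inverse $\mu\colon \Z[(A\cap I)_\sim]\otimes P \to G_{\ab}$ of $\nu_{\ab}$.

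Establishing that $\equiv$ coincides with $\sim$ is the main difficulty. One direction is free: each basic $\equiv$-step shifts $b'$ by $(p-1)(b'-b) \in IP\cdot A$, so by Corollary \ref{CorollaryA1} all $\equiv$-equivalent elements lie in a common $G$-orbit. For the converse I extract two concrete consequences of the abelianized version of \eqref{eq:8.3}: taking $b = 0$ (which lies in $A \cap I$) gives $b' \equiv p b'$ for every $b' > 0$ in $A \cap I$ and every $p \in P$, and setting $b = c-a$ for $a \in A$ with $0 < a \leq c$ gives $c \equiv c + (p-1) a$. Given positive $b_1 \leq b_2$ in the same $G$-orbit, I write $b_2 - b_1 = \sum_{i=1}^n (p_i-1) a_i$ with $a_i \in A_{>0}$, any negative sign being absorbed through the identity $(p-1)(-a) = (p^{-1}-1)(pa)$. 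Following the pattern of the proof of Proposition \ref{PropositionB3}, I enlarge $b_1$ to $M = p_\alpha b_1$ with $p_\alpha \in P$ big enough that $M$ dominates every $a_i$ and every intermediate sum $M + \sum_{j \leq i}(p_j - 1) a_j$, then execute the $n$ elementary moves $c \mapsto c + (p_i-1) a_i$ at $M$, landing at $(p_\alpha - 1) b_1 + b_2$; and I finally collapse this to $b_2$ by one additional move with $a = p_\alpha b_1$ and $p = p_\alpha^{-1}$, a move rendered legal precisely by the hypothesis $b_1 \leq b_2$, which forces $p_\alpha b_1 \leq (p_\alpha - 1) b_1 + b_2$. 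Symmetry of $\equiv$ covers $b_1 > b_2$, and the singleton orbit $\{0\}$ stands on its own.
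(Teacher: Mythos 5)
Your proof is correct, and its combinatorial engine is the one the paper itself runs on: the Brin--Squier generators $g(b,p)$, the relations \eqref{eq:8.2}--\eqref{eq:8.3}, and the enlarge--move--collapse scheme from the proof of Proposition \ref{PropositionB3} used to connect $g(b_1,p)$ to $g(b_2,p)$ when $b_2-b_1\in IP\cdot A$. The packaging differs in two respects. For $I=[0,\infty[$ you construct the inverse of $\nu_{\ab}$ explicitly by identifying the relation $\equiv$ with the orbit relation, whereas the paper first cuts the generating set of $G_{\ab}$ down to the classes of the $g(a,p)$ with $a\in\{0\}\cup\TT$ (quoting the conjugacies produced in the proof of Proposition \ref{PropositionB3}) and then reads off bijectivity from the direct-sum decomposition of $\Z[(A\cap I)_\sim]\otimes P$; your version has the small advantage of using only that the relations hold, not that they are defining. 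More substantially, when the finite endpoint of $I$ is not in $A$ the paper abandons the presentation and argues homologically (splitting of $G_1\lhd G\epi P$, triviality of the action on $(G_1)_{\ab}$, the five-term sequence, and a colimit over translates), precisely because the identification of such a group with $\ker\sigma_-$ is induced by an \emph{infinitary} PL-homeomorphism, for which $\nu$ need not be natural (see section \ref{sssec:11.2b}); your uniform presentation-based treatment avoids that detour and is arguably cleaner, but the one-line dismissal of this case hides a genuine, if routine, adjustment: the move $b'\equiv p\cdot b'$ used $0\in A\cap I$ and is no longer available, so the enlargement must be performed with $c\equiv c+(p-1)a$ for $0<a<c-\inf I$, and the final collapse with $a'=p\cdot a$, still legal because $a<b_1-\inf I\leq b_2-\inf I$. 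Spell that out, and apply the argument directly to each group $G([b,\infty[\,;A,P)$ with $b\notin A$ rather than transporting the statement along the infinitary isomorphism, and the proof is complete.
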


\begin{proof}  
Suppose first that \emph{the endpoint of $I$  is in $A$}.  
There exists then an \emph{affine} homeomorphism $\varphi$ in $\Aff(A,\{\pm 1\})$ 
with $\varphi(I) = [0,\infty[$ and such that 
\[
\act{\varphi}G(I;A,P) = G([0,\infty[\,;A,P).  
\]
The naturality of $\nu$, 
as described in Lemma  \ref{lem:Commutative-square-2}, 
implies thus
that it suffices to establish the claim for $I = [0,\infty[$.  

Set $G = G([0,\infty[;A,P)$ and let $\TT \subset \; ]0,\infty]$ be a set of representatives 
of the $G$-orbits $(IP \cdot A + b) \cap I$ in $A\, \cap \, \Int(I)$.
The homomorphism $\nu \colon G \to  \Z[(A \cap I)_\sim] \otimes P$
induces a homomorphism 
\[
\nu_{\ab} \colon G_{\ab} \longrightarrow \Z[(A \cap I)_\sim] \otimes P.
\]
We claim it is bijective.

To establish this contention,
we construct first a suitable generating set of $G_{\ab}$.
According to Proposition \ref{PropositionB3}, 
the group $G$ is generated by the subset
\begin{equation*}
\{g(0,p) \mid p\in P\}\cup \{g(b,p)\mid b\in \Acal \cup \TT \text{ and } p \in P\},
\end{equation*}
where $\Acal$ is a set of positive generators of $A$ 
and $\TT$ is a set of positive representatives of the cosets of $IP\cdot A$ in $A$.  
Our aim is now to verify
that the canonical image of the set 
\begin{equation}
\label{eq:Generating-set-Gab}
\{g(0,p) \mid p\in P\}\cup \{g(b,p)\mid b \in \TT \text{ and }p\in P\}
\end{equation}
generates $G_{\ab}$. 
Indeed, 
given $a_*$ in $\Acal$ there exists $b_* \in \TT$ so that  $b_* \in IP \cdot A + a_*$.
The proof of Proposition \ref{PropositionB3} furnishes next a sequence of elements 
\begin{align*}
&b_*,\\
&b_0= p_\alpha \cdot b_*, \quad
b_1 = 
b_0 + \varepsilon_1 (1-p_1) a_1, \ldots,
b_\ell = b_{\ell-1} + \varepsilon_\ell (1-p_\ell)a_\ell, \\
&p_\omega \cdot a_* = (1-p_\alpha) b_* + b_\ell,\\ 
& a_*
\end{align*}
whose members lie in  $A \, \cap \; ]0, \infty[$.
A closer look at the construction of this sequence discloses 
that, for each pair $(b,\bar b)$ of consecutive members of the sequence
and each $p \in P$,
the generator  $g(\bar b,p)$ is conjugate to $g(b,p)$.
It follows that $g(a_*, p)$ is conjugate to $g(b_*,p)$,
and so the canonical image of the set \eqref{eq:Generating-set-Gab} generates $G_{\ab}$.

The PL-homeomorphisms $g(0, p)$ and $g(t, p)$ have a single singularity at $0$, 
respectively at $t$,
and so the definition of  $\nu$, 
given by formulae  \eqref{eq:11.1} and \eqref{eq:11.2},
show that $\nu(g(0,p)) = \{0\} \otimes p$ and $\nu(g(t,p) = G \cdot t \otimes p$.
The co-domain of the homomorphism $\nu_{\ab}$ is the tensor product 
$\Z[(A \cap I)_\sim] \otimes P$
and its first factor $\Z[(A \cap I)_\sim] $ is free abelian with basis 
$\{0\} \cup \{G \cdot t \mid t \in \TT\}$.
The definitions of $\nu$ and of the set \eqref{eq:Generating-set-Gab} imply thus, first of all,
that $\nu_{\ab}$ is \emph{surjective}.
Consider now a non-trivial element of $g \cdot [G,G] \in G_{\ab}$.
In view of relations \eqref{eq:8.2}
there exists then finitely many elements 
\[
g(a_1, p_1), \ldots, g(a_k,p_k) \text{ with } 
a_1 < a_2 < \cdots < a_k,
\]
with each $a_j \in \{0\} \cup \TT$,
 each $p_j \in P\smallsetminus \{1\}$,
all in such a way that 
\[
g \cdot [G,G] 
= 
g(a_1, p_1) \cdot g(a_2, p_2) \cdots g(a_k, p_k) \cdot [G,G].
\]
The fact that the tensor product $\Z[(A \cap I)_\sim] \otimes P$ is a direct sum of the groups 
$\{0 \} \otimes P$ and $G \cdot t \otimes P$,
finally, 
allows us to conclude 
that $\nu_{ab} (g \cdot [G,G]) \neq 0$.
Thus $\nu_{ab}$ is \emph{injective}.
\smallskip

Suppose now that  \emph{the endpoint of $I$ is not in $A$}.  
If $I = I_1$ is bounded from above, 
the affine homeomorphism $\varphi = -\id$ sends $I_1$ onto the interval $I_2 = -I_1$,
which is bounded from below,
and it maps the orbit space 
\[
G(I_1;A,P) \backslash (A\cap I_1)
\]
onto the space $G(I_2; A, P) \backslash (A \cap I_2)$.
Lemma \ref{lem:Commutative-square-2} then shows
that the homomorphism 
\[
(\nu_1)_{\ab} \colon G(I_1;A,P)_{\ab} \to \Z[(A \cap I_1)_{\sim_1}] \otimes P
\]
is bijective
if  $(\nu_2)_{\ab} \colon G(I_2;A,P)_{\ab} \to \Z[(A \cap I_2)_{\sim_2}] \otimes P$ has this property.

We may, and shall, therefore assume 
that $I$ is bounded from below.
We discuss first the special case 
where $J$ is the  \emph{open} interval $ ]0, \infty[$ with endpoint $0$.
Then $G_1 = G(J;A,P)$ is nothing but the kernel of
\begin{equation*}
\sigma_-\colon  G([0,\infty[\, ;A,P) \to P.
\end{equation*}
We claim that $\nu \colon G([0, \infty]; A,P) \to  \Z[(A\ \cap [0,\infty[)_{\sim}] \otimes P$
induces an isomorphism
\begin{equation}
\label{eq:Isomorphism-for-G1}
(\nu_1)_{\ab} \colon (G_1)_{\ab} = \ker (\sigma_-)_{\ab}
\longrightarrow  \Z[(A\ \cap\ ]0,\infty[)_{\sim 1}] \otimes P.
\end{equation}

This assertion can be deduced from the case established before like this.
First,
 $G_1$ is the union of its subgroups $G([a, \infty[\;;A,P) $ 
with $ a \in A_{>0}$;
in view of Corollary \ref{CorollaryA1}, 
the $G_1$-orbits and the $G$-orbits coincide therefore on $A\ \cap\ ]0,\infty[$.
The resulting embedding 
\[
(A\ \cap\ ]0,\infty[)_{\sim_1} \mono (A \cap [0,\infty[)_\sim
\] 
gives then rise to an extension
\begin{equation}
\label{eq:12.2New}
\Z[(A\ \cap\ ]0,\infty)_{\sim 1}] \otimes P\mono \Z[(A \cap [0,\infty[)_\sim] \otimes P\epi P
\end{equation}
of abelian groups.
Next, the extension $G_1 \lhd G \epi P$ 
has the splitting $\sigma \colon P \mono G$  
that sends $p$ to $g(0,p)$. 
In addition,
$G $ acts trivially on $(G_1)_{\ab}$.  
Indeed, given $p \in P$ and an element $g_1 \in G_1$ 
there exists a positive number $b \in A$ 
such that the support of $g_1$ is contained in $[b, \infty[$.
Theorem \ref{TheoremA} 
\index{Theorem \ref{TheoremA}!consequences}%
and Corollary \ref{CorollaryA1}  allow us thus
to find an element $h \in G_1$
so that $h(t) = pt$ for each $t \geq b$.
It follows that $\act{g(0,p)}g_1(b,p') = \act{h}g_1(b,p')$. 
The fact just proved implies that the equation
\begin{equation*}
[G,G_1] = [G_1\cdot \sigma(P)] = [G_1,G_1]
\end{equation*}
is valid.
The fact that the extension $G_1 \lhd G \epi P$ splits
and 5-term exact sequence in homology
(see, \eg{}\cite[Corollary 8.2]{HiSt97})
\index{Homology Theory of Groups!5-term exact sequence}%
allows us therefore to infer
that this extension $G_1 \lhd G \epi P$ abelianizes to the exact sequence 
\begin{equation}\label{eq:12.3}
(G_1)_{\ab} \mono G_{\ab}\epi P.
\end{equation}
This extension, extension \eqref{eq:12.2New}, 
Lemma \ref{lem:Commutative-square-1} 
and the result for the  case $I = [0, \infty[$,
imply, finally,
that the map $(\nu_1)_{\ab}$, 
described by formula \eqref{eq:Isomorphism-for-G1},
is an isomorphism. 
Moreover,
since every translation $\tau_a \colon t \mapsto t+a$ is affine,
the assumptions on $\varphi$, 
stated in Lemma \ref{lem:Commutative-square-2},
are satisfied by every $ \tau_a$ with $a \in A$,
and so the preceding result allows us to infer
that the homomorphism 
$\nu_a \colon G(\,]a, \infty[\,; A,P) \to\Z[(A \,\cap \;]a, \infty[ )_{\sim}] \otimes P$
induces an isomorphism
\begin{equation}
\label{eq:Isomorphism-for-G1-general}
(\nu_{1, a})_{\ab}\colon  G(\,]a, \infty[\,;A,P)_{\ab}
\iso  \Z[(A\, \cap\, ]0,\infty[)_{\sim}] \otimes P.
\end{equation}
for every $a \in A$. 

These isomorphisms permit us 
to establish the claim of Proposition \ref{PropositionC2} 
by an approximation argument.
Suppose $b \in \R \smallsetminus A$ and set $G = G([b, \infty]\;;A,P)$. 
Then
\begin{equation*}
G  = \bigcup\,  \left\{G(\,]a, \infty[\,; A,P) \mid  b < a \in A \right\}.
\end{equation*}
Consider now a couple $(a, a') \in A^2$ with $a < a'$.
The inclusion $]a', \infty[ \,\subset\, ]a, \infty[$ induces then a bijection 
$(A\, \cap \;]a', \infty[\,)_\sim \iso (A \;\cap \;]a, \infty[\,)_\sim$.
On the other hand,
the functor $H \mapsto H_{\ab}$ commutes with colimits.
Since $G$ is the colimit of the subgroups $G(]a,\infty[\,;A,P)$,  
formula 
\eqref{eq:Isomorphism-for-G1-general}
therefore implies
that the homomorphism
\begin{equation}
\label{eq:Isomorphism-second-case}
\nu_{\ab} \colon G([b, \infty[\,;A,P)_{\ab} \longrightarrow 
\Z[(A \cap [b,\infty[)_{\sim}] \otimes P
\end{equation}
is bijective for $b \in \R \smallsetminus A$.
The proof of Proposition \ref{PropositionC2} is now complete.
\end{proof}

Since the group $\Z[(A\, \cap\, ]a, \infty[\,)_\sim]$ is free abelian of rank $\card A/(IP \cdot  A) $,
Proposition \ref{PropositionC2} has the following
\begin{corollary}
\label{crl:PropositionC2}
\index{Quotient group A/IPA@Quotient group $A/(IP \cdot A)$!applications}%
\index{Group G([0,infty[;A,P)@Group $G([0, \infty[\;;A,P)$!finite generation of Gab@finite generation of $G_{\ab}$}%
If $I$ is a half line,
the abelianization of $G = G(I;A,P)$ is \emph{finitely generated} if, and only if,
$P$ is finitely generated and and $IP \cdot A$ has finite index in $A$.
If $G_{\ab}$ is finitely generated, it is free abelian of rank 
\[
 \rk P \cdot \left(1 + \card A/(IP \cdot A)\right) 
\]
if the end point of $I$ lies in $A$, 
and otherwise of rank $ \rk P \cdot \left( \card A/(IP \cdot A) \right) $.
\end{corollary}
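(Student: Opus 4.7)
The plan is to deduce the corollary directly from Proposition \ref{PropositionC2}, which provides an isomorphism $\nu_{\ab} \colon G_{\ab} \iso \Z[(A \cap I)_\sim] \otimes_\Z P$. Since a tensor product of the form $\Z^n \otimes_\Z P$ is isomorphic to $P^n$ as an abelian group, the structure of $G_{\ab}$ is controlled entirely by two numerical invariants: the cardinality of the $G$-orbit set $(A \cap I)_\sim$ and the rank of $P$. The bulk of the work therefore consists in counting the $G$-orbits in $A \cap I$.

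First I would enumerate the orbits, invoking Corollary \ref{CorollaryA1}. Since $I$ is a half line, $I \neq \R$, so each $G$-orbit contained in $A \cap \Int(I)$ is of the form $(IP\cdot A + a) \cap \Int(I)$ for some $a \in A$. Because $IP \cdot A$ is dense in $\R$, each coset of $IP \cdot A$ in $A$ meets $\Int(I)$ in a non-empty set, so the orbits in $A \cap \Int(I)$ are in bijection with the cosets of $IP \cdot A$ in $A$; their number is $\card A/(IP\cdot A)$. If the finite endpoint of $I$ lies in $A$, it is fixed by every element of $G$ and forms an additional singleton orbit; otherwise $A \cap I = A \cap \Int(I)$ and no extra orbit appears. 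Hence
\begin{equation*}
\card (A \cap I)_\sim =
\begin{cases}
1 + \card A/(IP \cdot A) & \text{if the endpoint of $I$ lies in $A$,} \\
\card A/(IP \cdot A) & \text{otherwise.}
\end{cases}
\end{equation*}

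Next I would translate this into information about $G_{\ab}$. Let $n$ denote the orbit count above. The free abelian group $\Z[(A \cap I)_\sim]$ has rank $n$, so Proposition \ref{PropositionC2} gives
\begin{equation*}
G_{\ab} \;\iso\; \Z^n \otimes_{\Z} P \;\iso\; P^n.
\end{equation*}
Now $P$, being a subgroup of $\R^\times_{>0}$, is torsion-free abelian, so $P^n$ is finitely generated if and only if $n$ is finite and $P$ itself is finitely generated; equivalently, $A/(IP\cdot A)$ is finite and $P$ is finitely generated. When this happens, $P \iso \Z^{\rk P}$ and consequently $G_{\ab} \iso \Z^{n \cdot \rk P}$, which yields the stated ranks.

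There is no real obstacle here: once Proposition \ref{PropositionC2} is in hand, the proof is essentially bookkeeping. The only point requiring a little care is the correct treatment of the endpoint: one must remember that $\nu$ is defined via a sum over all orbits in $A \cap I$ (not merely in $A \cap \Int(I)$), so a finite endpoint lying in $A$ genuinely contributes an extra $\Z$-summand to the domain of the tensor product, accounting for the discrepancy between the two rank formulas.
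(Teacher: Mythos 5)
Your proof is correct and follows essentially the same route as the paper: the paper derives the corollary in one sentence from Proposition \ref{PropositionC2} by observing that $\Z[(A\cap I)_\sim]$ is free abelian of rank equal to the number of $G$-orbits, which is $\card A/(IP\cdot A)$ plus one extra singleton orbit when the endpoint lies in $A$. Your more explicit bookkeeping of the orbit count via Corollary \ref{CorollaryA1} and the identification $\Z^n\otimes_{\Z}P\iso P^n$ is exactly what the paper leaves implicit.
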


\begin{remarks}
\label{remarks:Explicit-description-nu-ab}
(i) The homomorphism $\nu \colon G = G(I;A,P) \to \Z[(A \cap I)_\sim] \otimes P$ 
has been introduced in section \ref{ssec:11.1} for arbitrary intervals $I$ of $\R$.
If $I$ is a half line of one of the forms $[a, \infty[$ or $]a, \infty[$ with $a \in A$,
the group $G$ is generated by elements with a single break, 
namely the PL-homeomorphisms $g(b,p)$ defined in formula \eqref{eq:8.1}.
This formula makes it plain 
that $b$ is the unique singularity of $g(b,p)$ 
and that $g(b,p)(b_+)/g(b,p)(b_-) = p$.
The definition of $\nu$ therefore implies that
\begin{equation}
\label{eq:valueof-nu-at-g(b,p)}
\nu \left(g(b,p) \right) = G \cdot b \otimes p
\end{equation}
for every couple $(b,p) \in (A \cap I) \times P$.

(ii) The above corollary complements Theorem \ref{TheoremB4} 
which states, \emph{inter alia}, 
that the group $G([a, \infty[\,;A,P)$ with $a \in A$ is finitely generated 
precisely if $P$ is finitely generated, $A$ is a finitely generated $\Z[P]$-module 
and $A/(IP \cdot A)$ is finite.
\end{remarks}
%
\subsection{Abelianization of $B(I;A,P)$}
\label{ssec:12.2New}
\index{Subgroup B(I;A,P)@Subgroup $B(I;A,P)$!abelianization}%
In view of Proposition \ref{PropositionC1} we assume that $I = [0,\infty[$.  
Set $B = B([0,\infty[\,;A,P)$.
\index{Proposition \ref{PropositionC1}!consequences}%
Then $B$ is a normal subgroup of 
\begin{equation*}
G_1 = \ker \left( \sigma_- \colon G([0,\infty[\; ;A,P)\to P\right).
\end{equation*}
The abelianization of $G_1$ can be computed by means of Proposition \ref{PropositionC2}.  
Corollaries \ref{CorollaryA2} and \ref{CorollaryA3} imply next
that $G_1/B$ is isomorphic to $\Aff(IP\cdot A,P) \cong (IP \cdot A) \rtimes P$.  
We shall use these facts and results from the Homology Theory of Groups 
to obtain estimates for $B_{\ab} = B/[B,B]$. 
 
We begin with 
\begin{lemma}
\label{LemmaC3New}
\index{Group G(I;A,P)@Group $G(I;A,P)$!action on Bab@action on $B_{\ab}$}%
If $I \neq \R$ or $IP\cdot A =A$, 
then $G(I;A,P)$ acts on $B(I;A;P)_{\ab}$ by the identity.
\end{lemma}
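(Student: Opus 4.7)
The plan is to show that for every $g \in G = G(I;A,P)$ and every $b \in B = B(I;A,P)$, the commutator $[g,b] = \act{g}b \cdot b^{-1}$ lies in the derived group $[B,B]$. The driving idea is that $b$ has bounded support, so the values of $g$ outside a compact subinterval are irrelevant to the conjugate $\act{g}b$. If we can replace $g$ by some $h \in B$ which agrees with $g$ on $\supp(b)$, then automatically $\act{g}b = \act{h}b$, whence $[g,b] = [h,b] \in [B,B]$, and triviality of the $G$-action on $B_{\ab}$ follows.

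To carry this out I would fix $b \in B$, pick a compact subinterval $[c_1,c_2]$ of $\Int(I)$ with endpoints in $A$ such that $\supp(b) \subset [c_1,c_2]$, set $d_i = g(c_i)$, and choose $c_0, c_3 \in A \cap \Int(I)$ with $c_0 < \min(c_1,d_1)$ and $\max(c_2,d_2) < c_3$. The construction of $h \in G$ then proceeds by splicing together three pieces: the restriction of $g$ to $[c_1,c_2]$, together with PL-homeomorphisms $[c_0,c_1] \iso [c_0,d_1]$ and $[c_2,c_3] \iso [d_2,c_3]$ supplied by Theorem~\ref{TheoremA}, extended by the identity outside $[c_0,c_3]$. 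The resulting $h$ has slopes in $P$, breaks in $A$, and support in the compact interval $[c_0,c_3]$, so it lies in $B$, and by construction $h \restriction{[c_1,c_2]} = g \restriction{[c_1,c_2]}$.

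The hypothesis $I \neq \R$ or $IP \cdot A = A$ enters at exactly one point: Theorem~\ref{TheoremA} requires the length differences $d_1 - c_1$ and $c_2 - d_2$ to lie in $IP \cdot A$. If $IP \cdot A = A$ this is automatic; if $I \neq \R$, Corollary~\ref{CorollaryA1} asserts that each $G$-orbit in $A \cap \Int(I)$ has the form $(IP \cdot A + a) \cap \Int(I)$, so $c_i$ and $d_i = g(c_i)$ lie in the same coset of $IP \cdot A$. With $h$ in hand, the identity $\act{g}b = \act{h}b$ is verified directly: on $\R \smallsetminus [d_1,d_2]$ both sides are the identity, while on $[d_1,d_2]$ one has $h^{-1}(t) = g^{-1}(t) \in [c_1,c_2]$ and $b(g^{-1}(t)) \in [c_1,c_2]$ (since $b$ maps $[c_1,c_2]$ to itself), so $g$ and $h$ agree on $b(g^{-1}(t))$, giving the equality.

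The main obstacle is really just the geometric construction of $h$: one must produce a bounded PL-homeomorphism in $G$ agreeing with $g$ on a prescribed subinterval with endpoints in $A$, and this rests entirely on Theorem~\ref{TheoremA} together with the orbit description of Corollary~\ref{CorollaryA1}. Once this is in place, the support bookkeeping that yields $\act{g}b = \act{h}b$ is routine, and $[g,b] = [h,b] \in [B,B]$ as required.
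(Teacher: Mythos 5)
Your argument is correct and is essentially the paper's own proof: both construct a bounded element $h\in B$ agreeing with $g$ on a compact neighbourhood $[c_1,c_2]$ of $\supp b$ by splicing $g\restriction{[c_1,c_2]}$ with two PL-homeomorphisms supplied by Theorem~\ref{TheoremA}, using Corollary~\ref{CorollaryA1} (or $IP\cdot A=A$) to verify the congruence hypothesis, and then conclude $[g,b]=[h,b]\in[B,B]$. The only differences are cosmetic (your explicit requirement $c_0<\min(c_1,d_1)$ makes a point the paper leaves implicit).
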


\begin{proof}
Let $f \in G(I;A,P)$ and $g \in B(I;A,P)$ be given.
Since $A$ is non-zero,
we can find elements $a_1 < c_1$ and $c_2 < a_2$ in $A \cap \Int(I)$ 
with $c_1 \leq \supp g \leq c_2$.
Our aim is now to construct an element $f_1 \in G([a_1, a_2]; A,P)$ 
such that 
\begin{equation}
\label{eq:Local-action-by-inner-automorphisms}
\act{f} g = \act{f_1} g.
\end{equation} 

Two cases arise.
If $f$ fixes a point of $A$ 
then $f(a_1) - a_1$ lies in  $IP \cdot A$ (by Theorem \ref{TheoremA}) 
\index{Theorem \ref{TheoremA}!consequences}%
and there exists a PL-homeomorphism $h_1 \in G(\R;A,P)$ with
$h_1(a_1) = a_1$ and $h_1(c_1) = f(c_1)$ (again by Theorem \ref{TheoremA}).
In the same way, one finds a PL-homeomorphism $h_2 \in G(\R;A,P)$ 
with $h_2(c_2) = f(c_2)$ and $h_2(a_2) = a_2$.
Consider now the function $f_1 \colon \R \to \R$ given by
\[
f_1(t) = 
\begin{cases} 
t &\text{ if } t  \in \R \smallsetminus [a_1,a_2],\\
h_1(t) &\text{ if } a_1 \leq t < c_1,\\
f(t) &\text{ if } c_1 \leq t \leq c_2,\\
h_2(t) &\text{ if } c_2 < t \leq  a_2.
\end{cases}
\]
Then $f_1$ is an element of $G([a_1,a_2]; A, P) \subset B(I;A,P)$ 
with property \eqref{eq:Local-action-by-inner-automorphisms}.

If, on the other hand, $f$ has no fixed point in $A$ then $I = \R$, 
whence $A = IP \cdot A$ by hypothesis 
and so we can construct $f_1$ as before.
Lemma \ref{LemmaC3New} now follows from the fact  that $f_1 \in B(I;A,P)$.
\end{proof}

\begin{remark}
\label{remark:Action-on-Bab-for-I=R}
\index{Group G(I;A,P)@Group $G(I;A,P)$!action on Bab@action on $B_{\ab}$}%
If $I = \R$ but $IP\cdot A\neq A$, 
the action of $G(\R;A,P)$ on $B(\R;A,P)_{\ab}$ is not trivial; 
see Proposition \ref{prp:Action-TildeG}.
\end{remark}
\subsubsection{Expressing $B_{\ab}$ as an extension}
\label{sssec:12.2.1}
%
For the next stage of our analysis of $B_{\ab}$ 
we consider the extension
\begin{equation}
\label{eq:12.4}
\xymatrix{1 \ar@{->}[r] &B \ar@{->}[r]^-\mu &G_1 \ar@{->}[r]^-\rho &
(IP\cdot A) \rtimes P\ar@{->}[r] &1.}
\end{equation}
Note that the image of $\rho \colon G_1 \to A \rtimes P$ is $(IP \cdot A) \rtimes P$, 
and not $A \rtimes P$ (see claim (iii) of Corollary \ref{CorollaryA2}). 
As we have seen in section \ref{ssec:8.1},
the group $G_1$ is generated by the PL-homeomorphisms $g(b,p)$ given by
\begin{equation*}
g(b,p)(t) = \begin{cases} t &\text{if $t \leq b$}\\
p(t-b)+b &\text{if $t\geq b$};\end{cases}
\end{equation*}
here $b$ is in $A_{>0}$ and $p$ is in $P$.
For future reference, we note that
\begin{equation}
\label{eq:Vaklur-rho-on-g(b,p)}
\rho(g(b,p))=((1-p)b,p).  
\end{equation}

Extension \eqref{eq:12.4} gives rise to an exact sequence in homology, 
namely
\begin{equation*}
\index{Homology Theory of Groups!5-term exact sequence}%
H_2(G_1)  
\xrightarrow{H_2 \rho}
H_2(IP\cdot A \rtimes P)
\xrightarrow  {d_2}  
B/[G_1,B]  
\xrightarrow {\iota_*} 
(G_1)_{\ab} 
\xrightarrow {\rho_{\ab}}
(IP\cdot A/IP^2\cdot A)\times P \to 1
\end{equation*}
(see, \eg \cite[p.\,47, Ex.6(a)]{Bro94}, or \cite[p. 203, Corollary 8.2]{HiSt97}).  
The abelian group $B_{\ab}$ is centralized by $G_1$ (by Lemma \ref{LemmaC3New}),
and so $[G_1,B] = [B,B]$. 
  
Proposition \ref{PropositionC2} allows us to express the kernel of $\rho_{\ab}$,
and hence the image of $\iota_*$. 
Indeed, 
this proposition and its proof show
that $\ker \rho_{\ab}$ is isomorphic to the kernel $K(A,P)$ of the epimorphism
\index{Subgroup K(A,P)@Subgroup $K(A,P)$!definition|textbf}%
\index{Homomorphism!17-rhobar@$\bar{\rho}$}%
\begin{equation}
\label{eq:12.5New}
\index{Homomorphism!17-rhobar@$\bar{\rho}$}%
\bar \rho \colon \left\{ 
\begin{aligned}
\Z[(A \,\cap \;]0,\infty[)_\sim] \otimes P 
&\longrightarrow 
\left( (IP \cdot A)/(IP^2\cdot A) \right)\times P\\
G_1 \cdot a \otimes p &\longmapsto ((1-p)a+IP^2\cdot A,p).
\end{aligned} \right.
\end{equation}
With the help of $K(A,P)$ and the above 5-term sequence the group $B_{\ab}$ 
can now be written as an extension of the form
\begin{equation}
\label{eq:Bab-as-extension}
\xymatrix{
1 \to \coker H_2(\rho) \ar@{->}[r]^-{d_2} 
&
B_{\ab} \ar@{->}[r]^-{\nu_*} 
&
K(A,P) \to 1,}
\end{equation}
the epimorphism $\nu_*$ being given by 
\begin{equation}
\label{eq;Description-nu-star}
f \cdot [B,B] \longmapsto
\sum\nolimits_{a \in A\,  \cap \,]0, \infty[\,} G_1 \cdot a \otimes f'(a_+)/f'(a_-).
\end{equation}
\begin{example}
\label{example:Elements-in-B}
Given positive elements $a$ and $\Delta$ of $A$ and $p \in P$, 
consider the PL-homeomorphism $b(a,\Delta;p)$ defined by the rectangle diagram 
\begin{equation*}
\label{notation:b(a,Del;p)}
\psfrag{1}{\hspace*{-1.7mm}  \small  $a$}
\psfrag{2}{\hspace*{-3.8mm}  \small    $a + \Delta$}
\psfrag{3}{\hspace*{-9mm}  \small   $a + (p+1)\cdot \Delta$}
\psfrag{11}{\hspace*{-0.6mm}  \small   $a$}
\psfrag{12}{\hspace*{-4mm}\small  $a + p \cdot \Delta$}
\psfrag{13}{\hspace*{-8mm}  \small   $a + (p+1)\cdot \Delta$}
\psfrag{21}{\hspace*{-1.0mm} \small $p$}
\psfrag{22}{  \hspace*{-2.0mm}\small $p^{-1}$}
\psfrag{la1}{\hspace{-8mm} \small $b(a,\Delta;p)$}
\includegraphics[width= 10.0cm]{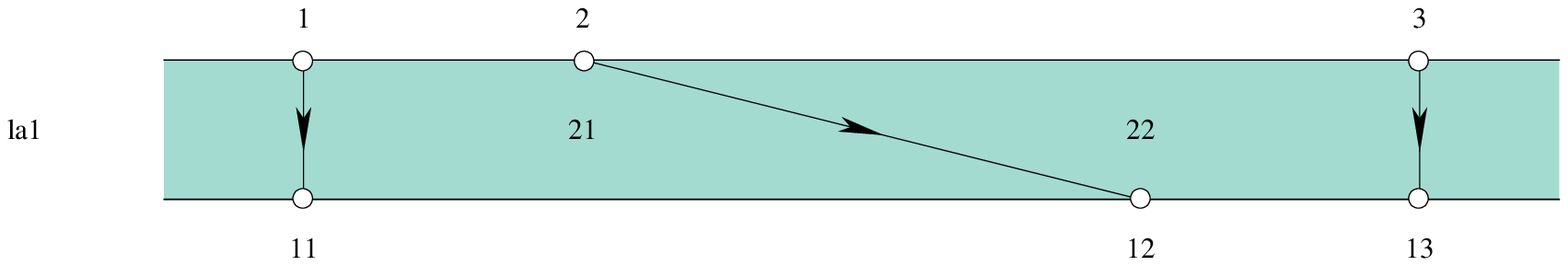}
\end{equation*}
Clearly $b(a,\Delta;p)$ is an element of the group $B([0, \infty[\,;A,P)$.
Its image under $\nu$ is
\begin{equation*}
 G_1 a \otimes p + G_1 (a + \Delta) \otimes p^{-2} 
+ G_1 (a + (p+1) \Delta) \otimes  p ;
\end{equation*}
\cf{}formula \eqref{eq:11.2}.
The description can be simplified,
as the $G_1$-orbit of $a + (p+1) \Delta$ is by Corollary \ref{CorollaryA1} equal to
 \[
 \left(a + (p+1) \Delta + IP \cdot A\right)\, \cap \;]0,\infty[ \;= 
 \left(a + 2 \Delta + IP \cdot A \right)\, \cap \; ]0, \infty[
 \]
 So $G_1 \cdot (a + (p+1) \Delta) = G_1 \cdot (a + 2 \Delta)$
 and thus
\begin{equation}
\label{eq:nu-of-bounded-element}
\nu(b(a,\Delta;P)) = G_1 \cdot a \otimes p + G_1 \cdot (a + \Delta) \otimes p^{-2} 
+ G_1 \cdot (a + 2 \Delta) \otimes p.
\end{equation}
\end{example}

\subsubsection{Analysis of $\coker H_2(\rho)$}
\label{sssec:Analysis-coker-rho}
The group $K(A,P)$ is fairly manageable --- see, \eg{} Proposition \ref{prp:LemmaC5} below --- 
but $\coker H_2(\rho) $ is hard to compute.  
An estimate, which is sometimes helpful, can be obtained as follows.  
Set
\begin{align*}
G_2 &= \ker\left((\sigma_-,\sigma_+) \colon G[(0,\infty[;A,P) \to P\times P\right)\\
&= \ker(\sigma_+ \colon G_1 \to P)
\end{align*}
and fix a positive element $b_0 \in A$. 
The relations \eqref{eq:8.2} imply
that the group  $P_b = \langle g(b_0,p)\mid p\in P\rangle$ is a complement of $G_2$ in $G_1$.  
Thus both $G_1 = G_2 \rtimes P_b$ and $IP\cdot A \rtimes P$ are split extensions, 
and $\rho$ maps $G_2$ onto $IP\cdot A$;  
note, however,  that $\rho$                                                                                                                                                                                                                                                                                                                                                                                                                                                                                                                                            does not map $P_b$ into $P$. 
The spectral sequences associated to the extensions $B\mono G_1 \to  P$ 
and $IP\cdot A \mono (IP\cdot A) \rtimes P\epi P$ 
lead then to the following two commutative ladders with exact rows:
\index{Homology Theory of Groups!spectral sequences}%
\begin{equation}\label{eq:12.6}
\xymatrix{
H_2(P,(G_2)_{\ab}) \ar@{->}[r]^-{d_2} \ar@{->}[d]^-{\rho_{*1}} 
&H_0(P,H_2(G_2)) \ar@{->}[r] \ar@{->}[d]^-{\rho_{*2}} 
&L \ar@{->}[r] \ar@{->}[d]^-{\rho_{*}} 
&H_1(P,(G_2)_{\ab}) \ar@{->}[r] \ar@{->}[d]^-{\rho_{*3}}&0\\
H_2(P, IP\cdot A) \ar@{->}[r]^-{d_2} 
&H_0(P,H_2(IP\cdot A)) \ar@{->}[r] 
&\bar L \ar@{->}[r] &H_1(P,IP\cdot A) \ar@{->}[r] 
&0
}
\end{equation}
and
\begin{equation*}
\xymatrix{0 \ar@{->}[r] 
&L \ar@{->}[r] \ar@{->}[d]^-{\rho_*} 
&H_2(G_1) \ar@{->}[r] \ar@{->}[d]^-{H_2 (\rho)} 
&H_2(P) \ar@{->}[r] 
\ar@{->}[d]^-{\id} &0\\
0 \ar@{->}[r] 
&\bar L \ar@{->}[r] 
&H_2(IP\cdot A\rtimes P) \ar@{->}[r] 
&H_2(P) \ar@{->}[r] &0.}
\end{equation*}
(For more details, see page 362, in particular formula (4.4), in \cite {HiSt74a}.)

The second ladder implies 
that $\coker H_2(\rho)$ is isomorphic to  $\coker \rho_*$ 
and the first one provides one with an estimate for $\coker \rho_*$. 
The resulting estimate for $B_{\ab}$ is summarized by
\begin{proposition}
\label{PropositionC4}
\index{Subgroup B(I;A,P)@Subgroup $B(I;A,P)$!abelianization}%
\index{Subgroup K(A,P)@Subgroup $K(A,P)$!significance}%
If $B = B([0, \infty[\;; A, P)$ 
and if $K(A, P)$ is the kernel of  the homomorphism $\bar{\rho}$
given by formula \eqref{eq:12.5New},
 there exist exact sequences
\begin{gather}
0 \to \coker( L \xrightarrow{\rho_*}  \bar{L}) \to B_{\ab} \xrightarrow {\nu_*} K(A,P) \to 0, 
\label{eq:ses-for-Bab}\\
0 \to K(A,P) 
\longrightarrow 
\Z[A/ (IP \cdot A)] \otimes_{\Z} P \longrightarrow (IP \cdot A/ IP^2 \cdot A) \times P \to 1, 
\label{eq:Description-K(A,P)}\\
\intertext{and}
H_2(P, IP\cdot A)  \xrightarrow{d_2} H_0(P, H_2(IP \cdot A)) \longrightarrow \bar{L} \longrightarrow H_1(P, IP \cdot A) \to 0.
\label{eq:Description-Lbar}
\end{gather}
\end{proposition}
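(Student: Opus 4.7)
The plan is to decompose the proposition into its three exact sequences and to assemble each one from material already laid out in section \ref{sssec:12.2.1}. All the heavy machinery—the extension \eqref{eq:12.4}, Proposition \ref{PropositionC2}, Lemma \ref{LemmaC3New} and the spectral-sequence ladder \eqref{eq:12.6}—is in place; what remains is to read off what they say.

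First I would establish \eqref{eq:Description-K(A,P)}, which is essentially a definition once $(G_1)_{\ab}$ has been identified. By Proposition \ref{PropositionC2} (in the form \eqref{eq:Isomorphism-for-G1}) the homomorphism $\nu$ induces an isomorphism $(G_1)_{\ab}\iso \Z[(A\,\cap\,]0,\infty[)_{\sim}]\otimes P$. By Corollary \ref{CorollaryA1} the $G_1$-orbits in $A\,\cap\,]0,\infty[$ are the non-empty intersections of cosets of $IP\cdot A$ with $]0,\infty[$, and since $IP\cdot A$ is dense in $\R$ every coset meets $]0,\infty[$; hence $(A\,\cap\,]0,\infty[)_{\sim}$ is in bijection with $A/(IP\cdot A)$. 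Under this identification formula \eqref{eq:Vaklur-rho-on-g(b,p)} shows that $\rho_{\ab}$ is the map $\bar\rho$ of \eqref{eq:12.5New}; it is surjective because every pair $((1-p)b,p)$ is hit. The sought short exact sequence is then the tautological decomposition of $\bar\rho$.

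Next, for \eqref{eq:ses-for-Bab}, I would feed extension \eqref{eq:12.4} into the standard five-term exact sequence in homology:
\begin{equation*}
H_2(G_1)\xrightarrow{H_2(\rho)} H_2(IP\cdot A\rtimes P)\xrightarrow{d_2} B/[G_1,B]\xrightarrow{\iota_\ast}(G_1)_{\ab}\xrightarrow{\rho_{\ab}}(IP\cdot A\rtimes P)_{\ab}\to 1.
\end{equation*}
By Lemma \ref{LemmaC3New} the group $G_1$ acts trivially on $B_{\ab}$, so $[G_1,B]=[B,B]$ and the middle term is $B_{\ab}$. The previous paragraph identifies $\ker\rho_{\ab}$ with $K(A,P)$, giving
\begin{equation*}
0\to \coker H_2(\rho)\longrightarrow B_{\ab}\xrightarrow{\nu_{\ast}} K(A,P)\to 0,
\end{equation*}
with $\nu_{\ast}$ given by \eqref{eq;Description-nu-star}. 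To finish I would replace $\coker H_2(\rho)$ by $\coker(L\to\bar L)$ using the second ladder in section \ref{sssec:Analysis-coker-rho}: the Lyndon–Hochschild–Serre spectral sequences of the two split extensions $B\mono G_1\epi P$ and $IP\cdot A\mono IP\cdot A\rtimes P\epi P$ have a common quotient $P$, so their five-term sequences fit into a ladder whose rightmost vertical map $H_2(P)\to H_2(P)$ is the identity; a short diagram chase yields $\coker H_2(\rho)\cong\coker(L\to\bar L)$.

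Sequence \eqref{eq:Description-Lbar} is then the genuinely easy piece: it is the low-degree five-term sequence of the Lyndon–Hochschild–Serre spectral sequence of $IP\cdot A\mono IP\cdot A\rtimes P\epi P$, in which $\bar L$ is by construction (the bottom row of \eqref{eq:12.6}) the kernel of the edge map $H_2(IP\cdot A\rtimes P)\to H_2(P)$. The main obstacle in the whole argument is the comparison step in the middle paragraph: one must verify that the natural map between the two Lyndon–Hochschild–Serre spectral sequences is well-defined and compatible with the $d_2$-differentials so that the ladder \eqref{eq:12.6} really does produce the isomorphism $\coker H_2(\rho)\cong\coker(L\to\bar L)$. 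Everything else in the proof is either a direct appeal to Proposition \ref{PropositionC2} and Corollary \ref{CorollaryA1}, or an application of a standard five-term sequence.
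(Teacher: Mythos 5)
Your proposal reproduces the paper's own proof essentially step for step: \eqref{eq:Description-K(A,P)} is read off from Proposition \ref{PropositionC2} together with the bijection between $(A\,\cap\,]0,\infty[)_\sim$ and $A/(IP\cdot A)$, \eqref{eq:ses-for-Bab} comes from the five-term sequence of the extension \eqref{eq:12.4} combined with Lemma \ref{LemmaC3New} and the second ladder, and \eqref{eq:Description-Lbar} is the bottom row of \eqref{eq:12.6}. One correction to the middle step: the sub-extension feeding the first spectral sequence is $G_2 \mono G_1 \epi P$ with $G_2 = \ker(\sigma_+\restriction{G_1})$ and $P_b$ as complement, not ``$B \mono G_1 \epi P$'' --- the quotient $G_1/B$ is $(IP\cdot A)\rtimes P$, not $P$, so the latter is not an extension. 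It is for the pair $G_2 \mono G_1 \epi P$ and $IP\cdot A \mono (IP\cdot A)\rtimes P \epi P$, with $\rho$ restricting to an epimorphism $G_2 \epi IP\cdot A$ and inducing the identity on the common quotient $P$, that the ladders \eqref{eq:12.6} are obtained and the diagram chase gives $\coker H_2(\rho)\cong\coker(L\xrightarrow{\rho_*}\bar L)$; with that substitution your argument is exactly the paper's.
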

%
\subsubsection{Vanishing results for $H_*(P, IP \cdot A)$}
\label{sssec:12.2c}
The exact sequence \eqref{eq:Description-Lbar} provides an estimate of the group $\bar{L}$.
Under suitable hypotheses on $P$ and $A$,
the first and the last term of this sequence vanish; 
the sequence yields then an explicit description of $\bar{L}$.
Here are some samples.
\begin{lemma}
\label{LemmaC6}
\index{Submodule IPA@Submodule $IP \cdot A$!significance}%
\index{Homology Theory of Groups!H1(P,A)@$H_1(P,A)$}%
\index{Homology Theory of Groups!H2(P,A)@$H_2(P,A)$}%
Suppose one of the following conditions is satisfied.
\begin{enumerate}[(i)]
\item $P$ is (infinite) cyclic. 
\item $A = IP \cdot A$ and $I[P] = \sum \{ (1-p) \Z[P]  \mid p \in P\}$ is a cyclic $\Z[P]$-module.
\item $A = IP \cdot A$ and $A$ is a finitely generated $\Z[P]$-module.
\item $A = IP\cdot A$ and $A$ is noetherian.
\end{enumerate}
Then the homology groups $H_j(P,IP\cdot A)$ vanish  for every $j > 0$.
\end{lemma}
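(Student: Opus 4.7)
The plan is to handle case (i) directly from a short free resolution of $\Z$, to reduce case (ii) to case (i), and to treat cases (iii) and (iv) together by combining Nakayama's Lemma with the module structure on $\Tor$.

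For case (i), since $P = \gp(t)$ is infinite cyclic, the sequence $0 \to \Z[P] \xrightarrow{\cdot(t-1)} \Z[P] \to \Z \to 0$ is a length-1 free resolution of $\Z$. This yields $H_j(P, M) = 0$ for every $j \geq 2$ and $H_1(P, M) = \ker(t-1 \colon M \to M)$ for every $\Z[P]$-module $M$. Taking $M = IP \cdot A \subseteq A \subseteq \R$ and using $t \neq 1$ together with the torsion-freeness of $A$, I conclude that $H_1(P, IP \cdot A) = 0$. For case (ii), I would observe that $P$ is torsion-free abelian, so $\Z[P]$ is an integral domain; if $IP = \Z[P](q-1)$ is cyclic, then $(q-1)$ is a non-zero-divisor and the map $1 \mapsto q-1$ makes the above sequence a length-1 free resolution, reducing to case (i). Equivalently, $IP/IP^2 \cong P$ (additively) being cyclic forces $P$ to be cyclic.

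In cases (iii) and (iv), $A = IP \cdot A$ and $A$ is finitely generated over $\Z[P]$ (trivially in (iii); in (iv) because a noetherian module is finitely generated). I would then invoke the determinantal form of Nakayama's Lemma, applied to the commutative ring $\Z[P]$, the ideal $IP$ and the module $A$, to obtain an element $r \in 1 + IP$ with $rA = 0$. Since $\Z[P]$ is commutative, $H_j(P, A) = \Tor_j^{\Z[P]}(\Z, A)$ inherits a $\Z[P]$-module structure, and on this structure $r$ acts simultaneously as $0$ (through its action on the second slot $A$) and as $\varepsilon(r) = 1$ (through its action on the trivial first slot $\Z$). Hence $H_j(P, A) = 0$ for every $j \geq 0$, and in particular for $j > 0$.

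The one delicate point is the coincidence of the two $\Z[P]$-module structures on $\Tor_j^{\Z[P]}(\Z, A)$ induced from the two arguments; this is a standard balancing property of the derived functors of tensor product over a commutative ring, but it is the linchpin of cases (iii) and (iv). Once this compatibility is granted, the whole argument is a handful of lines, with the technical content concentrated in Nakayama's Lemma and in the short free resolution used for case (i).
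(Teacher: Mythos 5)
Your treatment of case (i) is exactly the paper's, and your treatment of (iii) is too: the determinant trick produces $\lambda \in IP$ acting as an automorphism on $A$, and the balancing of the two $\Z[P]$-module structures on $\Tor_j^{\Z P}(\Z,A)$ kills the groups. Your reduction of (iv) to (iii) via ``noetherian $\Rightarrow$ finitely generated'' is legitimate and is in fact a shortcut: the paper instead quotes a theorem of D. Robinson for (iv). So three of the four cases are fine.

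Case (ii), however, contains a genuine gap. You have read the hypothesis as saying that the augmentation ideal of the integral group ring $\Z P$ is principal, in which case $IP/IP^2 \cong P$ would indeed force $P$ to be cyclic and (ii) would collapse into (i). But the ideal $I[P] = \sum\{(1-p)\Z[P] \mid p \in P\}$ in the statement lives in the ring $\Z[P]$ generated by $P$ \emph{inside} $\R$, which is a proper quotient of the group ring in general, and ``$I[P]$ cyclic'' is a much weaker condition than ``$P$ cyclic''. The case the hypothesis is designed to capture is precisely the one your argument misses: for $P = \gp(2,3)$ and $A = \Z[P] = \Z[1/6]$ one has $I[P] = (2-1)\cdot\Z[1/6] = \Z[1/6]$, a principal ideal, and $A = IP\cdot A$, yet $P \cong \Z^2$ is not cyclic and $\Z$ admits no length-one free resolution over $\Z P$. (This example reappears in the paper as Example \ref{example:LemmaC6} and in Examples \ref{examples:TheoremC10}.) Nor can you fall back on your Nakayama argument here, since in case (ii) the module $A$ need not be finitely generated (take $A = \R$ with the same $P$). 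The paper's proof of (ii) instead writes $I[P] = \bar{\lambda}\cdot\Z[P]$ with $\bar{\lambda} \in I[P]$, observes that $A = I[P]\cdot A = \bar{\lambda}A$ makes multiplication by $\bar{\lambda}$ a surjective endomorphism of $A$ which is injective because $A \subseteq \R$ is torsion-free, and then feeds this automorphism into exactly the same two-actions-on-$\Tor$ argument you use for (iii); a lift of $\bar{\lambda}$ to the augmentation ideal acts as zero on the $\Z$ slot and bijectively on the $A$ slot. Replacing your reduction by this argument repairs the proof.
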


\begin{proof}
(i) If $P$ is cyclic, say $P = \gp(p)$, 
then  $0 \to \Z P \xrightarrow{1-p} \Z{P} \to \Z  \to 0$ 
is a $\Z P$-free resolution of $\Z$ 
and  so $H_1(P,A)\cong \ker( A \xrightarrow{1-p} A)$ vanishes, 
as do all higher dimensional homology groups.

(ii) and (iii). Assume first that $I[P]$ is a cyclic $\Z[P]$-module. 
There exists then $\bar{\lambda} \in \Z[P]$ with $I[P] = \bar{\lambda} \cdot \Z[P]$;
note that $\bar{\lambda} \in I[P]$.
The chain of equalities
\[
A = IP \cdot A = I[P] \cdot A = \bar{\lambda} \cdot \Z[P] \cdot A 
= \bar{\lambda} A
\]
shows then that multiplication by $\bar{\lambda}$ defines a surjective endomorphism 
$\lambda_* \colon A \to A$. 
Since $A$ is a torsion-free $\Z[P]$-module, $\lambda_*$ is injective and hence an automorphism.

If, on the other hand, $A$ is a finitely generated $\Z{P}$-module, 
it follows from $A = IP\cdot A$ and the Cayley-Hamilton Theorem 
that $A$ is annihilated by an element of the form $1-\lambda$ with $\lambda$ in $IP$.
The endomorphism $\lambda_*$ induced by $\lambda$  is thus bijective. 

We exploit next the fact 
that $\lambda$ lies in the center of the (commutative) ring $\Z{P}$.
Let $B$ a right $\Z{P}$-module and $A$ a left $\Z{P}$-module.
Multiplication by $\lambda$ induces an endomorphism of $A$, 
but also of $B$,
and the two endomorphism $\id \otimes \lambda_*$ and $\lambda_* \otimes \id$ 
induce the same endomorphism of $A \otimes_{\Z{P}} B$,
but also the same endomorphism of the torsion functors $\Tor^{\Z P}_j (A,B)$ for $j > 0$ (see, \eg \cite[10.3.3]{LeRo04}).

We apply this fact to the homology groups 
$H_j(P, A) = \Tor^{\Z P}_j ( \Z, A)$. 
Since $\lambda \in IP$, the map $\Tor_j^{\Z P} (\lambda_*, \id)$ is the zero map;
since $\lambda_*$ is an automorphism of $A$,
the map $\Tor_j^{\Z P} (\id, \lambda_*)$ is bijective. 
As the two maps coincide, 
$\Tor_j^{\Z P}(\Z, A)$ must be trivial for $j > 0$. 

(iv) Since $A = IP \cdot A$ the homology group $H_0(P, A)$ is trivial;
as $A$ is noetherian all  higher homology groups $H_j(P, A)$ vanish therefore
by a result of Derek Robinson's  (see \cite[Theorem A]{Rob76} or \cite[10.3.1]{LeRo04}).
\end{proof}

\begin{example}
\label{example:LemmaC6}
\index{Module A@Module $A$!select examples}
In the literature on PL-homeomorphism groups of the form $G(I;A,P)$,
the module $A$ is usually taken to be $\Z[P]$.
To require that $A = IP \cdot A$ 
is then tantamount with demanding 
that $\Z[P| = I[P] \cdot \Z[P]$.
Thus for $A = \Z[P]$,
the requirement that $A = IP \cdot A$ 
implies that $I[P]$ is a cyclic $\Z[P]$-module.
So all homology groups $H_j(P,A)$ vanish by part (ii) of Lemma
\ref{LemmaC6}. 
\end{example}
%
\subsection{Analysis of $K(A;P)$}
\label{ssec:12.3New}
\index{Subgroup K(A,P)@Subgroup $K(A,P)$!analysis}%
\index{Homomorphism!17-rhobar@$\bar{\rho}$}%
We turn now to the  computation of the abelian group  $K(A,P)$, 
the kernel of the epimorphism 
\[
\label{notation:K(A,P)}
\index{Homomorphism!17-rhobar@$\bar{\rho}$}%
\bar\rho \colon\Z[(A \,\cap \,]0,\infty[)_\sim] \otimes P 
\epi (IP\cdot A/IP^2\cdot A) \times P;
\]
this epimorphism is described by formula \eqref{eq:12.5New}.  
Our main result is
\begin{proposition}
\label{prp:LemmaC5}
\index{Quotient group A/IPA@Quotient group $A/(IP \cdot A)$!significance}
\begin{enumerate}[(i)]
\item $K(A,P)$ is trivial if, and only if, $A = IP\cdot A$.
\item  $K(A,P)$ is finitely generated if, and only if, 
either $A = IP\cdot A$, 
or if $A/(IP\cdot A)$ is finite and $P$ is finitely generated.  
\item 
if $K(A,P)$ is finitely generated and $A \neq IP \cdot A$, 
then $K(A,P)$ is free abelian and its rank equals 
 $\rk(P) \cdot \left(\card\left (A/(IP\cdot A)\right)-1\right)$.
\end{enumerate}
\end{proposition}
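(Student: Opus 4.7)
The plan is to exploit the exact sequence \eqref{eq:Description-K(A,P)}. First I would use Corollary \ref{CorollaryA1} together with the density of $IP\cdot A$ in $\R$ to identify the orbit set $(A\cap\,]0,\infty[\,)_\sim$ with $Q := A/(IP\cdot A)$ (every coset $a+IP\cdot A$ meets $]0,\infty[$), so that the middle term becomes $\Z[Q]\otimes_\Z P$, where $\Z[Q]$ is the integral group ring of $Q$. The assignment $p \mapsto [0]\otimes p$ is a common section of the projections to $P$ in the domain and codomain of $\bar\rho$, so $\bar\rho$ decomposes as $\psi \oplus \id_P$, where
\[
\psi \colon I_Q \otimes P \longrightarrow IP\cdot A/IP^2\cdot A,\qquad ([q]-[0])\otimes p \longmapsto (1-p)\tilde q + IP^2\cdot A
\]
(with $I_Q$ the augmentation ideal of $\Z[Q]$ and $\tilde q \in A$ any lift of $q$); hence $K(A,P)=\ker \psi$. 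Since $(1-p)(IP\cdot A)\subseteq IP^2\cdot A$ and $P$ is torsion-free (hence $\Z$-flat), $\psi$ factors as
\[
I_Q\otimes P \twoheadrightarrow (I_Q/I_Q^2)\otimes P \cong Q\otimes P \xrightarrow{\bar\psi} IP\cdot A/IP^2\cdot A,
\]
yielding the crucial inclusion $I_Q^2 \otimes P \subseteq K(A,P)$.

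Part (i) then follows quickly. If $A = IP\cdot A$ then $Q = 0$, $\Z[Q]\otimes P = P$, and $\bar\rho \colon p \mapsto (0, p)$ is an isomorphism. Conversely, suppose $Q \neq 0$ and pick $q \in Q \smallsetminus \{0\}$. A direct calculation gives $([q]-[0])^2 = [2q] - 2[q]+[0]$: if $2q \neq 0$ the basis element $[2q] \in \Z[Q]$ is distinct from $[q]$ and $[0]$, forcing the element to be nonzero; if $2q = 0$ the expression collapses to $-2([q]-[0])$, still nonzero since $I_Q$ is free abelian. Thus $I_Q^2 \neq 0$, and tensoring with the nonzero torsion-free group $P$ gives $I_Q^2\otimes P \neq 0$, whence $K \neq 0$.

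For (iii), suppose $Q$ is finite of order $n$ and $P$ is finitely generated of rank $r$. From $nA \subseteq IP \cdot A$ follows $n (IP\cdot A) \subseteq IP^2 \cdot A$, so $IP\cdot A/IP^2\cdot A$ has exponent dividing $n$ and therefore $\Q$-rank zero. The exact sequence then forces $\rk K = nr - r = (n-1)r$, and $K$ is free abelian as a subgroup of the free abelian group $\Z[Q]\otimes P$. For (ii), sufficiency is immediate (the middle term is finitely generated, hence so is $K$). For necessity, assume $K$ is finitely generated and $A \neq IP\cdot A$, so $Q \neq 0$ and $I_Q^2 \neq 0$ by the calculation in (i); since $I_Q^2 \otimes P \subseteq K$ has finite $\Z$-rank equal to $\rk(I_Q^2)\cdot \rk P$, both factors must have finite rank, so $P$ is finitely generated. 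The step I expect to be the main obstacle is concluding that $Q$ must be finite, which amounts to showing $\rk I_Q^2 = \infty$ whenever $Q$ is infinite. My plan is a small case-analysis on the $2$-torsion subgroup $Q[2]$: when $Q[2]$ is finite one extracts an infinite family $\{q_i\}\subset Q\smallsetminus\{0\}$ on which $q \mapsto 2q$ is injective, so the elements $([q_i]-[0])^2$ have pairwise distinct leading basis terms $[2q_i]$; when $Q[2]$ is infinite, $2([q]-[0]) = -([q]-[0])^2 \in I_Q^2$ for infinitely many distinct $q \in Q[2]$. Either case yields an infinite $\Z$-linearly independent family in $I_Q^2$, contradicting $\rk(I_Q^2) < \infty$.
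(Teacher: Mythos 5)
Your argument is correct, but it follows a genuinely different route from the paper's. The paper proves the converse of (i) and the necessity half of (ii) \emph{geometrically}: it feeds the bounded PL-homeomorphisms $b(a,\Delta;p)$ of Example \ref{example:Elements-in-B} into $\nu$, obtaining explicit non-trivial elements $G_1\cdot a\otimes p + G_1\cdot 2a\otimes p^{-2}+G_1\cdot 3a\otimes p$ of $K(A,P)$, and for the finiteness of $Q=A/(IP\cdot A)$ it first shows $Q$ is torsion by exhibiting free abelian subgroups of $K(A,P)$ of unbounded rank (via $b(4^n a,\Delta;p)$), then invokes Lemma \ref{lemma:Quotients-A/IPA-and-IPA/IP2A} and a rank count in the sequence \eqref{eq:Description-K(A,P)}. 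You instead stay entirely inside the group ring $\Z[Q]$: splitting off the augmentation identifies $K(A,P)$ with $\ker\psi$, and the observation that $\psi$ kills $I_Q^2\otimes P$ packages all three parts into the single structural inclusion $I_Q^2\otimes P\subseteq K(A,P)$. The two are secretly close --- the paper's element corresponds under your dictionary to $\bigl([\bar a]-2[2\bar a]+[3\bar a]\bigr)\otimes p = -\bigl(([\bar a]-[2\bar a])([\bar a]-[0])\bigr)\otimes p\in I_Q^2\otimes P$ --- but your version buys a cleaner uniform statement, dispenses with the explicit homeomorphisms and with part a) of Lemma \ref{lemma:Quotients-A/IPA-and-IPA/IP2A} (your exponent remark $nA\subseteq IP\cdot A\Rightarrow n(IP\cdot A)\subseteq IP^2\cdot A$ suffices for the rank count), while the paper's version produces concrete elements of $B_{\ab}$ that it reuses elsewhere (e.g.\ in section \ref{ssec:Action-TildeG}).

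One detail in your final step deserves more care than ``pairwise distinct leading basis terms'': when $Q[2]$ is finite you should choose the $q_i$ inductively so that the supports $\{0,q_i,2q_i\}$ of the elements $([q_i]-[0])^2$ meet only in $0$ (possible because $q\mapsto 2q$ has finite fibres and $Q$ is infinite, so each choice excludes only finitely many candidates); otherwise coincidences such as $q_j=2q_i$ make the independence claim non-immediate. With that selection the independence is read off coefficientwise, and your conclusion $\rk I_Q^2=\infty$ for infinite $Q$ stands.
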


In the proof of Proposition  \ref{prp:LemmaC5} 
we shall make use of an auxiliary result;
it will be stated and established first.

\subsubsection{On the quotient modules $A/(IP \cdot A)$ 
and $(IP \cdot A)/(IP^2 \cdot A)$}
\label{sssec:12.3a}
%
\begin{lemma}
\label{lemma:Quotients-A/IPA-and-IPA/IP2A}
\index{Quotient group A/IPA@Quotient group $A/(IP \cdot A)$!properties}%
Assume $P$ is an abelian group and  $A$ is an arbitrary  $\Z{P}$-module.
\begin{enumerate}[a)]
\item If $P$ is finitely generated and $IP \cdot A$ has finite index in $A$, 
then $IP^2 \cdot A$ has finite index in $IP \cdot A$.

\item If $A/(IP \cdot A)$ is a torsion group, 
so is $A/(IP^2 \cdot A)$.
\end{enumerate}
\end{lemma}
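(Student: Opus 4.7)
My plan is to reduce both parts to the same elementary observation, namely that $IP\cdot A$ is spanned additively by the products $(p-1)\cdot a$ with $p$ ranging over a suitable set of elements of $P$ and $a$ over $A$. Since $P$ is assumed abelian (as a subgroup of $\R^\times_{>0}$ throughout the memoir, though the lemma is stated for an arbitrary abelian $P$), the relation $g\cdot(p-1)\cdot a = (p-1)\cdot(g\cdot a)$ shows that $IP\cdot A$ is a $\Z[P]$-submodule generated by $\{(p-1)\cdot a \mid p\in P,\, a\in A\}$, and one verifies readily that if $\PP\subseteq P$ generates $P$ as a group then the corresponding elements $\{p-1\mid p\in\PP\}$ generate $IP$ as an ideal.

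For part (a), let $\PP = \{p_1,\dots,p_n\}$ be a finite generating set of $P$. The preceding remark yields a surjective $\Z[P]$-linear map
\[
\varphi\colon A^n \longrightarrow IP\cdot A,\qquad (a_1,\dots,a_n)\longmapsto \sum\nolimits_{i=1}^n (p_i-1)\cdot a_i.
\]
Since $(p_i-1)\cdot(IP\cdot A)\subseteq IP^2\cdot A$, the map $\varphi$ descends to a surjection
\[
\bar\varphi\colon \bigl(A/(IP\cdot A)\bigr)^n \longrightarrow (IP\cdot A)/(IP^2\cdot A),
\]
so the right-hand side is a quotient of a finite group, hence finite.

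For part (b), I first show $(IP\cdot A)/(IP^2\cdot A)$ is torsion. Given a generator $(p-1)\cdot a$ of $IP\cdot A$, the hypothesis provides an integer $n\geq 1$ with $n\cdot a \in IP\cdot A$; then
\[
n\cdot (p-1)\cdot a \;=\; (p-1)\cdot(n\cdot a) \;\in\; (p-1)\cdot(IP\cdot A) \;\subseteq\; IP^2\cdot A.
\]
Thus each additive generator of $IP\cdot A$ has finite order modulo $IP^2\cdot A$, so every element does, and $(IP\cdot A)/(IP^2\cdot A)$ is torsion. The short exact sequence
\[
0 \to (IP\cdot A)/(IP^2\cdot A) \to A/(IP^2\cdot A) \to A/(IP\cdot A) \to 0
\]
then has torsion outer terms, forcing the middle term to be torsion as well.

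There is no real obstacle here; the only point requiring care is the initial description of the additive generators of $IP\cdot A$, and this rests on the commutativity of $P$, which is built into the setup of the memoir.
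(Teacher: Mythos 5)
Your proof is correct and rests on the same key observation as the paper's, namely that the identities $q_1q_2-1=q_1(q_2-1)+(q_1-1)$ and $q^{-1}-1=-q^{-1}(q-1)$ let one generate $IP\cdot A$ additively by the products $(p-1)\cdot a$ with $p$ in a generating set of $P$, after which multiplying such a generator by $p-1$ (or by an integer clearing $a$ into $IP\cdot A$) lands in $IP^2\cdot A$. Your packaging is slightly more structural — a surjection $\bigl(A/(IP\cdot A)\bigr)^n\twoheadrightarrow (IP\cdot A)/(IP^2\cdot A)$ in place of the paper's transversal count in a), and a short exact sequence in place of the paper's direct element computation in b) — but the underlying argument is the same.
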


\begin{proof}
a) Let $\TT$ be a transversal of $IP \cdot A$ in $A$
 and $\PP$ a finite set generating $P$.
 Consider an element $a \in A$.
 Since $\TT$ is a transversal of $A_1 = IP \cdot A$ in $A$,
 there exists $\tau_a \in \TT$ and $a_1  \in A_1$ with $a = \tau_a + a_1$.
 
 As an abelian group,
 $A_1$ is generated by the products $(q-1) \cdot  a$ 
 with $q \in P$ and $a \in A$.
 The identities 
 $q_1 q_2 - 1 = q_1(q_2 - 1) + (q_1 - 1)$ 
 and $q^{-1} - 1 = - q^{-1} ( q-1)$
 then imply that $A_1$ is generated,
 as a $\Z{P}$-module, by the products $(p-1) \cdot a$ with $p \in \PP$ and $a \in A$.
 So the element $a_1$ found before has a representation of the form
 $a_1 = \sum_{p \in \PP} (p-1) \cdot a(p)$.
 Each of the elements $a(p)$, finally, can be written as a sum $\tau_{a(p)} + a_1(p)$.
 
 The representations of $a$ obtained so far give rise to the following chain of equalities:
 \begin{align}
 a  
 = 
 \tau_a + a_1 
  &= 
 \tau_a + \sum\nolimits_{p \in \PP} (p-1) \cdot a(p)\notag\\
 &=
 \tau_a + \sum\nolimits_{p \in \PP} (p-1) \cdot (\tau_{a(p)} + a_1(p))\notag\\
 &=
\left(\tau_a + \sum\nolimits_{p \in \PP} (p-1) \cdot \tau_{a(p)}\right) 
 + 
 \sum\nolimits_{p \in \PP} (p-1) \cdot  a_1(p). \label{eq:Expressing-a}
 \end{align}
 The first summand in equation \eqref{eq:Expressing-a} 
 can take on at most $|A/A_1| + |A/A_1|^{\card \PP}$  values
 and the second summand lies in $IP^2 \cdot A$.
 The equation thus proves that  $IP^2 \cdot A$ has finite index in $A$, 
 and hence also in $A_1 = IP \cdot A$.

b) Fix $a \in A$.
 By hypothesis, there exists a positive integer $n_a$ with $n_a \cdot a \in A_1$.
 Set $a_1 = n_a \cdot a$.
 Then $a_1$ can be written as a sum  $a_1 = \sum_{p \in \PP} (p-1) \cdot a(p)$.
 Each of the elements $a(p)$ lies in $A$ 
 and so there exists, for each $p$, a positive integer $m_{a,p}$ with  $m_{a,p} \cdot a(p) \in A_1$.
 Let $m$ be a common multiple of the integers $m_{a,p}$.
 Then
 \[
 (m \cdot n_a) \cdot a = m \cdot (n_a \cdot a)   = m \cdot \sum_{p \in \PP} (p-1) \cdot a(p)
 = \sum_{p \in \PP} (p-1) \cdot (m \cdot a(p)).
 \]
 As each summand $m \cdot a(p)$ lies in $A_1 = IP \cdot A$,
 the sum $\sum_{p \in \PP} (p-1) \cdot (m \cdot a(p))$ belongs to $IP^2 \cdot A$.
 This proves that $A /(IP^2 \cdot A)$ is a torsion group.
\end{proof}
%
\subsubsection{Proof of Proposition \ref{prp:LemmaC5}}
\label{sssec:12.3b}
%
We begin with an observation.
The set $(A \,\cap \;]0, \infty[)_\sim$ is the space of $G_1$-obits
\footnote{Recall 
that $G_1$ denotes the subgroup $\ker (\sigma_- \colon G([0, \infty[\;;Q,P)$} 
of the interval $]0,\infty[$.
Now each $G_1$-orbit has the form $(IP \cdot A+  a) \cap \,]0, \infty[$
(by Corollary \ref{CorollaryA1});
so the assignment $G_1 \cdot a \mapsto a + IP \cdot A$ defines a bijection 
of $(A\, \cap \,]0, \infty[)_\sim$ onto $A/(IP \cdot A)$.

We embark now on the proof of Proposition \ref{prp:LemmaC5},
starting out with the case where $A = IP \cdot A$. 
Then $(A\,  \cap \,]0, \infty[)_\sim$ is a singleton; 
in addition, $IP \cdot A = IP^2 \cdot A$. 
The map $\bar{\rho}$ thus takes on the form $P \to \{0\} \times P$ and sends $p$ to $(0, p)$.
It is therefore injective and so its kernel $K(A,P)$ is trivial.
Claims (i) and (ii) hold therefore, 
while claim (iii) is valid since its assumption is not fulfilled.

Assume next that $A/(IP \cdot A)$ is finite and $P$ is finitely generated.
Lemma \ref{lemma:Quotients-A/IPA-and-IPA/IP2A} then implies 
that $(IP \cdot A )/( IP^2 \cdot A)$ is finite.
The group $P$ is free abelian, for it is finitely generated by hypothesis 
and torsion-free as a subgroup of $\R^\times_{>0}$.
Hence $\Z[(A\, \cap \;]0, \infty[)_\sim] \otimes P$ is free abelian,
while $(IP \cdot A/IP^2 \cdot A) \times P$ is the direct product of a finite and a free abelian group.
As $\card((A \cap \,]0, \infty[)_\sim)$ coincides with the order of $A/(IP \cdot A)$ 
by the preliminary remark, a straightforward count of ranks proves
that $K(A,P)$ has torsion-free rank $(\card(A/IP\cdot A)-1) \cdot \rk(P)$.
Moreover, being a subgroup of a free abelian group, $K(A,P)$ is free abelian, too.

We continue with the remaining parts of the proof of claims (i) and (ii),
assuming that $A \neq IP \cdot A$. 
Pick a positive number $a \in A \smallsetminus IP \cdot A$,
set $\Delta = a$, fix $p \in P$,  
and consider the PL-homeomorphism $b(a,\Delta;p)$
discussed in Example \ref{example:Elements-in-B}.
Formula \eqref{eq:nu-of-bounded-element}
the shows that
\begin{equation}
\label{eq:Image-in-K(A,P)}
\nu(b(a,\Delta;p)) = G_1 \cdot a \otimes p + G_1 \cdot 2a \otimes p^{-2} 
+ G_1 \cdot 3a \otimes p \in \Z[(A \, \cap \, ]0,\infty[)_\sim] \otimes P
\end{equation}
and this image lies in $K(A,P)$ 
(to see this, use  the exactness of the sequence \eqref{eq:ses-for-Bab}
and the fact that $b(a,\Delta;p) \in B(]0,\infty[\,;A,P)$).

We claim that \emph{$\nu(b(a,\Delta;p))$ is non-trivial for every $p \in P \smallsetminus \{1\}$}.
Two cases arise.
If the order of $a + IP \cdot A$ in $A/(IP \cdot )$ is 3 or higher,
the three orbits $G_1 \cdot a$, $G_1 \cdot 2a$ and $ G_1\cdot 3a$ are pairwise distinct;
the projection $\pi \colon \Z[(A\, \cap \, ]0,\infty[)_\sim \otimes P \to P$
which sends $G_1\cdot a \otimes p$ to $p \in P$
and the remaining direct summands to $1 \in P$,
is therefore surjective.
If, on the other hand,   
$a + IP \cdot A$ has order 2, then  $G_1 \cdot a = G_1 \cdot 3a$,
and the image of $\pi$ is the subgroup $\{p^2 \mid p \in P\}$ of $P$.
It follows that $K(A,P)$ is non-trivial
and that it is infinitely generated  if $P$ is so.
\smallskip

The arguments in the preceding paragraphs 
establish claims (i) and (iii) of Proposition \ref{prp:LemmaC5},
as well as part of its claim (ii).
It remains to be proved that 
\emph{$A/(IP \cdot A)$ is finite
if $K(A,P)$ is finitely generated and, as before, $A \neq IP \cdot A$}.
By the previous paragraph the group $P$ must then be finitely generated. 
We next verify 
that $A/(IP \cdot A)$ is a torsion group.
Suppose $n$ is a positive integer and $a \in A$ an element
so that the numbers $a$, $2a$, \ldots, $4^n a$ 
represent pair-wise distinct cosets of $A/(IP \cdot A)$.
Set $\Delta = a$, choose $p_0 \in P\smallsetminus \{1\}$  
and consider the sequence $n \mapsto a_n $ defined 
by  $a_n = b(4^n \cdot a, \Delta; p)$.
It then follows, 
as in the previous paragraph, 
that this sequence generates a subgroup of $K(A;P)$ 
which is free abelian of rank $n$.
As the group $K(A,P)$ is finitely generated, 
by hypothesis,
its rank is an upper bound for $n$ 
and hence for the order of the elements of $A/(IP\cdot A)$.
If follows, in particular, that $A/(IP\cdot A)$ is a torsion group,
whence $(IP \cdot A) / (IP^2 \cdot A)$ is so
by part b) of Lemma \ref{lemma:Quotients-A/IPA-and-IPA/IP2A}.
But if so,
the exact sequence \eqref{eq:Description-K(A,P)} 
yields an upper bound of the rank of the free abelian group
$\Z[(A \,\cap \,]0,\infty[)_\sim] \otimes P$
and hence of the cardinality of the orbit space $(A \,\cap \,]0,\infty[)_\sim$.
The bijection  $(A\,  \cap \,]0, \infty[)_\sim \iso A/(IP \cdot A)$,
described at the very beginning of the proof,
permits one, finally, to conclude that the group $ A/(IP \cdot A)$ is finite
%
\subsubsection{Application to groups with cyclic group $P$}
\label{sssec:12.3c}
Proposition \ref{prp:LemmaC5} gives concrete information on the group $K(A,P)$ 
under fairly mild assumptions. 
Proposition \ref{PropositionC4} then allows one 
to turn this information into a description of $B_{\ab}$, 
provided one can control the cokernel of the map $\rho_* \colon L \to \bar{L}$.
A situation where such a control is available
arises if $P$ is a cyclic subgroup of  $\Q^\times_{>0}$.
\begin{corollary}
\label{crl:Bab-for-P-cyclic-and-A-locally-cyclic}
\index{Subgroup B(I;A,P)@Subgroup $B(I;A,P)$!abelianization}%
\index{Quotient group A/IPA@Quotient group $A/(IP \cdot A)$!significance}%
Assume $P$ is a cyclic subgroup of $\Q^\times_{>0}$ 
and $A/(IP \cdot A)$ is a torsion group.
Then $B([0, \infty[\,;A, P)_{\ab}$ is free abelian of rank $\card(A/IP \cdot A)-1$ 
if $A/(IP\cdot A)$ is finite, and otherwise of rank $\aleph_0$.
\end{corollary}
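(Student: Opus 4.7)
The overall strategy is to invoke Proposition \ref{PropositionC4}, which presents $B_{\ab}$ as an extension
\[
0 \to \coker(L \xrightarrow{\rho_*} \bar L) \to B_{\ab} \xrightarrow{\nu_*} K(A,P) \to 0,
\]
and to reduce the result to the description of $K(A,P)$ already furnished by Proposition \ref{prp:LemmaC5}, by showing that the left-hand kernel vanishes.

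Since $P$ is cyclic, Lemma \ref{LemmaC6}(i) gives $H_j(P, IP\cdot A) = 0$ for every $j>0$. The exact sequence \eqref{eq:Description-Lbar} therefore collapses to an isomorphism $\bar L \iso H_0(P, H_2(IP\cdot A))$. Here the hypothesis, implicit in the corollary's label, that $A$ be locally cyclic is decisive: it says $A$ has $\Z$-rank at most one, and hence so does $IP\cdot A$. Since the exterior square $M\wedge_{\Z} M$ of a torsion-free abelian group of rank at most one vanishes, we conclude $H_2(IP\cdot A)=0$, whence $\bar L = 0$ and $\coker(\rho_*)=0$. The sequence above therefore forces $\nu_* \colon B_{\ab} \iso K(A,P)$.

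It remains to apply Proposition \ref{prp:LemmaC5} to $K(A,P)$. With $\rk(P)=1$, part (iii) of that proposition gives, in the case where $A/(IP\cdot A)$ is finite and non-trivial, that $K(A,P)$ is free abelian of rank $\card(A/(IP\cdot A))-1$; the case $A=IP\cdot A$ is covered by part (i). When $A/(IP\cdot A)$ is infinite torsion, part (ii) asserts that $K(A,P)$ is infinitely generated, but since it is a subgroup of the free abelian group $\Z[(A\cap\,]0,\infty[)_\sim]\otimes P$ it is itself free abelian; the construction in the proof of Proposition \ref{prp:LemmaC5}(iii) — which extracts an independent family of classes $\nu(b(4^n a,\Delta;p))$ of arbitrarily large finite cardinality — then forces the rank to be at least $\aleph_0$, while the (countable) cardinality of the index set $A/(IP\cdot A)$ bounds it above.

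The delicate point of the argument is the vanishing $\bar L = 0$, which rests entirely on the locally cyclic assumption: without it, $(IP\cdot A)\wedge(IP\cdot A)$ is generally non-trivial and one would have to analyse $\rho_*$ directly by constructing bounded PL-homeomorphisms whose classes in $H_2(G_1)$ cover $H_0(P, H_2(IP\cdot A))$ — an enterprise that the present homological machinery does not address. With the locally cyclic hypothesis in hand, however, the rest of the proof is a short assembly of Proposition \ref{PropositionC4}, Lemma \ref{LemmaC6}(i), and Proposition \ref{prp:LemmaC5}.
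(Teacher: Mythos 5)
Your proposal follows the same skeleton as the paper's proof: pass through the extension \eqref{eq:ses-for-Bab} of Proposition \ref{PropositionC4}, use Lemma \ref{LemmaC6}(i) to kill $H_j(P, IP\cdot A)$ for $j>0$, conclude that $\nu_*\colon B_{\ab} \to K(A,P)$ is bijective, and then compute $K(A,P)$. The two computations of $K(A,P)$ differ only cosmetically: the paper counts ranks directly in the defining sequence of $\bar{\rho}$ (free abelian domain of rank $\card (A/(IP\cdot A))$, torsion-by-infinite-cyclic codomain, $\bar{\rho}$ surjective), whereas you quote Proposition \ref{prp:LemmaC5}; both yield the stated rank.

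The genuine divergence lies in the step $\coker(\rho_*\colon L \to \bar{L}) = 0$. The paper passes from the vanishing of $H_j(P, IP\cdot A)$ for $j>0$ directly to ``$B_{\ab}$ is isomorphic to $K(A,P)$'' without further comment, although the sequence \eqref{eq:Description-Lbar} only yields $\bar{L} \iso H_0\bigl(P, H_2(IP\cdot A)\bigr)$, and $H_2(IP\cdot A)$ is the exterior square of $IP \cdot A$, which has no reason to vanish once $A$ has rank at least $2$ --- its vanishing is precisely what Theorem \ref{TheoremC10} must impose extra hypotheses to secure. You identified this point correctly and closed it by importing the hypothesis that $A$ is locally cyclic from the corollary's label; under that hypothesis your argument is complete, and it covers the only use made of the corollary in the text (Example \ref{example:P-cyclic-A-locally-cyclic}, where $A = \Z[P]$ with $P$ cyclic in $\Q^\times_{>0}$). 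So: same route, but you prove the statement augmented by ``$A$ locally cyclic'', while the literal statement --- which omits that hypothesis --- is not actually established by either argument as written.
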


\begin{proof}
Since $P$ is cyclic, 
the homology groups $H_j(P, IP \cdot A)$ vanish  in every positive dimension $j$
(see Lemma  \ref{LemmaC6}).
By Proposition \ref{PropositionC4}
the group $B_{\ab}$ is thus isomorphic to 
\[
K(A,P) = \ker \left( \bar\rho \colon\Z[(A \,\cap \,]0,\infty[)_\sim] \otimes P 
\epi (IP\cdot A/IP^2\cdot A) \times P  \right).
\]
Since $A/(IP \cdot A)$ is a torsion group, 
so is $(IP \cdot A)/(IP^2 \cdot A)$ 
by Lemma \ref{lemma:Quotients-A/IPA-and-IPA/IP2A};
the co-domain of $ \bar{\rho}$ is thus torsion-by-infinite cyclic;
the domain of $ \bar{\rho}$,
on the other hand,
is free abelian of rank $\card A/(IP \cdot A)$ 
by the first paragraph of section \ref{sssec:12.3b}.
The claim now follows from the fact 
that $ \bar\rho $ is surjective.
\end{proof}

\begin{example}
\label{example:P-cyclic-A-locally-cyclic}
We begin with a general remark.
Given a group $P $ and a $\Z[P]$-module $A$,
set $G = G([0,1]; A, P)$ and $B = B([0,1];A,P)$.
Consider now the extension $B \xrightarrow{\iota} G \epi G/B$.
This extension gives rise to an exact sequence in homology,
namely
\begin{equation*}
\index{Homology Theory of Groups!5-term exact sequence}%
H_2(G) \xrightarrow{} H_2(G/B) \xrightarrow{d_2} B/[B,G] \xrightarrow{\iota_*} G_{\ab} 
\xrightarrow{} (G/B)_{\ab} \to 0.
\end{equation*}
The group $G$ acts trivially on $B_{\ab}$ (by Lemma \ref{LemmaC3New}), 
whence $[B, G] =[B,B]$.
Since $G/B$ is isomorphic to $P\times P$, 
the above sequence can therefore be rewritten as follows:
\begin{equation}
\label{eq:5-term-sequence-involving-Bab}
H_2(G) \xrightarrow{} H_2(P^2) \xrightarrow{d_2} B_{\ab} \xrightarrow{\iota_*} G_{\ab} 
\xrightarrow{} P^2 \to 0.
\end{equation}
The maps $H_2(G) \to H_2(P^2)$ and $G_{\ab} \to P^2$ occurring in this sequence are induced 
by the projection $(\sigma_-, \sigma_+) \colon G \epi P \times P$.

Assume now that $P$ is cyclic.
Then the projection $(\sigma_-, \sigma_+)$ has a one-sided inverse $\mu \colon P\times P \to G$
(this follows from part (ii) of Corollary \ref{CorollaryA2} 
and its analogue for $\sigma_+$)
and so $H_2(G) \to H_2(P^2)$ is a split epimorphism.
The five term sequence \eqref{eq:5-term-sequence-involving-Bab} implies therefore 
that the sequence $B \xrightarrow{\iota} G \epi P^2$ abelianizes to an exact sequence
\footnote{see sections \ref{sssec:Notes-ChapterC-Brown-Stein} 
and \ref{sssec:Notes-ChapterC-Gal-Gismatullin}
for far better results.}
\begin{equation}
\label{eq:Exact-abelianized-sequence-P-cyclic}
0 \to B_{\ab} \xrightarrow{\iota_*} G_{\ab}  \epi P^2.
\end{equation}
Note that the injectivity of the induced map $\iota_* \colon B_{\ab} \to G_{\ab}$ 
amounts to say that $B' = G'$ or, equivalently, that $G/B'$ is abelian.
\index{Subgroup B([a,c];A,P)@Subgroup $B([a,c];A,P)$!derived group}%
\index{Group G([a,c];A,P)@Group $G([a,c];A,P)$!derived group}%
The conclusion of Corollary \ref{crl:Simplicity-of-B'} can then be stated in a more memorable form:
\emph{the derived group of $G([0,1];A,P)$ is a non-abelian simple group}.

Suppose, finally, that $P$ is generated by a positive rational number $p > 1$
and that $A$ is the $\Z[P]$-module $\Z[P] = \Z[p,p^{-1}]$. 
Then $A$ is locally cyclic and so $A/(IP \cdot A)$ is a torsion group.
By Proposition \ref{PropositionC1} 
\index{Proposition \ref{PropositionC1}!consequences}%
the group $B= B([0,1];A,P)$ is therefore isomorphic 
to the group $B([0, \infty[\,;A,P)$ studied so far. 
By Corollary \ref{crl:Bab-for-P-cyclic-and-A-locally-cyclic},
the group $B_{\ab}$ is thus free abelian 
and its rank is 1 less than the order of $A/(IP \cdot A)$.
The order of this quotient has been determined in Illustration \ref{illustration:4.3}:
if $p = D/N$  with $D$ and $N$ positive, relatively prime integers,
then $IP \cdot A  = (N-D) \cdot A$ and the quotient $A/(IP \cdot A) $ is cyclic of order $N-D$.
We deduce that $B_{\ab}$ is free abelian of rank $|N-D| -1$.
The exact sequence \eqref{eq:Exact-abelianized-sequence-P-cyclic}
then allows us to compute $G_{\ab}$: 
it is free abelian of rank $|N-D|+ 1$.
\end{example}
%
%
\subsection{When is $B_{\ab}$ trivial?}
\label{ssec:12.4New}
\index{Subgroup B(I;A,P)@Subgroup $B(I;A,P)$!vanishing of Bab@vanishing of $B_{\ab}$|(}%
%
If $B = B([0,\infty[;A,P)$ is perfect 
then $A = IP\cdot A$ by Propositions \ref{PropositionC4} and \ref{prp:LemmaC5} (i).  
A further necessary condition is provided by Proposition \ref{PropositionB5}: it states, inter alia, 
that $H_1(P,A)$ is a homomorphic image of $B$.  
By Lemma \ref{LemmaC6} this second condition holds
if, \eg $A =IP\cdot A$ is noetherian.
We don't know whether 
this consequence of Lemma \ref{LemmaC6} holds without any extra hypothesis.

Conversely, assume that $A = IP\cdot A$. 
Then $K(A,P)$ is trivial by part (i) of Proposition \ref{prp:LemmaC5}
whence Proposition \ref{PropositionC4} yields the following commutative diagram 
with exact rows and an exact column.

\begin{equation}
\label{eq:Diagram-for-Bab}
\xymatrix{H_2(P,(G_2)_{\ab}) \ar@{->}[r]^-{d_2} \ar@{->}[d]^-{\rho_{*1}} 
&H_0(P,H_2(G_2)) \ar@{->}[r] \ar@{->}[d]^-{\rho_{*2}} 
&L\ar@{->}[r] \ar@{->}[d]^-{\rho_*} 
&H_1(P,(G_2)_{\ab}) \ar@{->}[r]\ar@{->}[d] &0\\
H_2(P,A) \ar@{->}[r]^-{d_2} 
&H_0(P,H_2(A)) \ar@{->}[r] 
&\bar L\ar@{->}[r] \ar@{->>}[d] 
&H_1(P,A) \ar@{->}[r] 
& 0\\
& &B_{\ab} & &}
\end{equation}
This diagram implies, in particular,  that $B_{\ab}$ is trivial 
if $H_1(P,A) = 0$ and if the map
\begin{equation*}
\rho_{*2} = H_0(P,H_2(\rho\restriction{G_2})) \colon  H_0(P,H_2(G_2))\longrightarrow H_0(P,H_2(A))
\end{equation*}
is \emph{surjective}.  
The only situation for which we can guarantee that this condition is fulfilled
occurs when the group $H_0(P,H_2(A))$ vanishes.  

Our final result singles out two cases in which this latter condition is satisfied.
\begin{theorem}
\label{TheoremC10}
Assume $P$ is a subgroup of $\R^\times_{>0}$ 
and $A $ is a $\Z[P]$-submodule of $\R_{\add}$.
Let $B$ denote the subgroup $B([0, \infty[\,;A,P)$.
If $B_{\ab}$ is trivial then $A = IP\cdot A$.  
Conversely, $B_{\ab}$ is trivial if $A = IP\cdot A$,  $H_1(P,A) = 0$
and if one of the following two conditions holds:
\begin{enumerate}[(i)]
\item $A$ is locally cyclic; 
\item  $P$ contains a rational number $p = n/d > 1$  so that $A$ is divisible by $n^2-d^2$.
\end{enumerate}
\end{theorem}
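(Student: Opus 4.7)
The plan is to split the theorem into its two halves and leverage diagram \eqref{eq:Diagram-for-Bab}, which already organizes the computation of $B_{\ab}$. First I would dispose of the forward direction: if $B_{\ab}=0$, then by the exact sequence \eqref{eq:ses-for-Bab} the quotient $K(A,P)$ vanishes, and Proposition \ref{prp:LemmaC5}(i) immediately forces $A = IP\cdot A$. For the converse, suppose $A=IP\cdot A$ and $H_1(P,A)=0$. Again by Proposition \ref{prp:LemmaC5}(i) and \eqref{eq:ses-for-Bab}, we have $B_{\ab}\cong \coker\rho_*$, so it suffices to prove $\coker\rho_*=0$, and a sufficient condition for this is $\bar L=0$. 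Since the bottom row of \eqref{eq:Diagram-for-Bab} is exact, $\bar L$ sits in an exact sequence $H_0(P,H_2(A))\to \bar L\to H_1(P,A)\to 0$; as $H_1(P,A)=0$ by hypothesis, the whole task boils down to showing $H_0(P,H_2(A))=0$ in each of the two cases.

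Identifying $H_2(A)$ with $A\wedge_{\Z}A$ by Hopf's formula, case (i) is essentially free: if $A$ is locally cyclic, any two elements $a,b\in A$ lie in a common cyclic subgroup $\langle c\rangle$, so $a=mc$ and $b=nc$ for integers $m,n$, and then $a\wedge b = mn\,(c\wedge c)=0$. Hence $H_2(A)$ itself is zero, and a fortiori $H_0(P,H_2(A))=0$.

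Case (ii) is the substantive one. Write $p=n/d\in P$ with $n>d>0$ coprime integers. Since $p,p^{-1}\in P$ act on $A$ as endomorphisms whose composition is the identity, multiplication by $n=dp$ and by $d$ are both injective $\Z$-linear on $A$, and $A$ inherits a $\Z[1/(nd)]$-module structure from its $\Z[P]$-module structure; this passes to $A\wedge_{\Z}A$, which therefore also becomes a $\Z[1/(nd)]$-module. The induced action of $p$ on $H_2(A)=A\wedge A$ is $p\cdot(a\wedge b)=(pa)\wedge(pb)$, and using $d^2(pa\wedge pb)=(dpa)\wedge(dpb)=(na)\wedge(nb)=n^2(a\wedge b)$ together with the invertibility of $d^2$, one obtains $p\cdot(a\wedge b)=p^2(a\wedge b)$. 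Consequently $(p-1)\cdot(a\wedge b)=(p^2-1)(a\wedge b)=\tfrac{n^2-d^2}{d^2}(a\wedge b)$. Now $A$ is divisible by $n^2-d^2$ by assumption, hence so is $A\wedge A$, while $d^2$ is already invertible there; multiplication by $\tfrac{n^2-d^2}{d^2}$ is therefore surjective on $H_2(A)$. This gives $(p-1)\cdot H_2(A)=H_2(A)$, so $IP\cdot H_2(A)=H_2(A)$, \ie $H_0(P,H_2(A))=0$, as required.

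The main obstacle I expect is the bookkeeping in case (ii), namely making rigorous that the action of a non-integer $p=n/d\in P$ on $A\wedge_{\Z}A$ really is multiplication by $p^2$; the key ingredient is that $d$ (and $n$) must act invertibly on $A$ because $p^{-1}\in P$ and $P$ is a group, which allows $A\wedge_{\Z}A$ to be promoted to a $\Z[1/(nd)]$-module so that scalars from $P$ can legitimately be pulled across wedges. Everything else is a straightforward assembly of the diagram \eqref{eq:Diagram-for-Bab} with the exact sequence \eqref{eq:ses-for-Bab}.
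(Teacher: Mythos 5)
Your proposal is correct and follows essentially the same route as the paper: the forward direction via the exact sequence \eqref{eq:ses-for-Bab} and Proposition \ref{prp:LemmaC5}(i), and the converse by reducing everything to the vanishing of $H_0(P,H_2(A))$ through diagram \eqref{eq:Diagram-for-Bab}, with $H_2(A)\cong A\wedge A$ killed outright in case (i) and by the multiplication-by-$p^2$ action in case (ii). Your extra care in case (ii) about why $d$ acts invertibly on $A\wedge_{\Z}A$ merely makes explicit a step the paper leaves implicit.
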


\begin{proof}
If $B_{\ab}$ is trivial 
then $K(A,P) = 0$ by the exact sequence \eqref{eq:ses-for-Bab}
and so $A = IP \cdot A$ by part (i) of  Proposition \ref{prp:LemmaC5}.
Conversely, assume that $A = IP \cdot A$.  
Then $K(A,P)$ vanishes by part (i) of  Proposition \ref{prp:LemmaC5}.
If, in addition,  $H_1(P,A) = 0$ and $A$ is locally cyclic
the group $H_2(A)$, 
being isomorphic to the exterior square of $A$, 
vanishes and so $B_{\ab} = 0$ by diagram \eqref{eq:Diagram-for-Bab}.
If condition (ii) is fulfilled
the rational number $p$ acts on $H_2(A) \approx A \wedge A$ by multiplication by $p^2$.  
The group $H_0(P,H_2(A))$ is therefore trivial,
for it is a quotient of  
\[
H_2(A)/ H_2(A) \cdot (p^2- 1) =H_2(A)/ H_2(A) \cdot (n^2-d^2).
\] 
The desired conclusion  follows again from diagram \eqref{eq:Diagram-for-Bab}.
\end{proof}

\begin{examples}
\label{examples:TheoremC10}
\index{Module A@Module $A$!select examples}
\index{Module A@Module $A$!select examples}
The PL-homeomorphism groups of the form $G(I;A,P)$
that are discussed in the literature
typically use a module of the form $A = \Z[P]$. 
Then the condition $A = IP \cdot A$ implies that $H_1(P,A) =0$  
(see Example \ref{example:LemmaC6}). 
\emph{The group $B$ is therefore perfect and hence simple} 
(by Corollary \ref{crl:Simplicity-of-B'}) 
\emph{provided hypothesis (i) or (ii) is fulfilled}.

Here are some concrete examples.
a) Assume first that $P$ is the cyclic group generated by $p = \tfrac{3}{2}$,  
and set $A= \Z[P]$.
Then $IP \cdot A = (\tfrac{3}{2}- 1) \cdot A = \tfrac{1}{2} \cdot A = A$.
Moreover,
as $A$ is locally cyclic hypothesis (i) is fulfilled and so $B$ is simple.

b)  Assume next
that $P$ contains the integers 2 and 3 and  set $A = \Z[P]$.
Then  $IP$ contains $2-1$ and so $IP \cdot A = A$;
in addition,  $2^2 - 1^2 = 3 \in P$. 
So hypothesis (ii) is fulfilled,
whence $B(\R;\Z[P],P)$ is simple.
Moreover, Theorem \ref{TheoremE10}) implies
that distinct groups $P$, $\bar{P}$, containing both 2 and 3, 
give rise to non-isomorphic simple groups $B(\R;\Z[P], P)$ and 
$B(\R;\Z[\bar{P}], \bar{P})$.
\index{Theorem \ref{TheoremE10}!consequences}%

c) From the point of view of an analyst, uncountable groups are also of interest,
for example the group $B(\R;\R,\R^\times_{> 0})$;
here $P = \R^\times_{>0}$ and $A = \Z[P]$.
Since 2 and 3 lie in $P$
the group $B(\R;\R,\R^\times_{> 0})$ is simple by the previous example b),
and we have recovered a result originally proved by Chehata in \cite{Che52}
\index{Chehata, C. G.}%
and later on by D. B. A. Epstein in \cite[Theorem 3.2]{Eps70}.
\index{Epstein, D. B. A.}
\end{examples}
\index{Subgroup B(I;A,P)@Subgroup $B(I;A,P)$!vanishing of Bab@vanishing of $B_{\ab}$|)}%
%
\subsection{Action of $G(\R;A,P)$ on  $B(\R;A,P)_{\ab}$}
\label{ssec:Action-TildeG}
%
We begin with an observation.
If $I$ is a compact interval 
the orbits of the group $G(I;A,P)$ contained in $A\, \cap\, \Int{I}$ 
have the form  $(IP \cdot A + a) \cap  \Int(I)$ with $a \in A$
(see Corollary \ref{CorollaryA1}). 
Now every bounded PL-homeomorphism $f \in G(\R;A,P)$ 
lies in $G(I;A,P)$ for some compact interval $I$ 
and $B(G;A, P)$ is the union of the groups $G(I; A,P)$ with $I$ a compact interval.
It follows that the orbits of $\tilde{B} = B(\R;A,P)$ in $A$ are the cosets of $IP \cdot A$ in $A$. 

Fix a $\tilde{B}$-orbit contained in $A$, 
say $\Omega = IP \cdot A + a_0$.
The function
\begin{equation}
\label{eq:nu-for-B}
\index{Homomorphism!13-nu-Omega@$\nu_\Omega$}%
\nu_\Omega \colon \tilde{B} \longrightarrow P, 
\quad
f \longmapsto \prod\nolimits_{a \in \Omega} f'(a_+)/f'(a_-)
\end{equation}
is then defined
and, as $\Omega$ is an orbit of $\tilde{B}$, it is a homomorphism
(\cf{}section \ref{ssec:11.1}).

Consider now the PL-homeomorphism $f = b(a, \Delta; p)$ 
discussed in Example \ref{example:Elements-in-B};
here $a \in A$ and $\Delta$ is a positive element of $A$
and $p \in P$ is a non-trivial element. 
This homeomorphism has 3 singularities,
namely 
\[
a, \quad a + \Delta \quad \text{and}\quad  a + (p+1) \cdot \Delta \in a + 2 \Delta + IP \cdot A.
\]
Its image $\nu(f)$ is given 
by formula \eqref{eq:nu-of-bounded-element};
\index{Homomorphism!13-nu@$\nu$!applications}%
it is
\begin{equation*}
\label{eq:nu-of-bounded-element-2} 
\index{Homomorphism!13-nu@$\nu$}%
\nu(f) = \tilde{B} \cdot a \otimes p + \tilde{B} \cdot (a+ \Delta) \otimes p^{-2}
+ \tilde{B} \cdot (a+ 2\Delta) \otimes p.
\end{equation*}

Suppose now 
that $A/(IP \cdot A)$ has more than one element
and choose $\Delta \in A \smallsetminus IP \cdot A$.
Then the orbit represented by $a + \Delta$ is distinct from the orbit represented by $a$,
but also distinct from $\tilde{B} \cdot (a + 2 \Delta)$.
If $\pi \colon  \Z[A/(IP \cdot A)] \otimes P \epi P$ denotes the projection onto the direct summand corresponding to  $IP \cdot A + (a+ \Delta)$,
the image of $ f$ under $\pi \circ \nu$ is therefore $p^{-2}$.

Consider, finally, the translation $\tau \colon \R \to \R$ with amplitude $\Delta$
and the conjugated element 
$
\act{\tau} f =\tau \circ b(a, \Delta; p) \circ \tau^{-1} )  
= 
b(a + \Delta, \Delta; p)$.
Then 
\[
(\pi \circ \nu) ( \act{\tau}f)
=
\begin{cases} 
p &\text{ if  }2 \Delta \notin IP \cdot A,\\
p^2 & \text{ if } 2 \Delta \in IP \cdot A.
\end{cases}
\]
In both cases, the homomorphism $\pi \circ \nu$ maps $f = b(a;\Delta;p)$ 
and its conjugate $\act{\tau}f$ to distinct elements  of $P$.
Since $P$ is commutative 
$\pi \circ \nu$ induces a homomorphism of $\tilde{B}_{\ab}$ into $P$;
it maps the cosets $f \cdot [\tilde{B},\tilde{B}]$ 
and $\act{\tau}f\cdot [\tilde{B},\tilde{B}]$ 
to different elements of $P$.
 
The previous calculations establish
\begin{proposition}
\label{prp:Action-TildeG}
\index{Group G(I;A,P)@Group $G(I;A,P)$!action on Bab@action on $B_{\ab}$}%
If $IP \cdot A \neq A$ the group $\tilde{G} = G(\R;A, P)$  acts non-trivially 
on the abelianization $\tilde{B}_{\ab}$ of its subgroup of bounded elements $\tilde{B}$.
\end{proposition}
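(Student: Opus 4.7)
The plan is to exploit the homomorphism $\nu \colon \tilde B \to \Z[A/(IP\cdot A)] \otimes P$ of formula \eqref{eq:nu-for-B}, together with the fact that $\tilde G$ acts on the set of $\tilde B$-orbits (which are the cosets of $IP\cdot A$ in $A$) by translation. Since $P$ is abelian, each $\nu_\Omega$ descends to a homomorphism $\tilde B_{\ab} \to P$; to show that $\tilde G$ acts non-trivially on $\tilde B_{\ab}$ it therefore suffices to exhibit an element $f \in \tilde B$ and an element $g \in \tilde G$ so that $\nu(f)$ and $\nu(\act g f)$ have different images in some coordinate $P$.

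First I would choose $\Delta \in A \smallsetminus (IP\cdot A)$, which is possible by the hypothesis $IP\cdot A \neq A$, and pick any $p \in P \smallsetminus \{1\}$ and any $a \in A$. Then I would consider the bounded PL-homeomorphism $f = b(a,\Delta;p)$ of Example \ref{example:Elements-in-B} and its conjugate $\act\tau f = b(a+\Delta,\Delta;p)$, where $\tau$ is the translation $t \mapsto t + \Delta$, an element of $\Aff(A,P) \subseteq \tilde G$. Formula \eqref{eq:nu-of-bounded-element} gives
\begin{align*}
\nu(f) &= \tilde B\cdot a \otimes p + \tilde B\cdot(a+\Delta)\otimes p^{-2} + \tilde B\cdot(a+2\Delta)\otimes p,\\
\nu(\act\tau f) &= \tilde B\cdot(a+\Delta)\otimes p + \tilde B\cdot(a+2\Delta)\otimes p^{-2} + \tilde B\cdot(a+3\Delta)\otimes p.
\end{align*}

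Next I would project $\Z[A/(IP\cdot A)] \otimes P$ onto the direct summand $P$ indexed by the coset $(a+\Delta) + IP\cdot A$. Since $\Delta \notin IP\cdot A$ the orbit $\tilde B\cdot(a+\Delta)$ is distinct from $\tilde B\cdot a$ and from $\tilde B\cdot(a+2\Delta)$, so the projection sends $\nu(f)$ to $p^{-2}$. A case split on whether $2\Delta \in IP\cdot A$ shows that the same projection sends $\nu(\act\tau f)$ to either $p$ (when $2\Delta \notin IP\cdot A$, so that $a+3\Delta$ is not in the orbit of $a+\Delta$) or to $p^2$ (when $2\Delta \in IP\cdot A$, so that the first and last summands collapse onto the chosen coordinate). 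In either case the two values differ because $p \neq 1$, and hence $f[\tilde B,\tilde B] \neq (\act\tau f)[\tilde B,\tilde B]$ in $\tilde B_{\ab}$.

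The calculations are essentially routine once the right test element is on the table; the conceptual step, which I view as the only real content, is to recognize that an element $g \in \tilde G \smallsetminus \tilde B$ --- here a translation --- can permute the $\tilde B$-orbits in $A$ non-trivially precisely when $IP\cdot A \neq A$, and that $\nu$ is sensitive enough to detect this permutation on the abelianization. No further obstacle should arise, since $\nu$ factors through $\tilde B_{\ab}$ by the commutativity of $P$ and $\tau$ normalises $\tilde B$ because $\tilde B$ is the kernel of $(\lambda,\rho)$ and hence normal in $\tilde G$.
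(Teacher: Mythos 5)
Your proof is correct and is essentially identical to the paper's own argument: it uses the same test element $b(a,\Delta;p)$ with $\Delta \in A \smallsetminus (IP\cdot A)$, the same conjugating translation $\tau$, the same projection onto the summand indexed by $(a+\Delta)+IP\cdot A$, and the same case split on whether $2\Delta \in IP\cdot A$, yielding the values $p^{-2}$ versus $p$ or $p^{2}$.
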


%

%
%
\chapter{Presentations}
\label{chap:D}
%
%
Let $I$ be the line $\R$ or the half line $[0,\infty[$.
The group $G = G(I;A,P)$ admits then a simple infinite presentation:
in case $I = \R$,
its generators are the affine homeomorphisms 
and PL-homeomorphisms with one break,
if $I = [0,\infty[$, 
the generating set consists of all PL-homeomorphisms with one singularity.

In Chapter \ref{chap:B} the triples $(I,A,P)$ 
giving rise to a \emph{finitely generated groups} $G$ have been determined:
the group $P$ must then be finitely generated and $A$ must be a finitely generated $\Z[P]$-module;
moreover, 
if $I = [0,\infty[$ the submodule $IP \cdot A$ is to be of finite index in $A$
(see Theorems \ref{TheoremB2} and \ref{TheoremB4}).
\index{Quotient group A/IPA@Quotient group $A/(IP \cdot A)$!significance}%
The necessity of the stated conditions follows directly from the observation
that the group $G$ is the union of an infinite chain
if either $P$ is an infinitely generated group, 
or if $A$ is an infinitely generated module,
or if $I =[0,\infty[$ and $A/(IP \cdot A)$ is infinite 
(clearly only countable groups present a problem).
In the case of a line the sufficiency of the stated conditions
can easily be read off from the infinite  presentation;
in the other case, 
the claim has been established by a rather lengthy calculation.

In this chapter, 
we address the problem of determining the triples $(I,A,P)$ 
which lead to \emph{finitely presented groups} $G$.
Our results are far less complete than those in Chapter \ref{chap:B}.
The necessary conditions we have found are easy to state:
if $G$ admits a finite presentation 
so does the metabelian group  $\Aff(A,P) \iso A \rtimes P$;
\index{Affine group Aff(A,P)@Affine group $\Aff(A,P)$!significance}
moreover, if $I = \R$ then $A/(IP \cdot A)$ must be finite
(see Propositions \ref{PropositionD2} and \ref{PropositionD5}).
\index{Quotient group A/IPA@Quotient group $A/(IP \cdot A)$!significance}%
Our sufficient conditions are very exacting; 
the best result is obtained in the case of a line (see Proposition \ref{PropositionD4});
in the case of a half line 
the group $P$ is required to be generated by positive integers 
and $A = \Z[P]$ (see Theorem \ref{TheoremD6}).
\index{Group P@Group $P$!select examples}%
\index{Module A@Module $A$!select examples}%

In the above, only intervals of infinite length have been mentioned.
There are results for intervals of the form $[0, b]$ with $b \in A$
(see Section \ref{sec:15})
but, much as in Chapter \ref{chap:B}, 
these results are far less general 
than the findings for intervals of infinite length.
%
\setcounter{section}{12}
\section{Presentations of groups with supports in the line}
\label{sec:13}
%
Here is a summary of the results in this section.
The group $G = G(\R;A,P)$ is a semi-direct product of the form $N \rtimes \Aff(A,P)$.
If $G$ is finitely generated, $N$ is the normal closure of a finite subset 
and $G$ admits a tractable infinite presentation (see Proposition \ref{PropositionD1}).
If $G$ admits a \emph{finite presentation}, 
so does its quotient group $\Aff(A,P)$; 
moreover, the quotient module $A/(IP \cdot A)$ is then finite 
(see Proposition \ref{PropositionD2}).
Proposition \ref{PropositionD4}, finally, gives a partial converse to these results.
%
\subsection{An infinite presentation}
\label{ssec:13.1}
In Section \ref{sec:7}  
the group $G = G(\R;A,P)$ has been shown to be generated 
by the affine subgroup $\Aff(A,P)$ 
and by the subset of the kernel of $\lambda \colon G\to \Aff(A,P)$ 
consisting of elements with one singularity.
If we include,  
for ease of notation,  the identity into this subset 
we arrive at the set
\begin{equation}
\label{eq:13.1}
\GG = \{g(a,p)\mid (a,p) \in A\times P\};
\end{equation}
here  $g(a,p)$ denotes the PL-homeomorphism that fixes $t$ for $t\leq a$ 
and sends $t$ to $p(t-a) + a$ for $t > a$.  

Relations in terms of the generating set $\Aff(A,P)\,\cup\, \GG$ 
\index{Group G(R;A,P)@Group $G(\R;A,P)$!generating sets}%
are the multiplication table relations
\begin{align}
\aff(a,p)\circ \aff(a',p') &= \aff(a+pa',pp'), \label{eq:13.2}\\
g(a,p)\circ g(a,p') &= g(a,pp'), \label{eq:13.3}
\intertext{and the conjugation relations}
\act{\aff(a,p)}g(a',p') &= g(a+pa',p'), \label{eq:13.4}\\
\act{g(a,p)}g(a',p') &= g(a+p(a'-a), p') \text{ if }  a < a'.\label{eq:13.5}
\end{align}
\index{Group G(R;A,P)@Group $G(\R;A,P)$!infinite presentations}%

These relations define $G$ in terms of the generating set $\Aff(A,P)\cup \GG$
(see \cite[Section 2]{BrSq85}).  
Indeed, let $F$ be the free group on  the set $\Aff(A,P)\,\cup \,\GG$
and let $\equiv$ denote the congruence relation on $F$
generated by relations \eqref{eq:13.2} through \eqref{eq:13.5}.  
Given a word $w\in F$
there exists, by relations \eqref{eq:13.4} and \eqref{eq:13.5}, 
a congruent word $v_{a_m} \cdots  v_{a_1} \cdot u$, 
where $u$ is a word in $\Aff(A,P)$, 
each $v_{a_i}$ is a word in $\{g(a_i,p)\mid p\in P\}$ and $a_m > a_{m-1} > \cdots > a_1$.
Using relations \eqref{eq:13.2} and  \eqref{eq:13.3}, 
each of these subwords can be replaced by a generator $g(a_i, p_i)$,
respectively by a generator $\aff(a_0,g_0)$, without leaving the congruence class.  
The resulting word
\begin{equation*}
w' = g(a_m,p_m) \cdots g(a_1,p_1)\aff(a_0,p_0)
\end{equation*}
represents the  identity $\id \colon \R \iso \R$
only if $\aff(a_0, p_0) = \id$ and $p_i = 1$ for each $i$.

The above presentation has a simple form, 
but its generating set contains many redundant elements;  
a smaller generating set can be obtained as follows.
We begin by introducing the PL-homeomorphisms
\begin{equation}
\label{eq:New-generators}
\index{Group G(R;A,P)@Group $G(\R;A,P)$!generating sets}%
f(p) = \aff(0,p), \quad  g(p) = g(0,p) \text{ and } h(a) = \aff(a,1).
\end{equation}
Relations \eqref{eq:13.2} and  \eqref{eq:13.4} 
allow one to express the old generators in terms of the new ones:
$\aff(a,p) = h(a) \circ f(p)$ and $g(a,p) = \act{h(a)}g(p)$.
If one substitutes the new generators into relations  \eqref{eq:13.3}
one finds that all of them are consequences of the relations
\begin{equation}
\label{eq:13.3p}
g(p) \cdot  g(p') =g(pp') \quad\text{for all} \quad (p, p')  \in P^2. 
\end{equation}
If one expresses  relation \eqref{eq:13.4} in the new generators
one arrives at the relations
\[
\act{h(a) f(p)}\left( \act{ h(a')}g(p') \right) = \act{h(a + p a')} g(p').
\] 
The conjugating element of the right hand side of this relation
equals $h(a) \act{f(p)}h(a')$ (by relations  \eqref{eq:13.2}). 
So relations \eqref{eq:13.4} boil down to the conjugation relations
\begin{equation}
\label{eq:13.4p}
\act{f(p)} g(p') =g(p') \quad\text{for all} \quad (p, p') \in P^2.
\end{equation}
Now to relations \eqref{eq:13.5}.
The insertion of the new generators produces the relations
\[
\act{h(a) g(p) h(a)^{-1} \cdot h(a')} g(p') = \act{h(a + p(a'-a))}g(p').
\]
Upon setting $b = a'-a$ and using  relations \eqref{eq:13.2},
the previous relations become
\begin{equation}
\label{eq:13.5p}
\act{g(p)h(b)}g(p') = \act{h(pb)}g(p') 
\quad\text{for all} \quad (b, p, p') \in A_{>0} \times P^2.
\end{equation}
Note that $h(pb) = f(p) \cdot h(p)\cdot f(p)^{-1}$; 
since $\act{f(p)}g(p') = g(p')$ in view of relations \eqref{eq:13.4p}, 
relations \eqref{eq:13.5p} can also be written in the form
\begin{equation}
\label{eq:13.5pp}
\act{g(p)h(b)}g(p') = \act{f(p)h(b)}g(p').
\end{equation}

In the sequel,
we shall mainly be interested  in presentations of finitely generated groups.  
Then $P$ will be finitely generated, hence free abelian.  
Suppose therefore that $P$ is free abelian and let $\PP$ is a basis of $P$.  
Relations \eqref{eq:13.2} and \eqref{eq:13.3p} can then be used 
to eliminate the generators $f(p)$ and $g(p)$  with $p\in P\smallsetminus \PP$. 
The presentation derived from relations \eqref{eq:13.2}, \eqref{eq:13.4p} and \eqref{eq:13.5p} 
in terms of this smaller set of generators is then as described in
\begin{proposition}
\label{PropositionD1}
\index{Group G(R;A,P)@Group $G(\R;A,P)$!infinite presentations}%
\index{Group G(R;A,P)@Group $G(\R;A,P)$!generating sets}%
Suppose $P$ is free abelian with basis $\PP$. 
Introduce the PL-homeomorphisms $f(p) = \aff(0,p)$, $g(p) = g(0,p)$ and $h(a) = \aff(a,1)$.
The group $G(\R;A,P)$ is then generated by the (infinite) set
\begin{equation*}
\{f(p) \mid p\in \PP\} \cup \{g(p)\mid p\in \PP\} \cup \{h(a) \mid a\in A\},
\end{equation*}
and defined in terms of this generating set by the relations
\begin{equation}
\label{eq:13.6}
[f(p),f(p')] = [f(p),g(p')] = [g(p),g(p')] = \id,
\end{equation}
\begin{align}
\act{f(p)}h(a) &= h(pa),\label{eq:13.7}\\
h(a)\cdot h(a') &= h(a+a'), \text{ and} \label{eq:13.8}\\
\act{g(p)h(b)}g(p') &= \act{f(p)h(b)}g(p').\label{eq:13.9}
\end{align}
Here $(a,a',b)$ ranges over $A\times A \times A_{>0}$ and $(p,p')$ over $\PP^2$.
\end{proposition}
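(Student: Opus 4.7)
The plan is to perform Tietze transformations on the known infinite presentation with generator set $\Aff(A,P) \cup \GG$ and relations \eqref{eq:13.2}--\eqref{eq:13.5}, recalled at the start of this section. Since $P$ is free abelian with basis $\PP$, every $p \in P$ has a unique representation $p = \prod p_i^{n_i}$ with $p_i \in \PP$; I therefore \emph{define} $f(p) := \prod f(p_i)^{n_i}$ and $g(p) := \prod g(p_i)^{n_i}$ as words in the new generators, the ordering ambiguity being absorbed by the commutator relations $[f(p_i),f(p_j)] = \id$ and $[g(p_i),g(p_j)] = \id$ from \eqref{eq:13.6}. Substituting $\aff(a,p) = h(a) f(p)$ and $g(a,p) = \act{h(a)} g(p)$ then shows that the new set generates $G(\R;A,P)$; it remains to verify that each of \eqref{eq:13.6}--\eqref{eq:13.9} holds in $G(\R;A,P)$ and, conversely, that each of \eqref{eq:13.2}--\eqref{eq:13.5} is a consequence of \eqref{eq:13.6}--\eqref{eq:13.9}.

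The first direction is a direct check on the defining formulas \eqref{eq:New-generators} of $f(p)$, $g(p)$, and $h(a)$. For the converse, \eqref{eq:13.3} is immediate from $[g(p_i),g(p_j)] = \id$. Relations \eqref{eq:13.2} and \eqref{eq:13.4} follow by combining the extended identity $\act{f(p)}h(a) = h(pa)$ for general $p \in P$ --- obtained by a straightforward induction on the length of $p$ in $\PP^{\pm 1}$ from \eqref{eq:13.7} and the commutator relations --- with \eqref{eq:13.8} and $\act{f(p)}g(p') = g(p')$ from \eqref{eq:13.6}. For \eqref{eq:13.5}, conjugating out by $h(a)$ and setting $b = a' - a > 0$ reduces the assertion to
\begin{equation*}
\act{g(p) h(b)} g(p') = \act{f(p) h(b)} g(p') \qquad \text{for all } p \in P \text{ and } p' \in \PP,
\end{equation*}
which for $p \in \PP$ is precisely \eqref{eq:13.9}.

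The main obstacle is therefore extending this last identity from $p \in \PP$ to all $p \in P$. I would argue by induction on the word length of $p$ in $\PP^{\pm 1}$. For the step $p \mapsto p p_i$ with $p_i \in \PP$, I start from $\act{g(p_i) h(b)} g(p') = \act{f(p_i) h(b)} g(p')$, rewrite the right-hand side as $\act{h(p_i b)} g(p')$ by pushing $f(p_i)$ through $h(b)$ via \eqref{eq:13.7} and absorbing the trailing $f(p_i)$ through $[f(p_i),g(p')] = \id$, then apply $\act{g(p)}$ and invoke the inductive hypothesis with $h(p_i b)$ in place of $h(b)$ (legitimate since $p_i b > 0$). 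Reversing the initial rearrangement rewrites the result as $\act{f(p p_i) h(b)} g(p')$, closing the induction. Negative exponents are handled by an analogous rearrangement, and the extension from $p' \in \PP$ to $p' \in P$ is formal because conjugation is a homomorphism and $g(p')$ is by construction a product of the $g(p_j')^{\pm 1}$.
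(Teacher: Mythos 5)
Your proposal is correct and follows essentially the same route as the paper: both start from the Brin--Squier presentation on $\Aff(A,P)\cup\{g(a,p)\}$ with relations \eqref{eq:13.2}--\eqref{eq:13.5}, substitute $\aff(a,p)=h(a)f(p)$ and $g(a,p)=\act{h(a)}g(p)$ to obtain the rewritten relations \eqref{eq:13.3p}, \eqref{eq:13.4p}, \eqref{eq:13.5pp} for all $(p,p')\in P^2$, and then eliminate the generators indexed by $P\smallsetminus\PP$. The one place you go beyond the paper's text is the explicit induction on word length showing that \eqref{eq:13.9} for $(p,p')\in\PP^2$ implies \eqref{eq:13.5pp} for all $(p,p')\in P^2$; the paper leaves this reduction implicit in its Tietze elimination, and your argument for it is sound.
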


\begin{remark}
\label{remark:PropositionD1}
The first commutator relations in \eqref{eq:13.6} guarantee 
that the subset $\{f(p) \mid p \in \PP\}$ generates an abelian group;
the relations \eqref{eq:13.7} and \eqref{eq:13.8} define then the module $A$.
If the metabelian subgroup (and quotient group) $Q = A \rtimes P$ of $G$ admits a finite presentation,
the infinite set of generators $\{f(p)\mid p\in \PP\} \cup \{h(a) \mid a\in A\}$ 
can be replaced by a finite subset $\QQ$, 
and finitely many relations in terms of this subset $\QQ$ 
will then define the group $Q$.

The group $G$ itself will then be generated by its subgroup $Q$
and the finite subset $\{ g(p) \mid p\in \PP\}$
and defined by the relations of $Q$,
the remaining relations listed in equation \eqref{eq:13.6} 
and the set of relations \eqref{eq:13.9}.
This last set has a strange feature: 
it involves the subset $A_{>0}$ 
which depends not only on the module structure on $A$,
but, in addition,  on the order relation inherited from $\R$.
In section \ref{ssec:13.3New} 
we shall present an approach to deal with this problem.
\end{remark}
%
\subsection{Necessary conditions for a finite presentation}
\label{ssec:13.2}
In Chapter \ref{chap:B} necessary, as well as sufficient, conditions
for the finite generation of $G = G(\R;A,P)$ have been stated.  
If it comes to finite presentations, we cannot do equally well: 
we can only derive necessary conditions and give sufficient conditions of a rather different sort.
Necessary conditions are spelled out by
\begin{proposition}
\label{PropositionD2}
\index{Group G(R;A,P)@Group $G(\R;A,P)$!finite presentation}%
\index{Quotient group A/IPA@Quotient group $A/(IP \cdot A)$!significance}%
\index{Affine group Aff(A,P)@Affine group $\Aff(A,P)$!significance}%
If $G(\R;A,P)$ admits a finite presentation 
the metabelian quotient group $\Aff(A,P)$ is finitely related
and $A/(IP\cdot A)$ is finite.
\end{proposition}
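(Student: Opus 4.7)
The plan handles the two conclusions separately, using different tools. For the first conclusion, the key observation is that the homomorphism $\lambda \colon G(\R;A,P) \to \Aff(A,P)$ of section \ref{ssec:3.2} is a retraction: $\Aff(A,P)$ sits as a subgroup of $G(\R;A,P)$ and $\lambda$ fixes it pointwise. Since retracts of finitely presented groups are themselves finitely presented, the conclusion is immediate. Equivalently, the proof of Theorem \ref{TheoremB2} identifies $\ker \lambda$ as the normal closure in $G$ of the finite set $\{g(0,p) \mid p \in \PP\}$, where $\PP$ is a finite generating set of $P$ (finite by Proposition \ref{PropositionB1}), so the standard lemma on quotients of finitely presented groups by finitely normally generated normal subgroups applies.

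For the second conclusion, first observe that Proposition \ref{PropositionB1} applied to $G$ (finitely generated, being finitely presented) already guarantees that $P$ is finitely generated---hence free abelian of finite rank---and that $A$ is a finitely generated $\Z[P]$-module. This yields at once that $A/(IP\cdot A)$ is a finitely generated abelian group, but not finiteness. To upgrade to finiteness, the plan is to analyze the infinite presentation of $G$ furnished by Proposition \ref{PropositionD1}, focusing on the one-parameter family of relations \eqref{eq:13.9}, which after a routine rearrangement take the form of commutator relations
\[
[\,g(p)f(p)^{-1},\;\act{h(b)}g(p')\,]=1,\qquad b \in A_{>0},\ (p,p')\in\PP^2.
\]
These express the geometric fact that elements of $G$ supported in $]-\infty,0]$ commute with elements supported far to the right, and there is one such relation for every positive element of $A$. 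A finite presentation of $G$ lists only finitely many of them explicitly, so the remaining ones must be consequences of a finite subfamily together with the other (finitely many) relations of the presentation.

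The main obstacle is to translate this derivability into the numerical statement $|A/(IP\cdot A)|<\infty$. The approach is to argue contrapositively: assuming $A/(IP\cdot A)$ is infinite, pick a sequence $b_0,b_1,b_2,\ldots$ in $A_{>0}$ whose images in $A/(IP\cdot A)$ are pairwise distinct, and construct for each $n$ a homomorphism $\varphi_n \colon G \to H_n$ to an auxiliary group in which the relation indexed by $b_j$ holds for $j<n$ but fails for $j=n$. These obstruction homomorphisms would show that the $b_n$-relations are genuinely independent modulo any finite subfamily, contradicting finite presentability. The delicate point is the construction of the $\varphi_n$; the natural candidates arise by passing to PL-homeomorphism groups modelled on auxiliary $\Z[P]$-modules $A'$ for which $A'/(IP\cdot A')$ is a prescribed finite quotient of $A/(IP\cdot A)$, so that the relation ``commutators vanish for $b\in A_{>0}$'' is forced to hold only up to the chosen finite level.
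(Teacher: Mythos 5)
Your treatment of the first conclusion is correct and is essentially the paper's argument: the paper notes that relations \eqref{eq:13.4} exhibit $\ker\lambda$ as the normal closure of the finite set $\{g(0,p)\mid p\in\PP\}$, so the quotient $\Aff(A,P)\cong G/\ker\lambda$ inherits a finite presentation; your retraction formulation is an equivalent packaging of the same fact.

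The second conclusion is where the proposal has a genuine gap. Your contrapositive requires, for each $n$, a group $H_n$ in which the images of the generators satisfy all the relations of some finite sub-presentation (in particular the commutation relations indexed by $b_0,\dots,b_{n-1}$) while the relation indexed by $b_n$ fails. The candidates you propose --- groups of PL-homeomorphisms built over auxiliary modules $A'$ --- cannot do this job: in any such group the images of $g(p)f(p)^{-1}$ and of $\act{h(b)}g(p')$ are supported in $]-\infty,0]$ and in $[b,\infty[$ respectively, so they commute for \emph{every} positive $b$, and all the relations \eqref{eq:13.9} hold automatically. An obstruction quotient would have to live outside the world of homeomorphism groups, and building one is exactly the delicate combinatorial problem here; note moreover that the relation $\act{g(b,p)}g(b',p')=g(b+p(b'-b),p')$ shows that many of the relations \eqref{eq:13.9} are consequences of finitely many others, so the independence you are counting on is far from evident and is in tension with Proposition \ref{PropositionD4}, where a finite subfamily does suffice under mild hypotheses.

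The paper avoids this route altogether. It invokes Theorem A of \cite{BiSt78}: a finitely presented group with no non-abelian free subgroups (which $G$ is, by Brin--Squier) admitting an epimorphism $\chi\colon G\epi\Z$ must be the ascending union $\bigcup_{j\geq 0}\act{x^{j\varepsilon}}B$ of conjugates of a single finitely generated subgroup $B\leq\ker\chi$, for some sign $\varepsilon$. If $A/(IP\cdot A)$ were infinite, it would surject onto $\Z$, giving a submodule $A_1\supseteq IP\cdot A$ with $A/A_1\cong\Z$ and hence such a $\chi$ (compose $\lambda$ with $A\rtimes P\epi\Z$); Lemma \ref{lem:Sigularities-in-ascending-union} then shows by an explicit computation of singularity sets that any such ascending union has all its singularities inside the proper subset $A_1+\Acal+\N\cdot a_0$ of $A$, so it cannot exhaust $\ker\chi$ --- a contradiction. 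To complete your argument you would need either an honest construction of the obstruction groups $H_n$ or a substitute for this structure theorem; neither step is routine, and as it stands the proof is incomplete.
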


\begin{proof}
According to Theorem \ref{TheoremB2} 
the group $G = G(\R;A,P)$ is finitely generated if, and only if, $P$ is finitely generated 
and $A$ is a finitely generated $\Z[P]$-module; 
these two conditions hold precisely 
if the homomorphic image $\im \lambda = \Aff(A,P) \cong A\rtimes P$ is finitely generated.
Moreover, if $P$ is finitely generated, say by the finite set $\PP$,
relations \eqref{eq:13.4} imply
that the kernel of $\lambda$ is the normal closure of the finite set $\{g(0,p)\mid p\in \PP\}$.  
Thus $\Aff(A,P)$ is finitely presented if $G$ is so.
\smallskip

The proof of the assertion 
that finite presentability of $G$ forces $A/(IP\cdot A)$ to be finite
is more involved and relies on Theorem A in \cite{BiSt78}.
\index{Bieri, R.}%
\index{Strebel, R.}%
This theorem has the following consequence:
suppose $G$ is a finitely related group without non-abeian free subgroups,
$\chi \colon G \epi \Z$ is an epimorphism 
and $x  \in G$ is an element generating a complement of $\ker \chi$.
Then there exist a \emph{finitely generated} subgroup $B$ of $\ker \chi$
and a sign $\varepsilon \in \{1, -1\}$ with the following properties:
\begin{equation}
\label{eq:13.10}
\act{x^{-\varepsilon}}B \subseteq B 
\quad \text{and}\quad 
\bigcup\nolimits_{j\geq 0} \act{x^{j\cdot \varepsilon}} B = \ker\chi
\end{equation}
(see, \eg{}  Theorem B$^*$ on page 266 and the Deduction of Theorem B from Theorem B$^*$ on page 267 in \cite{Str84}).

Suppose now
that \emph{$G = G(\R;A,P)$ is finitely generated 
and that $A/(IP\cdot A)$ is infinite}. 
The image $\Aff(A,P) \iso A \rtimes P$ of $G$ is then finitely generated,
whence $A$ is a finitely generated $\Z[P]$-module 
over the finitely generated group $P$,
and so the quotient module $A/(IP \cdot A)$ is a finitely generated abelian group.
Being infinite,
it maps therefore onto an infinite cyclic group;
so there exists a subgroup $A_1$ of $A$
that contains $IP \cdot A$ and so that $A/A_1$ is infinite cyclic.
The subgroup $A_1$ is, in point of fact,
a finitely generated $\Z[P]$-submodule of $A$ 
and $A_1 \rtimes  P$ is a finitely generated normal subgroup of  
$A \rtimes P \isoinv \Aff(A,P)$ 
with infinite cyclic quotient $(A \rtimes P)/(A_1 \rtimes P)$.
There exists therefore an epimorphism $\bar{\chi} \colon A \rtimes P \epi \Z$ 
with kernel $A_1 \rtimes P$;
let $\chi$ denote the composition
\begin{equation*}
G \xrightarrow{\lambda} \Aff(A,P)  
\iso A \rtimes P 
\xrightarrow{\bar\chi} \Z.
\end{equation*}
The kernel of $\chi$ is then the semi-direct product 
\begin{equation}
\label{eq:Relation-chi-lambda}
\ker\chi = \ker\lambda \rtimes \Aff(A_1,P).
\end{equation}

Next, 
let $x_0 \in \Aff(A, P) \subset G$ be a translation with $\chi (x_0) \in \{ 1, -1\}$
and let $B_0$ be a subgroup of $\ker \chi$ with the following properties:
\begin{equation}
\label{eq:13.10plus}
B_0 \text{ is finitely generated and } B_0 \subseteq \act{x_0} B_0. 
\end{equation}
Lemma \ref{lem:Sigularities-in-ascending-union} below assures us
that
$\bigcup\nolimits_{j\geq 0} \act{x^j} B_0 \subsetneqq \ker\chi$.
As the group of all finitary PL-homeomorphisms of the real line
and, in particular, $G$
have no non-abelian free subgroups (see \cite[Theorem (3.1)]{BrSq85}),
the consequence of \cite[Theorem A]{BiSt78} stated in the above
thus allows us to conclude  
that $G$ is infinitely related.
\end{proof}

We are left with proving
\begin{lemma}
\label{lem:Sigularities-in-ascending-union}
Suppose the epimorphism $\chi \colon G(\R; A, P)  \epi \Z$,
the finitely generated subgroup $B_0 \in \ker \chi$
and the translation $x_0 \colon t \mapsto t + a_0$ are as before.
Then
\[
\bigcup\nolimits_{j\geq 0} \act{x_0^j} B_0\subsetneqq \ker\chi.
\]
\end{lemma}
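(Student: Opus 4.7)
The plan is to define a projection that tracks the singularities of elements of $\ker\chi$ modulo $A_1$, and then show that this invariant is bounded from below on $\bigcup_{j \geq 0} \act{x_0^j} B_0$ while $\ker\chi$ contains elements violating that bound. Let $\pi \colon A \to A/A_1 \cong \Z$ be the quotient map; we may assume $\pi(a_0) = 1$, so that $\chi(x_0) = 1$. The starting observation is that $P$ acts \emph{trivially} on $A/A_1$: for every $p \in P$ and $a \in A$ one has $(p-1)\cdot a \in IP\cdot A \subseteq A_1$. A piecewise computation along the breaks of any $f \in \ker\chi$, using $\chi(f) = \pi(\tau_-(f)) = 0$, then yields $\pi \circ f = \pi$ on the whole of $A$, and hence also $\pi \circ f^{-1} = \pi$.

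Next I would choose a finite generating set $b_1, \dots, b_n$ of $B_0$ and put $N_0 := \pi\bigl(\bigcup_i \Sing(b_i)\bigr) \subset \Z$, a finite set. Combining the general inclusion $\Sing(b' b'') \subseteq \Sing(b'') \cup (b'')^{-1}(\Sing(b'))$ with the identity $\pi \circ (b'')^{-1} = \pi$ gives by induction on word length that $\pi(\Sing(b)) \subseteq N_0$ for every $b \in B_0$. Since conjugation by $x_0^j$ simply shifts singularities by $j a_0$, this yields
\begin{equation*}
\pi(\Sing(\act{x_0^j} b)) \subseteq N_0 + j \quad \text{for all } j \geq 0 \text{ and } b \in B_0,
\end{equation*}
so every $f \in \bigcup_{j \geq 0} \act{x_0^j} B_0$ with non-empty singularity set satisfies $\min \pi(\Sing(f)) \geq \min N_0$.

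To exhibit a witness $f \in \ker\chi$ violating this bound, I would fix $s \in A$ with $\pi(s) = \min N_0 - 1$, choose a positive $\Delta \in IP \cdot A \subseteq A_1$ (such a $\Delta$ exists because $IP \cdot A$ is dense in $\R$ by \eqref{eq:Non-triviality-assumption}), and pick any $p \in P \smallsetminus \{1\}$. The bump homeomorphism $f = b(s,\Delta;p)$ of Example \ref{example:Elements-in-B} has bounded support, hence lies in $B(\R;A,P) \subseteq \ker\chi$; and since $\pi(\Delta) = 0$ and $P$ acts trivially on $A/A_1$, its three singularities $s,\, s+\Delta,\, s+(p+1)\Delta$ all project to $\min N_0 - 1$. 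Thus $\pi(\Sing(f)) \not\subseteq N_0 + j$ for any $j \geq 0$, so $f$ cannot lie in the union.

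The hard part will be identifying the right invariant: reducing via $\lambda$ or $\rho$ alone loses too much information, for a finitely generated subgroup of $\Aff(A,P)$ can perfectly well contain all of $A_1$ (the failure of $A_1$ to be finitely generated as an abelian group need not transfer to $\lambda(B_0)$ or $\rho(B_0)$). Passing instead to $\pi(\Sing(\cdot))$ and exploiting $IP\cdot A \subseteq A_1$ to trivialize the $P$-action on $A/A_1$ is what makes the $\pi$-levels of singularities stable under products in $\ker\chi$, and thereby converts the one-sided dynamics of $x_0$ into a proper inclusion.
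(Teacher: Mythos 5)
Your proof is correct and follows essentially the same route as the paper's: both arguments bound the singularities of the elements of $\bigcup_{j\geq 0}\act{x_0^j}B_0$ modulo $A_1$ --- using that $IP\cdot A\subseteq A_1$ forces $\ker\chi$ to preserve the cosets of $A_1$ --- and then exhibit an element of $\ker\chi$ with a singularity in a coset the union misses. The only differences are organizational: the paper verifies coset-stability just for its explicit generators $g(a,p)$ and $\Aff(A_1,P)$ and gets its witness from the fact that $\Sing(\XX_{-\infty})=A$, whereas you state the invariant $\pi\circ f=\pi$ globally on $\ker\chi$ and construct the witness as an explicit bump $b(s,\Delta;p)$.
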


\begin{proof}
We first have a closer look at $K = \ker \chi$.
Equation \eqref{eq:Relation-chi-lambda} shows
that $K$ is the semi-direct product of $\ker \lambda$ and $\Aff(A_1, P)$.
The kernel of $\lambda$ contains the subset $\XX_{-\infty}$
made up of all PL-homeomorphisms $g(a,p)$ given by the formula
\begin{equation}
\label{eq:8.1bis}
g(a,p) (t) = \begin{cases} t & \text{ if } t \leq a,\\ p(t-a) + a & \text{ if } a \leq t ;\end{cases}
\end{equation}
here $a$ runs over $A$ and $p$ over $P$.
An easy induction on the number of singularities of the elements in $\ker \lambda$ 
then discloses
that $\ker \lambda$ is generated by the subset $\XX_{-\infty}$
(cf.{} \cite[Corollary (2.5)]{BrSq85}).
As  $\XX_{-\infty}$ is closed under inversion
it follows
that every element in $K = \ker \chi$ is a composition of the form
\begin{equation}
\label{eq:Representation-elements-K}
f = g(a_{f,k}, p_{f, k}) \circ \cdots \circ g(a_{f,1}, p_{f,1}) \circ \aff(a'_f, p'_f).
\end{equation}

We next turn our attention to the subgroup $B_0$.
It is,
by hypothesis, 
a finitely generated subgroup of $K$.
Choose a finite generating set $\BB_0$ of $B_0$ that is stable under inversion
and then select, for every $f \in \BB_0$, a representation of the form
\eqref{eq:Representation-elements-K}.
Let $\Acal$ be the set of all elements $a_{f, j}$ occurring
in the chosen representations of the elements $f \in \BB_0$;
it is a finite set.
Consider now the set  
\begin{equation}
\label{eq:Set-of-singularities}
\Sing(B_0) 
= \{t \in \R \mid \exists f \in B_0 \text{ with } f'(t_-) \neq f'(t_+) \}
\end{equation}
of all singularities of the elements in $B_0$.
Our immediate aim is to establish an upper bound for $\Sing(B_0)$,
namely 
\begin{equation}
\label{eq:Upper-bound-B0}
\Sing(B_0) \subseteq A_1 + \Acal = \bigcup\nolimits_{a \in \Acal} A_1 + a.
\end{equation}

Fix an element $h \in B_0$.
Since the generating set $\BB_0$ of $B_0$ is closed under inversion,
$h$ is a product $f_\ell \circ \cdots \circ f_1$  of elements $f_j \in \BB_0$,
each of which has the form \eqref{eq:Representation-elements-K}.
If $h$ is not differentiable at $s$ 
there exists a factor $f_j$ of $h$ 
and in this factor a function $g(a_{f_j, i}, p_{f_j,i})$,
all in such a way
that the PL-homeomorphism
\[
h_* 
=   
\left( g(a_{f_j, i-1}, p_{f_j,i-1}) \circ \cdots \circ g(a_{f_j, 1}, p_{f_j,1})  
\circ \aff(a'_{f_j}, p'_{f_j}) \right)
\circ  
\left(f_{j-1} \circ \cdots \circ f_1\right)
\]
maps $s$ onto the unique singularity $a_{f_j, i}$ of $g(a_{f_j, i}, p_{f_j,i})$.
Now, $a_{f_j, i} \in \Acal$ 
and $h_*$ is a composition of PL-homeomorphisms 
that belong to  $\KK =\XX_\infty \cup \Aff(A_1, P)$.
Since $\KK$ is closed under inversion, 
it suffices to check
that each element  of $\KK$ 
maps the set $A_1 + \Acal$ into itself.
This verification is easy:
fix $a_* \in  A_1 + \Acal$. 
If $\aff(a_1, p)$ is an element of  $\Aff(A_1, P)$ 
then
\[
\aff(a_1, p)(a_*) = a_1 + p \cdot a_* 
= (a_1 + (p-1) \cdot a_*)) + a_* \in A_1 + a_*.
\]
If  $g(a, p) \in \XX_\infty$ then
\[
g(a, p)(a_*) = \begin{cases}
a_*  & \text { if } a_* \leq a \\
a + p(a_*-a) = (p-1)(a_*-a) + a_* \in A_1 + a_* & \text{ if } a_* > a.
\end{cases}
\]
The preceding calculations use both the fact 
 that  $IP \cdot A$ is contained in $A_1$.

 So far we know
 that $\Sing(B_0)$ is contained in $A_1 + \Acal$
where $\Acal$ is a certain finite subset of $A$.
Let's now pass to the subgroup $\act{x_0} B_0$.
By assumption,
$x_0$ is the translation with amplitude $a_0 \in A$.
The set of singularities $\Sing(\act{x_0}B_0)$ of $\act{x_0}B_0$
equals therefore $\Sing(B_0) + a_0$.
Moreover,
\[
\Sing(\act{x_0^k}B_0) = \Sing(B_0) + k \cdot a_0
\]
for every positive integer $k$, 
whence inclusion \eqref{eq:Upper-bound-B0}
allows us to infer that
\begin{equation}
\label{eq:Upper-bound-Binfty}
\Sing \left(\bigcup\nolimits_{j\geq 0} \act{x_0^j} B_1\right) 
\subseteq A_1 + \Acal + \N \cdot a_0.
\end{equation}
The right hand side of this inclusion is a proper subset of $A$;
for $A/A_1$ is infinite cyclic,
$a_0$ generates a complement of $A_1$ in $A$ and $\Acal$ is a finite set.
The assertion of the lemma now follows from inclusion 
\eqref{eq:Upper-bound-Binfty}
and the fact 
that the set of singularities $\Sing(\XX_\infty)$ of $\XX_\infty$ 
is, of course, all of $A$.
\end{proof}

\begin{remarks}
\label{remarks:Section-13.3}
(i) The problem of characterizing the \emph{finitely related} met\-abelian groups
among the \emph{finitely generated} metabelian groups
 is addressed in \cite{BiSt80}.  
The proposed solution involves a subset 
$\Sigma_{[Q,Q]}$ of a sphere $S(Q_{\ab})$; 
thanks to a number of subsequent articles this subset is fairly well-understood.  
A survey of results on $\Sigma_{[Q,Q]}$ is given in \cite[Section 6]{Str84}.
\index{Bieri, R.}%
\index{Strebel, R.}%

(ii) If $G$ is finitely presented, 
so is the soluble quotient $G/[B,B]$ of $G$, the group $B'$ being simple by section \ref{ssec:10.2}.  
\emph{Suppose now that $A/(IP\cdot A)$ is finite.
Then $G/[B,B]$ is finitely presented if, and only if, $G/\ker\lambda$, 
or equivalently $\Aff(A,P)$,  is so}.  
Indeed, if $G/[B,B]$ is finitely presented 
then so is $\Aff(A,P)$ by the first part of the proof of Proposition \ref{PropositionD2}
and the fact that $B \subset \ker \lambda$. 
The converse follows from three facts.  
First of all,
$G/[B,B]$ is an extension of $B_{\ab}$ by $G/B$, 
which by the proof of Corollary \ref{CorollaryA3} 
is an extension of $IP\cdot A \rtimes P$ by $A\rtimes P$.  
Secondly, $B_{ab}$ is finitely generated by Lemma \ref{LemmaD3} below 
and $IP\cdot A \rtimes P$ is finitely presented, 
\eg{}because it has finite index in $A\rtimes P$ which is finitely related by assumption.  
Finally, extensions of finitely presented groups can be finitely presented 
by a well-known observation of P.\ Hall's  (see, \eg{}\cite[p.\,52]{Rob96}).

(iii) Lemma \ref{LemmaD3} below states a  condition 
which implies that $B_{\ab}$ is finitely generated.
Another sufficient condition can be gleaned 
from  Proposition \ref{PropositionC1}
\index{Proposition \ref{PropositionC1}!consequences}%
and the exact sequence \eqref{eq:5-term-sequence-involving-Bab}: 
\emph{the group $B_{\ab}$ is finitely generated 
if $G([a,c]; A, P)_{ab}$, and hence its image $P$,
are finitely generated (for some $a < c$ in $A$).}
\footnote{see sections \ref{sssec:Notes-ChapterC-Brown-Stein}
and \ref{sssec:Notes-ChapterC-Gal-Gismatullin} for a more revealing argument}
\index{Subgroup B(I;A,P)@Subgroup $B(I;A,P)$!finite generation of Bab@finite generation of $B_{\ab}$}%
\end{remarks}

In part (ii) of the preceding remarks,
the following result has been used.
\begin{lemma}\label{LemmaD3}
\index{Subgroup B(I;A,P)@Subgroup $B(I;A,P)$!finite generation of Bab@finite generation of $B_{\ab}$}%
\index{Quotient group A/IPA@Quotient group $A/(IP \cdot A)$!significance}%
If $\Aff(A,P)$ is finitely presented and $A/(IP \cdot A)$ is finite
then the abelian group $B(\R;A,P)_{\ab}$ is finitely generated.
\end{lemma}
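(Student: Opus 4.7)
The plan is to reduce to $B = B([0,\infty[\,; A, P)$ via Proposition \ref{PropositionC1} and then to extract the finiteness of $B_{\ab}$ directly from the Lyndon--Hochschild--Serre five-term sequence of the extension \eqref{eq:12.4} studied in section \ref{sssec:12.2.1}. Since $\Aff(A, P) \iso A \rtimes P$ is finitely presented, it is in particular finitely generated, so $P$ is finitely generated and $A$ is a finitely generated $\Z[P]$-module. Because $A/(IP \cdot A)$ is finite, the subgroup $(IP \cdot A) \rtimes P$ has finite index in $A \rtimes P$, and therefore it too is finitely presented.

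Next I would invoke the extension $B \mono G_1 \xrightarrow{\rho} (IP \cdot A) \rtimes P$ of \eqref{eq:12.4}, where $G_1 = \ker(\sigma_-\colon G([0,\infty[\,;A,P) \to P)$. By Lemma \ref{LemmaC3New} the group $G_1$ acts trivially on $B_{\ab}$, so $[G_1, B] = [B, B]$ and the five-term sequence collapses to
\begin{equation*}
H_2((IP \cdot A) \rtimes P) \longrightarrow B_{\ab} \longrightarrow (G_1)_{\ab} \longrightarrow ((IP \cdot A) \rtimes P)_{\ab} \longrightarrow 0.
\end{equation*}
From this exact sequence $B_{\ab}$ is sandwiched between a quotient of $H_2((IP \cdot A) \rtimes P)$ and a subgroup of $(G_1)_{\ab}$, so it suffices to show that both flanking terms are finitely generated abelian groups.

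The right-hand flank is handled by Corollary \ref{crl:PropositionC2}: since $P$ is finitely generated and $IP \cdot A$ has finite index in $A$, the group $(G_1)_{\ab}$ is free abelian of rank $\rk(P) \cdot \card(A/(IP \cdot A))$. The left-hand flank is handled by Hopf's formula applied to $Q = (IP \cdot A) \rtimes P$: writing $Q = F/R$ with $F$ free of finite rank and $R$ the normal closure of finitely many elements, the abelianized relation module $R/[F, R]$ is a finitely generated abelian group, and $H_2(Q) = (R \cap [F, F])/[F, R]$ embeds into it. Since subgroups of finitely generated abelian groups are finitely generated, both flanks are finitely generated, and therefore so is $B_{\ab}$.

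The only genuinely non-formal ingredients are the identification $[G_1, B] = [B, B]$ via Lemma \ref{LemmaC3New} and the finite presentability of the finite-index subgroup $(IP \cdot A) \rtimes P$ of $\Aff(A,P)$; everything else is a routine assembly of Section \ref{sec:12} and the standard Hopf finiteness result. Accordingly I do not foresee a significant obstacle, the main attention being that the hypothesis $A/(IP \cdot A)$ finite is used \emph{twice}---once to promote finite presentability from $A \rtimes P$ to $(IP \cdot A) \rtimes P$, and once to control the rank of $(G_1)_{\ab}$ via Corollary \ref{crl:PropositionC2}.
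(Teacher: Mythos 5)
Your proof is correct, and while it shares the paper's skeleton (reduction to $B([0,\infty[\,;A,P)$ via Proposition \ref{PropositionC1}, the five-term sequence of $B \mono G_1 \epi (IP\cdot A)\rtimes P$, and Lemma \ref{LemmaC3New} to turn $B/[G_1,B]$ into $B_{\ab}$), the two finiteness inputs you feed into that sequence are genuinely different from the paper's. The paper controls the kernel side by passing to the group $\bar L$ of Proposition \ref{PropositionC4}: this requires the spectral-sequence ladders \eqref{eq:12.6}, the noetherianness of $\Z P$ to get $H_1(P, IP\cdot A)$ finitely generated, and a Schur--Hopf computation showing $H_2(IP\cdot A)$ is a finitely generated $\Z P$-module; it controls the cokernel side via $K(A,P)$ and Proposition \ref{prp:LemmaC5}(ii). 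You instead bound the kernel side by $H_2((IP\cdot A)\rtimes P)$ itself, using only the classical fact that the Schur multiplier of a finitely presented group is finitely generated (your Hopf-formula argument is sound: conjugation acts trivially on $R/[F,R]$, so that group is generated by the images of the finitely many relators), and you bound the cokernel side by $(G_1)_{\ab}$, which Proposition \ref{PropositionC2} identifies as free abelian of finite rank under the hypotheses. Your route is shorter and more elementary; what the paper's longer route buys is the finer structural description of $B_{\ab}$ as an extension of $K(A,P)$ by $\coker(L\to\bar L)$, which is reused elsewhere (e.g.\ in the vanishing results of section \ref{ssec:12.4New}) but is not needed for this lemma. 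The only cosmetic point is that Corollary \ref{crl:PropositionC2} is stated for groups $G(I;A,P)$ with $I$ a half line, whereas $G_1$ is the kernel of $\sigma_-$; you should either invoke the isomorphism $G_1\cong G([b,\infty[\,;A,P)$ for $b\notin A$ or cite the isomorphism \eqref{eq:Isomorphism-for-G1} from the proof of Proposition \ref{PropositionC2} directly.
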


\begin{proof}
The claim is a consequence of results in Chapter \ref{chap:C}. 
Proposition \ref{PropositionC1} first shows that $B(\R;A,P)$ is isomorphic 
to the group $B =B([0,\infty[\,;A,P)$.
\index{Proposition \ref{PropositionC1}!consequences}%
The abelianization of the latter group is an extension of a quotient of a group $\bar{L}$ 
by a group $K(A;P)$ (see formula \eqref{eq:ses-for-Bab}).
By assumption,
$A/(IP \cdot A)$ is finite and $\Aff(A,P)$ is finitely presented,
whence $P$ is finitely generated and $A$ is a finitely generated $\Z[P]$-module;
by part (ii) of Proposition \ref{prp:LemmaC5} 
the group $K(A,P)$ is  thus finitely generated. 
It suffices therefore to show
that  $\bar{L}$ is finitely generated.

By formula \eqref{eq:Description-Lbar} 
the group $\bar{L}$ is the middle term of an exact sequence
\[
H_0(P, H_2(IP \cdot A))  \longrightarrow \bar{L} \longrightarrow H_1(P, IP \cdot A) \to 0.
\]
As pointed out before,
$P$ is finitely generated and $A$ is a finitely generated $\Z[P]$-module.
Since the group ring of a finitely generated abelian group is noetherian,
the submodule $A_1 = IP \cdot A$ of $A$ is finitely generated
and so the homology group $H_1(P, A_1)$ is a finitely generated group
(see, \eg{}\cite[Theorems 4.2.3 and 10.1.5]{LeRo04}).
To complete the proof it suffices thus to show
that the iterated homology group $H_0(P, H_2(A_1))$  is finitely generated.

We claim that $H_2(A_1)$ is finitely generated as a $\Z{P}$-module.
Indeed,
the group $A_1 \rtimes P$ is (isomorphic to) a subgroup of $\Aff(A,P)$ having finite index
and so finitely presentable since $\Aff(A,P)$ is so.
Choose a free presentation $\pi \colon  F\epi A_1 \rtimes P$  
with $F$ a finitely generated free group.
Set $R = \ker \pi$ and  $S = \pi^{-1}(A_1)$.
Then $S$ is a free group and so the Schur-Hopf formula 
permits one to express $H_2(A_1)$ as $(R\cap S')/[R,S]$
(see, \eg \cite[section VI.9]{HiSt97}).
\index{Homology Theory of Groups!Schur-Hopf formula}%
Since $(R\cap S')/[R,S]$  is a submodule of the $\Z{P}$-module $R/[R,S]$
and as the latter is finitely generated and $\Z{P}$ is noetherian, 
the $\Z{P}$-modules $(R\cap S')/[R,S]$ and $H_2(A_1, \Z)$ are finitely generated over $\Z{P}$, 
and so $H_0(P, H_2(A_1))$ is a finitely generated abelian group.
\end{proof}
%
\subsection{Sufficient conditions for finite presentability}
\label{ssec:13.3New}
%
In this section we formulate conditions 
which guarantee that a group of the form $G= G(\R;A,P)$ admits a finite presentation.
As these conditions might puzzle the reader 
we begin by motivating them.
%
\subsubsection{Motivation}
\label{sssec:13.3aNew}
%
The homomorphism $\lambda \colon G \epi \Aff(A,P) \iso A \rtimes P$ has a splitting,
given by the obvious inclusion of $\Aff(A,P)$ into $G$.
The group $G$ is thus a semidirect product $N \rtimes \Aff(A,P)$
with $N$ the normal subgroup generated by the subgroup $G([0,\infty[\,;A,P)$.
Assume now that $G$ is \emph{finitely generated}.
Then so is $P$; as $P$ is torsion-free abelian, it is free abelian; let $\PP$ be a basis of  $P$. 
Proposition \ref{PropositionD1} thus applies 
and provides one with an infinite presentation of $G$.
This presentation has three types of generators,
homotheties $f(p) = \aff(0,p)$ with $p \in \PP$,
translations $h(a) = \aff(a,1)$ with $a \in A$, 
and the generators $g(p) = g(0,p)$ with $p \in \PP$.
This generating set is infinite,
but this is solely due to the translations $h(a)$ 
with $a$ ranging over the infinite $\Z[P]$-module $A$.
Note that the module $A$ is finitely generated, 
for the group $G$, and hence its quotient $\Aff(A,P) \iso A \rtimes P$, are so.

Assume next that $G$ is \emph{finitely presented}.
By Proposition  \ref{PropositionD2}
the quotient group $\Aff(A,P) \iso A \rtimes P$ of $G$ is then finitely presented, too. 
Let $\Acal \subset A$ be a finite set generating $A$ as a $\Z[P]$-module.
Then $\Aff(A,P)$ has a finite presentation of the form
\begin{equation}
\label{eq:Presentation-Aff(A,P)}
\Aff(A,P) 
= 
\left\langle \Acal \cup \Fcal 
\,\Big|\, 
\{\, [f(p), f(p')]  \mid (p,p') \in \PP^2 \} \cup \RR \} \right\rangle
\end{equation}
where $\Fcal = \{f(p) \mid p \in \PP \}$ and $\RR$ is a finite set of relators
which ensure that $A$ is an abelian group with the correct module structure.
In the presentation \eqref{eq:Presentation-Aff(A,P)},
each homothety $f(q)$ is represented 
by a product of homotheties $h(p_j)^{\varepsilon_j}$ 
with $p_j \in \PP$ and $\varepsilon_j \in \{1,-1\}$,
while the translation $h(a)$ with amplitude $a$ is represented by a product of the form
$\act{h(q_1)}a_1 \cdots \act{h(q_m)} a_m$ 
with each $q_\ell \in P$
and each $a_\ell \in \Acal \cup \Acal^{-1}$.
The relations \eqref{eq:13.7} and \eqref{eq:13.8} are then satisfied,
and hence in principle redundant.
But they are very convenient in later calculations.

Let's enlarge presentation  \eqref{eq:Presentation-Aff(A,P)}
to a presentation of $G$.
Proposition \ref{PropositionD1} shows how this can be done:
one has to add the (finite) set of generators 
$\GG = \{g(p) = g(0, p)\mid p \in \PP\}$,
and add the subset 
\begin{equation}
\label{eq:Remaining-commutators}
[f(p), g(p')] = 1 = [g(p), g(p')] \quad\text{for}\quad  (p, p') \in \PP^2
\end{equation}
of the set of commutator relations \eqref{eq:13.6},
and the infinite set of conjugation relations \eqref{eq:13.9},
namely
\begin{equation}
\label{eq:Conjugation-relations}
\act{g(p) h(b)}g(p') = \act{f(p) h(b)}g(p').
\end{equation}
In this last set of relations, the elements $p$ and $p'$ range over $\PP$ 
and $b $ ranges over the subset $B = A_{> 0}$.
To deal with this infinite set of relations
we introduce a condition that will be formulated and investigated next.
%
\subsubsection{The result and its proof}
\label{sssec:13.3bNew}
%
The subset $A_{>0}$ inherits two algebraic operations 
from the $\Z[P]$-module $A$:
is is closed under addition, thus a sub-semi-group of $A_{\add}$,
and it is stable under multiplication by $P$.
We can thus view $A_{>0}$ as \emph{a semi-group with operators in $P$}.
\index{Sub-semi-group with operators!definition|textbf}%
If this structure is finitely generated 
the group $G$ can conceivably be shown to be finitely presented.

We have only been able to establish a weaker result of this kind,
namely
\begin{proposition}
\label{PropositionD4}
\index{Group G(R;A,P)@Group $G(\R;A,P)$!finite presentation}%
\index{Finiteness properties of!G(R;A,P)@$G(\R;A,P)$}%
\index{Sub-semi-group with operators!application}
Suppose $\Aff(A,P)$ is finitely presented 
and $B = A_{>0}$ is finitely generated 
as a semi-group with operators in $P$.
If $P$ contains a rational number distinct from 1
then $G(\R;A,P)$ admits a finite presentation.
\end{proposition}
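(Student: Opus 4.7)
The plan is to start from the explicit infinite presentation of $G = G(\R;A,P)$ furnished by Proposition \ref{PropositionD1} and to reduce the sole infinite family of relations occurring in it to a finite sub-family. Since $\Aff(A,P)$ is finitely presented, $P$ is finitely generated and, being torsion-free, free abelian with a finite basis $\PP$; fix also a finite $\Z[P]$-module generating set $\Acal$ of $A$ and a finite presentation of $\Aff(A,P)$ in the generators $\Acal \cup \Fcal$, where $\Fcal = \{f(p) \mid p \in \PP\}$. Proposition \ref{PropositionD1} then presents $G$ in the finite generating set $\Acal \cup \Fcal \cup \GG$ ($\GG = \{g(p) \mid p \in \PP\}$) by (a) the finitely many relations of the chosen presentation of $\Aff(A,P)$, (b) the finite commutator relations \eqref{eq:13.6}, and (c) the infinite family
\[
R(b;p,p')\colon \quad \act{g(p)\,h(b)} g(p') \;=\; \act{f(p)\,h(b)} g(p'), \qquad b \in A_{>0},\ (p,p') \in \PP^2.
\]
Setting $\tilde g(p) := f(p)^{-1} g(p)$ and using the commutator relations in (b), a short rearrangement rewrites $R(b;p,p')$ as the equivalent commutation relation $[\tilde g(p),\, \act{h(b)} g(p')] = 1$. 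Everything now hinges on replacing the family (c) by a finite subset.

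Choose a finite subset $\BB \subset A_{>0}$ generating $A_{>0}$ as a semi-group with operators in $P$. The reduction is carried out via two closure principles. \emph{Principle~I} (closure under the $P$-action): for every $q \in P$ and every $b \in A_{>0}$, $R(qb;p,p')$ is a formal consequence of $R(b;p,p')$; indeed, conjugating the commutator form $[\tilde g(p), \act{h(b)} g(p')] = 1$ by $f(q)$ and using $[f(q),f(p)] = [f(q),g(p)] = [f(q),g(p')] = 1$ together with the identity $f(q) h(b) f(q)^{-1} = h(qb)$ from \eqref{eq:13.7}, one obtains $\act{f(q)} \tilde g(p) = \tilde g(p)$ and $\act{f(q)}(\act{h(b)} g(p')) = \act{h(qb)} g(p')$, which is $R(qb;p,p')$. \emph{Principle~II} (closure under addition): for all $b_1,b_2 \in A_{>0}$, the relation $R(b_1+b_2;p,p')$ is a consequence of the finite collection $\{R(b_i; p, p'') \mid i \in \{1,2\},\,p'' \in \PP\}$ together with the relations already available. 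Given both principles, a routine induction on the length of a $P$-semi-group expression $b = \sum_i q_i b_i$ with $q_i \in P$ and $b_i \in \BB$ shows that every $R(b;p,p')$ is a consequence of the finite sub-family $\{R(b_i;p,p') \mid b_i \in \BB,\,(p,p')\in \PP^2\}$, completing the argument.

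The main obstacle is Principle~II, and it is for this step alone that the assumption $P \cap \Q \neq \{1\}$ is required. Writing a non-trivial rational as $r = n/d$ with $n,d$ coprime positive integers and $n \neq d$, the affine element $f(r) \in \Aff(A,P)$ shifts translations in an arithmetically controlled way, while the integer $n-d = d(r-1) \in I[P]$ provides a lattice within $A$ on which one can telescope. Concretely, writing $h(b_1 + b_2) = h(b_1) h(b_2)$ and invoking \eqref{eq:13.5pp} iteratively with $p$ specialised to $r$ and $r^{-1}$, the aim is to express $\act{h(b_1 + b_2)} g(p')$ as an explicit word in $\{\act{h(b_i)} g(p'') \mid i\in\{1,2\},\,p''\in \PP\}$; since each such element lies in the centraliser of $\tilde g(p)$ by hypothesis, the sought commutation follows. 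Writing out this rewriting explicitly, verifying that it closes up into a finite auxiliary list of instances of \eqref{eq:13.9} (all themselves derivable from the base family by Principle~I), and checking termination, is the technical heart of the proof and the only substantive difficulty.
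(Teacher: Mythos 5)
Your overall strategy — start from the presentation of Proposition \ref{PropositionD1}, reformulate the relations \eqref{eq:13.9} as commutation relations $[f(p)^{-1}g(p),\,\act{h(b)}g(p')]=1$, and reduce the infinite family indexed by $b\in A_{>0}$ to a finite sub-family using the finite generation of $A_{>0}$ as a semi-group with operators and the rational in $P$ — is exactly the paper's strategy, and your Principle~I is the paper's property (II), proved the same way. The gap is Principle~II. You assert that $R(b_1+b_2;p,p')$ is a formal consequence of the relations $R(b_i;p,p'')$ for $i\in\{1,2\}$ alone, and you explicitly defer its verification as ``the technical heart of the proof.'' But this pairwise additive closure is not what the paper establishes, and it is not clear that it is derivable at all: the paper's corresponding closure property (III) is weaker and of a different shape — it derives $R(b+qb')$ only under the hypothesis that $b$, $b'$ \emph{and} $b+b'$ all already satisfy the relation, and the long chain of equivalences in the proof genuinely consumes $R(b+b')$ as an input (at the step marked ``since $b+b'\in\Bfr$''). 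With only the three-term closure available, your ``routine induction on the length of a semi-group expression'' does not go through, because at each additive step you would need the sum itself as a prior input.

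The entire remaining architecture of the paper's proof exists to compensate for precisely this failure: the finite set of relations retained is not $\{R(b_i)\mid b_i\in\BB\}$ but the larger set indexed by $\tilde\BB$, the set of all sums $\sum_b m(b)\cdot b$ with $0\le m(b)\le n-1$; the induction is a double induction on an ordinal-valued weight $k\omega_0+\ell$; and the rational $n/d$ is used not to ``telescope along a lattice'' but in the specific reduction $b_*=b+(n/d)b'$ with $b=b_*-nb_1$, $b'=db_1$, which makes $b$, $b'$ and $b+b'=b_*-(n-d)b_1$ all of strictly smaller weight so that property (III) applies. None of this is recoverable from your sketch of Principle~II (conjugating by $f(r)$ and $f(r^{-1})$ and ``checking termination''), so the proposal is incomplete at the one step where all the work of the proposition actually lies.
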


\begin{proof}
Let $\PP$ be a basis of the group $P$ and set $\Fcal = \{f(p) \mid p \in P \}$.
The module $A$ is a finitely generated over $\Z[P]$;
let $\Acal \subset A$ be a finite set of generators. 
The group $G$ is then generated by the finite set
\[
\XX = \Fcal \cup \Acal \cup\;  \{g(p) \mid p \in \PP\}.
\] 
Its subset $\Fcal \cup \Acal$ generates the complement $\Aff(A,P)$,
and this complement admits a finite presentation of the form 
\eqref{eq:Presentation-Aff(A,P)}.
Our task is to show that the infinite set of relations \eqref{eq:Conjugation-relations},
namely
\begin{equation}
\label{eq:Conjugation-relations-bis}
\act{g(p) h(b)}g(p') = \act{f(p) h(b)}g(p'),
\end{equation}
follow from finitely many among them, 
from the relations in presentation \eqref{eq:Presentation-Aff(A,P)}
and the commutativity relations \eqref{eq:Remaining-commutators}.

Let $\BB$ be a finite set that generates $B = A_{>0}$ 
as a semi-group with operators in $P$,
and let $n/d \in P \smallsetminus \{1\}$ be a rational number;
we may assume that $n > d > 0$.
Put
\begin{equation*}
\tilde\BB 
=  
\left\{
\sum\nolimits_b m(b) \cdot b \;\Big|\; b \in \BB \text{ and } m(b) \in \{0,1,\hdots,n-1\} 
\right\}.
\end{equation*}
Then $\tilde\BB $ is a finite set.
 
Let $F(\XX)$ be the free group on $\XX$
and let $\equiv$ denote the congruence relation on $F(\XX)$ 
that is generated by the relators in presentation \eqref{eq:Presentation-Aff(A,P)},
by the commutator relations \eqref{eq:Remaining-commutators}
and by those relations of the form \eqref{eq:Conjugation-relations-bis}
with $(p,b,p')$ in $\PP \times \tilde{\BB} \times \PP$. 
\emph{We claim that the congruence}
\begin{equation}\label{eq:13.12}
\act{g(p) h(b)}g(p') = \act{f(p) h(b)}g(p')
\end{equation}
\emph{holds for each element $b \in B = A_{>0}$ and each pair $(p,p')$ in $\PP^2$}.  

Let $\Bfr$ denote the subset of those elements $b \in B = A_{>0}$ 
for which congruence \eqref{eq:13.12} holds for all $(p,p')$ in $\PP^2$.  
Then $\Bfr$ enjoys the following properties.
\begin{enumerate}[(I)]
\item  $\tilde{\BB} \subset \Bfr$.
\item  $P\cdot \Bfr = \Bfr$.
\item  If $b$, $b'$ and $b+b'$ are in $\Bfr$ and if $q$ lies in $P$ 
then $b + qb'$ belongs to  $\Bfr$.
\end{enumerate}

Property (I) holds by the definition of the congruence relation $\equiv$;
property (II) is valid  because of the commutativity relations 
\eqref{eq:Remaining-commutators}, 
relations \eqref{eq:13.7}
and the following calculation,
valid for every $q \in \PP \cup \PP^{-1}$:
\begin{align*}
\act{g(p) h(q b)} g(p')
&=
\act{g(p) \act{f(q)}h(b)} g(p')
=
\act{g(p) (f(q)h(b))} g(p')
=
\act{f(q) (g(p) h(b))} g(p')\\
&=
\act{f(q) (f(p) h(b))} g(p')
=
\act{f(p) (f(q) h(b))} g(p')
=
\act{f(p) h(qb)} g(p').
\end{align*}
 Property (III) will be established by induction 
on the minimal number $\ell$ of letters in $\PP \cup \PP^{-1}$ 
which are needed to express $q$.
If $\ell = 0$ then $b + q b' = b +  1 \cdot b'$ 
and this sum is in $\Bfr$ by hypothesis.

The case $\ell = 1$ requires a rather lengthy argument 
that will be given below.
Suppose now that $\ell = 2$ 
and that $q = p_2^{\varepsilon_2}p_1^{\varepsilon_1}$ 
with $\varepsilon_i \in \{1, -1\}$ and $i \in \{1,2\}$.
Assume $b$, $b'$ and $b+ b'$ lie in $\Bfr$.
The case $\ell = 1$ (to be proved below) then guarantees 
that $b + p_1^{\varepsilon_1} b' \in \Bfr$.
On the other hand, 
$p_1^{\varepsilon_1} b' \in \Bfr$ by property (II) 
and the assumption that $b' \in \Bfr$.
But if so,
the elements $b$, $p_1^{\varepsilon_1}b' $ 
and  $b + p_1^{\varepsilon_1} b'$ lie all three in $\Bfr$,
whence 
$b + q b' = b + p_2^{\varepsilon_2}(p_1^{\varepsilon_1} b')$
lies in $\Bfr$, again by the case $\ell = 1$.
Iterate.

Now to the verification of the case $\ell = 1$.
It will involve a calculation in which $p'$ is fixed 
and only the conjugating elements in the exponents are modified.
In order to render the calculation more readable, 
we write $w_1\approx w_2$ to express 
that  $w_1$ and $w_2$ are words of $F(\XX)$ 
for which the relation 
\begin{equation*}
\act{w_1}g(p') \equiv \act{w_2}g(p')
\end{equation*} 
holds for every choice of $p' \in \PP$; 
the relations then also hold for every $p' \in \PP^{-1}$. 
In the calculation,
we shall also make use of the relations
\eqref{eq:13.7} and \eqref{eq:13.8},
namely
\[
h(p \cdot b) = \act{f(p)} h(b) \quad \text{and}\quad 
h(b_1 + b_2) = h(b_1) \cdot h(b_2). 
\]


Assume now that $b$, $b'$ and $b+b'$ belong to $\Bfr$, 
and fix $q \in \PP \cup \PP^{-1}$. 
We aim at proving the equivalence 
\[
g(p)\cdot h(b +q \cdot b') \approx f(p)\cdot h(b+q \cdot  b').
\]
The proof will consist of a chain of equivalences.
At one point in the chain
one uses the hypothesis that $b \in \Bfr$,
but written in the form
\begin{equation}
\label{eq:Special-form-of-hypothesis}
(g(p) h(b)) \cdot g(p') \cdot (g(p) h(b))^{-1} \equiv h(pb)  \cdot g(p') \cdot  h(pb)^{-1}.
\end{equation}
This relation is valid every $p' \in \PP \cup \PP^{-1}$ 
and so, in particular, for $q$.
The following chain of equivalences now holds:
\begin{align*}
g(p)\cdot h(b+q \cdot b')
&\approx 
g(p)\cdot h(b) h(q \cdot b') 
\approx 
g(p) h(p)  \cdot 1 \cdot  h(q \cdot b') 
\qquad \text{(use \eqref{eq:13.8})}\\
&\approx
 (g(p)h(b)) \cdot 
 \left( g(q) \cdot (g(p)h(b))^{-1}\cdot g(p)h(b) \cdot  g(q)^{-1}\right) 
 \cdot  h(q \cdot b')\\
&\approx 
 \left(g(p)h(b) \cdot g(q) \cdot (g(p)h(b))^{-1}\right) 
 \cdot \left( g(p)h(b)  g(q)^{-1} \cdot  g(q) \cdot h( b')\right) \\
&\approx 
\left( h(p b) \cdot  g(q) \cdot h(p b)^{-1}\right)
\cdot 
\left(g(p)h(b) \cdot h(b')\right) 
\qquad \text{(by \eqref{eq:Special-form-of-hypothesis}) }\\
&\approx
\left(h(pb)g(q)h(pb)^{-1} \right) \cdot g(p)h(b+b')\\
&\approx
\left( h(pb)g(q)h(pb)^{-1} \right)\cdot h(p(b+b'))
\qquad \text{(since $b + b' \in \Bfr$)}\\
&\approx
h(pb)\cdot g(q)\cdot h(-pb + p(b + b'))
\approx h(pb)\cdot g(q)\cdot h(p b')
\\
 &\approx
h(pb)\cdot h(q \cdot  pb')
\qquad \text{(use (II) and $b' \in \Bfr$)}\\
& \approx h(pb + p q b')
\approx
h(p(b+q b'))\\
&\approx
f(p)\cdot h(b+q \cdot  b')
\qquad \text{(by relation \eqref{eq:13.7}).}
\end{align*}

The claim  that $\Bfr = B$ will now be deduced from property (III) 
by a double induction, 
the induction parameter being an ordinal $k\omega_0 + \ell$,
called \emph{weight}.  
This induction parameter is defined like this:  
every $b \in B$ has a representation of the form
\begin{equation}
\label{eq:13.13}
\left(p_1b_1 + p_2b_2 + \cdots + p_k b_k \right)
+ 
\left(1 \cdot b_{k+1} + \cdots + 1\cdot b_{k+\ell} \right),
\end{equation}
where the elements $b_1$,\ldots, $b_{k+\ell}$ lie in $\BB$ 
and the elements $p_1$,\ldots, $p_k$ in $P\smallsetminus  \{1\}$.  
The weight of representation \eqref{eq:13.13} is defined to be $k\omega_0 + \ell$.  

The induction starts with elements having a representation of finite weight, 
\ie{}with sums of the form 
\begin{equation*}
b_* = \sum\nolimits_{b \in \BB} m(b) \cdot b
\end{equation*}
where each $m(b)$ is a non-negative integer.
If $m(b) < n $ for each $b \in \BB$
then $b_* \in \Bfr$ by the definition of $\tilde{\BB}$ and property (I).  
If, on the other hand,  $m(b_1) \geq n$ for some $b_1 \in \BB$,
set $b = b_* - n \cdot b_1$ and $b' = d \cdot b_1$.  
Then $b$, $b'$ and $b+b'$ have representations of smaller weight 
than has $b_*$, 
and so they lie in $\Bfr$ by the induction hypothesis 
while $b_* = b + (n/d)b'$ is in $\Bfr$ by property (III)
and the assumption that $n/d \in P$.

Suppose, finally, that $k > 0$ 
and that each element $b_0 \in B$ with weight less than $k\omega_0$ is in $\Bfr$.  
If $b_*$ has a representation of weight $k\omega_0 + \ell$, 
set $b = b_*-p_1b_1$ and $b' = b_1$.  
Then $b$, $b'$ and $b+b'$ have representations with weight
smaller than $k\omega_0$ and $b_* = b+p_1b'$, 
whence $b_*$ is in $\Bfr$ by property (III) and the inductive assumption.  
So  $\Bfr = B = A_{> 0}$; 
in other words, 
the congruence \eqref{eq:13.12} is valid for every $b \in A_{>0}$ and each couple $(p,p') \in \PP^2$
in the finitely presented group defined just before equation \eqref{eq:13.12}.
So we have constructed a finite presentation of the group $G(\R;A,P)$
and thus Proposition \ref{PropositionD4} is established.
\end{proof}
%
\subsubsection{Some examples of finitely presented groups}
\label{sssec:13.3cNew}
\index{Group G(R;A,P)@Group $G(\R;A,P)$!finite presentation|(}%
%
We shall discuss two kinds of examples.
The metabelian quotients $\Aff(A,P)$ of the groups of the first kind will be \emph{constructible} 
(in the sense of Baumslag-Bieri \cite{BaBi76}) 
and so their finite presentability will be obvious.
\index{Baumslag, G.}%
For the examples of the second kind,  
the finite presentability of $\Aff(A,P)$ will rely on the theory developed
by Bieri-Strebel in \cite{BiSt80} and \cite{BiSt81a}.
\index{Sub-semi-group with operators!examples}%
\index{Bieri, R.}%
\index{Strebel, R.}%
\smallskip

1)  Let $p_1$, $p_2$,\ldots, $p_k$ be positive integers greater than 1
with the property that $\PP = \{p_1,\hdots,p_k\}$ is a basis of the group $P$ generated by $\PP$.
Set $A = \Z[P]$.  
Then each positive element of $A$ is a positive multiple of some power of
\begin{equation*}
p = (p_1\cdot p_2 \cdots p_k)^{-1}
\end{equation*}
and so $b_1 = 1$ generates  $B = A_{>0}$ 
as a semi-group with operators in $P$.

The group $\Aff(A,P)\cong A \rtimes P$ is finitely presented, 
as it can be obtained by a sequence of $k$ ascending HNN-extensions, 
with $\Z$ as the base group  of the first HNN-extension 
and the group obtained in step $i -1$ as the base group of the $i$-th HNN-extension
(see, \eg{}\cite[Section 1.2]{Str84} for further details).
\index{Strebel, R.}%
By Proposition \ref{PropositionD4} the group $G(\R;\Z[P],P)$ admits therefore a finite presentation.

2) In the preceding examples,
the metabelian group $A\rtimes P$ has been built up in a manner which implies directly 
that it admits a finite presentation. 
Now many more subgroups $\Z[P] \rtimes P$ of $\Q \rtimes \Q^\times_{> 0}$ 
admit a finite presentation.  
\footnote{The necessary and sufficient conditions
are spelled out in \cite[p.\ 299, Example 34]{Str84}.}  
\index{Bieri, R.}%
\index{Strebel, R.}%
For some of these finitely presented groups $\Z[P] \rtimes P$
the semi-group  $B = Z[P]_{>0}$ can be shown to be finitely generated
with $P$ as set of operators.
Let $J$ denote the product of all the (positive) primes 
occurring in the prime factorizations of the elements of $P$.
Then $\Z[P] = \Z[1/J]$. 
Suppose the fraction $1/J$ can be written in the form
\begin{equation}
\label{eq:13.14}
1/J = q_1 + q_2 + \cdots + q_m
\end{equation}
where every  $q_i$ lies in $P$.
Each positive power of $1/J$ has then a representation of the same form
and so $B$ is generated by 1 as a semi-group with operators in $P$.

Here is an illustration. 
Suppose $n_1$, $n_2$ and  $d$ are relatively prime integers greater than 1
and $P$ is the group $\gp(\{ n_1/d, n_2/d\})$. 
Set $A = \Z[P] = \Z[1/(n_1n_2 d)]$.  
Then $\{n_1/d ,n_2/d \}$ is a basis of $P$ and  $A\rtimes P$ is finitely presented 
(by Example 34 in \cite{Str84}).
\index{Strebel, R.}%
It suffices now to find a representation of the form \eqref{eq:13.14},
for one of numbers $1/n_1$, $1/n_2$ or $1/d$,
as the remaining two numbers are $P$-multiple of any one of them.

Let us consider first the special case where $n_1 = 2$ and $ n_2 = 3$.
Then $1/d$ has a representation of the form \eqref{eq:13.14}
for all odd integers $d \geq 11$.
Indeed, $11^2 = 121 = 5 \cdot 2^3 + 3 \cdot 3^3$
and so
\[
(11 + 4k)^2 = 11^2 + 88 k + 16 k^2 = (5 + 11k+ 2k^2) \cdot 2^3 + 3 \cdot 3^3.
\]
Every denominator $d$ of the form $11 + 4k$ satisfies therefore the relation
\[
1/d =   (5 + 11k + 2k^2) \cdot (n_1/d)^3 + 3 \cdot (n_2/d)^3.
\]
The relation $13 = 2^2 + 3^2$, on the other hand,
leads to a representation of $1/d$ for all $d$ of the form $13 + 4k$.

A similar conclusion holds for arbitrary positive, 
relatively prime integers $n_1$ and $n_2$.
The squares $n_1^2$, $n_2^2$ are then relatively prime
and so a simple argument, 
based on the generalized Euclidean algorithm, 
discloses
that every integer $d \geq (n_1^2-1) \cdot (n_2^2-1)$ has a representation of the form
\[
d = c_1 \cdot n_1^2 + c_2 \cdot n_2^2 \quad \text{with} \quad c_1, c_2 \text{ in } \N
\] 
whence $1/d = c_1 (n_1/d)^2 + c_2 (n_2/d)^2$.
It follows that \emph{the group} 
\[
G\left(\R;\Z[1/(n_1n_2 d)], \gp(n_1/d, n_2/d)\right)
\] 
\emph{admits a finite presentation
whenever  $n_1$, $n_2$, $d$ are relatively prime integers greater than 1
and $d \geq (n_1^2-1) \cdot (n_2^2-1)$}.
\index{Group G(R;A,P)@Group $G(\R;A,P)$!finite presentation|)}%
 
\begin{remarks}
\label{remarks:Fp-groups-with-interval-a-line}
(i) A further class of groups deserves to be drawn to the attention of the reader.  
Let $K\subseteq \R$ be an algebraic number field with ring of integers $\OO$.  
If $K \neq \Q$ the group of positive units $P = \OO^\times_{> 0}$ is non-trivial 
and so we can consider $G = G(\R; \OO,P)$.  
The quotient group $\Aff(\OO,P)$ is polycyclic, hence finitely presented, 
and $\OO/(IP\cdot \OO)$ is finite.  
So the necessary conditions, given by Proposition \ref{PropositionD2}, hold; 
but as $P$ contains no rational number $\neq 1$,
our sufficient condition is not satisfied. 
We do not know 
whether any of these groups $G(\R;\OO,P)$ admits a finite presentation.

(ii) The module $A$ of every group $G(\R;A,P)$ 
which, so far, has been shown to be finitely presented, 
 is a locally cyclic as a group.
\end{remarks}
%
\section{Presentations of groups with supports in a half line}
\label{sec:14}
%
The group $G = G([a,\infty[\,;A,P)$ is generated by its elements with one singularity
and it admits an infinite presentation 
in terms of this generating set  
that is easy to describe (see section \ref{ssec:14.1}).
If $G$ admits a finite presentation so does its quotient group $\Aff(IP \cdot A, P)$ 
and $A/(IP \cdot A)$ is finite
(see Propositions \ref{PropositionB1} and  \ref{PropositionD5}).
In this section, we shall also establish the following converse:
\emph{if $P$ is generated by finitely many integers 
and $A = \Z[P]$ then $G$ is finitely presented} (see Theorem \ref{TheoremD6}).
\subsection{An infinite presentation}
\label{ssec:14.1}
By section \ref{ssec:8.1}
the group $G([a,\infty[\; ;A,P)$ is generated by the subset of its elements with (at most) one singularity,
namely the set
\begin{equation}
\label{eq:14.1}
\XX_a = \{g(b,p) \mid a\leq b \in A\ \text{ and }\ p \in P\}.
\end{equation}
In this formula,
$g(b, p)$ denotes the PL-homeomorphism
given by equation \eqref{eq:8.1}.
These generators satisfy the relations 
\begin{align}
g(b,p) \cdot g(b,p') &= g(b,pp') \quad \text{and } \label{eq:14.2}\\
\act{g(b,p)}g(b',p') &= g(b+p(b'-b),p')\ \text{ if }\ b < b', \label{eq:14.3}
\end{align}
\index{Group G([0,infty[;A,P)@Group $G([0, \infty[\;;A,P)$!infinite presentation}%
where $(p,p')$ ranges over $P^2$ 
and $b$, $b'$ are elements of $A\cap [a,\infty[$.  
One can show by a normal form argument
that relations \eqref{eq:14.2} and \eqref{eq:14.3} 
are defining relations of $G([a,\infty[\,;A,P)$
in terms of the generating set $\XX_a$; 
see \cite[Section 2]{BrSq85}.

Suppose $P$ is free abelian with basis $\PP$.  
Then the generators $g(b,p)$ with $p \in P\smallsetminus \PP$ can be eliminated.  
If this is done, 
one ends up with a presentation of $G([a,\infty[\,;A,P)$ with generating set
\begin{equation}
\label{eq:14.1p}
\YY_a = \{g(b,p)\mid a \leq b\in A\ \text{ and }\ p \in \PP\}
\end{equation}
and defining relations
\begin{gather}
[g(b,p),g(b,p')] = 1,\label{eq:14.2p}\\
\act{g(b,p)}g(b',p') = g(b+p(b'-b),p') \text{ if }\ b < b' \label{eq:14.3p}
\end{gather}
where $(p,p')$ runs over $\PP^2$ and $b$, $b'$ are in $A\cap [a,\infty[\,$.

%
\subsection{Necessary conditions for a finite presentation}
\label{ssec:14.2}
By Theorem \ref{TheoremB4} the group $G = G([a,\infty[\,;A,P)$ is finitely generated precisely 
if $A\rtimes P$ is finitely generated, $A/(IP\cdot A)$ is finite and $a$ lies in $A$.  
If $G$ is finitely presented, $A\rtimes P$ must be finitely presented according to 
\begin{proposition}
\label{PropositionD5}
\index{Group G([0,infty[;A,P)@Group $G([0, \infty[\;;A,P)$!finite presentation}%
\index{Affine group Aff(A,P)@Affine group $\Aff(A,P)$!significance}%
If $G([a,\infty[\;;A,P)$ has a finite presentation 
the same is true for $A\rtimes P$.  
\end{proposition}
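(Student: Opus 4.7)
The strategy mirrors the proof of Proposition \ref{PropositionD2}, but with the homomorphism $\rho$ in place of $\lambda$. Set $G = G([a,\infty[\,;A,P)$, and let $\PP$ denote a finite basis of $P$.

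First, since finite presentability implies finite generation, Theorem \ref{TheoremB4} forces $a \in A$, $P$ to be finitely generated, $A$ to be a finitely generated $\Z[P]$-module, and $A/(IP\cdot A)$ to be finite. The symmetric analogue of Corollary \ref{CorollaryA2}(iii) applied to the right endpoint shows that $\rho\colon G \epi \Aff(IP\cdot A, P) \iso (IP\cdot A) \rtimes P$, with kernel $B = B([a,\infty[\,;A,P)$, the subgroup of bounded PL-homeomorphisms.

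Second, I would aim to show that $B$ is the normal closure in $G$ of a finite subset. A natural family is given by the bump elements $b(c,\Delta;p)$ of Example \ref{example:Elements-in-B}, which have three singularities and lie in $B$. A direct computation with the chain rule shows that conjugation by a generator $g(c_0, q) \in G$ with $c_0 < c$ sends $b(c,\Delta;p)$ to the bump $b(c_0 + q(c - c_0),\, q\Delta;\, p)$ of the same slope profile. Taking $c_0 = a$ and letting $(q,c)$ range, the $G$-conjugates of a single bump $b(c,\Delta;p)$ sweep out the entire $G$-orbit of $c$ in $A \cap \Int(I)$, which by Corollary \ref{CorollaryA1} is the coset $c + IP\cdot A$, of which there are only $\card\bigl(A/(IP\cdot A)\bigr)$ many. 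Combined with the finitely many slope parameters $p \in \PP$, this yields a finite candidate family $\BB \subset B$; one would then verify that the normal closure of $\BB$ in $G$ is all of $B$ by expressing an arbitrary bounded PL-homeomorphism as a product of elementary bumps and matching their slope profiles against the conjugates of $\BB$.

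Third, once the finite normal generation of $B$ in $G$ is established, the quotient $(IP\cdot A) \rtimes P \iso G/B$ is finitely presented, being obtained from a finite presentation of $G$ by adjoining finitely many relators. Since $A/(IP\cdot A)$ is finite, $(IP\cdot A) \rtimes P$ has finite index in $A \rtimes P \iso \Aff(A,P)$, and finite presentability is inherited by finite-index over-groups via P.~Hall's observation (cf.\ Remark \ref{remarks:Section-13.3}(ii)). Hence $A \rtimes P$ is finitely presented, as required.

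The hard part is the middle step. The conjugation rule moves the base $c$ of a bump freely across its $G$-orbit, but it only multiplies the width $\Delta$ by elements of $P$, whereas $\Delta$ can be any positive element of $A$. Controlling $\Delta$ and, more substantially, recovering every bounded PL-homeomorphism as a product of $G$-conjugates of the finite family $\BB$, calls for an induction on the sub-semigroup $A_{>0}$ with operators in $P$ in the spirit of section \ref{sssec:13.3bNew}, crucially exploiting both the finiteness of $A/(IP\cdot A)$ and the finite module-generation of $A$ over $\Z[P]$.
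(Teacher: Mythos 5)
Your first and last steps are sound: $\rho$ does map $G([a,\infty[\,;A,P)$ onto $\Aff(IP\cdot A,P)\cong (IP\cdot A)\rtimes P$, and once that quotient is known to be finitely presented, the finite-index argument via P.~Hall's observation correctly upgrades to $A\rtimes P$. (A small slip on the way: $\ker\rho$ is the set of elements with bounded support, which strictly contains $B=\ker(\lambda,\rho)$ since an element of $\ker\rho$ may have slope $\neq 1$ at $a$; this does not affect the strategy.) The genuine gap is the middle step, which you acknowledge but do not close, and which as sketched cannot be closed. Given that $G$ is finitely presented, normal finite generation of $\ker\rho$ is \emph{equivalent} to finite presentability of $(IP\cdot A)\rtimes P$ --- it is a restatement of the conclusion, not a stepping stone to it. Your conjugation computation only moves the base $c$ of a bump within its coset $c+IP\cdot A$ and multiplies the width $\Delta$ by elements of $P$; the widths of the conjugates of a finite family therefore lie in finitely many $P$-orbits of $A_{>0}$, and decomposing an arbitrary bounded element into such conjugates is exactly where the difficulty of finite presentability of metabelian groups is concentrated. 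The induction of section \ref{sssec:13.3bNew} that you invoke is the \emph{sufficiency} argument of Proposition \ref{PropositionD4}, and it works only under the extra hypotheses imposed there ($A_{>0}$ finitely generated as a semi-group with operators in $P$, and $P$ containing a rational number $\neq 1$); neither is available here, and indeed for pairs such as $(A,P)=(\Z[1/6],\gp(3/2))$ the group $A\rtimes P$ is finitely generated but not finitely presented, so no argument using only the finite-generation conditions of Theorem \ref{TheoremB4} can establish your step (b).

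The paper takes an entirely different route. It first notes, by \cite[Section 3]{BrSq85}, that $G$ contains no non-abelian free subgroups, and then invokes \cite[Theorem 5.5]{BiSt80}: every metabelian quotient of a finitely presented group without non-abelian free subgroups is itself finitely presented. Applied to $\im\rho=\Aff(IP\cdot A,P)$ this yields the finite presentability of $(IP\cdot A)\rtimes P$ directly, and the finite-index step (using Proposition \ref{PropositionB1}) concludes as in your third paragraph. If you want to avoid the black box of \cite{BiSt80}, you would need a genuinely new idea for normally generating $\ker\rho$ by a finite set; the elementary bump calculus does not supply one.
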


\begin{proof}
By \cite[Section 3]{BrSq85}
the group $G = G([a,\infty[\,;A,P)$ has no non-abelian free subgroups.  
\index{Brin, M. G.}%
\index{Squier, C. C.}%
The theory developed in \cite{BiSt80} therefore applies 
and shows  that every metabelian quotient of $G$ is finitely presented 
if $G$ is so (see \cite[p.\ 462, Theorem 5.5]{BiSt80}).  
\index{Bieri, R.}%
\index{Strebel, R.}%
In particular, 
the quotient group $\im\rho = \Aff(IP\cdot A,P)\cong IP\cdot A\rtimes P$ is then finitely presented.  
As $IP\cdot A$ has finite index in $A$ (by Proposition \ref{PropositionB1}), 
the group $A\rtimes P$ is finitely presented, too.  
\end{proof}

\subsection{A Sufficient condition for finite presentability}
\label{ssec:14.3}
%
M. G.\ Brin and C. C.\ Squier prove in Section 2 of \cite{BrSq85} 
\index{Brin, M. G.}%
\index{Squier, C. C.}%
that, for every integer $p\geq 2$, the group
\begin{equation*}
\index{Group G({p})@Group $G(\{p\})$!definition|textbf}%
G(\{p\}) = G([0,\infty[\,;\Z[1/p],\gp(p))
\end{equation*}
has a finite presentation with $p$ generators and $p(p-1)^2$ defining relations
(see \cite[p.\,492]{BrSq85}).  
We shall improve on this result and show 
that, for every non-empty finite set $\PP$ of integers $p \geq 2$, 
the group
\begin{equation}
\label{eq:14.4}
\index{Group G(PP)@Group $G(\PP)$!definition|textbf}%
G(\PP) = G([0,\infty[\;;\Z[\gp(\PP)],\gp(\PP))
\end{equation}
is finitely related. 
More precisely, 
we shall establish
\begin{theorem}
\label{TheoremD6}
\index{Group G(PP)@Group $G(\PP)$!finite generating sets}%
\index{Group G(PP)@Group $G(\PP)$!finite presentation}%
\index{Finiteness properties of!G([0,infty[;A,P)@$G([0, \infty[\;;A,P)$}%
\index{Finiteness properties of!G(PP)@$G(\PP)$}%
Let $\PP$ be a finite (non-empty) set of integers $p \geq 2$.
Fix an element $p_0$ in $\PP$, 
\eg{}$p_0 = \min\{p\mid  p \in \PP\}$.
Then $G(\PP)$ is generated by the set
\begin{equation}
\label{eq:Finite-generating-set}
\GG(\PP,  p_0) =
\{g(i, p) \mid i \in \{0,1, \ldots, p_0 - 1 \} \text{ and } p \in \PP \}
\end{equation}
and it admits a finite presentation.
\end{theorem}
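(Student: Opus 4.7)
The plan is a two-stage argument. In the first stage I verify that the finite set $\GG = \GG(\PP, p_0)$ generates $G(\PP)$; in the second I produce a finite set of defining relations among these generators, adapting and extending the scheme that M.\,G.\,Brin and C.\,C.\,Squier used in \cite{BrSq85} for the case $\PP = \{p\}$.

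For the first stage I start from the infinite generating set $\YY_0 = \{g(b,p) \mid b \in A \cap [0,\infty[\,,\; p \in \PP\}$ furnished by section \ref{ssec:14.1}, and express each $g(b,p) \in \YY_0$ as a word in $\GG$. Every $b \in A_{>0} = \Z[\gp(\PP)]_{>0}$ has the form $b = q \cdot k$ with $q \in P$ and $k$ a positive integer (clear the denominator of $b$ by an element of $\md(\PP)$), and relation \eqref{eq:14.3p} yields $\act{g(0,r)^{\pm 1}} g(b',p) = g(r^{\pm 1} b', p)$ for $r \in \PP$ and $b' > 0$; so it suffices to produce $g(k,p)$ for every positive integer $k$ and every $p \in \PP$. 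An induction on $k$ settles this: for $k < p_0$ the element lies in $\GG$ by definition; for $k \geq p_0$, I write $k = j p_0 + r$ with $j \geq 1$ and $0 \leq r < p_0$, and use the identity $g(k,p) = \act{g(r,p_0)} g(j+r, p)$ derived from \eqref{eq:14.3p} (since $r + p_0((j+r)-r) = k$) to reduce to the strictly smaller index $j + r < k$, noting $g(r,p_0) \in \GG$.

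For the second stage I imitate the template of the proof of Proposition \ref{PropositionD4}. I would consider the group $G^* = \langle \GG \mid \RR_0 \rangle$ defined by a finite set of relations $\RR_0$, together with the canonical epimorphism $\pi \colon G^* \twoheadrightarrow G(\PP)$ provided by the first stage. To invert $\pi$, I would assign to each $(b,p) \in (A \cap [0,\infty[) \times \PP$ a definite word $g^*(b,p) \in G^*$ following the recipe of the first stage, and then verify that the elements $g^*(b,p)$ satisfy all of the commutator and conjugation relations \eqref{eq:14.2p} and \eqref{eq:14.3p} of the infinite presentation of $G(\PP)$; that done, the assignment $g(b,p) \mapsto g^*(b,p)$ descends to an inverse of $\pi$. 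For $\RR_0$ I would take finitely many instances of \eqref{eq:14.3p} whose break points lie in a prescribed bounded region of $A$ (rewritten in $\GG$ via the first stage), together with the commutator relations $[g(i,p), g(i,p')] = 1$ for $0 \leq i < p_0$ and $(p,p') \in \PP^2$. The verification that these suffice would proceed by a transfinite induction on the weight $k\omega_0 + \ell$ of a representation of the break point $b$ in the semigroup $A_{>0}$ with operators in $P$, in the style of the proof of Proposition \ref{PropositionD4}.

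The main obstacle is entirely in the second stage: calibrating $\RR_0$ so that the inductive reduction actually closes. The argument will be more delicate than in Proposition \ref{PropositionD4} because no finitely presented metabelian quotient is available here to absorb the module-theoretic bookkeeping of $A$; all of it must be encoded via the PL-generators and their interactions. The enabling features are that $A_{>0} = \Z[\gp(\PP)]_{>0}$ is generated by $\{1\}$ as a semigroup with operators in $P$ (cf.\ example 1 of section \ref{sssec:13.3cNew}), and that $\PP$ consists of positive integers (so the rationality hypothesis of Proposition \ref{PropositionD4} holds automatically). The hard part will be ensuring that the intermediate break points produced during the induction can be brought back into the prescribed bounded region of $\RR_0$ by auxiliary conjugations, which is where the careful selection of relations in $\RR_0$ becomes critical.
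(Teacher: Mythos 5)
Your first stage is sound and is essentially the paper's own generation argument: the identity $g(k,p)=\act{g(r,p_0)}g(j+r,p)$ with $k=jp_0+r$ is exactly the reduction used there. The gap is in your second stage, which remains a plan rather than a proof, and the plan as formulated will not close. After passing to the finite generating set, the residual infinitude in the presentation is no longer located in the break points $b$ (those have been tamed) but in the \emph{conjugating exponents}: once every $g(i,p)$ with $i\ge p_0$ is expressed as a conjugate $\act{f^{s}}g(r,p)$ of one of the finitely many basic generators, the conjugation relations \eqref{eq:14.3p} take the form $\act{g(r,p)\,f^{s'}}g(r',p')=\act{f^{s''}}g(r'',p')$ with $s'$ ranging over all of $\N$. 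Here $f$ is the auxiliary element $g(p_0,p_0)^{-1}g(0,p_0)$, whose conjugation action shifts break indices by $p_0-1$; this shift element is the key device your sketch is missing. Without it there is no way to bring a relation ``living at scale $i'$'' back into a prescribed bounded region, because $G([0,\infty[\,;A,P)$ contains no translations or homotheties and the elements of $\GG(\PP,p_0)$ do not uniformly contract break indices (conjugating by $g(r,p_0)^{-1}$ even pushes integer break points out of $\N$).

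Correspondingly, the induction that actually closes is not the weight induction of Proposition \ref{PropositionD4} on representations of $b$ in the semigroup $A_{>0}$ with operators in $P$: that induction is powered by the homothety $f(p)$ and the translations $h(b)$ of the affine subgroup, which have no counterpart on the half line, as you yourself observe without resolving the difficulty. What is needed instead is an induction on the exponent $s'$, resting on the decomposition $n=n_1+p'n_2$ with $n_1\in\{1,\dots,p'\}$ and on the special relations $\act{g(r',p')f^{s}}g(r',p')=\act{f^{p's}}g(r',p')$; this is how one shows that the relations with $s'>\max\PP$ follow from those with $s'\le\max\PP$. A further point your sketch passes over: before any of this one must know that the relations \eqref{eq:14.2p} and \eqref{eq:14.3p} restricted to \emph{integer} break points already present $G(\PP)$ (a fact established by a normal-form argument); working with the full infinite presentation over all of $A\cap[0,\infty[$, as you propose, only enlarges the family of relations to be verified.
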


The proof will comprise four stages.  
In the first one 
we show that the subpresentation obtained from equations 
\eqref{eq:14.1}, \eqref{eq:14.2} and \eqref{eq:14.3}
by restricting the parameters $b$, $b'$ to $\N$ and $p$, $p'$ to $\PP$ 
gives a presentation of $G(\PP)$.  
\footnote{Here, and in the remainder of Section \ref{sec:14},
$\N$ is assumed to contain the number 0.}
In the next stage 
we verify that the group $G(\PP)$ is generated by the finite set 
\eqref{eq:Finite-generating-set} 
and recall the notion of Tietze transformations.
In the third stage,
we derive a convenient presentation of $G(\PP)$ 
with a finite set of generators which has one more element
than the set \eqref{eq:Finite-generating-set}.
In the final stage we establish 
that all but finitely many defining relations of the presentation 
arrived at in the third stage
can be discarded.
%
\subsubsection{Proof of Theorem \ref{TheoremD6}: first stage}
\label{sssec:14.3aNew}
%
The aim of this stage is to obtain a presentation of $G(\PP)$ with generating set
\begin{equation}
\label{eq:Generating-set-XX}
\index{Group G(PP)@Group $G(\PP)$!infinite generating set}%
\index{Group G(PP)@Group $G(\PP)$!infinite presentations}%
 \XX =  \{f(i,p) \mid i\in \N\ \text{ and }\ p \in \PP \}.
\end{equation}
Let $F(\XX)$ denote the free group on $\XX$.  
As a first step towards the presentation of $G(\PP)$ with generating set $\XX$, 
we choose a presentation of $P = \gp(\PP)$ as an abelian group with set of generators $\PP$.
Let $p_1$, $p_2$,\ldots, $p_m$ be an enumeration of the elements of $\PP$ 
and let $\alpha \colon  \Z^m\epi P$ be the epimorphism 
which sends the $i$-th standard basis element of $\Z^m$ to $p_i$.  
Then find a finite subset 
\begin{equation*}
\{(x(1,1),\hdots,x(m,1)),\quad (x(1,2), \hdots,x(m,2)),
\quad \hdots\quad,  (x(1,k), \hdots  ,x(m,k))\}
\end{equation*}
of $\Z^m$  which generates the kernel of $\alpha$.  
For each $i\geq 0$ and each $j \in \{1, 2. \ldots, k\}$,
set
\begin{equation}
\label{eq:14.5}
w(i,j) = f(i,p_1)^{x(1,j)}f(i,p_2)^{x(2,j)}\cdots f(i,p_m)^{x(m,j)}.
\end{equation}

Having defined the elements $w(i,j)$ 
we can describe the set of relators:  
it is the union of the three infinite sets

\begin{align}
\RR_0 &= \{w(i,j)\mid i \in \N\ \text{ and }\ j \in \{1,2,\hdots,k\}\},
\label{eq:Relators-RR0}\\
\RR_1 &= \{[f(i,p),f(i,p')] \mid i\in \N\ \text{ and }\ (p,p') \in \PP^2\}
\label{eq:Relators-RR1}
\end{align}
and
\begin{equation*}
\RR_2 
= 
\{\act{f(i,p)}f(i',p') \cdot f(i+p(i'-i),p')^{-1}
\mid 
(i , i') \in \N^2, \; i < i' \text{ and }(p,p') \in \PP^2 \}.
\end{equation*}

Set $\RR = \RR_0 \cup \RR_1 \cup \RR_2$,
let $R$ be the normal subgroup of $F(\XX)$ generated by $\RR$
and let $\pi \colon F(\XX)/R \to G(\PP)$ denote the homomorphism
sending, 
for $i \in \N$ and $p \in \PP$,
 the generator $f(i,p)$ to $g(i,p)$.  
In view of relations \eqref{eq:14.2} and \eqref{eq:14.3} 
the PL-homeomorphism
\begin{equation*}
\act{(g(0,p_1)\cdots g(0,p_m))^{-k}}g(i,p')
\end{equation*}
coincides with  $g(i/(p_1\cdots p_m)^k,p')$. 
The image of $\pi$ contains therefore the generating set  
\eqref{eq:14.1} of $G(\PP)$   
and so $\pi$ is \emph{surjective}.

The \emph{injectivity} of $\pi$ will be established by a normal form argument. 
The set $\RR_2$ consists of relators having the form 
$\act{f(i,p)}f(i',p') \cdot f(i'',p')^{-1}$
where $i < i'$ and $i'' = i + p(i' - i)$.
 Then $i < i''$.
The relator $\act{f(i,p)}f(i',p') \cdot f(i'',p')^{-1}$
gives rise to the following four equivalent relations
\begin{align}
f(i,p) f(i',p')&=  f(i'',p') f(i,p),
\label{Moving-to-the-right-1} \\
 f(i,p)  f(i',p')^{-1} &=  f(i'',p')^{-1}f(i,p),
 \label{Moving-to-the-right-2} \\
f(i',p') f(i,p) ^{-1}&=  f(i,p)^{-1} f(i'',p'),
 \label{Moving-to-the-right-3} \\
f(i',p')^{-1} f(i,p)^{-1}&=  f(i,p)^{-1}  f(i'',p')^{-1}.
\label{Moving-to-the-right-4}
\end{align}

Consider now a non-trivial word $w$, with letters in $\XX \cup \XX^{-1}$.
Relations \eqref{Moving-to-the-right-2} and \eqref{Moving-to-the-right-3},
along with free reduction and the commutator relations in $\RR_1$,
permit one to move every$f(i_1,p_1)$ in $w$ 
to the right of every inverse $f(i_2,p_2)^{-1}$ of a generator. 
It follows that $w$  is congruent,
modulo the relators in $\RR_1 \cup \RR_2$, to a freely reduced word $v_1^{-1} \cdot v_2$,
where both $v_1$ and $v_2$  are positive words in $\XX$.
Relations \eqref{Moving-to-the-right-1} 
next allow one to transform the second factor $v_2$ into a positive word
\begin{equation} 
\label{eq:Word-u2}
u_2 = f(i'_{k'}, p'_{k'}) \cdots f(i'_2,p'_2) \cdot f(i'_1, p'_1)
\end{equation}
with
$i'_{k'} \geq \cdots \geq i'_2 \geq i'_1$ and each $p'_i \in \PP$.
Similarly,
relations \eqref{Moving-to-the-right-4} 
can be used to change the factor $v_1^{-1}$ 
into  the inverse of a positive word 
\begin{equation} 
\label{eq:Word-u1}
u_1 = f(i_{\ell}, p_{\ell}) \cdots f(i_2,p_2) \cdot f(i_1, p_1)
\end{equation}
with
$i_{\ell} \geq \cdots \geq i_{2} \geq i_1$ and with each $p_i \in \PP$.

Assume now that $w$ represents an element in $\ker \pi$.
Then so does $w' = u_1^{-1} \cdot u_2$. 
The positive words $u_1$ and $u_2$ represent therefore the same PL-homeomorphism $h \in G(\PP)$.
If both words are empty, all is well.
If one of them is non-empty we may assume 
that $u_1$ is non-empty 
and that $i_1$ is the smallest value of an index $i$ of a generator $f(i,p)$ 
which occurs in $u_1$ or in $u_2$.
Let $u_{12}$ be the terminal segment of $u_1$ formed by all the letters $f(i, p)$ with $i > i_1$
and let $u_{11}$ be the initial segment preceding $u_{12}$.
Then the functions $h = \pi(u_1R)$ and $h_{12} = \pi(u_{12}R)$ are the identity on $[0, i_1]$ 
and have the same slope on $]i_1, i_1 + 1[\,$. 
If this slope is 1 the relators in $\RR_0 \cup \RR_1$ show 
that $u_{12} \in R$ and so $\pi(u_1 R) =  \pi(u_{11} R)$.
If it is non-zero then $i_1' = i_1$; 
let $u_{22}$ be the largest terminal segment of $u_2$ 
formed by the letters $f(i, p)$ with $i = i_1$.
Then the slopes of the functions $h$, $\pi(u_{12}R)$ and $\pi(u_{22}R)$ 
agree on the interval $]i_1, i_1 + 1[$;
since the relators $\RR_0 \cup \RR_1$ define the group $P$ 
and are contained in $R$
the terminal segments $u_{12}$ and $u_{22}$ are congruent \emph{modulo} $R$
and so  $\pi(u_{11}R) =\pi(u_{21}R)$;  
here $u_{21}$ denotes the initial segment of $u_2$ which precedes $u_{22}$.

The above arguments are the ingredients of a proof by induction 
which allows one to conclude that $\pi \colon F(\XX)/R \to G(\PP)$ is \emph{injective}.
%
\subsubsection{Application: abelianization of $G(\PP)$}
\label{sssec:14.3bNew}
%
The presentation of $G(\PP)$ found in the previous section 
allows one to determine the abelianization of $G(\PP)$ 
in the important case where $\PP$ is a basis of $P$.
\footnote{Actually, a result, due to Ken S. Brown, shows 
that every group $P$ generated by finite set of integers admits a basis $\PP_1$ 
that is made up of integers; 
see Proposition 1.1 in \cite{Ste92}.}
\index{Brown, K. S.}%
\index{Stein, M.}%

We begin with the case of a cyclic group $P$ generated by the integer $p > 1$.
The generators $f(i,p)$ have then a fixed second argument;
we set $f(i) = f(i,p)$ to simplify the notation.
The sets of relators $\RR_0$ and $\RR_1$ are redundant in this case.
Consider now a relator  $r = \act{f(i)} f(i') \cdot f(i + p(i'-i))^{-1}$ in $\RR_2$.
Since
\begin{equation}
\label{eq:Useful-relation}
i + p(i'-i) = i' + (p-1)(i'-i),
\end{equation}
the relator $r$ is equivalent to the relation $\act{f(i)} f(i') = f(i' + (p-1)(i'-i))$.
Replacing $i'$ by $j$
we conclude that $G(\{p\}$ admits the presentation
\begin{equation}
\label{eq:Infinite-presentation-of-G(PP)}
\left\langle 
\{f(i)\}_{ i \in \N } \mid \act{f(i)}f(j) = f(j  + (p-1)(j-i))
\text{ for } (i,j) \in \N^2 \text { and } i < j 
\right\rangle.
\end{equation}
The abelianization of $G(\{p\})$, viewed as an abelian group, 
has thus the presentation
\begin{equation*}
\label{eq:Infinite-presentation-of-G(PP)abenianized}
\left\langle 
\{x(i)\}_{ i \in \N } \mid x_j = x_{j + (p-1)(j-i)} 
\text{ for } (i,j) \in \N^2 \text { and } i < j 
\right\rangle.
\end{equation*}
This presentation shows 
that abelianized group is free abelian 
with a basis that corresponds bijectively 
to the set
$\{0\} \cup \left\{[j] \mid j \in \N \smallsetminus \{0\} \right\}$
where $[j]$ denotes the equivalence class, represented by $j$, 
of the equivalence relation $\equiv$ generated by the relation 
$j \approx  j + (p-1)(j-i)$.
It follows, first of all, 
that the equivalence class $[j]$ is contained in the equivalence class of the congruence relation \emph{modulo} $p-1$; 
upon setting $i = j -1$ one sees that $j + (p-1) \in [j]$. 
This proves that the set of equivalence classes is represented by
$0$, $1$, $2$, \ldots, $p-1$,
and implies 
that $G(\{p\})_{ab}$ \emph{is free abelian of rank $ p$}.

Let's now pass to the case 
where $P$ is generated by a basis $\PP$ consisting of integers greater than 1.
The previous arguments can be adopted readily to this more general case 
and show that, as an abelian group,
$G(\PP)_{\ab}$ has the presentation with generating set $\{x_{i,p} \mid (i,p) \in \N \times \PP\}$
and defining relations 
\[
x_{j,p'} = x_{j + (p-1)(j-i), p'}.
\]
These defining relations identify only generators with the same parameter $p'$
and the equivalence relation $\equiv$ generated by the relations $j  \approx_p j + (p-1)(j-i)$,
does not depend on the parameter $p' \in \PP$.
Every equivalence class of $\equiv$ is contained in the congruence class modulo 
$n = \gcd\{p-1 \mid p \in \PP\}$
and  $j + (p-1) \in [j]$ for every $p \in \PP$. 
The generalized euclidean algorithm thus implies 
that the equivalence classes of $\equiv$ 
are the congruence classes \emph{modulo} $n$ in $\N \smallsetminus \{0\}$.
So we have proved
\begin{lemma}
\label{lem:Abelianization-of-G(PP)}
\index{Group G(PP)@Group $G(\PP)$!abelianization}%
Let $P$ be a subgroup of $\Q^\times_{>0}$ generated by a finite set $\PP$ of integers.
Assume $\PP$ is a basis of the abelian group $P$.
Then the group $G(\PP)_{\ab}$ is free abelian of rank
\begin{equation} \label{eq:Rank-abelianized-group}
\card( \PP) \cdot \left(1 + \gcd\left\{p-1 \mid p \in \PP\right\}\right)
\end{equation}
\end{lemma}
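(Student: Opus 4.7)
The plan is to derive the abelianization directly from the infinite presentation of $G(\PP)$ established in the first stage of the proof of Theorem \ref{TheoremD6}. That presentation has generators $\XX = \{f(i,p) \mid (i,p) \in \N \times \PP\}$ and three families of relators $\RR_0 \cup \RR_1 \cup \RR_2$. Since, by hypothesis, $\PP$ is a \emph{basis} of the free abelian group $P$, the epimorphism $\alpha \colon \Z^m \epi P$ is an isomorphism, so one can take the subset generating $\ker \alpha$ to be empty, and the family $\RR_0$ drops out. The commutators in $\RR_1$ become trivial upon abelianization. Thus $G(\PP)_{\ab}$, viewed as an abelian group, has the presentation with generators $\{x_{i,p} \mid (i,p) \in \N \times \PP\}$ and relations coming solely from $\RR_2$, namely
\[
x_{i',p'} = x_{i+p(i'-i),\,p'} \qquad \text{for } 0 \leq i < i' \text{ in } \N \text{ and } (p,p') \in \PP^2.
\]

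Next I would exploit the identity $i + p(i'-i) = i' + (p-1)(i'-i)$ to rewrite the relation as $x_{j,p'} = x_{j+(p-1)k,\,p'}$ where $j = i'$ and $k = i'-i$ ranges over all positive integers (because $i$ can be any element of $\{0,1,\ldots,j-1\}$). Crucially, the $p \in \PP$ appearing on the right-hand side is arbitrary while the index $p'$ is held fixed; so the relations \emph{never mix} generators with different second coordinates. Consequently $G(\PP)_{\ab}$ splits as a direct sum indexed by $p' \in \PP$, each summand being the quotient of $\Z^{(\N)}$ by the equivalence relation $\equiv$ on $\N$ generated by $j \sim j + (p-1)k$ for all $p \in \PP$ and $k \in \N_{>0}$, restricted to $j \geq 1$.

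The rank computation then reduces to counting $\equiv$-classes. The class $\{0\}$ is evidently isolated (no relation involves the index $0$ on the left). On $\N_{>0}$, the relation identifies $j$ with $j + (p-1)k$ for every $p \in \PP$ and $k \in \N_{>0}$; thus two positive integers are $\equiv$-equivalent iff their difference lies in the subgroup of $\Z$ generated by $\{p-1 \mid p \in \PP\}$, which is $n\Z$ with $n = \gcd\{p-1 \mid p \in \PP\}$. The generalized Euclidean algorithm shows that, conversely, every congruence class mod $n$ in $\N_{>0}$ is a single $\equiv$-class (one must observe that one can always add a suitable multiple of some $p-1$ to stay within $\N_{>0}$, which works because each $p-1 \geq 1$). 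Thus there are exactly $n$ classes in $\N_{>0}$ plus the singleton $\{0\}$, giving $1+n$ classes per $p' \in \PP$, and the asserted rank $\card(\PP)\cdot(1+n)$ follows.

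The only delicate step is checking that $\equiv$ is exactly the congruence relation mod $n$ on $\N_{>0}$; one inclusion is immediate, but the reverse requires verifying that one can realize the Bézout-type combination $n = \sum a_p(p-1)$ while keeping all intermediate indices strictly positive. This is straightforward once one notices that adding a large multiple of any single $p-1$ first puts one safely inside $\N_{>0}$, after which the remaining additions and subtractions can be carried out without leaving $\N_{>0}$.
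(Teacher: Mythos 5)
Your proof is correct and takes essentially the same route as the paper's: both abelianize the infinite presentation from the first stage of the proof of Theorem \ref{TheoremD6}, note that the relations in $\RR_2$ never mix generators with different second coordinates, and count the equivalence classes on $\N$ as the isolated class $\{0\}$ together with the congruence classes modulo $n = \gcd\{p-1 \mid p \in \PP\}$ on $\N_{>0}$. The only difference is that you spell out the B\'ezout/positivity verification that the paper delegates to the ``generalized euclidean algorithm'' without further comment.
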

%
\subsubsection{Proof of Theorem \ref{TheoremD6}: 
second stage}
\label{sssec:14.3cPreliminaries}
%
We begin by showing 
that the group $G(\PP)$ can be generated 
by a finite set $\GG(\PP,  p_0) $ with a very simple description. 
The presentation arrived at in section \ref{sssec:14.3aNew}
implies, in particular,
that $G(\PP)$ is generated by the infinite set 
\[
\pi(\XX) = \{g(i, p) \mid (i,p) \in \N \times \PP \}.
\]
Fix an element $p_0 \in \PP$.  
Given $i\geq p_0$,
let $r \in \{0,1,\hdots,p_0-1\}$ denote the remainder of $i$ divided by  $p_0$.
Then  $i = p_0 \cdot s + r$ for some positive integer $s$.  
Relations \eqref{eq:14.3} show
that $g(i,p)$ equals $\act{g(r,p_0)}g(r+s,p)$;
indeed,
\[
\act{g(r,p_0)} g(r+s, p) = g(r + p_0((r + s) - r), p) = g(i,p).
\]
Since $r + s < i$ the preceding calculation allows us to set up 
an induction with respect to $i$
which proves that $G(\PP)$ is generated by the finite set 
\begin{equation*}
\index{Group G(PP)@Group $G(\PP)$!finite generating sets}%
\GG(\PP, p_0) = \{g(r,p)\mid 0 \leq r < p_0\ \text{ and }\ p\in \PP\}
\end{equation*}
and so the first assertion of Theorem \ref{TheoremD6}
is established.

If one uses $\GG(\PP, p_0)$ as the generating set of a presentation $F_0/R_0$
that is derived from the presentation $F(\XX)/R$ 
by elimination of redundant  generators, 
one ends up, in general,  with an unwieldy set of relators.
The situation improves if one adds the extra generator
\begin{equation}
\label{eq:14.6} 
g =g(p_0,p_0)^{-1} g(0,p_0).
\end{equation}
The calculation 
\begin{align*}
\act{g(p_0,p_0)}g(i'+(p_0-1),p') &=g(p_0+p_0(i'+(p_0-1)-p_0),p')\\
&= g(p_0\cdot i',p') = \act{g(0,p_0)}g(i',p'),
\end{align*}
holds in the group $G(\PP)$ for each $i' > 1$ and each  $p' \in \PP$.
The relation just found continues to hold if $i' = 1$,
as one sees from the computation
\[
\act{g(p_0,p_0)}g(1+(p_0-1),p')
=
\act{g(p_0,p_0)}g(p_0,p')
=
g(p_0,p')
= 
\act{g(0,p_0)}g(1,p').
\]
Conjugation by $g$ satisfies therefore the equation
\begin{equation}
\label{eq:14.7}
\act{g}g(i,p) = g(i+(p_0-1),p)
\end{equation}
for every $i \geq 1$ and each $p \in \PP$.
With the help of this extra generator $g$
 the presentation 
\begin{equation}
\label{eq:Original-presentation-of-G(PP)}
\pi_* \colon \langle \XX \mid \RR = \RR_0 \cup \RR_1 \cup \RR_2 \rangle \iso G(\PP),
\end{equation}
will be transformed into a presentation with finitely many generators
and infinitely many, explicitly describable relators.
This will be achieved by a sequence of transformations
traditionally called \emph{Tietze transformations}.
\index{Tietze transformations|(}%
\footnote{see, \eg{}\cite[Section 1]{MKS04} for details on this concept}

Here is a conceptual version of this kind of transformations.
Let $\YY$ and $\ZZ$ be (non-empty) disjoint sets,
and let $F$ denote the free group on $\YY$ 
and $\tilde{F}$ the free group on $\YY \cup \ZZ$.
The assignements $y \mapsto y$
give rise to an embedding $\iota \colon F \mono \tilde{F}$.
Choose, for each $z \in \ZZ$, a word $w_z \in F$
and define $\WW$ to be the set of words 
$\{z \cdot w_z^{-1}\mid z \in \ZZ\}$.
Let $\rho \colon \tilde{F} \to F$ be the homomorphism 
that sends $ y \in \YY$ to $y$ and $z \in \ZZ$ to $w_z$.
The composition $\rho \circ \iota  \colon F \to F$ is the identity, 
while  $\iota \circ \rho$ fixes each $y \in \YY$ 
and sends each $z$ to $w_z$. 
Consider now the group given by the presentation 
\[
\Phi = \langle \YY \cup \ZZ \mid \WW \rangle
\]
and let $\psi \colon \tilde{F} \to \Phi$ be the obvious projection.
Since $\rho$ maps each relator $z \cdot w_z^{-1}$ to the neutral element of $F$,
it induces an epimorphism $\rho_* \colon \Phi \epi F$ 
that is the inverse of
\[
\iota_* = \psi \circ \iota \colon F \mono \tilde{F} \epi \Phi.
\]

Suppose now 
that $\RR$ is a set of relators in $F$
and let $R$ be the normal subgroup of $F$ generated by $\RR$.
Then $\iota_*$ induces an isomorphism of $F/R$ onto $\Phi/ \iota_*(R)$.
It follows that the presentations
\[
\label{eq:Tietze-transformation}
\langle \YY \mid \RR \rangle 
\quad \text{and} \quad 
\langle \YY \cup \ZZ \mid \RR \cup  \WW \rangle
\]
define isomorphic groups.
In traditional parlance this fact is rendered by saying:
\emph{by introducing a new set of generators $\ZZ$ 
and adding,
for each $z \in \ZZ$, 
a relator of the form $z \cdot w_z^{-1}$ with $w_z$ a word in the old generators,
one obtains a presentation $ \langle \YY \cup \ZZ \mid \RR \cup  \WW \rangle$
that defines the same group 
as does the presentation $\langle \YY \mid \RR \rangle$}.

What we have described so far is a first type of Tietze transformations.
There is a second type 
that is easy to understand from the conceptual point of view:
in a presentation, say $\langle \YY \mid \RR \rangle$,
the set of relators $\RR$ generates a normal subgroup $R \triangleleft F(\YY)$.
If $\RR_1$ is another set of relators with $R$ as its normal closure,
the identity on $\YY$ will induce an isomorphism of the group
with the presentation $\langle \YY \mid \RR \rangle$ 
onto the group with the presentation  $\langle \YY \mid \RR_1 \rangle$.
%
\subsubsection{Proof of Theorem \ref{TheoremD6}: third stage}
\label{sssec:14.3cNew}
This stage has three parts.
One begins by adding a new generator $f$ to the generating set $\XX$,
defined by equation \eqref{eq:Generating-set-XX},
and adding the relator 
\begin{equation}
\label{eq:Extra-relator-w}
w = f \cdot f(0,p_0)^{-1} f(p_0,p_0)
\end{equation}
to the set of relators
$\RR = \RR_0 \cup \RR_1 \cup \RR_2$,
specified in the second paragraph of section \ref{sssec:14.3aNew}.
In the resulting group the analogue of the relations \eqref{eq:14.7} will hold,
the relations
\begin{equation}
\label{eq:14.7New}
\act{f}f(i,p) = f(i+(p_0-1),p),
\end{equation}
valid for every $i \geq 1$ and each $p \in \PP$.
In the second part,
these relations will be used 
to reexpress the generators $f(i, p)$ 
in terms of a finite subset among them and of $f$.
The new expressions for the generators $f(i,p)$
permit one then to replace $\RR$ 
by a set of relators in which occur only finitely many of the original generators.
In the third part,
one discards all but finitely many of the generators $f(i,p)$
and arrives at a presentation of $G(\PP)$ with finitely many generators 
and an explicitly known infinite set of relators.

Let $F$ denote the free group on $\XX$ and $\pi \colon F \to G(\PP)$ 
the homomorphism 
that sends $f(i,p)$ to $g(i,p)$ for every $(i,p) \in \N \times \PP$.
By the verifications in section  \ref{sssec:14.3aNew},
the map $\pi$ is surjective 
and its kernel is the normal closure $R = \gp_F(\RR)$ 
of an explicitly known subset $\RR$ of $F$.
Next, 
let $\tilde{F}$ be the free group on $\XX \cup\, \{f\}$
and extend $\pi$ to the homomorphism $\tilde{\pi} \colon \tilde{F} \to G(\PP)$
which maps $f$ to $g = g(p_0,p_0)^{-1} g(0,p_0)$.
Then $\tilde{\pi}$ is surjective 
and its kernel  is the normal closure of the subset
\begin{equation}
\label{eq:Relators-RR-tilde}
\tilde{\RR} = \RR \cup \,\{w\} 
\quad \text{with} \quad
w = f \cdot f(0,p_0)^{-1} f(p_0,p_0).
\end{equation}
Let $\equiv$ denote the congruence relation on $\tilde{F}$
generated by $\tilde{\RR}$.
Equations  \eqref{eq:14.6}  and \eqref{eq:14.7},
in conjunction with the fact that $\tilde{\pi}$ is an isomorphism,
imply then that the congruence
\begin{equation}
\label{eq:14.7bis}
\act{f}f(i,p)\equiv f(i+(p_0-1),p)
\end{equation}
is valid for every $i \geq 1$ and $p \in \PP$.


We next express the generators in $\XX  \cup \, \{f\}$
in terms of finitely many among them.
Set
\begin{equation}
\label{eq:Enlarged-finite-set}
\tilde{\XX}(\PP, p_0, f) 
= 
\{f(i, p) \mid (i, p) \in \{0, 1, \ldots, p_0-1\} \times \PP \} \cup \, \{f\}.
\end{equation}
Given an integer $i\geq p_0$ 
write it in the form $i = r + s \cdot (p_0 - 1)$ 
with $s$ a positive integer and  $ r \in \{1, 2, \ldots, p_0-1\}$.
By the congruences \eqref{eq:14.7bis} 
the congruence
\begin{equation}
\label{eq:Congruences-for-eliminating}
f(i, p) \equiv \act{f^s} f(r, p)
\end{equation}
will the hold for every $i \geq p_0$.

With the help of these congruences we rewrite now
the relators in the set $\tilde{\RR}$ 
(defined by equation \eqref{eq:Relators-RR-tilde}).
This set is the union of the four subsets
\[
\RR_0, \quad \RR_1, \quad \RR_2 
\quad\text{and} \quad
\{f \cdot f(0,p_0)^{-1} f(p_0, p_0) \}.
\]
The subset $\RR_0$ is given by equation \eqref{eq:14.5};
its elements have the form
\[
w(i,j) = f(i,p_1)^{x(1,j)}f(i,p_2)^{x(2,j)}\cdots f(i,p_m)^{x(m,j)}.
\]
The generators involved in each one of these words 
have a fixed first parameter $i$
and so $w(i+ (p_0 - 1),j)$ is congruent to $\act{f} w(i, j)$ 
for every $i \geq 1$ and $j \in \{1, \ldots, k\}$.
In the presence of the congruences 
\eqref{eq:Congruences-for-eliminating},
the set $\RR_0$ can therefore be replaced by the set
\[
\left\{
\act{f^s} w(r, j) \mid (s, r, j) \in \N \times \{1, \ldots, p_0-1\} \times \{1, \ldots, k\} \right\}
\cup \left\{w(0,j) \mid j \in \{1, \ldots, k\} \right\}.
\]
Since we are looking for a set of relators in terms of the generating set 
$\XX \cup \, \{f\}$, 
we need only retain one relator in every conjugacy class of $\gp(f)$,
and so it suffices to retain the finite set
\begin{equation}
\label{eq:tilde-RR0}
\tilde{\RR}_0
= \{w(r, j \mid (r, j) \in \{0, 1, \ldots, p_0-1\} \times \{1, 2 \ldots, k\} \}.
\end{equation}
The subset $\RR_1$  consists of all the commutators  
$[f(i,p),f(i,p')]$ with $i \in \N$ and $(p,p') \in \PP^2$.
Each of these relators is a consequence of the relators in the finite set
\begin{equation}
\label{eq:tilde-RR1}
\tilde{\RR}_1
= \{[f(r, p), f(r, p')]\mid (r, p, p') \in \{0, 1, \ldots, p_0-1\} \times \PP^2 \}.
\end{equation}

Now to the subset of relators $\RR_2$.
Its elements have the form
\[ 
\act{f(i,p)}f(i',p') \cdot f(i'',p')^{-1} \text{ with }  (i,i') \in \N^2, i < i' \text { and } i'' = i + p(i'-i).
\]
Since $i'' >i' >  i$ and as $i''$ is given by the formula $i'' = i + p(i'-i)$, 
the relators in $\RR_2$ are congruent to conjugates of relators in $\RR_2$
with $i \leq p_0-1$.
Fix $r < p_0$,  set $ i = r$ and write $i'$ in the form $i' = (p_0-1)s' + r'$  
with $r' \in \{1,2, \ldots, p_0-1\}$
(this is possible since $i' > i \geq 0$).
Similarly, 
write $i'' = (p_0-1)s'' + r''$ with $r'' \in \{1,2, \ldots, p_0-1\}$.
In view of the equations 
\[
i'' = r+ p(i'-r) 
\quad \text{and} \quad
i' = (p_0-1)s' + r',
\]
the calculation
\begin{align*}
(p_0-1)s'' + r'' &= i'' = r + p(i'-r) = r + p((p_0-1) s' + r' - r) \\
&= r + p(r'-r) + (p_0-1) \cdot p  s'
\end{align*}
is valid.
It implies that 
\begin{align}
r'' &\equiv r + p(r'-r)  = r' + (p-1)(r'-r) \pmod{p_0-1},
\label{eq:14.8} \\
s'' &= ps' + \frac{r + p(r'-r) - r''}{p_0-1}\,.
\label{eq:14.8plus}
\end{align}
We are left with the relator $w = f \cdot f(0, p_0)^{-1} f(p_0, p_0)$.
The generator $f(p_0, p_0)$ occurring in it
is congruent to $\act{f} f(1, p_0)$
(see the congruence \eqref{eq:Congruences-for-eliminating}).
So $w$ can be replaced by
$f \cdot (f(0, p_0)^{-1}\cdot  f \cdot f(1,p_0)) \cdot f^{-1}$,
hence by $f(0, p_0)^{-1}\cdot  f \cdot f(1,p_0)$, or by
\begin{equation}
\label{eq:Replacing-additional-relator}
 f \cdot f(1,p_0) \cdot f(0, p_0)^{-1}. 
\end{equation}

So far we have rewritten the set of relators in
$\RR \cup\, \{ f \cdot f(0,p_0)^{-1} f(p_0,p_0) \}$
in terms of the finite set of generators $\tilde{\XX}(\PP, p_0, f)$ 
(given by  equation \eqref{eq:Enlarged-finite-set}),
and we have obtained a set of relators
$\tilde{\RR} \cup \{f \cdot f(1,p_0) \cdot f(0, p_0)^{-1} \}$.
In order to obtain this new set of relators 
the congruences 
\eqref{eq:Congruences-for-eliminating}
have been used;
they are consequences of the set of relators 
\begin{equation}
\label{eq:Elimination-redundant-generators}
 f(r + s(p_0-1), p) \cdot \left(\act{f^s} f(r, p)\right)^{-1} 
 \quad \text{for } s \geq 1 \text{ and } 
 r \in \{1, \cdots,  r_0-1\}.
\end{equation}
The generators $f(r + s(p_0-1), p)$ with $s > 0$
are no longer needed and 
they can be eliminated by means of a Tietze transformation
\footnote{see page \pageref{eq:Tietze-transformation}}
based on the set of relators  
\eqref{eq:Elimination-redundant-generators}.
If this is done,
one arrives at the presentation described in  
\begin{proposition} 
\label{prp:Summary-step-2}
\index{Group G(PP)@Group $G(\PP)$!finite generating sets}%
\index{Group G(PP)@Group $G(\PP)$!infinite presentations}%
The group $G(\PP)$ admits a presentation with the finite set of generators
\begin{equation}
\label{eq:XX-red}
\XX(\PP,p_0, f) =\{f(r,p) \mid 0 \leq r \leq p_0-1 \text{ and } p\in \PP\} \cup  \{f\}
\end{equation}
and the defining set of relators
\begin{equation}
\label{eq:Reduced-relators}
\RR(\XX, p_0, f) = \{ f \cdot f(1,p_0) \cdot f(0, p_0)^{-1} \}
\cup \tilde{\RR}_0 \cup \tilde{\RR}_1 \cup \tilde{\RR}_2.
\end{equation}
Here $\bar{\RR}_0$ and $\bar{\RR}_1$ denote the finite sets 
defined by formulae \eqref{eq:tilde-RR0} and \eqref{eq:tilde-RR1},
respectively,
while $\bar{\RR}_2$ is the infinite set consisting of all the words
\begin{equation}
\label{eq:Rewritten-conjugation-relations}
\act{f(r,p) f^{s'}}f(r',p') \cdot \left( \act{f^{s''}} f(r'', p') \right)^{-1},
\end{equation}
the parameters satisfying the restrictions
\[
0 \leq r < p_0, \quad  1 \leq r' < p_0, \quad  1 \leq r'' < p_0, \quad s' \in \N,
\text{ and } r' > r \text{ or } s' > 0,
\]
and equations \eqref{eq:14.8} and \eqref{eq:14.8plus}.
\end{proposition}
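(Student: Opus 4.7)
The plan is to realize the presentation asserted in the proposition as the end product of a sequence of Tietze transformations starting from the infinite presentation $\langle \XX \mid \RR_0 \cup \RR_1 \cup \RR_2\rangle$ obtained in section \ref{sssec:14.3aNew}. The route is exactly the one sketched in section \ref{sssec:14.3cNew}; what needs to be done is to organize the three phases (adjunction of $f$, derivation of the shift congruence, and elimination of the surplus generators) into a coherent argument.

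First I would add a new symbol $f$ together with the single relator $w = f \cdot f(0,p_0)^{-1} f(p_0,p_0)$. This is a Tietze transformation of the first kind, so the enlarged presentation $\langle \XX\cup\{f\}\mid \RR\cup\{w\}\rangle$ still defines $G(\PP)$. The next step is to establish, as a consequence of the enlarged relator set, the key shift congruence \eqref{eq:14.7bis}, i.e.\ $\act{f}f(i,p)\equiv f(i+(p_0-1),p)$ for all $i\geq 1$ and $p\in\PP$. For $i\geq 1$ and $p'\in\PP$, the identity $p_0+p_0(i'+(p_0-1)-p_0)=p_0 i'$ (used on page with formula \eqref{eq:14.7}) lifts to a consequence of two relators in $\RR_2$, namely $\act{f(p_0,p_0)}f(i'+(p_0-1),p')\equiv f(p_0 i',p')$ and $\act{f(0,p_0)}f(i',p')\equiv f(p_0 i',p')$; combining with $w$ yields the congruence. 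Iterating it $s$ times produces \eqref{eq:Congruences-for-eliminating}: $f(i,p)\equiv \act{f^s} f(r,p)$ whenever $i=r+s(p_0-1)$ with $1\leq r<p_0$.

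Second, using \eqref{eq:Congruences-for-eliminating} I would rewrite each of $\RR_0$, $\RR_1$, $\RR_2$ and $\{w\}$ so that only generators $f(r,p)$ with $r<p_0$ and the symbol $f$ appear, and replace the infinite sets by their normal closures in $\gp(f)$. Because each $w(i,j)\in\RR_0$ (resp.\ each commutator in $\RR_1$) involves generators with a common first argument, the congruence propagates through the whole word and gives $w(i+(p_0-1),j)\equiv \act{f}w(i,j)$, collapsing $\RR_0$ to conjugates of $\tilde\RR_0$ and $\RR_1$ to conjugates of $\tilde\RR_1$; passing to a set whose normal closure is the same one retains only the finite residues $\tilde\RR_0$ and $\tilde\RR_1$. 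For $\RR_2$, writing $i=r$, $i'=r'+s'(p_0-1)$ and $i''=r''+s''(p_0-1)$ with $0\leq r<p_0$, $1\leq r',r''<p_0$, and substituting the shift congruence for the three occurring generators, one obtains a word of the shape \eqref{eq:Rewritten-conjugation-relations}; the required arithmetic identities between $r,r',r''$ and $s',s''$ are precisely \eqref{eq:14.8} and \eqref{eq:14.8plus}, which follow by reading $i''=r+p(i'-r)$ modulo $p_0-1$. The relator $w$ itself becomes $f\cdot f(1,p_0)\cdot f(0,p_0)^{-1}$ after replacing $f(p_0,p_0)$ by $\act{f}f(1,p_0)$ and conjugating by $f^{-1}$.

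Finally I would perform a Tietze transformation of the second kind to delete every generator $f(r+s(p_0-1),p)$ with $s\geq 1$; each such generator is defined, by one of the relators \eqref{eq:Elimination-redundant-generators}, as $\act{f^s}f(r,p)$ in terms of the retained symbols of $\XX(\PP,p_0,f)$. What remains after this elimination is exactly the finite generating set \eqref{eq:XX-red} and the relator set \eqref{eq:Reduced-relators}. The main obstacle in carrying out this plan rigorously is the bookkeeping at the level of $\RR_2$: one must check that after substituting the shift congruence into each of the three generators of a relator $\act{f(i,p)}f(i',p')\cdot f(i'',p')^{-1}$, the conjugating words combine into a single prefix $f(r,p)f^{s'}$ acting on $f(r',p')$ and a single prefix $f^{s''}$ acting on $f(r'',p')$, with $r''$ and $s''$ determined by $(r,r',p,s')$ through \eqref{eq:14.8}--\eqref{eq:14.8plus}. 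Once this arithmetic verification is complete, the proposition is just the composition of the two Tietze transformations described.
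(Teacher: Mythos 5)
Your proposal follows the paper's own proof essentially step for step: adjoin $f$ with the relator $w$ by a Tietze transformation of the first kind, derive the shift congruence $\act{f}f(i,p)\equiv f(i+(p_0-1),p)$, rewrite $\RR_0$, $\RR_1$, $\RR_2$ and $w$ in terms of the retained generators, and finally eliminate the generators $f(r+s(p_0-1),p)$ with $s\geq 1$ by a second Tietze transformation. The only cosmetic differences are that you derive the shift congruence directly from the relators in $\RR_2$ (where the case $i'=1$ also needs a commutator from $\RR_1$, since $\act{f(p_0,p_0)}f(p_0,p')$ is not covered by $\RR_2$), whereas the paper obtains it by performing the corresponding computation in $G(\PP)$ and invoking the isomorphism $\tilde{\pi}$; and the arithmetic bookkeeping for $\RR_2$ that you flag but do not execute does work out exactly as you describe.
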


\begin{illustration}
\label{illustration:Lemma-summary-step-2}
If the slope group $P$ is cyclic,
the presentation afforded by Proposition \ref{prp:Summary-step-2}
can be described more concisely. 
Indeed, suppose $\PP$ is generated by  $p = p_0$.  
The sets $\bar{\RR}_0$ and $\bar{\RR}_1$ are then not needed,
formula \eqref{eq:14.8} simplifies to $r'' = r'$ 
and so formula \eqref{eq:14.8plus} becomes 
\begin{equation}
\label{eq:14.8plusNew}
s'' = ps' +\frac{ r + p(r'-r) - r'}{p-1} = ps' + r'-r.
\end{equation}

We show next
that every relator in $\bar{\RR}_2$ with $r = 0$ 
is a consequence of the remaining relators in $\bar{\RR}_2$.
If $r = 0$,
such a relator has the form
\[
\act{f(0,p)f^{s'}}f(r',p) \cdot \act{f^{s''}}f(r', p)^{-1}
\]
(recall that $r'' = r' $).
If one expresses $s''$ with the help of formula  \eqref{eq:14.8plusNew}
and uses that $r = 0$,
the relator becomes
\[
\act{f(0,p)g^{s'}}f(r',p)  \cdot \act{f^{ps' + r'}}f(r', p)^{-1}.
\]
If, finally, the generator $f(0,p)$ is replaced by $f \cdot f(1,p)$,
one arrives at the relator
\begin{equation}
\label{eq:Redandant-relator}
\act{f \cdot f(1,p) \cdot f^{s'}}f(r',p)  \cdot \act{f^{ps' + r'}}f(r', p)^{-1}.
\end{equation}
We claim that this relator is a consequence of the relators 
\begin{equation}
\label{eq:Relators-at-disposal}
\act{f(1,p)g^{s_1'}}f(r_1',p) \cdot \act{f^{ps_1' + r_1'-1}}f(r_1', p)^{-1}
\end{equation}
that are contained in the set $\bar{\RR}_2$. 
Two cases arise.
If $r' = 1$ and $s' = 0$ the word \eqref{eq:Redandant-relator}
becomes $\act{ f \cdot f(1,p)}f(1,p) \cdot \act{f} f(1,p) ^{-1}$ 
and so it is freely equivalent to the empty word;
if $r' > 1$ or $s' > 0$ then $r' + (p-1)s' > 1$
and so the word \eqref{eq:Relators-at-disposal} 
with $(r_1', s_1') = (r', s')$
is conjugated to a relator contained in the set $\bar{\RR}_2$
and thus conjugation by $f$ transforms the relator \eqref{eq:Relators-at-disposal} 
into the word \eqref{eq:Redandant-relator}.
The generator $f(0, p)$ is now no longer needed
and can be eliminated by a Tietze transformation 
based on the relator $ f \cdot f(1,p_0) \cdot f(0, p_0)^{-1}$.
\index{Brin, M. G.}%
\index{Squier, C. C.}%
If this is done,
one arrives at the presentation detailed in the following
\begin{corollary}
\label{LemmaD7N}
\index{Group G({p})@Group $G(\{p\})$!finite generating set}%
\index{Group G({p})@Group $G(\{p\})$!infinite presentation}%
(\cf{}(\cite[Section 2]{BrSq85}) 
If $p$ is an integer $\geq 2$, 
the group $G(\{p\})$ is generated by 
\begin{equation}
\label{eq:14.10}
x \mapsto f(0,p)f(1,p)^{-1},\quad  x_1 \mapsto f(1,p),\hdots,x_{p-1} \mapsto f(p-1,p),
\end{equation}
and is defined in terms of the generating set $\{x, x_1, \ldots, x_{p-1} \}$
by the relations
\begin{equation}
\label{eq:14.11}  \act{x_ix^n}x_j = \act{x^{pn+j-i}}x_j,
\end{equation}
where $n\geq 0$ and $(i,j)$ ranges over $\{1,\hdots,p-1\}$ with the restriction 
that either $j > i$ or $n > 0$.
\end{corollary}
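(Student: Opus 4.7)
The plan is to derive the finite presentation asserted by Corollary \ref{LemmaD7N} directly from Proposition \ref{prp:Summary-step-2}, specialised to the cyclic case $\PP = \{p\}$ with $p_0 = p$, and then to incorporate the three simplifications sketched in Illustration \ref{illustration:Lemma-summary-step-2}. In this set-up the sets $\tilde{\RR}_0$ and $\tilde{\RR}_1$ disappear: since $\PP = \{p\}$ is already a basis of the free abelian group $P = \gp(p)$, the set $\tilde{\RR}_0$ (which encodes abelian relations among the $f(\cdot,p_i)$) is empty, and the commutators in $\tilde{\RR}_1$ degenerate to $[f(r,p),f(r,p)] = 1$, hence are trivial. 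So the presentation afforded by Proposition \ref{prp:Summary-step-2} reduces, in this case, to the generating set $\{f(0,p), f(1,p), \ldots, f(p-1,p), f\}$ together with the extra relator $f \cdot f(1,p) \cdot f(0,p)^{-1}$ and the family $\tilde{\RR}_2$ given by formula \eqref{eq:Rewritten-conjugation-relations}.

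Next I would use the Illustration's two congruences specialised to $\PP = \{p\}$: formula \eqref{eq:14.8} forces $r'' \equiv r' \pmod{p-1}$, and since $r'',r' \in \{1,\ldots,p-1\}$ this yields $r'' = r'$; then formula \eqref{eq:14.8plusNew} gives $s'' = ps' + r' - r$. A typical word in $\tilde{\RR}_2$ thus simplifies to
\begin{equation*}
\act{f(r,p)\, f^{s'}} f(r',p) \cdot \bigl(\act{f^{ps' + r' - r}} f(r',p)\bigr)^{-1},
\end{equation*}
with $r \in \{0,1,\ldots,p-1\}$, $r' \in \{1,\ldots,p-1\}$, $s' \in \N$, and the non-degeneracy requirement that $r' > r$ or $s' > 0$. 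The Illustration has already shown that every relator with $r = 0$ is a consequence of the subfamily with $r \geq 1$, by rewriting $f(0,p) = f \cdot f(1,p)$ via the extra relator and then conjugating the appropriate $r=1$ relator by $f$; so I may drop all relators with $r=0$ without loss.

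The last step is a Tietze transformation of the type described on page \pageref{eq:Tietze-transformation}: the relator $f \cdot f(1,p) \cdot f(0,p)^{-1}$ expresses the generator $f(0,p)$ as the word $f \cdot f(1,p)$ in the remaining generators, so $f(0,p)$ may be eliminated along with this relator. What remains is the generating set $\{f, f(1,p), \ldots, f(p-1,p)\}$ with defining relators
\begin{equation*}
\act{f(i,p)\, f^{n}} f(j,p) = \act{f^{pn + j - i}} f(j,p)
\end{equation*}
ranging over $i,j \in \{1,\ldots,p-1\}$ and $n \in \N$, subject to $j > i$ or $n > 0$. Renaming $x = f$ and $x_i = f(i,p)$ for $i \in \{1,\ldots,p-1\}$, and observing that the eliminated relator $f \cdot f(1,p) \cdot f(0,p)^{-1} = 1$ translates into the identity $x = f(0,p)\, f(1,p)^{-1}$ which matches \eqref{eq:14.10}, yields precisely the generators and relators asserted in \eqref{eq:14.11}.

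I do not anticipate a genuine obstacle beyond careful book-keeping: all of the heavy lifting (the normal form argument establishing the presentation \eqref{eq:Original-presentation-of-G(PP)}, the Tietze transformations that shrink the generating set, and the redundancy of the $r = 0$ relators) is already in place. The main thing to be vigilant about is the combinatorial verification that in the cyclic case the congruence $r'' = r'$ truly holds (i.e., that $r' + (p-1)(r'-r)$ reduces to $r'$ in $\{1,\ldots,p-1\}$) and that the companion formula $s'' = ps'+r'-r$ is always a non-negative integer, so that the relator genuinely lies in the presentation; both follow from the range constraints on $r,r'$.
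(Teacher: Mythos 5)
Your proposal is correct and follows essentially the same route as the paper: the paper proves this corollary precisely by specialising Proposition \ref{prp:Summary-step-2} to $\PP=\{p\}$ inside Illustration \ref{illustration:Lemma-summary-step-2}, observing that $\tilde{\RR}_0$ and $\tilde{\RR}_1$ drop out, that \eqref{eq:14.8} forces $r''=r'$ and \eqref{eq:14.8plus} becomes $s''=ps'+r'-r$, that the $r=0$ relators are consequences of the rest via $f(0,p)=f\cdot f(1,p)$, and then eliminating $f(0,p)$ by a Tietze transformation. Your book-keeping of these steps matches the paper's argument.
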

\end{illustration}
\index{Tietze transformations|)}%
%
\subsubsection{Proof of Theorem \ref{TheoremD6}: fourth stage}
\label{sssec:14.3dNew}
%
In this final stage,
we show that finitely many among the relators listed in 
Proposition \ref{prp:Summary-step-2} suffice to define the group $G(\PP)$.
The proof will consists in a lengthy calculation
that is based on some properties of formulae
\eqref{eq:14.8} and \eqref{eq:14.8plus}.

According to Proposition \ref{prp:Summary-step-2}
the group $G(\PP)$ admits a presentation with 
\begin{equation}
\label{eq:Generating-set-X(P,p0)}
\XX(\PP,p_0, f) =\{f(r,p) \mid 0 \leq r \leq p_0-1\text{ and }p\in \PP\} \cup  \{f\}
\end{equation}
as set of generators and $\RR(\PP,p_0, f)$ as set of defining relators.
The set $\RR(\PP,p_0, f)$ in the union of three finite subsets and 
the infinite subset $\bar{\RR}_2$.
The relators in the infinite subset are equivalent 
to relations of the form
\begin{equation}
\label{eq:Some-defining-relations-G(PP)}
\act{f(r,p) f^{s'}}f(r',p')  =  \act{f^{s''}} f(r'', p'), 
\end{equation}
the parameters satisfying the following restrictions:
\begin{enumerate}[a)]
\item the elements $p$ and $p'$ range over $\PP$,
\item the index $r$ varies over $\{0,1, \ldots, p_0-1\}$,
\item the integer $s'$ takes on all values in $\N$;
\item the triple  $(r, r', s')$ takes on all values in  
$\{0,\ldots, p_0-1\} \times \{1,\ldots, p_0-1\} \times \N$ 
with $r' > r$ or $s' > 0$;
\item the index $r''$ lies in $\{1, \ldots, p_0-1\}$ 
and satisfies the congruence \eqref{eq:14.8},
namely 
\begin{equation}
\label{eq:14.8bis}
r'' \equiv r + p(r'-r)  \pmod{p_0-1};
\end{equation} 
\item the integer $s''$ lies in $\N$ and satisfies equation \eqref{eq:14.8plus},
namely
\begin{equation}
\label{eq:14.8plusbis}
s'' = p \cdot s' + \frac{r + p(r'-r) - r''}{p_0-1}\,.
\end{equation}
\end{enumerate}

We continue with three observations.
\begin{enumerate} [i)]
\item The right hand side of relation \eqref{eq:Some-defining-relations-G(PP)} 
involves the parameter $r''$;
according to formula \eqref{eq:14.8bis} 
this parameter depends on $r$, $r'$ and $p$, 
but it does not depend on $s'$.
\item By formulae \eqref{eq:14.8bis} and \eqref{eq:14.8plusbis},
 the parameter $s''$ depends on $r$, $r'$, $p$, $p_0$ and $s'$,
 but its dependence is of the form
 \begin{equation}
 \label{eq:14.8plus-aux}
 s'' = p \cdot s' + d(r, r', p, p_0).
 \end{equation}
 \item If $r = r'$ (and hence $s' > 0$) 
 then $r'' = r$ by  \eqref{eq:14.8bis}
 and so $s'' = p' \cdot s'$;
relation   \eqref{eq:Some-defining-relations-G(PP)} simplifies 
therefore to
 \begin{equation}
 \label{eq:Defining-relation-with-r=r'}
 \act{f(r',p') f^{s'}}f(r',p')  =  \act{f^{p' \cdot s'}} f(r', p').
 \end{equation}
 \end{enumerate}
 
 At present, 
 we are set for completing the proof of Theorem \ref{TheoremD6}.
 Let $p_\star$ be the largest element of $\PP$
 and let $F$ be the free group on the finite set $\XX(\PP, p_0, f)$
 defined by formula \eqref{eq:Generating-set-X(P,p0)}.
 Let $\equiv$ denote the congruence relation on $F$ 
 generated by the relations $f \equiv f(p,p_0) \cdot f(1, p_0)^{-1}$,
the relations $r \equiv 1$ with $r \in \bar{\RR}_0 \cup \bar{\RR}_1$ 
and all the relations of the form \eqref{eq:Some-defining-relations-G(PP)}
with 
\[
(p,p') \in \PP^2, \quad (r,r') \in \{0,1,\ldots, p_0-1\} \times \{1,\ldots, p_0-1\}\text{ and } s' \leq p_\star.
\]
Assume now that $n > p_*$ 
and that the congruences \eqref{eq:Some-defining-relations-G(PP)}
namely
\[
\act{f(r,p) f^{s'}}f(r',p')  \equiv  \act{f^{s''}} f(r'', p'), 
\]
hold for all $(p,p') \in \PP^2$, 
all $(r,r') \in \{0,1,\ldots, p_0-1\} \times \{1,\ldots, p_0-1\}$ 
and every $s' < n$.
Fix a couple $(p,p') \in \PP^2$ and find integers $n_1$, $n_2$ so that
\begin{equation}
\label{eq:Definition-n1-n2}
n = n_1 +p' \cdot n_2 \text{ with } n_1 \in \{1,\ldots, p'\}.
\end{equation}
Then  $n_1$, $n_1 + n_2$ and $p'\cdot n_2$ are all three smaller than $n$. 

Our aim is to prove the validity of the congruence
\begin{equation}
\label{eq:Relation-with-parameter-n}
\act{f(r,p) f^{n}}g(r',p')  \equiv  \act{f^{n''}} f(r'', p') 
\end{equation}
where $r''$ is given by formula   \eqref{eq:14.8bis} 
and $n''$ is given by the analogues of formulae 
\eqref{eq:14.8plusbis} and \eqref{eq:14.8plus-aux}
with $s'$ replaced by $n$. 
Thus
\begin{equation}
\label{eq:14.8plustris}
n'' = p \cdot n + \frac{r + p(r'-r) - r''}{p_0-1}
= p \cdot n + d(r, r', p, p_0).
\end{equation}
To reach this goal, 
we rewrite the left hand side $\act{f(r,p) f^{n}}f(r',p')$ 
of congruence \eqref{eq:Relation-with-parameter-n}
repeatedly in such a way
that the induction hypothesis can be applied to certain parts of the rewritten words.
In the course of these rewritings a situation will arise
where an auxiliary result for the case $r = r'$ is needed.

We move on to the first part of the calculation.
To ease notation, we write $x \downarrow y$ for $xyx^{-1}$ 
\label{notation:x-downarrow-y}%
and set $f_1 = f(r,p)$, $f_2 = f(r',p')$ and  $f_3 = f(r'',p')$.
The left hand side of congruence \eqref{eq:Relation-with-parameter-n} becomes then 
$f_1 f^n \downarrow f_2$. 
Since $n_1 < n$,
the induction hypothesis guarantees 
that $\act{f_1f^{n_1}} f_2 \equiv \act{f^{n_1''}}f_3$;
moreover, as $n_2 < n_1 + n_2 <   n$,
the induction hypothesis and observation iii) show that
\[
f_2 f^{n_2} \downarrow f_2 \equiv f^{p'\cdot n_2}\downarrow f_2 
\quad\text{and}\quad
f_3 f^{n_1 +n_2} \downarrow f_3 \equiv f^{p'(n_1 + n_2)} \downarrow f_3.
\] 
These relations and formulae \eqref{eq:Definition-n1-n2}
and \eqref{eq:14.8plustris}
are used in the next calculation.
\begin{equation}
\label{eq:Rewriting-exponents}
\begin{split}
\act{f(r,p) \cdot f^n}f(r',p') 
&= 
\left(f_1 f^{n_1} \cdot f^{p'n_2}\right) \downarrow f_2\\
&=
\left( (f_1f^{n_1} \downarrow f_2) \cdot f_1 f^{n_1} \cdot f_2^{-1}  f^{p'n_2}\right) 
\downarrow f_2\\
&\equiv
(f^{n_1''} \downarrow f_3) \cdot f_1 f^{n_1} 
\downarrow 
\left(f_2^{-1}  f^{p'n_2} \downarrow f_2 \right)\\
&\equiv
(f^{n_1''} \downarrow f_3) \cdot    f_1 f^{n_1} 
\downarrow 
(f^{n_2}\downarrow f_2)\\
&=
(f^{n_1''} \downarrow f_3) \downarrow  (f_1 f^{n_1 + n_2}) \downarrow f_2)\\
&\equiv
f^{n_1''}  f_3  f^{-n_1''} \downarrow  (f^{(n_1 + n_2)''})  \downarrow f_3\\
&=
\left (f^{n_1''}  f_3   f^{(n_1 + n_2)'' - n_1''}\right) \downarrow f_3.
\end{split}
\end{equation}
Observation ii) next allows one to rewrite the exponent $(n_1 + n_2)''-n_1''$
like this:
\begin{align*}
(n_1 + n_2)''-n_1''
&=
\left( p(n_1 + n_2) + d(r,r', p,p_0) \right) -  \left( p n_1 + d(r,r', p,p_0) \right)
= p n_2.
\end{align*}
So the result of calculation \eqref{eq:Rewriting-exponents} is the relation 
\[
\act{f(r,p) \cdot f^n}f(r',p') \equiv (f^{n_1''} f_3 f^{pn_2}) \downarrow f_3.
\]
Suppose we know that
$(f_3 f^{m}) \downarrow f_3   \equiv   f^{p'm}\downarrow f_3$
for every exponent $m > 0$.
We can then deduce that
\begin{align*}
(f^{n_1''} f_3 f^{pn_2}) \downarrow f_3
&=
f^{n_1''} \downarrow\left(f_3 f^{pn_2} \downarrow f_3  \right)
\equiv 
f^{n_1''} \downarrow \left(f^{p'\cdot pn_2} \downarrow f_3 \right)\\
&= 
f^{(pn_1  + d(r,r',p,p_0)) + p' pn_2} \downarrow f_3\\
&=
f^{p(n_1  + p' n_2) + d(r,r',p,p_0)} \downarrow f_3
 = \act{f^{n''}}f(r'',p').
\end{align*}

We are left with establishing the relations
$(f_3 f^{m}) \downarrow f_3   \equiv   f^{p'm}\downarrow f_3$ for  $m > 0$.
This can be done 
by repeating the calculation  \eqref{eq:Rewriting-exponents}
with $f_2 =f(r',p')$ in the rôle of $f_1 =f(r,p)$.
Then $p = p'$ and $r = r'$, hence $r' = r''$ and so $f_1 = f_2 = f_3$.
If $m \leq p_* = \max \{p \mid p \in \PP\}$ 
the claimed relation is one of the defining relations 
\eqref{eq:Some-defining-relations-G(PP)};
if $m > p_*$ 
we set $m = m_1 + p' m_2$ with  $m_1 \in \{1,\ldots, p'\}$ 
and compute:
\begin{equation}
\label{eq:Rewriting-exponents-for-p=pprime}
\begin{split}
\act{f_3 \cdot f^m}f_3 
&= 
\left(f_3 f^{m_1} \cdot f^{p'm_2}\right) \downarrow f_3\\
&=
\left( (f_3 f^{m_1} \downarrow f_3) \cdot f_3 f^{m_1} \cdot f_3^{-1}  f^{p'm_2}\right) 
\downarrow f_3\\
&\equiv
(f^{p'm_1} \downarrow f_3) \cdot f_3 g^{m_1} 
\downarrow 
\left( f_3^{-1}  f^{p'm_2} \downarrow f_3 \right)\\
&\equiv
f^{p' m_1}  f_3  f^{-p'm_1} \cdot    f_3 f^{m_1} 
\downarrow 
(f^{m_2}\downarrow f_3)\\
&\equiv
\left(f^{p'm_1}  f_3  f^{-p'm_1}  f^{p' (m_1 + m_2)}\right)  \downarrow f_3\\
&=
\left (f^{p'm_1}  f_3   f^{p'm_2}\right) \downarrow f_3\\
&\equiv
(f^{p'm_1}   f^{p' (p'm_2)}) \downarrow f_3 
= 
\act{f^{p'm}} f_3.
\end{split}
\end{equation}
The end of the fourth stage of the proof has now been reached;
with it, 
the proof of Theorem  \ref{TheoremD6} is complete.
%
\subsection{A more concise finite presentation of $G(\{p\})$}
\label{ssec:14.4New}
We close Section \ref{sec:14} with a word on the finite presentation 
of the group $G(\PP)$,
obtained before, in the special case where $\PP$ is a singleton.
By Corollary \ref{LemmaD7N} 
the group $G = G(\{p\})$ is then generated by the elements
\begin{equation}
\label{eq:14.10bis}
x \mapsto g(0,p)g(1,p)^{-1}, \;  x_1 \mapsto g(1,p),\hdots,  
x_{p-1}\mapsto g(p-1,p)
\end{equation}
and defined in terms of these generators by the infinite set of relations
\begin{equation} \label{eq:14.11bis}  
\act{x_ix^n}x_j = \act{x^{pn+j-i}}x_j,
\end{equation}
where $n\geq 0$ and $(i,j)$ ranges over $\{1,\hdots,p-1\}^2$ with the restriction 
that either $j > i$ or $n > 0$.
The results in section \ref{sssec:14.3dNew} show, in addition,  
that the relations with $n > p_\star = p$ are redundant.
The group $G$ has therefore a finite presentation with $p$ generators 
and $(p-1)(p-2)/2 + p(p-1)^2$ relations.
If $p = 2$, 
this is a  presentation with 2 generators and  2 relations;
if $m > 2$,
one can do better.
\begin{proposition}
\label{PropositionD8}
\index{Group G({p})@Group $G(\{p\})$!finite presentation}%
\index{Finiteness properties of!G({p})@$G(\{p\})$}%
If $p \geq 2$,
the group $G(\{p\}) = G([0,\infty[\,;\Z[1/p],\gp(p))$ 
is generated by the elements
\begin{equation*}
x \mapsto g(0,p)g(1,p)^{-1},
\quad x_1 \mapsto g(1,p),
\ldots,  \quad x_{p-1}\mapsto g(p-1,p)
\end{equation*}
and defined in terms of these generators by the relations
\begin{align}
\act{x_i}x_j &= \act{x^{j-i}}x_j \hspace*{10mm}\text{ with } 1 \leq i < j \leq p-1,\phantom{and }
\label{eq:14.13}\\
\act{x_ix}x_j &= \act{x^{p+j-i}}x_j  \hspace*{7mm}
\text{ with } (i,j) \in \{1, 2, \ldots, p-1\}^2 \text{ and } j \leq  i+1,
\label{eq:14.14}\\
\act{x_{p-1}x^2}x_1 &= \act{x^{p+2}}x_1.
\label{eq:14.15}
\end{align}
The number of these relations is 
$\tbinom{p-1}{2} + \left(\tbinom{p-1}{2} + (p-1) + (p-2)\right) + 1 =  p(p-1)$.
\end{proposition}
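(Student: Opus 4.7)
My plan is to deduce Proposition \ref{PropositionD8} from the infinite presentation of $G = G(\{p\})$ afforded by Corollary \ref{LemmaD7N}. I index its relations by triples $(i,j,n)$ with $i,j \in \{1,\ldots,p-1\}$, $n \in \N$, and $j > i$ whenever $n = 0$, and write
$$R(i,j,n)\colon\quad \act{x_i\,x^n}x_j = \act{x^{pn+j-i}}x_j.$$
The task is to show that every such $R(i,j,n)$ is a consequence of the finite set (14.13)--(14.15). The computation carried out in section \ref{sssec:14.3dNew}, in particular the rewriting displayed in \eqref{eq:Rewriting-exponents}, already implies that any $R(i,j,n)$ with $n > p$ follows from relations $R(i',j',n')$ with $n' \leq p$, so it suffices to treat $n \in \{0,1,\ldots,p\}$.

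The derivation proceeds in three substantive steps. First, relation (14.13) is by definition the whole family $\{R(i,j,0) : i < j\}$, which settles $n = 0$. Second, I establish the ``diagonal'' relations $R(i,i,n)$ for every $i \in \{1,\ldots,p-1\}$ and every $n \geq 1$ by induction on $n$: the base case $n = 1$ is part of (14.14), and the inductive step is the verbatim specialisation of the computation \eqref{eq:Rewriting-exponents-for-p=pprime} to $p' = p$ and $f_3 = x_i$, which writes $n = n_1 + p n_2$ with $n_1 \in \{1,\ldots,p\}$ and exhibits $R(i,i,n)$ as a consequence of $R(i,i,n_1)$, $R(i,i,n_2)$ and $R(i,i,n_1+n_2)$, each with a strictly smaller $n$-parameter once $n > p$. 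Third, once the diagonal relations are in hand, the remaining off-diagonal relations $R(i,j,n)$ with $i \neq j$ and $n \in \{1,\ldots,p\}$ are to be derived from the input $R(i,j,1)$ for $j \leq i+1$ (supplied by (14.14)), the single relation $R(p-1,1,2)$ (supplied by (14.15)), and the diagonal family, via an off-diagonal analogue of the rewriting \eqref{eq:Rewriting-exponents}.

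The main obstacle is concentrated in the third step. For fixed $n \in \{1,\ldots,p\}$ one has to cover $(p-1)^2$ pairs $(i,j)$ while the available ``seeds'' are scarce: (14.14) gives only the $n = 1$ relations with $j \leq i+1$, and (14.15) gives the single long-range relation $R(p-1,1,2)$. I expect the argument to proceed by a double induction, one on $n$ and one on the displacement $j-i$, in which at each stage (14.15), combined with conjugation by suitable words in $x_1,\ldots,x_{p-1}$ and the commutator manipulations already deployed in sections \ref{sssec:14.3aNew}--\ref{sssec:14.3dNew}, furnishes precisely the input needed to extract one additional $R(i,j,n)$. A perhaps cleaner alternative is to introduce in the abstractly presented group the formal elements $g(k,p) := \act{x^n}x_r$ (for $k = r + n(p-1)$, $r \in \{1,\ldots,p-1\}$, $n \geq 0$) and to verify directly that the Brin--Squier relations $\act{g(i,p)}g(j,p) = g(i+p(j-i),p)$ for $i < j$ hold; this recasts the problem as a bounded list of identities among $x,x_1,\ldots,x_{p-1}$, in which (14.15) plays the decisive role in handling the ``wrap-around'' indices $j \geq p$.
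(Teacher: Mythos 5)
Your overall strategy --- deduce the finite presentation from the infinite presentation of Corollary \ref{LemmaD7N} by showing that every relation $\act{x_ix^n}x_j = \act{x^{pn+j-i}}x_j$ follows from \eqref{eq:14.13}--\eqref{eq:14.15} --- is the right one, and your handling of $n=0$ and your reduction of the relations with $n>p$ to those with $n\le p$ are sound (the latter is exactly what section \ref{sssec:14.3dNew} already provides). But there is a genuine gap precisely where you locate ``the main obstacle'': the derivation of the relations with $1\le n\le p$ and arbitrary $(i,j)$ from the $p(p-1)$ seeds is never carried out --- you only state that you \emph{expect} a double induction on $n$ and on $j-i$ to work. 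That derivation is the entire content of the proposition; without it your first two steps merely recover the presentation with $(p-1)(p-2)/2+p(p-1)^2$ relations of section \ref{ssec:14.4New}. Moreover your second step does not in fact cover the diagonal relations $\act{x_ix^n}x_i=\act{x^{pn}}x_i$ for $2\le n\le p$: writing $n=n_1+pn_2$ with $n_1\in\{1,\dots,p\}$ forces $n_2=0$ in that range, so the recursion of \eqref{eq:Rewriting-exponents-for-p=pprime} does not decrease the parameter, while \eqref{eq:14.14} only supplies $n=1$. So as stated the plan leaves an uncovered band of relations even before the off-diagonal problem is reached.

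The missing mechanism is a descent in the index $j$ rather than in the displacement $j-i$, and it removes any need to truncate at $n\le p$. Let $\Scal_n$ be the assertion that all relations with parameter $n$ hold. Relations \eqref{eq:14.13} give in particular $\act{x_{j-1}}x_j=\act{x}x_j$; this lets one trade the last factor $x$ in the conjugator $x_ix^{n+1}$ for a factor $x_{j-1}$ and rewrite $\act{x_ix^{n+1}}x_j$ as a conjugate of $\act{x_ix^{n}}x_j$ by $\act{x_ix^{n}}x_{j-1}$, so that $\Scal_{n+1}$ at $(i,j)$ with $j>1$ follows from $\Scal_n$ at $(i,j-1)$ and at $(i,j)$. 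When $j=1$ there is no $x_0$ to descend to, and this is exactly what \eqref{eq:14.15} is for: using $\act{x_{p-1}x}x_1=\act{x^2}x_1$ from \eqref{eq:14.14} together with $\act{x_{p-1}x^2}x_1=\act{x^{p+2}}x_1$, one deduces $\Scal_{n+1}$ at $(i,1)$ from $\Scal_{n-1}$ at $(i,p-1)$ and $\Scal_n$ at $(i,1)$. With the base cases $\Scal_0$ (which is \eqref{eq:14.13}), $\Scal_1$ (from \eqref{eq:14.14} for $j\le i+1$ and the same $j$-descent for $j>i+1$) and $\Scal_2$ (where \eqref{eq:14.15} enters for $j=1$), this two-step induction on $n$ establishes all the relations at once, diagonal ones included.
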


\begin{proof}
\label{Start-of-proof-of-Prp-D8}
The claim will be established by a refinement of the calculations carried out in section
\ref{sssec:14.3dNew}.
As before, $u \downarrow v$ denotes the triple product $uvu^{-1}$. 
We have to verify 
that all the relations \eqref{eq:14.11bis} are consequences of the relations 
listed in formulae  \eqref{eq:14.13}, \eqref{eq:14.14} and \eqref{eq:14.15}.

Let $\Scal_n$ be the assertion: 
\emph{all relations \eqref{eq:14.11bis} with parameter $n$
are valid in the group with generators  \eqref{eq:14.10bis} and relations
\eqref{eq:14.13}, \eqref{eq:14.14} and \eqref{eq:14.15}.} 

Assertion $\Scal_0$ holds since all relations with $n= 0$ are included 
into the set \eqref{eq:14.13}.
If $n=1$ the relations for $j \leq i+1$ are listed in \eqref{eq:14.14}. 
Suppose now that $j > i+1$.
The three relations 
\[
x_{j-1} \downarrow x_j = x \downarrow x_j, 
\quad
x_{i} \downarrow x_{j-1} = x^{(j-1)-i} \downarrow x_{j-1}
\text{ and  }
x_{i} \downarrow x_{j} = x^{j-i} \downarrow x_j
\]  
hold by assertion $\Scal_0$,
while relation $x_{j-1}x \downarrow x_j = x^{p+1} \downarrow x_j$ 
is covered 
by \eqref{eq:14.14}. 
The cited four relations justify the following calculation:  
 \begin{align*}
x_i x \downarrow x_j 
&=
((x_i \downarrow x_{j-1}) \cdot  x_i \cdot x_{j-1}^{-1} x) \downarrow x_j\\
&=
(x_i \downarrow x_{j-1}) \cdot x_i \downarrow (x_{j-1}^{-1} x \downarrow  x_j)\\
&=
(x_i \downarrow x_{j-1}) \downarrow (x_i \downarrow  x_j)
 \tag{\text{since  $x_{j-1} \downarrow x_j = x \downarrow x_j$}}\\
&=
(x^{(j-1)-i} \downarrow x_{j-1}) \downarrow (x^{j-i} \downarrow  x_j)\\
&=
(x^{(j-1)-i} \cdot x_{j-1} \cdot x^{-(j-1) + i} \cdot  x^{j-i}) \downarrow x_j\\
&=
x^{(j-1)-i} \downarrow (x_{j-1}  x \downarrow x_j)\\
& = 
(x^{(j-1)-i} \cdot x^{p + 1}) \downarrow x_j
=
x^{p + j-i} \downarrow x_j.
\end{align*}
This proves assertion $\Scal_1$.

The verification of assertion $\Scal_2$ divides into three parts.
If $j = 1$ and $i = p-1$,  the relation holds by assumption \eqref{eq:14.15}.
If $j = 1$ and $i < p-1$,  
assumption \eqref{eq:14.15} and assertion $\Scal_1$ 
justify the following calculation:
\begin{align*}
x_i x^2 \downarrow x_1
&=
((x_i  \downarrow x_{p-1}) \cdot  x_i  \cdot x_{p-1}^{-1} x^2) \downarrow x_1\\
&=
(x_i  \downarrow x_{p-1}) \downarrow ( x_i x \downarrow  x_1) 
\tag{\text{since $x_{p-1}x \downarrow x_1 = x^2 \downarrow x_1$}}
\\
&=
(x^{(p-1) - i}  \downarrow x_{p-1}) \downarrow  (x^{p + 1 - i} \downarrow x_1)\\
&=
(x^{(p - 1)- i} \cdot  x_{p-1} \cdot x^{-p + 1 + i} x^{p + 1 - i}) \downarrow x_1\\
& = 
(x^{p -1 -i} \cdot x_{p-1} x^2) \downarrow x_1 \tag{\text{by relation \eqref{eq:14.15}}}\\
&=
(x^{(p-1) -i} \cdot x^{p+2}) \downarrow x_1 
= 
x^{2p + 1- i} \downarrow x_1.
\end{align*}
If, finally, $n= 2$ and $j > 1$,
the index $j-1$ exists. 
The relations 
\[
x_{j-1}x \downarrow x_j = x^{p+1} \downarrow x_j
\quad \text{and}\quad
x_i x \downarrow x_{j-1} = x^{p +( j-1)- i}\downarrow x_{j-1}
\]
hold by assertion $\Scal_1$
and relation $x_{j-1} \downarrow x_j = x \downarrow x_j$ 
by statement  $\Scal_0$.
Therefore:
 \begin{align*}
x_i x^2 \downarrow x_j 
&=
((x_i x \downarrow x_{j-1}) \cdot  x_i  x \cdot x_{j-1}^{-1} x) \downarrow x_j\\
&=
(x_i x  \downarrow x_{j-1}) \downarrow (x_i  x\downarrow  x_j)
\tag{\text{since  $x_{j-1} \downarrow x_j = x \downarrow x_j$}} \\
&=
(x^{p + (j-1)- i} \downarrow x_{j-1})  \downarrow (x^{p + j- i}\downarrow  x_j)\\
&=
(x^{p + (j-1)- i}  \cdot x_{j-1} \cdot x^{-p -j + 1 + i} \cdot  x^{p + j - i} )\downarrow x_j
\\
&=
x^{p + (j-1)- i}  \downarrow (x_{j-1}  x \downarrow x_j)\\
& = 
(x^{p + (j-1)-i} \cdot x^{p + 1}) \downarrow x_j
=
x^{2p + j-i} \downarrow x_j.
\end{align*}
The verification of assertion $\Scal_2$ is now complete.

Suppose, next, that $n \geq 2$ and that assertion $\Scal_n$ has been established.
If $j > 1$ the index $j-1$ exists and the following calculation is valid:
 \begin{align*}
x_i x^{n+1} \downarrow x_j 
&=
((x_i x ^n \downarrow x_{j-1}) \cdot  x_i  x^n \cdot x_{j-1}^{-1} x) \downarrow x_j\\
&=
(x_i x ^n \downarrow x_{j-1}) \downarrow (x_i  x^n\downarrow  x_j)
\tag{\text{since  $x_{j-1} \downarrow x_j = x \downarrow x_j$}}\\
&=
(x^{pn + (j-1)- i} \downarrow x_{j-1})  \downarrow (x^{pn + j- i}\downarrow  x_j)\\
&=
(x^{pn + (j-1)- i}  \cdot x_{j-1} \cdot x^{-pn -j + 1 + i} \cdot  x^{pn + j - i} )\downarrow x_j
\\
&=
x^{pn + (j-1)- i}  \downarrow (x_{j-1}  x \downarrow x_j)\\
& = 
(x^{pn + (j-1)-i} \cdot x^{p + 1}) \downarrow x_j
=
x^{(n+1) p + j-i} \downarrow x_j.
\end{align*}
The  relation $x_i x^{n+1} \downarrow x_j = x^{p(n+1) + j - i} \downarrow x_j$
holds thus for $j > 1$.
Assume, finally,  that $j = 1$ and, in addition, 
that assertions $\Scal_{n-1}$ and $\Scal_n$ hold.
The calculation
\begin{align*}
x_i x^{n+1} \downarrow x_1
&=
((x_i x ^{n-1} \downarrow x_{p-1}) \cdot  x_i  x^{n-1} \cdot x_{p-1}^{-1} x^2) \downarrow x_1\\
&=
((x_i x ^{n-1} \downarrow x_{p-1}) \cdot x_i  x^{n}) \downarrow x_1
\tag{\text{since  $x_{p-1} x \downarrow x_1 = x^2 \downarrow x_1$}}\\
&=
(x^{p(n-1) + (p-1)- i} \downarrow x_{p-1})  \downarrow (x^{pn +1- i}\downarrow  x_1)\\
&=
(x^{pn -1 - i}  \cdot x_{p-1} \cdot x^{-pn + 1 + i} \cdot  x^{pn + 1 - i} )\downarrow x_1
\\
&=
x^{pn - 1- i}  \downarrow (x_{p-1}  x^2 \downarrow x_1)\\
& = 
(x^{pn -1 -i} \cdot x^{p + 2}) \downarrow x_1
=
x^{(n+1) p + 1-i} \downarrow x_1
\end{align*}
is then valid and establishes the induction step for $j = 1$.
 The proof of Proposition \ref{PropositionD8} is now complete.
 \label{End-of-proof-of-Prp-D8}
\end{proof}

\begin{examples}
\label{examples:Two-numerical-examples}
We work out the presentation given by Proposition  \ref{PropositionD8}
for $p=2$ and $p=3$.
If $p =2$,
the set \eqref{eq:14.13} is empty, 
while each of the sets \eqref{eq:14.14} and \eqref{eq:14.15} contributes a single relation.
The outcome is the 2-generator 2-relator presentation 
\begin{equation}
\label{eq:14.19}
\langle x,x_1; \act{x_1x}x_1 = \act{x^2}x_1,\act{x_1x^2}x_1 = \act{x^4}x_1\rangle;
\end{equation}
it resembles closely the presentation \eqref{eq:9.13New} for $G([0,1];\Z[1/2],\gp(2))$.
The PL-homeomorphisms associated to $x$ and $x_1$ are 
$g(0,2)g(1,2)^{-1}$ and  $ g(1,2)$;
they are indicated by the following two diagrams.
\begin{figure}[htb]
\psfrag{1}{\hspace*{-1.54mm} \small   $0$}
\psfrag{2}{\hspace*{-1.7mm} \small  1}
\psfrag{3}{  \hspace*{-1.7mm}\small  $\infty$}

\psfrag{11}{\hspace*{-0.6mm} \small   $0$}
\psfrag{12}{\hspace*{-1.5mm} \small  $2$}
\psfrag{13}{\hspace*{-1.7mm} \small  $\infty$}
\psfrag{21}{\hspace*{-3.7mm} \small  2}
\psfrag{22}{\hspace*{-1.7mm} \small  1}

\psfrag{31}{\hspace*{-0.9mm} \small   $0$}
\psfrag{32}{\hspace*{-1.2mm} \small  1}
\psfrag{33}{  \hspace*{-1.7mm}\small  $\infty$}

\psfrag{41}{\hspace*{-0.9mm} \small   $0$}
\psfrag{42}{\hspace*{-0.9mm} \small  $1$}
\psfrag{43}{\hspace*{-1.3mm} \small  $\infty$}
\psfrag{51}{\hspace*{-1.7mm} \small  1}
\psfrag{52}{\hspace*{1.7mm} \small  2}

\psfrag{la1}{\hspace{1mm}$x$}
\psfrag{la2}{\hspace{1mm}$x_1$}
\psfrag{la3333}{\hspace{-2mm}$x_2$}
\begin{center}
\includegraphics[width= 11cm]{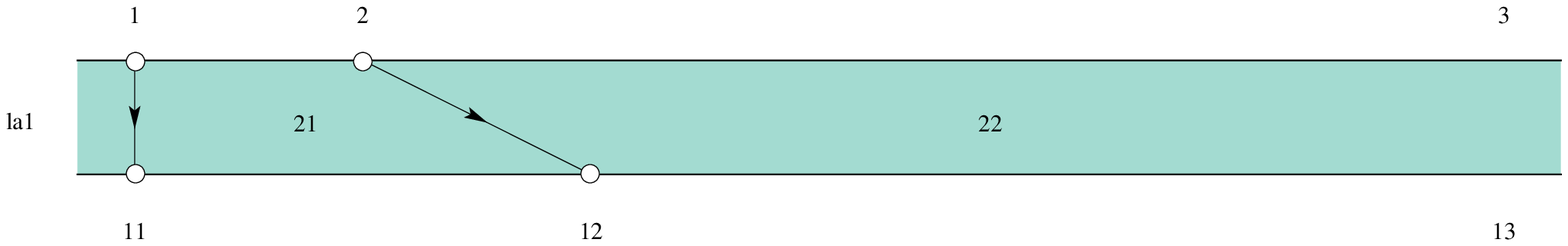}\par
\vspace*{0mm}
\includegraphics[width= 11cm]{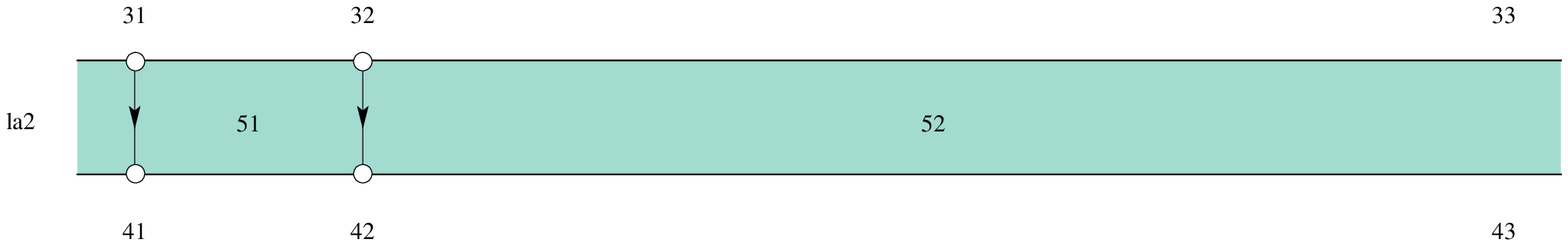}\\
\end{center}
\end{figure}
\index{Rectangle diagram!examples}

\index{Group G({p})@Group $G(\{p\})$!finite presentation}%

If $p = 3$, Proposition \ref{PropositionD8} gives a presentation 
with three generators $x$, $x_1$, $x_2$ and six defining relations.
The relations are as follows:
\begin{align}
\act{x_1}x_2 &= \act{x}x_2,
\label{eq:14.13p}\\
\act{x_1x}x_1 = \act{x^3}x_1,\quad
\act{x_2x}x_1 = \act{x^2}x_1, 
\quad &\quad
\act{x_1x}x_2 = \act{x^4}x_2, \quad 
\act{x_2x}x_2 = \act{x^3}x_2,
\label{eq:14.14p}\\
\act{x_2x^2}x_1 &= \act{x^5}x_1.
\label{eq:14.15p}
\end{align} 
The PL-homeomorphisms corresponding to $x$, $x_1$ and $x_2$ are 
\[
g(0,3)g(1,3)^{-1}, \quad g(1,3) \quad \text{and}\quad  g(2,3).
\]
\index{Group G({p})@Group $G(\{p\})$!finite presentation}%
\end{examples}
%
\section{Presentations of groups with supports in a compact interval}
\label{sec:15}
%
Suppose $I$ is a compact interval and $G$ is the group $G(I;A,P)$.
If  $G$ is \emph{finitely generated} then $P$ is finitely generated, 
$A$ is a finitely generated $\Z[P]$-module, 
the quotient module $A/(IP \cdot A)$ is finite 
and both end points of $I$ are in $A$ (see Proposition \ref{PropositionB1}). 
No further necessary condition is known to hold 
if $G$ is \emph{finitely presented}.  
\index{Quotient group A/IPA@Quotient group $A/(IP \cdot A)$!significance}%

The group $G([a,c];A,P)$ is isomorphic to $G([a',c'];A,P)$ 
whenever $P$ is cyclic and $a$, $a'$, $c$ and $c'$ are all four in $A$
(see Theorem \ref{TheoremE07}).
In the case of a cyclic group $P$ with $A = \Z[P]$, 
it suffices to therefore to study the group $G([0,1];A,P)$.
In this section,
we shall do this for the groups
\begin{equation}
G[p] = G([0,1]; \Z[1/p], \gp(p)) 
\end{equation}
with $p \geq 2$ an integer 
and prove that they are finitely presented.
To date
\footnote{\ie{}in 1985; 
see sections \ref{sssec:Notes-finiteness-properties-Brown-Stein}
and \ref{sssec:Notes-Finiteness-properties-Cleary} for newer results.}
they are the only groups of the form $G([a,c]; \Z[P], P)$
that are known to be finitely presented 
%
\subsection{An infinite presentation of $G[p]$}
\label{ssec:15.1}
%
By Theorem \ref{TheoremB9} and the preamble to section \ref{ssec:9.9}
the group $ G = G[p]$ is generated by the infinite set
$\Fcal^\sharp = \{f(p^m;p; r, p) \mid m \in \N \text{ and } 1 < r \leq p\}$.
For $m > 0$, 
these generators are rescaled copies of the $p-1$ generators
\[
f(1;p; p,p), \quad f(1; p; p-1, p), \ldots, f(1; p; 2, p)
\]
with supports $]0,1[$\,, $]0, (p-1)/p[$\,, \ldots, $]0, 2/p[$, respectively.
\footnote{For $p \in \{2,3\}$ and $m \leq 1$, 
the rectangle diagrams of some of these functions are displayed in 
Illustration \ref{illustration:9.9.Example1}.}

It is useful to enumerate the generators $f(p^m;p;r,p)$
by decreasing length of their support.
To that end, we set $i = (p-r) + (p-1) m$ and $x_i = f(p^m; p;r,p)$.
By Corollary \ref{CorollaryB13}
the generators $x_i$ satisfy then the relations
\begin{equation}
\label{eq:15.2}
\index{Group G[p]@Group $G[p]$!infinite presentation}%
\index{Thompson's group F@Thompson's group $F$!infinite presentation}%
\index{Thompson, R. J.}%
\act{x_i}x_j =  x_{j+(p-1)}\; \text{ for }\; i < j.
\end{equation}

These relations actually define the group,
as we now prove by a normal form argument.
Let $F$ be the free group with basis $\{x_i \mid i \geq 0\}$
and let  $\equiv$ denote the congruence relation on $F$
that is induced by the relations \eqref{eq:15.2}.  
The relation  with parameters $i < j$ has the following four equivalent forms
\begin{gather}
\label{eq:First-two-equivalent-forms}
x_i \cdot x_j \equiv  x_{j+(p-1)} \cdot x_i, 
\quad 
x_j^{-1} \cdot x_i^{-1} \equiv x_i^{-1} \cdot x_{j+(p-1)}^{-1}, \\
\label{eq:Last-two-equivalent-forms}
x_i \cdot x_j^{-1} \equiv  x_{j+(p-1)}^{-1} \cdot x_i, 
\quad
x_j \cdot x_i ^{-1} \equiv  x_i^{-1} \cdot  x_{j+(p-1)}.
\end{gather}
Relations \eqref{eq:Last-two-equivalent-forms},  
together with free reduction,
allow one to move each generator $x_k$ 
to the right of each inverse $x_\ell^{-1}$ of a generator.
Every word $w \in F$ is therefore congruent 
to a word $v_1^{-1} \cdot v_2$ with $v_1$ and $v_2$ 
both positive or empty words in the generators $x_k$.
Next relations \eqref{eq:First-two-equivalent-forms} permit one
to reorder the letters in $v_2$, respectively those in $v_1^{-1}$, 
so us to obtain a word $w' = u_1^{-1} \cdot u_2$,
that is congruent to $v_1^{-1} \cdot v_2$ and satisfies:
\begin{equation*}
u_1 = x^{h(k)}_{i_k} \cdots x^{h(2)}_{i_2} x^{h(1)}_{i_1}
\quad \text{and} \quad
u_2 =x^{n(\ell)}_{j_\ell} \cdots x^{n(2)}_{j_2} x^{n(1)}_{j_1}
\end{equation*}
with $i_1 < i_2 < \cdots < i_k$ and $j_1 < j_2 < \cdots < j_\ell$, 
and where each $h(i)$ and each $n(j)$ is positive, 
unless $u_1$ or $u_2$ is empty
(\cf{}the injectivity proof in section \ref{sssec:14.3aNew}).

Assume now that $w$ represents an element in the kernel of the obvious projection
$ \pi \colon F \epi G[p]$.
Then so does $w'$.
By replacing, if need be,
$w' = u_1^{-1} \cdot u_2$, by a cyclic reduction and/or  its inverse,
one arrives at a word $\bar{w} = \bar{u}_1^{-1} \cdot \bar{u}_2$
that is, either empty, 
or if non-empty has the property 
that the index $\bar{j}_1$ of the last letter in $\bar{u}_2$ 
is smaller than the index of the last letter in $\bar{u}_1$. 
The second case cannot arise.
Indeed,
write $\bar{j}_1$ in the form $(p - r_1) + (p-1)m_1$  
with $r_1 \in \{0,1, \ldots, p-2\}$.
Then $f =  \pi(w) = \pi(\bar{w})$ is the identity on $[(p - r_1)/p^{m_1 + 1}, 1]$ 
and the right-hand derivative of $f$ at $(p-r_1)/p^{m_1 + 1}$ is $p^{\bar{n}_1} \neq 1$,
a finding which contradicts the assumption 
that $w$ is in the kernel of $\pi$.
%
\subsection{A finite presentation of $G[p]$}
\label{ssec:15.2}
\setcounter{thm}{9}
Relations \eqref{eq:15.2} make it clear 
that the group $G[p]$ is generated by the finite set 
\begin{equation}
\label{eq:Generating-set-XX[p]}
\XX[p] = \{x = x_0,x_1,\hdots,x_{p-1}\}.
\end{equation}
Conjugation by the distinguished generator $x$ permits one to eliminate the other generators 
by means of the relations 
\begin{equation*}
x_{r+m(p-1)} = \act{x^m}x_r,
\end{equation*}
where $r$ is in $\{1,2,\hdots,p-1\}$.  
A short calculation then shows
that the rewritten relations \eqref{eq:15.2} have the form
\begin{equation}
\label{eq:15.3}
\act{x_ix^n}x_j = x^{n+1}x_j;
\end{equation}
here $n\geq 0$ and $(i,j)$ ranges over $\{1,2,\hdots,p-1\}^2$ with the restriction 
that either $j > i$ or $n > 0$.  

So far we know that the group $G[p]$ is generated 
by the set $\XX[p]$ with $p$ elements
and that it has an easily described infinite presentation in terms of this set.  
But more is true: 
\begin{proposition}
\label{PropositionD10}
\index{Group G[p]@Group $G[p]$!finite presentation}%
\index{Finiteness properties of!G([a,c];A,P)@$G([a,c];A,P)$}%
\index{Finiteness properties of!G[p]@$G[p]$}%
If $p$ is an integer $\geq 2$  
the group $G[p]$ is generated by the set 
$\XX[p] = \{x = x_0, x_1, \ldots, x_{p-1}\}$, 
and defined in terms of $\XX[p]$ by the  relations
\begin{align}
\act{x_i} x_j  &= \act{x}x_j \quad \text{ where} \quad 1 \leq i < j \leq p-1, \label{eq:15.4}\\
\act{x_ix} x_j  &= \act{x^2}x_j \quad \text{where} \quad 
(i,j) \in \{1, \ldots, p-1\} \text{ and } j \leq i+1,
\label{eq:15.5}\\
\act{x_{p-1}x^2} x_1  &= \act{x^3}x_1. \label{eq:15.6}
\end{align}
The number of these relations is $p(p-1)$.
\end{proposition}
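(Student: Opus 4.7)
The plan is to argue by induction on the exponent $n$, following closely the template established in the proof of Proposition \ref{PropositionD8}. By section \ref{ssec:15.1} we already know that $G[p]$ is presented by the generators $\XX[p] = \{x, x_1, \ldots, x_{p-1}\}$ together with the infinite family of relations
\[
\Scal_n\colon \quad \act{x_i x^n}x_j = \act{x^{n+1}}x_j, \qquad (i,j)\in \{1,\ldots,p-1\}^2, \; n\geq 0,
\]
subject only to the restriction that either $j > i$ or $n > 0$. The task is therefore to show that all these relations are consequences of the finite subset \eqref{eq:15.4}, \eqref{eq:15.5}, \eqref{eq:15.6}. I shall write $u \downarrow v$ for $u v u^{-1}$, as in the proof of Proposition \ref{PropositionD8}.

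First I would dispatch the low-level cases. The assertion $\Scal_0$ is by definition the set \eqref{eq:15.4}. For $\Scal_1$ the subcase $j \leq i+1$ is exactly \eqref{eq:15.5}; for $j > i+1$ one rewrites
\[
x_i x \downarrow x_j \;=\; \bigl((x_i\downarrow x_{j-1})\cdot x_i \cdot x_{j-1}^{-1} x\bigr) \downarrow x_j
\]
and collapses the right-hand side using the three instances $x_i \downarrow x_{j-1} = x\downarrow x_{j-1}$, $x_{j-1} \downarrow x_j = x \downarrow x_j$ and $x_i \downarrow x_j = x \downarrow x_j$ from $\Scal_0$, together with the instance $x_{j-1} x \downarrow x_j = x^2 \downarrow x_j$ from \eqref{eq:15.5}. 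For $\Scal_2$ the subcase $(i,j) = (p-1,1)$ is \eqref{eq:15.6}; the remaining subcases are handled by two further rewrites, namely
\[
x_i x^2 \downarrow x_1 = \bigl((x_i \downarrow x_{p-1})\cdot x_i \cdot x_{p-1}^{-1} x^2\bigr) \downarrow x_1 \quad (i < p-1)
\]
and, for $j > 1$,
\[
x_i x^2 \downarrow x_j = \bigl((x_i x \downarrow x_{j-1}) \cdot x_i x \cdot x_{j-1}^{-1} x \bigr) \downarrow x_j,
\]
each of which is collapsed by invoking $\Scal_0$ and $\Scal_1$ (and, in the case $j = 1$, also \eqref{eq:15.6}).

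The inductive step, deducing $\Scal_{n+1}$ from $\Scal_{n-1}$ and $\Scal_n$ for $n \geq 2$, is formally identical to the one performed on pages \pageref{Start-of-proof-of-Prp-D8}--\pageref{End-of-proof-of-Prp-D8}: for $j > 1$ one splits via the factor $x_{j-1}^{-1} x$ and for $j = 1$ via the factor $x_{p-1}^{-1} x^2$, in each instance shifting the exponent on $x$ by exactly one step. The arithmetic bookkeeping is substantially simpler here than in Proposition \ref{PropositionD8}, because the right-hand sides of our target relations carry $x^{n+1}$ rather than $x^{pn + j - i}$, so that the exponents reduce to trivial additions rather than expressions like the ones governed by \eqref{eq:14.8} and \eqref{eq:14.8plus}. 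I expect the main obstacle to be purely organisational: one must prime the induction with two consecutive cases ($\Scal_{n-1}$ and $\Scal_n$) before closing out $\Scal_{n+1}$, and one must treat the exceptional index $j = 1$ separately from $j > 1$ throughout; no genuinely new algebraic phenomenon enters beyond what is already seen in the proof of Proposition \ref{PropositionD8}, so once the parallel with that proof is drawn, the remainder is a direct, if lengthy, verification.
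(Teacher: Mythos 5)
Your proposal is correct and follows essentially the same route as the paper: both start from the infinite presentation of section \ref{ssec:15.1} in the form \eqref{eq:15.3}, verify $\Scal_0$, $\Scal_1$ and $\Scal_2$ by the identical rewritings, and then close a double induction on $n$ using the $u \downarrow v$ calculus. The only divergence is cosmetic: in the inductive step the paper splits off the factor $x_j^{-1}x^2$ uniformly for all $j$ (relying on the instance $\act{x_j x}x_j = \act{x^2}x_j$ of \eqref{eq:15.5}), whereas you import the $j=1$ versus $j>1$ case distinction from the proof of Proposition \ref{PropositionD8}; both decompositions collapse correctly, so nothing is lost.
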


\begin{proof}
Our argument follows closely the procedure employed in the proof of Proposition \ref{PropositionD8},
given on pages  \pageref{Start-of-proof-of-Prp-D8}--\pageref{End-of-proof-of-Prp-D8}. 

Let $\Scal_n$ be the assertion: 
\emph{all relations \eqref{eq:15.3} with parameter $n$
are valid in the group with generators  \eqref{eq:Generating-set-XX[p]} 
and relations \eqref{eq:15.4}, \eqref{eq:15.5} and \eqref{eq:15.6}.} 

Assertion  $\Scal_0$ holds
since all relations with $n= 0$ are included in the set \eqref{eq:15.4}.
If $n=1$ the relations for $j \leq i+1$ are listed in \eqref{eq:15.5}. 
Suppose now that $j > i+1$ and write, to ease notation, 
$u \downarrow v$ for the conjugate $u v u^{-1}$ of $v$. 
By statement $\Scal_0$,
the three relations 
$x_{i} \downarrow x_{j-1} = x \downarrow x_{j-1}$
and
$x_{i} \downarrow x_{j}  = x_{j-1} \downarrow x_j = x \downarrow x_j$, 
hold,
while relation $x_{j-1}x \downarrow x_j = x^2 \downarrow x_j$ holds by assumption \eqref{eq:15.5}. 
The cited four relations justify then the following calculation:  
 \begin{align*}
x_i x \downarrow x_j 
&=
((x_i \downarrow x_{j-1}) \cdot  x_i \cdot x_{j-1}^{-1} x) \downarrow x_j\\
&=
((x_i \downarrow x_{j-1}) \cdot x_i )\downarrow (x_{j-1}^{-1} x \downarrow  x_j)\\
&=
(x_i \downarrow x_{j-1}) \downarrow (x_i \downarrow  x_j)
 \quad \text{(since  $x_{j-1} \downarrow x_j = x \downarrow x_j$)}\\
&=
(x \downarrow x_{j-1}) \downarrow (x \downarrow  x_j)
=
(x\cdot x_{j-1} \cdot x^{-1}\cdot  x)\downarrow x_j\\
&=
x \downarrow   (x_{j-1} \downarrow x_j)
=
x \downarrow (x \downarrow x_j)
=
x^2 \downarrow x_j.
\end{align*}
It follows that assertion $\Scal_1$ is valid.

The verification of assertion $\Scal_2$ divides into three parts.
If $j = 1$ and $i = p-1$,  the relation holds by hypothesis \eqref{eq:15.6}.
If, secondly, $j = 1$ and $i < p-1$,  
assumption \eqref{eq:15.6} 
and statement $\Scal_1$ justify the following calculation:
\begin{align*}
x_i x^2 \downarrow x_1
&=
((x_i  \downarrow x_{p-1}) \cdot  x_i  \cdot x_{p-1}^{-1} x^2) \downarrow x_1\\
&=
(x_i  \downarrow x_{p-1}) \downarrow ( x_i x \downarrow  x_1) 
\quad \text{(since  $x_{p-1} x \downarrow x_1 = x^2 \downarrow x_1$)}\\
&=
(x  \downarrow x_{p-1}) \downarrow  (x^2 \downarrow x_1)\\
&=
(x \cdot  x_{p-1} \cdot x^{-1} x^{2}) \downarrow x_1
= 
x \downarrow( x_{p-1} x  \downarrow x_1) \\\
&=
x \downarrow (x^2 \downarrow x_1) 
= 
x^3 \downarrow x_1.
\end{align*}
If, finally, $n= 2$ and $j > 1$,
the index $j-1$ exists. 
The relations 
\[
x_{j-1}x \downarrow x_j = x^2 \downarrow x_j
\quad \text{and}\quad
x_i x \downarrow x_{j-1} = x^2\downarrow x_{j-1}
\]
hold by statement $\Scal_1$
and relation $x_{j-1} \downarrow x_j = x \downarrow x_j$ 
by statement $\Scal_0$.
The following calculation is thus valid.
 \begin{align*}
x_i x^2 \downarrow x_j 
&=
((x_i x \downarrow x_{j-1}) \cdot  x_i  x \cdot x_{j-1}^{-1} x) \downarrow x_j\\
&=
(x_i x  \downarrow x_{j-1}) \downarrow (x_i  x\downarrow  x_j)
\quad \text{(since  $x_{j-1} \downarrow x_j = x \downarrow x_j$)} \\
&=
(x^2 \downarrow x_{j-1})  \downarrow (x^2\downarrow  x_j)\\
&=
(x^2  \cdot x_{j-1} \cdot x^{-2} \cdot  x^2 )\downarrow x_j
=
x^2  \downarrow (x_{j-1}  \downarrow x_j)\\
& = 
x^2 \downarrow (x \downarrow x_j)
=
x^3\downarrow x_j.
\end{align*}
The verification of assertion $\Scal_2$ is now complete.

Suppose, finally, that $n \geq 2$ 
and that assertions $\Scal_{n-1}$ and $\Scal_n$ have been established.
The following calculation is then valid:
 \begin{align*}
x_i x^{n+1} \downarrow x_j 
&=
((x_i x ^{n-1} \downarrow x_j) \cdot  x_i  x^{n-1} \cdot x_j^{-1} x^2) \downarrow x_j\\
&=
(x_i x ^{n-1} \downarrow x_{j}) \downarrow (x_i  x^{n-1} \cdot x\downarrow  x_j)
\quad \text{(since  $x_{j} x\downarrow x_j = x^2 \downarrow x_j$)}\\
&=
(x^n \downarrow x_{j})  \downarrow (x^{n+1}\downarrow  x_j)\\
&=
(x^n  \cdot x_{j} \cdot x^{-n } \cdot  x^{n+1} )\downarrow x_j
=
x^n  \downarrow (x_{j}  x \downarrow x_j)\\
& = 
x^n \downarrow (x^2 \downarrow x_j)
=
x^{(n+1) +1} \downarrow x_j.
\end{align*}
This completes the induction step 
and with it the proof of Proposition \ref{PropositionD8}.
 \end{proof}

\begin{examples}
\label{examples:Fp-groups-generalizing-F}
The group $G[2]$ is the celebrated group studied by R. J. Thompson in \cite{Tho74} 
(see also \cite{McTh73}), 
\index{Thompson, R. J.}%
and by many other mathematicians.
Proposition \ref{PropositionD10} yields the well-known 2-generator 2-relator presentation 
\eqref{eq:9.13New},
namely
\begin{equation}
\label{eq:15.8}
\langle x, x_1 \mid \act{x_1x}x_1 = \act{x^2}x_1, \act{x_1x^2}x_1 = \act{x^3}x_1 \rangle.
\end{equation}
The rectangle diagrams of the generators $x$, $x_1$ are 
\[
\begin{minipage}[c]{12cm}
\psfrag{1}{\hspace*{-1.7mm}  \small  $0$}
\psfrag{2}{  \hspace*{-3mm} \small  $\tfrac{1}{2}$}
\psfrag{3}{\hspace*{-2mm}  \small  $\tfrac{3}{4}$}
\psfrag{4}{\hspace*{-1.5mm} \small   $1$}
\psfrag{11}{\hspace*{-1.7mm}  \small   $0$}
\psfrag{12}{  \hspace*{-2.8mm} \small   $\tfrac{1}{4}$}
\psfrag{13}{\hspace*{-1.45mm}  \small   $\tfrac{1}{2}$}
\psfrag{14}{\hspace*{-1.15mm}  \small   $1$}
\psfrag{21}{\hspace*{-2.5mm}  \small   $\tfrac{1}{2}$}
\psfrag{22}{\hspace*{-0.3mm}  \small   $1$}
\psfrag{23}{\hspace*{-0mm}  \small   $2$}
\psfrag{31}{\hspace*{-1.3mm} \small $0$}
\psfrag{32}{\hspace*{-0.4mm}\small $\tfrac{1}{4}$}
\psfrag{33}{\hspace*{-1.5mm} \small $\tfrac{3}{8}$}
\psfrag{34}{\hspace*{-0.4mm}\small $\tfrac{1}{2}$}
\psfrag{35}{\hspace*{-0.3mm}\small  $1$}
\psfrag{41}{\hspace*{-1.2mm} \small  $0$}
\psfrag{42}{  \hspace*{-2.7mm} \small   $\tfrac{1}{8}$}
\psfrag{43}{\hspace*{-1.2mm} \small   $\tfrac{1}{4}$}
\psfrag{44}{\hspace*{-1.3mm} \small   $\tfrac{1}{2}$}
\psfrag{45}{\hspace*{-1mm} \small   $1$}
\psfrag{51}{\hspace*{-1.4mm} \small   $\tfrac{1}{2}$}
\psfrag{52}{\hspace*{-0.8mm} \small   1}
\psfrag{53}{\hspace*{-0.6mm} \small   $2$}
\psfrag{54}{\hspace*{-1mm} \small   $1$}

\psfrag{la1}{\hspace{-1.5mm}$x=$}
\psfrag{la2}{\hspace{-4.5mm} $x_1=$}
\begin{equation*}
\includegraphics[width= 5.5cm]{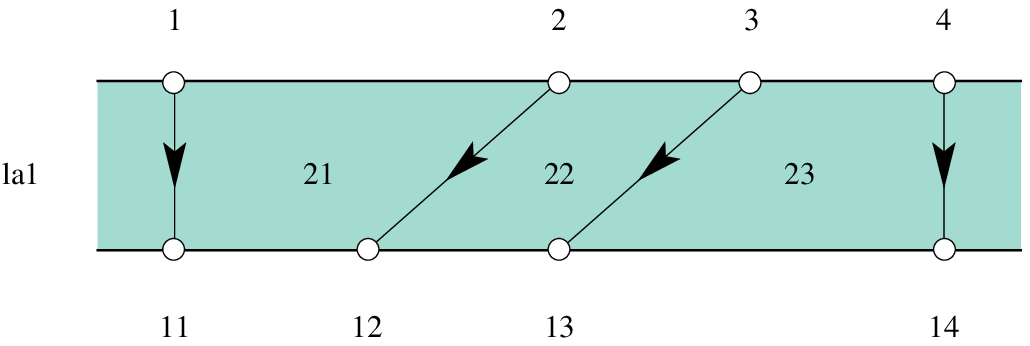} 
\hspace*{7mm}
\includegraphics[width= 5.5cm]{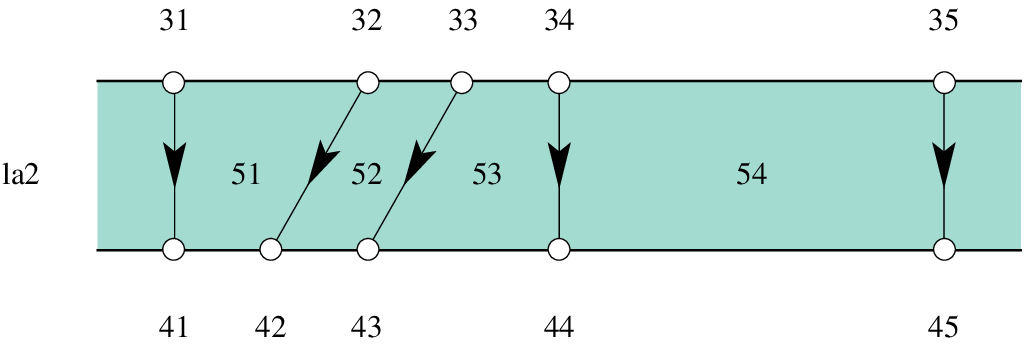} 
\end{equation*}
\end{minipage}
\]
\index{Group G[p]@Group $G[p]$!finite presentation}%
\index{Thompson's group F@Thompson's group $F$!finite presentation}%
\index{Thompson, R. J.}%

As a second example, consider the group $G[3]$. 
Proposition \ref{PropositionD10} furnishes  3 generators $x$, $x_1$ and $x_2$;
their rectangle diagrams are
\[
\begin{minipage}[c]{12cm}
\psfrag{1}{\hspace*{-1.7mm}  \small  $0$}
\psfrag{2}{  \hspace*{-3.2mm} \small  $\tfrac{1}{3}$}
\psfrag{3}{\hspace*{-1.8mm}  \small  $\tfrac{2}{3}$}
\psfrag{4}{\hspace*{-1.6mm}  \small  $\tfrac{7}{9}$}
\psfrag{5}{\hspace*{-1.8mm}  \small  $\tfrac{8}{9}$}
\psfrag{6}{\hspace*{-1.6mm} \small   $1$}
\psfrag{11}{\hspace*{-1.2mm}  \small   $0$}
\psfrag{12}{  \hspace*{-2.5mm} \small   $\tfrac{1}{9}$}
\psfrag{13}{  \hspace*{-2.7mm} \small   $\tfrac{2}{9}$}
\psfrag{14}{\hspace*{-1.4mm}  \small   $\tfrac{1}{3}$}
\psfrag{15}{\hspace*{-1.5mm}  \small   $\tfrac{2}{3}$}
\psfrag{16}{\hspace*{-1.4mm}  \small   $1$}
\psfrag{21}{\hspace*{-1.5mm}  \small   $\tfrac{1}{3}$}
\psfrag{22}{\hspace*{-2.6mm}  \small   $\tfrac{1}{3}$}
\psfrag{23}{\hspace*{-1.0mm}  \small  }
\psfrag{24}{\hspace*{-0.5mm}  \small   $3$}
\psfrag{25}{\hspace*{-0.2mm}  \small   $3$}
\psfrag{31}{\hspace*{-1.1mm} \small $0$}
\psfrag{32}{\hspace*{-0.2mm}\small $\tfrac{1}{3}$}
\psfrag{33}{\hspace*{-1.3mm} \small $\tfrac{4}{9}$}
\psfrag{34}{\hspace*{-1.5mm} \small $\tfrac{5}{9}$}
\psfrag{35}{\hspace*{-0.2mm}\small $\tfrac{2}{3}$}
\psfrag{36}{\hspace*{-0.0mm}\small  $1$}
\psfrag{41}{\hspace*{-1.2mm} \small  $0$}
\psfrag{42}{  \hspace*{-2.4mm}   \small   $\tfrac{1}{9}$}
\psfrag{43}{  \hspace*{-2.4mm}   \small   $\tfrac{2}{9}$}
\psfrag{44}{\hspace*{-1mm} \small   $\tfrac{1}{3}$}
\psfrag{45}{\hspace*{-1.5mm} \small   $\tfrac{2}{3}$}
\psfrag{46}{\hspace*{-1.2mm} \small   $1$}
\psfrag{51}{\hspace*{-2mm} \small   $\tfrac{1}{3}$}
\psfrag{52}{\hspace*{-1.2mm} \small}
\psfrag{53}{\hspace*{-1.2mm} \small}
\psfrag{54}{\hspace*{-1mm} \small   $3$}
\psfrag{55}{\hspace*{-1mm} \small   $1$}

\psfrag{la1}{\hspace{-2mm}$x=$}
\psfrag{la2}{\hspace{-5mm} $x_1=$}
\begin{equation*}
\includegraphics[width= 5.5cm]{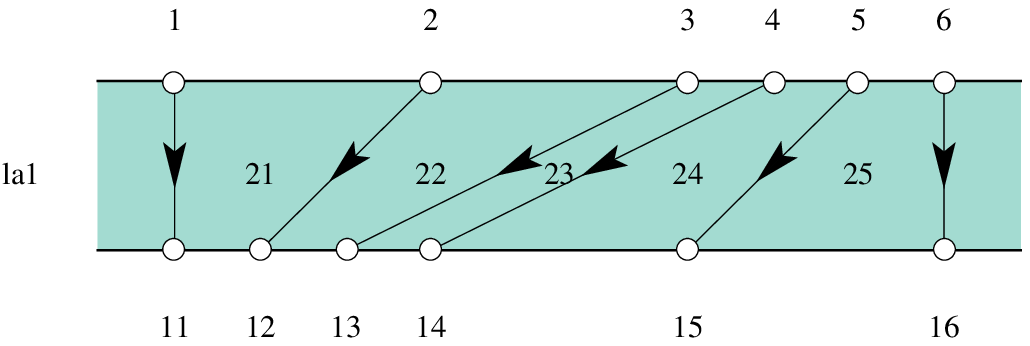} 
\hspace*{6mm}
\includegraphics[width= 5.5cm]{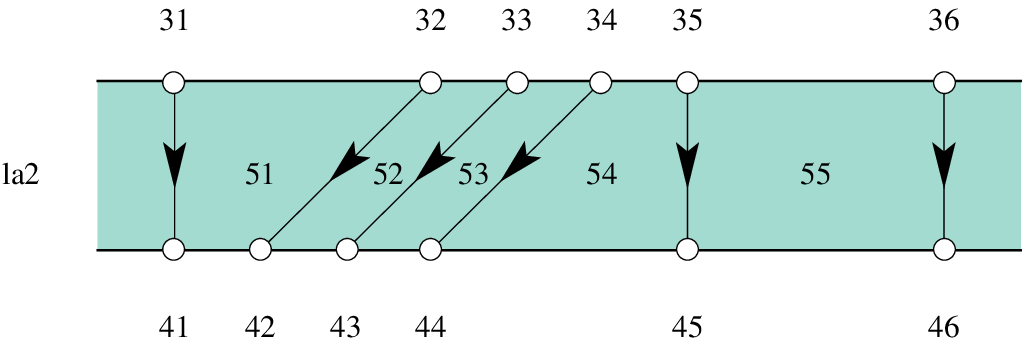} 
\end{equation*}
\end{minipage}
\]
and
\[
\begin{minipage}[c]{12cm}
\psfrag{1}{\hspace*{-1.7mm}  \small  $0$}
\psfrag{2}{  \hspace*{-2.8mm} \small  $\tfrac{1}{9}$}
\psfrag{3}{  \hspace*{-3.0mm} \small  $\tfrac{2}{9}$}
\psfrag{4}{\hspace*{-2.2mm}  \small  }
\psfrag{5}{\hspace*{-2.2mm}  \small  }
\psfrag{6}{\hspace*{-1.6mm} \small   $\tfrac{1}{3}$}
\psfrag{7}{\hspace*{-1.6mm} \small   $\tfrac{2}{3}$}
\psfrag{8}{\hspace*{-1.4mm} \small   $1$}

\psfrag{11}{\hspace*{-0.7mm}  \small   $0$}
\psfrag{12}{  \hspace*{-3.0mm} \small   }
\psfrag{13}{  \hspace*{-3.0mm} \small   }
\psfrag{14}{  \hspace*{-2.0mm} \small   $\tfrac{1}{9}$}
\psfrag{15}{  \hspace*{-2.2mm} \small   $\tfrac{2}{9}$}
\psfrag{16}{\hspace*{-0.8mm}  \small   $\tfrac{1}{3}$}
\psfrag{17}{\hspace*{-0.9mm}  \small   $\tfrac{2}{3}$}
\psfrag{18}{\hspace*{-0.5mm} \small   $1$}
\psfrag{21}{\hspace*{-1.0mm}  \small   $\tfrac{1}{3}$}
\psfrag{22}{\hspace*{-1.5mm}  \small  }
\psfrag{23}{\hspace*{-1mm}  \small   }
\psfrag{24}{\hspace*{-1.1mm}  \small   }
\psfrag{25}{\hspace*{-0.7mm}  \small   $3$}
\psfrag{26}{\hspace*{-1.3mm}  \small   $1$}
\psfrag{27}{\hspace*{-1.3mm}  \small   $1$}
\psfrag{la3}{\hspace{-5mm}$x_2=$}
\begin{equation*}
\includegraphics[width= 5.5cm]{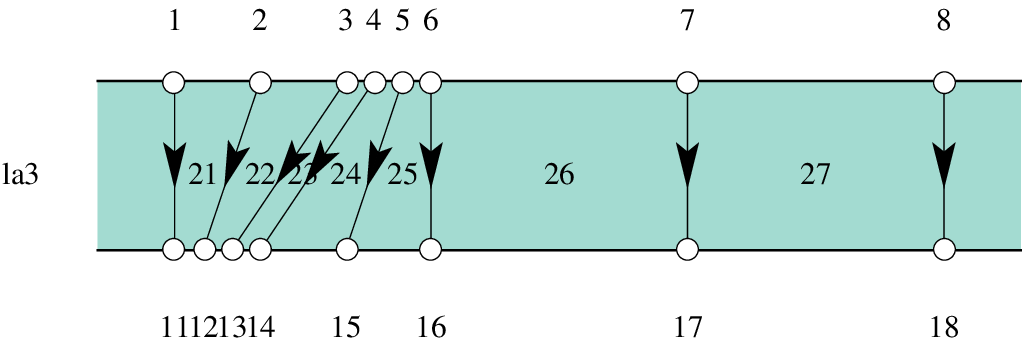}
\end{equation*}
\index{Group G[p]@Group $G[p]$!finite presentation}%
\smallskip
\end{minipage}
\]
%
Proposition \ref{PropositionD10} lists 6 relations, 
namely
\begin{gather*}
\act{x_1}x_2   = \act{x} x_2, \\
\act{x_1x}x_1 = \act{x_2x}x_1 = \act{x^2}x_1, 
\qquad 
\act{x_1x}x_2 = \act{x_2x}x_2 = \act{x^2}x_2,\\
\act{x_2x^2}x_1 = \act{x^3}x_1.
\end{gather*}
\end{examples}
%

%
%
\chapter{Isomorphisms and Automorphism Groups}
\label{chap:E}
\setcounter{section}{15}
%
%
In this chapter,
we investigate isomorphisms among groups $G =G(I;A,P)$ 
and $\bar{G} = G(\bar{I}; \bar{A}, \bar{P})$.
We first show 
that every such isomorphism $\alpha$ is induced by conjugation 
by a unique homeomorphism $\varphi \colon \Int(I) \iso \Int(\bar{I})$.
This conclusion holds for arbitrary pairs of triples 
$(I,A,P)$, $(\bar{I},\bar{A},\bar{P})$
and it remains valid for isomorphisms among certain subgroups of $G$ 
and of $\bar{G}$
(see Theorem \ref{TheoremE04} for a precise statement). 
In the remainder of the chapter 
the nature of the homeomorphism $\varphi$ is studied.
Best results are obtained if the group $P$ is not cyclic.
Then $\bar{P} = P$, 
the homeomorphism $\varphi$ is piecewise linear 
with slopes in a coset $s \cdot P$ of $P$ 
and $\bar{A} = s \cdot A$
(see Theorem \ref{TheoremE10}). 
A similar conclusion holds for isomorphisms 
$\alpha \colon G(I;A,P)  \iso G(\bar{I}; \bar{A}, \bar{P})$
of groups with cyclic $P$ and $I$ the line or a half line with endpoint in $A$
(see Theorem \ref{TheoremE14}).
%
%
\section{General results}
\label{sec:16}
%
%
 S. H. McCleary investigates group isomorphisms $\alpha \colon G \iso \bar G$ 
of ordered permutation groups  $(G,\Omega)$ and $(\bar G,\bar\Omega)$
in \cite{McC78b}.
\index{McCleary, S. H.}%
He proves that $\alpha$ is induced by a unique homeomorphism 
$\varphi \colon \Omega^{\cpl} \iso \bar \Omega^{\cpl}$ 
of the Dedekind completion $\Omega^{\cpl}$ of $\Omega$ 
onto the Dedekind completion $\bar\Omega^{\cpl}$ of $\bar{\Omega}$, 
provided the following conditions hold:
$G$ acts 3-fold transitively on $\Omega$ 
and contains strictly positive elements of bounded support, 
and the pair $(\bar G,\bar\Omega)$ has the analogous properties
(see \cite[p.\;505, Main Theorem]{McC78b}).  
Here $f$ is called \emph{positive}, if $f(\omega) \geq \omega$ for all $\omega \in \Omega$, 
and \emph{strictly positive} if it is positive and distinct from $\id$.  
In section \ref{ssec:16.2} we shall establish a variation of McCleary's result.  
Our proof will exploit ideas used in \cite{McC78b} in a set-up 
that is adapted to the case at hand; 
the new set-up enables one to simplify McCleary's  argument. 
\index{Ordered permutation groups!isomorphisms}%
\index{McCleary, S. H.}%
%
\subsection{Set-up}
\label{ssec:16.1}
In what follows we deal with isomorphisms of certain kinds of groups of order preserving homeomorphisms of an open interval $J$ of $\R$.  
We require that $G$ satisfies the following four axioms.
\begin{enumerate}[{Ax} 1:]
\item $G$ contains a strictly positive element.
\item $G$ contains a non-trivial element with bounded support.
\item $G$ contains non-trivial elements $g_\ell$ and $g_u$ with $\sup(\supp g_\ell) = \inf(\supp g_u)$. 
\item  $G$ is approximately 6-fold transitive in the sense that, given strictly increasing six-tuples $(t_1,\hdots,t_6)$ and $(t'_1,\hdots,t'_6)$ of elements in $J$, there
exists $h$ in $G$ with 
\end{enumerate}
\begin{equation}
\label{eq:16.1} 
h(t_1) < t'_1, \quad  t'_2 < h(t_2), \quad 
h(t_3) < t'_3, \quad t'_4 < h(t_4), \quad 
h(t_5) < t'_5, \quad t'_6 < h(t_6).
\end{equation}

Note that if the pair $(G,J)$ satisfies these axioms and if $\tilde G$ is a supergroup of $G$, made up of homeomorphisms of the interval $J$, then $(\tilde G,J)$ satisfies the axioms, too.  
Moreover, if $\varphi \colon J \iso \bar J$ is an order preserving or order reversing
homeomorphism and if $(G,J)$ fulfills the axioms, so does $(\act{\varphi}G,\bar{J})$. 

%
\subsection{The basic result}
\label{ssec:16.2}
In the sequel $G$ and $\bar G$ are assumed to be groups made up of order preserving homeomorphisms of the open intervals $J$ and $\bar J$, respectively.  

We begin with a technical result 
that will be the main tool for constructing the homeomorphism $\varphi$.
\begin{lemma}
\label{LemmaE1}  
Assume $G$ satisfies axiom Ax4 and $g_1$, $g_2$ are elements of $G \smallsetminus \{\id\}$.  
Suppose, in addition, that $G$ contains an element $f$  so that 
\begin{equation}
\label{eq:16.2}
[g_1, \act{hfh^{-1}}g_2] = \id \quad \text{for all }h \in G.
\end{equation}
If the inequality  $t < f(t)$ holds for some $t \in J$  then 
\begin{equation}
\label{eq:16.3}
\supp g_1 < \supp g_2 \text{ and  $f$  is strictly positive}.
\end{equation} 
\end{lemma}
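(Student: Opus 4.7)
The plan is to argue contrapositively: assuming the universal commutation hypothesis and $t_0<f(t_0)$, a failure of either conclusion will be converted, by means of axiom Ax4, into a violation of commutativity. The underlying elementary tool is the following observation about order-preserving homeomorphisms of an interval: if $u,v$ are such homeomorphisms of $J$ with $[u,v]=\id$, then $v$ sends $\operatorname{Fix}(u)$ bijectively onto itself and therefore permutes the connected components of $\supp u$ in an order-preserving fashion; in particular, if $v$ carries a boundary point of some component of $\supp u$ strictly into $\supp u$, then $u$ and $v$ cannot commute.

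First I would fix witnesses. Since $g_1,g_2\neq\id$ are order-preserving, I select a connected component $(c_1,d_1)$ of $\supp g_1$ and a component $(c_2,d_2)$ of $\supp g_2$. The hypothesis then says that for every $h\in G$ the element $\act{hfh^{-1}}g_2$, whose support equals $(hfh^{-1})(\supp g_2)$, commutes with $g_1$; consequently the boundary points $hfh^{-1}(c_2)$ and $hfh^{-1}(d_2)$ of that support must lie in $\operatorname{Fix}(g_1)$ for every choice of $h$.

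To prove $\supp g_1<\supp g_2$ I would proceed by contradiction: assume the offending components can be chosen so that $c_2\le d_1$, and use Ax4 to produce $h\in G$ whose six alternating inequalities of the form \eqref{eq:16.1} force $h^{-1}(c_2)$ to lie in a small neighbourhood of $t_0$ on which $f$ pushes points strictly upward (available by continuity of $f$ at $t_0$), while simultaneously forcing $h$ to send the corresponding $f$-image strictly into the open interval $(c_1,d_1)$. Then $hfh^{-1}(c_2)\in(c_1,d_1)\subset\supp g_1$, contradicting the conclusion of the previous paragraph. For the second claim, if $f(t^\ast)<t^\ast$ for some $t^\ast$, a symmetric application of Ax4 would place $h^{-1}(d_2)$ near $t^\ast$ and force $hfh^{-1}(d_2)$ into the interior of a component of $\supp g_1$, yielding the same contradiction.

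The main obstacle will be the combinatorial bookkeeping in the application of Ax4: one must choose six source/target pairs so that the alternating inequalities \eqref{eq:16.1} simultaneously pin down the preimages of $c_2$ and $d_2$ inside the region where $f$ acts in the desired direction, \emph{and} pin down their final images strictly inside $(c_1,d_1)$. Once the six-point configuration is written out explicitly, continuity of $f$ near $t_0$ (respectively near $t^\ast$) converts the approximate six-fold transitivity into the sharp geometric statement needed to produce the contradiction.
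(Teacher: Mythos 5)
Your reduction rests on a claim that does not follow from commutativity. From $[g_1,\act{hfh^{-1}}g_2]=\id$ you infer that the boundary points $hfh^{-1}(c_2)$ and $hfh^{-1}(d_2)$ of the support of $w=\act{hfh^{-1}}g_2$ must lie in $\operatorname{Fix}(g_1)$. What commutativity actually gives is only that $g_1$ maps $\operatorname{Fix}(w)$ onto itself and hence permutes the components of $\supp w$ (and, symmetrically, that $w$ permutes the components of $\supp g_1$); it does not force $g_1$ to fix the endpoints of those components. For instance, if $w$ is an increasing homeomorphism of $\R$ with $\operatorname{Fix}(w)=\Z$ that commutes with the unit translation $g_1$, then $g_1$ moves every boundary point of $\supp w$, yet $[g_1,w]=\id$; so the inference is invalid. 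Consequently, arranging via Ax4 that $hfh^{-1}(c_2)$ lands in $(c_1,d_1)$ produces no contradiction: the correct conclusion is only that $g_1\bigl(hfh^{-1}(c_2)\bigr)$ again lies in $\operatorname{Fix}(w)$, which is compatible with everything. The usable form of your ``elementary tool'' would require showing that $w$ carries a boundary point of a component of $\supp g_1$ into $\supp g_1$ (or vice versa), and controlling $w(c_1)=hfh^{-1}\circ g_2\circ hf^{-1}h^{-1}(c_1)$ reintroduces exactly the multi-stage bookkeeping you hoped to bypass. Your argument for the second assertion, being a second instance of the same construction, inherits the same gap.

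For comparison, the paper's proof uses no fixed-point or component argument. It picks $s_1\in\supp g_1$, $s_2\in\supp g_2$ with $s_2<s_1$, normalizes so that $g_2(s_2)<s_2$ and $s_1<g_1(s_1)$ (legitimate because \eqref{eq:16.2} is unchanged when $g_1$ or $g_2$ is replaced by its inverse), applies Ax4 to the six-tuples $(t,t',t'',f(t),f(t'),f(t''))$ and $\bigl(g_2^2(s_2),g_2(s_2),s_2,s_1,g_1(s_1),g_1^2(s_1)\bigr)$, and then evaluates the two compositions $\act{h_0fh_0^{-1}}g_2^{-1}\circ g_1$ and $g_1\circ\act{h_0fh_0^{-1}}g_2^{-1}$ at the single point $s_1$, showing the first exceeds $g_1^2(s_1)$ while the second lies below it; this is the explicit witness of non-commutation that your soft argument never produces. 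The second assertion of \eqref{eq:16.3} is then free: hypothesis \eqref{eq:16.2} is symmetric under exchanging $(g_1,g_2,f)$ with $(g_2,g_1,f^{-1})$, so a point with $f(t^{\ast})<t^{\ast}$ would yield $\supp g_2<\supp g_1$ by the first part, contradicting $\supp g_1<\supp g_2$.
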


\begin{proof}
We begin by proving a contraposition of the first assertion in \eqref{eq:16.3}.
Suppose $s_1$, $s_2$ and $t$ are elements of  $J$ such that 
\begin{equation*}
 s_1 \in \supp g_1,\quad  s_2 \in \supp g_2 \quad \text{and}\quad  s_2 < s_1.
\end{equation*}
and $f$ is an element of $G$ with $t < f(t)$.
Equation \eqref{eq:16.2} holds for $g_1$ if, and only if, 
it holds with $g_1$ replaced by $g^{-1}_1$, 
and the analogous statement is true for $g_2$.
We can therefore assume that $s_1 < g_1(s_1)$ and that $g_2(s_2) < s_2$.  
Choose reals $t'$, $t''$ with $t < t' < t'' < f(t)$ 
and use them to define the six-tuples
\begin{equation*}
(t_1,t_2,t_3,t_4,t_5,t_6) = (t,t',t'',f(t),f(t'),f(t'') )
\end{equation*}
and
\begin{equation*}
(t'_1,t'_2,t'_3,t'_4,t'_5,t'_6)= \left(g^2_2(s_2),g_2(s_2),s_2,s_1,g_1(s_1),g^2_1(s_1) \right).
\end{equation*}
These six-tuples are strictly increasing;  
by axiom Ax4 there exists therefore an element $h_0$ in $G$ 
satisfying the inequalities \eqref{eq:16.1}.  
These inequalities imply 
that
\begin{align*}
\left(\act{h_0fh^{-1}_0}g^{-1}_2 \circ g_1\right) (t'_4) 
&= 
(h_0 f h^{-1}_0 \circ g^{-1}_2\circ h_0 f^{-1} h^{-1}_0) (t'_5) \\
&>
( h_0 f h^{-1}_0 \circ g^{-1}_2 \circ h_0f^{-1})(t_5)
= 
(h_0 f h^{-1}_0 \circ g^{-1}_2 \circ h_0) (t_2)\\
&> 
(h_0 f h^{-1}_0 \circ g^{-1}_2) (t'_2) = (h_0fh^{-1}_0) (t'_3) \\
&> h_0 (f(t_3))=  h_0(t_6) > t'_6
\end{align*}
and that
\begin{align*}
\left(g_1 \circ \act{h_0fh^{-1}_0} g^{-1}_2\right)(t'_4) 
&< 
(g_1 \circ h_0fh^{-1}_0 \circ g^{-1}_2 \circ h_0f^{-1})(t_4) \\
&= 
(g_1 \circ h_0 f h^{-1}_0 \circ g^{-1}_2 \circ h_0)(t_1)
< 
(g_1 \circ h_0fh^{-1}_0 \circ g^{-1}_2)(t'_1)\\
&= 
(g_1 \circ h_0fh^{-1}_0)(t'_2) < (g_1 \circ h_0f )(t_2)\\ 
&= (g_1\circ h_0) (t_5) < g_1(t'_5) = t'_6.
\end{align*}
These calculations reveal
that $g_1$ and $\act{h_0fh_0^{-1}}g_2^{-1}$ do not commute.

We come now to the second assertion of Lemma \ref{LemmaE1}.
Suppose that $f \in G$ satisfies hypothesis \eqref{eq:16.2}
and $t \in J$ is an element with $f < f(t)$. 
Then $\supp g_1 < \supp g_2$.
Since $g_1 \neq \id $ and  $g_2\neq \id $ by hypothesis, 
their supports are non-empty 
and so the inequality $\supp g_2 < \supp g_1$ does not hold.
On the other hand,
if $f$ were not strictly positive 
there would exists an element $t' \in J$ with $f(t') < t'$.
Then $t' < f^{-1}(t')$ and so the first assertion of \eqref{eq:16.3} would allow us to conclude
that  $\supp g_2 < \supp g_1$, a contradiction.
\end{proof}

The main goal of this section is to show that if $G$ and $\bar{G}$ satisfy suitable hypotheses, 
each isomorphism $\alpha \colon G\iso \bar{G}$ is induced by conjugation 
by a unique homeomorphism $\varphi \colon J \iso \bar{J}$.  
The uniqueness part is a consequence of Ax4. 
\begin{lemma}
\label{LemmaE2}
If $G$ satisfies Ax2 and Ax4 the only homeomorphism 
$\varphi \colon J \iso J$ centralizing $G$ is the identity.
\end{lemma}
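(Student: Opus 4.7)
The plan is to show that $\varphi$ has dense fixed-point set in $J$ and then conclude $\varphi = \id$ by continuity. The key realization is that under Axioms Ax2 and Ax4, the group $G$ contains, in every open subinterval of $J$, a non-trivial element with support inside that subinterval; since $\varphi$ centralizes $G$, it must preserve the support of every such element, which (once $\varphi$ is known to be order-preserving) forces it to fix the infimum and supremum of that support.

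The first step will be to establish this abundance of bounded-support elements. By Ax2 I fix once and for all a non-trivial $g_{0} \in G$ with $\supp g_{0}$ contained in some compact subinterval $[a,b] \subset J$. Given any interval $(c,d)$ whose closure lies in $J$, I apply Ax4 with the choice $t_{2}=a$, $t_{3}=b$, $t'_{2}=c$, $t'_{3}=d$, padding the remaining four pairs with arbitrary values in $J$ satisfying the required strict monotonicity. This yields $h \in G$ with $h(a) > c$ and $h(b) < d$, and since $h$ is order-preserving, $\supp(\act{h}g_{0}) = h(\supp g_{0}) \subset [h(a),h(b)] \subset (c,d)$. The element $\act{h}g_{0}$ is the desired non-trivial member of $G$ with support inside $(c,d)$.

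Next I will verify that $\varphi$ is order-preserving. A homeomorphism of an open interval is either order-preserving or order-reversing; to rule out the latter, pick two disjoint subintervals $I_{1} < I_{2}$ in $J$ and apply Step 1 to obtain $g_{i} \in G$ with $\supp g_{i} \subset I_{i}$. From $\varphi g_{i} = g_{i}\varphi$ it follows that $\varphi(\supp g_{i}) = \supp g_{i}$, so picking $p_{i}\in\supp g_{i}$ gives $p_{1} < p_{2}$ together with $\varphi(p_{1}) \in I_{1} < I_{2} \ni \varphi(p_{2})$, contradicting order-reversal. Knowing that $\varphi$ is order-preserving, I deduce from $\varphi(\supp(\act{h}g_{0})) = \supp(\act{h}g_{0})$ that $\varphi$ fixes both $\inf \supp(\act{h}g_{0})$ and $\sup \supp(\act{h}g_{0})$, and these lie in $(c,d)$ by construction. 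Since $(c,d)$ was arbitrary, the fixed-point set of $\varphi$ meets every open subinterval of $J$, hence is dense; by continuity of $\varphi$, the fixed-point set is closed, so it must be all of $J$ and $\varphi = \id$. I expect the main technical obstacle to be the first step: one has to be willing to use the full strength of Ax4's six points, padding carefully so that the two genuinely needed inequalities ($h(a)>c$ and $h(b)<d$) sit in the correct positions within the alternating pattern required by the axiom.
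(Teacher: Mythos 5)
Your proof is correct, and its engine is the same as the paper's: Axioms Ax2 and Ax4 are combined to conjugate a fixed non-trivial bounded-support element into an arbitrarily prescribed subinterval of $J$ (the paper uses exactly the same positions $t_2, t_3$ of the six-tuple, with the same silent padding of the remaining four entries). Where you diverge is in how this supply of elements is exploited. The paper argues by contraposition in one stroke: if $\varphi(t) \neq t$, continuity yields a small interval $J'$ disjoint from $\varphi(J')$; placing a non-trivial $g_1 \in G$ with $\supp g_1 \subset J'$ then gives $\varphi(\supp g_1) \cap \supp g_1 = \emptyset$, so $\act{\varphi}g_1 \neq g_1$ and $\varphi$ cannot centralize $G$. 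You instead run the argument forwards, first ruling out order-reversal and then showing that $\varphi$ fixes $\inf\supp$ and $\sup\supp$ of elements supported in every subinterval, so that the fixed-point set is dense and closed, hence all of $J$. Both arguments are sound; the paper's is shorter because it never needs to know that $\varphi$ preserves orientation, while yours yields that fact (and the density of fixed points) as intermediate information. One could even shortcut your version: once a non-trivial $g$ with $\supp g \subset (t-\varepsilon, t+\varepsilon)$ is available for every $t$ and $\varepsilon$, the identity $\varphi(\supp g) = \supp g$ already pins $\varphi$ to within $2\varepsilon$ of the identity near $t$, without discussing orientation at all.
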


\begin{proof}
Suppose $\varphi(t) \neq t$ for some $t$ in $J$.
By the continuity of $\varphi$ 
there exists then a small interval $J' =\,]t_2', t_3']$ 
that contains $t$ and is disjoint from its image $\varphi(J')$.
Use axiom Ax2 to find an element $g \neq \id$ in $G$ with bounded support,
say contained in $]t_2, t_3[$, 
and then axiom Ax4  to find $h \in G$ 
so that $ t'_2 < h(t_2) < h(t_3) < t'_3$.
Set $g_1 = \act{h} g$. 
Then  $\supp g_1$ and  $\varphi(\supp g_1)$ are disjoint 
and so $g_1 \neq \act{\varphi} g_1$.
\end{proof}

We are now ready to establish the announced modification of McCleary's result \cite[Main Theorem 4]{McC78b}:
\begin{theorem}
\label{TheoremE3}
\index{Representation Theorem for isomorphisms!general result}%
\index{Isomorphisms!representation}%
Assume $G$ satisfies axioms Ax1, Ax3 and Ax4,
and $\bar{G}$  satisfies axioms Ax2 and Ax4.
If $\alpha \colon G \iso \bar{G}$ is an isomorphism of groups 
there exists a unique homeomorphism $\varphi \colon J \iso \bar{J}$
which induces $\alpha$ by conjugation.
\end{theorem}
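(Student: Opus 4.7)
Suppose $\varphi_1, \varphi_2 \colon J \iso \bar J$ both induce $\alpha$ by conjugation. Then $\psi = \varphi_1 \circ \varphi_2^{-1}$ is an order-preserving self-homeomorphism of $\bar J$ that centralizes $\bar G$. Since $\bar G$ satisfies Ax2 and Ax4, Lemma \ref{LemmaE2} applied to $\bar G$ forces $\psi = \id_{\bar J}$, so $\varphi_1 = \varphi_2$.

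\textbf{Transferring the support order.} Fix a strictly positive $f_0 \in G$ by Ax1. For non-identity $g_1, g_2 \in G$, call $(g_1, g_2)$ an $f_0$-\emph{separating pair} when $[g_1, \act{h f_0 h^{-1}} g_2] = \id$ for every $h \in G$. Lemma \ref{LemmaE1} applied to $G$ shows that any such pair satisfies $\supp g_1 < \supp g_2$. By Ax3 one has elements $g_\ell, g_u \in G\smallsetminus\{\id\}$ with $\sup \supp g_\ell = \inf \supp g_u$, and the separation property for $(g_\ell, g_u)$ can be arranged by shrinking them via conjugation using Ax4. Since the commutator condition defining separation is invariant under $\alpha$, the image $(\alpha(g_\ell), \alpha(g_u))$ is an $\alpha(f_0)$-separating pair in $\bar G$; Lemma \ref{LemmaE1} applied to $\bar G$ (which satisfies Ax4) then forces $\alpha(f_0)$ to be either strictly positive or strictly negative on $\bar J$. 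Replacing $f_0$ by $f_0^{-1}$ if necessary, assume $\alpha(f_0)$ is strictly positive. The same reasoning shows that every $f_0$-separating pair $(g_1,g_2)$ in $G$ yields an image pair with $\supp \alpha(g_1) < \supp \alpha(g_2)$ in $\bar J$, so the separation order transfers.

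\textbf{Construction of $\varphi$.} For each $t \in J$, select a nested sequence of $f_0$-separating pairs $(g_\ell^{t,n}, g_u^{t,n})$ in $G$ with $\sup \supp g_\ell^{t,n} \nearrow t$ and $\inf \supp g_u^{t,n} \searrow t$ as $n \to \infty$; existence is secured by Ax3 together with repeated applications of Ax4. Define
\[
\varphi(t) = \sup\nolimits_n \, \sup \supp \alpha(g_\ell^{t,n}) = \inf\nolimits_n \, \inf \supp \alpha(g_u^{t,n}),
\]
the equality being forced by the transported separation $\supp \alpha(g_\ell^{t,n}) < \supp \alpha(g_u^{t,n})$ and a further use of Ax4 in $\bar G$ to pinch these two supports together. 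Independence from the choice of sequence follows along the same lines; monotonicity of $\varphi$ is immediate from the transported order. Running the whole construction with $\alpha^{-1}$ and $\alpha(f_0)$ in place of $\alpha$ and $f_0$ yields a map $\bar \varphi \colon \bar J \to J$ with $\bar \varphi \circ \varphi = \id_J$ and $\varphi \circ \bar \varphi = \id_{\bar J}$. Since a monotone bijection between open intervals of $\R$ is automatically a homeomorphism, $\varphi$ is a homeomorphism.

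\textbf{$\varphi$ induces $\alpha$; main obstacle.} For $g \in G$ and $t \in J$, the sequence $(g g_\ell^{t,n} g^{-1}, g g_u^{t,n} g^{-1})$ is again $f_0$-separating (conjugation preserves the commutator condition) and witnesses the point $g(t)$; applying $\alpha$ and passing to the limit gives
\[
\varphi(g(t)) \;=\; \alpha(g)\bigl(\varphi(t)\bigr),
\]
so $\alpha(g) = \varphi \circ g \circ \varphi^{-1}$ on all of $\bar J$. The principal technical obstacle lies in the well-definedness in the construction of $\varphi$: because Ax4 supplies only \emph{approximate} 6-fold transitivity, one cannot in general produce an $f_0$-separating pair \emph{exactly} attached at a prescribed $t$, which is why the pair must be replaced by a nested sequence and the value of $\varphi(t)$ by a $\sup$/$\inf$. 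The argument that the two one-sided limits coincide and are independent of the approximating sequence is the delicate part, and it is where the full strength of Ax4 in both $G$ and $\bar G$ is used.
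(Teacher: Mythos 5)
Your overall strategy is the paper's own: uniqueness via Lemma \ref{LemmaE2}, transfer of the ``disjoint supports'' relation through the commutation condition of Lemma \ref{LemmaE1} using the strictly positive element supplied by Ax1, construction of $\varphi$ by approximation, and verification of the conjugation identity by conjugating the witnessing pairs. (Two cosmetic remarks: the commutation condition for the Ax3 pair needs no ``arranging by shrinking'' --- it holds automatically because every conjugate of $f_0$ is strictly positive, so $\supp g_\ell \leq \supp(\act{hf_0h^{-1}}g_u)$ for all $h$; and the paper avoids your sup/inf bookkeeping by working only on the set $\Bfr$ of points $b$ with $\sup(\supp g_\ell)=b=\inf(\supp g_u)$ for some pair in $G$, which is non-empty by Ax3 and dense by Ax4 since it is $G$-invariant, and then extending the resulting strictly monotone map by continuity.)

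There is, however, one genuine error: your disposal of the case where $\alpha(f_0)$ is strictly negative. Replacing $f_0$ by $f_0^{-1}$ does not restore the situation. The element $f_0^{-1}$ is strictly negative in $G$, so Lemma \ref{LemmaE1} no longer applies to it directly (its hypothesis $t<f(t)$ fails); and since $[g_1,\act{hf_0^{-1}h^{-1}}g_2]=\id$ for all $h$ is equivalent to $[g_2,\act{hf_0h^{-1}}g_1]=\id$ for all $h$, Lemma \ref{LemmaE1} (applied with $f_0$) yields $\supp g_2 < \supp g_1$ for an $f_0^{-1}$-separating pair. In other words, with this substitution the support order in $J$ and the support order in $\bar J$ come out \emph{reversed} relative to one another, which is exactly the orientation-reversing situation; your formula $\varphi(t)=\sup_n \sup\supp\alpha(g_\ell^{t,n})$ and the subsequent monotonicity claim are then wrong as written. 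The correct reduction, which the paper uses, is to conjugate the \emph{target} by $-\id$: the group $\breve{G}=\act{-\id}\bar{G}$ on $-\bar{J}$ still satisfies Ax2 and Ax4, the composite isomorphism $\breve{\alpha}$ sends $f_0$ to a strictly positive element, the first case produces $\breve{\varphi}\colon J\iso -\bar{J}$, and one sets $\varphi=(-\id)\circ\breve{\varphi}$. With that repair your argument goes through.
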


\subsubsection{Proof of Theorem \ref{TheoremE3}}
\label{sssec:Proof-of-TheoremE3}
%
If $\varphi$ exists, it is unique by Lemma \ref{LemmaE2}.
As a first step on the way to $\varphi$,
we introduce the subset $\Bfr$ of $J$ defined by
\begin{equation}
\label{eq:Definition-Bfr}
\Bfr = \{b \in J \mid \text{there exist }  g_\ell, g_u \text{ in } G \text{ with } \sup(\supp g_\ell) = b = \inf(\supp g_u) \},
\end{equation}
and the analogously defined subset $\bar{\Bfr}$ of $\bar J$.  
Since $G$ satisfies Ax3 the set $\Bfr$ is non-empty; 
hypothesis Ax4 then implies that $\Bfr$ is a dense subset of $J$.

Our next aim is to construct a function $\varphi_1 \colon \Bfr \to \bar{\Bfr}$. 
Consider $b \in \Bfr$. 
By definition,
$G$ contains elements $g_\ell$ and $g_u$ such that
\begin{equation}
\label{eq:16.4}
\sup(\supp g_\ell) = b = \inf(\supp g_u).
\end{equation}
We claim that the images $\alpha(g_\ell)$ and $\alpha(g_u)$
have the property that
\begin{equation}
\label{eq:16.4plus}
\sup(\supp \alpha(g_\ell)) = \inf(\supp \alpha(g_u))
\quad \text{or} \quad
\sup(\supp \alpha(g_u) )= \inf(\supp \alpha(g_\ell)).
\end{equation}
Once this is proved,
$\bar{b} = \sup(\supp \alpha(g_\ell))$
or  $\bar{b} = \sup(\supp \alpha(g_u))$, respectively,
will be our candidate for $\varphi_1(b)$.

To establish property \eqref{eq:16.4plus} we proceed as follows.
Formula \eqref{eq:16.4} implies, in particular,  
that $\supp g_\ell \leq \supp g_u$, 
\ie{}that every element in the support of $g_\ell$ is smaller 
than every element in the support of $g_u$.
The group $G$ contains, by axiom Ax1 a strictly positive element $f$. 
Every conjugate of $f$ is then strictly positive and so the formula
\begin{equation*}
\supp g_\ell \leq \left(\act{h}f \right) (\supp g_u) = \supp\left(\act{hfh^{-1}}g_u\right)
\end{equation*}
holds for every $h \in G$.
It implies that $g_\ell$ commutes with each conjugate $\act{hfh^{-1}}g_u$ of $g_u$.
Since $\alpha$ is a homomorphism
this commutativity relation is also true for the images 
$\bar{g}_\ell = \alpha(g_\ell)$,  $\bar{g}_u = \alpha(g_u)$ 
and for the images of the conjugates $hfh^{-1}$ of $f$;
as $\alpha$ is surjective this consequence can be stated as follows:
\[
\text{$\bar{g}_\ell$ commutes, 
for each $\bar{h} \in \bar{G}$, 
with the conjugate $\act{\bar{h} \bar{f} \bar{h}^{-1}}\bar{g}_u$  of $\bar{g}_u$} .
\]
Note that $\bar f\neq \id$, for  $f\neq \id$ and $\alpha$ is injective. 
  
Two cases now arise: 
if there exists $\bar t \in \bar J$ with $\bar t < \bar{f}(\bar t)$ 
then $\bar f$ is strictly positive by Lemma \ref{LemmaE1};
otherwise, $\bar f$ is strictly negative.

\emph{Suppose first that $\bar{f} = \alpha(f)$ is strictly positive}.
The preceding argument can then be summarized by the implication
\begin{equation}
\label{eq:Summary1-proof-TheoremE3}
\text{if $(g_\ell, g_u) \in G^2$ satisfies $\supp g_\ell  \leq \supp g_u$ 
then $\supp \alpha(g_\ell ) \leq \supp \alpha(g_u)$.}
\end{equation}

Assume now that the triple $(g_\ell, b,  g_u) \in G \times \Bfr \times G$
satisfies equality \eqref{eq:16.4}. 
Then the images $\bar g_1 =\alpha(g_1)$ and $\bar g_u = \alpha(g_u)$ 
satisfy relation $\supp \bar g_\ell \leq \supp \bar g_u$.  
But more is true:
the equality
\begin{equation}
\label{eq:16.5}
\sup(\supp \bar g_\ell) = \inf(\supp \bar g_u)
\end{equation}
holds.
Indeed, 
if equation \eqref{eq:16.5} were not valid
there would exist, by axioms Ax2 and Ax4, an element $\bar g = \alpha(g)$ in 
$\bar G\smallsetminus \{\id\}$, 
having bounded support and satisfying the inequalities
$\supp \bar g_\ell \leq \supp \bar g\leq \supp \bar{g}_u$.
Since $\bar G$ satisfies Ax4 and $\bar{f}$ is strictly positive,
Lemma \ref{LemmaE1} would allow us to deduce the inequalities
\begin{equation}
\label{eq:Triple-inequality}
\supp g_\ell \leq \supp g\leq \supp g_u
\end{equation}
in contradiction to formula \eqref{eq:16.4}.  

The preceding argument shows, first of all, 
that the subset $\bar{\Bfr}$ is non-empty 
and so dense in $\bar{J}$ (by axiom Ax4).
It implies next
that each triple $(g_\ell,b, g_u)$ satisfying condition \eqref{eq:16.4} 
defines an element $\bar b$ of $\bar{\Bfr}\subset \bar J$, given by 
\begin{equation}
\label{eq:Definition-of-b-bar}
\sup(\supp \alpha(g_\ell)) = \bar b = \inf(\supp\alpha(g_u)).
\end{equation}
If $g_\ell$ is fixed,  
the element $\bar b$ does not depend on the choice of $g_u$;
if $g_u$ is fixed, it does not depend on $g_\ell$. 
The assignment $b \mapsto \bar b$ defines therefore a \emph{function} 
$\varphi_1 \colon \Bfr \to \bar{\Bfr}$.  
This function is \emph{strictly increasing}.
Indeed,  if $b < b'$ and $b = \sup(\supp g_\ell)$, $b' = \inf(\supp g_u)$
there exists (by Ax2 and Ax4) an element $g$ satisfying relation \eqref{eq:Triple-inequality} 
whence implication \eqref{eq:Summary1-proof-TheoremE3} entails that
\begin{equation*}
\varphi_1(b) = \sup(\supp \bar g_\ell)) < \inf(\supp \bar g_u) = \varphi_1(b').
\end{equation*}

So far we know 
that $\varphi_1 \colon \Bfr \to \bar{\Bfr}$  is a strictly increasing function, 
defined on a dense subset of $J$,
and that $\bar{G}$ satisfies not only axioms Ax2 and Ax4,
but also axiom Ax1 (for $\bar{f} = \alpha(f)$ is strictly positive)
and axiom Ax3. 
The roles of $G$ and $\bar{G}$ are therefore interchangeable.
It follows, in particular, 
that  $\bar{\Bfr}$ is a dense subset of $\bar J$
and that $\varphi_1\colon \Bfr \to \bar{\Bfr}$ is bijective.
Let $\tilde{\varphi}_1\colon \Bfr \to \bar{J}$ 
denote the composition of $\varphi_1$ and the inclusion $\bar{\Bfr} \incl \bar{J}$.
Then $\tilde{\varphi}_1$ is a strictly increasing function
whose image is dense in $\bar{J}$
and so it extends uniquely to a strictly increasing and continuous function 
$\varphi \colon J \to \bar{J}$ 
(\cf{}\cite[IV.4.2.1,  III.3.15.5 and III.3.15.3]{Die69}).
This extension is actually surjective (by the Intermediate Value Theorem)
and thus bijective. 
By exchanging the rôles of $G$ and $\bar{G}$ one sees 
that $\varphi^{-1}$ is continuous, too.
All taken together,
this shows
that $\varphi \colon J \to \bar{J}$ is a homeomorphism 
with image $\bar{J}$.

We are left with proving 
that the isomorphism $\alpha \colon G \iso \bar G$ is induced by conjugation by $\varphi$.
For this task we recall the definition of $\Bfr$. 
Given $b \in \Bfr$, there exist homeomorphisms $g_\ell$ and $g_u$ in $G$ such that
\[
\sup ( \supp g_\ell) = b = \inf ( \supp g_u).
\]
Since each $g \in G$ is orientation preserving 
it follows that
\begin{equation*}
g(b)
=
g(\sup ( \supp g_\ell)) 
= 
\sup ( g(\supp g_\ell) ) = \sup \left(\supp \act{g} g_\ell \right).
\end{equation*}
On the other hand, 
if $b'$ is any point of $\Bfr$ and $g'_\ell \in G$ is any element with  $b' = \sup (\supp g'_\ell)$
then $\varphi(b') = \sup ( \supp \alpha( g'_\ell))$  holds by equation \eqref{eq:Definition-of-b-bar}.
These two formulae then allow one to justify the following chain of equations:
\begin{align*}
\left( \varphi \circ g \right) ( b) 
=\varphi(g(b))
&=
\varphi \left( \sup \left( \supp \act{g} g_\ell\right) \right)\\
&=
\sup \left( \supp \alpha\left(\act{g} g_\ell\right) \right)
=
\sup \left( \supp   \act{\alpha(g)} \alpha(g_\ell) \right)\\
&=
\alpha(g) \left( \sup \left( \supp  \alpha(g_\ell)\right) \right) \\
&=
\alpha(g) ( \varphi(b))=
\left(\alpha(g) \circ \varphi \right) (b).
\end{align*}
The chain shows that the homeomorphisms $\varphi \circ g$ and $\alpha(g) \circ \varphi$ 
agree on the subset $\Bfr$ of $J$.
As this subset is dense in the interval $J$ the two homeomorphisms agree everywhere 
whence $\alpha (g) = \varphi \circ g \circ \varphi^{-1}$.
The first case of the proof of Theorem \ref{TheoremE3} is now complete
 \smallskip

\emph{Suppose, secondly,  that $\bar f = \alpha(f)$ is strictly negative.}
Define $\breve{f}$ to be the function 
$\act{-\id} \bar f \colon -t \mapsto -\bar{f}(t)$.
Then $\breve{f}$ is strictly positive, for the inequality $\bar{f}(t) \leq t$ entails
that $-t \leq -\bar{f}(t) = \breve{f}(-t)$.
The homeomorphism $\breve{f}$ is an element of the group $\breve{G} = \act{-\id} \bar{G}$
and this group satisfies axioms Ax2 and Ax4.
Indeed, 
the hypothesis that $\bar{G}$ satisfies axiom Ax2 obviously implies 
that $\breve{G}$ does so.
Suppose now 
that $(-t_1, \ldots, -t_6)$ and $(-t'_1, \ldots, -t'_6)$ are two strictly increasing six-tuples
of elements in $-\bar{J}$.
Then $(t_6, \ldots, t_1)$ and $(t'_6, \ldots, t'_1)$ are two strictly increasing six-tuples 
with elements in $\bar{J}$. 
Since $\bar{G}$ satisfies axiom Ax4 
there exists therefore a homeomorphism $h \in \bar{G}$ satisfying the chain of inequalities
\[
h(t_6) < t'_6 < t'_5 < h(t_5) < h(t_4) < t'_4 < t'_3 < h(t_3) < h(t_2) < t'_2 < t'_1 < h(t_1).
\]
The negatives of this chain of numbers satisfy then a chain of inequalities of the form 
\[
-h(t_1) < -t'_1 < \cdots  < -t'_5 < -t'_6 < -h(t_6);
\]
it shows 
that $\breve{h} = \act{-\id} h \colon -\bar{J} \iso -\bar{J}$ satisfies the chain of inequalities 
\eqref{eq:16.1} with $h$ replaced by $\breve{h}$ 
and the six-tuples $(-t_1, \ldots, -t_6)$ and $(-t'_1, \ldots, -t'_6)$.
The group $\breve{G}$ satisfies therefore axiom Ax4.

Consider now the isomorphism 
$\breve{\alpha} 
\colon 
G \xrightarrow{\alpha} \bar{G} \xrightarrow{\bar{g} \mapsto (-\id) \circ \bar{g} \circ (-\id)} \breve{G}$.
It maps the strictly negative homeomorphism $f$ onto $\breve{f}$ 
which is a strictly positive element of $\breve{G}$.
By the first case of the proof 
the isomorphism $\breve{\alpha}$ is therefore induced 
by conjugation by a homeomorphism $\breve{\varphi} \colon J \iso -\bar{J}$,
and so $\alpha$ itself is induced by conjugation by $\varphi = (-\id) \circ \breve{\varphi}$.
%
\subsection[Consequences for groups of PL-homeomorphisms]%
{Consequences of Theorem \ref{TheoremE3}}
\label{ssec:16.3}
%
We begin by showing 
that Theorem \ref{TheoremE3} applies 
to every isomorphism of groups $\alpha \colon G \iso \bar{G}$
where $G$ is a subgroup of a group $G(I;A,P)$ containing the derived group of $B(I;A,P)$
and $\bar{G}$ a subgroup of a group $G(\bar{I};\bar{A}, \bar{P})$ 
containing the derived group of $B(\bar{I};\bar{A}, \bar{P})$.
\emph{A priori}, 
there may not be any relations 
between the triples $(I,A,P)$ and $(\bar{I},\bar{A}, \bar{P})$;
it will turn out, however,
that $\alpha$ induces always an isomorphism $P \iso \bar{P}$ of a specific type
(see Proposition \ref{PropositionE8})
and that a much tighter relationship holds if the group $P$ is not cyclic: 
then $\bar{P} = P$ and there exists a positive real number $s$ 
so that $\bar{A} = s \cdot A$
(see Theorem \ref{TheoremE10}).

\subsubsection{The main result and its proof}
\label{sssec:TheoremE04}
%
\begin{thm}
\label{TheoremE04}
\index{Isomorphisms!main result}%
\index{Representation Theorem for isomorphisms!main result}%
\index{Theorem \ref{TheoremE04}!statement|textbf}%
Assume $G$ is a subgroup of $G(I;A,P)$ containing $B(I;A,P)'$
and that $\bar{G}$ is a subgroup of $G(\bar{I};\bar{A},\bar{P})$ containing 
$B(\bar{I}; \bar{A},\bar{P})'$.
If $\alpha \colon  G \iso \bar{G}$ is an isomorphism of groups
there exists a unique homeomorphism $\varphi \colon \Int(I) \iso \Int(\bar{I})$ 
which induces $\alpha$ by conjugation; more precisely,
the equation
\begin{equation}
\label{eq:16.6}
\alpha(g) \restriction{\Int(\bar{I}})  = \varphi \circ (g \restriction{\Int(I)}) \circ \varphi^{-1}
\end{equation}
holds for every $g \in G$.
Moreover, $\varphi$ maps $A \cap \Int(I) $ onto $\bar{A} \cap \Int (\bar{I})$.
\end{thm}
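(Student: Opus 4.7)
The plan is to deduce Theorem \ref{TheoremE04} from Theorem \ref{TheoremE3} applied with $J = \Int(I)$ and $\bar{J} = \Int(\bar{I})$, and then to refine the conclusion by identifying $A \cap \Int(I)$ in purely group-theoretic terms. First I would verify axioms Ax1--Ax4 for both pairs $(G, \Int(I))$ and $(\bar{G}, \Int(\bar{I}))$. Since $G$ contains $B(I;A,P)'$ and $\bar{G}$ contains $B(\bar{I}; \bar{A}, \bar{P})'$, each of which is non-abelian and simple by Corollary \ref{crl:Simplicity-of-B'}, it suffices to check the axioms for these derived subgroups. Ax2 is immediate. For Ax4 (approximate $6$-fold transitivity) I would invoke Remark \ref{remark:5.3}: $B'$ acts $\ell$-fold transitively on each of its orbits in $A \cap \Int(I)$ for every $\ell \geq 1$, and those orbits are dense in $\Int(I)$, so any two strictly increasing six-tuples can be approximated by tuples drawn from a single orbit and then transported by an element of $B'$. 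For Ax3, given $b \in A \cap \Int(I)$, Theorem \ref{TheoremA} lets me construct a non-identity $f \in B$ whose support lies in $[a,b]$ with $\sup \supp f = b$ (for some $a \in A \cap \Int(I)$ with $a < b$), together with $h \in B$ carrying $[a,b]$ onto a disjoint interval inside $\Int(I)$; the commutator $[f,h]$ lies in $B'$ and still has $b$ as sup of its support, and a symmetric construction furnishes a partner in $B'$ with $b$ as inf of support. Ax1 is obtained by a more delicate variant of this same construction, in which the bumps $f$ and $h f^{-1} h^{-1}$ are arranged so as to yield a weakly positive element of $B'$.

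Theorem \ref{TheoremE3} then delivers the unique homeomorphism $\varphi \colon \Int(I) \iso \Int(\bar{I})$ satisfying \eqref{eq:16.6}. For the final assertion, that $\varphi(A \cap \Int(I)) = \bar{A} \cap \Int(\bar{I})$, I would rely on the set
\[
\Bfr = \{b \in \Int(I) \mid \exists\, g_\ell, g_u \in G \text{ with } \sup(\supp g_\ell) = b = \inf(\supp g_u)\}
\]
from the proof of Theorem \ref{TheoremE3}, which shows that $\varphi$ restricts to a bijection $\Bfr \iso \bar{\Bfr}$, with $\bar{\Bfr}$ defined analogously. It then suffices to verify $\Bfr = A \cap \Int(I)$ and the symmetric equality for $\bar{\Bfr}$. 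The inclusion $\Bfr \subseteq A \cap \Int(I)$ is clear: for $g \in G \subseteq G(I;A,P)$ and any real $b = \sup \supp g \in \Int(I)$, the slope of $g$ just to the left of $b$ must differ from $1$, so $b$ is a break of $g$ and hence an element of $A$. The reverse inclusion is exactly what the Ax3 construction provides.

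The most delicate step is Ax1: the naive commutator of a strictly positive bump in $B$ with a translating element yields a function that is positive on one bump of its support and negative on the translated bump, so one has to refine the construction---either by combining several commutators to cancel the negative piece, or by appealing directly to the multiple transitivity of $B'$ on a dense orbit together with an explicit PL interpolation from Theorem \ref{TheoremA}---in order to land a strictly positive element inside $B'$ despite the fact that $B'$ arises by taking commutators.
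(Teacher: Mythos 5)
Your overall architecture coincides with the paper's: verify Ax1--Ax4 for $B(I;A,P)'$, invoke Theorem \ref{TheoremE3}, and then identify $\varphi(A\cap\Int(I))$ with $\bar A\cap\Int(\bar I)$ by recognising the relevant points as endpoints of supports, hence breaks. Your treatment of Ax2, Ax4, Ax3 and of the final assertion (via the set $\Bfr$ from the proof of Theorem \ref{TheoremE3} and the observation that $\sup\supp g$ is a one-sided break of a finitary PL-homeomorphism) is sound, and your route to the last claim is if anything a little cleaner than the paper's.

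The genuine gap is Ax1, and you have flagged it yourself without closing it. This is not a peripheral verification: producing a strictly positive element \emph{inside the derived group} $B'$ is precisely the point where Theorem \ref{TheoremE04} goes beyond Theorem \ref{TheoremE3}, and neither of your two suggested repairs is workable as stated. A commutator $[f,h]=f\cdot(\act{h}f)^{-1}$ built from bumps with \emph{disjoint} supports agrees with $f$ on $\supp f$ and with $\act{h}f^{-1}$ on $h(\supp f)$, so it is positive on one bump and negative on the other, and stacking several such commutators does not obviously cancel the negative pieces while keeping the product positive everywhere. Likewise, $\ell$-fold order-transitivity of $B'$ on a dense orbit only prescribes the images of finitely many points; it gives no control over the sign of $g(t)-t$ between them, so it cannot by itself deliver $g(t)\geq t$ for all $t$. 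What the paper actually does is an explicit computation with two \emph{overlapping} bumps: with $g=b(a,\Delta;p)$ and $f=b(a+p\Delta,\Delta;p)$ as in Example \ref{example:Elements-in-B}, one checks via the rectangle diagram that $[f,g]$ is strictly positive with support exactly $]a+p\Delta,\,a+2p\Delta[$. That one construction simultaneously settles Ax1 and Ax3 and is then reused for the statement that $\varphi$ maps $A\cap\Int(I)$ onto $\bar A\cap\Int(\bar I)$. Until you either reproduce such a computation or justify one of your alternative routes in detail, the proof is incomplete at its central step.
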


\begin{proof}
We first verify that $G$ satisfies axioms Ax1 through Ax4, 
listed in section \ref{ssec:16.1}. 
The group $B' = B(I;A,P)'$ consists of elements of bounded support 
and it is not reduced to the unit element 
(\cf{}Corollary \ref{crl:Simplicity-of-B'}), 
and thus axiom Ax2 holds for every group $G$ containing $B'$.
Theorem \ref{TheoremA} and remark \ref{remark:5.3} imply 
\index{Theorem \ref{TheoremA}!consequences}%
next that $B'$, and hence $G$,  satisfy axiom Ax4.
\smallskip

Now to axiom Ax1. 
It postulates the existence of a strictly positive element in $G$;
we establish its validity by an explicit construction.
Choose $p \in P$ with $p > 1$ and $a \in A \cap \Int{I}$.
Find a positive element $\Delta \in A$ that is so small 
that 
\[
c = a + (2p + 1) \cdot \Delta \in \Int(I).
\]
These choices being made, we construct PL-homeomorphisms $f$ and $g$;
they will have the form of the PL-homeomorphism $b(a, \Delta; p)$ 
discussed in Example \ref{example:Elements-in-B}.
The PL-homeomorphism $g$ is the affine interpolation of the points
\[
(a,a), \quad (a + \Delta, a + p \cdot \Delta), \quad (a + (p+1) \cdot \Delta, a + (p+1) \cdot \Delta)
\]
(extended by the identity outside of the interval $[a, a + (p+1) \cdot \Delta]$);
so $g = b(a, \Delta; p)$ in the notation of Example  \ref{example:Elements-in-B}.
The PL-homeomorphism $f$  is a translated copy of $g$, namely $b(a + p \Delta, \Delta; p)$.
The commutator $[f,g] = f \circ g \circ f^{-1} \circ g^{-1}$ 
has then the rectangle diagram depicted in the next figure.
\medskip

\begin{minipage}{12 cm}
\psfrag{1}{\hspace*{-1.5mm}     \small $0$}
\psfrag{2}{\hspace*{-1.3mm}    \small $1$}
\psfrag{3}{\hspace*{-1.5mm}     \small $p$}
\psfrag{4}{\hspace*{-6mm}     \small $\tfrac{p^2 +p - 1}{p}$}
\psfrag{5}{\hspace*{-2mm}     \small $p+1$}
\psfrag{6}{\hspace*{-2.0mm}  \small $2p$}
\psfrag{7}{\hspace*{-5.5mm}     \small $2p+1$}
\psfrag{11}{\hspace*{-3.5mm}     \small $\tfrac{1}{p}$}
\psfrag{12}{\hspace*{-1.4mm}    \small $p$}
\psfrag{13}{\hspace*{-1.4mm}     \small $p$}
\psfrag{14}{\hspace*{-2mm}     \small $1$}
\psfrag{15}{\hspace*{-2.5mm}     \small $1$}
\psfrag{21}{\hspace*{1.7mm}     \small $1$}
\psfrag{22}{\hspace*{-1.7mm}    \small $1$}
\psfrag{23}{\hspace*{-2.0mm}     \small $\tfrac{1}{p}$}
\psfrag{24}{\hspace*{-4.0mm}     \small $\tfrac{1}{p}$}
\psfrag{25}{\hspace*{2.5mm}     \small $p$}
\psfrag{31}{\hspace*{-3mm}     \small $p$}
\psfrag{32}{\hspace*{1.7mm}    \small $\tfrac{1}{p}$}
\psfrag{33}{\hspace*{-0.5mm}     \small }
\psfrag{34}{\hspace*{0.5mm}     \small $\tfrac{1}{p}$}
\psfrag{35}{\hspace*{7.3mm}     \small $1$}
\psfrag{41}{\hspace*{-7.4mm}     \small $1$}
\psfrag{42}{\hspace*{-1.0mm}    \small $p$}
\psfrag{43}{\hspace*{-1.7mm}     }
\psfrag{44}{\hspace*{-0.3mm}     \small $p$}
\psfrag{45}{\hspace*{2.4mm}     \small $\tfrac{1}{p}$}
\psfrag{101}{\hspace*{0.2mm}     \small $0$}
\psfrag{102}{\hspace*{0.1mm}    \small $p$}
\psfrag{103}{\hspace*{-6.5mm}     \small $2p-1$}
\psfrag{104}{\hspace*{-5mm}     \small $\tfrac{2p^2 -p + 1}{p}$}
\psfrag{105}{\hspace*{-0.8mm}     \small $2p$}
\psfrag{106}{\hspace*{-4.0mm}  \small $2p+1$}
\psfrag{ginv}{\hspace*{-5.5mm}  \small $g^{-1}$}
\psfrag{finv}{\hspace*{-5.5mm}  \small $f^{-1}$}
\psfrag{g}{\hspace*{-5.8mm}  \small $g$}
\psfrag{f}{\hspace*{-5.8mm}  \small $f$}
\begin{equation*}
\index{Rectangle diagram!examples}%
\includegraphics[width= 11cm]{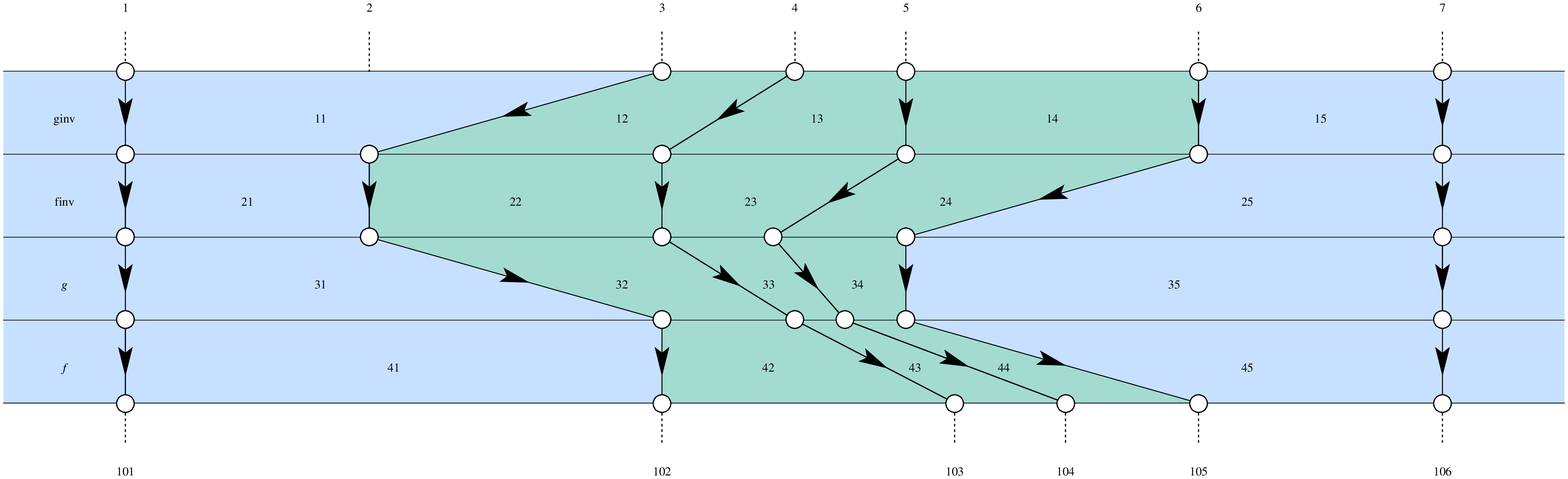}
\end{equation*}
\end{minipage}
\medskip

To simplify notation in this figure, but also in the calculations below,
we consider only the case where $a = 0$ and $\Delta = 1$;
the general case can be reduced to this one by a suitable affine map.

The diagram will be used to compute the commutator $h = [f,g]$.
To render legitimate this verification by a figure, 
we have to check that the four homeomorphisms 
and the arrangements of the auxiliary subdivisions
are as indicated by the figure.
For the first verification, 
we recall that $1 < p$,
that $f$ is the identity on $[0, p]$,
has slope $p$ on the interval $]p, p+1[$ 
and slope $1/p$ on the interval $]p+1, 2p+1[$,
and that $g$ has slope $p$ on the interval $]0, 1[$,  
slope $1/p$ on the interval $]1, p+1[$
and that is the identity on $[p + 1, 2p +1]$.
It follows that the support of $[f,g]$ is contained in the interval 
$]p,  2p[$.

The second verification amounts to check 
that $g(p) = p + 1 - 1/p$ 
and that $p < p + 1 - 1/p < p + 1  < 2p$ for every $p > 1$;
both verifications are straightforward.
The figure next permits one to infer 
that $[f,g]$ fixes the points $p$ and $2p$,
and that it has slope $p$ on the interval $]p, p + 1 - 1/p[$,
slope 1 on the interval $]p + 1 - 1/p, p + 1[$
and slope $1/p$ on the interval $]p+1, 2p[$,
We conclude that $[f,g]$ is strictly positive and 
that its support is the interval $]p,2p[$.

Conjugation by affine transformations then shows the following:
\emph{whenever $a$ is a point in $A \cap \Int(I)$, $p > 1$ is in $P$ 
and $\Delta$ is a small positive element of $A$ such that $a + (2p+1) \Delta \in \Int(I)$, 
then the derived group of $B(I;A,P)$ contains a strictly positive element 
with support $]a + p\cdot \Delta, a + 2p\cdot \Delta[$}.
So $B'$ satisfies axiom Ax1 and also axiom Ax3.

So far we know that the group $G$ satisfies axioms Ax1 through Ax4;
as the group $\bar{G}$ has the same properties as $G$, 
it satisfies these axioms, too.
Theorem \ref{TheoremE3} therefore implies 
that the given isomorphism $\alpha \colon G \iso \bar{G}$ is induced 
by conjugation by a homeomorphism $\varphi \colon \Int(I) \iso \Int(\bar{I})$.

We are left with proving 
that $\varphi$ maps the subset $A \cap \Int(I)$ onto $\bar{A} \cap \Int (\bar{I})$.
Let $a$ be a point in $A \cap \Int(I)$ and choose $p \in P$ with $p > 1$. 
There exists then a positive number $\Delta \in A$ which is so small 
that the interval $[a - p \cdot \Delta, a + (p+1) \cdot \Delta]$ is contained in $\Int(I)$.
By the construction carried out at the beginning of the proof,
there exists thus a strictly positive element $h \in B(I;A,P)' \subseteq G$ 
which moves every point in the interval $]a, a + p \cdot \Delta[$.
The image $\alpha(h) = \varphi \circ h \circ \varphi^{-1}$ is therefore a PL-homeomorphism
with support $]\varphi(a), \varphi(a + p \Delta)[$ 
or $] \varphi(a + p \Delta), \varphi(a)[$.
In both cases, the derivative of $\alpha(h)$ is 1 on one side of $\varphi(a)$ 
and distinct from 1 at the other side.
Thus $\varphi(a)$ is a singularity of $\alpha(h) \in \bar{G} \subset G(\bar{I}; \bar{A}, \bar{P})$,
so $\varphi(a) \in \bar{A}$.
This shows that 
$\varphi(A \cap \Int(I)) \subseteq \bar{A}\, \cap \,\Int(\bar{I})$.
As the rôles of $G$ and $\bar{G}$ can be reversed in the previous argument,
we have established 
that $\varphi(A \cap \Int(I)) = \bar{A} \cap \Int(\bar{I})$.
\end{proof}

\subsubsection{Application to the kernels of $\lambda$ and of $\rho$}
\label{sssec:Kernels-lambda-rho}
Theorem \ref{TheoremE04} will provide the basis of many results of this chapter.  
We open the host of consequences with an application
that relies on the fact 
that $\varphi$ is a homeomorphism mapping an open interval onto another open interval 
and so is, either orientation preserving, or orientation reversing.  
Prior to stating the result, we introduce a parlance.

Let $G$ be a subgroup of $G(I;A,P)$, 
let $B$ be the group of bounded homeomorphism $B(I;A,P)$,
and let $\bar G$, $\bar B$ be defined analogously. 
Suppose that $G$ contains $B'$, that $\bar{G}$ contains $\bar{B}'$ and 
that $\alpha \colon G \iso \bar G$ is an isomorphism; 
let $\varphi \colon \Int(I) \iso \Int(\bar I)$ 
be the unique homeomorphism inducing $\alpha$ by conjugation. 
\emph{We say that $\alpha$ is increasing if $\varphi$ is so};
otherwise $\alpha$ will be called \emph{decreasing}
\index{Increasing isomorphism}%
\index{Decreasing isomorphism}%
\begin{corollary}
\label{CorollaryE5}
\index{Group G(I;A,P)@Group $G(I;A,P)$!characteristic subgroups}%
\index{Group G(R;A,P)@Group $G(\R;A,P)$!characteristic subgroups}%
\index{Group G([0,infty[;A,P)@Group $G([0, \infty[\;;A,P)$!characteristic subgroups}%
\index{Group G([a,c];A,P)@Group $G([a,c];A,P)$!characteristic subgroups}%
\index{Representation Theorem for isomorphisms!consequences}%
\index{Theorem \ref{TheoremE04}!consequences}%
\begin{enumerate}[(i)]
\item If $G$ contains $B$ and $\bar G$ contains $\bar B$ 
every isomorphism $\alpha \colon G \iso \bar G$ maps $B$ onto $\bar B$.
 \item $B$ is a characteristic subgroup of every group $G$ 
 with $B\leq G \leq G(I;A,P)$.  
 \item Suppose $G$ contains the derived group of $B$ 
 and $\bar G$ contains the derived group of $\bar B$.  
 If $\alpha \colon G \iso \bar G$ is \emph{increasing} 
 it maps the kernel of the homomorphism 
 $(\lambda \restriction{G}) \colon G\to \Aff(A,P)$ 
 onto the kernel of
$(\bar\lambda\restriction{\bar G}) \colon \bar{G} \to  \Aff(\bar A,\bar P)$ 
and induces by passage to the quotients an isomorphism
\begin{equation*} 
\alpha_\ell \colon  G/(G \cap \ker \lambda) \iso \bar{G}/(\bar{G} \cap \ker \bar{\lambda}).
\end{equation*}
Similarly, 
$\alpha$ maps $\ker(\rho\restriction{G})$ onto $\ker(\bar\rho \restriction{\bar G})$ 
and induces an isomorphism
\begin{equation*}
\alpha_r \colon  G/(G \cap \ker \rho) \iso \bar{G}/(\bar{G} \cap \ker \bar{\rho}).
\end{equation*}
\end{enumerate}
\end{corollary}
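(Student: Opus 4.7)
The plan is to leverage Theorem \ref{TheoremE04}: since $G \supseteq B'$ and $\bar G \supseteq \bar B'$ in all three parts, every isomorphism $\alpha \colon G \iso \bar G$ is induced by a unique homeomorphism $\varphi \colon \Int(I) \iso \Int(\bar I)$, and I will analyze each of the three algebraic conditions ($g \in B$, $g \in \ker \lambda$, $g \in \ker \rho$) via geometric conditions on $\supp g$ that $\varphi$ transparently preserves. The basic observation underlying everything is that $\alpha(g) = \varphi \circ (g \restriction \Int(I)) \circ \varphi^{-1}$ implies $\supp(\alpha(g)) = \varphi(\supp g)$.

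For (i), an element $g \in G$ belongs to $B = B(I;A,P)$ iff $\supp g$ is contained in a compact subinterval of $\Int(I)$ (this is the standard characterization of the bounded subgroup; \cf{}Proposition \ref{PropositionC1}). Since $\varphi$ is a homeomorphism between two open intervals, it carries compact subsets of $\Int(I)$ to compact subsets of $\Int(\bar I)$; hence $\alpha(B) \subseteq \bar B$. Applying the same argument to $\alpha^{-1}$ gives $\alpha^{-1}(\bar B) \subseteq B$, whence $\alpha(B) = \bar B$. Claim (ii) follows from (i) by taking $\bar G = G$, $(\bar I, \bar A, \bar P) = (I, A, P)$ and letting $\alpha$ range over $\Aut G$: every automorphism of $G \supseteq B$ then maps $B$ onto itself.

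For (iii), the key step is to find a geometric description of $\ker(\lambda \restriction G)$ that $\varphi$ visibly preserves. Inspecting section \ref{ssec:3.2}, I will verify case by case that $g \in G \cap \ker \lambda$ if, and only if, $\supp g$ is \emph{bounded away from $\inf I$} inside $\Int(I)$, meaning either $\inf(\supp g) > \inf I$ when $\inf I > -\infty$, or $\supp g$ is bounded from below in $\R$ when $I = \R$. Concretely: when $I = \R$ an element $g \in \ker \lambda$ coincides with $\lambda(g) = \id$ near $-\infty$; when $\inf I = a \in A$, the equality $\sigma_-(g) = 1$ combined with the finitariness of $g$ and the fact that $g$ fixes $\,]-\infty,a]\,$ pointwise forces $g$ to be the identity on some right neighborhood of $a$; and when $\inf I = a \notin A$ one has $\ker \lambda = G$, while automatically every $g \in G$ is the identity near $a$ because $a$, not being in $A$, cannot be a break of $g$. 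An entirely symmetric case-analysis yields the analogous description of $\ker \rho$ in terms of supports bounded away from $\sup I$.

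Because $\alpha$ is increasing, $\varphi$ is an \emph{increasing} homeomorphism $\Int(I) \iso \Int(\bar I)$; it therefore sends a cofinal sequence approaching $\inf I$ to one approaching $\inf \bar I$, and similarly for $\sup$. Combined with $\supp(\alpha(g)) = \varphi(\supp g)$, both geometric conditions transfer, giving $\alpha(G \cap \ker \lambda) = \bar G \cap \ker \bar \lambda$ and $\alpha(G \cap \ker \rho) = \bar G \cap \ker \bar \rho$; the induced isomorphisms $\alpha_\ell$ and $\alpha_r$ are then obtained by passage to the quotient. I expect no conceptual obstacle beyond the case-by-case verification of the geometric description of $\ker \lambda$ (and $\ker \rho$); the only care needed is to handle cleanly the four combinations arising from $\inf I \in \{-\infty\} \cup A \cup (\R \smallsetminus A)$ and the analogous alternatives for $\sup I$.
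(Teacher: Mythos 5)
Your proposal is correct and follows essentially the same route as the paper: invoke Theorem \ref{TheoremE04} to realize $\alpha$ as conjugation by a homeomorphism $\varphi$ with $\supp(\alpha(g)) = \varphi(\supp g)$, observe that boundedness of the support (resp.\ boundedness away from the left or right end of $\Int(I)$, when $\varphi$ is increasing) is preserved, and symmetrize via $\alpha^{-1}$. The only difference is one of exposition — you spell out the case-by-case identification of $\ker\lambda$ and $\ker\rho$ with the support conditions, which the paper's proof leaves implicit.
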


\begin{proof}  
If $g \in G$ has bounded support and $\varphi\colon \Int(I) \iso \Int(\bar{I})$ is a homeomorphism 
then $\act{\varphi}g$ has bounded support.  
If $\varphi$ is increasing and $g \in G$ is bounded from below 
the same is true of $\act{\varphi}g$; 
similarly, if $g$ is bounded from above, so is $\act{\varphi}g$.  
The assertions of the corollary are immediate consequences of Theorem \ref{TheoremE04},
the listed facts and the analogous facts 
obtained by reversing the rôles of $G$ and $\bar{G}$.
\end{proof}
%
\subsection{Explicit constructions of isomorphisms: part  I}
\label{ssec:16.4}
\index{Group G(R;A,P)@Group $G(\R;A,P)$!isomorphisms}%
Many useful isomorphisms $\alpha \colon G \iso \bar G$ can be obtained 
by constructing explicit homeomorphisms $\varphi \colon \Int(I) \iso \Int(\bar I)$.  
Evidence for this statement has been given already in Chapter \ref{chap:A}, 
where $\varphi$ has been chosen to be a restriction of an affine homeomorphism of $\R$.  
In Proposition \ref{PropositionC1} then a PL-homeomorphism 
with infinitely many singularities has been employed to show 
that $B(I;A,P)$ is isomorphic to $B(\R;A,P)$ for each interval $I$ (with non-empty interior).  

By varying the construction given in the proof of Proposition \ref{PropositionC1},
\index{Proposition \ref{PropositionC1}!analogues}%
one can also show
that each group $G(I;A,P)$ belongs to one of the six types of groups
enumerated in section \ref{ssec:2.4} of the introduction; 
these groups correspond to the intervals
\begin{enumerate}[(1)]
\item  $\R$,
\item $[0,\infty[$,
\item $]0,\infty[$ with associated group 
$\ker(\sigma_- \colon G([0,\infty[\;;A,P) \to P)$,
\item  $[0,b]$,
\item  $]0,b]$ with associated group $\ker(\sigma_-\colon G([0,b];A,P)\to P)$, and
\item  $]0,b[$ with associated group $B([0,b];A,P)$.
\end{enumerate}

\begin{proposition}
\label{PropositionE6}
\index{Intervals|see{Classification of intervals}}%
\index{Classification!of intervals}%
\index{Representation Theorem for isomorphisms!consequences}%
\index{Isomorphisms!construction}%
\index{Proposition \ref{PropositionC1}!consequences}%
Each group $G(I;A,P)$ is isomorphic to a group of one of the above-listed six types; the type is uniquely determined by $I$.
\end{proposition}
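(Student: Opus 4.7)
The plan is to run an explicit case analysis on the interval $I$, organized by (a) boundedness of $I$ and (b) the location of each finite endpoint of $I$ with respect to $A$. By the reflection $t \mapsto -t$, which lies in $\Aff(A,\Aut(A))$ and conjugates $G(I;A,P)$ onto $G(-I;A,P)$, one may assume that any distinguished finite endpoint lies on the left. The remaining configurations partition naturally into exactly the six types enumerated in section \ref{ssec:2.4}: the line, a half line with finite endpoint in $A$ or not in $A$, and a compact interval with both, exactly one, or no endpoint in $A$. This partition depends only on $I$ (and on $A$), so the type is visibly uniquely determined by $I$.

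For types (1), (2) and (4) --- that is, when every finite endpoint of $I$ already belongs to $A$ --- the required isomorphism is induced by a translation $t \mapsto t-a$ with $a \in A$, which lies in $\Aff(A,P) \subseteq G(\R;A,P)$ and conjugates $G(I;A,P)$ onto the representative group. For the three remaining cases, one exploits the following algebraic observation: if an endpoint of $I$ fails to lie in $A$, then no element of $G(I;A,P)$ can have a singularity there, whence a one-sided neighborhood of that endpoint in $I$ must be fixed pointwise by every element of $G(I;A,P)$. Applied to type (6), this immediately yields $G([a,c];A,P) = B([a,c];A,P)$ (both endpoints not in $A$ force bounded support), and Proposition \ref{PropositionC1} finishes the job by identifying this with $B([0,b_0];A,P)$. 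For types (3) and (5) the same observation realizes $G(I;A,P)$ as the kernel of $\sigma_-$ applied to the enlarged group over the interval obtained by replacing the bad endpoint with a point of $A$.

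To convert these identifications into explicit conjugation isomorphisms with the fixed representative group, I would mimic the construction in the proof of Proposition \ref{PropositionC1}: choose an increasing sequence $(b_i)_{i \in \Z}$ (or $(b_i)_{i \in \N}$) in $A \cap \Int(I)$ accumulating at the bad endpoint(s) of $I$ with consecutive differences in $P \cdot a_0$ for a fixed positive $a_0 \in A$, and a parallel sequence $(\bar b_i)$ in $\bar A \cap \Int(\bar I)$ accumulating at the corresponding endpoint(s) of the representative interval $\bar I$, with $\bar b_{i+1} - \bar b_i$ congruent to $b_{i+1}-b_i$ modulo $IP\cdot A$. Affinely interpolating these sequences yields a PL-homeomorphism $\varphi \colon \Int(I) \iso \Int(\bar I)$ with slopes in $P$, breaks in $A$ (by Theorem \ref{TheoremA} and density of $IP \cdot A$), and which maps $A \cap \Int(I)$ bijectively onto $\bar A \cap \Int(\bar I)$. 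Near any endpoint already lying in $A$, one terminates the sequence after finitely many steps so that $\varphi$ extends affinely across that endpoint, preserving the possibility of a singularity there; near a bad endpoint, $\varphi$ accumulates infinitely many breaks, which is precisely what forces the conjugated element to be the identity in a one-sided neighborhood and hence to lie in the prescribed kernel subgroup.

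The main obstacle will be the bookkeeping at the endpoints: one must verify that the constructed $\varphi$ extends continuously to the closed intervals so that the conjugated group lands in $G(\bar I;A,P)$ (not merely in the larger group of PL-homeomorphisms of $\Int(\bar I)$), and that conjugation by $\varphi$ is not only an embedding but hits the entire representative group. Surjectivity follows by running the construction in reverse, using the fact that an element of $\ker \sigma_-$ of $G([0,\infty[\;;A,P)$ is the identity on some initial interval $[0,\varepsilon]$, so that conjugation by $\varphi^{-1}$ produces a PL-homeomorphism of $\R$ whose support lies in a compact subinterval of $\Int(I)$ and whose breaks (finite in number) lie in $\varphi^{-1}(A \cap \Int(\bar I)) = A \cap \Int(I)$.
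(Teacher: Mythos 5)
Your treatment of the \emph{existence} half is essentially the paper's argument: affine maps in $\Aff(A,\{1,-1\})$ handle the cases where all finite endpoints lie in $A$, and for a bad endpoint one builds, as in the proof of Proposition \ref{PropositionC1}, an infinitary PL-conjugator whose breaks accumulate only at that endpoint; the observation that an element of $G(I;A,P)$ must be the identity on a one-sided neighbourhood of an endpoint not in $A$ is exactly what makes the conjugated group land in $B$, respectively in $\ker\sigma_-$. That part is fine.

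The \emph{uniqueness} half, however, contains a genuine gap. You read ``the type is uniquely determined by $I$'' as the tautology that the classification of the \emph{interval} $I$ into one of six configurations is a function of $I$. What the proposition actually asserts --- and what the paper spends most of its proof establishing --- is that the six representative \emph{groups} are pairwise non-isomorphic (more precisely, that $G(I_1;A,P)\cong G(I_2;A,P)$ forces $I_1$ and $I_2$ to be of the same type), so that $G(I;A,P)$ cannot simultaneously be isomorphic to representatives of two different types. This is not visible from the partition of intervals; a priori, say, $G([0,\infty[\,;A,P)$ could be isomorphic to some $G([0,b];A,P)$. Ruling this out requires the Representation Theorem \ref{TheoremE04} via Corollary \ref{CorollaryE5}: every isomorphism $\alpha\colon G_1\iso G_2$ is induced by a homeomorphism of the open intervals, hence carries $B_1$ onto $B_2$ and, according to its orientation, $\ker\lambda_1$ onto $\ker\lambda_2$ or onto $\ker\rho_2$. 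One then separates $\{(1),(2),(3)\}$ from $\{(4),(5)\}$ from $\{(6)\}$ by the isomorphism type of $G/B$ (metabelian non-abelian, abelian non-trivial, trivial), and distinguishes the types within each cluster by the images of $\lambda$ and $\rho$ computed in Corollary \ref{CorollaryA2} ($\Aff(A,P)$ versus $P$ versus $\{\id\}$). Without invoking some form of Theorem \ref{TheoremE04}, your argument does not prove the uniqueness claim.
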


\begin{proof}
We first deal with the \emph{existence} of an isomorphism; 
it is clear for $I = \R$.  
If $I$ is bounded on one side with endpoint in $A$, or bounded on both sides with
endpoints in $A$, there is an affine homeomorphism $f$ in $\Aff(A,\{1, -1\})$ 
so that $f(I)$ is the interval $[0,\infty[$ is the first case 
and an interval of the form $[0,b]$ with $b$ in $A$ in the second case.  
If $I$ is bounded, but none of the endpoints is in $A$, 
then $G(I;A,P) = B(I;A,P)$ and $B(I;A,P)$ is isomorphic to a group $B([0,b];A,P)$
(by Proposition \ref{PropositionC1}).  
There remain the cases
where exactly one endpoint is in $\R \smallsetminus  A$.  
Then one can modify the proof of Proposition \ref{PropositionC1} 
\index{Proposition \ref{PropositionC1}!analogues}%
so as to obtain a homeomorphism $\varphi \colon \Int(I) \iso\ ]0,\infty[$ 
provided $I$ or $-I$ has the form $[a,\infty[$ with $a \notin A$, 
or a homeomorphism $\varphi \colon \Int I \iso\ [0,b[$ 
if $I$ or $-I$ is of the form $[a,c]$ with $a \in A$ and $c \in \R\smallsetminus A$.
\smallskip

Now to the \emph{uniqueness} of the type.
Suppose $I_1$, $I_2$ are intervals of one of the above six types,
and $G_j =G(I_j;A,P)$ and $B_j = B(I_j; A, P)$ for $j \in \{1,2\}$.
Let $\alpha \colon G_1 \iso G_2$ be an isomorphism.
Then part (i) of Corollary \ref{CorollaryE5} implies 
that the quotient groups $G_1/B_1$ and $G_2/B_2$ are isomorphic.
These quotient groups are metabelian,  but non-abelian 
if the interval is of one of the types (1), (2) or (3);
abelian but not trivial if the interval is of type (4) or (5),
and trivial in the case of type (6).
It follows
that the intervals $I_1$ and $I_2$ are, either both of one of the types (1), (2) or (3),
or both of one of the types (4) or (5), or both of type (6).

Consider next the case where the intervals $I_1$ and $I_2$ are of one of the types (1), (2) or (3)
and let $\varphi \colon \Int(I_1) \iso \Int(I_2)$ 
be a homeomorphism inducing $\alpha \colon G_1 \iso G_2$.
Assume first that $\varphi$ is \emph{increasing}.
Then $\varphi$ induces an isomorphism of the image of $\lambda_1$ 
onto the image of $\lambda_2$ (by part (iii) of Corollary \ref{CorollaryE5}). 
If $I_1$ is of type (1) the image of $\lambda_1$ is isomorphic to $A \rtimes P$ 
(by Corollary \ref{CorollaryA2});
if $I_2$ is of type (2) this image is isomorphic to  $P$ (again by Corollary \ref{CorollaryA2});
if, thirdly, $I_1$ is of type (3), this image is trivial.
It follows that the groups $G_1$ and $G_2$ can only be isomorphic via an increasing isomorphism
if both are of the same type.
If, on the other hand, $\varphi$ is \emph{decreasing}
then $\alpha$ will map the kernels of $\lambda_1$ and $\rho_1$
onto the kernels of $\rho_2$ and $\lambda_2$, respectively.
This fact implies, much as before, that the groups $G_1$ and $G_2$ must be of the same type.

We are left with the case where the intervals are of type (4) or (5).
If $I_1$ is of type (4),  
the images of both $\lambda_1$ and $\rho_1$ are isomorphic to $P$,
and if it is of type (5) the image of $\lambda_1$ is trivial.
These facts and part (iii) of Corollary \ref{CorollaryE5} allow us to deduce, 
much as before,
that the groups can only be isomorphic if the intervals $I_1$ and $I_2$ are, either both of type (4), or both of type (5).
\end{proof}

Each of the above types of intervals $I$ gives rise to a single isomorphism type of groups $G(I;A,P)$, except possibly for types (4) and (5). 
Actually, type (5) is no exception; 
indeed, given $b$, $b'$ in $A_{>0}$ 
there exists an infinitary PL-homeomorphism 
$\varphi \colon \,]0, b] \iso \,]0,b']$ 
that maps $A$ onto itself, 
has only finitely many singularities in every compact subinterval of $]0, b]$ 
and which induces an isomorphism 
$\alpha$ of $G(\,]0, b]; A, P)$ onto $G(\,]0, b']; A, P)$
(\cf{}the proof of Proposition \ref{PropositionC1}).
\index{Proposition \ref{PropositionC1}!analogues}%

For an interval of type (4),
the case where
$P$ is cyclic differs radically from the case where $P$ is non-cyclic.  
To gain an insight into the problem 
consider two intervals $[0,b]$ and $[0,b']$ with endpoints in $A$.  
If there exists a (finitary) PL-homeomorphism $\varphi$ in $G(\R;A,P)$ mapping $[0,b]$ onto $[0,b']$ then conjugation by $\varphi$ induces an isomorphism of $G([0,b];A,P)$ onto $G([0,b'];A,P)$.  
The same conclusion holds 
if $\varphi$ can be found in $G(\R;A,\Aut(A))$, 
where $\Aut(A) = \{s\in \R^\times \mid sA = A\}$, 
and the slopes of $\varphi \restriction{[0,b]}$ 
lie in a single coset $s\cdot P$ of $P$. 
\index{Group Aut(A)@Group $\Aut(A)$!significance}%

Assume now that the group $P$ is \emph{not cyclic}.
Then every  isomorphism 
\[
\alpha \colon G([0,b];A,P) \iso G([0,b'];A,P)
\] 
is induced by a (finitary) PL-homeomorphism as described before  
(for details, see Supplement \ref{SupplementE11New}); 
\index{Supplement \ref{SupplementE11New}!consequences}%
it follows that the family of groups  
\[
\{G([0,b];A,P\mid 0 < b\in A\}
\]
can contain non-isomorphic members 
(see Corollary \ref{CorollaryE12}). 
\smallskip 

For \emph{cyclic} groups $P$ the situation is different.
\begin{theorem}
\label{TheoremE07}
\index{Representation Theorem for isomorphisms!consequences}%
\index{Group G([a,c];A,P)@Group $G([a,c];A,P)$!dependence on [a,c]@dependence on $[a,c]$}%
\index{Isomorphisms!construction}%
\index{Group P cyclic@Group $P$ cyclic!consequences}%
\index{Theorem \ref{TheoremE07}!statement|textbf}%
If $P$ is cyclic the groups in the family
\[
\left\{G([0,b]; A,P) \mid b \in A_{>0}\right\}
\]
are isomorphic to each other.
\end{theorem}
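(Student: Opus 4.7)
The plan is to construct, for each pair $b_1, b_2 \in A_{>0}$, an infinitary PL-homeo\-mor\-phism $\varphi \colon \,]0,b_1[\, \iso \,]0,b_2[\,$ with the following properties: $\varphi$ has slopes in $P$, sends $A \cap \,]0,b_1[\,$ onto $A \cap \,]0,b_2[\,$, has only finitely many singularities in each compact subinterval of $]0,b_1[\,$, is the identity on a neighbourhood of $0$, and is \emph{scale-equivariant at the right endpoint} in the sense that
\[
\varphi\bigl(b_1 + p(t-b_1)\bigr) = b_2 + p\bigl(\varphi(t)-b_2\bigr)
\]
for every $t$ sufficiently close to $b_1$, where $p$ is a fixed generator of $P$ with $p > 1$ (arranged by non-triviality and, if necessary, passage to the inverse generator). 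Given such $\varphi$, conjugation $g \mapsto \varphi \circ g \circ \varphi^{-1}$ will implement the desired isomorphism $G([0,b_1];A,P) \iso G([0,b_2];A,P)$.

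For the construction, first I would pick an $\alpha \in A$ with $0 < \alpha < \min(b_1,b_2)$, declare $\varphi$ to be the identity on $]0,\alpha]$, and consider the sequences $a_n = b_1 - p^{-n}(b_1-\alpha)$ and $c_n = b_2 - p^{-n}(b_2-\alpha)$ for $n \geq 0$, which produce partitions $[\alpha, b_1[\, = \bigsqcup_{n\geq 1} K_n$ with $K_n = [a_{n-1},a_n]$ and the analogous partition of $[\alpha, b_2[\,$ into pieces $L_n = [c_{n-1},c_n]$. Using Theorem \ref{TheoremA}, the identity
\[
(c_1-c_0) - (a_1-a_0) = (1-p^{-1})(b_2-b_1) = (p^{-1}-1)(b_1-b_2) \in IP\cdot A
\]
produces a finitary PL-homeomorphism $\psi_0 \in G(\R;A,P)$ mapping $K_1$ onto $L_1$ and fixing the common left endpoint $\alpha$. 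I would then define $\varphi$ on each $K_n$ by the rescaled formula
\[
\varphi(t) = b_2 - p^{-(n-1)}\Bigl(b_2 - \psi_0\bigl(b_1 - p^{n-1}(b_1-t)\bigr)\Bigr),
\]
so that each $K_n$ is mapped onto $L_n$ by a shrunken and translated copy of $\psi_0$. Direct computation then yields continuity at the partition points $\alpha$ and $a_n$, slopes in $P$, breaks in $A$ (because each break of $\psi_0$ contributes a point of $A$ scaled by $p^{-(n-1)}$ and shifted by $b_1$, $b_2$, all in $A$), and finiteness of the break set in every compact subinterval of $]0,b_1[\,$; the scale-equivariance then follows routinely from the formula, and a symmetric analysis gives the corresponding properties for $\varphi^{-1}$.

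The verification that conjugation delivers the isomorphism comes down to controlling endpoint behaviour. Every $g \in G([0,b_1];A,P)$ equals $t \mapsto p^k t$ on some neighbourhood of $0$ and equals $t \mapsto b_1 + p^\ell(t-b_1)$ on some neighbourhood of $b_1$. Since $\varphi$ is the identity near $0$, the first formula transfers verbatim to $\varphi\circ g \circ \varphi^{-1}$ near $0$; iterating the scale-equivariance relation $\ell$ times shows that $\varphi\circ g \circ \varphi^{-1}$ equals $t \mapsto b_2 + p^\ell(t-b_2)$ near $b_2$. No singularities of the conjugate therefore accumulate at either endpoint, and on the compact remainder the conjugate picks up only finitely many breaks from $g$ and from $\varphi$. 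Together with the fact that $\varphi$ has slopes in $P$ and sends $A$ to $A$, this lands $\varphi\circ g \circ \varphi^{-1}$ in $G([0,b_2];A,P)$; symmetry of the construction handles $\varphi^{-1}$, so one obtains a genuine isomorphism.

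The main obstacle is precisely the scale-equivariance at the right endpoint: despite $\varphi$ having infinitely many breaks accumulating at $b_1$, we must certify that the conjugate has none accumulating at $b_2$. This is what forces the self-similar construction using a single generator $p$ of $P$, and it is the reason the argument is tied to $P$ being cyclic; for non-cyclic $P$ one cannot arrange simultaneous equivariance under the various generators, and indeed by Proposition \ref{preface:CorollaryE12} the isomorphism type of $G([0,b];A,P)$ genuinely depends on $b$ in that case.
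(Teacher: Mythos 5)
Your proof is correct and follows essentially the same route as the paper's: both construct an infinitary PL-homeomorphism whose breaks accumulate at one endpoint and which is equivariant there under scaling by the generator $p$ of $P$, so that conjugation carries finitary PL-homeomorphisms to finitary ones. The paper merely places the accumulation at the left endpoint $0$ (using the relation $\varphi(p\cdot t)=p\cdot\varphi(t)$ after a homothety reduction to the case $p\cdot b<\bar b<b$, which makes the interpolation data explicit), whereas you place it at the right endpoint, are the identity near $0$, and obtain the fundamental-domain map $\psi_0$ from Theorem \ref{TheoremA}.
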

\begin{figure}[b]
\vspace*{3mm}
\psfrag{pp0}{\hspace*{3.5mm}     \small $(b_0, \bar{b}_0)$}
\psfrag{pp1}{\hspace*{1.5mm}    \small $(b_1, \bar{b}_1)$}
\psfrag{pp2}{\hspace*{1.5mm}     \small $(b_2, \bar{b}_2)$}
\begin{center}
\includegraphics[width= 8cm]{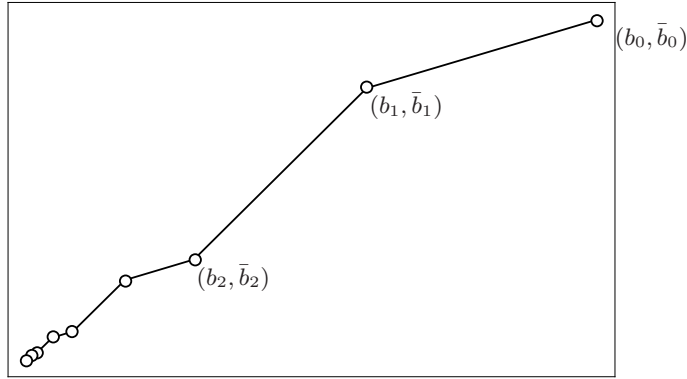}
\end{center}
\caption{Graph of the infinitary PL-homeomorphism $\varphi_1$}
\label{fig:Graph-varphi}
\end{figure}

\begin{proof}
Let $p$ be the generator of $P$ with $p < 1$.
Since the homothety $t \mapsto p \cdot t$ maps the interval $[0, b'|$ 
onto the interval $[0, p \cdot b']$,
it suffices to consider intervals $[0,b]$ and $[0,\bar{b}]$ with $p\cdot b < \bar b < b$.
The affine interpolation of 
\begin{equation*}
\left((pb,p\bar b),\quad (\bar b,\bar b + p(\bar b-b)),\quad (b,\bar b)\right)
\end{equation*}
has slopes $1$ and $p$, and vertices in $A$. 
With its help we now construct an infinitary PL-homeomorphism 
$\varphi \colon \,]0,b] \iso \,]0, \bar{b}]$.
Let $(b_i\mid i \geq 0)$ and $(\bar b_i\mid i\geq 0)$ be the sequences defined  by 
\begin{equation*}
b_{2i} = p^i\cdot b,\quad 
b_{2i+1} = p^i\cdot \bar b, \quad\text{and}\quad
\bar{b}_{2i}= p^i \cdot \bar b, 
\quad
\bar b_{2i+1} = p^i(\bar b+p(\bar b-b)).
\end{equation*}
These sequences are strictly decreasing 
and they have the property that multiplication by $p$ 
sends $b_i$ to $b_{i + 2}$ and $\bar{b}_i$ to $\bar{b}_{i+2}$.
The affine interpolation of the product $((b_i,\bar b_i) \mid i\geqq 0)$ of these sequences is an infinitary PL-homeomorphism $\varphi_1 \colon \,]0,b]\ \iso \,]0,\bar{p}]$ 
with slopes in $\{1,p\}$ and vertices in $A^2$;
it is indicated in Figure \ref{fig:Graph-varphi}.
The homeomorphism extends uniquely to a homeomorphism 
$\varphi \colon [0, b] \iso [0,\bar{b}]$ that fixes 0,
and this extension satisfies then the commutativity relation 
\begin{equation}
\label{eq:Commutativity-relation-for-phi}
(\varphi \circ p \cdot \id) \restriction{[0,1]} = (p \cdot \id \circ \varphi) \restriction{[0,1]}.
\end{equation}

We claim
that conjugation by $\varphi$ induces an isomorphism of $G = G([0,b];A,P)$ 
onto $\bar G = G([0,\bar b];A,P)$.  
Indeed, 
if $f \in G$ there exists an integer $k$ and an index $j\geq 0$ 
so that $f$ is linear on $[0,p^j\cdot b]$ with slope $p^k$. 
Consider now a positive real number $t$ in the interval
$[0, \min \{1, p^{-k}\} \cdot p^j \cdot \bar{b}]$.
Then $\varphi^{-1} (t) <  \min \{1, p^{-k}\} \cdot p^j \cdot b$
and so the chain of equalities
\begin{equation*}
(\varphi \circ f \circ \varphi^{-1})(t) = (\varphi \circ (p^k \cdot \id) \circ \varphi^{-1})(t)  = p^k \cdot t
\end{equation*}
holds by relation \eqref{eq:Commutativity-relation-for-phi}.
It shows that $\act{\varphi}f$ is linear with slope $p^k$ 
on the interval  $[0, \min \{1, p^{-k}\} \cdot p^j \cdot \bar{b}]$.
As the singularities of $\varphi$ accumulate only in 0
and as the slopes of $\varphi$ lie in $\{1, p\}$, 
it follows that $\act{\varphi}f  \in \bar G$.

The preceding argument proves 
that $\act{\varphi} G \subseteq \bar{G}$.
By reversing the rôles of $G$ and $\bar{G}$, 
one establishes similarly that $\act{\varphi^{-1}} \bar{G} \subseteq G$ 
and thus $\act{\varphi} G = \bar{G}$, as asserted
\end{proof}

\subsection[Auxiliary results helping one in showing that $\varphi$ is PL]%
 {Results helping one in showing  that $\varphi$ is PL}
\label{ssec:16.5}
The usefulness of Theorem \ref{TheoremE04} increases considerably 
if $\varphi$ is known to be a finitary or infinitary PL-homeomorphism.  
In this section we establish two auxiliary results that help one in proving 
that a homeomorphism $\varphi$ inducing an isomorphism is piecewise linear.
The set-up will be as in the statement of Theorem \ref{TheoremE04}.
Thus:  
\begin{itemize}
\item
\emph{$G$ is a subgroup of $G(I;A,P)$ containing the derived group of $B(I;A,P)$},
\item 
\emph{$\bar G$ is a subgroup of $G(\bar I;\bar A,\bar P)$ containing $B(\bar I;\bar A,\bar P)'$},
and 
\item \emph{$\varphi \colon \Int(I) \iso \Int(\bar{I})$ is a homeomorphism 
that induces by conjugation an isomorphism of groups $\alpha \colon G \iso \bar{G}$}.
\end{itemize}
\begin{proposition}
\label{PropositionE8}
\index{Representation Theorem for isomorphisms!supplements}%
\index{Theorem \ref{TheoremE04}!consequences}%
Assume $\varphi$ is \emph{order preserving}.  
Then there exist,
for every $a \in A \cap \Int(I)$,
positive real numbers $\delta$ and $r$ such that the formula
\begin{equation}
\label{eq:16.7}
\varphi(a + p \cdot t) = \varphi(a) + p^r \cdot \left(\varphi( a + t) - \varphi(a)\right)
\end{equation}
holds for every $t  \in [0,\delta]$ and each $p \in P \, \cap \,]0,1]$.  
Moreover, 
the automorphism $\theta_r \colon \R^\times_{> 0} \iso \R^\times_{> 0}$, 
taking $t$ to $t^r = e^{r\cdot \log(t)}$, 
maps $P$ isomorphically onto $\bar P$.
\end{proposition}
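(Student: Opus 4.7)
The plan is to extract the local scaling behaviour of $\varphi$ at $a$ by comparing the slopes of carefully chosen elements of $G$ with the slopes of their conjugates, which lie in $\bar{G}$. Set $\bar{a} = \varphi(a)$; by Theorem \ref{TheoremE04} one has $\bar{a} \in \bar{A} \cap \Int(\bar{I})$.

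The first step is to produce, for each $p \in P \cap\;]0,1]$, a test element $g_p$ lying in $B(I;A,P)' \subseteq G$ which fixes $a$ and is affine with slope $p$ on some interval $[a, a+\delta_0(p)]$. Such an element can be built from suitable commutators of translates of the PL-homeomorphisms $b(a',\Delta;p)$ introduced in Example \ref{example:Elements-in-B}, in the spirit of the construction of strictly positive elements of $B'$ with prescribed support carried out in the proof of Theorem \ref{TheoremE04}. The conjugate $\bar{g}_p := \varphi \circ g_p \circ \varphi^{-1}$ belongs to $\bar{G}$ and is therefore piecewise linear with slopes in $\bar{P}$ and breaks in $\bar{A}$; in particular there exist $\delta_1(p)>0$ and a slope $\bar{q}_p \in \bar{P}$ such that $\bar{g}_p$ is affine with slope $\bar{q}_p$ on $[\bar{a},\bar{a}+\delta_1(p)]$. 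Choosing $\delta(p)>0$ small enough that $\varphi$ sends $[a, a+\delta(p)]$ into $[\bar{a},\bar{a}+\delta_1(p)]$ and that $g_p(a+t) = a + pt$ for $t \in [0,\delta(p)]$, the identity $\bar{g}_p \circ \varphi = \varphi \circ g_p$ becomes
\begin{equation}
\label{eq:scaling-plan}
\varphi(a+pt) - \bar{a} \;=\; \bar{q}_p \cdot \bigl(\varphi(a+t) - \bar{a}\bigr), \qquad t \in [0,\delta(p)].
\end{equation}

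The second step is to verify that the assignment $p \mapsto \bar{q}_p$ is multiplicative on $P \cap\;]0,1]$ and extends to a group homomorphism $\chi \colon P \to \bar{P}$: applying the scaling \eqref{eq:scaling-plan} first for $p_2$ and then for $p_1$ on a common initial interval gives at once $\bar{q}_{p_1 p_2} = \bar{q}_{p_1}\bar{q}_{p_2}$.

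The heart of the proof, and the step I expect to be the main obstacle, is to show that $\chi$ coincides with a single power map $p \mapsto p^r$ for some real $r > 0$. Setting $\psi(t) = \varphi(a+t) - \bar{a}$ and $F(x) = \log \psi(e^x)$, which is continuous and strictly increasing on some half-line $(-\infty, x_0)$, the relation \eqref{eq:scaling-plan} translates into the Cauchy-type identity $F(x+\log p) - F(x) = \log \chi(p)$, valid for $p \in P$ and $x$ in a suitable range. If $P$ is cyclic with generator $p_*$, defining $r = \log\chi(p_*)/\log p_*$ immediately gives $\chi(p_*^k) = (p_*^k)^r$. If $P$ is non-cyclic, then $\log P$ is a \emph{dense} additive subgroup of $\R$; the continuity of $F$ forces the homomorphism $\log p \mapsto \log\chi(p)$ to extend to a \emph{continuous} additive homomorphism $y \mapsto F(x+y)-F(x)$ of $\R$, and any such is linear $y \mapsto ry$, yielding $\chi(p) = p^r$ for all $p \in P$. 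This also shows $p^r \in \bar{P}$ for every $p \in P$, so $\theta_r$ maps $P$ into $\bar{P}$. Finally, applying the whole argument to $\varphi^{-1}$ at $\bar{a}$ produces an exponent $\bar{r}$ such that $\bar{p}^{\bar{r}} \in P$ for every $\bar{p} \in \bar{P}$; inserting each scaling into the other on a common interval forces $r\bar{r} = 1$, so $\theta_r \colon P \iso \bar{P}$ is a bijection.
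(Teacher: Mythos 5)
Your overall strategy --- produce test elements of $G$ fixing $a$ with prescribed initial slope $p$, conjugate them by $\varphi$, read off the initial slope of the image in $\bar P$ to get the scaling relation, and then identify the resulting order-preserving homomorphism $P\to\bar P$ as a power map $p\mapsto p^r$ via density of $\log P$ together with monotonicity and continuity --- is essentially the proof in the paper, which works with the stabilizer $G_a = G\cap G(I\cap[a,\infty[\,;A,P)$ and the slope homomorphism $\sigma_-$ rather than with explicit commutators, but to the same effect. Your symmetry argument with $\varphi^{-1}$ for surjectivity of $\theta_r$ also works; the paper gets surjectivity for free because $\vartheta_a$ is induced by the isomorphism $\alpha$ restricted to $G_a$, which carries $\ker\sigma_-$ onto $\ker\bar\sigma_-$.

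There is, however, a genuine gap: the proposition asserts the existence of a \emph{single} $\delta>0$ such that \eqref{eq:16.7} holds for every $t\in[0,\delta]$ and \emph{every} $p\in P\,\cap\,]0,1]$. Your argument produces, for each $p$, an interval $[0,\delta(p)]$ on which the relation holds, and nothing in your later steps removes the dependence of $\delta(p)$ on $p$. This uniformity is not cosmetic; it is exactly what is used in the proof of Theorem \ref{TheoremE10}, where $p$ ranges over the dense set $P\,\cap\,]0,1[$ while $t$ stays in a fixed interval, in order to conclude that $\varphi$ is differentiable there. The paper devotes a separate third step to this point: after translating to $a=0=\bar a$, fix one $p_1\in P\,\cap\,]0,1[$ with its interval $[0,\delta_{p_1}]$, on which $\varphi(p_1 t)=p_1^r\varphi(t)$; given any other $p<1$ with its own $\delta$, choose $m$ so large that $p_1^m\delta_{p_1}\le\delta$; then for every $t\in[0,\delta_{p_1}]$ one has $p_1^{mr}\varphi(pt)=\varphi(p_1^m p t)=\varphi(p\,p_1^m t)=p^r\varphi(p_1^m t)=p^r p_1^{mr}\varphi(t)$, whence $\varphi(pt)=p^r\varphi(t)$ on all of $[0,\delta_{p_1}]$. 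Adding this bootstrapping step (and taking $\delta=\delta_{p_1}$) completes your proof.
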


\begin{proof}
The proof consists of three parts.
In the first one,
the fact that $\varphi$ maps $A \cap \Int(I)$ onto $\bar{A} \cap \Int(\bar{I})$
and the hypothesis 
that $\alpha$ sends PL-homeomorphisms with slopes in $P$ 
to PL-homeomorphisms with slopes in $\bar{P}$ 
is shown to imply 
that $\alpha$ induces an isomorphism  $\vartheta_a \colon P \iso \bar{P}$ 
taking $p$ to $p^r$.
In the second part,
we establish formula \eqref{eq:16.7} for a single slope $p_1 \in P\, \cap \,]0,1]$.
In the third part, 
this result is then extended to all slopes $p \in P \, \cap \,]0,1]$.

We embark now on the first part.
The last statement of Theorem \ref{TheoremE04} guarantees 
that $\bar a = \varphi(a)$ lies in $\bar{A}$.
Consider the subgroups
\[
G_a = G\cap G(I\cap [a, \infty[\;;A,P) \quad\text{and} \quad 
\bar G_{\bar a} = \bar G \cap G(\bar I \cap [\bar a, \infty[\;;\bar A,\bar P).
\]
Part (ii) of Corollary \ref{CorollaryA2} 
and Remark \ref{remark:5.3} imply
that $G_a$ contains, for every $p \in P$,  an element $f_a$ 
whose right-hand derivative in $a$ equals $p$.
The assignment $f \mapsto \lim_{t\searrow a} f'(t)$ 
yields therefore an epimorphism  $\sigma_-\colon G_a \epi P$.
Similarly, one constructs an epimorphism
$\bar\sigma_- \colon \bar G_{\bar a} \epi \bar P$.
On the other hand,
the hypothesis that $\alpha$ is induced by an order preserving homeomorphism
implies 
that $\alpha$ maps the kernel of $\sigma_-$ onto the kernel of $\bar{\sigma}_-$
and thus gives rise to an isomorphism  $\theta_a \colon P\to \bar P$ 
which renders the square
\begin{equation*}
\xymatrix{G_a \ar@{->}[r]^-{\alpha_a} \ar@{->>}[d]^-{\sigma_-} &\bar G_{\bar a} \ar@{->>}[d]^-{\bar\sigma_-}\\
P \ar@{->}[r]^-{\vartheta_a} &\bar P}
\end{equation*}
commutative.

Our next aim is to work out the form of the isomorphism $\vartheta_a$.
As a first step towards this goal 
we verify 
that $\vartheta_a$ maps the set $P \, \cap \,]0, 1[$ 
onto $\bar{P} \, \cap \,]0, 1[$\,.
Consider $p \in P$ with $p < 1$ and find $f_p \in G_a$ with $\sigma_-(f_p) = p$.  
Then $\alpha_a(f_p) = \alpha(f_p)$ is affine 
on some interval $[\bar{a}, \bar{a} + \varepsilon_p]$ 
and has there slope $\bar{\sigma}_-(\alpha(f_p)) = \vartheta_a(p)$.  
Find $\delta_p > 0$ 
so that $f_p$ is affine on $[a, a + \delta_p]$ 
and that $\varphi(a + \delta_p) \leq \bar{a} + \varepsilon_p$.
The hypothesis that $\alpha$ is induced by conjugation by $\varphi$ 
then leads to the following chain of equalities
\begin{align*}
\varphi (a + p \cdot t)  
&=
(\varphi \circ f_p)(a + t) \\
&=
(\alpha(f_p) \circ \varphi)(a + t)
=
\alpha(f_p) (\varphi(a + t))\\
&=
\varphi(a) + \vartheta_a(p) \cdot (\varphi(a + t) - \varphi(a));
\end{align*}
it is valid for every  $t \in [0,  \delta_p]$.
Upon setting $\bar{a} = \varphi(a)$, 
the result of the above calculation takes on the form
\begin{equation}
\label{eq:Representation-varphi}
\varphi(a + p \cdot t) = \bar{a} + \vartheta_a(p) \cdot \left(\varphi(a + t) - \varphi(a)\right)
\quad\text{for } t \in [0,\delta_p].
\end{equation}
Since  $\varphi$ is increasing and $p < 1$,
this representation implies
that $\vartheta_a(p) < 1$.
It follows that $\vartheta_a(P\, \cap\, ]0,1[)  \subseteq \bar{P}\, \cap\, ]0,1[$
and then, by reversing the rôles of $G$ and $\bar{G}$, 
that $\vartheta_a(P\, \cap\, ]0,1[)  = \bar{P}\, \cap\, ]0,1[$.

This fact allows one to show next
that $\theta_a(p) = p^r$ for all $p \in P$ and for some positive real $r$. 
If $P$ is cyclic, this claim is clear,
for the generator $p \in  P\, \cap\, ]0,1[$ is sent to the generator 
$\bar{p} \in \bar{P}\, \cap\, ]0,1[$ 
and thus $\bar{p} = p ^r$ with $r = \ln \bar{p}/\ln p > 0$.
Otherwise, 
$P$ and $\bar P$ are dense subgroups of $\R^\times_{> 0}$ 
and $\theta_a \colon P \iso \bar{P}$ is an order preserving homomorphism.
Set $L_0 =  \ln \circ\ \tilde\theta_a\circ \exp \colon \ln{P} \iso \ln(\bar{P})$.
Then $L_0$ is an order preserving homomorphism which 
maps the dense subgroup $\ln(P)$ of $\R$ onto the dense subgroup $\ln(\bar{P})$ of $\R$. 
It follows
\footnote{see, \eg{}\cite[IV.4.2.1,  III.3.15.5 and III.3.15.3]{Die69}} 
that $L_0$ extends uniquely to an order preserving, continuous endomorphism  
$L \colon \R_{\add} \to \R_{\add}$.
The homomorphism $L$ is thus linear, given by multiplication by some real number $r > 0$,
whence $\vartheta \colon P \iso \bar{P}$ has the form 
$p \mapsto \exp{r \cdot \ln (p)} = p ^r$, as asserted.
\smallskip 

We come now to the second part of the proof.
In it we establish formula \eqref{eq:16.7} for a single element $p_1 < 1$.
Translations allow one to reduce to the case  where $a = 0 = \bar a$.  
With this reduction and the formula for $\vartheta_a $ just obtained,
representation \eqref{eq:Representation-varphi}  becomes for $p = p_1$:
\begin{equation}
\label{eq:16.8}
\varphi(p_1 \cdot t)= p_1^r  \cdot  \varphi (t) \;
\text{ for every }\; t\in [0,\delta_{p_1}].
\end{equation}

In the third part we show that formula \eqref{eq:16.8} holds not only for $p_1$,
but for every $p \in P \, \cap \, ]0,1]$.
Given $p < 1$,  one finds, as before, a real number $\delta > 0$ 
so that $\varphi(p\cdot t) = p^r\varphi(t)$ for all $t\in [0,\delta]$.  
Choose $m\in \N$ so large that $p_1^m \cdot \delta_{p_1} \leq \delta$.  
Then the following chain of equalities holds for each $t \in [0,\delta_{p_1}]$:
\begin{equation*}
p^{m\cdot r}_1 \cdot \varphi(p\cdot t) 
= 
\varphi(p_1^m\cdot p \cdot t) 
= 
\varphi( p \cdot p_1^m \cdot t) 
= 
p^r \cdot \varphi(p^m_1\cdot t)
= 
 p^r \cdot p^{m\cdot r}_1 \cdot \varphi(t).
\end{equation*}
This calculation shows
that $\varphi(p\cdot t) = p^r\cdot \varphi(t)$ for every $t\in [0,\varepsilon_1]$. 
Formula \eqref{eq:16.7} now follows by an affine change of coordinates  
(and setting $\delta = \delta_{p_1}$).  
\end{proof}

The second result shows 
that a mild additional hypothesis on $\varphi$ implies
that $\varphi$ is PL, 
that $\bar P = P$ 
and that $\bar A = s\cdot A$ for some positive real number $s$. 
\begin{proposition}
\label{PropositionE9} 
\index{Representation Theorem for isomorphisms!supplements}%
\index{Proposition \ref{PropositionE9}!statement|textbf}%
Assume there exists a point $a_0$ in $A\cap \Int(I)$ 
at which $\varphi$ has a one-sided derivative with non-zero value $s$.  
Then $\bar P = P$, $\bar A = s\cdot A$ and
$\varphi$ is $\PL$ with slopes in the coset $s\cdot P$ and singularities in $A$.  
Moreover, $\varphi$ has only finitely many singularities in every compact subinterval of $\Int(I)$.  
\end{proposition}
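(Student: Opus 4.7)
The plan is to combine Proposition \ref{PropositionE8} at $a_0$ with the hypothesis of a non-zero one-sided derivative, and then to propagate local affineness to all of $\Int(I)$ by exploiting the action of $G$. After reducing to the case where $\varphi$ is order-preserving and the derivative in question is a right-derivative with $s > 0$ (by composing $\varphi$ with $-\id$ on $\bar{I}$ when $s < 0$, and by an entirely symmetric argument when the given derivative is a left-derivative), I would apply formula \eqref{eq:16.7} at $a_0$, divide both sides by $pt$, and let $t \to 0^+$: the left-hand side tends to $s$ while the right-hand side tends to $p^{r-1}s$. Since $s \neq 0$ and $P \neq \{1\}$, this forces $r = 1$, and the last assertion of Proposition \ref{PropositionE8} then yields $\bar P = P$. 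Moreover, with $\psi(t) = \varphi(a_0 + t) - \varphi(a_0)$, the reduced formula $\psi(pt) = p \cdot \psi(t)$ gives $\psi(t_0)/t_0 = \psi(p^n t_0)/(p^n t_0) \to s$ as $n \to \infty$ (for $p \in P \cap\, ]0,1[$), whence $\psi(t_0) = s \cdot t_0$ for every $t_0 \in\, ]0,\delta]$; hence $\varphi$ is affine with slope $s$ on $[a_0, a_0 + \delta]$.

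Next I would propagate this affineness to every $a \in A \cap \Int(I)$, showing $\varphi$ has a one-sided derivative there lying in $s \cdot P$. The $G(I;A,P)$-orbit of any such $a$ is dense in $\Int(I)$ and therefore meets $(a_0, a_0 + \delta)$ in some $a'$; by Remark \ref{remark:5.3}, $B(I;A,P)' \subseteq G$ acts multiply transitively on that orbit, so there exists $g \in G$ with $g(a') = a$ that is affine in a neighborhood of $a'$ with slope $p \in P$. The identity $\varphi \circ g = \alpha(g) \circ \varphi$, together with the known affineness of $\varphi$ near $a'$ (slope $s$) and the local PL nature of $\alpha(g) \in \bar G$ near $\varphi(a') \in \bar A$ (slope $\bar p \in \bar P = P$), gives $\varphi(a + pt) - \varphi(a) = \bar p \cdot s \cdot t$ for small $t > 0$; hence $\varphi$ has right-derivative $s\bar p/p \in s \cdot P$ at $a$, and the left side is treated symmetrically.

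It remains to establish three things: the singularity set $\Sigma$ of $\varphi$ lies in $A$, $\Sigma$ is discrete in $\Int(I)$, and $\bar A = s \cdot A$. For $\Sigma \subseteq A$: at any candidate singularity $c$, select $g \in G$ that is affine in a neighborhood of $c$ (always possible, since every singularity of $g \in G \subseteq G(I;A,P)$ lies in $A$, whereas $c$ is the very point in question) and for which $g(c)$ falls in the established affine region $(a_0, a_0 + \delta)$, so that $g(c) \notin \Sigma$. The slope-jump of $\varphi$ at $c$ then survives into $\alpha(g)$, placing $\varphi(c) \in \bar A$ and hence $c \in A$ by Theorem \ref{TheoremE04}. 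Discreteness of $\Sigma$ follows at once: an accumulation $c_n \to c^*$ would be impossible, because the previous step guarantees $\varphi$ is affine in a one-sided neighborhood of any nearby $a \in A$, whose interval of affineness then covers cofinitely many $c_n$. Finally, $\bar A = s \cdot A$ is read off by restricting $\varphi$ to any affine piece: the image of $A \cap \Int(I)$ is $\bar A \cap \Int(\bar I)$ (Theorem \ref{TheoremE04}), and on a piece with slope $sp$ this gives $\bar A$ locally equal to $s \cdot A$ up to a translation, but both are subgroups of $\R_{\add}$, which forces the translation to vanish.

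The main obstacle is the extraction of a suitable $g \in G$ in the argument for $\Sigma \subseteq A$: one must guarantee, for every candidate singularity $c$, the existence of $g \in G$ that is affine near $c$ and carries $c$ into the established affine region. Since $G$ may be no larger than $B(I;A,P)'$, this step relies on the multiply transitive action from Remark \ref{remark:5.3} together with the density of every $G$-orbit in $\Int(I)$.
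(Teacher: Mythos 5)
Your proposal is correct in substance, and its first half --- reducing to an order-preserving $\varphi$ with positive one-sided derivative, feeding that hypothesis into formula \eqref{eq:16.7} to force $r=1$, and concluding that $\varphi$ is affine with slope $s$ near $a_0$ and that $\bar P = P$ --- is exactly the paper's argument. Where you diverge is in the globalization, and the paper's version is worth noting because it dissolves the obstacle you flag at the end. Rather than pulling an individual point $c$ into the linear region by a map that is affine near $c$, the paper takes an \emph{arbitrary} $t_0 \in \Int(I)$ and uses the $2$-fold transitivity of $B' = B(I;A,P)'$ on the dense orbit $(IP\cdot A)\cap\Int(I)$ (Remark \ref{remark:5.3}) to produce $f \in B'$ carrying a whole neighbourhood $J_{t_0}$ of $t_0$ into the interval on which $\varphi$ is already linear; the identity $\varphi = \alpha(f)^{-1}\circ\varphi\circ f$ then exhibits $\varphi|_{J_{t_0}}$ as a composition of a finitary PL-map, a linear map and a finitary PL-map, hence finitary PL with slopes in $P^{-1}\cdot s\cdot P = s\cdot P$ and singularities in $A$ (via Theorem \ref{TheoremE04}). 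No affineness of $f$ at $t_0$ is required, one never has to speak of ``candidate singularities'' before PL-ness is established, and local finitary PL-ness at every point gives the discreteness of the break set at once by compactness. Two of your steps would need repair along these lines: your discreteness argument (covering an accumulation point $c^*$ by the affine intervals around nearby points of $A$) fails as stated, since those intervals may shrink before reaching $c^*$ --- one must apply the local conjugation argument at $c^*$ itself; and your derivation of $\bar A = s\cdot A$ ``up to a vanishing translation'' is cleanest if you first normalize $a_0 = 0 = \varphi(a_0)$, as the paper does, and then scale by $P$ and $\bar P$ to pass from the small linear piece to all of $A$ and $\bar A$.
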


\begin{proof}  
Since $a_0$ is in $A$ and $\varphi(a_0)$ in $\bar{A}$, 
conjugations by affine homeomorphisms bring us to the special case 
where $a_0 = 0 = \varphi(a_0)$ and $\varphi$ is order preserving 
with right-hand derivative of value $s > 0$ at 0. 
Formula \eqref{eq:16.7} then simplifies to
\begin{equation*}
\varphi(p \cdot t) = p^r \cdot \varphi (t);
\end{equation*}
it holds for every $t \in [0,\delta]$ for some $\delta > 0$,
and $p \in P$ with $p< 1$.   

The hypothesis that the right-hand derivative of $\varphi$ exists in 0 and is positive
(and the fact that set $P\,  \cap \, ]0,1[$ is not empty) 
implies therefore that $r = 1$
and so \emph{$\varphi$ is linear on $[0,\delta]$ with slope $s$} 
and $\bar{P} = P$ (by Proposition \ref{PropositionE8}).
Moreover, $\bar A = s A$.
Indeed,  
the last statement in Theorem \ref{TheoremE04} guarantees
that $\varphi$ maps $A \cap \Int(I)$ onto $\bar{A}\cap  \Int(I)$
whence $s(A \,\cap \,[0, \delta]\,) = \bar{A}\, \cap \,[0, s \cdot \delta]$.
Next,
given a positive number $a \in A$, 
there exists a positive number $p \in P$  with $p \cdot a < \varepsilon$ 
and so 
\[
s \cdot a = p^{-1} \cdot (s \cdot p \cdot a) \in \bar{A}, 
\]
(because $p^{-1} \cdot \bar{A} = \bar{A}$).
Finally, if $\bar{a}$ is a positive element of $\bar{A}$
there exists $\bar{p} \in \bar{P} = P$ 
with $\bar{p} \cdot \bar{a} < s \cdot \delta$
and thus $ (\bar{p} \cdot \bar{a})/s$ is an element $a$, say, of $A$ 
and so $\bar{a} = s \cdot (a / \bar{p})$ lies in $s \cdot A$.

We are left with proving 
that $\varphi$ is a PL-homeomorphism  with the stated properties.
By the previous paragraph, there exists a positive real number $\varepsilon$
such that $\varphi$ is linear on the open interval  $]0, \delta[$\,.
Consider now an arbitrary point $ t_0 \in \Int(I)$.
By Remark \ref{remark:5.3} 
the group $B' = B(I;A,P)'$ acts 2-fold transitively 
on the intersection $(IP \cdot A) \cap \Int(I)$.
There exists therefore an element $f \in B'$ 
which maps an open neighbourhood $J_{t_0}$ of the given point 
$t_0$ into  $]0, \delta[$.
 
We claim that the restriction of $\varphi$ to $J_{t_0}$ is piecewise linear.
Indeed,
conjugation by $\varphi$ induces the isomorphism $\alpha \colon G \iso \bar{G}$.
It follows, in particular, 
that the relation $\varphi \circ f^{-1} \circ \varphi^{-1} = \alpha(f^{-1})$ is valid
or, equivalently,
that the formula  
\begin{equation}
\label{eq:Revealing-that-phi-is-PL}
\varphi  =  \alpha(f)^{-1} \circ \varphi \circ f
\end{equation}
 holds.
 The homeomorphisms $f$ and $\alpha(f)^{-1}$ are finitary piecewise linear
 with slopes in $P = \bar{P}$;
 in addition,  $f$ maps $J_{t_0}$ into an  interval 
 on which $\varphi$ is linear with slope $s$.
Formula \eqref{eq:Revealing-that-phi-is-PL} therefore proves 
that $\varphi$ is finitary piecewise linear on an open neighbourhood of $t_0$
 and that its slopes lie in $P^{-1} \cdot s \cdot P = s \cdot P$. 
\end{proof}
%
%
\section{Isomorphisms of groups with non-cyclic slope groups}
\label{sec:17}
%
In this section, Theorem \ref{TheoremE04} is applied in the situation 
where the group $P$  \emph{is not cyclic}.
The homeomorphisms $\varphi$ inducing isomorphisms $\alpha$ 
are then necessarily piecewise linear and they can be determined explicitly.
%
\subsection{The main results}
\label{ssec:17.1}
If $P$ is not cyclic, 
the conclusion of Proposition \ref{PropositionE8} can be sharpened 
so as to imply the hypothesis of Proposition \ref{PropositionE9}.  
In this way one obtains the following basic result:
\begin{thm}
\label{TheoremE10}
\index{Representation Theorem for isomorphisms!supplements}%
\index{Automorphisms of G(I;A,P)@Automorphisms of $G(I;A,P)$!properties}%
\index{Group P not cyclic@Group $P$ not cyclic!consequences}%
\index{Proposition \ref{PropositionE9}!consequences}%
\index{Theorem \ref{TheoremE10}!statement|textbf}%
Let $G$ be a subgroup of $G(I;A,P)$ containing $B(I;A,P)'$
and  $\bar{G}$ a subgroup of $G(\bar{I}; \bar{A}, \bar{P})$ with the analogous property. 
If there exists an isomorphism $\alpha \colon G \iso \bar{G}$ and if $P$ is not cyclic,
the following assertions hold:
\begin{enumerate} 
\item[(i)] $ \bar{P} = P$.
\item[(ii)] There exists a non-zero real number $s$ such that  $\bar{A} = s \cdot A$
and $\varphi$ is a PL-homeo\-mor\-phism 
which maps $A \cap \Int(I)$ onto $\bar{A} \cap \Int(\bar{I})$ 
and whose slopes lie in the coset $s \cdot P$.
Each compact subinterval of $\Int(I)$ contains only finitely many singularities of $\varphi$. 
\end{enumerate}
\end{thm}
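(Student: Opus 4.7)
The plan is to combine Theorem~\ref{TheoremE04} with Propositions~\ref{PropositionE8} and~\ref{PropositionE9}, with the non-cyclicity of $P$ serving as the bridge between them. Theorem~\ref{TheoremE04} produces the unique homeomorphism $\varphi\colon\Int(I)\iso\Int(\bar I)$ that induces $\alpha$ by conjugation and maps $A\cap\Int(I)$ onto $\bar A\cap\Int(\bar I)$. What remains is to show that $\bar P=P$, that $\bar A=s\cdot A$ for some nonzero real $s$, and that $\varphi$ is PL of the stated form. Throughout the argument I will use that a non-cyclic subgroup of the torsion-free Archimedean group $\R^\times_{>0}$ is dense in $\R^\times_{>0}$.

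First I would reduce to the case where $\varphi$ is order-preserving. If $\varphi$ is order-reversing, then $\tilde\varphi=(-\id)\circ\varphi$ is an order-preserving homeomorphism $\Int(I)\iso\Int(-\bar I)$ inducing the isomorphism $g\mapsto(-\id)\,\alpha(g)\,(-\id)$ from $G$ onto the subgroup $(-\id)\bar G(-\id)$ of $G(-\bar I;-\bar A,\bar P)$, which still contains the derived group of $B(-\bar I;-\bar A,\bar P)$. The conclusion for the original $\varphi$ then follows by reversing this reduction, with the real $s$ acquiring the opposite sign; the statement of the theorem allows $s$ of either sign.

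Assume now that $\varphi$ is order-preserving. Proposition~\ref{PropositionE8} gives, for each $a\in A\cap\Int(I)$, positive reals $\delta_a$ and $r_a$ such that, setting $\psi_a(t)=\varphi(a+t)-\varphi(a)$,
\[
\psi_a(pt)=p^{r_a}\psi_a(t)\qquad\text{for all }t\in[0,\delta_a]\ \text{and}\ p\in P\,\cap\,]0,1].
\]
Fixing $t=\delta_a$, both sides are continuous functions of $p$ agreeing on the dense set $P\,\cap\,]0,1]$, hence on all of $]0,1]$. Setting $s=p\delta_a$ yields $\psi_a(s)=K_a\,s^{r_a}$ on $[0,\delta_a]$ with $K_a=\psi_a(\delta_a)/\delta_a^{r_a}>0$; equivalently, $\varphi(t)=\varphi(a)+K_a(t-a)^{r_a}$ on $[a,a+\delta_a]$.

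The decisive step, and the one I expect to be the main obstacle, is to force $r_a=1$ everywhere. I would pick $a_1\in A\cap\Int(I)$ and, using density of $A$ in $\R$, a point $a_2=a_1+h\in A\cap\Int(I)$ with $h>0$ so small that the two local power-law formulas above apply on a common range $[a_2,a_2+\delta']$. On this range the two representations of $\varphi$ must coincide, giving
\[
K_2\,u^{r_{a_2}}=K_1\bigl[(h+u)^{r_{a_1}}-h^{r_{a_1}}\bigr]\qquad(u\in[0,\delta']).
\]
Comparing one-sided derivatives at $u=0$, the right-hand side has the finite positive value $K_1\,r_{a_1}\,h^{r_{a_1}-1}$, while the left-hand side equals $0$ if $r_{a_2}>1$ and $+\infty$ if $r_{a_2}<1$; hence $r_{a_2}=1$. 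The right-hand side is then linear in $u$ on an interval, and the Taylor expansion $(h+u)^{r_{a_1}}-h^{r_{a_1}}=r_{a_1}h^{r_{a_1}-1}u+\binom{r_{a_1}}{2}h^{r_{a_1}-2}u^2+\cdots$ is linear only when $r_{a_1}\in\{0,1\}$; since $r_{a_1}>0$, this forces $r_{a_1}=1$. Consequently $\varphi$ has a nonzero one-sided derivative at every $a\in A\cap\Int(I)$, and Proposition~\ref{PropositionE9} delivers the full conclusion: $\bar P=P$, $\bar A=s\cdot A$ for some real $s$, and $\varphi$ is PL with slopes in the coset $s\cdot P$, singularities in $A$, and only finitely many singularities in every compact subinterval of $\Int(I)$.
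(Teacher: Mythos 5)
Your proof is correct and follows essentially the same route as the paper's: Theorem \ref{TheoremE04} to produce $\varphi$, reduction to the increasing case, Proposition \ref{PropositionE8} combined with the density of the non-cyclic group $P$ to obtain the local power law, and finally Proposition \ref{PropositionE9}. The one difference is that you force $r_a=1$ explicitly by overlapping two local power laws at nearby points of $A$, whereas the paper simply observes that $t\mapsto K\,(t-a_0)^{r}$ is already differentiable (with non-zero derivative) at every interior point of $]a_0,a_0+\delta[$, picks a point of $A$ in that open interval, and lets the proof of Proposition \ref{PropositionE9} derive $r=1$; your extra step is sound but redundant.
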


\begin{proof}
By Theorem \ref{TheoremE04}, the isomorphism $\alpha$ is induced by a homeomorphism $\varphi$.  
Replacing, if need be, 
$\varphi$ by $(-\id) \circ \varphi \colon \Int(I) \iso  \Int(-\bar{I})$, 
we can reduce to the case where $\varphi$ is increasing.  
Choose $a_0 \in A\cap \Int(I)$ and set $\bar{a}_0 = \varphi(a_0)$.  
By Proposition \ref{PropositionE8} 
there exist positive real numbers  $r$ and $\delta$ 
so that the equation
\begin{equation}
\label{eq:17.1}
\varphi(a_0+ p\cdot \delta) 
= 
\bar{a}_0  +  p^r \cdot \left(\varphi (a_0 +\delta) - \bar{a}_0 \right)
\end{equation}
is valid for every $p \in  P\, \cap \,]0, 1]$.  
Since $P$ is not cyclic, hence dense in $\R_{> 0}$,  and $\varphi$ is continuous,
equation \eqref{eq:17.1} is valid for each $p \in \,]0,1[$ 
and so $\varphi$ is differentiable on $]a_0,a_0+\delta[$. 
The claim now follows from Proposition \ref{PropositionE9}
and the fact that $]a_0,a_0+\delta[$ contains a point of $A$.
\end{proof}

\subsubsection{Supplement to Theorem \ref{TheoremE10}}
\label{sssec:Supplement-TheoremE10}
%
Theorem \ref{TheoremE10} reveals 
that the hypothesis that $P$ is not cyclic 
forces the parameters $\bar{P}$ and $\bar{A}$ 
to be closely related to $P$ and $A$.
Indeed, one has then $\bar{P} = P$ and $\bar{A} = s \cdot A$ for some positive real $s$.
This is as good as one can expect,
for every homothety $\varphi \colon t \mapsto s \cdot t$ induces 
by conjugation an isomorphism $\alpha_s \colon G(I;A,P) \iso G(\varphi(I); s \cdot A, P)$
and this isomorphism yields, of course, also isomorphisms between subgroups.

We turn next to the question as to 
\emph{how closely related the intervals $I$ and $\bar{I}$ must be.}
The group of bounded PL-homeomorphisms $B(I;A,P)$ provides a telling negative answer,
for this group does not depend on the interval $I$ 
(up to isomorphism; see Proposition \ref{PropositionC1}).
\index{Proposition \ref{PropositionC1}!consequences}%
 A positive answer is provided by
 \begin{lemma}
 \label{lem:Isomorphisms-and-type-of-intervals-2}
 \index{Group G(R;A,P)@Group $G(\R;A,P)$!isomorphisms}%
 \index{Group G([0,infty[;A,P)@Group $G([0, \infty[\;;A,P)$!isomorphisms}%
 Suppose that $G = G(I;A,P)$, 
 that $\bar{G}$ is a subgroup of $G(\bar{I}; \bar{A}, \bar{P})$ 
 containing the derived group of $B(\bar{I}; \bar{A}, \bar{P})$
 and that $G$ is isomorphic to $\bar{G}$.
 Then the following implications hold:
 \begin{enumerate}[(i)]
 \item if $I = \R$ then $\bar{I} = \R$;
 \item if $I = [0,\infty[$
 then $\bar{I}$ is either a half line with endpoint in $\bar{A}$ or $\bar{I} = \R$;
 \item if $I =\, ]0,\infty[$
 then $\bar{I}$ is either a half line with endpoint in $\R \smallsetminus A$
 or a line.
 \end{enumerate}
 \end{lemma}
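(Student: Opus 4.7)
The plan is to invoke Theorem \ref{TheoremE04} to obtain the unique homeomorphism $\varphi \colon \Int(I) \iso \Int(\bar{I})$ inducing $\alpha$, and then reduce to the case where $\varphi$ is \emph{increasing} by composing, if necessary, with the reflection $t \mapsto -t$ (which replaces $\bar{I}$, $\bar{A}$ by $-\bar{I}$, $-\bar{A}$ but preserves the isomorphism type of the ambient interval). With $\alpha$ increasing, I would then apply Corollary \ref{CorollaryE5}(iii) to obtain the chains of group homomorphisms
\begin{align*}
\im\lambda &\iso \bar{G}/(\bar{G}\cap\ker\bar\lambda) \hookrightarrow \im\bar\lambda,\\
\im\rho &\iso \bar{G}/(\bar{G}\cap\ker\bar\rho) \hookrightarrow \im\bar\rho,
\end{align*}
where the second arrow of each line is induced by the inclusion $\bar{G} \leq G(\bar{I};\bar{A},\bar{P})$. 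Corollary \ref{CorollaryA2} and its dual for $\rho$ describe each of the four images explicitly --- as one of $\Aff(A,P)$, $\Aff(IP\cdot A,P)$, a copy of $P$, or $\{\id\}$ --- in terms of the underlying interval; comparing them on the two sides constrains $\bar{I}$.

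For part (i), the hypothesis $I = \R$ gives $\im\lambda = \im\rho = \Aff(A,P)$, which is non-abelian because both $A$ and $P$ are non-trivial by the standing hypothesis \eqref{eq:Non-triviality-assumption}. The embeddings above then force both $\im\bar\lambda$ and $\im\bar\rho$ to be non-abelian, and the only $\bar{I}$ for which this happens is $\bar{I} = \R$.

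For part (ii), $I = [0,\infty[$ gives $\im\lambda \iso P$ and $\im\rho = \Aff(IP\cdot A,P)$. Non-abelianness of $\im\bar\rho$ rules out every $\bar{I}$ that is bounded from above, and non-triviality of $P \hookrightarrow \im\bar\lambda$ rules out $\bar{I}$ being a half line whose finite endpoint lies in $\R\smallsetminus\bar{A}$ (in which case $\im\bar\lambda = \{\id\}$). The surviving possibilities are exactly $\bar{I} = \R$ and $\bar{I}$ a half line with endpoint in $\bar{A}$.

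The case I expect to require the most care is part (iii), where $I = \,]0,\infty[$ gives $\im\lambda = \{\id\}$ and $\im\rho = \Aff(IP\cdot A,P)$. Non-abelianness of $\im\bar\rho$ again rules out $\bar{I}$ bounded from above, but because $\ker\lambda = G$ the first embedding collapses to the mere containment $\bar{G} \subseteq \ker\bar\lambda$. The main obstacle is interpretive: if $\bar{I}$ were a half line $[\bar{a},\infty[$ with $\bar{a}\in\bar{A}$, this containment would force $\bar{G} \subseteq \ker\bar\lambda = G(\,]\bar{a},\infty[\,;\bar{A},\bar{P})$, so that $\bar{G}$ in fact lies inside a group whose ambient interval is of type (3); after relabelling one is left with precisely the alternatives $\bar{I} = \R$ or $\bar{I}$ a half line with endpoint in $\R\smallsetminus\bar{A}$ asserted by the lemma.
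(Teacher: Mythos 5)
Your proposal is correct and follows the same route as the paper's own proof: reduce to an increasing isomorphism by composing with the reflection $t \mapsto -t$, then compare the images of $\lambda$ and $\rho$ on the two sides via Corollary \ref{CorollaryE5}(iii) together with the explicit descriptions of those images in Corollary \ref{CorollaryA2} and its dual. The only difference is one of thoroughness: the paper disposes of all three cases in a single sentence, whereas you spell out the case analysis and, in part (iii), explicitly confront the fact that a half line with endpoint in $\bar{A}$ cannot be excluded outright but only forces $\bar{G}$ into the associated type-(3) kernel $\ker\bar\lambda$ --- a subtlety the paper's terse argument passes over.
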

 
 \begin{proof}
Let $\alpha$ be in isomorphism of $G$ onto $\bar{G}$.
If $\alpha$ is not increasing 
then 
\[
\act{-\id}(-) \circ \alpha \colon G \iso \bar{G} \iso \act{-\id}\bar{G}
\] 
is an increasing isomorphism. 
We can thus assume that $\alpha$ is increasing.
If $I = \R$ both $\lambda$ and $\rho$ map $G$ onto a non-abelian, metabelian group;
 if $I = [0, \infty[$ the image of $\lambda = \sigma_-$ is a non-trivial abelian group
 and the image of $\rho$ is metabelian, but not abelian;
 if $I = \;]0, \infty[$ the image of $\rho$ is metabelian, but not abelian.
In all three cases, the claim thus follows from Corollary \ref{CorollaryE5}.
 \end{proof}
\index{Homomorphism!11-lambda@$\lambda$}%
\index{Homomorphism!17-rho@$\rho$}%
  
Another type of question is whether $\mu \colon G(I;A,P) \mono G(\bar{I};\bar{A},\bar{P})$
can be a monomorphism without being an isomorphism.
An answer is provided by
\begin{supplement}
\label{SupplementE11New}
\index{Representation Theorem for isomorphisms!supplements}%
\index{Group G([a,c];A,P)@Group $G([a,c];A,P)$!isomorphisms}%
\index{Group G([0,infty[;A,P)@Group $G([0, \infty[\;;A,P)$!isomorphisms}%
\index{Group G(R;A,P)@Group $G(\R;A,P)$!isomorphisms}%
\index{Automorphisms of G(R;A,P)@Automorphisms of $G(\R;A,P)$!properties}%
\index{Automorphisms of G([0,infty[;A,P)@Automorphisms of $G([0, \infty[\;;A,P)$!properties}%
\index{Automorphisms of G([a,c];A,P)@Automorphisms of $G([a,c];A,P)$!properties}%
\index{Group P not cyclic@Group $P$ not cyclic!consequences}%
\index{Proposition \ref{PropositionE9}!consequences}%
\index{Theorem \ref{TheoremE10}!consequences}
\index{Supplement \ref{SupplementE11New}!statement|textbf}%
\index{Group P not cyclic@Group $P$ not cyclic!consequences}%
Assume $P$ is \emph{not cyclic},
$\alpha \colon G(I;A, P) \iso \bar{G}$
is an isomorphism onto a subgroup $\bar{G}$ of $G(\bar{I}; \bar{A}, \bar{P})$
which contains the derived group of $B(\bar{I}; \bar{A}, \bar{P})$.
Let $\varphi \colon \Int(I) \iso \Int(\bar{I})$ 
be the unique homeomorphism inducing $\alpha$.

Assume, furthermore,
that $I$ is of type $\R$, or of type $[a, \infty[$ with $a \in A$, 
or of finite length with endpoints in $A$.
If $\bar{I}$ has the same type as $I$ 
then $\varphi$ is a finitary PL-homeomorphism and $\bar{G} = G(\bar{I}; \bar{A}, \bar{P})$.
\end{supplement}

\begin{proof}
The verification will consist of three parts.
Note first that
Theorem \ref{TheoremE10} applies to $\alpha$ 
\index{Theorem \ref{TheoremE10}!consequences}%
and so $\varphi$ is a PL-homeomorphism with slopes in a coset $s \cdot P$
and $\bar{A} = s \cdot A$.
In addition, $\bar{P} = P$ 
and each compact subinterval of $\Int(I)$ contains only finitely many singularities of $\varphi$.
By replacing, if need be $\bar{G}$ by $\act{-\id}\bar{G}$, 
we can arrange that $\alpha$ is increasing. 

\paragraph{Part 1: $I$ is bounded from below.}
\label{para:Part1-SupplementE11New}
%
So $I$ is either of type $[a,c]$ or of type $[a, \infty[$ with $a \in A$.
The homeomorphism $\varphi$ has then a unique continuous extension 
$\tilde{\varphi} $ which sends $a$ to the left end point $\bar{a}$ of $\bar{I}$.
It turns out 
that the reasoning in the proof of Proposition \ref{PropositionE8} 
can be repeated in the present situation.
So there exist positive real numbers  $\delta$ and $r$ with the property 
that the formula
\begin{equation}
\label{eq:16.7bis}
\tilde{\varphi}(a + p \cdot t) = \bar{a} + p^r \cdot \left(\tilde{\varphi}( a + t) - \bar{a}\right)
\end{equation}
holds for every $t \in [0, \delta]$ and each $p \in P\, \cap \, ]0,1]$.
Next we use the hypothesis that $P$ is not cyclic.
It implies that $P\, \cap \, ]0,1]$ is dense in the unit interval $[0,1]$.
Since $\varphi$ and $t \mapsto t^r$ are orientation preserving and continuous,
the representation \eqref{eq:16.7bis} holds therefore for every $p \in \,]0,1[$
and shows that $\varphi$ is differentiable 
on the open interval $]a, a + \delta[$.
Proposition \ref{PropositionE9} thus applies and guarantees 
that $\varphi$ is piecewise linear on $]a, a + \delta[$.
Therefore $r = 1$ and so $\varphi$ is affine on $[a, a + \delta]$.
%
\paragraph{Part 2: $I$ is not bounded from above.}
\label{para:Part2-SupplementE11New}
%
The quotient $G/\ker \rho \iso \im \rho \subseteq A \rtimes P$ 
is then a metabelian, but non-abelian group; 
the derived group of $\im \rho$ consists of translations with amplitudes in $IP \cdot A$ if $I = \R$,
respectively in  $IP^2 \cdot A$ if $I = [a, \infty[\,$.
In both cases, 
this derived group is a non-trivial $\Z[P]$-submodule of $\R_{\add}$,
hence a dense subgroup of $\R_{\add}$.
Now, 
according to part (iii) of Corollary \ref{CorollaryE5} 
the isomorphism $\alpha$, being increasing,
induces an isomorphism of $G/\ker \rho$  onto $\bar{G}/(\bar{G} \cap \ker \bar{\rho})$.
If follows 
that $\alpha$ induces an isomorphism
\[
\alpha_2 \colon \im(\rho)' \iso  (\im \bar{\rho} \restriction{\bar{G}})';
\]
it maps the amplitude of a translation $f$ to the amplitude of the translation $\rho(\alpha(f))$.

Our next aim is to imitate the proof of Proposition \ref{PropositionE8}
in the present additive setting. 
We claim, first, 
that the isomorphism \emph{$\alpha_2$ is strictly increasing}.
Let $b \in IP^2 \cdot A$ be an arbitrary  positive element 
and  let $f_b \in G$ be a translation with amplitude $b$.
Then $\alpha(f_b) \in \bar{G}$ is a PL-homeomorphism 
which is a translation with amplitude $\alpha_2(b)$ for large enough values of $t$, 
say for $t \geq \bar{t}_b$.
Set $t_b = \varphi^{-1}(\bar{t}_b)$.
Since $\alpha$ is induced by conjugation by $\varphi$,
one has $\alpha(f_b) = \act{\varphi} f_b$; 
so $\varphi \circ f_b = \alpha(f_b) \circ  \varphi$.
By evaluating this equality at $t \geq t_b$ one obtains the chain of equations
\[
\varphi(t + b) = (\varphi \circ f_b)(t) = (\alpha(f_b) \circ \varphi)(t) = \alpha_2(b) + \varphi(t).
\]
It implies that $\alpha_2(b)$ is positive, 
for $b$ is so by assumption and $\varphi$ is increasing.

We show next
that $\alpha_2$ is given by multiplication by a positive real number $s_2$.
Let $A_2$ be the group of amplitudes of the translations in $(\im \rho)'$.
Then $A_2$ contains the submodule $IP^2 \cdot A$, 
hence arbitrary small positive numbers. 
It follows that $A_2$ is not cyclic and so dense in $\R$.
The isomorphism $\alpha_2$ maps $A_2$  
injectively onto a group $\bar{A}_2$ of amplitudes contained in $\bar{A}$.
Its image $\bar{A}_2$ is not cyclic, and hence dense in $\R$.
So $A_2$ and $\bar{A}_2$  are both dense subgroups of $\R_{\add}$;
as $\alpha_2$ is strictly increasing  
it extends therefore to a (unique) strictly increasing monomorphism  
$\tilde{\alpha}_2 \colon \R \mono \R$.
The extended monomorphism is continuous and hence an $\R$-linear map,
given by multiplication by some positive real $s_2$.

We come now to the third stage of the analysis of $\varphi$.
In it we show 
that the restriction of $\varphi$ to a suitable interval of the form $[t_*, \infty[$ is \emph{affine}.
Choose a positive element $b_* \in IP^2 \cdot A$ 
and let $f_{b_*} \in G$ be a translation with amplitude $b_*$.
It then follows, as in the above,
that there is a number $t_* \in \Int(I)$ 
so that the equation
\begin{equation}
\label{eq;Describing-varphi}
\varphi(t + b_*) = \alpha_2(b_*) + \varphi(t) =  \varphi(t) + s_2 \cdot b_*
\end{equation} 
holds for every $t \geq t_*$.
Consider now an arbitrary positive element $b \in IP^2 \cdot A$.
Then there exists  a number $t_{b} \in \Int(I)$ 
such that
\[
\varphi(t + b) = \alpha_2(b) + \varphi(t) = \varphi(t) +  s_2 \cdot b  \text{ for } t \geq t_{b}.
\]
Choose a positive integer $m$ which is so large that $t_{b} \leq t_* + m \cdot b_*$.
For every $t \geq t_*$ the following calculation is then valid:
\begin{align*}
 \varphi(t + b) +s_2 \cdot m b_*
&=
\varphi( t +b +  m \cdot b_* )\\
&=
\varphi( t +  m \cdot b_*) + s_2 \cdot b
= 
\varphi(t) + s_2 \cdot m b_* +s_2 \cdot b.
\end{align*}
It follows, in particular,
that the equation
\begin{equation}
\label{eq:Representation-.varphi-bis}
\varphi (t_* + b) = \varphi (t_*) + s_2 \cdot b
\end{equation}
holds for every positive element $b \in IP^2 \cdot A$ and $t \geq t_*$.
Since $\varphi$ is continuous and $IP^2 \cdot A$ is dense in $\R$,
this equation allows us to deduce 
that $\varphi$ is affine with slope $s_2$ on the half line $[t_*, \infty[$.

%
\paragraph{Part 3: conclusion of the proof.}
\label{para:Part3-SupplementE11New}
%
Having at one's disposal the analysis of $\varphi$
for $I$ an interval that is bounded from below, carried out in Part 1,
and for $I$ an interval that is not bounded from above, given in Part 2,
it is easy to establish that $\varphi$ is a finitary PL-homeomorphism.
Recall that \emph{$\varphi$ can be assumed to be increasing}.

Suppose first that $I$ and $\bar{I}$ are \emph{half lines} with endpoints in $A$,
say $I = [a, \infty[$ and $\bar{I} = [\bar{a}, \infty]\,$.
By part 1 
there exist a positive real $\delta$ with the property
that  $\varphi$ is affine on the interval $[a, a + \delta]$;
by part 2 there exist a number $t_* \in \Int(I)$ 
so that $\varphi$ is affine on the half line $[t_*, \infty[$;
we may assume that $a + \delta \leq t_*$. 
As $\varphi$ has only finitely many singularities in  $[a + \delta, t_*]$
(by claim (iii) in Theorem \ref{TheoremE10}) it is finitary piecewise linear.
\index{Theorem \ref{TheoremE10}!consequences}%

Assume next that $I$ and $\bar{I}$ 
are \emph{compact intervals} with endpoints in $A$,
say $I = [a, c]$ and $\bar{I} = [\bar{a}, \bar{c}]$.
By part 1 there exist a positive real $\delta_\ell$ 
so that $\varphi$ is affine on the interval $[a, a + \delta_\ell]$.
Replace now $I$ and $\bar{I}$ by $-I$ and $-\bar{I}$, 
and $\varphi$ by $(-\id) \circ \varphi \circ (-\id)$.
It then follows that there exists a positive real $\delta_r$ 
so that $\varphi$ is affine on the interval $]c - \delta_r, c[$.
We conclude, as in the previous case, 
that $\varphi$ is a finitary PL-homeomorphism.

Thirdly, assume $I$ and $\bar{I}$ are lines.
By part 2 there exists a positive real $t_r$ 
so that $\varphi$ is affine on $]t_r, \infty[$.
By replacing $\varphi$ by $(-\id) \circ \varphi \circ (-\id)$ 
one finds that there exists $t_\ell$ so that $\varphi$ is affine on $]-\infty, t_\ell[$.
It follows, as before,  that $\varphi$ is finitary PL.

We are left with showing that $\bar{G}$ is all of $G(\bar{I}; \bar{A}, \bar{P})$.
This follows from the fact that $\varphi$ is finitary PL.
Indeed,
by Theorem \ref{TheoremE10} we know that $\bar{P} = P$,
\index{Theorem \ref{TheoremE10}!consequences}
that the slopes of $\varphi$ lie in a single coset $s \cdot P$ 
and that $\bar{A} = s \cdot A$.
Since, by the preceding analysis, $\varphi$ is a \emph{finitary} PL-homeomorphism  
it induces by conjugation an isomorphism 
that maps $G = G(I;A,P)$ onto all of $G(\bar{I};s \cdot A, P)$.
Since $\bar{G} $ is known to be the image of the isomorphism induced by conjugation by $\varphi$,
we see that $\bar{G} = G(\bar{I};s \cdot A, P) = G(\bar{I}; \bar{A}, \bar{P})$.
The proof is now complete.
\end{proof}

\begin{remarks}
\label{remarks:SupplementE11New}
(i) If $I = \R$
then $\bar{I} = \R$  by claim (i) in Lemma \ref{lem:Isomorphisms-and-type-of-intervals-2}.
The analysis of $\varphi$ carried out in the above 
uses then only the analysis of Part 2 in the proof.
This part does not need the hypothesis that $P$ be non-cyclic,
for $IP^2 \cdot A$ is a non-trivial $\Z[P]$-module and hence dense in $\R_{\add}$ 
even in case $P$ is cyclic. 
It follows that $\varphi$ is affine on some interval $J$ of the form $[t_*, \infty[\,$,
so differentiable on $J$,
whence $P = \bar{P}$ and $\varphi$ is piecewise affine 
with slopes in a coset $s \cdot P$  by Proposition \ref{PropositionE9}.
Supplement \ref{SupplementE11New} has therefore the pleasant
\begin{corollary}
\label{crl:Isomorphism-I-line}
\index{Representation Theorem for isomorphisms!supplements}%
\index{Group G(R;A,P)@Group $G(\R;A,P)$!isomorphisms}%
\index{Proposition \ref{PropositionE9}!consequences}%
\index{Supplement \ref{SupplementE11New}!consequences}%
Assume $\alpha \colon G(\R;A, P) \iso \bar{G}$
is an isomorphism onto a subgroup $\bar{G}$ of $G(\bar{I}; \bar{A}, \bar{P})$
that contains the derived group of $B(\bar{I}; \bar{A}, \bar{P})$.
Then $\bar{I} = \R$ and $\alpha$ is induced 
by conjugation by a \emph{finitary} PL-homeomorphisms with slopes in a coset $s \cdot P$.
Moreover, $\bar{P} = P$, $\bar{A} = s \cdot A$ and $\bar{G} = G(\R; s \cdot A, P)$.
\end{corollary}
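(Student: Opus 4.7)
The plan is to recycle the additive argument of Part 2 of the proof of Supplement \ref{SupplementE11New}, observing that nowhere in that argument is the non-cyclicity of $P$ actually used once $I$ is the entire line; the density that drove the proof came from the $\Z[P]$-submodule $IP^2 \cdot A$, which is dense in $\R_{\add}$ for every non-trivial triple $(I,A,P)$. I would begin by noting that $\bar{I} = \R$ by claim (i) of Lemma \ref{lem:Isomorphisms-and-type-of-intervals-2}, and that Theorem \ref{TheoremE04} supplies a unique homeomorphism $\varphi \colon \R \iso \R$ that induces $\alpha$ by conjugation and satisfies $\varphi(A) = \bar{A}$. After replacing $\bar{G}$ by $\act{-\id}\bar{G}$ and $\varphi$ by $(-\id)\circ \varphi$ if necessary, we may assume $\varphi$ is orientation preserving.

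The core step is to show that $\varphi$ is affine on some half line $[t_*, \infty[\,$. Since $G = G(\R;A,P)$ contains the full affine group $\Aff(A,P)$, the homomorphism $\rho$ restricts to the identity on $\Aff(A,P)$, so $\im \rho = \Aff(A,P)$ and its derived group consists of the translations with amplitudes in $IP \cdot A$. By part (iii) of Corollary \ref{CorollaryE5}, the (increasing) isomorphism $\alpha$ maps $\ker(\rho\restriction G)$ onto $\ker(\bar\rho\restriction \bar G)$ and thus induces an isomorphism of quotients; restricting to derived groups of the images gives a group isomorphism
\[
\alpha_2 \colon IP \cdot A \iso \bar A_2 \subseteq \bar A
\]
sending the amplitude of a translation $f \in G$ to the amplitude of the translation that $\rho(\alpha(f))$ becomes near $+\infty$. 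Exactly as in Part 2 of the proof of Supplement \ref{SupplementE11New}, one checks that $\alpha_2$ is strictly increasing (because $\varphi$ is so) and that for each $b \in IP \cdot A$ with $b > 0$ there is a number $t_b$ with $\varphi(t+b) = \varphi(t) + \alpha_2(b)$ for $t \geq t_b$. Since $IP \cdot A$ and $\alpha_2(IP \cdot A)$ are both dense in $\R_{\add}$ (this is where it matters that $IP \cdot A$, and not merely $IP^2 \cdot A$, is involved in the non-triviality, but either suffices), the strictly increasing additive map $\alpha_2$ extends uniquely to a continuous, hence linear, order-preserving automorphism of $\R_{\add}$, given by multiplication by some $s_2 > 0$. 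Fixing a positive $b_* \in IP \cdot A$ and an associated $t_*$, the relation $\varphi(t+b_*) = \varphi(t) + s_2 b_*$ on $[t_*, \infty[$ combined with the density of $IP \cdot A$ and the continuity of $\varphi$ forces $\varphi$ to be affine with slope $s_2$ on $[t_*, \infty[\,$.

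Applying the same argument to $(-\id)\circ \varphi \circ (-\id)$ (which induces an isomorphism between conjugated copies of $G$ and $\bar G$), one obtains a real $t_\ell$ with $\varphi$ affine on $]-\infty, t_\ell[\,$. In particular, $\varphi$ has a non-zero one-sided derivative at any point in $A \cap\,]t_*, \infty[\,$, so Proposition \ref{PropositionE9} applies and yields $\bar P = P$, $\bar A = s \cdot A$ (with $s = s_2$), and that $\varphi$ is PL with slopes in the coset $s \cdot P$, singularities in $A$, and only finitely many singularities in every compact subinterval of $\R$. Combined with the affineness of $\varphi$ on $]-\infty, t_\ell[$ and on $[t_*, \infty[\,$, this shows $\varphi$ has only finitely many singularities in total, hence is a finitary PL-homeomorphism. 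Finally, since $\varphi \in \PLhomeo(\R;s \cdot A, P)$ is finitary, conjugation by $\varphi$ maps $G(\R;A,P)$ isomorphically onto all of $G(\R; s \cdot A, P)$; as $\bar G$ is that image, $\bar G = G(\R;\bar A,\bar P)$.

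The main obstacle—if there is one—is the verification that $\alpha_2$ is well defined and strictly monotone. Everything else is bookkeeping or a direct citation of Proposition \ref{PropositionE9}; the cyclic case of $P$ works here, in contrast to the general $I$ in Supplement \ref{SupplementE11New}, precisely because on the full line $\alpha$ automatically sees the translations of $G$, so the additive structure of $IP \cdot A$ is directly available and no passage to $IP^2 \cdot A$ nor density of $P$ itself is required.
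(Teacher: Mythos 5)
Your proposal is correct and follows essentially the same route as the paper, which establishes this corollary in Remark \ref{remarks:SupplementE11New}(i) by observing that $\bar{I}=\R$ via Lemma \ref{lem:Isomorphisms-and-type-of-intervals-2}, that Part 2 of the proof of Supplement \ref{SupplementE11New} (the additive argument producing affineness of $\varphi$ on a half line $[t_*,\infty[$) nowhere uses the non-cyclicity of $P$, and then invoking Proposition \ref{PropositionE9} together with the reflection trick and Part 3 of that proof to conclude that $\varphi$ is finitary PL and $\bar{G}=G(\R;s\cdot A,P)$. Your identification of the key point—that on the full line the translations of $\Aff(A,P)$ make the dense module $IP\cdot A$ directly available, so no density of $P$ itself is needed—is exactly the paper's reasoning.
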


(ii) The hypotheses of Supplement \ref{SupplementE11New} require
\index{Supplement \ref{SupplementE11New}!discussion}%
that the intervals $I$ and $\bar{I}$ have the same type. 
As we have seen in the preceding part (i),
this requirement can be dispensed with if $I$ is a line.
I do not know if the requirement is also superfluous 
in case $I$ is a half line or an interval of finite length.
\end{remarks}
%
\subsection{Comparison of groups having intervals of finite length}
\label{ssec:17.2}
The question 
when two groups of the form $G(I;A,P)$ and $G(\bar I;A,P)$ are isomorphic 
has been discussed briefly in section \ref{ssec:2.4} 
and, more fully, in section \ref{ssec:16.4}; 
the answer is that they are isomorphic if, and only if, 
the intervals $I$ and $\bar I$ belong to the same type, 
with the possible exception that groups $G([0,b_1];A,P)$ and
$G([0,b_2];A,P)$ need not be isomorphic if $P$ is not cyclic. 
In this brief section we study the exceptional case.

Let $b_1$, $b_2$ be positive elements of $A$,
and let $G_1$, $G_2$ be the groups $G([0,b_1]; A, P)$ and $G([0,b_2]; A, P)$, respectively.
Assume $P$ is not cyclic.
Suppose first $G_1$ and $G_2$ are isomorphic
and let $\alpha \colon G_1 \iso G_2$ be an isomorphism.
Since conjugation by the reflection $t \mapsto b_2-t$ maps $G_2$ onto itself,
we may and shall assume that $\alpha$ is increasing.
By Supplement \ref{SupplementE11New} 
\index{Supplement \ref{SupplementE11New}!consequences}%
there exists then a finitary PL-homeomorphism 
$\varphi \colon ]0, b_1[ \, \iso \, ]0, b_2[$ 
that induces $\alpha$ by conjugation and has slopes in a coset $s \cdot P$.
The real $s$ is positive and $s \cdot A = A$ (by part (ii) of Theorem \ref{TheoremE10}).
\index{Theorem \ref{TheoremE10}!consequences}
It follows that $\varphi$ is the composition of the homothety $s \cdot \id$, 
mapping $[0,b_1]$ onto $[0, s \cdot b_1]$ 
and a PL-homeomorphism $f \in G(\R; A, P)$ 
which maps the interval $[0, s\cdot b_1]$ onto the interval $[0, b_2]$.
By Theorem \ref{TheoremA}, 
\index{Theorem \ref{TheoremA}!consequences}%
the difference $s \cdot b_1- b_2$ lies therefore in $IP \cdot A$.
Conversely, assume there exists a positive real number $s \in \Aut_o(A)$ 
so that $s \cdot b_1 - b_2 \in IP \cdot A$.
By Theorem \ref{TheoremA} 
\index{Theorem \ref{TheoremA}!consequences}%
there exists then $f \in G(\R; A, P)$ with $f([0, s \cdot b_1]) = [0, b_2]$
and conjugation by $f \circ (s \cdot \id)$ induces an isomorphisms of $G_1$ onto $G_2$.
We conclude that $G_1$ and $G_2$ are isomorphic if, and only if, 
there exists $s \in \Aut_o(A)$ so that $s \cdot b_1 - b_2 \in IP \cdot A$.
Now $IP \cdot A$ is invariant under multiplication by $\Aut_o(A)$
and so the abelian group $A/(IP\cdot A)$ has a canonical $\Aut_o(A)$-action.
The previous finding can therefore be summarized as follows:
\begin{corollary}
\label{CorollaryE12}
\index{Group G([a,c];A,P)@Group $G([a,c];A,P)$!isomorphisms}%
\index{Subgroup Auto(A)@Subgroup $\Aut_o(A)$!significance}%
\index{Group G([a,c];A,P)@Group $G([a,c];A,P)$!dependence on [a,c]@dependence on $[a,c]$}%
\index{Group P not cyclic@Group $P$ not cyclic!consequences}%
Let $\Aut_o(A)$ be the group $\{s \in \R^\times_{> 0}\mid s\cdot A = A\}$ 
and let $b_1$, $b_2$ be positive elements of $A$.
If $P$ is not cyclic, 
the groups $G([0,b_1];A,P)$ and $G([0,b_2];A,P)$ are isomorphic if, and only if,
$b_1 + IP \cdot A$ and $b_2 + IP \cdot A$ lie in the same orbit of $\Aut_o(A)$.
\end{corollary}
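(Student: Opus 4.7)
The plan is to exploit Supplement \ref{SupplementE11New} in one direction and Theorem \ref{TheoremA} in the other. Both directions rely crucially on the assumption that $P$ is not cyclic, since this permits one to invoke Theorem \ref{TheoremE10} to pin down the slope structure of any homeomorphism inducing an isomorphism between our two groups.

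For the \emph{only if} direction, suppose $\alpha \colon G([0,b_1];A,P) \iso G([0,b_2];A,P)$ is given. First I would reduce to the case where $\alpha$ is increasing by composing, if necessary, with conjugation by the reflection $t \mapsto b_2 - t$, which is an inner-like automorphism of $G([0,b_2];A,P)$. Supplement \ref{SupplementE11New} then yields a unique finitary PL-homeomorphism $\varphi \colon \,]0,b_1[\, \iso \,]0,b_2[$ that induces $\alpha$, and Theorem \ref{TheoremE10} guarantees that the slopes of $\varphi$ lie in a single coset $s \cdot P$ with $s \in \R^\times_{>0}$ satisfying $s \cdot A = A$, \ie{} $s \in \Aut_o(A)$. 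The next step is to factor $\varphi$ through the intermediate interval $[0, s b_1]$: write $\varphi = f \circ (s \cdot \id)$, where the homothety $s \cdot \id$ takes $[0, b_1]$ onto $[0, s b_1]$, and $f$ is a finitary PL-homeomorphism with slopes in $P$ carrying $[0, s b_1]$ onto $[0, b_2]$. The homeomorphism $f$ extends to an element of $G(\R; A, P)$, so Theorem \ref{TheoremA} forces $s b_1 - b_2 \in IP \cdot A$; hence $b_1$ and $b_2$ determine the same orbit of $\Aut_o(A)$ on $A/(IP \cdot A)$.

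For the \emph{if} direction, suppose $s \in \Aut_o(A)$ satisfies $s b_1 - b_2 \in IP \cdot A$. Theorem \ref{TheoremA} directly supplies an element $f \in G(\R; A, P)$ with $f([0, s b_1]) = [0, b_2]$, and then conjugation by the composite $f \circ (s \cdot \id)$ realizes an isomorphism $G([0,b_1];A,P) \iso G([0,b_2];A,P)$, since the homothety $s \cdot \id$ sends the triple $([0,b_1], A, P)$ to $([0, s b_1], s \cdot A, P) = ([0, s b_1], A, P)$.

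The main obstacle lies in Supplement \ref{SupplementE11New}: extracting a \emph{finitary} PL-homeomorphism $\varphi$ (and not merely a PL-homeomorphism with breaks potentially accumulating at the endpoints $0$ and $b_i$) is what legitimises the factorisation $\varphi = f \circ (s \cdot \id)$ with $f \in G(\R; A, P)$. The hypothesis that $P$ is not cyclic is indispensable here, as Theorem \ref{TheoremE07} shows that for cyclic $P$ any two bounded intervals $[0,b_1], [0,b_2]$ with $b_i \in A_{>0}$ yield isomorphic groups, the required homeomorphism being genuinely infinitary.
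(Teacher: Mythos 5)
Your proposal is correct and follows essentially the same route as the paper's own proof: reduction to an increasing isomorphism via the reflection $t \mapsto b_2 - t$, extraction of a finitary PL-homeomorphism with slopes in a coset $s \cdot P$ via Supplement \ref{SupplementE11New} and Theorem \ref{TheoremE10}, factorisation through the homothety $s \cdot \id$ and an element of $G(\R;A,P)$, and Theorem \ref{TheoremA} in both directions. Nothing to add.
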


An illustration of Corollary \ref{CorollaryE12} will be given
in section \ref{sssec:Automorphism-groups-some-examples-non-cyclic-P}.

%
\subsection{Automorphism groups}
\label{ssec:17.3}
In this section, 
we study the implications of Theorem \ref{TheoremE10} 
and its Supplement \ref{SupplementE11New} 
\index{Supplement \ref{SupplementE11New}!consequences}%
for the automorphism groups $\Aut G$  of the groups of $G = G(I;A,P)$,
\emph{assuming that the slope groups $P$ are non-cyclic}.
\label{notation:AutG}

%
\subsubsection{Basic result}
\label{sssec:Automorphism-Result-non-cyclic}
We begin by introducing an auxiliary homomorphism
that will help us in describing the outer automorphism groups.
Let $G$ be a subgroup of $G(I;A,P)$ 
that contains the derived group of $B(I;A,P)$. 
If $P$ is non-cyclic,
Theorem \ref{TheoremE10} applies
\index{Theorem \ref{TheoremE10}!consequences}
and shows that every automorphism $\alpha$ of $G$ is induced 
by conjugation by a unique PL-automorphism $\varphi_\alpha$ 
whose slopes lie in a coset $s \cdot P$ of $P$
with  $s$ belonging to the group
\begin{equation}
\label{eq:Definition-Aut(A)}
\Aut(A) = \{s\in \R^\times \mid s \cdot A= A\}
\end{equation}
The assignment $\alpha \mapsto \varphi_\alpha \mapsto s\cdot P$ 
yields then a homomorphism.
\begin{equation}
\label{eq:17.2}
\index{Homomorphism!07-eta@$\eta$}%
\index{Group Aut(A)@Group $\Aut(A)$!significance}%
\eta \colon \Aut G  \to \Aut(A)/P.
\end{equation}

Henceforth we shall assume that  $G$ is the group $G(I;A,P)$.
The kernel and the image of $\eta$ can then be described as follows.
\begin{corollary}
\label{CorollaryE13}
\index{Group Aut(A)@Group $\Aut(A)$!significance|(}%
\index{Subgroup Auto(A)@Subgroup $\Aut_o(A)$!significance}%
\index{Group G(R;A,P)@Group $G(\R;A,P)$!automorphism group|(}%
\index{Group G([0,infty[;A,P)@Group $G([0, \infty[\;;A,P)$!automorphism group|(}\index{Group G([a,c];A,P)@Group $G([a,c];A,P)$!automorphism group|(}%
\index{Automorphism group of G(R;A,P)@Automorphism group of $G(\R;A,P)$!description|(}%
\index{Automorphism group of G([0,infty[;A,P)@Automorphism group of $G([0, \infty[\;;A,P)$!description|(}%
\index{Automorphism group of G([a,c];A,P)@Automorphism group of $G([a,c];A,P)$!description|(}%
\index{Homomorphism!07-eta@$\eta$}%
\index{Subgroup Qb@Subgroup $Q_b$!significance}%
\index{Group P not cyclic@Group $P$ not cyclic!consequences}%
\index{Theorem \ref{TheoremE10}!consequences}%
\index{Supplement \ref{SupplementE11New}!consequences}%
Assume $P$ is non-cyclic and $G = G(I;A,P)$.  
Then the following statements hold.
\begin{enumerate}[(i)]
\item  If $I = \R$, or $I = [a,\infty[$ with $a \in A$, 
or $I = [a,c]$ with $(a, c) \in A^2$,
then $\ker \eta$ is the group of inner automorphisms 
and $\im \eta$ is the group of outer automorphisms of $G$.
\index{Outer automorphism group of!G(R;A,P)@$G(\R;A,P)$}%
\index{Outer automorphism group of!G([0,infty[;A,P)@$G([0, \infty[\;;A,P)$}%
\index{Outer automorphism group of!G([a,c];A,P)@$G([a,c];A,P)$}%
\index{Group P not cyclic@Group $P$ not cyclic!consequences}%
\item  If $I = \;]a, \infty[$, 
or $I =\;]a, c]$ with $c \in A$,
or $ I =\;]a, c[$,
then $\ker \eta$ consists of all automorphism $\alpha$
which are induced by PL-auto-homeo\-mor\-phisms $\varphi_\alpha$ of $\Int(I)$ 
with slopes in $P$ and vertices in $A^2$ 
and whose sets of singularities have the following property: 
\begin{enumerate}
\item if $I\, = \, ]a, \infty[$ or $I = \,]a, c]$ 
there are only finitely many singularities above every $b > a$;
\item if $I\, =\; ]a, c[$ the singularities accumulate only in the endpoints.
\end{enumerate}
\item if $I = \R$ or $I = \;]a,c[$ then $ \im\eta =\Aut(A)/P$; 
\item if $I = [a,\infty[\,$ with $a \in A$, 
or $I =\;]a,\infty[\,$,
or $I =\; ]a,c]$ with $c \in A$,
then $\im \eta$ is the subgroup $\Aut_o(A)/P$ of $\Aut(A)/P$ having index 2;
\item if $I$ equals $[a,c]$ with $(a, c) \in A^2$ 
then the image of $\eta$ may depend on $b = c-a$ and it is the subgroup
\begin{equation}
\label{eq:17.3}
Q_b/P \text{ with } Q_b = \{s\in \Aut(A) \mid (|s| -1)b \in IP\cdot A\}.
\end{equation}
\end{enumerate}
\index{Group G(R;A,P)@Group $G(\R;A,P)$!automorphism group|)}%
\index{Group G([0,infty[;A,P)@Group $G([0, \infty[\;;A,P)$!automorphism group|)}\index{Group G([a,c];A,P)@Group $G([a,c];A,P)$!automorphism group|)}%
\index{Theorem \ref{TheoremE10}!consequences}%
\index{Supplement \ref{SupplementE11New}!consequences}%
\index{Group Aut(A)@Group $\Aut(A)$!significance|)}%
\index{Group P not cyclic@Group $P$ not cyclic!consequences}%
\end{corollary}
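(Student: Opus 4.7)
The starting point is that, since $P$ is non-cyclic, Theorem \ref{TheoremE10} and Supplement \ref{SupplementE11New} together associate to every $\alpha \in \Aut G$ a unique PL-auto-homeomorphism $\varphi_\alpha$ of $\Int(I)$ inducing $\alpha$ by conjugation, with slopes in a single coset $s \cdot P$ for some $s \in \Aut(A)$, and vertices in $A^2$; the assignment $\alpha \mapsto s \cdot P$ is the homomorphism $\eta$ in question. My plan is to determine $\ker \eta$ and $\im \eta$ by analysing $\varphi_\alpha$ separately for each interval type in the statement.

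An automorphism lies in $\ker \eta$ precisely when every slope of $\varphi_\alpha$ belongs to $P \subseteq \R^\times_{>0}$, so $\varphi_\alpha$ is orientation-preserving. In case (i) the interval $I$ is of a type to which Supplement \ref{SupplementE11New} applies, whence $\varphi_\alpha$ is \emph{finitary} PL; extending it across the finite endpoints of $I$ (which it must fix) produces an element of $G(I;A,P) = G$, and $\alpha$ is therefore inner. Since every inner automorphism trivially lies in $\ker \eta$, this proves $\ker \eta = \Inn G$ and hence $\im \eta$ is the outer automorphism group in case (i). In case (ii), one or both endpoints of $I$ lie outside $A$ and Supplement \ref{SupplementE11New} is unavailable; here one knows from Theorem \ref{TheoremE10} only that the singularities of $\varphi_\alpha$ are finite in each compact subinterval of $\Int(I)$, and since the vertices lie in $A$ the singularities can accumulate only at the endpoints of $I$ not in $A$. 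The explicit descriptions (a) and (b) follow.

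For the image, the strategy is to exhibit, for each admissible coset $s \cdot P$, a PL-homeomorphism realising it, and to exclude the inadmissible ones. If $I = \R$, the homothety $t \mapsto s \cdot t$ with $s \in \Aut(A)$ normalises $G$ by conjugation and realises any coset of $\Aut(A)/P$, settling (iii) in this case. When $I$ has a single finite endpoint in $A$ that $\varphi_\alpha$ must fix, $\varphi_\alpha$ is necessarily increasing, so $s > 0$ and $\im \eta \subseteq \Aut_o(A)/P$; the affine map $t \mapsto s(t-a)+a$ with $s > 0$ furnishes the reverse inclusion when $a \in A$. For the remaining cases in (iii) and (iv) where at least one endpoint of $I$ is not in $A$, simple affine maps no longer preserve $I$; one builds instead infinitary PL-homeomorphisms with slopes in $s \cdot P$ whose breaks accumulate at the endpoints not in $A$, in the manner of the proof of Proposition \ref{PropositionC1}. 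For $I = \;]a,c[$ this can be done both for positive and for negative $s$ (in the latter case composing an increasing such map with the reflection $t \mapsto a+c-t$), yielding $\im \eta = \Aut(A)/P$.

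Case (v) is the most delicate and is where the principal obstacle lies. If $\varphi_\alpha$ maps $[a,c]$ onto itself with slopes in $s \cdot P$, a length computation along the lines of the proof of Theorem \ref{TheoremA}, summing $\pm b = \sum_i s p_i(b_i - b_{i-1})$ over the subdivision of the graph of $\varphi_\alpha$, forces $(|s|-1)b \in IP \cdot A$, i.e., $s \in Q_b$. Conversely, given $s \in Q_b$ with $s > 0$, the affine map $t \mapsto s(t-a)+a$ carries $[a,c]$ onto $[a, a+sb]$; since $sb - b = (s-1)b \in IP \cdot A$, Theorem \ref{TheoremA} provides a finitary element of $G(\R;A,P)$ sending $[a, a+sb]$ back to $[a,c]$, and the composition realises the coset $s \cdot P$ under $\eta$. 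The case $s < 0$ is reduced to $|s|$ by precomposing with a decreasing affine map. The main technical difficulty is the necessity direction: the length identity must be unwound carefully so as to extract the single factor $s$ from slopes ranging over the coset $s \cdot P$ and place the residual sum in $IP \cdot A$.
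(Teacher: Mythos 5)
Your overall strategy coincides with the paper's: extract from Theorem \ref{TheoremE10} and Supplement \ref{SupplementE11New} a PL-homeomorphism $\varphi_\alpha$ with slopes in a single coset $s\cdot P$, identify $\ker\eta$ with the automorphisms whose $\varphi_\alpha$ has slopes in $P$ itself, and realise the cosets in the image by homotheties, affine maps, infinitary PL-maps built as in the proof of Proposition \ref{PropositionC1}, and, for case (v), Theorem \ref{TheoremA}. Cases (i), (iii), (iv) and (v) are essentially the arguments of the paper; in (v) the paper sidesteps the length computation you flag as the main difficulty by writing $\varphi_\alpha$ as the composition of the homothety $t \mapsto s\cdot t$ with a finitary element $f$ of $G(\R;A,P)$ carrying $[sa,sc]$ onto $[a,c]$, so that the necessity of $(s-1)b \in IP\cdot A$ is exactly the ``only if'' half of Theorem \ref{TheoremA} applied to $f$.

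There is, however, a genuine gap in your case (ii). You justify the description of the break set by asserting that, since the vertices of $\varphi_\alpha$ lie in $A^2$, its singularities can accumulate only at endpoints of $I$ not belonging to $A$. That inference is a non-sequitur, and for $I = \,]a,\infty[$ the conclusion (ii)(a) does not follow from Theorem \ref{TheoremE10} at all: the theorem only yields that the break set is discrete in $\Int(I) = \,]a,\infty[$\,, which is perfectly compatible with infinitely many breaks marching off to $+\infty$ --- and that would contradict ``only finitely many singularities above every $b>a$''. To exclude this the paper re-runs Part 2 of the proof of Supplement \ref{SupplementE11New}: because $\alpha$ preserves the dense group of translations inside $\im \rho$, the homeomorphism $\varphi_\alpha$ must be affine on some half line $[t_*,\infty[$\,. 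The same issue arises at the endpoint $c \in A$ of $I = \,]a,c]$: discreteness in $]a,c[$ does not preclude accumulation at $c$, and one needs the Part 1 argument (reflected) showing that $\varphi_\alpha$ is affine near an endpoint of $I$ lying in $A$. Statement (ii)(b) for $I=\,]a,c[$ is the one sub-case that really is immediate from discreteness alone; for (ii)(a) the two additional inputs above are indispensable.
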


\begin{proof}
(i)  is a direct consequence of Theorem \ref{TheoremE10}
and Supplement \ref{SupplementE11New} 
(and of the definition of $\ker \eta$).
Now to (ii).  
Suppose $\alpha$ is an automorphism of $G$.
By Theorem \ref{TheoremE10} 
it is then induced by a unique  PL-auto-homeomorphism $\varphi_1$ 
with slopes in $P$ and vertices in $A^2$;
in addition, 
every compact subinterval contains only finitely many singularities of $\varphi$.
It follows, in particular, 
that statement (ii)a) holds for $I = \;]a, c]$  
and that (ii)b) is valid for $I = \;]a, c[$.
Consider now an interval of the form $]a, \infty[$.
Then the argument carried out in Part 2 of the proof of Supplement \ref{SupplementE11New} applies 
and shows 
that $\varphi_\alpha$ is affine on a subinterval of the form $[t_*, \infty]$,
and so statement a) holds.

So far we know 
that every automorphism of $\ker \eta$ is induced by an auto-homeo\-mor\-phism 
with properties as described in statement (ii).
Conversely, these properties ensure 
that conjugation by such a PL-homeomorphism induces an automorphism of $G(I;A,P)$ that lies in  $\ker \eta$.
\smallskip

(iii) Assume first that $I = \R$. 
Then each element $s \in \Aut(A)$ gives rise to a homothety  $\vartheta_s \colon t\mapsto s\cdot t$ 
which normalizes $G(\R;A,P)$ and also $B(\R;A,P)$;
let $\alpha_s$ the induced automorphism of $G(\R;A, P)$.
Clearly, $\eta(\alpha_s) = s \cdot P$.

Consider now an open bounded interval $I =\; ]a,c[\,$.
Then the group $G(I; A, P)$ coincides with $ B(I;A,P)$ (by the definition of $G(I;A,P)$; 
see section \ref{ssec:16.4}).
\index{Proposition \ref{PropositionC1}!consequences}%
Next, the proof of Proposition \ref{PropositionC1} provides one
with an infinitary PL-homeomorphism $\varphi \colon \R \iso I$, 
with slopes in $P$, that induces,  by conjugation,  
the isomorphism  $\alpha_\varphi \colon B(\R;A, P) \iso  B(I;A,P)$. 
The composition $\psi = \varphi \circ \vartheta_s \circ \varphi^{-1}$ 
is then an auto-homeo\-mor\-phism of $I$ 
that is PL and has slopes in the coset $s \cdot P$.
Conjugation by $\psi$, finally,  induces the automorphism 
$\beta_s = \act{\alpha_\varphi} \alpha_s$ of the group $B(I;A,P)$ 
and $\eta(\beta_s) = s \cdot P$.
\smallskip

(iv)  If $I$ is either $[0,\infty[$ or $]0,\infty[$,
the images of $\sigma_-$ and  of $\rho$ are not isomorphic 
whence each isomorphism of $G = G(I;A,P)$ must be increasing.  
If $I = ]a, c]$ with $c \in A$ 
the image of $\sigma_-$ is trivial and that of $\sigma_+$ is $P$, 
and so every automorphism is increasing.
It follows in all these cases 
that the image of $\eta$ is contained in the subgroup  $\Aut_o(A)/P$
represented by the positive elements in $\Aut(A)$.

Fix now $s \in \Aut_o(A)$. 
If $I$ is either $[a, \infty[$ or $]a, \infty[$, 
there exists an affine map with slope $s$ that fixes $a$ and maps $I$ onto itself.
The induced automorphism then proves that $s \cdot P \in \im \eta$.
If, on the other hand, 
$I$ is the half-open interval $]a,c]$ 
pick a positive element $a_0 \in A$
and construct strictly decreasing sequences 
$(c_j \mid j \in \N)$ and $(c'_j \mid j \in \N)$ 
with the following properties:
\begin{itemize}
\item $c_0 =  c'_0 = c$ and $\lim_{j \to \infty} c_j= \lim_{j \to \infty} c'_j = a$;
\item $c_j - c_{j+1} \in P \cdot a_0$ 
and $c'_j - c'_{j+1} \in s \cdot P \cdot a_0$ for every $j \in \N$.
\end{itemize}
The affine interpolation of the sequence of points $(c_j, c'_j) \in A^2$ 
is then a PL-homeomorphism with slopes in $s \cdot P$ 
and it induces by conjugation an automorphism of the group $G(\,]a,c];A,P)$.
\smallskip

(v) Assume, finally, that  $I = [a,c]$. 
The affine map $t\mapsto a + c -t$ maps $I$ onto itself 
and induces by conjugation an order reserving automorphism; 
so it suffices to consider increasing automorphisms $\alpha$ of $G([a,c];A,P)$.  
If $\alpha$ is induced by $\varphi$, 
it follows, as in the proof of Corollary \ref{CorollaryE12}, 
that $\varphi$ is the composition of a homothety 
$\vartheta_s \colon \,]a, c[\, \iso\, ]s \cdot a,s \cdot c[$, 
taking $t$ to $s\cdot t$, 
and the restriction of an element $f$ in $G(\R;A,P)$ with $f([s \cdot a,s \cdot c]) = [a,c]$. 
By Theorem \ref{TheoremA} the difference 
\index{Theorem \ref{TheoremA}!consequences}%
$s \cdot (c-a) - (c-a) = (s - 1 )b$ lies therefore in $IP \cdot A$.  
The preceding argument can be reversed
and so we have established assertion (v).
\end{proof}

Corollary \ref{CorollaryE13} details the outer automorphism groups 
of the groups $G(I;A,P)$ with $P$ non-cyclic and $I$ the line, a closed half line, 
or an interval $[a, c]$ with $(a,c) \in A^2$.
The next result describes the full automorphism groups of these groups. 
\begin{corollary}
\label{crl:Explicit-description-of-Aut(G)}
\index{Subgroup Auto(A)@Subgroup $\Aut_o(A)$!significance}%
\index{Group G(R;A,P)@Group $G(\R;A,P)$!automorphism group}%
\index{Group G([0,infty[;A,P)@Group $G([0, \infty[\;;A,P)$!automorphism group}%
\index{Group G([a,c];A,P)@Group $G([a,c];A,P)$!automorphism group}%
\index{Automorphism group of G(R;A,P)@Automorphism group of $G(\R;A,P)$!description|)}%
\index{Automorphism group of G(R;A,P)@Automorphism group of $G(\R;A,P)$!properties}%
\index{Automorphism group of G([0,infty[;A,P)@Automorphism group of $G([0, \infty[\;;A,P)$!description|)}%
\index{Automorphism group of G([0,infty[;A,P)@Automorphism group of $G([0, \infty[\;;A,P)$!properties}%
\index{Automorphism group of G([a,c];A,P)@Automorphism group of $G([a,c];A,P)$!description|)}%
\index{Automorphism group of G([a,c];A,P)@Automorphism group of $G([a,c];A,P)$!properties}%
\index{Group Auto G(I;A,P)@Group $\Aut_o G(I;A,P)$!properties}%
\index{Theorem \ref{TheoremE04}!consequences}%
\index{Supplement \ref{SupplementE11New}!consequences}%
Suppose $P$ is non-cyclic 
and $I$ is one of the closed intervals 
$\R$, $[0, \infty[$ or $[a, c]$ with $(a,c) \in A^2$.
Set $G = G(I;A,P)$ 
and let $\Aut_o G $ be the group consisting of all increasing automorphisms of $G$. 
Then $\Aut_o G $ has index at most 2 in $\Aut G$ 
and it is isomorphic to the subgroup of $G(I;A, \Aut_o(A))$ 
consisting of all finitary PL-homeomorphisms $f$  
with slopes in a single coset $s_f \cdot P$ of $\Aut_o(A)/P$.
It follows, in particular, 
that $\Aut_o G $ is locally indicable 
and that it contains no free subgroups of rank 2.
\end{corollary}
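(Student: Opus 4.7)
The plan is to leverage Theorem \ref{TheoremE10} and Supplement \ref{SupplementE11New} so as to identify each $\alpha \in \Aut_o G$ with a concrete PL self-homeo\-mor\-phism of $I$, and then to invoke two soft closure properties of $\PL_o(\R)$. First I would dispose of the index-two assertion. By Theorem \ref{TheoremE04}, every $\alpha \in \Aut G$ is induced by conjugation by a unique homeomorphism of $\Int(I)$, which, being a self-homeomorphism of an interval, is monotone; hence every automorphism is either increasing or decreasing. If decreasing automorphisms exist at all, left-multiplication by any one of them identifies $\Aut_o G$ with the coset of decreasing automorphisms, so $[\Aut G : \Aut_o G] \leq 2$.

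Next, for the isomorphism claimed in (2), fix $\alpha \in \Aut_o G$. Applying Theorem \ref{TheoremE10} with $(\bar I, \bar A, \bar P) = (I, A, P)$, the inducing homeomorphism $\varphi_\alpha \colon \Int(I) \iso \Int(I)$ is PL with slopes in a single coset $s_\alpha \cdot P$ for some $s_\alpha \in \Aut_o(A)$ (positive because $\alpha$ is increasing, and lying in $\Aut_o(A)$ because $s_\alpha \cdot A = A$), with breaks in $A$ and a discrete singularity set. Supplement \ref{SupplementE11New} then strengthens this to: $\varphi_\alpha$ is \emph{finitary} PL. Because the endpoints of $I$ (if any) lie in $A$ and $\varphi_\alpha$ is an increasing finitary PL-bijection of $\Int(I)$, it extends by continuity to a PL self-bijection $\tilde\varphi_\alpha$ of $I$ fixing the endpoints, hence to an element of $G(I;A,\Aut_o(A))$ whose slopes lie entirely in the coset $s_\alpha \cdot P$. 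The assignment $\alpha \mapsto \tilde\varphi_\alpha$ is plainly a homomorphism into the subgroup $H$ described in the statement, and it is injective by the uniqueness part of Theorem \ref{TheoremE04} (or directly by Lemma \ref{LemmaE2}).

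For surjectivity, given $f \in H$ with slopes in $s \cdot P$, I would verify that conjugation by $f$ sends $G$ to $G$. For $g \in G$ the chain rule gives
\[
(fgf^{-1})'(t) \;=\; \frac{f'(g(f^{-1}(t)))}{f'(f^{-1}(t))} \cdot g'(f^{-1}(t)) \;\in\; (sP)(sP)^{-1} \cdot P \;=\; P,
\]
the breaks of $fgf^{-1}$ lie in $A$ (using $f(A) = A$), and $\supp(fgf^{-1}) = f(\supp g) \subseteq f(I) = I$, so $fgf^{-1} \in G$. The same argument applied to $f^{-1}$ yields $fGf^{-1} = G$, so conjugation by $f$ defines an increasing automorphism $\alpha_f$ whose associated $\tilde\varphi_{\alpha_f}$ is, by the uniqueness in Theorem \ref{TheoremE04}, equal to $f$.

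Claims (3) and (4) then follow formally: $\Aut_o G$ embeds via (2) into $G(I;A,\Aut_o(A))$, which is a subgroup of $\PL_o(\R) = G(\R;\R,\R^\times_{>0})$. The latter is locally indicable by Remark \ref{remarks:Definitions-lambda-and-rho}(i) and contains no non-abelian free subgroup by Brin and Squier \cite[Section 3]{BrSq85}; both properties are inherited by arbitrary subgroups. The only real point of care in the whole argument is the verification that conjugation by elements whose slopes live in a non-trivial coset of $P$ preserves slopes inside $P$ and breaks inside $A$, but this is exactly the chain-rule computation displayed above, and so constitutes no genuine obstacle.
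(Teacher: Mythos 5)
Your proposal is correct and follows essentially the same route as the paper's proof: Theorem \ref{TheoremE04} together with Supplement \ref{SupplementE11New} identifies each increasing automorphism with a finitary PL-homeomorphism whose slopes lie in a single coset of $\Aut_o(A)/P$, the chain-rule computation establishes the converse, and the final two claims follow by embedding $\Aut_o G$ into $G(\R;\R,\R^\times_{>0})$ and quoting Remark \ref{remarks:Definitions-lambda-and-rho}(i) and Brin--Squier. The only difference is one of detail, not of substance.
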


\begin{proof}
Each automorphism $\alpha$ of $G$ is induced 
by conjugation by a \emph{unique} homeomorphism $\varphi_\alpha$ of $\Int(I)$
(see Theorem \ref{TheoremE04}).
This homeomorphism is a finitary PL-homeomorphism 
with slopes in a coset of $\Aut(A)/P$ and maps $A \cap \Int(I)$ onto itself
(by Supplement \ref{SupplementE11New}).
\index{Supplement \ref{SupplementE11New}!consequences}%
If $\alpha$ is increasing, 
the slopes of $\varphi_\alpha$ are in $\Aut_o(A)/P$ and so $\varphi_\alpha $
is an element of $\tilde{G} = G(I;A, \Aut_o(A))$.
Conversely, 
suppose $\tilde{g} \in \tilde{G}$ is an element whose slopes lie in a single coset $s \cdot P$. 
Then conjugation by $\tilde{g}$ maps $G$ onto the subgroup of all finitary PL-homeomorphisms 
which map $A \cap \Int(I) $ onto $s\cdot A \cap \Int(I)$ and have slopes in $P$;
since $s \cdot A = A$ the equality $\act{\tilde{g}} G = G$ holds.

The previous reasoning implies, in particular, 
that $\Aut_o G $ is isomorphic to a subgroup of $G(\R;\R, \R^\times_{>0})$ 
hence it is locally indicable by Remark \ref{remarks:Definitions-lambda-and-rho}(i)
and contains no non-abelian free subgroups by \cite[Thm.\,3.1]{BrSq85}.
\end{proof}
\index{Group G(R;A,P)@Group $G(\R;A,P)$!automorphism group}%
\index{Group G([0,infty[;A,P)@Group $G([0, \infty[\;;A,P)$!automorphism group}%
\index{Group G([a,c];A,P)@Group $G([a,c];A,P)$!automorphism group}%
\index{Local indicability!examples}%
%
\subsubsection{Some examples}
\label{sssec:Automorphism-groups-some-examples-non-cyclic-P}
\index{Group Aut(A)@Group $\Aut(A)$!examples|(}%
%
(i) We begin with specimens of \emph{automorphism groups $\Aut(A)$ 
of modules $A$}.
The group $\Aut(A)$ can be far larger than $P$,
as is brought home by the following construction.
Let $Q$ be a subgroup of $\R^\times_{>0}$ which contains the given group $P$.
Set $A =\Z[Q]$ and view $A$ as a $\Z[P]$-module.
Then $\Aut_o (A)$ contains $Q$, but it may actually be larger.
Specific examples of this construction are obtained
by choosing  $Q = \R^\times_{>0}$ and $A = \Z[Q] = \R$
or by selecting $Q = \Q^\times_{>0}$ and $A = \Z[Q] = \Q$.

In the previous construction,
$A$ is the additive group of a the ring generated by a super group $Q$ of $P$
and so $A$ is typically a non-cyclic $\Z[P]$-module.
But there exist also examples with $\Aut_o(A)$ larger than $P$ 
where $A$ is a cyclic $\Z[P]$-module.  
Consider, for instance, a subring $R \subset \R$ of the form $\Z[P]$
and let $A$ be the additive group of $R$.
The automorphism group $\Aut(A)$ is then nothing 
but the \emph{group of units $U(R)$ of the ring $R$}.
This group of units can be larger than $P$,
as is shown by the examples that follow.

a)  Let $p_1$, \ldots, $p_k$  be integers greater than 1.
Define $P$ to be the group generated by $p_1$, \ldots, $p_k$
and set $R = \Z[P]$.
Then $R$ is the ring $\Z[1/(p_1 \cdots p_k)]$.
Let $q_1$, \ldots, $q_\ell$ be the list of prime numbers 
dividing the product $p_1 \cdots p_k$.
We claim that
\begin{equation}
\label{eq:Determination-Aut(A)-1}
\Aut_o(A) = U(R) \cap \R_{>0} =\gp(q_1,\ldots, q_\ell).
\end{equation}
Indeed, 
suppose that $s  \in \Aut(A)$. 
Then $s \cdot 1$ and $s^{-1} \cdot 1$ both belong to $A$ 
and so there exist positive integers  $n_+$, $n_-$ and exponents $e_+$,  $e_-$ 
such that
\[
s = n_+/ (q_1 \cdots q_\ell)^{e_+} 
\quad \text{and} \quad 
s^{-1} = n_-/ (q_1 \cdots q_\ell)^{e_-}.
\]
The equation $s \cdot s^{-1} = 1$ gives next
that $n_+ \cdot n_- = (q_1 \cdots q_\ell) ^{e_+ e_-}$
and so $n_+$ is a divisor of a power of $(q_1 \cdots q_\ell)$;
the uniqueness of the prime factorization in $\N$ then implies
that $n_+$ is a product of powers of the various primes $q_j$ 
and so $s \in \gp(q_1, \ldots, q_\ell)$.
The preceding argument shows that $\Aut_o(A) \subseteq \gp(p_1, \ldots,p_\ell)$;
the reverse inclusion is clear.  

b) In the preceding example, 
the ring $R = \Z[P]$ is a localization of $\Z$ 
and so $A$ is infinitely generated as as an abelian group
(recall that $P$ is assumed to be non-trivial). 
Subrings $R$ of $\R$
whose additive group is finitely generated, hence free abelian,
are rings made up of algebraic integers. 
If $P$ is generated by finitely many algebraic integers that are units, 
the additive group of $R = \Z[P]$ is finitely generated
and $R_{\Q} = \Q[P]$ is an algebraic number field;
let $\OO$ be its ring of integers.
Dirichlet's Unit Theorem
(see, \eg{}\cite[Theorem 13.1.1]{AlWi04}) then guarantees 
that the group of units of $\OO$ 
is the direct product of a free abelian group of finite rank  and the cyclic group $\{1,-1\}$.
It follows that the group of units $U(R)$ of $R$, 
being a subgroup of $U(\OO)$ and containing $-1$,
has the same form.

Here is an explicit example.
Let $K$ be the algebraic number field $\Q(\sqrt{2}, \sqrt{3}\,)$. 
Then $K$  is generated by $p_1 = \sqrt{3} + \sqrt{2}$ 
and this number is a root of the polynomial $X^4 - 10X^2 + 1$,
which is irreducible in $\Q[X]$
(see, \eg{}\cite[Example 5.6.2]{AlWi04}).
As this polynomial has four real roots,
namely $p_1$, $-p_1$ and $\pm(\sqrt{3}- \sqrt{2})$,
the field $K$ is totally real 
and so the $\Z$-rank of  $U(\OO)$ is 3 by  Dirichlet's Unit Theorem.

Consider now the subring $ R = \Z[\sqrt{2}, \sqrt{3}\,]$ of $\OO$.
It is easy to find 3 units in $R$
that have a good chance of generating a free abelian group of rank 3.
namely
\begin{align*}
p_1 &= \sqrt{3} +\sqrt{2} &\text{with } p_1^{-1} &= \sqrt{3} -\sqrt{2}, \\
p_2 &= \sqrt{2} +1    &\text{with } p_2^{-1} &= \sqrt{2} -1, \\
p_3 &=2 +\sqrt{3}   &\text{with } p_3^{-1} &= 2 -\sqrt{3}.
\end{align*}
Note that $p_1 + 3 = p_2 + p_3$;
if $P$ is a group generated by two of these numbers,
the ring $R =\Z[P]$ will therefore contain the remaining one.

We finally show that $p_1$, $p_2$, $p_3$ generate a free abelian group of rank 3.
Note first that all three numbers are greater than 1 and so of infinite order.
To show that $\gp(p_2, p_3)$ has rank 2,  it suffices to verify 
that no positive power of $p_2$ can be a positive power of $p_3$;
this follows from the facts 
that $p_2^{m_2}$ is a linear combination of 1 and $\sqrt{2}$ with positive coefficients,
from the analogous statement for $p_3^{m_3}$, 
and from the linear independence of 1, $\sqrt{2}$ and $\sqrt{3}$. 
To establish that $\gp(p_1, p_2, p_3)$ has rank 3 it suffices now to verify 
that no positive power $p_1^{m_1}$ of $p_1$
is a product of the form $a_2 \cdot a_3$ with $a_2 \in \Z[\sqrt{2}\,]$ 
and $a_3 \in \Z[\sqrt{3}\,]$.
As there is no harm in assuming that $m_1$ is even, 
we shall do so; then $p_1^{m_1} \in \Z[\sqrt{6}\,]$
and so the claim follows by a straightforward calculation 
based on the fact 
that the numbers 1, $\sqrt{2}$, $\sqrt{3}$ and $\sqrt{6}$ are linearly independent over $\Q$.
\index{Group Aut(A)@Group $\Aut(A)$!examples|)}%
\smallskip

(ii) We continue with \emph{specimens of functions $b \mapsto Q_b$} 
that crop up in statement (v) of Corollary \ref{CorollaryE13}.
We start with two observations.
Let $b$ be a positive element of $A$.
The group $Q_b$ is then given by the formula
\[
Q_b = \{s\in \Aut(A) \mid (|s| -1)b \in IP\cdot A\}
\]
It is the direct product $\{1,-1\} \times (Q_b \, \cap\, \R^\times_{>0})$;
in order to determine $Q_b(A)$  it suffices thus to find the positive elements of $Q_b(A)$.
Secondly, if $b_1$, $b_2$ are two elements  in $A_{>0}$ 
which are congruent \emph{modulo} $IP \cdot A$ 
the groups $Q_{b_1}$ and $Q_{b_2}$ are equal.
So we need only find the groups $Q_b$ 
for $b$ running through a system of positive representatives of $A/(IP \cdot A)$.

Consider now the case 
where $P$ is generated by positive integers $p_1$, \ldots, $p_k$ 
and set $R = \Z[P]$.
By the example in part (i) a) above,
the group $\Aut_o(R)$ is then
generated by the positive prime divisors $q_1$, \ldots, $q_\ell$ 
of the product $p_1 \cdots p_k$.

The ideal $I[P] = IP \cdot R$ is generated by the integers $p_1-1$, \ldots, $p_k-1$; 
it is actually the principal ideal generated by the greatest common divisor $d$ of these numbers.
The quotient module $A/(IP \cdot A)$ is therefore represented 
by the list of integers $\{1,2, \ldots, d\}$.
If $b$ is one of them,
a fraction $s = d/n \in \gp(q_1, \ldots, q_\ell)$ lies in $Q_b$ if, 
and only if,
the congruence
\begin{equation}
\label{eq:Definition-of-b-example1}
(d-n) \cdot b \equiv 0 \pmod{d} 
\end{equation}
is satisfied. 

Here is a numerical example.
Choose $p_1 = 65 = 5 \cdot 13$ and $p_2= 97$.
Then $P = \gp(65, 97)$, $\Aut_o(A) = \gp(5, 13, 97)$, and $d = \gcd\{65-1, 97-1 \} = 32$.
The cyclic group generated by 5 is a complement of $P$ in $\Aut_o(A)$;
in order to find $Q_b$ is suffices thus to find the minimal positive exponent $k$
with  $(5^k - 1) \cdot b  \equiv 0 \pmod{32}$. 
Since $5^k - 1$ and 32 are both multiples of 4, the preceding congruence 
is equivalent to the congruence
\[
(1 + 5 + \cdots + 5^{k-1}) \cdot b \equiv 0 \pmod{8}.
\]
So we need only find the minimal exponents $k_{\min}$ for the 8 values 1,  2,  \ldots, 8 of $b$.
The results are collected in the following table.
\begin{center}
\begin{tabular}{r|ccccccccc}
$b$&
1&2&3&4&5&6&7&8\\
\hline
$k_{\min}$&
8&4& 8&2&8&4&8&1\\
\end{tabular}
\end{center}
\smallskip

(iii) We conclude with an illustration of Corollary  \ref{CorollaryE12}.
As in the previous part(ii),
we consider $P = \gp(65, 97)$ and $A = \Z[P]$.
Then  $\gcd\{65-1, 97-1 \} = 32$ 
and so the inclusion of $\Z$ into $A$ induces an isomorphism $\Z/32\Z \iso A / (IP \cdot A)$.
The group $\Aut_o(A)$ is generated by $\{5, 13, 97\}$ or, alternatively,  by $\{5, 65, 97\}$.
Since the numbers 65 and 97 act on $\Z/32\Z$ by the identity, 
the isomorphism classes of the groups $G_b = G([0, b]; \Z[1/65 \cdot 97], \gp(65,97))$ 
correspond to the orbits of $\gp(5)$ on the abelian group $\Z/32\Z$.
There are 10 orbits,
represented by the subsets
\begin{gather*}
\{1,5,9,13,17,21,25, 29\}, \quad \{3,7,11,15, 19, 23, 27, 31\}\\
\{2,10, 18,26\},\quad\{6,14,22,30\}, \\
\{4,20\},\quad \{12, 28\},\quad \{8\}, \quad \{16\}, \quad \{24\}, \quad \{32\}.
\end{gather*}
%
%
\section{Isomorphisms of groups with cyclic slope groups}
\label{sec:18}
%
If $P$ is cyclic 
the homeomorphisms $\varphi \colon \Int(I) \iso \Int(\bar I)$
that induce isomorphisms $\alpha \colon G \iso \bar G$ are of a more varied nature 
than in the non-cyclic case, 
studied in Section \ref{sec:17}.  
On the one hand, 
$\varphi$ can be an infinitary PL-homeomorphism even 
when $G = G([0,b];A,P)$ and $\bar G = G([0,\bar b];A,P)$, 
as has already been brought to light by the proof of Theorem \ref{TheoremE07}. 
In addition,
there exists embeddings
\begin{equation*}
\index{Homomorphism!12-mu(1,b)@$\mu_{1,b}$}%
\index{Homomorphism!12-mu(2,b)@$\mu_{2,b}$}%
\xymatrix{G([0,b];A,P) \ar@{>->}[r]^-{\mu_{1,b}} &G([0,\infty[\;;A,P) \ar@{>->}[r]^-{\mu_{2,b}} &G(\R;A,P)}
\end{equation*}
induced by infinitary PL-homeomorphisms, 
which map the corresponding subgroups of bounded homeomorphisms isomorphically onto each other;
they will be constructed in section \ref{ssec:18.2}.
Moreover,
$G([0,b];A,P)$ admits injective endomorphisms $\mu_m$ and $\nu_n$,
whose images are subgroups that contain the bounded subgroup $B([0,b];A,P)$
having finite indices $m$ and $n$, respectively, in $G/[0,b];A,P)$
see section \ref{sssec:18.5}. 
\smallskip

Prior to giving details about these surprising isomorphisms, 
we deal in section \ref{ssec:18.1} with a case 
where the situation is much as in Section \ref{sec:17}.

\subsection{Unbounded intervals}
\label{ssec:18.1}
If the intervals are unbounded, 
most of the conclusions of Theorem \ref{TheoremE10} and Supplement \ref{SupplementE11New}
continue to hold for cyclic $P$:
\begin{thm}
\label{TheoremE14}
\index{Automorphisms of G(R;A,P)@Automorphisms of $G(\R;A,P)$!properties}%
\index{Group G(R;A,P)@Group $G(\R;A,P)$!isomorphisms}%
\index{Group G(R;A,P)@Group $G(\R;A,P)$!endomorphisms}%
\index{Automorphisms of G([0,infty[;A,P)@Automorphisms of $G([0, \infty[\;;A,P)$!properties}%
\index{Group G([0,infty[;A,P)@Group $G([0, \infty[\;;A,P)$!isomorphisms}%
\index{Group G([0,infty[;A,P)@Group $G([0, \infty[\;;A,P)$!endomorphisms}%
\index{Representation Theorem for isomorphisms!supplements}%
\index{Proposition \ref{PropositionE9}!consequences}%
\index{Supplement \ref{SupplementE11New}!variation}%
Assume $P$ is cyclic
and $\alpha \colon G(I;A, P) \iso \bar{G}$
is an isomorphism onto a subgroup $\bar{G}$ of $G(\bar{I}; \bar{A}, \bar{P})$
which contains the derived group of $B(\bar{I}; \bar{A}, \bar{P})$.
Let $\varphi \colon \Int(I) \iso \Int(\bar{I})$ be the unique homeomorphism inducing $\alpha$.
\begin{enumerate}[(i)]
\item If $I = \R$ 
then $\bar{I} = \R$ and $\varphi$ is a finitary PL-homeomorphism 
with slopes in a coset $s \cdot P$ and singularities in $A$.
Moreover, $P = \bar{P}$, $\bar A = s \cdot A$ and $\bar{G} = G(\R; \bar A ,P)$. 
\item If $I = [0, \infty[$ or $I = \,]0 ,\infty[$, and  if $\bar{I} = I$,
then $\varphi$ is an increasing  PL-homeomor\-phism 
with slopes in a coset $s \cdot P$ and singularities in $A$.
Moreover, $P = \bar{P}$, $\bar A = s \cdot A$ and for each $k \in \N_{>0}$,
the interval $[1/k, \infty[$ contains only finitely many singularities of $\varphi$
and $\bar{G}$ contains 
$\ker (\bar{\sigma}_- \colon  G(\bar{I}; \bar{A}, \bar{P}) \to \bar{P})$.
\end{enumerate}
\end{thm}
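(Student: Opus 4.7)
Part~(i) is already a consequence of Corollary~\ref{crl:Isomorphism-I-line}, whose proof, as noted in the remark preceding it, uses only Part~2 of the proof of Supplement~\ref{SupplementE11New}; that part never invokes the non-cyclicity of~$P$, since it relies only on the density of the $\Z[P]$-submodule $IP\cdot A$ in $\R_{\add}$. I would therefore dispatch part~(i) by simple citation.

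For part~(ii), the first step would be to show that $\alpha$, and hence $\varphi$, is increasing. For $I=[0,\infty[\,$ the image of $\sigma_-$ is the cyclic group $P$ while the image of $\rho$ is $\Aff(IP\cdot A,P)$, which is metabelian and non-abelian; for $I=\,]0,\infty[\,$ the image of $\sigma_-$ is trivial while $\rho$ still has non-trivial image. In either case the two images are non-isomorphic, so Corollary~\ref{CorollaryE5}(iii) prevents $\alpha$ from swapping $\ker\lambda$ with $\ker\rho$, and since the reflection $t\mapsto -t$ does not preserve~$I$ this forces $\varphi$ to be order-preserving.

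The heart of the proof is the second step: extract an affine stretch of $\varphi$ near~$+\infty$ by transcribing Part~2 of the proof of Supplement~\ref{SupplementE11New} essentially verbatim. The derived group $(\im\rho)'$ consists of translations whose amplitudes form the non-trivial, hence dense, $\Z[P]$-submodule $IP^2\cdot A$ of $\R_{\add}$; conjugation by $\varphi$ carries this subgroup isomorphically and monotonically onto another dense subgroup of $\R_{\add}$ and therefore extends to a continuous order-preserving additive bijection $\R\to\R$, necessarily multiplication by some $s>0$. The translation-by-$b_\ast$ trick then yields $\varphi(t+b)=\varphi(t)+s\cdot b$ for every $b\in(IP^2\cdot A)_{>0}$ and every $t\geq t_\ast$, and density of $IP^2\cdot A$ together with continuity of $\varphi$ upgrades this to $\varphi$ being affine with slope~$s$ on $[t_\ast,\infty[\,$. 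Cyclicity of $P$ plays no role here, which is why the argument transfers. At any point $a_0\in A\cap[t_\ast,\infty[\,$, then, $\varphi$ has a non-zero one-sided derivative~$s$, and Proposition~\ref{PropositionE9} delivers in one stroke that $\bar P=P$, $\bar A=s\cdot A$, that $\varphi$ is PL with slopes in $s\cdot P$ and singularities in~$A$, and that every compact subinterval of $\Int(I)=\,]0,\infty[\,$ contains only finitely many singularities. The bound on $[1/k,\infty[\,$ follows by splitting into the compact piece $[1/k,t_\ast]$ and the affine piece $[t_\ast,\infty[\,$. For the final inclusion $\bar G\supseteq\ker\bar\sigma_-$, any $\bar g\in\ker\bar\sigma_-$ has $\supp\bar g\subseteq[b,\infty[\,$ for some $b>0$, whence $\act{\varphi^{-1}}\bar g$ is supported in $[\varphi^{-1}(b),\infty[\,\subset\,]0,\infty[\,$ and inherits only finitely many singularities from~$\varphi^{-1}$ on that half-line, and so is a finitary PL-homeomorphism with slopes in~$P$ and singularities in~$A$ lying in $\ker\sigma_-\subseteq G$; hence $\bar g=\alpha(\act{\varphi^{-1}}\bar g)\in\bar G$.

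The genuine obstacle is the behaviour of $\varphi$ near the left endpoint~$0$: because $P$ is cyclic, the Part~1 argument of Supplement~\ref{SupplementE11New} is unavailable there, since one has no density of $P\cap\,]0,1]$ in~$]0,1]$, and indeed the theorem deliberately allows singularities of $\varphi$ to accumulate at~$0$. All control must therefore be obtained from the $+\infty$ end, which is why the affine-stretch construction is the keystone of the whole argument: were it to fail, neither Proposition~\ref{PropositionE9} nor the $\ker\bar\sigma_-$-inclusion would be within reach.
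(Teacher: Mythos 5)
Your proof is correct and follows essentially the same route as the paper's: part (i) by citing Corollary \ref{crl:Isomorphism-I-line}, and part (ii) by ruling out a decreasing $\varphi$ via Corollary \ref{CorollaryE5}, transplanting Part 2 of the proof of Supplement \ref{SupplementE11New} to get an affine stretch near $+\infty$, and then invoking Proposition \ref{PropositionE9}. If anything you are more complete than the paper, which leaves the final inclusion $\bar{G} \supseteq \ker\bar{\sigma}_-$ unargued, whereas your conjugation argument supplies it explicitly.
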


\begin{proof}
The crucial ingredient in the proofs of assertions (i) and (ii) is the analysis carried out in Part 2 
of the proof of Supplement \ref{SupplementE11New} 
(see pages \pageref{para:Part2-SupplementE11New}ff).
\index{Supplement \ref{SupplementE11New}!consequences}%

Assertion (i) is a restatement of Corollary \ref{crl:Isomorphism-I-line}.
Assume now that $I$ is one of the half lines $[0, \infty[$ or $]0, \infty[$  
and that $\bar{I} = I$. 
Part 2 in the proof of Supplement \ref{SupplementE11New} then shows 
that $\varphi$ is affine on a subinterval $J \subset I$ of the form $]t_*,\infty[$
and thus differentiable on $J$.
Proposition \ref{PropositionE9}
\index{Proposition \ref{PropositionE9}!consequences}%
tells one next
that $\varphi$ is piecewise affine with slopes in a coset $s \cdot P$, 
that it has only finitely many singularities in each interval of the form $[1/k, \infty[$
and that $P = \bar{P}$. 
Note that $\varphi$ is increasing;
indeed,
 $\bar{I} = I$ and $\im (\rho \colon G(I;A, P) \to \Aff (A,P))$ 
 and $P = \im (\bar{\sigma}_- \colon \bar{G} \to P)$ are not isomorphic).
\end{proof}

\subsection{Explicit constructions of isomorphisms: part  II}
\label{ssec:18.2}
\index{Group G([a,c];A,P)@Group $G([a,c];A,P)$!embeddings}
The isomorphisms constructed below make explicit use of the fact that $P$ is cyclic.
In that respect they are akin to the isomorphisms 
used in the proof of Theorem \ref{TheoremE07}.
\index{Theorem \ref{TheoremE07}!analogues}%

\emph{Notation. }
In section \ref{ssec:18.2} the group $P$ is assumed to be cyclic
and  $p$ denotes its generator with $p > 1$.

\subsubsection{The embedding $\mu_{1, b} \colon G([0,b];A,P) \mono G([0,\infty[\,;A,P)$.}
\label{sssec:Embedding-mu1}
%
Given a positive number $ b \in A$,
let $\varphi_b \colon [0,\infty[\ \to\ [0,b[$ 
be the affine interpolation of the sequence of points
\begin{equation}
\label{eq:18.3}
\left(j\cdot (p-1)b,(1-p^{-j})b \right) \quad \text{ for $j = 0$, 1, \ldots }.
\end{equation}
Then $\varphi$ is a PL-homeomorphism with slopes $p^{-1}$, $p^{-2}$, \ldots 
that maps $A \cap [0, \infty[$ onto $A \cap [0, b[$\,.
It is illustrated by the following diagram.

\begin{minipage}{11.5cm}
\psfrag{1}{\hspace*{-2mm} \small   $0$}
\psfrag{2}{\hspace*{-6.3mm} \small $(p-1)b$}
\psfrag{3}{  \hspace*{-5.0mm}\small  }
\psfrag{4}{  \hspace*{-1.7mm}\small  }
\psfrag{5}{  \hspace*{-1.7mm}\small }
\psfrag{6}{  \hspace*{-1.7mm}\scriptsize}
\psfrag{7}{  \hspace*{-3mm}\scriptsize $\infty$}

\psfrag{11}{\hspace*{-0.7mm} \small   $0$}
\psfrag{12}{\hspace*{-2.9mm} \small $\tfrac{p-1}{p}b$}
\psfrag{13}{  \hspace*{-5.7mm}\scriptsize  }
\psfrag{14}{  \hspace*{-1.7mm}\small  }
\psfrag{15}{  \hspace*{-1.7mm}\small }
\psfrag{16}{  \hspace*{-1.2mm}\scriptsize}
\psfrag{17}{  \hspace*{-0.5mm}\small$b$}

\psfrag{21}{\hspace*{-1.2mm} \scriptsize   $\tfrac{1}{p}$}
\psfrag{22}{\hspace*{-2.2mm} \scriptsize $\tfrac{1}{p^2}$}
\psfrag{23}{  \hspace*{-2.9mm}\scriptsize $\tfrac{1}{p^3}$}
\psfrag{24}{  \hspace*{-1.7mm}\small  }
\psfrag{25}{  \hspace*{-1.7mm}\small }
\psfrag{26}{  \hspace*{-1.7mm}\small }

\psfrag{81}{\hspace*{-9mm} \small  3}
\psfrag{82}{\hspace*{-1.7mm} \small  1}
\psfrag{31}{\hspace*{-1.2mm} \small   $0$}
\psfrag{32}{\hspace*{-1.2mm} \small  1}
\psfrag{33}{  \hspace*{-1.7mm}\small  $\infty$}
\psfrag{41}{\hspace*{-1mm} \small   $0$}
\psfrag{42}{\hspace*{-1.0mm} \small  $1$}
\psfrag{43}{\hspace*{-1.7mm} \small  $\infty$}
\psfrag{51}{\hspace*{-1.7mm} \small  1}
\psfrag{52}{\hspace*{-1.7mm} \small  3}
\psfrag{1}{\hspace*{-1.5mm} \small   $0$}
\psfrag{92}{\hspace*{-1.2mm} \small  2}
\psfrag{93}{  \hspace*{-1.7mm}\small  $\infty$}

\psfrag{101}{\hspace*{-1mm} \small   $0$}
\psfrag{102}{\hspace*{-0.7mm} \small  $2$}
\psfrag{104}{\hspace*{-1.2mm} \small  $\infty$}
\psfrag{111}{\hspace*{-1.7mm} \small  1}
\psfrag{112}{\hspace*{-1.7mm} \small  3}
\psfrag{phi1}{\hspace{1mm}\small $\varphi_b$}
\psfrag{la2}{\hspace{-1mm}$x_1$}
\psfrag{la4}{\hspace{-2mm}$x_2$}
\begin{equation*}
\includegraphics[width= 11cm]{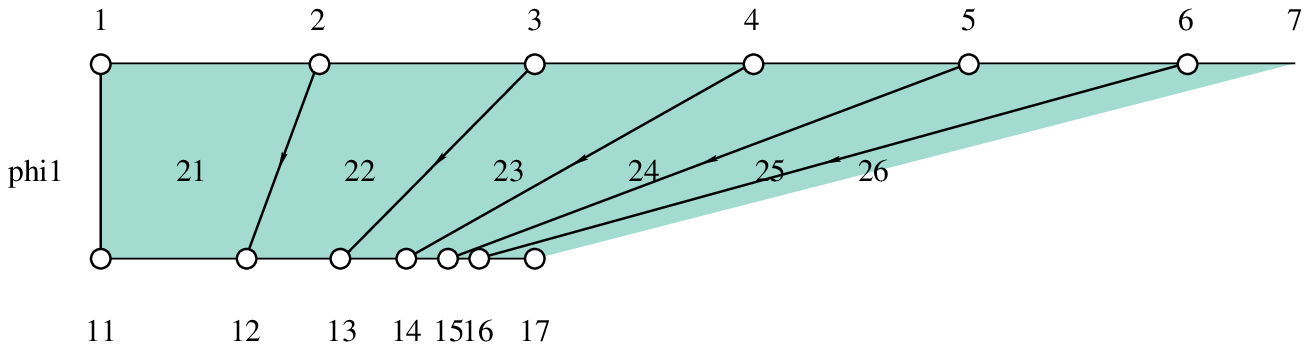}
\end{equation*}
\end{minipage}
\index{Rectangle diagram!examples}

\begin{lemma}
\label{LemmaE15}
\index{Group G([a,c];A,P)@Group $G([a,c];A,P)$!embeddings}%
\index{Isomorphisms!construction}%
\index{Homomorphism!12-mu(1,b)@$\mu_{1,b}$}%
For every positive number $b \in A$
the PL-homeomorphism $\varphi_b^{-1}$ induces by conjugation an embedding
 $\mu_{1,b} \colon G([0,b]; A, P) \mono \bar{G} = G([0,\infty[\, ; A, P)$. 
Its image is the subgroup consisting of all elements $\bar g \in \bar G$ with 
\[
\sigma_+(\bar g) = 1 \quad \text{and} \quad  \tau_+(\bar g)  \in \Z(p-1) b.
\]
Let $\bar{\mu}_{1,b} \colon P \times P \to  P \times ((IP \cdot A) \rtimes P)$ 
denote the function defined by  
\[
(p^m,p^n)   \mapsto  \left(p^m,(-n(p-1)\cdot b, 1)\right).
\]
Then $\bar{\mu}_{1,b}$ is a monomorphism which renders the square 
\begin{equation}
\label{eq:18.4}
\xymatrix{G([0,b];A,P) \ar@{->>}[r]^-{(\sigma_-,\sigma_+)} \ar@{>->}[d]^-{\mu_{1,b}} &P\times P\ \ar@{>->}[d]^-{\bar\mu_{1, b}}\\
G([0,\infty[\,;A,P) \ar@{->>}[r]^-{(\sigma_-,\rho)} &P\times (IP\cdot A \rtimes P)}
\end{equation}
commutative. 
\end{lemma}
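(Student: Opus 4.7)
The plan hinges on a self-similarity property of $\varphi_b$ that I would establish first. Let $T \colon t \mapsto t + (p-1)b$ be the translation of period $(p-1)b$ on $[0, \infty[$ and let $H \colon s \mapsto b + p^{-1}(s-b)$ be the affine map with fixed point $b$ and slope $p^{-1}$. The defining sequence \eqref{eq:18.3} is precisely the $(T, H)$-orbit of the pair $(0,0)$, so a direct comparison on each linear piece of $\varphi_b$ yields the intertwining relation $\varphi_b \circ T = H \circ \varphi_b$. Conjugating by $\varphi_b^{-1}$ therefore transports powers of $H$ acting on a left neighbourhood of $b$ to the corresponding powers of $T$ acting on a right neighbourhood of $+\infty$; this is the pivot on which the entire argument rests.

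Given $g \in G([0,b];A,P)$, I would define $\bar{g}$ to be the extension to $\R$ by the identity of $\varphi_b^{-1} \circ g \circ \varphi_b$. Because $\varphi_b$ and $\varphi_b^{-1}$ are piecewise linear with slopes in $P$ and breaks in $A$, and $g$ has the same properties, $\bar{g}$ is piecewise linear with slopes in $P$ and breaks in $A$; only the finitary property requires genuine work. Writing $\sigma_+(g) = p^n$, one has $g = H^{-n}$ on some left neighbourhood $]c,b]$ of $b$. Choosing $J \in \N$ so large that $\varphi_b([J(p-1)b,\infty[) \subset\, ]c,b[$, the intertwining relation gives
\[
\varphi_b^{-1} \circ g \circ \varphi_b
=
\varphi_b^{-1} \circ H^{-n} \circ \varphi_b
=
T^{-n}
\]
on $[J(p-1)b,\infty[$, so beyond this point $\bar{g}$ is the translation by $-n(p-1)b$. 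This proves $\bar{g}$ is finitary, that $\sigma_+(\bar{g}) = 1$, and that $\tau_+(\bar{g}) = -n(p-1)b$. A parallel computation near $0$, using that $\varphi_b$ has slope $p^{-1}$ on $[0,(p-1)b]$ and that $g$ is affine near $0$ with slope $p^m = \sigma_-(g)$, will yield $\sigma_-(\bar{g}) = p^m$. Together these steps establish that $\mu_{1,b}$ is a well-defined injective group homomorphism and that its image is contained in the subgroup of $\bar{G}$ described in the statement.

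For the reverse inclusion I would start with any $\bar{g} \in G([0,\infty[\,;A,P)$ satisfying $\sigma_+(\bar{g}) = 1$ and $\tau_+(\bar{g}) = -n(p-1)b$, set $g = \varphi_b \circ \bar{g} \circ \varphi_b^{-1}$ on $[0,b[$, and extend by $g(b)=b$. The intertwining relation, applied to the tail of $\bar{g}$ (which eventually coincides with $T^{-n}$), converts that tail into $H^{-n}$ on a left neighbourhood of $b$; thus $g$ extends continuously and is finitary PL with slopes in $P$ and breaks in $A$, so $g \in G([0,b];A,P)$ and $\mu_{1,b}(g) = \bar{g}$. Commutativity of the square \eqref{eq:18.4} then follows at once from $\sigma_-(\bar{g}) = \sigma_-(g)$ and from the formula $\rho(\bar{g}) = (-n(p-1)b,1)$ when $\sigma_+(g) = p^n$. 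Finally, $\bar{\mu}_{1,b}$ is a monomorphism because $(p-1)b \in IP \cdot A$ and the assignment $p^n \mapsto (-n(p-1)b,1)$ is an additive homomorphism of $P$ into the abelian normal subgroup of $(IP \cdot A) \rtimes P$; injectivity is immediate since $(p-1)b \neq 0$. I expect the main obstacle to be the careful verification of the intertwining relation together with the finitary property of $\bar{g}$; once these are in hand, everything else falls out by direct computation.
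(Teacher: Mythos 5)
Your proof is correct and follows essentially the same route as the paper: the paper verifies directly that $\varphi_b^{-1}\circ f\circ\varphi_b$ sends $j(p-1)b$ to $(j-n)(p-1)b$ for large $j$ and is therefore eventually the translation by $-n(p-1)b$, which is exactly the content of your intertwining relation $\varphi_b\circ T=H\circ\varphi_b$ applied to $g=H^{-n}$ near $b$. Your packaging of that breakpoint computation as a conjugacy between the germ of $H$ at $b$ and the germ of $T$ at $+\infty$ is a tidy reformulation, but the substance of the argument is identical.
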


\begin{proof}
For every $f \in G = G([0,b];A,P)$
the composition $\varphi_b^{-1} \circ f \circ \varphi_b$ 
is a (possibly infinitary) PL-auto-homeomorphism of $ I =[0, \infty[$ with slopes in $P$ 
and vertices in $A^2$.
Moreover, since $f \in G = G([0,b];A,P)$,
there exists a small number $\varepsilon > 0$ 
so that $f$ is affine on the interval  $[(1-\varepsilon)b, b] $ 
and has there slope $p^n$ for some $n \in \Z$.
Choose $j_0$ so large that $1/p^{j_0} \leq \varepsilon$.
It then follows, for every index 
\[
j \geq j_1 = \max\{j_0, j_0 + n\},
\]
that $f$ sends the number $t_j = (1-p^{-j})b$ 
to $t_{j-n} =(1-p^{-j + n})b$.
The composite $\act{\varphi_b^{-1}}f$ thus maps
$ \bar{t}_j = j \cdot (p-1)b$ to 
$\bar{t} _{j-n} = (j- n) \cdot (p-1)b$ for every $j \geq j_1$
and so it is a translation with amplitude $- n \cdot (p-1)b$ for $t \geq j_1 (p-1)b$.

Conversely, if $\bar{f} \in G([0, \infty[\, ;A,P)$ has the property 
that $\rho (\bar{f})$ is a translation with amplitude in $\Z(p-1)\cdot b$,
then $\varphi_b\circ \bar{f} \circ \varphi_b^{-1}$ is a PL-auto-homeomorphism of $[0,b[$ 
with slopes in $P$ and vertices in $A^2$
that is affine near $b$ 
and so it is an element of $G([0,b]; A, P)$.
We are left with verifying the commutativity of diagram \eqref{eq:18.4}.
It is clear that $f$ and its conjugate $\mu_{1,b}(f) = \act{\varphi_b^{-1}}f$ have the same slope in 0.
On the other hand,
the above calculation has shown that $\rho(\mu_{1,b}(f))$ is a translation 
with amplitude  $- n (p-1)b$ if $f$ has slope $p^n$ in 1.
So the diagram commutes.
Since $\mu_{1,b}$ and $\bar{\mu}_{1,b}$ are obviously injective,
the proof is complete.
\end{proof}
%
\subsubsection{The embedding $\mu_{2,b} \colon G([0,\infty[\,;A,P) \mono G(\R;A,P)$.}
\label{sssec:Embedding-mu2}
%
Given $b \in A_{>0}$,
let $\psi_{2,b} \colon \R \iso\ ]0,\infty[$ be the affine interpolation of the sequence
\begin{equation}
\label{eq:18.5}
\left(-j(p-1)b,p^{-j}\cdot b\right) \quad \text{ for $j = 0$, 1, \ldots }
\end{equation}
and the translation by $b$ on $[0,\infty[$.  
Then $\psi_b$ has slopes $\cdots,p^{-2},p^{-1},1$, 
and it maps $A$ onto $A \,\cap\, ]0, \infty[$; 
it is indicated by the following diagram.

\begin{minipage}{11.5cm}
\psfrag{1}{\hspace*{-3mm}    \small   $-\infty$}
\psfrag{2}{\hspace*{-6.7mm} \small }
\psfrag{3}{  \hspace*{-5.7mm}\small  }
\psfrag{4}{  \hspace*{-6.7mm}\small  }
\psfrag{5}{  \hspace*{-5.2mm}\small  $(1-p)b$ }
\psfrag{6}{  \hspace*{-1.4mm}\small $0$}
\psfrag{7}{  \hspace*{-4.7mm}\small $\infty$}

\psfrag{11}{\hspace*{-0.6mm} \small  $0$}
\psfrag{12}{\hspace*{-3.7mm} \small }
\psfrag{13}{  \hspace*{-5.7mm}\small  }
\psfrag{14}{  \hspace*{-1.3mm}\small $\tfrac{b}{p^2}$}
\psfrag{15}{  \hspace*{-0.8mm}\small $\tfrac{b}{p}$}
\psfrag{16}{  \hspace*{-0.2mm}\small $b$}
\psfrag{17}{  \hspace*{-3.7mm}\small $\infty$}

\psfrag{21}{\hspace*{-1.2mm} \small   }
\psfrag{22}{\hspace*{-2.7mm} \small }
\psfrag{23}{  \hspace*{-3.9mm}\small }
\psfrag{24}{  \hspace*{-0.0mm}\small $\tfrac{1}{p^2}$}
\psfrag{25}{  \hspace*{-0.6mm}\small $\tfrac{1}{p}$ }
\psfrag{26}{  \hspace*{-0.7mm}\small 1 }
\psfrag{psi}{\hspace{2mm}\small $\psi_{2,b}$}
\begin{equation*}
\includegraphics[width= 11cm]{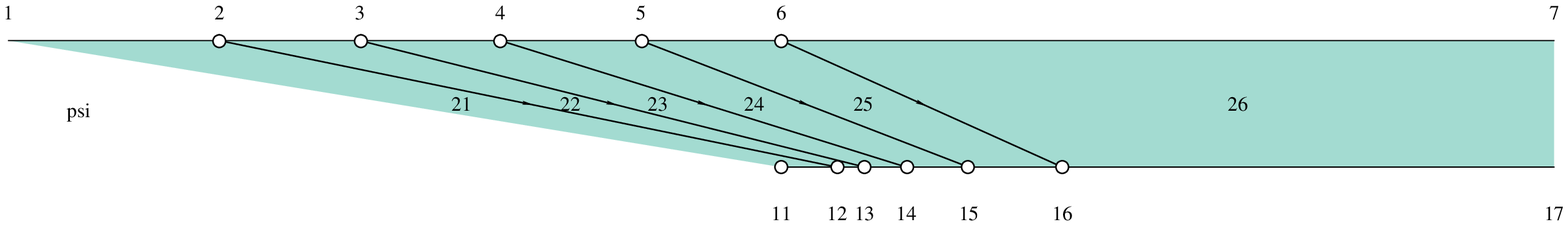}
\end{equation*}
\end{minipage}
\index{Rectangle diagram!examples}

\begin{lemma}
\label{LemmaE16}
\index{Group G([0,infty[;A,P)@Group $G([0, \infty[\;;A,P)$!embeddings}%
\index{Isomorphisms!construction}%
\index{Homomorphism!12-mu(2,b)@$\mu_{2,b}$}%
For every positive number $b \in A$
the PL-homeomorphism $\psi_{2,b}^{-1}$ induces by conjugation an embedding
$\mu_{2, b} \colon G([0,\infty[\,; A, P) \mono G(\R ; A, P)$. 
Its image consists of all elements $\bar g \in \bar G = G(\R;A,P)$ 
with 
\[
\sigma_-(\bar{g}) = 1, \tau_-(\bar{g}) \in \Z(p-1)b \quad \text{and} \quad  
\rho(\bar{g}) \in (IP \cdot A) \rtimes P.
\]
Let  $\bar{\mu}_{2, b} \colon 
P \times (IP \cdot Q \rtimes P) \to  (A \rtimes P)  \times (A \rtimes P)$ 
denote the function defined by 
\[
(p^m,y) \mapsto \left(m(p-1)b,(-b, 1) \cdot y \cdot (b,1)\right).
\]  
Then $\bar{\mu}_{2,b}$ is a monomorphism and the following square commutes.
\begin{equation}
\label{eq:18.6}
\xymatrix{G([0,\infty[;A,P) \ar@{->}[r]^-{(\sigma_-,\rho)} \ar@{>->}[d]^-{\mu_{2,b}} &P\times (IP\cdot A \rtimes P) \ar@{>->}[d]^-{\bar\mu_{2,b}}\\
G(\R;A,P) \ar@{->}[r]^-{(\lambda,\rho)} &( A\rtimes P) \times ( A\rtimes P)}
\end{equation}
\end{lemma}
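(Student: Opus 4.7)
My plan is to follow the same three-part strategy used for Lemma \ref{LemmaE15}: first verify that conjugation by $\psi_{2,b}^{-1}$ sends $G([0,\infty[\,;A,P)$ into $G(\R;A,P)$ and produces only finitary PL-homeomorphisms, then identify the image, and finally check the commutativity of the square and the injectivity of $\bar\mu_{2,b}$.

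For the first step, fix $f \in G = G([0,\infty[\,;A,P)$. On $[0,\infty[$ the map $\psi_{2,b}$ is just translation by $b$, so on the half line $t \geq -b + \tau_+(f)$ (or any value large enough that $\psi_{2,b}(t) = t+b$ is beyond all singularities of $f$) the conjugate $\psi_{2,b}^{-1}\circ f \circ \psi_{2,b}$ agrees with the affine map $t \mapsto \rho(f)(t+b) - b$, which in affine-group notation is $(-b,1)\cdot\rho(f)\cdot(b,1)$. On the negative side, set $\sigma_-(f) = p^m$ and pick $j_0$ so large that $f$ coincides with multiplication by $p^m$ on $[0,b/p^{j_0}]$. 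Then for every $j \geq \max\{j_0, j_0+m\}$ the chain
\[
 -j(p-1)b \;\xmapsto{\psi_{2,b}}\; b/p^j \;\xmapsto{f}\; p^m\cdot b/p^j = b/p^{j-m} \;\xmapsto{\psi_{2,b}^{-1}}\; -(j-m)(p-1)b
\]
is valid, showing that on the half line $t \leq -\max\{j_0,j_0+m\}\cdot(p-1)b$ the conjugate is the translation by $m(p-1)b$. Thus $\mu_{2,b}(f)$ is affine near $-\infty$ with slope $1$ and amplitude $m(p-1)b$, and affine near $+\infty$ with the conjugated affine map above; on the bounded interval in between, $\mu_{2,b}(f)$ has only finitely many singularities (those coming from breaks of $f$ and the finitely many breaks of $\psi_{2,b}$ in a bounded region). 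Hence $\mu_{2,b}(f) \in G(\R;A,P)$, the map $\mu_{2,b}$ is a homomorphism, and
\[
 \lambda(\mu_{2,b}(f)) = (m(p-1)b,1),\qquad \rho(\mu_{2,b}(f)) = (-b,1)\cdot\rho(f)\cdot(b,1),
\]
which is precisely the commutativity of square \eqref{eq:18.6}.

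For the image, the above formulas already show that every $\mu_{2,b}(f)$ lies in the prescribed subgroup, using Corollary \ref{CorollaryA2}(iii) (its analogue for $\rho$) which guarantees $\rho(f) \in \Aff(IP\cdot A,P)$. Conversely, given $\bar g \in G(\R;A,P)$ satisfying $\sigma_-(\bar g)=1$, $\tau_-(\bar g) \in \Z(p-1)b$ and $\rho(\bar g) \in \Aff(IP\cdot A,P)$, the composite $\psi_{2,b}\circ\bar g\circ\psi_{2,b}^{-1}$ is a PL-homeomorphism of $[0,\infty[$ with slopes in $P$ and vertices in $A^2$; the hypothesis $\tau_-(\bar g) = -m(p-1)b$ allows one to run the telescoping calculation above in reverse, producing a common value $p^m$ for the slope on a neighbourhood of $0$, which shows the composite belongs to $G$. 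Injectivity of $\mu_{2,b}$ is then immediate since it is conjugation by a homeomorphism, and injectivity of $\bar\mu_{2,b}$ is clear from its formula.

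The one genuine obstacle is the finitariness argument near $-\infty$: one must leverage the fact that an element of $G([0,\infty[\,;A,P)$ is affine on some neighbourhood of $0$ to defeat the infinite sequence of breaks of $\psi_{2,b}$. Once the telescoping identity $-j(p-1)b \mapsto -(j-m)(p-1)b$ is in hand the rest is bookkeeping, entirely parallel to the proof of Lemma \ref{LemmaE15}.
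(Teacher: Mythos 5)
Your proposal is correct and follows essentially the same route as the paper's own proof: the telescoping identity $-j(p-1)b \mapsto b/p^{j} \mapsto b/p^{j-m} \mapsto -(j-m)(p-1)b$ for $j$ large (exploiting linearity of $f$ near $0$) to get the translation by $m(p-1)b$ near $-\infty$, the observation that $\psi_{2,b}$ is translation by $b$ on $[0,\infty[$ to compute $\rho$ of the conjugate, and the reverse conjugation for the converse inclusion. The appeal to the $\rho$-analogue of Corollary \ref{CorollaryA2}(iii) for $\rho(f)\in\Aff(IP\cdot A,P)$ is exactly the ingredient the paper uses implicitly.
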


\begin{proof}
For every $f \in G = G([0,\infty[\,;A,P)$
the composition $\psi_b^{-1} \circ f \circ \psi_b$ 
is a (possibly infinitary) PL-auto-homeomorphism of $ I =\R$ with slopes in $P$ 
and vertices in $A^2$.
Since $f$ lies in $G$
there exists a small number $\varepsilon > 0$ 
so that $f$ is linear on the interval  $[0, \varepsilon] $,
say with slope $p^m$.
Choose $j_0$ so large that $b/p^{j_0} \leq \varepsilon$.
It then follows
that $f$ sends,
for every index $j \geq j_1 = \max\{j_0, j_0 + m\}$,
the point $t_j = b/p^j$  to $t_{j-m} =b/ p^{j-m}$.
The composition $\act{\psi_b^{-1}}f$ maps,
for every $j \geq j_1$, 
the point  $ \bar{t}_j = -j (p-1)b$ to $\bar{t} _{j-m} = -(j- m) (p-1)b$,
and so it is the translation with amplitude $m (p-1)b$ 
on the interval $]-\infty,  - j_1(p-1)b]$.

There exists, furthermore, a positive number $t_*$ 
so that $f$ is affine on the interval $[t_*, \infty[$ and $\rho(f) \in \Aff(IP \cdot A, P)$.
The composition $\act{\psi_b^{-1}}f$ is then affine on the interval $[\max\{0, t_*-b\}, \infty[$
and $\rho(\act{\psi_b^{-1}}f)$ lies again in $\Aff(IP \cdot A, P)$.

Conversely, 
if $\bar{f} \in G(\R ;A,P)$ has the property 
that $\lambda(\bar{f})$ is a translation with amplitude in $\Z(p-1) b$
and $\rho(\bar{f}) \in \Aff(IP \cdot A, P)$,
the composition $\psi_b \circ \bar{f} \circ \psi_b^{-1}$ is a PL-homeomorphism with slopes in $P$
that maps $A \,\cap\, ]0, \infty[$ onto itself and belongs to $G([0, \infty[\,;A, P)$.
The commutativity of the diagram follows from the calculations carried out on the way.
\end{proof}
%
\subsubsection{The embedding $\mu_{2,b}\circ \mu_{1,b} \colon G([0,b];A,P) \mono G(\R;A,P)$.}
\label{sssec:Embedding-mu2-mu1} 
The composition $\varphi_{1,b} \circ \psi_{2,b}\colon \R\to\ ]0,b[$ is a PL-homeomorphism; 
its description is unwieldy unless $(p-1)b = b$, \ie{}unless $p =2$. 

By combining Lemmata \ref{LemmaE15} and \ref{LemmaE16} one obtains
\begin{lemma}
\label{LemmaE17}
Let $\bar\mu \colon P^2 \to (A \rtimes P)^2$ be the function defined by the formula
\[
\bar{\mu}(p^m,p^n) = (m(p-1)b,-n(p-1)b).  
\]
Then $\bar{\mu}$ is a monomorphism.
The composition $(\varphi\circ\psi_b)^{-1}$ induces by conjugation  an embedding 
$\mu = \mu_{2,b}\circ \mu_{1, b}$ 
which makes the square
\begin{equation}
\label{eq:18.7}
\xymatrix{G([0,b];A,P) \ar@{>->}[r]^-{(\sigma_-,\sigma_+)} \ar@{->}[d]^-\mu &P\times P \ar@{->}[d]^-{\bar\mu}\\
G(\R;A,P) \ar@{->}[r]^-{(\lambda,\rho)} &A\rtimes P \times A\rtimes P}
\end{equation}
commutative.
The image of $\mu$ consists of all elements $\bar{g} \in G(\R;A,P)$ 
which satisfy the conditions
\[
\sigma_-(\bar{g}) = 1 = \sigma_+(\bar{g}) \quad \text{ and } \quad 
\left(\tau_-(\bar{g})  , \tau_+(\bar{g})\right) \in \Z(p-1)b \times  \Z(p-1)b.
\]
\end{lemma}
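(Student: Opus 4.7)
\noindent\emph{Proof proposal.} The plan is to obtain Lemma \ref{LemmaE17} as a direct composition of Lemmata \ref{LemmaE15} and \ref{LemmaE16}, so that the algebraic content reduces to concatenating the two commutative squares \eqref{eq:18.4} and \eqref{eq:18.6}, and the geometric content reduces to checking that the four boundary conditions $\sigma_\pm=1$, $\tau_\pm\in\Z(p-1)b$ match the union of the conditions appearing in Lemmata \ref{LemmaE15} and \ref{LemmaE16}.

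First I would note that Lemma \ref{LemmaE15} gives $\im\mu_{1,b}\subseteq G([0,\infty[\,;A,P)$, so the composition $\mu=\mu_{2,b}\circ\mu_{1,b}$ is well defined; since $\mu_{1,b}$ is conjugation by $\varphi_b^{-1}$ and $\mu_{2,b}$ is conjugation by $\psi_{2,b}^{-1}$, the map $\mu$ is conjugation by $(\varphi_b\circ\psi_{2,b})^{-1}$, as the statement asserts. Injectivity of $\mu$ follows from the injectivity of each factor. The monomorphism property of $\bar\mu$ is a one-line verification: identifying $m(p-1)b$ with the translation $(m(p-1)b,1)\in A\rtimes P$, the map $\bar\mu$ coincides with the composition $P^2\to P\times ((IP\cdot A)\rtimes P)\to (A\rtimes P)^2$ of the monomorphisms $\bar\mu_{1,b}$ and $\bar\mu_{2,b}$ of Lemmata \ref{LemmaE15} and \ref{LemmaE16}, so injectivity is automatic and the explicit formula reduces to the computation
\begin{equation*}
(-b,1)\cdot(-n(p-1)b,1)\cdot(b,1) = (-n(p-1)b,1)
\end{equation*}
inside $A\rtimes P$.

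The commutativity of square \eqref{eq:18.7} would then be obtained by stacking the two previously established squares: applying \eqref{eq:18.4} to $g\in G([0,b];A,P)$ yields $(\sigma_-,\rho)(\mu_{1,b}(g))=\bar\mu_{1,b}(\sigma_-(g),\sigma_+(g))$, and applying \eqref{eq:18.6} to $\mu_{1,b}(g)$ yields $(\lambda,\rho)(\mu(g))=\bar\mu_{2,b}\bigl((\sigma_-,\rho)(\mu_{1,b}(g))\bigr)$. Composing these two identities and invoking the factorization $\bar\mu=\bar\mu_{2,b}\circ\bar\mu_{1,b}$ noted above gives exactly $(\lambda,\rho)\circ\mu=\bar\mu\circ(\sigma_-,\sigma_+)$.

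The only non-formal step is the characterization of $\im\mu$, and I expect this to be the main obstacle because one has to translate ``lying in $\im\mu_{1,b}$'' through the conjugation $\mu_{2,b}$ into boundary conditions at $+\infty$ on elements of $G(\R;A,P)$. By Lemma \ref{LemmaE16} an element $\bar g\in G(\R;A,P)$ lies in $\im\mu_{2,b}$ iff $\sigma_-(\bar g)=1$, $\tau_-(\bar g)\in\Z(p-1)b$, and $\rho(\bar g)\in(IP\cdot A)\rtimes P$; the pre-image $g':=\mu_{2,b}^{-1}(\bar g)$ then lies in $\im\mu_{1,b}$ iff $\sigma_+(g')=1$ and $\tau_+(g')\in\Z(p-1)b$. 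Because $\psi_{2,b}$ is translation by $b$ on $[0,\infty[$, conjugation by $\psi_{2,b}^{-1}$ preserves the eventual slope and the eventual translation amplitude at $+\infty$, so these two conditions on $g'$ are equivalent to $\sigma_+(\bar g)=1$ and $\tau_+(\bar g)\in\Z(p-1)b$. Since $(p-1)b\in IP\cdot A$, the condition $\tau_+(\bar g)\in\Z(p-1)b$ together with $\sigma_+(\bar g)=1$ automatically forces $\rho(\bar g)\in(IP\cdot A)\rtimes P$, so the four stated boundary conditions are together necessary and sufficient, completing the proof.
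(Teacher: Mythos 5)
Your proposal is correct and is exactly the argument the paper intends: the paper states Lemma \ref{LemmaE17} as obtained "by combining Lemmata \ref{LemmaE15} and \ref{LemmaE16}" without further proof, and your write-up supplies precisely that combination — stacking the squares \eqref{eq:18.4} and \eqref{eq:18.6}, computing $\bar\mu_{2,b}\circ\bar\mu_{1,b}$ via the commutation $(-b,1)\cdot(-n(p-1)b,1)\cdot(b,1)=(-n(p-1)b,1)$, and translating the image conditions through $\psi_{2,b}$, with the correct observation that $\tau_+(\bar g)\in\Z(p-1)b\subseteq IP\cdot A$ makes the condition $\rho(\bar g)\in(IP\cdot A)\rtimes P$ redundant.
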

\index{Group G([a,c];A,P)@Group $G([a,c];A,P)$!embeddings}
\index{Isomorphisms!construction}
%
\subsection{Application to the groups $G[p] = G([0,1];\Z[1/p], \gp(p))$}
\label{ssec:Application-embeddings-mi1-mu2-etc}
\index{Group G[p]@Group $G[p]$!endomorphisms}
Let $P$ be the cyclic group generated by an integer $p\geq 2$ 
and choose $A = \Z[1/p]$ and $b = 1$. 
The group $G[p] = G([0,1]; A, P)$ has been investigated in section \ref{ssec:9.9}
and in Section \ref{sec:15}. 
In this section,
we work out the images of $G[p]$ 
for the embeddings $\mu_{1,1}$ and $\mu =  \mu_{2,1} \circ \mu_{1,1}$.

According to Lemma \ref{LemmaE15},
the embedding $\mu_{1,1}$ maps $G[p]$ isomorphically onto the subgroup of
$G([0,\infty[\,;A,P)$ consisting of all elements $\bar{g}$ which satisfy the requirements
\begin{equation*}
\sigma_+(g) = 1 \text{ and } \tau_+(g) \in \Z(p-1).
\end{equation*}
We claim 
that the image $\mu_{1,1}(G[p])$ is generated 
by the PL-homeomorphisms $y_i$ defined by the formula
\begin{equation}
\label{eq:18.8}
y_i(t) = 
\begin{cases}
t &\text{if $t < i$}\\
p(t-i)+i &\text{if $i\leq t \leq i+1$}\\
t+(p-1) &\text{if $t > i+1$}
\end{cases},
\end{equation}
the index $i$ varying over $\N$.  
\footnote{In section \ref{ssec:8.2},
the function $y_i$ is called $f(i,i+1;p)$.} 
  Let $H$ denote subgroup generated by $\{y_i \mid i \in \N\}$.  
For every $m\geq 1$, 
the product
\begin{equation*}
y_0 \circ y_1 \circ \cdots \circ y_m
\end{equation*}
has slope $p$ on $[0,m]$.  
It follows that there exist, for every $g$ in $\im \mu_{1,1}$, 
an element $h$ in $H$ 
such that the singularities of $hgh^{-1}$ are contained in $\N$. 
The argument detailed in section \ref{ssec:8.2} then shows 
that $\act{h}g$ is in $H$.

One verifies painlessly that $\act{y_i}y_j = y_{j+(p-1)}$ whenever $i < j$, 
and the usual normal form argument then permits one to establish
\begin{lemma}
\label{lemma:New18.3}
The image of $G([0,1];\Z[1/p], \gp(p))$ under $\mu_{1,1}$ is generated by the set 
$\{y_i\mid i \in \N\}$ specified by formula \eqref{eq:18.8}, 
and defined in terms of this set by the relations
\begin{equation}
\label{eq:Presentation-for-mu1(G[p])}
\index{Group G[p]@Group $G[p]$!infinite presentation}%
\act{y_i}y_j = y_{j+(p-1)} \text{ whenever } i < j.
\end{equation}
\end{lemma}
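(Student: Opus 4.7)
The plan is to follow the three-step pattern used for similar presentations earlier in the monograph (see sections \ref{ssec:8.2}, \ref{sssec:14.3aNew} and \ref{ssec:15.1}): first verify that each $y_i$ lies in $\mu_{1,1}(G[p])$, then establish that the family $\{y_i \mid i\in\N\}$ generates this image, and finally prove that the claimed relations are defining by a normal form argument.

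First I would check directly from formula \eqref{eq:18.8} that $y_i$ is a finitary PL-homeomorphism of $[0,\infty[$ with singularities in $\Z \subset A$, slopes in $\{1,p\}\subset P$, and leftmost slope 1; moreover $\rho(y_i)$ is the translation with amplitude $p-1$. By Lemma \ref{LemmaE15} this is precisely the condition $\sigma_+(y_i)=1$ and $\tau_+(y_i)=p-1\in \Z(p-1)$ characterising the image of $\mu_{1,1}$, so $y_i\in \mu_{1,1}(G[p])$. Next, for the generation claim, I would use the hint provided in the paragraph preceding the lemma: given $g\in \mu_{1,1}(G[p])$, choose $m$ so large that $\supp g\subset [0,m]$; the product $h_m = y_0 \circ y_1 \circ \cdots \circ y_{m-1}$ has slope $p$ on $[0,m]$ and is a translation beyond $m$, so a suitable power $h_m^k$ pushes all singularities of $g$ into $[m,\infty[$. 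Iterating, one can conjugate $g$ by an element of $H=\gp\{y_i\mid i\in\N\}$ so that the resulting element has all its singularities in $\N$ and is then already expressible as a product of the $y_i$ by the one-singularity argument of section \ref{ssec:8.2}.

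The verification that the relation $\act{y_i}y_j = y_{j+(p-1)}$ holds for $i<j$ is a routine computation: the support of $y_j$ is $]j,j+1[$, which lies in the region where $y_i$ is the translation with amplitude $p-1$; so conjugation translates the graph of $y_j$ bodily by $p-1$, producing $y_{j+(p-1)}$. It is worth phrasing this as an instance of formula \eqref{eq:14.3} applied to the generators $f(i,i+1;p)$ of section \ref{ssec:8.2}, of which the $y_i$ are specific instances.

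The main work lies in proving that these relations are defining. I would proceed exactly as in section \ref{ssec:15.1}. Let $F$ be the free group on symbols $\{y_i\mid i\in \N\}$, let $\equiv$ be the congruence generated by the relations, and let $\pi\colon F\epi \mu_{1,1}(G[p])$ be the canonical map, which is surjective by the previous paragraph. The four equivalent forms of the defining relations allow one to move every positive letter to the right of every negative letter and, within each such half, to reorder the letters so that the indices are (weakly) increasing. Every word is thus congruent to a normal form $u_1^{-1}\cdot u_2$ where $u_1$ and $u_2$ are positive words with indices in ascending order. Assuming $\pi(u_1^{-1}u_2)=\id$ and replacing $u_1^{-1}u_2$ by a cyclic conjugate if necessary, one can arrange that the smallest index $i_*$ appearing is the index of the last letter of $u_1$; evaluating slopes immediately to the right of $i_*$ forces this letter to cancel, contradicting minimality unless both words are empty. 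This is the obstacle that needs the most care, because one must track the left- and right-hand derivatives of $\pi(\bar w)$ at the distinguished point $i_*$ and exploit that the $y_i$ have disjoint singular sets; but the argument is essentially identical to the one given in sections \ref{sssec:14.3aNew} and \ref{ssec:15.1}, and presents no new difficulty.
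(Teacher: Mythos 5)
Your proposal follows the paper's own route step for step: membership via the characterization in Lemma \ref{LemmaE15}, generation by conjugating with the products $y_0\circ\cdots\circ y_m$ (which have slope $p$ on $[0,m]$) so as to move all singularities into $\N$ and then invoking the decomposition of section \ref{ssec:8.2}, and finally the normal form argument of section \ref{ssec:15.1} to see that the relations are defining. One slip to correct: the support of $y_j$ is $\,]j,\infty[\,$, not $\,]j,j+1[\,$ (beyond $j+1$ the map $y_j$ is the translation with amplitude $p-1$, not the identity); the conjugation computation still goes through because $\supp y_j$ is entirely contained in the region $[i+1,\infty[$ where $y_i$ acts as the translation by $p-1$, but the relation is not literally an instance of formula \eqref{eq:14.3}, which concerns the one-break generators $g(b,p)$ rather than the two-break elements $f(a,b;p)$.
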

%
\subsubsection{Images of the generators $y_i$ in $G(\R; A, P)$.}
\label{sssec:images-yi}
%
For the determination of the automorphisms of the group $G[p]$ 
it is advantageous to use their realizations in $G(\R;A,P)$ afforded
by the embeddings 
\[
\mu_{2,1}\colon \im \mu_{1,1} \mono G(\R;A,P),
\qquad y_i \longmapsto \psi_{2,1}^{-1} \circ y_1 \circ \psi_{2,1}.
\]
In this context, 
it is useful to dispose of a description of the images $z_i$ of the generators $y_i$.
It is not hard to find one: 
using definition \eqref{eq:18.5} of $\psi_{2,1}$
and the fact that $z_i =\mu_{2,1} (y_1)$ is the composition $\psi_{2,1}^{-1} \circ y_i \circ \psi_{2,1}$,
one easily verifies 
that $z_0$ is the translation with amplitude $p-1$,
while $z_i = y_{i-1}$ for positive indices $i$.
%

\subsubsection{Preimages of the generators $y_i$ in $G([0,1];A,P)$.}
\label{sssec:preimages-yi}
Presentation \eqref{eq:Presentation-for-mu1(G[p])} 
coincides with presentation \eqref{eq:9.12} of the group $G[p]$,
except for the fact 
that the generators are now called $y_i$ 
while they are denoted by $x_i$ in section \ref{ssec:9.9}. 
This makes one wonder 
what the preimages $\mu_{1,1}^{-1}(y_i)$ of the generators $y_i$  look like.

To find the answer,
we recall the definition of the generators $x_i$.
The generator $x_i$ is the PL-homeomorphism $f(m, r) = f(p^m;p;r,p)$,
the parameters $i$ and $(m,r)$ being related by the formula 
$i = (p-1)m + (p-r)$ with $1 < r \leq p$;
see Corollary \ref{CorollaryB13}.
The PL-homeomorphism $f(m, r)$ is the affine interpolation of 
two $\PP$-regular subdivisions of level $m + 2$.
In the sequel it will suffice to have a closer look at the case where $m=0$;
then $f(r) = f(1,p; r, p)$ is obtained as follows.
One divides the unit interval into $p$ subintervals of length $1/p$ 
and then the $r$-th of the subintervals so obtained again into $p$ intervals of length $1/p^2$,
ending up with the $\PP$-regular subdivision $D_r$.
Let $D'$ be the $\PP$-standard subdivision of level 2;
it arises by first subdividing the unit interval into $p$ parts of equal length 
and then dividing the first interval $[0, 1/p]$ into $p$ subintervals of length $1/p^2$.
The PL-homeomorphism $f(r)$, finally, is the affine interpolation 
of the sequence of points $D_r \boxtimes D'$.

Now,
in the definition of a $\PP$-standard subdivision it is always the \emph{left most} subinterval attained
at an intermediate level $\ell$ that gets subdivided in the passage from level $\ell$ to level $\ell + 1$.
Since conjugation by the reflection $\imath \colon t \mapsto 1- t$
\label{notation-imath}%
 maps the group $G([0,1]; A, P)$ onto itself and thus induces an automorphism of this group,
one could equally well work with subdivisions 
where it is always the \emph{right most} of the intervals already created that gets subdivided.
If one proceeds in this new manner 
one obtains the preimages of the elements $y_i$ 
used in the presentation  \eqref{eq:Presentation-for-mu1(G[p])}.
Indeed, one has
\begin{proposition}
\label{prp:Preimages-yi}
Let $\imath$ denote the reflection at the midpoint of the unit interval 
and let $x_i$ be the generator of the group $G([0,1]; \Z[1/p], \gp (p))$ 
introduced in section \ref{ssec:9.9}. 
For each $i \in \N$ set $f_i = \act{\imath} x_i$.
Then $f_i$ coincides with $\mu_{1,1}^{-1}(y_i)$ for each $i \in \N$.
\end{proposition}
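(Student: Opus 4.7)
The equation to establish is $f_i = \mu_{1,1}^{-1}(y_i)$. Unpacking $f_i = \imath\,x_i\,\imath$ and recalling that $\mu_{1,1}$ is conjugation by $\varphi_1^{-1}$ (with $\varphi_1 = \varphi_b$ for $b = 1$), the identity becomes, after multiplying by $\imath$ on both sides and using $\imath^2 = \id$,
\[
x_i \;=\; \tilde\varphi \circ y_i \circ \tilde\varphi^{-1}, \qquad \text{where } \tilde\varphi := \imath \circ \varphi_1.
\]
The map $\tilde\varphi$ is a decreasing infinitary PL-homeomorphism $[0,\infty[\,\iso\,]0,1]$, extending continuously to $[0,\infty] \iso [0,1]$ by $0 \mapsto 1$ and $\infty \mapsto 0$. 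By formula \eqref{eq:18.3} it sends the arithmetic grid $\{j(p-1) : j \in \N\}$ bijectively onto the geometric grid $\{p^{-j} : j \in \N\}$ via the rule $j(p-1) \mapsto p^{-j}$, and it is affine with slope $-p^{-j-1}$ on each interval $[j(p-1),(j+1)(p-1)]$. My plan is a two-stage verification of the displayed identity.

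\emph{Stage 1 (inductive reduction).} I would first handle $i \geq p$ by induction, using the defining relations $\act{x_i}x_j = x_{j+(p-1)}$ of \eqref{eq:9.12} together with the parallel relations $\act{y_i}y_j = y_{j+(p-1)}$ furnished by Lemma \ref{lemma:New18.3}. If $i \geq p$ and the identity is known for all smaller indices, set $j = i - (p-1) \geq 1$ so that $x_i = \act{x_0}x_j$ and $y_i = \act{y_0}y_j$; then
\[
\tilde\varphi \circ y_i \circ \tilde\varphi^{-1}
\;=\; \act{(\tilde\varphi\, y_0\, \tilde\varphi^{-1})}\bigl(\tilde\varphi\, y_j\, \tilde\varphi^{-1}\bigr)
\;=\; \act{x_0} x_j \;=\; x_i,
\]
which closes the induction. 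The problem is thus reduced to the base cases $i \in \{0, 1, \ldots, p-1\}$.

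\emph{Stage 2 (base cases).} For each $i \in \{0, 1, \ldots, p-1\}$ I would verify $x_i = \tilde\varphi \circ y_i \circ \tilde\varphi^{-1}$ by a direct comparison of rectangle diagrams. Recall that $x_i = f(p^m;p;r,p)$ with $i = (p-r) + (p-1)m$ and $1 < r \leq p$; so $m = 0$, $r = p - i$ for $i \leq p-2$, and $m = 1$, $r = p$ for $i = p-1$. The conjugate $\tilde\varphi \circ y_i \circ \tilde\varphi^{-1}$ is controlled in three regimes. First, $y_i$ fixes $[0,i]$, so the conjugate is the identity on $[\tilde\varphi(i),1]$. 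Second, on $[i+1,\infty[$ the map $y_i$ is the translation by $p-1$, which carries each grid point $j(p-1) \geq i+1$ to $(j+1)(p-1)$; composing the three affine pieces of $\tilde\varphi^{-1}$, the translation, and $\tilde\varphi$ on each segment $[p^{-j-1},p^{-j}]$ yields a short slope computation showing that the conjugate is affine with slope $p^{-1}$ on the whole of $[0,\tilde\varphi(i+1)]$ (viewed inside $]0,1]$). Third, on the transitional interval $[i,i+1]$ the map $y_i$ is $t \mapsto p(t-i)+i$, and since the grid spacing is $p-1 \geq 1$ this interval contains at most one grid point; composing the relevant affine pieces produces a piecewise affine map on $[\tilde\varphi(i+1),\tilde\varphi(i)]$ with at most two pieces whose slopes work out to $1$ and $p$, the break occurring at the image under $\tilde\varphi$ of the unique intermediate affine break of $\varphi_1$ inside $[i,i+1]$. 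Assembling the three regimes yields a rectangle diagram identical to the one obtained by interpolating $D_{p-i} \boxtimes \St(p,p)$ (for $i \leq p-2$) or the level-three subdivisions that define $x_{p-1} = f(p;p;p,p)$.

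\emph{Main obstacle.} The real work is Stage 2, and in particular the slope bookkeeping on the transitional interval $[\tilde\varphi(i+1), \tilde\varphi(i)]$ together with the verification that the infinite chain of slope-$p^{-1}$ segments in the image agrees with the nested structure of the $\PP$-standard subdivision $\St(p,p)$ that defines the target side of $x_i$. The underlying combinatorial reason for the match is that the always-leftmost subdivision convention defining a $\PP$-standard subdivision of $[0,1]$ corresponds, after the reflection $\imath$, to an always-rightmost subdivision; and the iterated translation by $p-1$ on $[i+1,\infty[$ produces precisely such an always-rightmost nested sequence of segments $[p^{-j-1},p^{-j}]$ in $]0,1]$ via $\tilde\varphi$. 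Once this correspondence is laid bare, the per-segment slope check becomes a routine computation in each of the $p$ base cases.
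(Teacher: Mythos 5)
Your proposal is correct and follows essentially the same route as the paper: the paper likewise verifies the base cases $i\in\{0,\dots,p-1\}$ by a direct comparison of the singularities and slopes of $f_i$ with those of $\varphi_1\circ y_i\circ\varphi_1^{-1}$ (computed via rectangle diagrams in exactly your three regimes), and then handles $i\geq p$ by induction using the relations $\act{x_0}x_j=x_{j+(p-1)}$ and $\act{y_0}y_j=y_{j+(p-1)}$. Your only departures are cosmetic — absorbing the reflection $\imath$ into the conjugating map $\tilde\varphi=\imath\circ\varphi_1$ and presenting the induction before the base cases — and the slope bookkeeping you defer in Stage 2 is precisely the computation the paper carries out in full.
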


The diagrams below illustrate the generators $f_i$ for $p = 5$ and $ i \in \{0,1,2,3\}$.
\smallskip 
\begin{equation*}
%
%
\psfrag{1}{\hspace*{-1.4mm}   \small 0}
\psfrag{2}{\hspace*{-2.0mm}  \small }
\psfrag{3}{\hspace*{-2.3mm} \small }
\psfrag{4}{\hspace*{-1.0mm}\small  }
\psfrag{5}{\hspace*{-1.7mm}  \small  }
\psfrag{6}{\hspace*{-1.5mm}   \small $\tfrac{1}{5}$}
\psfrag{7}{\hspace*{-1.6mm}  \small $\tfrac{2}{5}$}
\psfrag{8}{\hspace*{-1.7mm} \small $\tfrac{3}{5}$}
\psfrag{9}{\hspace*{-0.4mm}\small  $\tfrac{4}{5}$}
\psfrag{10}{\hspace*{-1mm}  \small  1}
\psfrag{11}{\hspace*{-0.5mm}  \small  $0$}
\psfrag{12}{\hspace*{-0.9mm}  \small$\tfrac{1}{5}$}
\psfrag{13}{\hspace*{-0.8mm}  \small $\tfrac{2}{5}$}
\psfrag{14}{\hspace*{-1mm}  \small$\tfrac{3}{5}$}
\psfrag{15}{\hspace*{-0.9mm}  \small $\tfrac{4}{5}$}
\psfrag{16}{\hspace*{-1.7mm}  \small  }
\psfrag{17}{\hspace*{-1.8mm}  \small}
\psfrag{18}{\hspace*{-1.6mm}  \small }
\psfrag{19}{\hspace*{-1.6mm}  \small}
\psfrag{20}{\hspace*{-0.5mm}  \small 1}
\psfrag{21}{\hspace*{-0.7mm} \small 5}
\psfrag{22}{\hspace*{0.4mm}\small 5}
\psfrag{23}{\hspace*{-0.0mm}\small 5}
\psfrag{24}{\hspace*{-1.5mm} \small 5}
\psfrag{25}{\hspace*{-1.7mm} \small }
\psfrag{26}{\hspace*{-1.0mm} \small }
\psfrag{27}{\hspace*{-1.0mm} \small }
\psfrag{28}{\hspace*{-0.0mm} \small $\tfrac{1}{5}$}
\psfrag{29}{\hspace*{-0.0mm} \small $\tfrac{1}{5}$}
%
%
\psfrag{31}{\hspace*{-0.7mm}   \small 0}
\psfrag{32}{\hspace*{-1.0mm}  \small $\tfrac{1}{5}$}
\psfrag{33}{\hspace*{-2.3mm} \small }
\psfrag{34}{\hspace*{-1.0mm}\small  }
\psfrag{35}{\hspace*{-1.7mm}  \small  }
\psfrag{36}{\hspace*{-2.2mm}   \small }
\psfrag{37}{\hspace*{-0.3mm}  \small $\tfrac{2}{5}$}
\psfrag{38}{\hspace*{-0.7mm} \small $\tfrac{3}{5}$}
\psfrag{39}{\hspace*{-0.2mm}\small  $\tfrac{4}{5}$}
\psfrag{40}{\hspace*{-0.7mm}  \small  1}
\psfrag{41}{\hspace*{-1.0mm} \small 1}
\psfrag{42}{\hspace*{0.6mm}\small  5}
\psfrag{43}{\hspace*{-0.6mm}  \small 5}
\psfrag{44}{\hspace*{-1.0mm} \small 5}
\psfrag{45}{\hspace*{-1.7mm} \small }
\psfrag{46}{\hspace*{-1.7mm} \small }
\psfrag{47}{\hspace*{-1.4mm} \small }
\psfrag{48}{\hspace*{-0.7mm} \small $\tfrac{1}{5}$}
\psfrag{49}{\hspace*{0.1mm} \small $\tfrac{1}{5}$}
\psfrag{la1}{\hspace{-2.5mm} \small$f_0$}
\psfrag{la2}{\hspace{-2.5mm} \small $f_1$}
\includegraphics[width= 5.5cm]{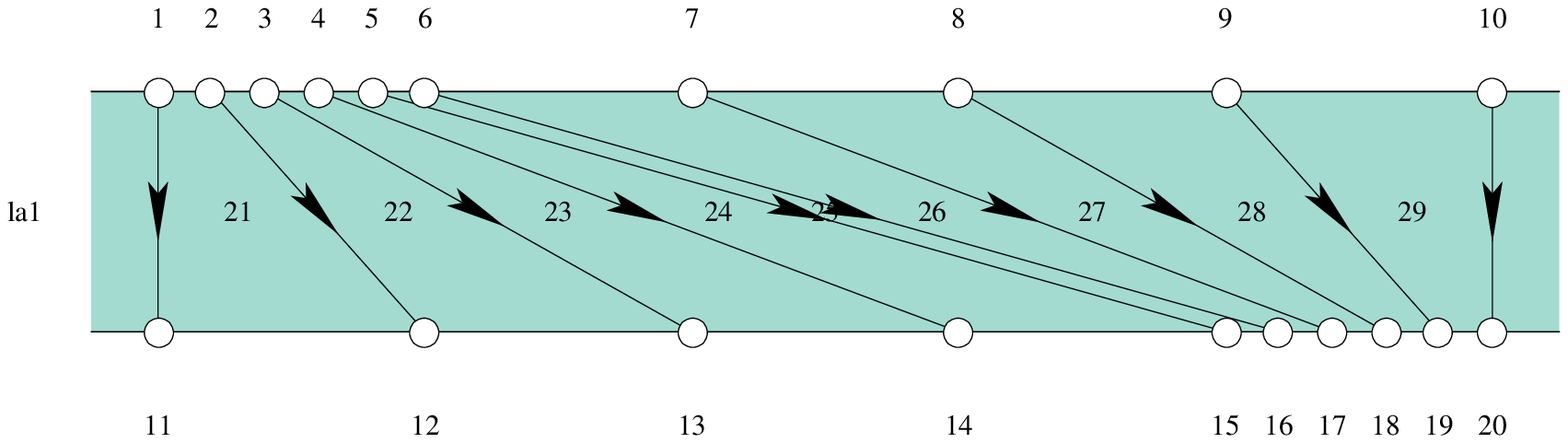}
\hspace*{6mm}
\includegraphics[width= 5.5cm]{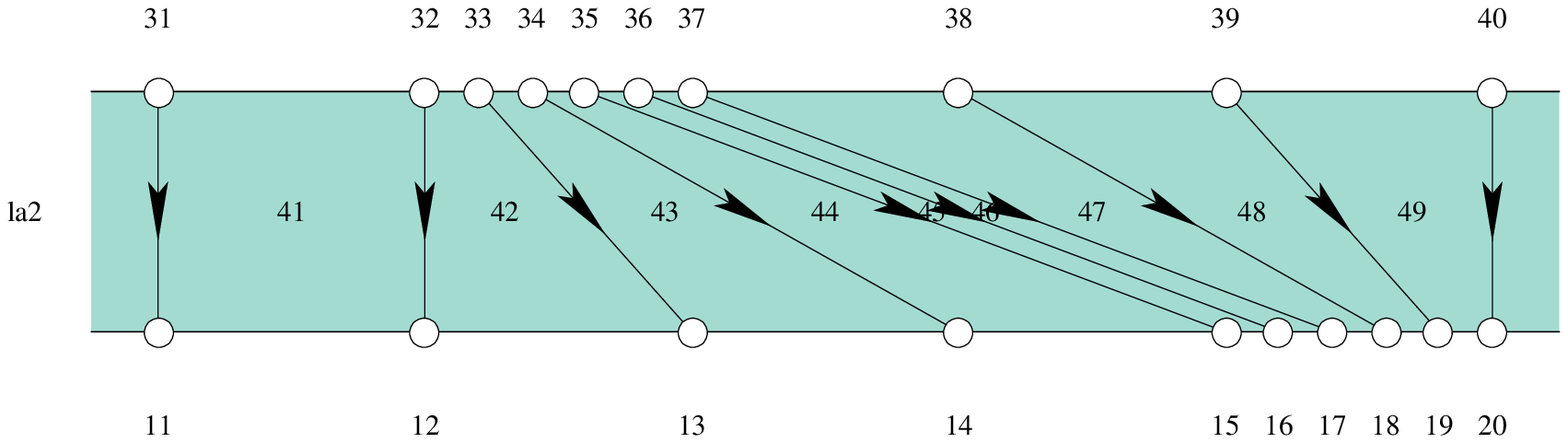}
\end{equation*}

\begin{equation*}
%
%
%
\psfrag{11}{\hspace*{-0.5mm}  \small  $0$}
\psfrag{12}{\hspace*{-0.9mm}  \small$\tfrac{1}{5}$}
\psfrag{13}{\hspace*{-0.8mm}  \small $\tfrac{2}{5}$}
\psfrag{14}{\hspace*{-1mm}  \small$\tfrac{3}{5}$}
\psfrag{15}{\hspace*{-0.9mm}  \small $\tfrac{4}{5}$}
\psfrag{16}{\hspace*{-1.7mm}  \small  }
\psfrag{17}{\hspace*{-1.8mm}  \small}
\psfrag{18}{\hspace*{-1.6mm}  \small }
\psfrag{19}{\hspace*{-1.6mm}  \small}
\psfrag{20}{\hspace*{-0.5mm}  \small 1}
\psfrag{51}{\hspace*{-0.7mm}   \small 0}
\psfrag{52}{\hspace*{-1.0mm}  \small $\tfrac{1}{5}$}
\psfrag{53}{\hspace*{-0.7mm} \small $\tfrac{2}{5}$}
\psfrag{54}{\hspace*{-1.0mm}\small  }
\psfrag{55}{\hspace*{-1.7mm}  \small  }
\psfrag{56}{\hspace*{-2.2mm}   \small }
\psfrag{57}{\hspace*{-2.0mm}  \small }
\psfrag{58}{\hspace*{-0.9mm} \small $\tfrac{3}{5}$}
\psfrag{59}{\hspace*{0.4mm}\small  $\tfrac{4}{5}$}
\psfrag{60}{\hspace*{-1mm}  \small  1}
\psfrag{61}{\hspace*{-0.8mm} \small 1}
\psfrag{62}{\hspace*{-0.0mm}\small  1}
\psfrag{63}{\hspace*{-0.7mm} \small 5}
\psfrag{64}{\hspace*{-0.7mm} \small 5}
\psfrag{65}{\hspace*{-1.7mm} \small }
\psfrag{66}{\hspace*{-1.7mm} \small }
\psfrag{67}{\hspace*{-1.7mm} \small }
\psfrag{68}{\hspace*{-0.3mm} \small $\tfrac{1}{5}$}
\psfrag{69}{\hspace*{-0.1mm} \small $\tfrac{1}{5}$}
%
%
%
\psfrag{71}{\hspace*{-0.7mm}   \small 0}
\psfrag{72}{\hspace*{-0.9mm}  \small $\tfrac{1}{5}$}
\psfrag{73}{\hspace*{-0.9mm} \small $\tfrac{2}{5}$}
\psfrag{74}{\hspace*{0.3mm}\small $\tfrac{3}{5}$}
\psfrag{75}{\hspace*{-1.7mm}  \small  }
\psfrag{76}{\hspace*{-2.2mm}   \small }
\psfrag{77}{\hspace*{-2.0mm}  \small }
\psfrag{78}{\hspace*{-2.1mm} \small }
\psfrag{79}{\hspace*{0.3mm}\small  $\tfrac{4}{5}$}
\psfrag{80}{\hspace*{-0.6mm}  \small  1}
\psfrag{81}{\hspace*{-1.7mm} \small 1}
\psfrag{82}{\hspace*{-0.0mm}\small  1}
\psfrag{83}{\hspace*{-1mm} \small 1}
\psfrag{84}{\hspace*{-1.0mm} \small 5}
\psfrag{85}{\hspace*{-1.7mm} \small }
\psfrag{86}{\hspace*{-1.7mm} \small }
\psfrag{87}{\hspace*{-1.7mm} \small }
\psfrag{88}{\hspace*{-1.7mm} \small }
\psfrag{89}{\hspace*{-0.4mm} \small $\tfrac{1}{5}$}
%
\psfrag{la3}{\hspace{-2.5mm}\small $f_2$}
\psfrag{la4}{\hspace{-2.5mm}\small $f_3$}
\includegraphics[width= 5.5cm]{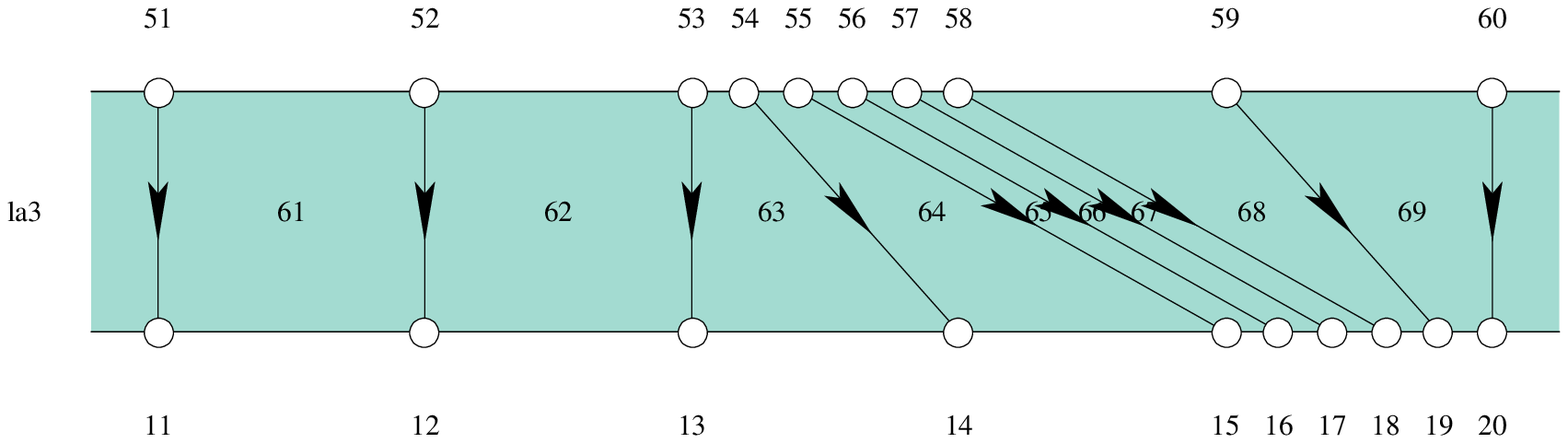}
\hspace*{6mm}
\includegraphics[width= 5.5cm]{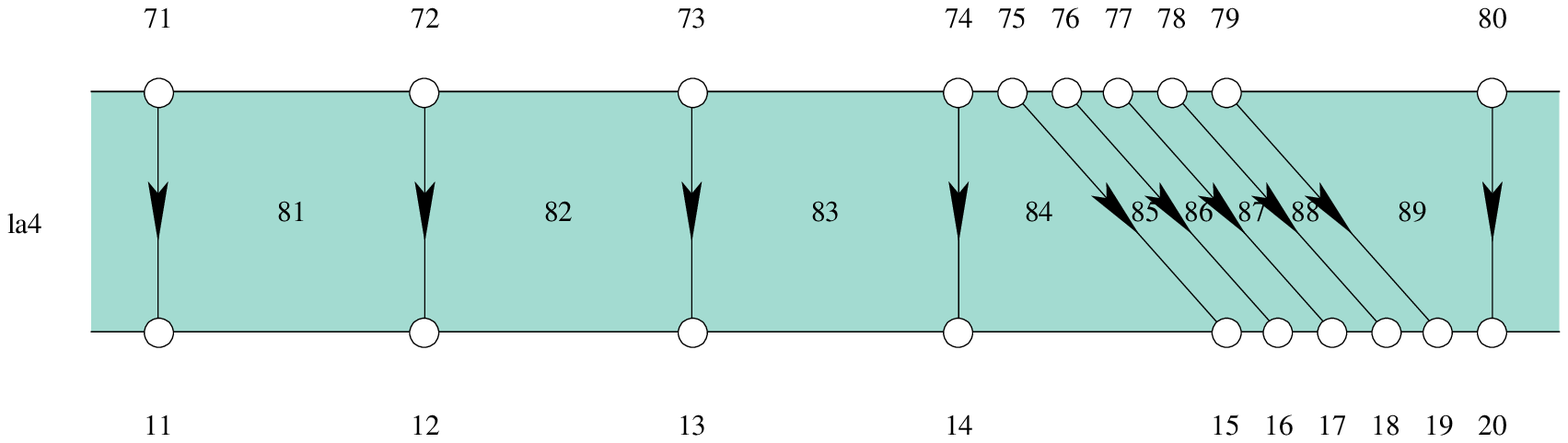}
%
\end{equation*}

\index{Rectangle diagram!examples}%
%
\subsubsection{Proof of Proposition \ref{prp:Preimages-yi}.}
\label{sssec:Proof-Proposition-Preimiges-yi}
%
We begin by collecting the salient features of the generators $f_i$, 
for $i$ running from 0 to $p-2$.
The function $f_i$ is the identity on the interval $[0, i/p]$.
The next few subdivision points of the interval $[i/p, (i+1)/p]$ 
are mapped to $(i+1)/p$, $(i+2)/p$, up to $(p-1)/p$;
there are $p- 1 -i$ such points.
It follows that $f_i$ has slope $p$ on the interval from $s_1=i/p$ 
to $s_2 = s_1 + (p-1-i)/p^2$.  
The remaining $i+1$ subdivision points in the interval $[i/p, (i+1)/p]$
are mapped to the points 
\[
(p-1)/p + 1/p^2,  \quad (p-1)/p + 2/p^2, \ldots, (p-1)/p + (i+1)/p^2;
\]
on the interval from $s_2 = s_1 + (p-1-i)/p^2$ to $s_3 = (i+1)/p$ 
the function $f_i$ has therefore slope 1.
On the interval from $s_3= (i+1)/p$ to the endpoint 1, finally, the function is affine with slope $1/p$.
We conclude that $f_i$ has 4 singularities,
in
\begin{equation}
\label{eq:List-of-singularities}
s_ 1 =\frac{i}{p}, \quad 
s_2 = s_1 + \frac{p-1-i}{p^2} = \frac{i+1}{p}- \frac{i+1}{p^2}, \quad 
s_3 = \frac{i+1}{p}, \quad 1,
\end{equation}
and slope $p$ on $]s_1,s_2[$, slope 1 on $]s_2,s_3[$ and slope $1/p$ on the interval $]s_3, 1[$.

We next determine the singularities $t_j$
of the PL-homeo\-mor\-phism 
$\mu_{1,1}^{-1}(y_i)$, again for $i \leq p-2$. 
The function $\mu_{1,1}^{-1}(y_i)$ is the composition 
$\varphi_1 \circ y_i \circ \varphi_1^{-1}$;
see section \ref{sssec:Embedding-mu1}.
The rectangle diagram of this composition (restricted to the interval $[0,(p-1)/p]$) 
is displayed in Figure \ref{fig:PL-homeomorphism-mu1-invers-i-small}.
The generator $y_i$ is the identity on the interval $[0,i]$,
it has slope $p$ on the adjacent interval $]i, i+1[$ and is the translation with amplitude $p-1$ 
on the half line $[i+1, \infty[\,$.
The infinitary PL-homeomorphism $\varphi_1$ is linear with slope $1/p$ on the interval $]0, p-1[$, 
affine with slope $1/p^2$ on the interval $]p-1, 2(p-1)[$
and affine with slope $1/p^3$ on the interval $]2(p-1), 3(p-1)[$.
\begin{figure}
\psfrag{1}{\hspace*{-1.2mm} \small   0}
\psfrag{2}{\hspace*{-1.7mm} }
\psfrag{3}{\hspace*{-0.7mm}\small$t_2$ }
\psfrag{4}{ \hspace*{-1.5mm}}
\psfrag{5}{  \hspace*{-1.7mm}\small $t_4$}
\psfrag{6}{  \hspace*{-0.7mm}\small }
\psfrag{11}{  \hspace*{-1.7mm}}
\psfrag{12}{  \hspace*{-1.9mm}\small $i$}
\psfrag{13}{  \hspace*{-1.7mm}\small }
\psfrag{14}{  \hspace*{-3.2mm}\small $i+1$}
\psfrag{15}{ \hspace*{-1.0mm}\small  $p-1$}
\psfrag{16}{ \hspace*{-1.8mm}\small  $2(p-1)$}

\psfrag{21}{\hspace*{-1.2mm} }
\psfrag{22}{\hspace*{-2.2mm} \small $i$}
\psfrag{23}{\hspace*{-2.7mm}\small  $ p-1$}
\psfrag{24}{\hspace*{-6.5mm}\small $p + i$}
\psfrag{25}{\hspace*{-1.7mm}\small $2(p-1)$}
\psfrag{26}{\hspace*{-2.5mm}\small $3(p-1)$}
\psfrag{41}{\hspace*{-0.2mm} \small $p$  }
\psfrag{42}{\hspace*{-0.2mm} }
\psfrag{43}{  \hspace*{0.1mm}}
\psfrag{44}{  \hspace*{0.5mm}\small $p$}
\psfrag{45}{  \hspace*{-1.2mm}\small $p^2$}
\psfrag{51}{\hspace*{-0.7mm} \small $1$  }
\psfrag{52}{\hspace*{-1.7mm} \small $p$}
\psfrag{53}{  \hspace*{-1.8mm}\small $p$}
\psfrag{54}{  \hspace*{-0.8mm}\small 1}
\psfrag{55}{  \hspace*{0.7mm}\small 1}

\psfrag{61}{\hspace*{-1.7mm} \small $p^{-1}$  }
\psfrag{62}{\hspace*{-2.7mm} \small $p^{-1}$}
\psfrag{63}{  \hspace*{-1.9mm}\small $p^{-2}$}
\psfrag{64}{  \hspace*{-0.7mm}\small $p^{-2}$}
\psfrag{65}{  \hspace*{-0.7mm}\small $p^{-3}$}
\psfrag{varphiI}{\hspace{-1mm}\small $\varphi_1^{-1}$}
\psfrag{yy}{\hspace{-2.8mm}\small $y_i$}
\psfrag{varphi}{\hspace{-0.5mm}\small $\varphi_1$}
\begin{center}
\includegraphics[width= 11cm]{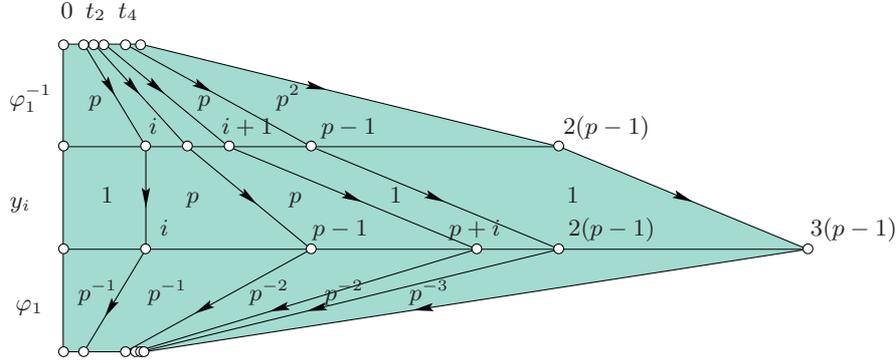}
\caption{PL-homeomorphism $\mu_1^{-1}(y_i)$ for $0 \leq i \leq p-2$}
\label{fig:PL-homeomorphism-mu1-invers-i-small}
\end{center}
\end{figure}

\index{Rectangle diagram!examples}%

Using the stated facts, 
it is easy to determine the singularities $t_i$ of $\mu_{1,1}^{-1}(y_i)$.
Note first that the chain of inequalities
\[
i < p-1 < p + i \leq 2(p-1)
\]
holds for every couple of integers $(i,p)$ with $0 \leq i \leq p-2$ and $2 \leq p$.
Figure  \ref{fig:PL-homeomorphism-mu1-invers-i-small}
thus allows one to see 
that the first singularity $t_1$ equals $ i/p$,
and so it coincides with $s_1$,
the first singularity of $f_i$ (see formulae  \eqref{eq:List-of-singularities}).
The function $\mu_{1,1}^{-1}(y_i)$ 
is affine with slope $p$ on the interval from $t_1$ to
\[
t_2 = \varphi_1(y_i^{-1}(p-1) )
= 
\varphi_1\left(i + \frac{(p-1-i)}{p} \right) 
= 
\frac{1}{p} \cdot \left(i + \frac{p-1-i}{p}\right),
\]
and so $t_2 = s_2$.
On the interval from $t_2$ to $t_3 = \varphi_1(i+1) = (i + 1)/p$ 
the map $\mu_{1,1}^{-1}(y_i)$ is a translation;
moreover,
$t_3 = s_3$.
On the interval from $t_3$ to $t_4 = (p-1)/p$ the function has slope $1/p$.
(Note
that $t_3 = t_4$  if $i= p-2$.)
 Finally,
 $\mu_{1,1}^{-1}(y_i)$ has the same slope $1/p$ on the interval from 
$t_4 = 1-p^{-1}$ to 1.
Indeed, 
definition \eqref{eq:18.3} of $\varphi_1$ justifies the following calculation for $k \geq 1$:
\[
\varphi_1 \circ y_i \circ \varphi_1^{-1} \colon
1- p^{-k} \longmapsto  k(p-1) \longmapsto (k+1)(p-1) \longmapsto 1 - p^{-(k+1)}.
\]
It follows 
that the function $\mu_{1,1}^{-1}(y_i)$ has no singularity in the interval $]t_3, 1[$ 
and slope $1/p$ on this interval.

The previous calculations show 
that the preimage $ \mu_{1,1}^{-1}(y_i)$ of $y_i$ coincides 
with the function $f_i$ for $i \in \{0,1, \ldots, p-2\}$.
We intend to deduce the general result by induction, 
the verifications just carried out being our basis,
but to do so we need to check our contention also for $i = p-1$.
This new case is depicted in Figure 
\ref{fig:PL-homeomorphism-mu1(invers)-i=p-1}.
\begin{figure}[b]
\psfrag{1}{\hspace*{-2.2mm} \small   0}
\psfrag{2}{\hspace*{-2.7mm} \small $t_1$}
\psfrag{3}{\hspace*{-0.9mm}\small$t_2$ }
\psfrag{4}{ \hspace*{-1.0mm}\small $t_3$}
\psfrag{5}{  \hspace*{-1.0mm}\small $t_4$}
\psfrag{11}{  \hspace*{-1.7mm}}
\psfrag{12}{  \hspace*{-3.7mm}\small}
\psfrag{13}{  \hspace*{-1.7mm}\small }
\psfrag{14}{  \hspace*{-1.2mm}\small $p$}
\psfrag{15}{  \hspace*{-1.7mm}\small $2(p-1)$}
\psfrag{21}{\hspace*{-1.2mm} }
\psfrag{22}{\hspace*{-2.2mm} \small $p-1$}
\psfrag{23}{  \hspace*{-5.5mm}\small $2(p -1)$}
\psfrag{24}{\hspace*{-2.7mm}\small $2p-1$}
\psfrag{25}{\hspace*{-2.7mm}\small $3(p-1)$}
\psfrag{41}{\hspace*{-1.2mm} \small $p$  }
\psfrag{42}{\hspace*{0.99mm} \small $p^2$}
\psfrag{43}{  \hspace*{0.2mm}\small }
\psfrag{44}{  \hspace*{-0.4mm}\small $p^2$}
\psfrag{51}{\hspace*{-1.2mm} \small $1$  }
\psfrag{52}{\hspace*{-0.7mm} \small $p$}
\psfrag{53}{  \hspace*{-0.3mm}\small $p$}
\psfrag{54}{  \hspace*{-1.2mm}\small 1}

\psfrag{61}{\hspace*{-1.2mm} \small $p^{-1}$  }
\psfrag{62}{\hspace*{-0.2mm} \small $p^{-2}$}
\psfrag{63}{  \hspace*{3.5mm}\small $p^{-3}$}
\psfrag{64}{  \hspace*{5mm}\small $p^{-3}$}
\psfrag{varphiI}{\hspace{-0mm}\small $\varphi_1^{-1}$}
\psfrag{yy}{\hspace{-3mm}\small $y_{p-1}$}
\psfrag{varphi}{\hspace{0.8mm}\small $\varphi_1$}
\begin{center}
\includegraphics[width= 11cm]{}
\caption{PL-homeomorphism $\mu_1^{-1}(y_{p-1})$}
\label{fig:PL-homeomorphism-mu1(invers)-i=p-1}

\end{center}
\end{figure}

\index{Rectangle diagram!examples}%

Note first that the chain of inequalities $p-1 < 2(p-1) < 2p-1 \leq 3(p-1)$
holds for every integer $p \geq 2$.
The function $ \mu_{1,1}^{-1}(y_{p-1})$ is the identity on the interval 
$[0, \varphi_1(p-1)]$ 
and its first singularity $t_1$ equals $\varphi_1(p-1) =(p-1)/p$.

The second singularity is  $\varphi_1\left(y_{p-1}^{-1}(2(p-1))\right)$
and  this number works out at
\[
t_2 = \varphi_1(p- 1 + (p-1)/p) = \frac{p-1}{p} + \frac{p-1}{p^3}.
\]  
The slope on the interval $]t_1,t_2[$  is $p$.
The third singularity is 
\[
t_3 = \varphi_1(p) = \frac{p-1}{p} + \frac{1}{p^2}
\]
and the slope on the interval $]t_2,t_3[$  is 1.
The value of $t_4 = \varphi_1(2(p-1))$, finally, is $(p-1)/p + (p-1)/p^2$.
If $p = 2$ the interval $]t_3, t_4[$ empty; 
otherwise, its length is positive and the function $\mu_{1,1}^{-1}(y_{p-1})$ has slope $1/p$ on it.
One verifies, as in the previous part, 
that this function has also slope $1/p$ on the interval from $t_4$ until 1.
Altogether one finds 
that the function $\mu_{1,1}^{-1}(y_{p-1})$ has four singularities in 
\[
t_1 = \frac{p-1}{p}, \quad 
t_2 = \frac{p-1}{p} + \frac{p-1}{p^3}, \quad
t_3 =  \frac{(p-1)}{p} + \frac{1}{p^2}, \quad  1
\]
and that its slopes are $p$ on $]t_1,t_2[$, 
then 1 on $]t_2, t_3]$ and finally $1/p$ on  $]t_3,1[$.

We claim these singularities and slopes are also those of the generator $f_{p-1}$.
To do so,
one has to recall 
that the generator $x_{p-1}$ is the conjugate of $x_0$ 
by the linear function $\lambda = (1/p)\id$. 
Therefore 
$
f_{p-1} 
= 
\act{\imath} x_{p-1} 
= 
\act{\imath \circ \lambda}x_0 
=
\act{\imath \circ \lambda \circ \imath^{-1}}f_0
$
is the conjugate of $f_0$ by the affine function 
$\vartheta = \imath \circ \lambda \circ \imath^{-1}$;
one finds that 
\[
\vartheta(t) = \frac{p-1}{p} + \frac{t}{p}.
\]
It follows that the singularities of $f_{p-1}$ 
are the images of the singularities of $f_0$  under $\vartheta$.
The singularities of $f_0$ can be obtained from the list   \eqref{eq:List-of-singularities}
by setting $i = 0$;
they are $s_1 =0$, $s_2 = (p-1)/p^2$, $s_3 = 1/p$ and 1, 
and so the list of singularities of $f_{p-1}$ is
\[
\vartheta(s_1) 
= 
\frac{p-1}{p}, 
\quad
\vartheta(s_2) 
= 
\frac{p-1}{p} + \frac{p-1}{p^3}, 
\quad
\vartheta(s_3) 
= 
\frac{p-1}{p} + \frac{1}{p^2}, 
\quad
1.
\]
This list tallies with that of the singularities of $\mu_{1,1}^{-1}(y_{p-1})$; 
so $f_{p-1} = \mu_{1,1}^{-1}(y_{p-1})$.
\smallskip

The proof is now quickly completed.
The previous calculation show 
that $f_{i} = \mu_{1,1}^{-1}(y_i)$ for each index $i < p$.
Suppose 
the equality $f_{j-(p-1)} = \mu_{1,1}^{-1}(y_{j-(p-1)})$  has been established
for some index $j \geq p$,
and set $i = j- (p-1)$.
Then $i \geq 1$.
Relations \eqref{eq:Presentation-for-mu1(G[p])} and \eqref{eq:9.12} 
therefore justify the following chain of equalities:
\[
\mu_{1,1}^{-1}(y_j) 
= \mu_{1,1}^{-1}\left( \act{y_0}y_i\right) 
=
 \act{f_0} f_i = \act{\imath} \left( \act{x_0} x_i \right) = \act{\imath} x_j = f_j.
\]
We conclude 
that the equality $f_i= \mu_{1,1}^{-1}(y_i)$ holds for each index $i \in \N$
and thus Proposition \ref{prp:Preimages-yi} is established. 
%
\subsection{Isomorphism types of normal subgroups containing $B$}
\label{ssec:18.4}
%
We consider a group $G([0,b];A,P)$ with $P$ cyclic 
and two subgroups $G$ and $\bar G$ containing $B([0,b];A,P)$.  
Our aim is to determine when $G$ and $\bar G$ are isomorphic.  
For ease of notation we set $I = [0,b]$ and $B = B(I;A,P)$.  
Note that $G$, $\bar G$ can be specified by giving their images  $Q$, $\bar Q$ 
under the epimorphism
\begin{equation*}
\xymatrix{\pi \colon G(I;A,P) \ar@{->>}[r]^-{(\sigma_-,\sigma_+)} 
&
P \times P \,\ar@{>->>}[r]^-\omega &\Z^2};
\end{equation*}
here $\omega$ takes $(p^j,p^\ell)$ to $(j,\ell)$ 
and $p$ denotes the generator of $P$ with $p < 1$.  

By Corollary \ref{CorollaryE5} each isomorphism $\alpha \colon G \iso \bar{G}$
maps the subgroup $B$ of $G$ onto the subgroup $B$ of $\bar{G}$
and so it induces an isomorphism 
\[
\bar\alpha \colon Q = \pi(G) \iso \bar Q = \pi(\bar{G}).  
\]
This fact prompts one to introduce two classes of groups,
defined by 
\begin{enumerate}[(I)]
\item  $Q$ has rank 1, and 
\item  $Q$ has rank 2, \ie{}$G$ has finite index in $G(I,A,P)$.  
\end{enumerate}
We shall see that class (I) divides into 3 isomorphism types, 
while class (II) comprises infinitely many such types.

We begin our analysis with an easy consequence of Theorem \ref{TheoremE04};
it holds for arbitrary groups $A$ and $P$ and reads thus:
\begin{corollary}
\label{crl:Respecting-attractors}
\index{Theorem \ref{TheoremE04}!consequences}%
Suppose $G \subseteq G(I;A,P)$ contains the derived group of $B = B(I;A,P)$
and $\bar{G} \subseteq G(\bar{I};\bar{A}, \bar{P})$ contains the derived group 
of $\bar{B} = B(\bar{I};\bar{A}, \bar{P})$.
Assume, in addition,
that $I$ and $\bar{I}$ are bounded from below with endpoints $a \in A$ and $\bar{a} \in \bar{A}$.
Define subsets
\begin{equation}
\label{eq:Attracting-subsets}
\Att_a(G) = \{g \in G \mid g'(a_+) < 1  \} 
\;\text{ and } \;
\Att_{\bar{a}}(\bar{G}) = \{\bar{g} \in \bar{G} \mid \bar{g}'(\bar{a}_+) < 1  \}.
\end{equation}
Then every \emph{increasing} isomorphism $\alpha \colon G \iso \bar{G}$ maps $\Att_a(G)$ 
onto $\Att_{\bar{a}}(\bar{G})$.
\end{corollary}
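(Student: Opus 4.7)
The plan is to deduce the corollary from Theorem \ref{TheoremE04} by translating the algebraic condition ``$g'(a_+) < 1$'' into a dynamical condition that is manifestly invariant under order-preserving topological conjugacy. First I would invoke Theorem \ref{TheoremE04} to obtain the unique homeomorphism $\varphi \colon \Int(I) \iso \Int(\bar I)$ with $\alpha(g) \restriction \Int(\bar I) = \varphi \circ (g \restriction \Int(I)) \circ \varphi^{-1}$ for every $g \in G$; since $\alpha$ is increasing, $\varphi$ is order-preserving. Because $\Int(I) = \;]a,\sup I[$ and $\Int(\bar I) = \;]\bar a,\sup \bar I[$ and $\varphi$ is a homeomorphism between them that preserves the order, the infima correspond: $\lim_{t \searrow a} \varphi(t) = \bar a$ and $\lim_{\bar t \searrow \bar a} \varphi^{-1}(\bar t) = a$. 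Consequently $\varphi$ sends arbitrarily small right-neighbourhoods of $a$ in $I$ onto arbitrarily small right-neighbourhoods of $\bar a$ in $\bar I$.

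Next I would establish the dynamical characterisation: for $g \in G \subseteq G(I;A,P)$, one has $g \in \Att_a(G)$ if, and only if, there exists $\varepsilon > 0$ with $g(t) < t$ for all $t \in \;]a, a+\varepsilon[$. The proof is immediate from the structure of finitary PL-homeomorphisms with support in $[a,\infty[$ and slopes in $P$: on some right-neighbourhood of $a$ the map $g$ is affine with slope $\sigma_-(g) \in P$ and fixes $a$, so if $\sigma_-(g) < 1$ then $g(t) = \sigma_-(g)\,(t-a) + a < t$ there, whereas if $\sigma_-(g) = 1$ the element $g$ is the identity on a right-neighbourhood of $a$, and if $\sigma_-(g) > 1$ then $g(t) > t$ there. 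The same characterisation applies verbatim to $\bar G \subseteq G(\bar I;\bar A,\bar P)$ with $\bar a$ in place of $a$.

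Now suppose $g \in \Att_a(G)$ and choose $\varepsilon > 0$ with $g(t) < t$ for every $t \in \;]a, a+\varepsilon[$. By the preceding paragraph there exists $\bar\varepsilon > 0$ such that $\varphi^{-1}(\bar t) \in \;]a,a+\varepsilon[$ for every $\bar t \in \;]\bar a, \bar a + \bar\varepsilon[$. For such $\bar t$, the inequality $g(\varphi^{-1}(\bar t)) < \varphi^{-1}(\bar t)$ combined with the monotonicity of $\varphi$ yields
\[
\alpha(g)(\bar t) \;=\; \varphi\bigl(g(\varphi^{-1}(\bar t))\bigr) \;<\; \varphi(\varphi^{-1}(\bar t)) \;=\; \bar t.
\]
The dynamical criterion applied to $\bar G$ gives $\alpha(g) \in \Att_{\bar a}(\bar G)$. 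Running the same argument with the increasing isomorphism $\alpha^{-1} \colon \bar G \iso G$ in place of $\alpha$ yields the reverse inclusion, so $\alpha$ maps $\Att_a(G)$ bijectively onto $\Att_{\bar a}(\bar G)$.

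The only point that requires real care, and which I consider the main (though modest) obstacle, is the implicit extension of $\varphi$ to the boundary point $a$: nothing in Theorem \ref{TheoremE04} directly controls the behaviour of $\varphi$ at the endpoints of $I$ and $\bar I$. This gap is bridged by the purely order-theoretic observation that an order-preserving homeomorphism between two open intervals must send their infima to each other in the limit sense, which is exactly what is needed to transport the neighbourhood-based dynamical criterion from $a$ to $\bar a$.
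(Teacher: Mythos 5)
Your proposal is correct and follows essentially the same route as the paper's proof: the paper likewise rests on the dynamical characterisation of $\Att_a(G)$ as the set of $g$ with $g(t) < t$ on a small punctured right-neighbourhood of $a$, observes that any increasing homeomorphism $\varphi$ carries this condition over to $\bar{a}$, and concludes by interchanging the r\^{o}les of $G$ and $\bar{G}$. Your write-up merely supplies in full the justifications (the PL structure near the fixed endpoint, and the fact that an order-preserving homeomorphism of open intervals matches up their infima) that the paper leaves implicit.
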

\begin{proof}
The set $\Att_a(G)$ consists of all $g \in G$ 
for which there exist a small positive number $\varepsilon_g$ so that $g(t) < t $ 
for every $t \in \;]a, a + \varepsilon_g[$.
Every increasing homeomorphism $\varphi \colon \Int(I) \iso \Int(\bar{I})$ 
leads therefore to an inclusion 
of $\act{\varphi}\Att_a(G)$ into $ \Att_{\bar{a}}(\bar{G})$.
As the rôles of $G$ and $\bar{G}$ can be interchanged,
this inclusion is an equality.
\end{proof}
\subsubsection{The endomorphisms $\mu_m$ and $\nu_n$}
\label{sssec:18.5}
We continue with the  construction of endomorphisms $\mu_m$ 
and $\nu_n$ of $G([0,b];A,P)$ that map $B$ onto itself.  
Theorem \ref{TheoremA} allows one to find, 
\index{Theorem \ref{TheoremA}!consequences}%
for every integer $m \geq 1$,
an element $f \in G(\R;A,P)$
that maps the interval $[p\cdot b,b]$ onto $[p^m\cdot b,b]$. 
\footnote{Recall that $p$ is the generator of $P$ with $p < 1$.} 
For $j\geq 0$,
let $f_j$ denote the composition $(p^{mj}\cdot \id) \circ f \circ (p^{-j}\cdot \id)$.  
Then $f_j$ induces by restriction a homeomorphism
\begin{equation*}
f_{j*} \colon [p^{j+1}b,p^jb] \iso [(p^m)^{j+1}b,(p^m)^j b]
\end{equation*}
and by taking the union of these homeomorphisms
one ends up with an infinitary PL-homeomorphism  
$\varphi_m\colon \,]0,b] \iso\, ]0,b]$.  
One verifies, 
as in the proof of Theorem \ref{TheoremE07}, 
that $\varphi_m$ induces an endomorphism $\mu_m$ of $G([0,b];A,P)$
\label{Endomorphism-mu-m}%
\index{Endomorphism mum@Endomorphism $\mu_m$!definition|textbf}%
which maps $B$ onto itself,
and that it induces the endomorphism
\begin{equation}
\label{eq:Morphism-mu-m}
\bar\mu_m \colon \Z^2 \mono \Z^2,\quad (j,\ell) \mapsto (mj,\ell).
\end{equation}
Similarly, one can construct, 
for each $n \geq 1$, 
an endomorphism $\nu_n$ of $G([0,b];A,P)$
\label{Endomorphism-nu-n}%
which takes $B$ onto itself and gives rise to the endomorphism
\begin{equation}
\label{eq:Morphism-nu-n}
\index{Endomorphism nun@Endomorphism $\nu_n$!definition|textbf}%
\bar\nu_n \colon \Z^2 \mono \Z^2,\quad (j,\ell) \mapsto (j,n\ell).
\end{equation}
Note 
that the auto-homeomorphism $\imath$ of $I = [0,b]$, 
given by $t\mapsto b-t$, 
induces an automorphism $\iota$ of $G([0,b];A,P)$ 
\label{notation:iota}%
which gives rise to the automorphism
\begin{equation*}
\index{Group G([a,c];A,P)@Group $G([a,c];A,P)$!endomorphisms}%
\index{Homomorphism!09-iota-bar@$\bar{\iota}$}%
\bar\iota \colon \Z^2 \iso\Z^2,\quad  (j,\ell) \mapsto (\ell,j).
\end{equation*}
%
\subsubsection{Isomorphism types of groups with $G/B$ infinite cyclic}
\label{sssec:18.6}
We consider now a group $G$  in the class (I); 
by definition,
the image $Q$ of 
\[
\index{Homomorphism!16-pi-bar@$\bar{\pi}$}%
\pi \colon G \incl G([0,b];A,P) \epi \Z^2
\]
 is then infinite cyclic 
and so it is generated by an element of the form 
\begin{equation*}
(j_0,\ell_0)\text{ with } j_0 > 0, \text{ or with } j_0 = 0 \text{ and } \ell_0 > 0.
\end{equation*}
Two cases arise: 
if $j_0 = 0$ then the homomorphism $\nu_{\ell_0}$ maps 
\begin{equation*}
G_1 = \ker(\sigma_- \colon G([0,b];A,P) \to P)
\end{equation*}
isomorphically onto $G$. 
Similarly, if $j_0 > 0$ but $\ell_0 = 0$ 
then $\mu_{j_0}$ maps $\ker \sigma_+$ isomorphically onto $G$
and   $\ker \sigma_+$ is isomorphic to $G_1$.
In the second case, $j_0 > 0$, and $\ell_0 \neq 0$.  
If $\ell_0 > 0$ then the group
\begin{equation*}
G_+ = \left\{g \in G([0,b];A,P) \mid \sigma_-(g) = \sigma_+(g)\right\}
\end{equation*}
is mapped by $\nu_{\ell_0} \circ \mu_{j_0}$ onto $G$; 
if $\ell_0 < 0$ it is the group 
\begin{equation*}
G_- = \{g\in G([0,b];A,P) \mid \sigma_-(g) = (\sigma_+(g))^{-1}\}
\end{equation*}
that is mapped by $\nu_{|\ell_0|} \circ \mu_{j_0}$ onto the given group $G$.  
We conclude that each group in class (I) is isomorphic to one of the groups $G_1$, $G_+$ or $G_-$. 
The next lemma shows 
that no two of these three groups are isomorphic.

\begin{lemma}
\label{lemma:The-three-groups-are-not-isomorphic}
\index{Group G([a,c];A,P)@Group $G([a,c];A,P)$!classification of normal subgroups}%
\index{Classification!of normal subgroups}%
\index{Theorem \ref{TheoremE04}!consequences}%
No two of the three groups  $G_+$, $G_-$ and $G_1$ are isomorphic,
for each satisfies a characteristic property not enjoyed by the other two groups,
namely:
\begin{enumerate}[(i)]
\item $G_+$ is an ascending HNN-extension with a stable letter $g_+$
and a base group of the form $B_0 = G([a_1, b-a_1];A,P)$
where $a_1 \in A$ is a small positive number with $3a_1 < b$;
\item $G_-$ is generated by an element $g_-$
and the subgroup $B_0 = G([a_1, b-a_1];A,P)$
where $a_1 \in A$ is a small positive number with $3a_1 < b$,
but it is not an ascending HNN-extension of the type described in (i);
\item $G_1$ cannot be generated by a subgroup $B_0 = G([a_1, b-a_1];A,P)$
with $a_1 > 0$ and an additional element $g_1$.
\end{enumerate}
\end{lemma}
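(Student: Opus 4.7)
The plan is to prove properties (i), (ii), (iii) directly and to observe that they are pairwise incompatible, whence the three groups $G_+$, $G_-$, $G_1$ are mutually non-isomorphic. Throughout, $p$ denotes the generator of $P$ with $p < 1$, the slope homomorphisms at the endpoints of $[0,b]$ are $\sigma_\pm$, and for $a_1 \in A$ I set $B = B([0,b];A,P)$ and $B_0 = G([a_1, b-a_1]; A, P)$.

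Property (iii) for $G_1$. For any $a_1 \in A$ with $3a_1 < b$ and any $g_1 \in G_1 = \ker \sigma_-$, the condition $\sigma_-(g_1) = 1$ together with $g_1(0) = 0$ forces $g_1$ to be the identity on some interval $[0, \varepsilon_{g_1}]$ with $\varepsilon_{g_1} \in A_{>0}$, while every element of $B_0$ fixes $[0, a_1]$. Hence every element of $\langle B_0, g_1 \rangle$ is the identity on $[0, \min(a_1, \varepsilon_{g_1})]$. Choosing $c \in A$ and $\Delta \in A_{>0}$ with $c + (p+1)\Delta < \min(a_1, \varepsilon_{g_1})$, the bounded homeomorphism $b(c, \Delta; p)$ of Example \ref{example:Elements-in-B} lies in $G_1 \cap B$ but moves points of $(c, c + (p+1)\Delta)$; hence $\langle B_0, g_1 \rangle \subsetneq G_1$, which proves (iii).

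Property (i) for $G_+$. I take $g_+ \in G_+$ affine with slope $p^{-1}$ on an initial and a final segment $[0, c_1]$ and $[c_2, b]$ (with $c_1, c_2 \in A$), and affine on $[c_1, c_2]$ with a slope in $P \cap (0,1)$, arranged so that $g_+$ is a $\wedge$-shaped PL-homeomorphism of $[0,b]$ with a single interior fixed point $x^*$ attracting all of $(0,b)$ in forward iteration. Such a $g_+$ exists and satisfies $\sigma_\pm(g_+) = p^{-1}$, whence $g_+ \in G_+$. Picking $a_1 \in A$ with $a_1 < c_1$, $b-a_1 > c_2$, and $3a_1 < b$, the linearity of $g_+$ near the endpoints yields $g_+(a_1) = p^{-1} a_1 > a_1$ and $g_+(b-a_1) = b - p^{-1} a_1 < b - a_1$, so $g_+([a_1, b-a_1]) \subsetneq [a_1, b-a_1]$ and $\act{g_+} B_0 \subsetneq B_0$. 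For any $h \in B$, the attraction toward $x^*$ produces $n \in \N$ with $g_+^n(\supp h) \subset [a_1, b-a_1]$, whence $\act{g_+^n} h \in B_0$. Combined with $G_+/B \cong \Z$ generated by the class of $g_+$, every $g \in G_+$ admits a representation $g = g_+^{-n} h_0 g_+^m$ with $h_0 \in B_0$ and $n, m \in \N$. A standard Britton-style normal-form argument (applicable because $\act{g_+} B_0 \subsetneq B_0$ is strict) identifies $G_+$ with the ascending HNN extension $B_0 *_\phi$ for $\phi(h) = \act{g_+} h$, which gives (i).

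Property (ii) for $G_-$. I take $g_- \in G_-$ with $\sigma_-(g_-) = p$ and $\sigma_+(g_-) = p^{-1}$, constructed so that $g_-(t) < t$ for every $t \in (0,b)$; then $g_-^n(t) \to 0$ and $g_-^{-n}(t) \to b$ as $n \to +\infty$ for each $t \in (0, b)$. Pick $a_1 \in A$ with $3a_1 < b$ and small enough that $g_-$ is affine on $[0, a_1]$ and $[b-a_1, b]$. For generation, $B$ is the directed union of the subgroups $G(K; A, P)$ with $K$ a compact subinterval of $(0,b)$; for narrow enough $K$ there is $n \in \Z$ with $g_-^n(K) \subset [a_1, b-a_1]$, giving $G(K;A,P) \subset \langle B_0, g_-\rangle$, and wide $K$ can be fragmented into narrow pieces via the multiple transitivity of $B'$ from Remark \ref{remark:5.3}; therefore $G_- = \langle B_0, g_-\rangle$. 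To rule out an ascending HNN extension of the type (i), suppose some $t \in G_-$ has non-trivial image in $G_-/B \cong \Z$ and $t([a_1, b-a_1]) \subseteq [a_1, b-a_1]$ for some $a_1 \in A$ with $3a_1 < b$. Write $\sigma_-(t) = p^k$ and $\sigma_+(t) = p^{-k}$, $k \neq 0$; shrinking $a_1$ inside $A$ below every break of $t$ in $[0, a_1] \cup [b-a_1, b]$, we may assume $t$ is affine near both endpoints, giving $t(a_1) = p^k a_1$ and $t(b - a_1) = b - p^{-k} a_1$. If $k > 0$ then $p^k < 1$ forces $t(a_1) < a_1$, while if $k < 0$ then $p^{-k} < 1$ forces $t(b - a_1) > b - a_1$; both contradict the assumed inclusion, establishing (ii). The main obstacle is this last step, where one must rule out every conceivable stable letter $t \in G_-$ regardless of where $t$ has its breaks; the essential device is that the quantifier on $a_1$ in (i) is existential, so one may shrink $a_1$ after $t$ is given, placing oneself in the affine regime of $t$ at both endpoints and reducing the question to the above slope computation.
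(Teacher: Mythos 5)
Your verifications of properties (i), (ii) and (iii) are essentially sound (and close to the paper's own constructions), but the final step --- ``observe that they are pairwise incompatible, whence the three groups are mutually non-isomorphic'' --- is exactly where the real work lies, and it is missing. The three properties are \emph{not} abstract group-theoretic invariants: each of them refers to subgroups ``of the form $B_0 = G([a_1,b-a_1];A,P)$'', i.e.\ to specific subgroups of the chosen realization inside $G([0,b];A,P)$. An abstract isomorphism $\alpha\colon G_+ \iso G_1$, say, would carry the HNN structure of $G_+$ to an HNN structure of $G_1$ over the subgroup $\alpha(B_0)$, and nothing in your argument shows that $\alpha(B_0)$ must again be of the form $G([a_1',b-a_1'];A,P)$; so property (iii), which only rules out generation by subgroups of that special form, does not yet contradict anything. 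The paper closes this gap in a second part of the proof: by Theorem \ref{TheoremE04} any isomorphism between two of the three groups is induced by conjugation by a homeomorphism $\varphi$ of $]0,b[$, by Corollary \ref{CorollaryE5} it carries $B$ onto $B$ (so it carries the distinguished generator $g$ into $B\bar g \cup B\bar g^{-1}$), and by Corollary \ref{crl:Respecting-attractors} an increasing such isomorphism preserves the sets $\Att_0$ and $\Att_b$ of elements contracting at the endpoints; comparing the endpoint behaviour of $\varphi g\varphi^{-1}$ with that of the elements of $B\bar g\cup B\bar g^{-1}$ then separates the three cases. You need some version of this transfer argument.

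Two smaller points in your part (ii). First, the fragmentation of a wide compact interval into narrow ones is not a consequence of multiple transitivity alone; the correct tool is Lemma \ref{lem:Piecing-together}, which rests on Theorem \ref{TheoremA}. Second, in ruling out the HNN structure you assume $t([a_1,b-a_1])\subseteq[a_1,b-a_1]$ for the \emph{given} $a_1$ and then ``shrink $a_1$'' to enter the affine regime of $t$; but the containment hypothesis is not inherited by smaller $a_1$, so the slope computation at the new endpoints contradicts a hypothesis you never had. The repair is to use the full HNN requirement $\bigcup_n t^{-n}([a_1,b-a_1]) = \,]0,b[$, which forces $t^{-n}(a_1)\to 0$ and $t^{-n}(b-a_1)\to b$ simultaneously; since $\sigma_-(t)\sigma_+(t)=1$, the endpoints $0$ and $b$ cannot both be attracting for $t^{-1}$, which is the contradiction the paper draws.
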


\begin{proof}
The proof has two parts.
In the first one, the three statements (i), (ii) and (iii) are established.
In the second part we deduce from the first part and Theorem \ref{TheoremE04} 
that no two of the three groups are isomorphic.

We begin by fixing the notation
that will be used in the proofs of both (i) and (ii).
Choose a positive number $a_0 \in A$ with $3a_0 < b$. 
By Corollary \ref{CorollaryA2}
there exists then an element $g_\ell \in G(I;A,P)$  
with $\supp g_\ell \subset [0, a_0]$ 
whose right-hand derivative in 0 is $p$;
similarly, 
there exists an element $g_r \in G(I;A,P)$ 
with support contained in $[b-a_0, b]$
and whose left-hand derivative in $b$ is $p$.
Set $g_+ = g_\ell \circ g_r$ and $g_- = g_\ell \circ g_r^{-1}$.
Then $\sigma_-(g_+) = \sigma_+(g_+) = p$ and so $g_+ \in G_+$. 
On the other hand,
 $\sigma_-(g_-) =  p$ and $\sigma_+(g_-) = p^{-1}$,
 so $g_- \in G_-$.
 
Let  $a_1 \in A$ be a positive number which is so small 
that $g_\ell$ is linear on $[0, a_1]$ and $g_r$ is affine on $[b-a_1, b]$;
clearly $a_1 \leq a_0$.
Then both $g_+$ and $g_-$ are linear on $[0, a_1]$ and affine on $[b-a_1, b]$.
\smallskip
 
(i) The element $g_+$ generates a complement of $B = B([0,b];A,P)$ in $G_+$.
Set $B_0 = G([a_1, b- a_1];A,P)$.
Then $B_0$ is a subgroup of $B$ 
and it enjoys the following property:
\[
B_0 \subset \act{g_+} B_0 \subset \act{g_+^2} B_0 \subset \cdots 
\quad \text{and} \quad 
\bigcup\nolimits_{n \in \N} \act{g_+^n}B_0 = B.
\]
As $G_+$ is generated by  $B\, \cup\, \{g_+\}$, 
this property implies 
that $G_+$ is an ascending HNN-extension 
with stable letter $g_+$ and base group $B_0 = G([a_1, b-a_1];A,P)$.
\smallskip

(ii) The group $G_-$ is generated by  the subset $B \cup \{g_-\}$.
We want to show that $B$ is generated 
by the conjugates $B_0= G([a_1, b-a_1];A,P)$ by the powers of $g_-$.
By the choices of $a_1 \leq a_0$ and $g_-$, 
the generator $g_-$ fixes every point in the interval $[a_0, 2a_0]$
and it pushes the points in the interval $[0,a_1]$ towards 0.
Moreover, the inverse of $g_-$ moves every point of $[b-a_1, b]$ towards $b$.
It follows that the union of the conjugated subgroups $g_-^k B_0 g_-^{-k}$ 
for $k \in \N$ 
contains the subgroup $B([0, 2a_0];A,P)$.
Similarly, the subgroup $B([a_0, b];A,P)$ is contained in the union
of all the conjugated subgroups $g_-^{-j} B_0 g_-^{j}$ with $j \in \N$.
Lemma \ref{lem:Piecing-together} below allows us to deduce from these two facts 
that $B$ is generated by the conjugates of $B_0$ by the powers of $g_-$.

We are left with showing 
that $G_-$ is not an ascending HNN-extension with a base group of the form 
$B([\varepsilon , b- \varepsilon]; A, P)$ with $\varepsilon > 0$
and some stable letter $g_*$.
Consider an element $g_* \in G_-$  that generates a complement of $B$ in $G_-$.
Then $\sigma_-(g_*) \sigma_+(g_*) = 1$. 
So $g_*$ either moves points near 0 towards 0 and points near $b$ away from $b$,
or it moves points near 0 away from 0 and points near $b$ towards $b$.
The group $G_-$ thus cannot be an ascending HNN-extension with a base group 
whose elements have supports in an interval $[ \varepsilon, b- \varepsilon]$ and stable letter $g_*$.
\smallskip

(iii) Let $g_1$ be an element of $G_1$. 
Then $\sigma_-(g_1) = 1$ 
and so $g_1$ is the identity on a small interval $[0, a_1]$ with $a_1 > 0$.
So $G_1$ cannot be generated by a subgroup $B([a_1, b- a_1];A,P)$
with $a_1> 0$ and an additional element $g_1 \in G_1$.
\smallskip

We move on to the second part of the proof.
In the first part each of the three groups $G_+$, $G_-$ and $G_1$
has been shown to enjoy a characteristic property.
These properties involve the realizations of the groups as groups of PL-homeomorphisms. 
To deduce the claim that no two of the three groups are isomorphic,
we shall use these characteristic properties, 
along with Theorem \ref{TheoremE04}, Corollary \ref{CorollaryE5} 
and Corollary \ref{crl:Respecting-attractors}.
Let $G$, $\bar{G}$ be two of the groups $G_+$, $G_-$ and $G_1$,
not necessarily distinct, and let $\alpha \colon G \iso \bar{G}$ be an isomorphism.
Part (i) of Corollary \ref{CorollaryE5} guarantees 
that $\alpha$ maps the subgroup $B$ of $G$ 
onto the subgroup $B$ of $\bar{G}$.
Now both $G$ and $\bar{G}$ are generated by the subgroup $B$ 
and an additional element $g$, respectively $\bar{g}$;
so $\alpha(g) \in B \cdot \bar{g} \cup B \cdot \bar{g}^{-1}$.
The isomorphism $\alpha$ is induced by conjugation 
by some homeomorphism $\varphi \colon ]0,b[\, \iso \, ]0,b[$
(by Theorem \ref{TheoremE04}).

We compare now $\alpha(g) = \varphi \circ g \circ \varphi^{-1}$ 
with the elements in $B \cdot \bar{g}\, \cup \, B \cdot \bar{g}^{-1}$.
Suppose first that $G = G_1$ and choose $g = g_1$.
Then $g_1$ and hence $\alpha(g_1)$ are the identity near 0, 
while no element in $G_+ \smallsetminus B$ or in $G_- \smallsetminus B$
has this property.
So $G_1$ is neither isomorphic to $G_+$ nor to $G_-$.
Suppose next that $G = G_+$ and choose $g = g_+$.
If $\alpha$ is increasing, 
Corollary \ref{crl:Respecting-attractors} implies 
that the right-hand derivative of $\alpha(g_+)$ in 0 is smaller then 1;
the analogous result for the right end point $b$ proves, in addition, 
that $\alpha(g_+)'(b_-) < 1$.
So $\bar{G} = G_+$.
If $\alpha$ is decreasing, 
say induced by conjugation by $\varphi$,
replace $\varphi$  by $\imath \circ \varphi$ 
where $\imath$ is the reflection at the midpoint of $[0,b]$.
It then follows, as before, that $\bar{G} = G_+$.
\end{proof}

\begin{lem}
\label{lem:Piecing-together}
\index{Subgroup B([a,c];A,P)@Subgroup $B([a,c];A,P)$!generating sets}%
\index{Theorem \ref{TheoremA}!consequences}%
Let $A$ and $P$ be arbitrary and consider elements $a_1 < a_2 < c_ 1< c_2 $ in $A$.
Then the group $B([a_1,c_2];A, P)$ is generated by the set 
\[
B([a_1,c_1];A , P) \cup B([a_2,c_2];A, P).
\]
\end{lem}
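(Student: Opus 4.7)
Write $G_1 = B([a_1,c_1];A,P)$, $G_2 = B([a_2,c_2];A,P)$ and $G = B([a_1,c_2];A,P)$, and let $H$ denote the subgroup of $G$ generated by $G_1\cup G_2$. Since $A$ is dense in $\R$ and $a_2 < c_1$, choose a splitting point $b \in A\cap\,]a_2,c_1[\,$. The plan is to show that for every $f\in G$ one can find $h\in G_1 \cup G_2$ with $(h\circ f)(b)=b$, and then to factor $h\circ f$ across the point $b$ into a product of one element of $G_1$ and one of $G_2$; this expresses $f$ as $h^{-1} u_1 u_2$ with all three factors in $G_1\cup G_2$.

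First I would carry out the \emph{splitting step}, which is the easy half. Assume for the moment that $(h\circ f)(b)=b$ and set $g = h\circ f$. Since $g$ has bounded support in $(a_1, c_2)$ and fixes $b$, the two functions
\[
u_1(t) = \begin{cases} g(t) & t\leq b \\ t & t > b \end{cases}, \qquad
u_2(t) = \begin{cases} t & t< b \\ g(t) & t\geq b \end{cases}
\]
are well-defined PL-homeomorphisms in $G$, satisfy $g = u_1\circ u_2$, and have bounded supports contained in $]a_1,b]\subset [a_1,c_1]$ and $[b,c_2[\,\subset[a_2,c_2]$ respectively. Hence $u_1\in G_1$ and $u_2\in G_2$, so $f = h^{-1}\circ u_1\circ u_2$ lies in $H$ as required.

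Next comes the \emph{adjustment step}, which is where one uses the overlap of the intervals. Because $f\in G$ and $I=[a_1,c_2]\neq \R$, Corollary \ref{CorollaryA1} guarantees $f(b)-b \in IP\cdot A$. Since $a_2 < c_1$, the open interval $]a_1,c_2[$ is the union $]a_1,c_1[\,\cup\,]a_2,c_2[\,$, so either $f(b)\in\,]a_1,c_1[\,$ or $f(b)\in\,]a_2,c_2[\,$. In the first case both $b$ and $f(b)$ lie in $A\cap\,]a_1,c_1[\,$ and are congruent modulo $IP\cdot A$; thus they lie in a common orbit of $G_1$ by Corollary \ref{CorollaryA1}, and the multiple transitivity result of Remark \ref{remark:5.3} (applied to $G_1 = B([a_1,c_1];A,P)$) furnishes $h\in G_1$ with $h(f(b))=b$. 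In the second case the symmetric argument applied to $G_2$ yields $h\in G_2$ with the same property. In either eventuality $h\in G_1\cup G_2\subset H$, completing the proof.

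The main point to watch is that the two ingredients used in the adjustment step—namely that $f(b)-b\in IP\cdot A$ and that the bounded subgroups of $B([a_1,c_1];A,P)$ and $B([a_2,c_2];A,P)$ act transitively on their orbits in $A\cap\Int$—both genuinely require the subdivision point $b$ to lie in the \emph{overlap} $]a_2,c_1[$ of the two intervals, which is exactly where the hypothesis $a_2<c_1$ enters. No delicate estimates or approximation arguments are needed; once $b$ is chosen, the whole argument is a single application of Corollary \ref{CorollaryA1} together with the transitivity results already established in Section \ref{sec:5}.
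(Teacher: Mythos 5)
Your proof is correct and follows essentially the same route as the paper's: both choose a point $b$ in the overlap $]a_2,c_1[\,$, use the fact that $f(b)-b \in IP\cdot A$ to correct $f$ near $b$ by an element supported in one of the two subintervals, and then split the resulting homeomorphism at a point it fixes. The only difference is cosmetic: the paper builds the correction directly into the first factor by interpolating with a homeomorphism supplied by Theorem \ref{TheoremA} on a small interval $[b,b']$ of the overlap, obtaining a two-factor decomposition $g=g_1\circ g_2$, whereas you pre-compose with a transitivity element and obtain three factors.
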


\begin{proof}
Set $B =  B([a_1,c_2];A, P)$ and $B_i = B([a_i,c_i];A, P)$ for $i \in \{1,2\}$.
We intend to show that every element of $B$ lies in the complex product $B_1 \circ B_2$.

Let $g$ be a given element of $B$. 
Then $g$ fixes $a_1$ and so $f(t)- t \in IP \cdot A$ for every $t \in [a_1, c_2]$ 
(by Theorem \ref{TheoremA}). 
\index{Construction of PL-homeomorphisms!applications}%
Next,
choose $b$ and $b'$ in $IP \cdot A$ with $a_2 < b < b' < c_1$
and construct then, with the help of Theorem \ref{TheoremA}, 
\index{Theorem \ref{TheoremA}!consequences}%
an element $h \in G(\R;A, P)$ satisfying the requirements
$h(b) = g(b)$ and $h(b') = b'$.
Define, finally,  $g_1 \in G(\R; A,P)$ to be the PL-homeomorphism 
that coincides with $g$ on $[a_1, b]$,
with $h$ on $[b, b']$ and that  is the identity outside of $[a_1, b']$.
Then $g_1$ belongs to $B_1$,
while $g_2 = g_1^{-1} \circ g$ lies in $B_2$ and  so $g = g_1 \circ g_2$.
\end{proof}

\begin{remark}
\label{remark:ssec-18.7}
\footnote{This remark is due to R.\ Bieri and R.\ Geoghegan.}
\index{Bieri, R.}%
\index{Geoghegan, R.}%
If $\tilde G = G(I;A,P)$ is \emph{finitely generated} 
the subgroups $G_+$ and $G_-$ are finitely generated, too.
Indeed,
by Lemma \ref{lemma:The-three-groups-are-not-isomorphic}
each of these groups is generated 
by a subgroup of the form $G([a_1, b-a_1]; A,  P)$ and an additional element,
and the subgroup $G([a_1, b-a_1];A,P)$ is isomorphic to $\tilde G$
by Theorem \ref{TheoremE07}.
\index{Theorem \ref{TheoremE07}!consequences}%

If $\tilde G$ is \emph{finitely presented}
$G_+$ is finitely presented, 
being an ascending HNN-extension with base group $G([a_1,b-a_1];A,P)$ 
isomorphic to $\tilde G$ (by Theorem \ref{TheoremE07}),
\index{Theorem \ref{TheoremE07}!consequences}%
and stable letter generating a complement of $B(I;A,P)$.  
The group $G_-$, however, 
cannot be finitely presented.
Indeed, 
$G_-$ is a subgroup of $G(\R;\R_{\add}, \R^\times_{>0})$
and so it contains no non-abelian free subgroup (by \cite[Thm.\ 3.1]{BrSq85}).
If it admitted a finite presentation, 
the proof of Theorem A in \cite{BiSt78} would therefore allow us to find a 
\emph{finitely generated} subgroup $B_0$ in the kernel of $\sigma_- \colon G_- \epi P$
and a stable letter $x$ 
so that $G_-$ is an ascending HNN-extension with base group $B_0$.
\index{Bieri, R.}%
\index{Strebel, R.}%
Now,
the kernel of $\sigma \restriction{G_-}$ is nothing but the subgroup $B = B(I;A,P)$ 
and every finitely generated subgroup $B_1$ of $B$ 
is contained in $B_2 =G([\varepsilon, b-\varepsilon];A,P)$ 
for some $\varepsilon > 0$.
The stable letter $x$ would therefore have 0 and 1 as attracting fixed points
in contradiction to the fact 
that $B \cdot g_- \,\cup\,  B \cdot g_-^{-1}$ does not contain such an element.  
\end{remark}

\subsubsection{Investigation of subgroups of $\Z^2$ with finite index}
\label{sssec:18.8a}
\index{Group G([a,c];A,P)@Group $G([a,c];A,P)$!classification of subgroups with finite index|(}%
\index{Classification!of subgroups with finite index|(}%
%
If $G$ is in class (II), the subgroup  $Q$ has finite index in $\Z^2$.
We collect here some properties of such subgroups $Q$ 
that will be used in the next section.

Let $Q \subseteq \Z^2$ be a subgroup of finite index.
Then $Q$ gives rise to four groups
\[
Q_1= Q \cap (\Z \times \{0\}), \quad
Q_2 = Q \cap (\{0\} \times \Z), \quad
\im (\pi_1 \colon Q  \epi \Z) \text{ and } \im (\pi_2 \colon Q  \epi \Z);
\]
here $\pi_1$ and $\pi_2$ denote the canonical projections of $\Z^2$ onto its two factors.
All four groups are infinite cyclic; 
let $(m', 0)$, $(0,n')$ and $m$, $n$ be their positive generators.
The group $\inner(Q) = Q_1 \oplus Q_2$ is then a subgroup of $Q$
while $\Outer(Q ) =  \Z m \oplus \Z n$ contains $Q$;
these subgroups will be called  
the \emph{inner} and \emph{outer rectangles} of $Q$.
\footnote{These expressions are borrowed from \cite{BlWa07}; 
see the comments in \ref{sssec:Notes-ChapterE-Bleak-Wassink}.} 
\label{notation:Group inner(Q)}%
\label{notation:Group outer(Q)}%
\index{Subgroup inner(Q)@Subgroup $\inner(Q)$!definition|textbf}%
\index{Subgroup outer(Q)@Subgroup $\Outer(Q)$!definition|textbf}%

We consider now  the images of $\inner(Q)$ and $Q$ under the projection $\pi_1$.
The first image is $\Z \cdot m'$, 
the second  $\Z \cdot m$; 
as $\pi_1(\inner(Q))$ is a subgroup of $\pi_1(Q)$,
the integer  $m'$ is a multiple of $m$, 
say $m' = c \cdot m$.
Now the kernels of the restrictions of $\pi_1$ to the subgroups $\inner(Q)$ 
and to $Q$ 
are both equal to $\{0\} \times \Z \cdot  n'$; 
the integer $c$ is thus nothing but the index of $\inner(Q)$ in $Q$.
Similarly, one finds that $n' = c \cdot n$.

Next, we construct generating sets of $Q$.
By the previous paragraph the group $Q$ is an extension of 
$Q_1 = \Z \cdot m' \times \{0\}$ 
by a cyclic group generated by an element of the form $(j, n)$;
so $Q$ is generated by $(m',0)$ and $(j,n)$.
Similarly, $Q$ is generated by $(0,n')$ 
and an element of the form $(m, \ell)$.
There exist therefore integers $x$, $y$ and $d$, $e$ 
that satisfy the equations
\[
(j,n)= x \cdot  (0, n') + d\cdot ( m, \ell) \quad \text{and} \quad 
(m,\ell) = y \cdot (m', 0) + e \cdot (j,n).
\]
The first component of the first equation shows that $j = d \cdot m$;
the second component of the second equation implies that $\ell = e \cdot n$.
The second component of the first equation then states that  
\[
n = x \cdot n' + d \cdot \ell = x \cdot c \cdot n + d \cdot e \cdot n
\]
or, equivalently, that $1 = x \cdot c + d \cdot e$.
Two cases now arises.

If $c = 1$ 
then $Q = \inner(Q) = \Outer(Q)$ and $d$ and $e$ can be arbitrary integers;
if $c > 1$ then $\Z/\Z c$ is a non-trivial ring 
and so equation $1 = x \cdot c + d \cdot e$ states
that $d$ and $e $ are inverses of each other \emph{modulo} $c$.
Moreover,
the facts
that $m' = c \cdot m$, that  $j = d \cdot m$, and
that $Q$ is generated by $(m',0) = (c\cdot m, 0)$ 
and $(j,n) = (d \cdot m, n)$
implies that $d$ is  unique \emph{modulo} $c$.

The previous insights are summarized in
\begin{lemma}
\label{lem:Summary-analysis-Q} 
\index{Subgroup inner(Q)@Subgroup $\inner(Q)$!properties}
Suppose $Q$ is a subgroup of $\Z \times \Z$ with finite index,
and set $Q_1= Q \cap (\Z \times \{0\})$ 
and $Q_2 = Q \cap (\{0\} \times \Z)$.
Then there exists positive integers $m$, $n$ and $c$, $d$, $e$ with the following properties:
\begin{enumerate}[(i)]
\item  $\im (\pi_1 \colon Q \epi \Z) = \Z \cdot m$ 
and $\im (\pi_2 \colon Q \epi \Z) = \Z \cdot n$;
\item $c$ is the index of  $\inner(Q) = Q_1 \oplus Q_2$ in $Q$;
\item $Q_1 = \Z (c\cdot m,0)$ and $Q_2 = \Z (0, c\cdot n)$;
\item $Q = Q_1 + \Z (d  \cdot m, n) =  Q_2 + \Z (m, e \cdot  n)$.
\end{enumerate}
The numbers $m$, $n$ and $c$ are uniquely determined by $Q$; 
the numbers $d$ and $e$ are unique modulo $c$. 
Moreover, if $c > 1$ then $1 \equiv d \cdot e \pmod{c}$.
\end{lemma}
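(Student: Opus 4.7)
The plan is to exploit the fact that $Q$ has finite index in $\Z^2$, so each projection $\pi_i(Q)$ is a non-trivial subgroup of $\Z$, and each $Q_i = Q \cap \ker \pi_{3-i}$ is infinite cyclic. First I would define $m, n$ to be the positive generators of $\pi_1(Q)$ and $\pi_2(Q)$, and then let $(m', 0)$ and $(0, n')$ be the positive generators of $Q_1$ and $Q_2$. Since $\inner(Q) = Q_1 \oplus Q_2$ is a subgroup of $Q$ and $\pi_1$ maps the former onto $\Z \cdot m'$ and the latter onto $\Z \cdot m$, divisibility gives $m' = c \cdot m$ for a positive integer $c$; the analogous argument with $\pi_2$ yields $n' = c^* \cdot n$ for some positive integer $c^*$.

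Next I would verify that $c = c^* = [Q : \inner(Q)]$. The restriction $\pi_1 \restriction{\inner(Q)}$ and $\pi_1 \restriction{Q}$ share the same kernel $\{0\} \times \Z \cdot n'$, so passing to quotients gives $[Q : \inner(Q)] = [\Z m : \Z m'] = c$; the symmetric calculation with $\pi_2$ shows $[Q : \inner(Q)] = c^*$, establishing both (ii) and (iii). For (iv) I would observe that $Q/Q_1$ is infinite cyclic, generated by the class of any preimage of $n$ under $\pi_2$; picking such a preimage $(j, n)$ and writing $j = d \cdot m$ (which is forced since $(j, n) \in Q$ implies $j \in \pi_1(Q) = \Z m$) produces the first generating set $Q = Q_1 + \Z (dm, n)$; the symmetric argument provides $(m, en)$ and the second decomposition.

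To derive the relation $de \equiv 1 \pmod c$ when $c > 1$, I would express $(j, n) = (dm, n)$ as an integral combination of the generators $(0, n') = (0, cn)$ and $(m, en)$ of the second presentation; the first coordinate forces the coefficient of the second generator to equal $d$, and then the second coordinate gives $n = x \cdot cn + d \cdot en$ for some $x \in \Z$, i.e.\ $1 = xc + de$. Reducing modulo $c$ yields the stated congruence. Uniqueness of $m, n, c$ is immediate from their intrinsic characterisation via the images and the index; uniqueness of $d, e$ modulo $c$ follows from the fact that any two preimages of $n$ in $Q$ differ by an element of $Q_1 = \Z \cdot (cm, 0)$, so their first coordinates agree modulo $cm$, hence their quotients by $m$ agree modulo $c$.

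The argument is essentially a bookkeeping exercise in the elementary divisor theory of finite-index sublattices of $\Z^2$, and I do not anticipate a genuine obstacle; the only point requiring a touch of care is the joint verification that the two a priori different quantities ($m'/m$ from the horizontal comparison and $n'/n$ from the vertical one) coincide with the global index $[Q : \inner(Q)]$, which is what ensures the \emph{single} integer $c$ in the statement.
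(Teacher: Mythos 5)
Your proposal is correct and follows essentially the same route as the paper's own argument in section \ref{sssec:18.8a}: the same identification of $c$ as both $m'/m$ and $n'/n$ via the common kernel of the restricted projections, the same two generating pairs $(cm,0),(dm,n)$ and $(0,cn),(m,en)$, and the same change-of-generators computation yielding $1 = xc + de$. The only (harmless) differences are presentational — you obtain $j = dm$ directly from $j \in \pi_1(Q) = \Z m$ rather than from the first coordinate of the change-of-basis equation, and you spell out the uniqueness of $d$ modulo $c$ slightly more explicitly.
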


\subsubsection{Isomorphism types of groups with $G/B$ free abelian of rank 2}
\label{sssec:18.8b}
%
Suppose $G$ and $\bar{G}$ are subgroups of $G([0,b];A,P)$ 
which contain both the subgroup $B = B([0,b];A,P)$ 
and have finite index in $G([0,b];A,P)$.
Let $Q$ and $\bar{Q}$ be the images of $G$ and $\bar{G}$ under the epimorphism
\begin{equation}
\label{eq:Projection-pi}
\index{Homomorphism!16-pi@$\pi$}%
\xymatrix{\pi \colon G(I;A,P) \ar@{->>}[r]^-{(\sigma_-,\sigma_+)} 
&
P \times P \,\ar@{>->>}[r]^-\omega &\Z^2};
\end{equation}
here $\omega$ takes $(p^i, p^k)$ to $(i, k)$ and $p$ denotes the generator of $P$ with $p < 1$.

The aim of this section is to obtain necessary, as well as sufficient, conditions, 
for the existence of an isomorphism $\alpha \colon G \iso \bar{G}$. 
We begin with a remark on the strategy of our approach.
Suppose $G$ and $\bar{G}$ are subgroups of $G([0,b];A,P)$ as described before,
and $\alpha \colon G \iso \bar{G}$ is an isomorphism.
Since $G$ and $\bar{G}$ contain the subgroup $B = B(I; A, P)$,
Corollary \ref{CorollaryE5} implies then 
that $\alpha$ induces an isomorphism $\alpha_* \colon G/B \iso \bar{G}/B$.
\index{Theorem \ref{TheoremE04}!consequences}%
Next,
let $Q$ and $\bar{Q}$ denote the images of $G$ and $\bar{G}$, respectively,
under the projection $\pi$.
Then $\pi$  induces isomorphisms $\pi_* \colon G/B \iso Q$ 
and $\bar{\pi}_{* }\colon \bar{G}/B \iso \bar{Q}$.
Let $\alpha_* \colon Q \iso \bar{Q}$ denote the isomorphism 
that renders the square
\begin{equation}
\label{eq:Commuativity-square-new}
\xymatrix{
G/B \ar@{->}[r]^-{\pi_\star} \ar@{->}[d]^-{\alpha}
&Q \ar@{->}[d]^-{{\alpha}_*}\\
\bar{G}/B \ar@{->}[r]^-{\bar{\pi}_\star} &\bar{Q}
}
\end{equation}
commutative. 

The isomorphism $\alpha$ is induced 
by conjugation by an auto-homeo\-mor\-phism $\varphi$ of $\Int(I)$ 
(by Theorem \ref{TheoremE04}); 
\index{Theorem \ref{TheoremE04}!consequences}%
let $\tilde{\varphi}$  denote the unique extension of $\varphi$ to $I = [0, b]$.
Assume now that $\tilde{\varphi} $ is \emph{increasing} 
and that its \emph{right-hand derivative} in $0$ 
and its \emph{left-hand derivative} in  $b$ exist.
The chain rule permits one then to deduce 
that $\bar{Q} = Q$ and that $\alpha_* = \id_Q$.
This conclusion holds without the hypothesis 
on the differentiability of $\tilde{\varphi}$,
\emph{provided} $\Outer(Q)$ and $\Outer(\bar{Q})$ are equal to $\Z^2$.

More precisely, one has
\begin{lemma}
\label{lem:Isomorphisms-special-case}
\index{Homomorphism!09-iota@$\iota$}%
\index{Homomorphism!09-iota-bar@$\bar{\iota}$}%
Assume the rectangles $\Outer(Q)$ and $\Outer(\bar{Q})$ coincide with $\Z^2$,
and let $\alpha \colon  G \iso \bar{G}$ be an isomorphism.
Then the parameter $c$ of $Q$ coincides 
with the parameter $\bar{c}$ of $\bar{Q}$.
Moreover,
if $\alpha$ is increasing
then $\bar{Q} = Q$ and $\alpha_* = \id$, 
whereas $\alpha_* = \bar{\iota} \restriction {Q}$ 
and $\bar{Q} = \bar{\iota}(Q)$
if $\alpha$ is decreasing.
(Here $\bar{\iota}$ denotes the automorphism of $\Z^2$ 
mapping $(i, k)$ to $(k,i)$.)
\end{lemma}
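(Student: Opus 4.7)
My plan is to first handle the \emph{increasing} case, showing $\bar Q = Q$ and $\alpha_* = \id$, and then reduce the decreasing case to it by precomposition with the automorphism $\iota$ of $G([0,b];A,P)$ induced by the reflection $\imath \colon t\mapsto b-t$, which acts on $\Z^2$ as the coordinate swap $\bar\iota$. Concretely, if $\alpha$ is decreasing then $\iota\circ\alpha \colon G \iso \iota(\bar G)$ is increasing; the image of $\iota(\bar G)$ in $\Z^2$ is $\bar\iota(\bar Q)$, whose outer rectangle is again $\Z^2$; and the increasing case applied to this composition will yield $\bar\iota(\bar Q) = Q$ and $\iota_*\circ\alpha_* = \id$, i.e.\ $\bar Q = \bar\iota(Q)$ and $\alpha_* = \bar\iota\restriction{Q}$. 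The equality $c = \bar c$ will then be automatic in both cases, since $c$ is an invariant of the abstract subgroup of $\Z^2$ and is preserved by $\bar\iota$.

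For the increasing case, let $\varphi \colon \,]0,b[\,\iso\,]0,b[$ be the homeomorphism inducing $\alpha$ supplied by Theorem \ref{TheoremE04}; its continuous extension $\tilde\varphi$ to $[0,b]$ is increasing and hence fixes the endpoints $0$ and $b$. By Corollary \ref{crl:Respecting-attractors}, $\alpha$ maps $\Att_0(G)$ onto $\Att_0(\bar G)$; since $p<1$, translating via $\pi$ this reads $\pi_1(q) > 0 \iff \pi_1(\alpha_*(q)) > 0$ for $q \in Q$. An entirely analogous argument at the right endpoint $b$, using that $\tilde\varphi$ fixes $b$ and that every element of $G \subseteq G([0,b];A,P)$ is finitary PL in a left neighbourhood of $b$, shows that $\alpha_*$ also preserves the strict sign of $\pi_2$ on $Q$. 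Applying these statements to $-q$ and to $\alpha^{-1}$ promotes both to full sign-matching: for each $j \in \{1,2\}$, the integer-valued functionals $\pi_j$ and $\pi_j \circ \alpha_*$ have identical sign pattern on $Q$ (including zeros).

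Next I will extend $\alpha_*$ to a $\Q$-linear automorphism $T$ of $\Q^2 = Q\otimes_\Z\Q = \bar Q\otimes_\Z\Q$, which is legitimate since both $Q$ and $\bar Q$ have finite index in $\Z^2$. The sign-matching of $\pi_j$ and $\pi_j \circ T$ on $Q$ propagates to $\Q^2$ by $\Q$-linearity and forces $\ker(\pi_j) = \ker(\pi_j \circ T)$ together with agreement of sign at some nonzero vector; hence $\pi_j \circ T = c_j \pi_j$ for some $c_j \in \Q_{>0}$, $j \in \{1,2\}$. Thus $T$ is the diagonal map $(i,k)\mapsto(c_1 i, c_2 k)$. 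The hypothesis $\Outer(Q) = \Outer(\bar Q) = \Z^2$ supplies, via Lemma \ref{lem:Summary-analysis-Q}, an element $(d,1) \in Q$ and an element $(1,\bar e) \in \bar Q$; applying $T$ to $(d,1)$ gives $(c_1 d, c_2) \in \bar Q \subset \Z^2$, so $c_2$ is a positive integer, and applying $T^{-1}$ to $(1,\bar e)$ gives $(1/c_1, \bar e/c_2) \in Q \subset \Z^2$, forcing $c_1 = 1$. By symmetry $c_2 = 1$, so $T = \id$, yielding $\bar Q = T(Q) = Q$ and $\alpha_* = \id$.

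The main technical point that will require care is the transfer of Corollary \ref{crl:Respecting-attractors} from the left endpoint $0$ to the right endpoint $b$, since the corollary is formulated only on the left. This can be carried out either by reproving its one-line argument verbatim at $b$ (using $\tilde\varphi(b)=b$ and the finitary PL behaviour of elements of $G$ near $b$) or, alternatively, by conjugating $\alpha$ through $\iota$ so that $b$ is transported to the left endpoint. Beyond this, the argument is essentially the observation that a $\Q$-linear automorphism of $\Q^2$ agreeing in sign with each coordinate functional must be a positive diagonal map, combined with a single integrality check extracted from the outer-rectangle hypothesis.
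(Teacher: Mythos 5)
Your proof is correct in substance and follows the same strategic skeleton as the paper's: treat the increasing case first, using the homeomorphism supplied by Theorem \ref{TheoremE04} together with the fact that it fixes both endpoints of $[0,b]$, and then reduce the decreasing case by composing with the automorphism $\iota$ induced by the reflection $\imath\colon t\mapsto b-t$. The endgame, however, is organized differently. The paper records only which elements have trivial derivative at each endpoint: it shows that $\alpha$ carries $H_1=\{g\in G\mid g'(b_-)=1\}$ onto $\bar H_1$ and $H_2=\{g\in G\mid g'(0_+)=1\}$ onto $\bar H_2$, identifies $\pi(H_1)=Q_1=\Z(c,0)$ and $\pi(H_2)=Q_2=\Z(0,c)$ via Lemma \ref{lem:Summary-analysis-Q}, obtains $c=\bar c$ by comparing the indices of $\inner(Q)$ in $Q$ and of $\inner(\bar Q)$ in $\bar Q$, and concludes $\alpha_*=\id$ because $\alpha_*$ then fixes the finite-index subgroup $\inner(Q)$ pointwise and $\Z^2$ is torsion-free. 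You instead retain the full sign data from Corollary \ref{crl:Respecting-attractors} at both endpoints, extend $\alpha_*$ to a $\Q$-linear automorphism $T$ of $\Q^2$, show $T$ is a positive diagonal map, and finish with an integrality check against the outer-rectangle hypothesis. Your route has the virtue of making explicit the sign information that the paper uses only implicitly (the Notes acknowledge that Corollary \ref{crl:Respecting-attractors} enters the paper's argument implicitly at the step asserting $\alpha_*(c,0)=(\bar c,0)$ rather than $(-\bar c,0)$); the paper's route avoids the passage to $\Q^2$ and the integrality bookkeeping.

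One step of yours needs a two-line repair. From $T^{-1}(1,\bar e)=(1/c_1,\bar e/c_2)\in Q\subset\Z^2$ you conclude $c_1=1$, but this only yields $1/c_1\in\Z_{>0}$. To pin down $c_1$ you must also apply $T$ to the element $(1,e)\in Q$ furnished by Lemma \ref{lem:Summary-analysis-Q}(iv) (recall $m=1$ here), which gives $c_1\in\Z_{>0}$ and hence $c_1=1$; the mirror pair of computations, applying $T^{-1}$ to $(\bar d,1)\in\bar Q$, likewise upgrades $c_2\in\Z_{>0}$ to $c_2=1$. This uses only data you already have in hand and does not affect the structure of the argument.
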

\index{Subgroup outer(Q)@Subgroup $\Outer(Q)$!application}%
\index{Homomorphism!09-iota-bar@$\bar{\iota}$}%

\begin{proof}
Let $\varphi $ be the auto-homeomorphism of $]0,b[$ inducing $\alpha$;
it exists by Theorem \ref{TheoremE04}.
\index{Theorem \ref{TheoremE04}!consequences}%
\emph{Assume first that $\varphi$ is increasing}
and consider the subgroups
\begin{equation}
\label{eq:Definitions-H1-and-of-barH-1}
H_ 1 = \{g \in G \mid g'(b_-) = 1 \}
\quad \text{and} \quad 
\bar{H}_ b = \{\bar{g} \in \bar{G} \mid \bar{g}'(b_-) = 1 \}
\end{equation}
of $G([0,b];A,P)$.
The left-hand derivative $g'(b_-)$ of an element $g$ equals 1 if, and only if,
$g$ is the identity in a neighbourhood of $1$. 
Since $\varphi$ is increasing,
the image $\alpha(H_1) = \varphi \circ H_1 \circ \varphi^{-1}$ 
of $H_1$ coincides therefore with $\bar{H_1}$. 
The definitions of $H_1$ and of the projection $\pi$,
on the other hand, imply
that $\pi(H_1)$ equals $Q_1 = Q\, \cap\, ( \Z \times \{0\})$.
The latter subgroup can be identified with the help of claim (iii) in Lemma 
\ref{lem:Summary-analysis-Q}:
in view of the assumption on $\Outer(Q)$ one has $m = n = 1$.
It follows that
\[
Q_1 = \pi(H_1) = Q\, \cap\, ( \Z \times \{0\})  = \Z (c,0);
\]
here $c$ denotes the positive integer characterized in claim (ii) of the lemma.

So far we dealt with the subgroup $H_1$ of $G$.
The situation for the subgroup $\bar{H}_1$, 
defined in equation \eqref{eq:Definitions-H1-and-of-barH-1},
is entirely analogous and so
\[
\bar{Q}_1 = \pi(\bar{H}_1)= \bar{Q}\; \cap\; ( \Z \times \{0\})  = \Z (\bar{c}, 0)
\]
by statement (iii) of Lemma \ref{lem:Summary-analysis-Q}.

Now we invoke the commutativity of the square 
\eqref{eq:Commuativity-square-new}.
By the first paragraph of the proof, 
the isomorphism $\alpha \colon G \iso \bar{G}$ maps $H_1$ onto $\bar{H}_1$;
by the commutativity of the square this fact implies
that $\alpha_* \colon Q \iso \bar{Q}$ maps $Q_1$ onto $\bar{Q}_1$.
As $Q_1$ is infinite cyclic generated by $(c,0)$ 
and $\bar{Q}_1$ 
is generated by $(\bar{c}, 0)$, 
and as the parameters $c$ and $\bar{c}$ are positive 
one infers 
that  $\alpha_*$ maps $(c,0)$ onto  $(\bar{c}, 0)$.
Now to the subgroups
\begin{equation}
\label{eq:Definitions-H2-and-of-barH-2}
H_ 2 = \{g \in G \mid g'(0_+) = 1 \}
\quad \text{and} \quad 
\bar{H}_ 2 = \{\bar{g} \in \bar{G} \mid \bar{g}'(0_+) = 1 \}
\end{equation}
of $G([0,b];A,P)$.
Statement (iii) of Lemma \ref{lem:Summary-analysis-Q} implies for them 
that
\[
Q_2 = \pi(H_2) = Q\, \cap\, ( \{0\} \times \Z)  = \Z (0,c)
\quad \text{and} \quad
Q_2 = \pi(\bar{H}_2) =  \Z (0,\bar{c}).
\]

The proof is now quickly completed.
By the first paragraph,  $\alpha$ maps $H_1$ onto $\bar{H}_1$;
similarly, one proves that $\alpha(H_2) = \bar{H}_2$.
It follows that 
\[
\alpha_*(\inner(Q) )
= 
\alpha_* (Q_1 + Q_2) = \bar{Q}_1 +  \bar{Q}_2 = \inner(\bar{Q}),
\]
and thus $\alpha_*$ induces an isomorphism of 
$Q/\inner(Q)$ onto $\bar{Q}/\inner(\bar{Q})$.
In view of claim (ii) in Lemma \ref{lem:Summary-analysis-Q},
the first of these groups has order $c$,
the second order $\bar{c}$, whence $c = \bar{c}$.
But if so, $\alpha_*$ fixes every point of the subgroup 
$\inner(Q) = \Z(c,0) \oplus \Z(0,c)$; 
as $Q$ is torsion-free,
$\alpha_*$ fixes therefore every point $Q$
and so it is the identity on $Q$.
\smallskip

Assume, \emph{secondly},
that $\varphi $ is \emph{decreasing}
and let $\imath  \colon ]0,b[ \iso ]0,b[$ be  the reflection 
at the midpoint $b/2$ of the interval $[0,b]$.
Then $\psi = \imath \circ \varphi$ is increasing; 
let $\alpha_1 \colon G \iso \act{\vartheta}\bar{G}$ 
be the isomorphism induced by conjugation by $\psi$
and set $\tilde{Q}= \pi(\act{\imath}\bar{G})$.
Then $\tilde{Q} = Q$ by the previous part 
and so $\alpha_* = \iota \restriction{Q}$ and $\bar{Q} = \iota(Q)$.
\end{proof}

We are now set for establishing the announced characterization.
\begin{theorem}
\label{thm:Isomorphism-types-class-II}
Suppose $P$ is cyclic
and $G$, $\bar{G}$ are subgroups of $G([0,b];A,P)$ with finite index,
both containing $B$.
Set $Q = \pi(G)$ and $\bar{Q} = \pi(\bar{G})$,
and let $m$, $n$ and $c$, $d$, $e$ be the parameters associated to $Q$,
and defined in section \ref{sssec:18.8a}, 
and let $\bar{m}$, $\bar{n}$ and $\bar{c}$, $\bar{d}$, $\bar{e}$ 
be the analogous parameters associated to $\bar{Q}$. 
The following statements are then equivalent:
\begin{enumerate}[(i)]
\item $G$ and $\bar{G}$ are isomorphic;
\item $c = \bar{c} = 1$, or $c = \bar{c} > 1$ 
and, either $d \equiv \bar{d} \pmod{c}$ or $d \equiv \bar{e} \pmod{c}$.
\end{enumerate}
\end{theorem}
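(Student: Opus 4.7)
The plan is to replace $G$ and $\bar{G}$ by ``normalized'' subgroups whose outer rectangles coincide with $\Z^2$, and then appeal to Lemma \ref{lem:Isomorphisms-special-case}. Given $Q$ with invariants $(m, n, c, d, e)$ as in Lemma \ref{lem:Summary-analysis-Q}, I would introduce the normalized image $Q_0 = \Z(c, 0) + \Z(d, 1) = \Z(0, c) + \Z(1, e)$, a finite-index subgroup of $\Z^2$ having the same triple $(c, d, e)$ but with $m = n = 1$. The formulae $\bar\mu_m(j, \ell) = (mj, \ell)$ and $\bar\nu_n(j, \ell) = (j, n\ell)$ of section \ref{sssec:18.5} give $(\bar\mu_m \circ \bar\nu_n)(Q_0) = Q$; together with the identifications $\im(\mu_m) = \pi^{-1}(m\Z \times \Z)$ and $\im(\nu_n) = \pi^{-1}(\Z \times n\Z)$ recalled there, one obtains $\im(\mu_m \circ \nu_n) = \pi^{-1}(m\Z \times n\Z) \supseteq G$, since $Q \subseteq m\Z \times n\Z$ by the very definition of $m$ and $n$. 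Injectivity of $\mu_m \circ \nu_n$ then yields an isomorphism $G_0 := \pi^{-1}(Q_0) \iso G$, and analogously $\bar{G}_0 := \pi^{-1}(\bar{Q}_0) \iso \bar{G}$, where $\bar{Q}_0 = \Z(\bar{c}, 0) + \Z(\bar{d}, 1)$.

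For the implication (i) $\Rightarrow$ (ii) I would transport an isomorphism $\alpha \colon G \iso \bar{G}$ through the normalization to an isomorphism $\alpha_0 \colon G_0 \iso \bar{G}_0$ between groups whose outer rectangles both equal $\Z^2$. Lemma \ref{lem:Isomorphisms-special-case} will then force $c = \bar{c}$, and will identify the induced map on $\Z^2$ either as the identity---whence $Q_0 = \bar{Q}_0$ and thus $d \equiv \bar{d} \pmod{c}$---or as the swap $\bar\iota \colon (j, \ell) \mapsto (\ell, j)$, in which case $\bar\iota(Q_0) = \bar{Q}_0$, i.e.\ $\Z(0, c) + \Z(1, d) = \Z(0, \bar{c}) + \Z(1, \bar{e})$, giving $d \equiv \bar{e} \pmod{c}$.

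For the converse I would exhibit isomorphisms explicitly. If $c = \bar{c} = 1$ then $Q_0 = \bar{Q}_0 = \Z^2$, so both $G$ and $\bar{G}$ are isomorphic to $G([0,b]; A, P)$ itself via the normalizing maps. If $c = \bar{c} > 1$ and $d \equiv \bar{d} \pmod{c}$, the equality $Q_0 = \bar{Q}_0$ holds directly, hence $G_0 = \bar{G}_0$. If $c = \bar{c} > 1$ and $d \equiv \bar{e} \pmod{c}$, one has $\bar\iota(Q_0) = \bar{Q}_0$ by matching the two presentations of $\bar{Q}_0$ from Lemma \ref{lem:Summary-analysis-Q}(iv), and the automorphism $\iota$ of $G([0,b]; A, P)$ induced by the reflection $t \mapsto b - t$---which realizes $\bar\iota$ on $\Z^2$---restricts to an isomorphism $G_0 \iso \bar{G}_0$. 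In each scenario, composing with the normalizing maps yields $G \iso \bar{G}$.

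The main obstacle, I expect, is the bookkeeping underlying the normalization step, in particular checking the equality $(\mu_m \circ \nu_n)(G_0) = G$ on the nose, which relies on the precise descriptions of $\im(\mu_m)$ and $\im(\nu_n)$ from section \ref{sssec:18.5} together with the inclusion $Q \subseteq m\Z \times n\Z$. A secondary delicacy is the need to switch, in the decreasing case of the forward direction, between the two generating sets of $\bar{Q}_0$ furnished by Lemma \ref{lem:Summary-analysis-Q}(iv) in order to read off the congruence $d \equiv \bar{e} \pmod{c}$ from the equation $\bar\iota(Q_0) = \bar{Q}_0$.
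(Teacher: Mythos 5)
Your proposal is correct and follows essentially the same route as the paper's proof: normalize $Q$ and $\bar{Q}$ to finite-index subgroups of $\Z^2$ with outer rectangle $\Z^2$ via the monomorphisms $\mu_m \circ \nu_n$, transport the isomorphism, and invoke Lemma \ref{lem:Isomorphisms-special-case}, with the reflection $t \mapsto b-t$ realizing the swap $\bar{\iota}$ in the remaining case. The paper's argument is the same normalization (its $\check{Q}$, $\check{G}$ are your $Q_0$, $G_0$), so no further comment is needed.
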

\index{Group G([a,c];A,P)@Group $G([a,c];A,P)$!subgroups of finite index}
\index{Group G([a,c];A,P)@Group $G([a,c];A,P)$!isomorphic subgroups}
\index{Thompson's group F@Thompson's group $F$!subgroups of finite index}
\begin{proof}
We begin with a remark.
By assumption,
the group $Q = \pi(G)$ has the parameters $m$, $n$ and $c$, $d$, $e$,
and so it is generated by the ordered pairs $(c \cdot m,0)$ and $(d \cdot m, n)$
(see items (iii) and (iv) in Lemma \ref{lem:Summary-analysis-Q}).
Define $\check{Q}$ to be the subgroup of $\Z^2$ generated by 
$(c, 0)$ and $(d,1)$.
Then $\Outer(\check{Q}) = \Z^2$
(by the final claim of Lemma \ref{lem:Summary-analysis-Q})
and $Q$ is the image of $\check{Q}$ under the ``rescaling'' map 
$ \zeta_{m,n} \colon \Z^2 \to \Z^2$
sending $(x,y)$ to $(mx, ny)$.
We assert 
that  $(\zeta_{m, n})_* \colon \check{Q} \iso Q$ can be lifted to an isomorphism 
$\eta_* \colon \check{G} \iso G$ 
with $\check{G}$ a subgroup of $G([0,b];A,P)$ containing $G$.
Indeed,
let $\mu_m$ and $\nu_n$ be the monomorphisms of  $G([0,b];A,P)$ 
constructed in section \ref{sssec:18.5}
and set
\[
\eta = \mu_m \circ \nu_n \colon G([0,b];A, P) \mono G([0,b];A,P).
\]
The image of $\eta$ contains the subgroup $B$  
and formulae \eqref{eq:Morphism-mu-m} and \eqref{eq:Morphism-nu-n}
reveal that $\pi(\im \eta)$ is nothing  but $\Z m \times \Z n = \Outer(Q)$
and so $\im \eta$ contains $G$.

Set $\check{G} = \pi^{-1}(\check{Q})$;
then $\pi(\check{G}) = \check{Q}$.
Let $\eta_* \colon \check{G} \iso G$ be the isomorphism 
obtained from $\eta$ by restricting both the domain and the range suitably.
The isomorphisms  $\eta_*$ and $(\zeta_{m,n})_*$ 
render commutative the left square 
displayed in \eqref{eq:Relation-eta-bar-eta}.
\begin{equation}
\label{eq:Relation-eta-bar-eta}
%
\xymatrix{
\check{G} \ar@{->}[r]^-{\check{\pi}_*} \ar@{->}[d]^-{\eta_*}
&\check{Q}\ar@{->}[d]^-{(\zeta_{m,n})_*}\\
G \ar@{->}[r]^-{\pi_\star} &Q
}
\qquad \qquad
\xymatrix{
\hat{G} \ar@{->}[r]^-{\hat{\pi}_*} \ar@{->}[d]^-{\bar{\eta}_*}
&\check{Q}\ar@{->}[d]^-{(\zeta_{\bar{m},\bar{n}})_*}\\
\bar{G} \ar@{->}[r]^-{\pi_\star} &\bar{Q}
}
\end{equation}
In this square,
$\check{\pi}_* \colon \check{G} \epi \check{Q}$ and $\pi_*  \colon G \epi Q$ 
denote the epimorphisms obtained from $\pi$ 
by restricting domain and range.
In the same way, 
one can construct a monomorphism $\bar{\eta} = \mu_{\bar{m}} \circ \nu_{\bar{n}}$ 
and a subgroup $\hat{G}$ in $G([0,b];A, P)$ 
that is mapped by $\bar{\eta}$ onto $\bar{G}$
and that renders commutative the square shown on the right.
\index{Endomorphism mum@Endomorphism $\mu_m$!applications}
\index{Endomorphism nun@Endomorphism $\nu_n$!applications}
\index{Isomorphisms!construction}
\smallskip

$(i) \Rightarrow (ii)$.
Let $\alpha \colon G \iso \bar{G}$ be an isomorphism 
and set 
\[
\alpha_1 = (\bar{\eta}_*)^{-1} \circ \alpha \circ \eta_* \colon \check{G} \longrightarrow \hat{Q}.
\]
Then $\alpha_1$ is an isomorphism and it is increasing precisely if $\alpha$ is so.
\footnote{The monomorphisms $\mu_m$  and $\nu_n$ are induced by conjugation 
by homeomorphisms which are obviously orientation preserving; 
see section \ref{sssec:18.5}.}
Lemma \ref{lem:Isomorphisms-special-case} shows 
that  $c$ and $\bar{c}$ coincide and
that $\check{Q} =\hat{Q}$ if $\alpha$ is increasing 
or that $\check{Q} = \iota(\hat{Q})$ if $\alpha$ is decreasing.
Moreover, if $c > 1$ and $\check{Q} = \hat{Q}$ then  $d \equiv \bar{d} \pmod{c}$;
if $c > 1$ and $\check{Q} = \iota(\hat{Q})$ then $d \equiv \bar{e} \pmod{c}$. 
\smallskip

$(ii) \Rightarrow (i)$.
If $c = \bar{c} = 1$ then $\check{Q} = \hat{Q}$; 
if $c = \bar{c} > 1$ and $d \equiv \bar{d} \pmod{c}$
the equality $\check{Q} = \hat{Q}$ holds again.
In both cases, 
the groups $\check{G}$ and $\hat{G}$ are equal;
the isomorphism $\bar{\eta}_* \circ \eta^{-1}_*$ is therefore defined 
and it maps $G$ onto $\bar{G}$.
Assume, finally, that $c = \bar{c} > 1$ and that $d \equiv \bar{e} \pmod{c}$.
Then $\hat{Q} = \iota(\check{Q})$.
Now $G([0,b];A,P)$ admits an automorphism that corresponds under $\pi$ to this exchange of factors,
namely the automorphism $\iota$  induced by conjugation 
by the reflection $\imath$ at the midpoint of the interval $[0,b]$.
It follows that $\hat{G} = \act{\imath}\check{G}$ 
and so the composition
$\bar{\eta}_* \circ \iota \circ \eta^{-1}_*$
maps $G$ onto $\bar{G}$.
\end{proof}

\begin{remark}
\label{remark:Analogues-of-thm-ismorphism-types-class-II}
Theorem  \ref{thm:Isomorphism-types-class-II} shows, in particular,
that a group of the form $G =G(I;A, P)$ with $I = [0,b]$, 
endpoint $b \in A$ and cyclic slope group $P$
has many subgroups of finite index 
that contain $B([0,b];A, P)$ and are isomorphic to $G$.
By Supplement \ref{SupplementE11New},
\index{Supplement \ref{SupplementE11New}!consequences}%
this cannot happen 
if $P$ is \emph{non-cyclic} and $I$ is one of the intervals $\R$, $[0,\infty[$ 
or $[0,b]$ with $b \in A$.
\end{remark}
\index{Group G([a,c];A,P)@Group $G([a,c];A,P)$!classification of subgroups with finite index|)}%
\index{Classification!of subgroups with finite index|)}%
%
%
\section{Automorphism groups of groups with cyclic slope groups}
\label{sec:19}
%
In this section,
we have assembled our findings 
on the automorphism groups of groups $G =G(I;A,P)$ 
with cyclic slope group $P$.
These results show
that the complexity of the automorphism groups increases markedly 
if one passes from groups with $I$ a line, 
to groups with $I$ a closed half line,
to groups with $I$ an open half line.
The outer automorphism groups are an indicator of this growing complexity:
if $I$ is the line, $\Out G $ is abelian and isomorphic to $\Aut(A)/P$;
if $I$ is a closed half line, $\Out G $ contains a copy of $G$;
if, thirdly, $I$ is an open half line,
$\Out G $ contains the cartesian product $\prod_{n \in \N} G_n$ of countably many copies of $G$.
The three cases share, on the other hand, a common feature: 
\emph{every automorphism is induced  by conjugation by a PL-homeomorphism} defined on $\Int(I)$.
We have not been able to determine
whether this property continues to hold 
if $I$ is a compact interval with endpoints in $A$;
all we can offer is a satisfactory description of the subgroup of $\Out G $ 
induced by conjugation by PL-homeomorphisms.
\footnote{See sections \ref{sssec:Notes-ChapterE-Brin96}
and  \ref{sssec:Notes-ChapterE-BrGu98} for updates.}

The section divides into three parts.
In the first one,
some subgroups of the group of all PL-homeomorphisms of $\R$ are introduced
that help one to describe the automorphism groups.
The second part contains our results on the automorphism groups 
for $I$ a line or a half line.
The third part, finally, collects our findings 
on the automorphism groups of groups $G(I;A,P)$ 
with $I$ a compact interval. 
%
\subsection{Preliminaries}
\label{ssec:19.1}
%
We begin with a reminder.
Let $(I,A,P)$ be an arbitrary triple 
that satisfies the non-triviality assumptions \eqref{eq:Non-triviality-assumption} 
and set $G = G(I;A,P)$. 
By Theorem \ref{TheoremE04},
the automorphism group of $G$ is then isomorphic to the normalizer of $G$ 
in the ambient group $\Homeo(\Int(I))$, \label{Homeo(J)}
provided one views the elements of $G$ as homeo\-mor\-phisms of $\Int(I)$.
We shall denote this normalizer by $\Autfr G$. 
\label{notation:Autfr}%
Conjugation induces then an isomorphism 
\begin{equation}
\label{eq:19.1}
\index{Homomorphism!08-theta@$\vartheta$}%
\vartheta \colon \Autfr G \iso \Aut G.
\end{equation}

The automorphism group of $G$ will be described 
by giving details about its normalizer $\Autfr G$ of $G$.
In the discussion of this normalizer
various subgroups turn out to be useful,
in particular the two subgroups defined next.

Let $J$ be a non-empty \emph{open} interval of the real line,
let $A$ be a subgroup of $\R_{\add}$ and let $P$ a subgroup of $\Aut_o(A)$.
Define $G_\infty(J;A,P)$ to be the group 
\label{definition:Ginfty(J;A,P)}%
\index{Group Ginfty(J;A,P)@Group $G_\infty(J;A,P)$!definition|textbf}%
consisting all (finitary or infinitary) PL-homeomorphisms  
$f \colon J\iso J$ 
which map $A \cap J$ onto itself, have \emph{slopes in} $P$,
singularities in $A$, and no more than finitely many singularities 
in any given compact subinterval of $J$.

In the sequel, 
$G_\infty = G_\infty(\R;A,P)$ will play the role of a containing group.
One subgroup that is worth being singled out at this stage
is the subgroup of \emph{periodic elements} with period $\pfr \in A_{>0}$;
it is defined by
\begin{equation}
\label{eq:Definition-periodic-subgroup}
\index{Group Ginfty(R;pfr,A,P)@Group $G_\infty(\R;\pfr;A,P)$!definition|textbf}%
\index{Group Autfr G([0,1];A,P)@Group $\Autfr G([0,1];A,P)$!subgroups}%
G_\infty(\R;\pfr;A,P) = 
\{f\in G_\infty \mid f(t+\pfr) = f(t) + \pfr \text{ for all }t \in \R\}
\end{equation}

For every period $\pfr \in A_{>0}$, 
the group $G_\infty(\R;\pfr;A,P)$ gives rise to a group 
made up of homeomorphisms of the circle $\R/\Z \pfr$.
Indeed,
the covering map $\pi \colon \R \epi \R/\Z  \pfr$ induces,
by passage to the quotients,
a homomorphism
\begin{equation}
\label{eq:19.2}
\index{Homomorphism!10-kappa@$\kappa$}%
\kappa \colon G_\infty(\R;\pfr;A,P) \to \Homeo(\R/\Z \pfr)
\end{equation}
into the group of auto-homeomorphisms of the circle $\R/\Z  \pfr$.  
The image of $\kappa$ will be denoted by $T(\R/\Z \pfr;A,P)$ 
\label{definition:T(R/Zpfr;A,P)}%
\index{Group T(R/Zpfr;A,P)@Group $T(\R/\Z \pfr;A,P)$!definition|textbf}%
and called the  \emph{group of finitary PL-homeomorphisms of the circle 
$\R/\Z \pfr$ with slopes in $P$ and singularities in $A/\Z\pfr$}.  
The kernel of $\kappa$ is infinite cyclic, 
generated by the translation with amplitude $\pfr$.
\begin{remark}
\label{remark:Homomorphism-gamma}%
\index{Thurston, W. P.}%
\index{Thompson's group T@Thompson's group $T$!Thurston's characterization}\index{Thompson, R. J.}%
W. P. Thurston discovered 
that $T(\R/\Z;\Z[1/2],\gp(2))$ is isomorphic to Thompson's simple groups $T$.  
\label{definition:T}%
Its generalization $T(\R/\Z \pfr;A,P)$, however, is not always simple.  
To see this,
we go back to the group $G_\infty(\R;A,P)$.
Each of its elements $f$ maps $A$ onto itself.
Moreover, if $a_1 < a_2$ are in $A$ 
then $f$ has only finitely many singularities in the compact interval $[a_1, a_2]$,
and so $f(a_2) - f(a_1)$ is congruent to $a_2-a_1$ \emph{modulo} $IP \cdot A$
by Theorem \ref{TheoremA}.
\index{Construction of PL-homeomorphisms!applications}%
This finding can be restated by saying that
\begin{equation}
\label{eq:Consequence-TheoremA}
f(a_1) - a_1 \equiv f(a_2)-a_2 \pmod{IP \cdot A} 
\quad \text{for all} \quad
(a_1, a_2) \in A^2.
\end{equation}

The above findings allow us to define a homomorphism $\gamma$ of 
$G_\infty(\R;A,P)$ onto the abelian group $A/(IP \cdot A)$.
To do so, we fix $a_0 \in A$ and consider the function
\begin{equation}
\label{eq:Definition-gamma}
\index{Group Ginfty(J;A,P)@Group $G_\infty(J;A,P)$!abelianization}%
\index{Quotient group A/IPA@Quotient group $A/(IP \cdot A)$!applications}%
\index{Homomorphism!03-gamma@$\gamma$}%
\gamma \colon G_\infty(\R; A,P) \to A/(IP\cdot A), 
\quad f \longmapsto (f(a_0) - a_0 )+ IP \cdot A.
\end{equation}
Statement  \eqref{eq:Consequence-TheoremA} shows 
that the function $\gamma$ does not depend on the choice $a_0$, 
a fact that implies that $\gamma$ is a homomorphism.
Indeed, if $f_1$, $f_2$ are elements of $G_\infty(\R; A,P)$,
the following calculation is valid:
\begin{align*}
\gamma (f_2 \circ f_1)
=
(f_2 \circ f_1) (a_0) -a_0 
&= (f_2(f_1(a_0)) - f_1(a_0)) + (f_1(a_0) - a_0)
\\
&=
f_2(f_1(a_0)) - f_1(a_0)) + \gamma(f_1)
=
\gamma(f_2)+ \gamma(f_1).
\end{align*}
As $G_\infty(\R; A,P) $ contains every translation with amplitude in $A$,
the homomorphism $\gamma$ is surjective.

\emph{Assume now that $\pfr$ lies in $IP \cdot A$}
and let $\gamma_{\pfr} \colon G_\infty(\R;\pfr;A,P) \to A/(IP \cdot A)$
be the restriction of $\gamma$ to the subgroup 
$G_\infty(\R;\pfr;A,P)$ of $G_\infty(\R; A,P) $.
Since the translation $h_\pfr$ with amplitude $\pfr$ 
lies in the kernel of $\gamma_\pfr$
the assignment
\begin{equation}
\label{eq:Definition-gammap-bar}
\index{Homomorphism!03-gamma-bar-p@$\bar{\gamma}_\pfr$}%
\bar{\gamma}_\pfr \colon T(\R/ \Z \pfr; A,P) \epi A/(IP \cdot A), 
\quad \bar{f} \longmapsto \bar{f}(a_0 + \Z \pfr) - (a_0 + \Z \pfr)+ IP \cdot A.
\end{equation}
is licit and defines an epimorphism onto the abelian group  $A/(IP \cdot A)$
\end{remark}
\index{Group T(R/Zpfr;A,P)@Group $T(\R/\Z \pfr;A,P)$!abelianization}%
\index{Submodule IPA@Submodule $IP \cdot A$!significance}%
\index{Group Ginfty(R;pfr,A,P)@Group $G_\infty(\R;\pfr;A,P)$!properties}%
%
\subsubsection{Hypotheses imposed for the remainder of Section \ref{sec:19}}
\label{sssec:19.1a} 
From now on, $P$ denotes a cyclic group with generator $p > 1$.
We assume 
that $A$ contains the integers $\Z$ and that $[0,b]$ is the unit interval $[0, 1]$;  
by Theorem \ref{TheoremE07} 
\index{Theorem \ref{TheoremE07}!consequences}%
this last assumption entails no loss of generality.  
Note that $\pfr = p-1$ is then a positive element of $IP\cdot A$.
%
\subsection{Groups with interval the line or a half Line}
\label{ssec:19.2New}
In this section, 
we study the automorphism groups of those groups 
for which we can prove 
that every automorphism is given by conjugation by a PL-homeomorphism.
%
\subsubsection{The automorphism group of $G(\R;A,P)$}
\label{ssec:19.2a}
\index{Group G(R;A,P)@Group $G(\R;A,P)$!automorphism group}%
\index{Automorphism group of G(R;A,P)@Automorphism group of $G(\R;A,P)$!description}%
%
The automorphism group of $G(\R;A,P)$ has the form 
described by items (i) and (iii) of Corollary \ref{CorollaryE13}.
Indeed, 
by part (i) of Theorem \ref{TheoremE14} 
the group $\Autfr G(\R;A,P)$ 
is made up of those \emph{finitary} PL-homeomorphisms $f$ 
that map $A$ onto itself, have slopes in a coset $s_f \cdot P$,
with $s$ an element of 
\begin{equation}
\label{eq:19.3}
\Aut(A) = \{s\in \R^\times \mid s\cdot A = A\},
\end{equation}
and singularities in $A^2$.
In addition, 
the outer automorphism group of $G(\R;A,P)$ is isomorphic to $\Aut(A)/P$.
\index{Outer automorphism group of!G(R;A,P)@$G(\R;A,P)$}%
Note that $\Autfr G(\R;A,P) $ contains a familiar copy of $\Aut(A)$, 
the group of homotheties
\begin{equation*}
\Aut(A)^\wedge = \{s\cdot \id\mid s\in \Aut(A)\},
\end{equation*}
and so $\Autfr G(\R;A,P)  = G(\R;A,P)\cdot \Aut(A)^\wedge$.
(Note that  $\Autfr G(\R;A,P) $ has the same description
if $P$ is a non-cyclic subgroup of $\R^\times_{>0}$.)
\index{Group G(R;A,P)@Group $G(\R;A,P)$!outer automorphism group}%
\index{Group P cyclic@Group $P$ cyclic!consequences}%

%
\subsubsection{Automorphism group of $G([0, \infty[\,;A,P) $}
\label{sssec:19.2b}
\index{Group G([0,infty[;A,P)@Group $G([0, \infty[\;;A,P)$!automorphism group|(}
\index{Automorphism group of G([0,infty[;A,P)@Automorphism group of $G([0, \infty[\;;A,P)$!description|(}%
\index{Theorem \ref{TheoremE04}!consequences}%
We shall study both the automorphism group and the outer automorphism group 
of $G =G([0,\infty[\;;A,P)$. 
We begin with some preliminaries.
By Theorem \ref{TheoremE04} and part (ii) of Theorem \ref{TheoremE14},  
every element $\varphi \in \Autfr G $ is an increasing  PL-homeomorphism of $]0,\infty[$
that maps $A\, \cap \, ]0, \infty[$ onto itself,
with slopes in a coset $s_\varphi \cdot P$ and singularities in $A$;
the singularities may be infinite in number, 
but if so they accumulate only in 0.
The number $s_\varphi$ is an element of
\begin{equation*}
\Aut_o(A) = \{s\in \R^\times_{>0} \mid s\cdot A = A\}.
\end{equation*}
If follows that there exists a homomorphism 
\[
\label{notation:eta}
\index{Homomorphism!07-eta@$\eta$}%
\eta \colon \Autfr G  \to \Aut_o (A)/P
\] 
\index{Group Auto(A)@Group $\Aut_o(A)$!significance}%
which sends a PL-auto-homeomorphism $\varphi$ to the coset $s_\varphi \cdot P$;
\cf{}section \ref{sssec:Automorphism-Result-non-cyclic}.
This homomorphism is surjective;
indeed, $\Autfr G$ contains a copy of $\Aut_o(A)$ made up of the PL-homeomorphisms of the form
\begin{equation}
\label{eq:Definition-copy-Auto(A)}
\index{Subgroup Auto(A)@Subgroup $\Aut_o(A)$!significance}%
t \mapsto \begin{cases} 
s\cdot t &\text{ if } t \geq 0,\\ 1 \cdot t&\text{ if } t \leq 0, \end{cases}
\end{equation}
with $s \in \Autfr_o(A)$.

For the more detailed analysis of $\Autfr G$
it is convenient to replace $G$ by its image $\bar{G}$
\label{notation:bar(G)}%
under the embedding $\mu_{2,1} \colon G([0, \infty[\,;A,P) \mono G(\R;A,P)$ 
detailed in section \ref{sssec:Embedding-mu2}. 
This embedding is induced by the PL-homeomorphism  $\psi_{2,1}^{-1} \colon \,]0, \infty[\, \iso \R$
that is increasing, maps $A \; \cap \; ]0, \infty[$ onto $A$,
has slopes in $P$ and infinitely many singularities which,
however, accumulate only in 0. 
Moreover, 
the homomorphism $\eta$ gives rise to an epimorphism 
\[
\index{Homomorphism!07-eta-bar@$\bar{\eta}$}%
\bar{\eta} \colon \Autfr \bar{G}  \to \Aut_o (A)/P.
\] 

The elements in its kernel have therefore the properties listed in
\begin{lemma}
\label{lem:Properties-Autfr-bar-G}
\index{Group Ginfty(J;A,P)@Group $G_\infty(J;A,P)$!significance}%
Each element $\bar{\varphi}\in \ker \bar{\eta}$ lies in the group
$G_\infty(\R;A,P)$ (defined in section \ref{ssec:19.1})
and its singularities can only accumulate in $-\infty$.
\end{lemma}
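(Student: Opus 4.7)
The plan is to transfer the question back across the embedding $\mu_{2,1}$ and then exploit the explicit description of $\Autfr G$ given by part (ii) of Theorem \ref{TheoremE14}. Since $\mu_{2,1}$ is induced by conjugation by $\psi_{2,1}^{-1}\colon\,]0,\infty[\,\iso\R$, normalizers correspond: the assignment $\varphi\mapsto \bar\varphi=\psi_{2,1}^{-1}\circ\varphi\circ\psi_{2,1}$ yields a bijection between $\Autfr G$ and $\Autfr\bar G$, and the resulting square intertwines $\eta$ and $\bar\eta$. Consequently, every $\bar\varphi\in\ker\bar\eta$ comes from a unique $\varphi\in\ker\eta$, that is, an increasing PL-auto-homeomorphism of $]0,\infty[$ whose slopes lie in $P$ (not merely in a coset $s\cdot P$), whose singularities lie in $A$, whose set of singularities contains only finitely many points of each interval $[1/k,\infty[\,$, and which maps $A\cap\,]0,\infty[$ onto itself.

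Next I would analyse $\bar\varphi$ separately on $[0,\infty[$ and on $\,]-\infty,0]$. On $[0,\infty[$ the map $\psi_{2,1}$ is the translation $t\mapsto t+1$ onto $[1,\infty[$, and on $[1,\infty[$ the function $\varphi$ has, by Theorem \ref{TheoremE14}(ii) (applied with $k=1$), only finitely many singularities; in particular, $\varphi$ is affine on some terminal half-line $[t_0,\infty[$. For $t\geq t_0-1$ we then have $\varphi(t+1)\geq 1$, so $\psi_{2,1}^{-1}(\varphi(t+1))=\varphi(t+1)-1$, which shows that $\bar\varphi$ is a finitary PL-homeomorphism on $[0,\infty[$ with the same finite set of breaks as $\varphi\restriction_{[1,\infty[}$ (translated by $-1$), with slopes in $P$ and breaks in $A$, and eventually affine as $t\to+\infty$. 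In particular, no accumulation of singularities occurs at $+\infty$.

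The delicate part is the behaviour on $\,]-\infty,0]$. Here $\psi_{2,1}$ is the infinitary PL-homeomorphism onto $\,]0,1]$ whose breaks $-j(p-1)$ ($j\geq 0$) accumulate only at $-\infty$ and whose slopes $p^{-j}$ lie in $P$; its inverse has the analogous property. Fix a compact subinterval $[-N,0]$ of $\,]-\infty,0]$. Then $\psi_{2,1}([-N,0])\subseteq [p^{-k},1]$ for some $k$, and on $[p^{-k},1]$ the map $\varphi$ carries only finitely many singularities, by Theorem \ref{TheoremE14}(ii). Combining this with the fact that $\psi_{2,1}$ and $\psi_{2,1}^{-1}$ introduce only finitely many breaks in any compact subinterval bounded away from $0$ (respectively $-\infty$), I conclude that $\bar\varphi$ has only finitely many singularities in $[-N,0]$; all the breaks of $\varphi$ accumulating at $0$ are pushed out to $-\infty$ by $\psi_{2,1}^{-1}$. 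The chain rule shows that the slopes of $\bar\varphi$ are products of powers of $p$ with elements of $P$, hence lie in $P$; the breaks lie in $A$ because both $\varphi$ and $\psi_{2,1}$ have breaks in $A$ and both map $A$ onto itself; and the global map $\R\to\R$ is a homeomorphism sending $A$ onto $A$. Piecing the two halves together at $0$, where continuity is automatic since $\bar\varphi$ is a conjugate of $\varphi$, we obtain $\bar\varphi\in G_\infty(\R;A,P)$ with singularities accumulating only at $-\infty$. The main obstacle is the bookkeeping in this last paragraph, namely verifying that the pre-image under $\psi_{2,1}$ of a compact subinterval of $\,]-\infty,0]$ is bounded away from $0$ in $\,]0,1]$, so that Theorem \ref{TheoremE14}(ii) actually applies; once that is in place the rest is mechanical.
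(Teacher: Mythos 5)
Your proof is correct and follows the same route the paper intends: the paper states this lemma without a written proof, presenting it as an immediate consequence of transferring the description of $\Autfr G([0,\infty[\,;A,P)$ from Theorem \ref{TheoremE14}(ii) across the conjugation by $\psi_{2,1}^{-1}$, which sends the accumulation point $0$ to $-\infty$. Your argument simply supplies the bookkeeping that the paper leaves implicit, and it is sound.
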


The group $\Autfr \bar{G}$ contains three subgroups 
that we describe next.
Firstly,  the subgroup $\bar{G}$ itself; 
according to Lemma  \ref{LemmaE16}, 
it consists of all elements $\bar{g} \in G(\R;A,P)$ satisfying the restrictions
\begin{equation}
\label{eq:19.4}
\sigma_-(\bar{g}) = 1, \quad \tau_-(\bar{g}) \in \Z (p-1) \text{ and }  \rho(\bar{g}) \in \Aff(IP \cdot A, P);
\end{equation}
in stating these restrictions,
the assumptions 
that $b = 1$ and that $1 \in A$ have been taken into account.
The second subgroup is a copy $\Aut_o(A)^\wedge$ of the group $\Aut_o(A)$, 
made up of all PL-homeomorphisms of the form \eqref{eq:Definition-copy-Auto(A)}.

The third subgroup $H$ will turn out to be the kernel of $\bar \eta$.
Let $H$ be the set that consists of those elements $h \in G_\infty(\R;A,P)$
for which there exists an element $t_h \in A$ with the following property:
\begin{enumerate}[a)]
\item $h(t+(p-1)) = h(t) + (p-1)$ for each $t \leq t_h$,
\item $h_{\restriction{[t_h, \infty[}}$ is a finitary PL-function.
\end{enumerate}
\begin{proposition}
\label{prp:LemmaE18New}
\index{Group Ginfty(J;A,P)@Group $G_\infty(J;A,P)$!significance}%
\index{Homomorphism!07-eta-bar@$\bar{\eta}$}%
\index{Automorphism group of G(R;A,P)@Automorphism group of $G(\R;A,P)$!description}%
The set $H$ is a subgroup of $G_\infty(\R;A,P)$ 
and it is the kernel of the epimorphism $\bar \eta \colon \Autfr \bar{G}  \epi \Aut_o(A)/P$.
\end{proposition}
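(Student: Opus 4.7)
My plan is to prove the proposition in three stages: first verify that $H$ is a subgroup, then establish $H \subseteq \ker\bar\eta$, and finally $\ker\bar\eta \subseteq H$. Throughout I exploit the explicit description \eqref{eq:19.4} of $\bar G$: elements $\bar g \in \bar G$ are precisely those $\bar g \in G(\R;A,P)$ that are translations with amplitudes in $\Z(p-1)$ on a half line $]-\infty, t_*]$ and whose restriction to a half line $[t^*,\infty[$ is affine with slope in $P$ and translation part in $IP\cdot A$.

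For the subgroup property, I would show that if $h_1,h_2\in H$ with parameters $t_{h_1},t_{h_2}$, then $h_2\circ h_1$ and $h_1^{-1}$ lie in $H$. For closure under composition one can take $t_{h_2\circ h_1}$ to be any element of $A$ which is $\leq t_{h_1}$ and whose image under $h_1$ is $\leq t_{h_2}$; conditions a) and b) for $h_2\circ h_1$ then follow from the corresponding conditions on the factors, using that translation by $p-1$ commutes with $h_1$ on $]-\infty,t_{h_1}]$. The verification for $h_1^{-1}$ is analogous.

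For the inclusion $H\subseteq\ker\bar\eta$, I would check that conjugation by $h\in H$ carries $\bar G$ into itself (the reverse inclusion then follows from $h^{-1}\in H$) and has slopes in $P$. Given $\bar g \in \bar G$, there exists a half line $]-\infty, s_*]$ on which $\bar g$ is a translation by some $k(p-1)$, $k\in\Z$; by condition a), $h\circ \bar g\circ h^{-1}$ coincides on $]-\infty, t_*]$ (for $t_*$ a suitable element of $A$ even more negative than $t_h$) with the same translation. At the positive end, the restriction of $\bar g$ to $[s^*,\infty[$ is affine with parameters in $\Aff(IP\cdot A, P)$, and by b) the restriction of $h$ to $[t_h,\infty[$ is finitary PL; a direct computation with the chain rule shows that $h\circ\bar g\circ h^{-1}$ has image under $\rho$ again in $\Aff(IP\cdot A,P)$ — here one uses that the slopes of $h$ in the finitary part cancel in the conjugate and that any translation part introduced lies in $IP\cdot A$ because the differences $h(t)-t$ at singularities lie in $IP\cdot A$ by \eqref{eq:Consequence-TheoremA}. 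Since all slopes involved are in $P$, one deduces $\bar\eta(h)=P$.

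For the harder inclusion $\ker\bar\eta\subseteq H$, the main obstacle, I start from $h\in\ker\bar\eta$, which by Lemma \ref{lem:Properties-Autfr-bar-G} already lies in $G_\infty(\R;A,P)$ with singularities that can only accumulate in $-\infty$. Thus condition b) is automatic: choose $t_h\in A$ larger than every singularity of $h$ in any chosen bounded region, and $h$ is finitary PL on $[t_h,\infty[$. The substantive step is to extract condition a). For this I apply $h\bar G h^{-1}=\bar G$ to the specific element $\bar g_0\in\bar G$ which is the translation by $p-1$ on $]-\infty,0]$ and matches back up with the identity far to the right (this is an element of $\bar G$ since it has $\sigma_-=1$, $\tau_-=p-1\in\Z(p-1)$, and $\rho=\id\in\Aff(IP\cdot A,P)$). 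The conjugate $h\circ \bar g_0\circ h^{-1}$ must again be an element of $\bar G$, hence a translation by some integer multiple of $p-1$ on a half line $]-\infty, t_*]$; by comparing slopes of $h$ on $]-\infty,t_*]$ one shows this multiple must be $1$, whence $h(t+(p-1)) = h(t)+(p-1)$ for all $t\leq t_*$. Taking $t_h\in A$ to be the minimum of $t_*$ and the value required for b) gives $h\in H$. The fact that $\bar\eta$ is surjective was noted before the statement; together with the identification of its kernel this completes the proof.
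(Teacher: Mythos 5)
Your proof follows the same three-stage plan as the paper's (subgroup property, $H \subseteq \ker\bar\eta$, $\ker\bar\eta \subseteq H$), and the first two stages are sound; your treatment of the positive end via the chain rule is a harmless variant of the paper's appeal to the normality of $\Aff(IP\cdot A,P)$ in $\Aff(A,P)$. The genuine gap is in the pivotal step of the third stage: you assert that ``comparing slopes of $h$ on $]-\infty,t_*]$'' forces the conjugate of the translation by $p-1$ to be a translation by exactly $p-1$ near $-\infty$. Slope comparison cannot deliver this. If, for instance, $h$ has constant slope $p$ on all of $]-\infty,t_*]$ --- perfectly compatible with $h\in\ker\bar\eta$, whose defining condition is only that the slopes of $h$ lie in $P$ --- then $h(u+(p-1))=h(u)+p(p-1)$ there, so $h\circ\bar g_0\circ h^{-1}$ is the translation by $p(p-1)$ near $-\infty$; its $\tau_-$ still lies in $\Z(p-1)$, so membership of the conjugate in $\bar G$ is not violated, and your multiple comes out as $k=p$ rather than $1$. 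Monotonicity of $h$ only yields $k\geq 1$.

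What is missing is a group-theoretic input that uses the full normalizing property $h\bar G h^{-1}=\bar G$ rather than just the inclusion $h\bar G h^{-1}\subseteq\bar G$. Conjugation by $h$ induces an automorphism of $\bar G$ which, $h$ being increasing, preserves the subgroup of elements that are the identity near $-\infty$, hence induces an automorphism of $\im\tau_- = \Z(p-1)\cong\Z$; the only order-preserving automorphism of $\Z$ is the identity, so the generator $p-1$ is fixed and $k=1$. (Equivalently: run your computation for $h^{-1}$ as well, obtaining a second positive integer $k'$ with $k\cdot k'=1$.) This is precisely the paper's argument --- it invokes that $\tau_-(\bar h_*)=p-1$ generates the image of $\tau_-$ on $\bar G$ and that the conjugating homeomorphism is increasing. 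With that one sentence inserted, your proof closes.
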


\begin{proof}
We first verify that $H$ is a subgroup.
Suppose $h \in H$.
Properties a) and b) entail
that $h$ is a finitary PL-function on every half line of the form $[t_2, \infty[$
and periodic with period $p-1$ 
on every sufficiently small half line of the form $]-\infty, t_1]$.
These features are preserved by composition and passage to the inverse,
and so $H$ is a subgroup.

We next show 
that $H$ is a subgroup of $\Autfr \bar{G}$.
Fix $\bar{g} \in \bar G$ and $h \in H$.
Then $\bar{g}$ has the properties stated in equation \eqref{eq:19.4};
in particular, $\tau_-(\bar{g})= k(p-1)$ for some $k \in \Z$. 
Then $\act{h} \bar g$ is a PL-homeomorphism in $G_\infty(\R;A,P)$
whose singularities are finite in number 
on every half line of the form $[t_2, \infty[$\,.
So $\act{h} \bar g$ belongs to $\bar G$ if, 
and only if,
conditions \eqref{eq:19.4} are satisfied.
Let $t_1$ be a sufficiently small number.
Then
\begin{align*}
\left(\act{h} \bar g\right) (t_1) 
= 
h \left(\bar g (h^{-1}(t_1)) \right)  
&= 
h\left( h^{-1}(t_1) + k(p-1)\right)\\
&=
h\left( h^{-1}(t_1 + k(p-1))\right)
= t_1 + k(p-1).
\end{align*}
This calculation shows 
that $\bar g_1 = \act{h} \bar g$ is an element of $G(\R;A,P)$
with $\sigma_-(\bar{g}_1) = 1$ and $\tau_-(\bar{g}_1) \in \Z(p-1)$.
The third property of $\bar g$ listed in equation \eqref{eq:19.4} 
and the fact 
that $(IP \cdot A) \rtimes P$ is a normal subgroup of $A \rtimes P$
then imply that $\rho(\bar{g}_1) \in \Aff(IP \cdot A, P)$. 
All taken together, we have shown that  $\bar{g}_1 \in \bar G$.

The preceding reasoning establishes 
that $H$ is a subgroup of $\Autfr \bar{G}$;
its definition then shows 
that it is contained in the kernel of $\bar \eta$.
Conversely, 
assume that $\bar \varphi \in \ker \bar \eta$.
Then $\bar \varphi$  lies in $G_\infty(\R;A,P)$  
(by Lemma \ref{lem:Properties-Autfr-bar-G}).
Conditions \eqref{eq:19.4} show next
that $\bar{G}$ contains the translation 
$\bar{h}_*$ with amplitude $p-1$.
As $\tau_-(\bar h_*)$ generates the image of $\tau_-$ 
and $\bar \varphi$ is increasing
the conjugated translation $\act{\bar{\varphi}}\bar{h}_*$ 
is a translation  near $-\infty$, again with amplitude $p-1$.
There exists therefore a number $t_1 \in A$ so that the equation
\begin{equation}
\label{eq:19.5}
  \left(\varphi \circ \bar{h}_* \circ \varphi^{-1}\right) (t) = t  + (p-1)
\end{equation}
holds for every  $t \leq t_1$. 
Set $u_1 = \bar \varphi^{-1} (t_1)$ and let $u$ be an element of $]-\infty, u_1]$.
Equation \eqref{eq:19.5} gives then rise to the chain of equations
\[
\bar \varphi (u + (p-1)) = \bar \varphi ( \bar h_*(u)) 
= 
 \left(\act{\bar{\varphi}} \bar{h_*}\right) (\bar{\varphi}(u)) 
 = \bar \varphi (u) + (p-1);
\]
it is valid for each $u \leq u_1$ and proves that $\bar{\varphi}$ 
satisfies property a) characterizing the elements of $H$.
Property b) holds since the singularities of $\bar \varphi$ can only accumulate in $-\infty$
(by Lemma \ref{lem:Properties-Autfr-bar-G})
and thus $\bar{\varphi} \in H$.
 \end{proof}

\begin{corollary}
\label{crl:PropositionE19}
\index{Group G([0,infty[;A,P)@Group $G([0, \infty[\;;A,P)$!outer automorphism group}%
\index{Outer automorphism group of!G([0,infty[;A,P)@$G([0, \infty[\;;A,P)$}%
\index{Group P cyclic@Group $P$ cyclic!consequences}%
\index{Group T(R/Zpfr;A,P)@Group $T(\R/\Z \pfr;A,P)$!applications}%
The outer automorphism group of $\bar G$ is an extension of the form
\begin{equation}
\label{eq:19.6}
\xymatrix{1 \ar@{->}[r] &T(\R/\Z (p-1);A,P) \ar@{->}[r]^-{\bar{\iota}_*} 
&\Outfr \bar{G} \ar@{->}[r]^-{\bar{\eta}_*} &\Aut_o(A)/P \ar@{->}[r] &1.}
\end{equation}
In the above,
$\bar{G}$ denotes the subgroup of $G(\R;A,P)$ 
satisfying the restrictions \eqref{eq:19.4}; 
it is isomorphic to $G([0, \infty[\;;A,P)$.
\end{corollary}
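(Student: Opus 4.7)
The plan is to refine the exact sequence
\[
1 \longrightarrow H \longrightarrow \Autfr \bar G \xrightarrow{\bar\eta} \Aut_o(A)/P \longrightarrow 1
\]
of Proposition \ref{prp:LemmaE18New} by passing to the quotient by the normal subgroup $\bar G$ of $\Autfr \bar G$, and to identify $H/\bar G$ with $T(\R/\Z(p-1);A,P)$. The first step is to check that $\bar G \subseteq H$: an element $\bar g \in \bar G$ is finitary PL on $\R$ and satisfies $\sigma_-(\bar g) = 1$ and $\tau_-(\bar g) = k(p-1)$ for some $k \in \Z$ by the defining conditions \eqref{eq:19.4}, so there is $t_1 \in A$ below which $\bar g$ agrees with the pure translation $t \mapsto t + k(p-1)$; conditions a) and b) in the definition of $H$ then hold with $t_h = t_1 - (p-1)$. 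I also use that $\Outfr \bar G = \Autfr \bar G/\bar G$, which follows from Lemma \ref{LemmaE2}.

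Next I construct a homomorphism $\chi \colon H \to T(\R/\Z(p-1);A,P)$. Given $h \in H$ with witness $t_h$, the restriction $h\restriction_{]-\infty, t_h]}$ extends uniquely to an element $\tilde h \in G_\infty(\R;p-1;A,P)$ by the formula $\tilde h(t) = h(t - k(p-1)) + k(p-1)$, where $k \in \Z$ is chosen so that $t - k(p-1) \leq t_h$; condition a) of the definition of $H$ guarantees that $\tilde h$ is independent of $k$, is $(p-1)$-periodic, and has only finitely many singularities on any compact interval. Set $\chi(h) = \kappa(\tilde h)$ using the projection $\kappa$ of \eqref{eq:19.2}. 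The map $\chi$ is a group homomorphism because $\widetilde{h_2 \circ h_1}$ and $\tilde h_2 \circ \tilde h_1$ agree on a sufficiently small left half-line and are both $(p-1)$-periodic on all of $\R$.

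For surjectivity of $\chi$, given $\bar f \in T(\R/\Z(p-1);A,P)$, pick a lift $\tilde f \in G_\infty(\R;p-1;A,P)$ and an element $t_0 \in A$, then define $h(t) = \tilde f(t)$ for $t \leq t_0$ and $h(t) = t + (\tilde f(t_0) - t_0)$ for $t \geq t_0$. One verifies easily that $h$ lies in $H$ and that $\chi(h) = \bar f$. For the kernel, $\bar G \subseteq \ker \chi$ is immediate, since the periodic extension of $\bar g \in \bar G$ near $-\infty$ is a translation by $k(p-1)$, which $\kappa$ sends to the identity. Conversely, if $h \in \ker \chi$ then $\tilde h$ is a translation by some $k(p-1)$, so $h$ coincides with that translation on $]-\infty, t_h]$; combined with the finitary PL behaviour on $[t_h, \infty[$, this makes $h$ a finitary PL-homeomorphism of $\R$ with slopes in $P$, $\sigma_-(h) = 1$ and $\tau_-(h) = k(p-1)$. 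The remaining condition $\rho(h) \in \Aff(IP\cdot A,P)$ required by Lemma \ref{LemmaE16} follows from Corollary \ref{CorollaryA3}: since $h \in G(\R;A,P)$ one has $\tau_+(h) - \tau_-(h) \in IP\cdot A$, and $\tau_-(h) = k(p-1) \in IP\cdot A$, so $\tau_+(h) \in IP\cdot A$; hence $h \in \bar G$ by Lemma \ref{LemmaE16}.

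Combining these ingredients with Proposition \ref{prp:LemmaE18New} yields an induced isomorphism $\chi_* \colon H/\bar G \iso T(\R/\Z(p-1);A,P)$ and hence the desired extension. The main obstacle is the equality $\ker\chi = \bar G$: showing that an element of $\ker\chi$ lies in $\bar G$ requires one to recover the condition $\rho(h) \in \Aff(IP\cdot A,P)$ from $\tau_-(h) \in \Z(p-1)$ alone, which is exactly where the compatibility condition of Corollary \ref{CorollaryA3} is indispensable.
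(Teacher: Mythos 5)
Your proof is correct and follows essentially the same route as the paper's: both quotient the extension of Proposition \ref{prp:LemmaE18New} by the normal subgroup $\bar G$ and identify $H/\bar G$ with $T(\R/\Z(p-1);A,P)$ by means of the periodic extension of $h$ near $-\infty$ (the paper's epimorphism $\zeta_\ell$) followed by the projection $\kappa$, with the same lift construction for surjectivity. The only difference is cosmetic: you compose the two maps at the outset and verify $\ker\chi=\bar G$ directly, making explicit the appeal to Corollary \ref{CorollaryA3} (to recover $\rho(h)\in\Aff(IP\cdot A,P)$ from $\tau_-(h)\in\Z(p-1)$) that the paper leaves implicit.
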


\begin{proof} 
The conclusion of Proposition \ref{prp:LemmaE18New} can be restated by saying 
that $\Autfr \bar G$ is the extension of the form
\begin{equation*}
\xymatrix{1 \ar@{->}[r] &H \ar@{->}[r]^-{\bar{\iota}} 
&\Autfr \bar{G} \ar@{->}[r]^-{\bar{\eta}} &\Aut_o(A)/P \ar@{->}[r] &1.}
\end{equation*}
Since $\bar{G}$ is a normal subgroup of $H$ it suffices therefore to establish 
that the quotient group $H/\bar G$ is isomorphic to $T(\R/\Z(p-1);A,P)$.

To reach this goal, 
we construct an epimorphism $\zeta_\ell \colon H \epi G_\infty(\R;p-1;A,P)$.
\label{notation_zeta-ell}%
Consider $h \in H$. 
According to property b) there exists a number $t_h \in A$
so that the equation $h(t+(p-1)) = h(t) + (p-1)$ holds for every $t \leq t_h$;
intuitively speaking, the PL-homeomorphism  $h$ is thus
\emph{periodic with period $p-1$ to the left of $t_h$}. 
It follows that there exists a function $\tilde{h} \in G_\infty(\R;p-1);A,P)$ 
which agrees with $h$ on the half line $]-\infty,t_h]$ 
(and is periodic with period $p-1$).
The PL-homeomorphism $\tilde{h}$ does not depend on the choice of $t_h$;
it is thus uniquely determined by $h$.
The assignment  $h \mapsto \tilde{h}$ is then a homomorphism,
say  $\zeta_\ell$,
of  $H$ into $G_\infty(\R;p-1;A,P)$.
\label{notation:zeta-ell}%
\index{Homomorphism!06-zeta-ell@$\zeta_\ell$}%
It is surjective;
indeed,
given $\tilde{h} \in G_\infty(\R;p-1;A,P)$ 
define $h \colon \R \iso \R$ by
\[
h (t) = \begin{cases} \tilde{h}(t)& \text{ if } t \leq 0,\\ t + \tilde{h}(0) &\text{ if } t \geq 0. \end{cases}
\]
Then $h$ is a PL-homeomorphism with slopes in $P$, singularities in $A$
which maps $A$ onto $A$.
In addition, it satisfies properties a) and b) characterizing the elements of $H$ 
and so it belongs to $H$.
 
The homomorphism  $\zeta_\ell \colon H \to G_\infty(\R;p-1;A,P)$ 
is therefore surjective.
Its kernel consists of all elements in $h \in H$ 
which are the identity to the left of a suitably small number $t_h$;
in view of property b) of $h$ and Lemma \ref{LemmaE16},
it coincides thus with the kernel of $\tau_- \colon \bar {G} \epi \Z(p-1)$.
Moreover,
the image of $\bar{G}$ under $\zeta_\ell$ consists of all the translations with amplitudes in $\Z(p-1)$; 
this image equals the kernel of canonical epimorphism
\[
\kappa_* \colon G_\infty(\R;p-1;A,P) \epi T(\R/\Z(p-1); A, P).
\]
The epimorphism $\zeta_\ell$ induces therefore an isomorphism 
\[
H/\bar{G} \iso T(\R/\Z(p-1);A,P).
\]
The proof of Corollary \ref{crl:PropositionE19} is now complete.
\index{Group G([0,infty[;A,P)@Group $G([0, \infty[\;;A,P)$!automorphism group|)}
\index{Automorphism group of G([0,infty[;A,P)@Automorphism group of $G([0, \infty[\;;A,P)$!description|)}%
\end{proof}
%
\subsubsection{Automorphism group of 
$G_1 = \ker(\sigma_- \colon G([0, \infty[ ;A,P) \epi P)$}
\label{sssec:19.2c}
\index{Theorem \ref{TheoremE04}!consequences}%
%
The investigation of the automorphism group of $G_1$ will parallel 
that of the automorphism group $\Autfr G([0,\infty[\,;A,P)$.
By Theorem \ref{TheoremE04} and part (ii) of Theorem \ref{TheoremE14}  
every element $\varphi_1 \in \Autfr G_1 $ is an increasing  PL-homeomorphism of $]0,\infty[$
that maps $A\; \cap \; ]0, \infty[$ onto itself
and has slopes in a coset $s_{\varphi_1} \cdot P$ and singularities in $A$;
the singularities may be infinite in number, 
but if so they accumulate only in 0.
It follows that there exists a homomorphism
\[
\label{eq:Definition-eta-1}
\index{Homomorphism!07-eta1@$\eta_1$}%
\eta_1 \colon \Autfr G_1 \to \Aut_o(A)/P
\]
which takes $\varphi_1$ to $s_{\varphi_1} \cdot P$.
It is surjective,
since $\Autfr G_1$ contains all the homeomorphisms of the form
\begin{equation}
\label{eq:Definition-copy-Auto(A)-1}
t \mapsto \begin{cases} 
s\cdot t &\text{ if } t \geq 0,\\ 1 \cdot t&\text{ if } t \leq 0, \end{cases}
\end{equation}
with $s \in \Autfr_o(A)$.

For a more detailed analysis we replace $G_1$ by its image $\bar{G}_1$ 
under the embedding $\mu_{2,1} \colon G([0, \infty[\,;A,P) \mono G(\R;A,P)$ 
(defined in \ref{sssec:Embedding-mu2}). 
\label{notation:bar(G)1}
The epimorphism $\eta_1 \colon \Autfr G_1 \to \Aut_o(A)/P$ gives then rise 
to the epimorphism
\[
\label{notation:bar(eta)1}
\index{Homomorphism!07-eta1-bar@$\bar{\eta}_1$}%
\bar{\eta}_1  \colon \Autfr \bar{G}_1 \epi \Aut_o(A)/P.
\]
One sees next,
just as in the paragraph preceding the statement of Lemma  
\ref{lem:Properties-Autfr-bar-G},
that the conclusion of this lemma holds also 
for the elements of the kernel of $\bar{\eta}_1$.
But more is true, 
this conclusion actually describes the elements of $\Autfr \bar{G}_1$;
indeed, 
the elements $\bar{g}_1 \in \Autfr \bar{G}_1$ 
are characterized  by the requirements
\begin{equation}
\label{eq:Characterization-bar(g)1}
\sigma_-(\bar{g}_1) = 1, \quad \tau_-(\bar{g}) = 0 
\quad \text{and}\quad  
\rho(\bar{g}) \in \Aff(IP \cdot A, P)
\end{equation}

We have thus established
\begin{proposition}
\label{prp:Characterisation-Aut-barG1}
\index{Homomorphism!07-eta1-bar@$\bar{\eta}_1$}%
The kernel of $\bar{\eta}_1 \colon \bar{G}_1 \epi \Aut_o(A)/P$ 
consists of all PL-homeomorphism in $G_\infty(\R;A,P)$ 
whose set of singularities, if infinite in number,  accumulate only in $-\infty$.
\end{proposition}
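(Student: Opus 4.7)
The plan is to establish the two inclusions of the characterization separately. The forward inclusion is nearly immediate from what the authors have already noted: given $\bar{\varphi}\in\ker\bar{\eta}_1$, the analog of Lemma~\ref{lem:Properties-Autfr-bar-G} invoked in the paragraph preceding the statement guarantees that $\bar{\varphi}\in G_\infty(\R;A,P)$ with singularities accumulating only in $-\infty$, and the hypothesis that $\bar{\eta}_1(\bar{\varphi})$ is trivial says exactly that the slope coset $s_{\bar{\varphi}}\cdot P$ equals $P$, so the slopes of $\bar{\varphi}$ already lie in $P$.

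For the reverse inclusion, the task is to show that any such $\bar{\varphi}$ normalizes $\bar{G}_1$ (its image under $\bar{\eta}_1$ is then obviously trivial). Since $\bar{\varphi}^{-1}$ has the same properties as $\bar{\varphi}$, it suffices to check $\act{\bar{\varphi}}\bar{G}_1\subseteq\bar{G}_1$. Recall from the paragraph preceding the statement that $\bar{g}\in\bar{G}_1$ is characterized by being a finitary PL-homeomorphism in $G(\R;A,P)$ which is the identity on some half-line $]-\infty,s_{\bar{g}}]$ (because $\sigma_-(\bar g)=1$ and $\tau_-(\bar g)=0$) and which agrees with an affine map in $\Aff(IP\cdot A,P)$ on some half-line $[t_{\bar{g}},\infty[$.

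Given such a $\bar g$, I would then verify the three defining properties of $\bar{G}_1$ for the conjugate $\act{\bar{\varphi}}\bar{g}$. First, it is the identity on $]-\infty,\bar{\varphi}(s_{\bar{g}})]$, which is automatic. Second, near $+\infty$ one has $\bar{\varphi}=\aff(u_+,s_+)$ with $s_+\in P$ and $u_+\in A$, and $\bar{g}=\aff(a,p^k)$ with $a\in IP\cdot A$; a routine conjugation calculation then gives $\bar{\varphi}\circ\bar{g}\circ\bar{\varphi}^{-1}(t)=p^kt+(1-p^k)u_++s_+a$ near $+\infty$, with translation amplitude in $IP\cdot A$ because $(1-p^k)u_+\in IP\cdot A$ and $s_+a\in IP\cdot A$ (as $IP\cdot A$ is a $\Z[P]$-submodule and $s_+\in P$). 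Third, all breaks of $\act{\bar{\varphi}}\bar{g}$ lie to the right of $\bar{\varphi}(s_{\bar{g}})$, and in such a half-line each of $\bar{\varphi}$, $\bar{\varphi}^{-1}$, and $\bar{g}$ contributes only finitely many breaks (for $\bar{\varphi}$ and $\bar{\varphi}^{-1}$ this uses that their singularities accumulate only at $-\infty$), so $\act{\bar{\varphi}}\bar{g}$ is finitary.

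The main conceptual point guiding this plan is that the difference between Proposition~\ref{prp:Characterisation-Aut-barG1} and Proposition~\ref{prp:LemmaE18New} reflects the loss of the constraint $\tau_-(\bar{g})\in\Z(p-1)$ when passing from $\bar{G}$ to $\bar{G}_1$. In the $\bar{G}$ case, the translation $\bar{h}_*$ of amplitude $p-1$ belonged to $\bar G$ and forced periodicity of $\bar{\varphi}$ near $-\infty$; since $\bar{G}_1$ contains no nontrivial translation near $-\infty$, this periodicity constraint disappears, and the only restriction on $\bar{\varphi}$ near $-\infty$ is that it be a PL-homeomorphism with singularities accumulating only there. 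For this reason I do not anticipate a serious obstacle; the verification is essentially a simplified version of the argument behind Proposition~\ref{prp:LemmaE18New}, with the bookkeeping near $-\infty$ made trivial by the fact that elements of $\bar{G}_1$ are honestly the identity in a neighbourhood of $-\infty$.
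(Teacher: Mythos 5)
Your proposal is correct and follows essentially the same route as the paper: the forward inclusion via the argument of Lemma~\ref{lem:Properties-Autfr-bar-G}, and the reverse inclusion by checking that conjugation preserves the three conditions \eqref{eq:Characterization-bar(g)1} characterizing $\bar{G}_1$ (the paper leaves this second verification largely implicit, and your computation of $\rho(\act{\bar{\varphi}}\bar{g})$ and the finitariness of the conjugate supplies exactly the missing details). Your closing remark correctly identifies why the periodicity constraint near $-\infty$ present in Proposition~\ref{prp:LemmaE18New} disappears here.
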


\begin{remark}
\label{remark:Characterisation-Aut-barG1}
The group described in Proposition \ref{prp:Characterisation-Aut-barG1}
does not seem to have an alternative description that is better known.
The group contains the cartesian product $\prod_n G([-n+1),-n];A,P)$ 
of countably many copies of the group $G([0,1];A,P)$.
The outer automorphism group $\Outfr \bar{G}_1$ of $\bar{G}_1$
contains also a cartesian  product $\prod_j L_j$ of countably many copies of $L =G([0,1];A,P)$.
\index{Group P cyclic@Group $P$ cyclic!consequences}%
To see this, 
choose a bijection $\beta \colon \N \times \N \iso \N$
and realize $L_j$ as the diagonal copy in the cartesian product 
$\prod_{i \in \N} G([-(\beta(i,j) - 1, -\beta(i,j)]; A, P)$.
Then no nontrivial element of $\bar{G}_1$ lies in $\prod_j L_j$. 
\end{remark}
%
\subsection[Groups with $I$ a compact interval]%
{Automorphism groups of groups with $I$ a compact interval}
\label{ssec:19.3New}
%
We come finally to groups of PL-homeomorphisms 
with supports in a compact interval having endpoints in $A$;
as the isomorphism type of $G([0,b]; A, P)$ 
does not depend on $b \in A$ (see Theorem \ref{TheoremE07}),
we may and shall assume that $I = [0,1]$.
\index{Theorem \ref{TheoremE07}!consequences}%

Set $G = G([0,1];A,P)$.
We have not been able to determine
whether every element of $\Autfr G$ is a PL-homeomorphism; 
\index{Group G([a,c];A,P)@Group $G([a,c];A,P)$!exotic automorphisms}%
the situation is even worse for $G_1 = \ker(\sigma_- \colon G\to P)$ or for $B([0,1];A,P)$
and so we shall say nothing about these groups. 
\footnote{See sections \ref{sssec:Notes-ChapterE-Brin96}
and  \ref{sssec:Notes-ChapterE-BrGu98} for updates.}
The subgroup $\Autfr_{\PL}G$ of all PL-homeomorphisms in $\Autfr G$
can, however, be investigated in some detail.

We carry out this study in section \ref{sssec:19.3a}, 
obtaining descriptions of $\Autfr_{\PL} G$ and of $\Outfr_{\PL}  G$,
that are akin to those of 
$\Autfr G([0,\infty[\,;A,P)$ and of $\Outfr G([0,\infty[\,;A,P)$ 
found in section \ref{sssec:19.2b}.
In section \ref{sssec:19.3b}
 we address then the question whether $\Autfr_{\PL} G$ 
 coincides with the full group $\Autfr G$. 
 As a first step towards an answer we show 
 that $\Autfr G$ is generated by $\Autfr_{\PL} G$ 
 and a second subgroup $\Autfr_{\per}G$ with special properties.
%
\subsubsection{The subgroup of PL-automorphisms}
\label{sssec:19.3a}
%
Let $\Autfr_{\PL}G$ denote the intersection $\Autfr G\, \cap\, G_\infty(]0,1[\,;A,P)$
\footnote{The definition of $G_\infty(J;A,P)$ is given in section \ref{ssec:19.1}.}
and consider a PL-homeomorphism in $\varphi \in \Autfr_{\PL} G$,
By definition,  
$\varphi$ has then only finitely many singularities in every compact subinterval of 
$\Int(I) =\; ]0,1[$;
so it is differentiable at a point $a \in A \cap \Int(I)$
whence Proposition \ref{PropositionE9} allows one to deduce
that $\varphi$ has slopes in a single coset $s_\varphi \cdot P$ of $\Aut(A)/P$.
\index{Proposition \ref{PropositionE9}!consequences}%
This fact implies 
that there exists a homomorphism
\begin{equation}
\label{eq:Definition-eta-PL}
\index{Homomorphism!07-etaPL@$\eta_{\PL}$}%
\index{Group AutfrPLG@Group $\Autfr_{\PL}G([0,1];A,P)$!properties}%
\index{Group Aut(A)@Group $\Aut(A)$!significance}%
\eta_{\PL} \colon \Autfr_{\PL}G \to \Aut(A)/P
\end{equation}
which sends $\varphi$ to the coset $s_\varphi \cdot P$.
This homomorphism is \emph{surjective}.
Indeed, 
the reflection $\beta$ at the midpoint of $I$ is in $\Autfr_{\PL}G$ 
and $\eta(\beta) =(-1) \cdot P$. 
On the other hand,
if $s$ is a positive real with $s\cdot A = A$ 
then $s = s\cdot 1$ lies in $A$. 
By Theorem \ref{TheoremE07}
\index{Theorem \ref{TheoremE07}!consequences}%
there exists therefore an infinitary PL-homeomorphism 
$\psi_s \colon ]0,1[\, \iso\, ]0,s[$ 
with slopes in $P$ 
and vertices in $A\times (s\cdot A) = A^2$ 
which, by conjugation,  induces  an isomorphism
\begin{equation*}
(\psi_s)_* \colon G([0,1];A,P) \iso G([0,s];A,P).
\end{equation*}
The composition $\varphi_s = (\psi_s)_*^{-1} \circ (t\mapsto s\cdot t)$ 
belongs then to $\Autfr_{\PL} G $ and $\eta(\varphi_s) = s\cdot P$.
(Recall that $\eta$ may not be surjective 
if the slope group $P$ of $G = G([0,b];A,P)$ is not cyclic,
as is shown by part (v) of Corollary \ref{CorollaryE13}).

Our next aim is to describe the kernel of $\eta$.
For this task it is convenient to replace the group $G$ 
by its image $\bar G$ of $G$  under the embedding $\mu_{2,1} \circ \mu_1$. 
In view of Lemma \ref{LemmaE17}, 
this image consists of all PL-homeomorphism in $G(\R;A,P)$
that satisfy the restrictions
\begin{equation}
\label{eq:19.7}
\sigma_-(\bar{g}) = 1, \quad  \tau_-(\bar{g}) \in \Z(p-1)
\quad \text{and}\quad
 \sigma_+(\bar{g}) = 1,\quad \tau_+(\bar{g}) \in\Z(p-1).
\end{equation}

We are now ready to define a subgroup $H \subset G_\infty(\R;A,P)$
that will turn out to be the kernel of 
$\bar \eta \colon \Autfr_{\PL} \bar G  \epi \Aut(A)/P$.
Let $H$ be the set consisting of all PL-homeomorphisms $h \in G_\infty(\R;A,P)$ 
for which there exists a positive number $t_h \in A$ with the following properties:
\begin{enumerate}
\item[a1)] $h(t - (p-1)) = h(t) - (p-1)$ for each $t \leq -t_h$;
\item[a2)] $h(t+(p-1)) = h(t) + (p-1)$ for each $t \geq t_h$.
\end{enumerate}
\begin{proposition}
\label{PropositionE20New-part-I}
\index{Homomorphism!07-eta-bar@$\bar{\eta}$}%
\index{Group AutfrPLG@Group $\Autfr_{\PL}G([0,1];A,P)$!properties}%
The set $H$ is a subgroup of $G_\infty(\R;A,P)$ 
and it is the kernel of the epimorphism 
$\bar \eta \colon \Autfr_{\PL} \bar{G}  \epi \Aut(A)/P$.
\end{proposition}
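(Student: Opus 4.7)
The plan is to mimic the proof of Proposition \ref{prp:LemmaE18New}, with the one-sided periodicity near $-\infty$ replaced by the symmetric pair of conditions a1) and a2) at $\pm\infty$. First I would verify that $H$ is a subgroup of $G_\infty(\R;A,P)$: since $G_\infty(\R;A,P)$ is already a group, it suffices to observe that both periodicity requirements are preserved under composition and inversion, which is a routine computation involving suitable choices of the threshold $t_{h_1\circ h_2}$ (or $t_{h^{-1}}$) in terms of the thresholds for the factors and their values at those thresholds.

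Next I would establish the inclusion $H\subseteq\ker\bar\eta$. Given $h\in H$ and $\bar g\in\bar G$, the task is to show $\act{h}\bar g\in\bar G$, i.e.\ that this conjugate is a \emph{finitary} PL-homeomorphism in $G(\R;A,P)$ satisfying the four restrictions \eqref{eq:19.7}. Because $\bar g$ is finitary there is an $M>0$ so that $\bar g(t)=t+k_-(p-1)$ for $t\le -M$ and $\bar g(t)=t+k_+(p-1)$ for $t\ge M$, where $\tau_\pm(\bar g)=k_\pm(p-1)$. For $t$ sufficiently negative so that $h^{-1}(t)$ and all the iterates involved stay below $-t_h$, repeated application of a1) gives
\[
(\act{h}\bar g)(t)=h\bigl(h^{-1}(t)+k_-(p-1)\bigr)=h(h^{-1}(t))+k_-(p-1)=t+k_-(p-1),
\]
so $\sigma_-(\act{h}\bar g)=1$ and $\tau_-(\act{h}\bar g)\in\Z(p-1)$. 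The symmetric argument using a2) controls the behaviour near $+\infty$. Thus $\act{h}\bar g$ coincides with pure translations on complementary half lines $(-\infty,-T]$ and $[T,\infty)$; the remaining breaks lie in the compact interval $[-T,T]$ and are therefore finite in number, so $\act{h}\bar g\in\bar G$. Since $h\in G_\infty(\R;A,P)$ has all slopes in $P$, we also get $\bar\eta(h)=P$.

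For the reverse inclusion $\ker\bar\eta\subseteq H$, let $\bar\varphi\in\ker\bar\eta$. Then $\bar\varphi$ lies in $G_\infty(\R;A,P)$ and is increasing. Using Theorem \ref{TheoremA}, I would construct two elements $\bar h_-,\bar h_+\in\bar G$: the first equal to the translation $t\mapsto t+(p-1)$ on $(-\infty,-M]$ and the identity on $[M,\infty)$, the second obtained symmetrically, so that $(\tau_-,\tau_+)(\bar h_-)=(p-1,0)$ and $(\tau_-,\tau_+)(\bar h_+)=(0,p-1)$; such interpolations exist because $p-1\in IP\cdot A$. Since $\bar\varphi$ normalizes $\bar G$, the conjugates $\act{\bar\varphi}\bar h_\pm$ lie in $\bar G$, and $\bar\varphi$ induces an automorphism of each infinite cyclic quotient $\bar G/\ker\tau_\pm$ (preservation of the kernels follows from the observation that $\bar\varphi$ is increasing, so that an element which is the identity near $\mp\infty$ is mapped to another such element). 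Because $\bar\varphi$ is order preserving, the induced automorphism must be the identity rather than inversion, giving $\tau_-(\act{\bar\varphi}\bar h_-)=p-1$. Unwinding this equality near $-\infty$ yields $\bar\varphi(u+(p-1))=\bar\varphi(u)+(p-1)$ for all sufficiently small $u$, which is property a1); the parallel argument with $\bar h_+$ delivers a2). Hence $\bar\varphi\in H$.

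The main obstacle I anticipate is the verification of finitariness in the second paragraph. The homeomorphism $h$ may have infinitely many breaks accumulating at both $-\infty$ and $+\infty$, and \emph{a priori} these produce infinitely many breaks in $\act{h}\bar g$; the point is that the periodicity conditions a1) and a2) force the distant breaks of $h$ and $h^{-1}$ to cancel in pairs under the conjugation, so that $\act{h}\bar g$ coincides with a pure translation far from the origin. Making this cancellation precise, by showing that $\act{h}\bar g$ is actually a translation on two complementary half lines and then concentrating all remaining breaks in a fixed compact interval, is the crux of the argument.
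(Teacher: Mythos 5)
Your proposal is correct and follows essentially the same route as the paper's proof: the subgroup and normalization checks are identical, and the converse direction rests on the same observation that an increasing $\bar\varphi$ conjugates a generator of $\im\tau_\pm$ to an element with the same (not inverted) value of $\tau_\pm$, which unwinds to the periodicity conditions a1) and a2). The only cosmetic difference is that the paper uses the single global translation $t\mapsto t-(p-1)$ as its test element in place of your two one-sided elements $\bar h_\pm$ built from Theorem \ref{TheoremA}; the finitariness point you flag as the crux is handled in the paper by the same remark that a1) and a2) force $\act{h}\bar g$ to be a translation outside a compact interval.
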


\begin{proof}
We first verify that $H$ is a subgroup.
Every element $h \in H$ maps $A$ onto itself,
has slopes in $P$,
singularities in $A$ and it lies in $G_\infty$ by definition.
Properties a1),  a2)  can thus be restated by saying
that $h$ is a finitary PL-function on every symmetric interval $[-b, b]$,
and periodic with period $p-1$ 
outside of every sufficiently large  interval of this kind.
It follows
that the defining properties of the elements of $H$
are preserved by composition and passage to the inverse.

We show next
that $H$ is a subgroup of $\Autfr_{\PL} \bar G$.
Let $h$ be an element in $H$.
Suppose that $\bar{g} \in \bar G$ 
and that $\tau_-(\bar{g})= k_\ell(p-1)$ and  $\tau_+(\bar{g})= k_r(p-1)$.
Then $\bar{g}_1 = \act{h} \bar g$ is a PL-homeomorphism in $G_\infty(\R;A,P)$.
The element $\bar{g}_1$ lies therefore in $\bar G$ if, 
and only if,
conditions \eqref{eq:19.7} are satisfied.
Let $t$ be a sufficiently small, negative number.
Then
\begin{align*}
\bar{g}_1 (t) 
= 
h \left(\bar g \left(h^{-1}(t) \right) \right)  
&= 
h\left( h^{-1}(t) + k_\ell(p-1)\right)\\
&= 
h\left( h^{-1}(t + k_\ell(p-1))\right)
= 
t + k_\ell(p-1).
\end{align*}
This calculation shows 
that $\bar g_1 = \act{h} \bar g$ is an element of $G(\R;A,P)$
with $\sigma_-(\bar{g}_1) = 1$ and $\tau_-(\bar{g}_1) \in \Z(p-1)$.
One sees similarly 
that $\sigma_+(\bar{g}_1) = 1$ and $\tau_+(\bar{g}_1) \in \Z(p-1)$.

The preceding reasoning shows
that the group $H$ is a subgroup of $\Autfr_{\PL} \bar{G}$;
the definition of $H$ then implies 
that $H$ is contained in the kernel of $\bar \eta$.
Conversely, assume that $\bar \varphi \in \ker \bar \eta$.
Then $\bar \varphi$ is a PL-homeomorphism 
which maps $A$ onto itself and has singularities in $A$ 
(by the definition of $\Autfr_{\PL} \bar{G}$)
and which has slopes in $P$ (since $\bar \varphi \in \ker \bar \eta$).
It follows, in particular, that $\bar{\varphi}$ is increasing.
The group $\bar{G}$ contains the translation  $\bar g_*$ with amplitude $-(p-1)$
(see conditions \eqref{eq:19.7}). 
Its image $\tau_-(\bar g_*)$ generates the image of $\tau_-$; 
as $\bar \varphi$ is increasing the conjugated translation 
$ \act{\bar{\varphi}}\bar{g}_*$ 
is therefore a translation  near $-\infty$ with amplitude $-(p-1)$.
There exists thus a number $t_\ell \in A$ so that the equation
\begin{equation}
\label{eq:19.5bis}
  \left(\act{\bar{\varphi}} \bar g_*\right) (t) = t - (p-1)
\end{equation}
holds for every  $t \leq t_\ell$. 
Set $u_\ell =\bar  \varphi^{-1} (t_\ell)$ and fix $u \in \;]-\infty, u_\ell]$.
Equation \eqref{eq:19.5bis} leads then to the chain of equations
\[
\bar \varphi (u - (p-1)) = \bar \varphi ( \bar g_*(u)) 
= 
 \left(\act{\bar{\varphi}} \bar g_*\right) (\bar{\varphi}(u) 
 = \bar \varphi (u) - (p-1).
 \]
They prove that $\bar{\varphi}$ 
satisfies property a1),
the first of the properties characterizing the elements of $H$;
property  a2) can be established similarly.
Thus   $\ker \bar{\eta} \subseteq H$. 
 \end{proof}
 
The previous proposition is the analogue of Proposition \ref{prp:LemmaE18New}.
That proposition has a corollary 
which describes the group $\Outfr_{\PL} \bar G$ of $\bar{G}$ 
as an extension of groups that are better known than the subgroup $H$ itself.
Our next goal is to obtain an analogous corollary 
for the subgroup $\bar{G}$ of $G(\R;A,P)$ 
made up of all elements  that satisfy condition \eqref{eq:19.7};
this subgroup is isomorphic to $G([0,1];A,P)$.
The corollary will describe $\Outfr_{\PL} \bar G$ as an extension of groups.
The kernel of the extension is a subgroup, 
called $\bar{L}$, 
of the square of $T(\R/\Z(p-1);A,P)$.
To define $\bar{L}$, 
we need the homomorphism
\begin{equation}
\label{eq:definition-gamma-bar-1}
\index{Homomorphism!03-gamma-bar-p@$\bar{\gamma}_\pfr$}%
\bar{\gamma}_{p-1} \colon T(\R/ \Z(p-1); A,P) \epi A/(IP \cdot A), 
\quad \bar{f} \mapsto \bar{f}(a_0 + \Z \pfr) - (a_0 + \Z \pfr)+ IP \cdot A,
\end{equation}
introduced in Remark \ref{remark:Homomorphism-gamma}.
Here $\R/\Z(p-1)$ denotes the circle of circumference $p-1$
and $a_0$ is an element of $A$.
A crucial property of the function $\bar{\gamma}_{p-1}$ is the fact
that it does not depend on the choice of  $a_0$.

The group $\bar{L}$ can now defined like this:
\begin{equation}
\label{eq:Definition-subgroup-L}
\index{Group T(R/Zpfr;A,P)@Group $T(\R/\Z \pfr;A,P)$!applications}%
\bar{L} = \left\{
(\bar{f}_1, \bar{f}_2) \in T(\R/ \Z(p-1); A,P) ^2  
\mid 
\bar{\gamma}_1(\bar{f}_1) =  \bar{\gamma}_1(\bar{f}_2)
\right\}.
\end{equation}
Since $\bar{\gamma}_1$ is an epimorphism 
onto the quotient group $A/(IP \cdot A)$,
the group $\bar{L}$ is a subgroup of index $|A : IP \cdot A|$ in the square of the group of finitary PL-homeomorphisms of the circle $\R/\Z(p-1)$ with slopes in $P$ and singularities in $A/\Z(p-1)$.

\begin{corollary}
\label{crl:PropositionE20New-part-II}
\index{Group AutfrPLG@Group $\Autfr_{\PL}G([0,1];A,P)$!description}%
\index{Outer automorphism group of!G([a,c];A,P)@$G([a,c];A,P)$}%
\index{Outer automorphism group of!G([a,c];A,P)@$G([a,c];A,P)$}%
\index{Group Out-PLG@Group $\Out_{\PL}G([0,1];A,P)$!description}%
\index{Group P cyclic@Group $P$ cyclic!consequences}%
Let $\bar{G}$ denote the subgroup of $G(\R; A, P)$ 
consisting of the PL-homeomorphisms satisfying the requirements \eqref{eq:19.7}.
Then $\Outfr_{\PL} \bar{G}$,
\ie{} the image of $\Autfr_{\PL} \bar{G}$ 
in the outer automorphism group of $\bar{G}$, 
 is an extension of the form
\begin{equation}
\label{eq:PL-outer-automorphism-group-for-I-compact}
\xymatrix{1 \ar@{->}[r] &\bar{L} \ar@{->}[r]^-{} 
&\Outfr_{\PL} \bar{G} \ar@{->}[r]^-{\bar{\eta}_*} &\Aut(A)/P \ar@{->}[r] &1}.
\end{equation}
\end{corollary}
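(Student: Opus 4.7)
The plan is to deduce this from Proposition \ref{PropositionE20New-part-I} in the same way Corollary \ref{crl:PropositionE19} is deduced from Proposition \ref{prp:LemmaE18New}, but using two ``boundary'' homomorphisms instead of one. By Proposition \ref{PropositionE20New-part-I}, I have an exact sequence
\begin{equation*}
1 \to H \longrightarrow \Autfr_{\PL}\bar{G} \xrightarrow{\bar{\eta}} \Aut(A)/P \to 1,
\end{equation*}
and since $\bar{G}$ is a normal subgroup of $\Autfr_{\PL}\bar{G}$ contained in $H$ (the inclusion $\bar{G}\subseteq H$ being immediate from conditions \eqref{eq:19.7} which force the elements of $\bar{G}$ to be translations by multiples of $p-1$ near $\pm\infty$), passing to outer automorphisms yields
\begin{equation*}
1 \to H/\bar{G} \longrightarrow \Outfr_{\PL}\bar{G} \xrightarrow{\bar{\eta}_*} \Aut(A)/P \to 1.
\end{equation*}
It therefore suffices to construct an isomorphism $H/\bar{G}\iso\bar{L}$.

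Each $h\in H$ is, by properties a1) and a2), periodic with period $p-1$ outside a sufficiently large symmetric interval $[-t_h,t_h]$. So there exist unique elements $\zeta_\ell(h),\zeta_r(h)\in G_\infty(\R;p-1;A,P)$ agreeing with $h$ on $(-\infty,-t_h]$ and on $[t_h,\infty[$ respectively; as in the proof of Corollary \ref{crl:PropositionE19}, they do not depend on the choice of $t_h$ and the assignments are homomorphisms. Composing with the projection $\kappa_*\colon G_\infty(\R;p-1;A,P)\epi T(\R/\Z(p-1);A,P)$ produces
\begin{equation*}
\Phi = (\kappa_*\zeta_\ell,\kappa_*\zeta_r)\colon H \longrightarrow T(\R/\Z(p-1);A,P)^2.
\end{equation*}
The first step is to compute $\ker\Phi$: an element $h$ lies in $\ker\Phi$ precisely when both $\zeta_\ell(h)$ and $\zeta_r(h)$ are translations by integer multiples of $p-1$, which by Lemma \ref{LemmaE17} is exactly the defining condition \eqref{eq:19.7} for membership in $\bar{G}$. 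Thus $\ker\Phi=\bar{G}$ and $\Phi$ induces an injection $\bar{\Phi}\colon H/\bar{G}\mono T(\R/\Z(p-1);A,P)^2$.

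The heart of the matter, and the step I expect to be the main obstacle, is to identify the image of $\bar{\Phi}$ with $\bar{L}$. For the inclusion $\im\bar{\Phi}\subseteq\bar{L}$, I pick $a_0\in A$ with $a_0\leq -t_h$ so that $h(a_0)=\zeta_\ell(h)(a_0)$, and I pick $a_0'\in A$ with $a_0'\geq t_h$ so that $h(a_0')=\zeta_r(h)(a_0')$. Applying Theorem \ref{TheoremA} to the finitely many singularities of $h$ in the compact interval $[a_0,a_0']$ gives
\begin{equation*}
h(a_0')-a_0' \equiv h(a_0)-a_0 \pmod{IP\cdot A},
\end{equation*}
which by the very definition \eqref{eq:definition-gamma-bar-1} of $\bar{\gamma}_{p-1}$ translates into the equality $\bar{\gamma}_{p-1}(\kappa_*\zeta_r(h))=\bar{\gamma}_{p-1}(\kappa_*\zeta_\ell(h))$, \ie{} $\Phi(h)\in\bar{L}$. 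For surjectivity onto $\bar{L}$, given $(\bar{f}_1,\bar{f}_2)\in\bar{L}$ I lift each $\bar{f}_i$ to a periodic PL-homeomorphism $\tilde{f}_i\in G_\infty(\R;p-1;A,P)$; the compatibility condition $\bar{\gamma}_{p-1}(\bar{f}_1)=\bar{\gamma}_{p-1}(\bar{f}_2)$ guarantees that, after replacing $\tilde{f}_2$ by its composition with an appropriate translation by a multiple of $p-1$ (which does not change its class in $T(\R/\Z(p-1);A,P)$), the difference $\tilde{f}_2(a_0')-\tilde{f}_1(a_0)-(a_0'-a_0)$ lies in $IP\cdot A$ for suitably chosen $a_0\ll a_0'$ in $A$. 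A further application of Theorem \ref{TheoremA} then allows me to interpolate $\tilde{f}_1\restriction{]-\infty,a_0]}$ and $\tilde{f}_2\restriction{[a_0',\infty[}$ by a finitary PL-homeomorphism on $[a_0,a_0']$ with slopes in $P$ and breaks in $A$. The resulting element of $G_\infty(\R;A,P)$ satisfies a1) and a2) by construction, hence lies in $H$, and its image under $\Phi$ is $(\bar{f}_1,\bar{f}_2)$. This identifies $H/\bar{G}$ with $\bar{L}$ and completes the extension \eqref{eq:PL-outer-automorphism-group-for-I-compact}.
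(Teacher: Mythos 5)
Your proposal is correct and follows essentially the same route as the paper's proof: the two boundary homomorphisms $\zeta_\ell,\zeta_r$ obtained from the periodicity properties a1) and a2), the identification of the kernel of $\Phi$ with $\bar{G}$ via conditions \eqref{eq:19.7}, and the two applications of Theorem \ref{TheoremA} to identify the image with $\bar{L}$. The only slight imprecision is in the surjectivity step: the congruence modulo $IP\cdot A$ already holds for any choice of lifts (since $p-1\in IP\cdot A$), and the translation adjustment of $\tilde{f}_2$ is really needed to ensure $\tilde{f}_2(a_0')>\tilde{f}_1(a_0)$ so that an \emph{increasing} interpolating PL-homeomorphism exists on $[a_0,a_0']$ --- which is exactly how the paper handles this point.
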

\index{Group G([a,c];A,P)@Group $G([a,c];A,P)$!outer automorphisms}%
\index{Group P cyclic@Group $P$ cyclic!consequences}%

\begin{proof} 
The conclusion of Proposition \ref{PropositionE20New-part-I}
can be restated by saying 
that $\Autfr_{\PL} \bar G$ is an extension having the form
\begin{equation*}
\xymatrix{1 \ar@{->}[r] &H \ar@{->}[r]^-{\bar{\iota}} 
&\Autfr_{\PL} \bar{G} \ar@{->}[r]^-{\bar{\eta}} &\Aut(A)/P \ar@{->}[r] &1.}
\end{equation*}
Now, $\bar{G}$ is a normal subgroup of $\Autfr \bar{G}$
and every element of $\bar G$ has slopes in $P$;
so the epimorphism $\bar{\eta}$
gives rise to an epimorphism
\[
\bar{\eta}_* \colon \Outfr_{\PL} \bar{G} \epi \Aut(A)/P.
\]
It suffices therefore to establish 
that $\ker \bar{\eta}_*$ is isomorphic to $\bar{L}$.

As a first step on the way towards this goal,
we construct two epimorphisms $\zeta_\ell$ and $\zeta_r$ 
\label{notation:zeta-r}%
from  $H$ onto  $G_\infty(\R;p-1;A,P)$.
Let $h$ be  an element in $H$. 
According to property a1) of the elements of $H$
there exists then a positive number $t_h \in A$
such that the equation $h(t-(p-1)) = h(t) - (p-1)$ holds for every $t \leq -t_h$.
There exists thus a PL-homeomorphism $\tilde{h}_\ell \in G_\infty(\R;p-1);A,P)$ 
which agrees with $h$ on the half line $]-\infty,-t_h]$. 
As this PL-homeomorphism is not changed if $t_h$ is replaced  
by a larger number in $A$,
it follows,
as in the proof of Corollary \ref{crl:PropositionE19},
that the assignment  $h \mapsto \tilde{h}_\ell$ is a homomorphism,
\index{Homomorphism!06-zeta-ell@$\zeta_\ell$}%
say $\zeta_\ell \colon H \to G_\infty(\R;p-1;A,P)$,
and that this homomorphism is surjective.
Property a2) allows one to construct a similar epimorphism
$\zeta_r$ of $H$ onto $G_\infty(\R;p-1;A,P)$.
\index{Homomorphism!06-zeta-r@$\zeta_r$}%
Taken together,
these homomorphisms lead to the homomorphism
\begin{equation}
 \index{Homomorphism!06-zeta@$\zeta$}%
 \label{eq:Homomorphism-zeta}
\zeta = (\zeta_\ell, \zeta_r) \colon 
H \longrightarrow G_\infty(\R; p-1; A, P)^2,
\quad
h \longmapsto (\tilde{h}_\ell, \tilde{h}_r).
\end{equation}
By composing $\zeta$ with the square of  the projection
\[
\kappa_*\colon G_\infty(\R;p-1;A,P) \epi T(\R/\Z(p-1);A,P)
\]
one obtains, finally, the homomorphism
\begin{equation}
\label{eq:Homomorphism-zeta-bar}
\bar{\zeta} = (\bar{\zeta}_\ell, \bar{\zeta}_r) \colon 
H \longrightarrow T(\R/\Z(p-1); A, P)^2,
\quad
h \longmapsto (\bar{h}_\ell, \bar{h}_r).
\end{equation}

Its kernel consists of all elements of $H$ 
that are translations with periods in $\Z(p-1)$, 
both near $-\infty$ and near $+\infty$;
a glance at equation \eqref{eq:19.7} reveals then
that this kernel is nothing but the group $\bar{G}$.

Now to the image of $\bar{\zeta}$.
Let $h$ be an element of $H$. 
Since $H$ is a subgroup of $G_\infty(\R;A,P)$ 
the restriction of $h$ to every compact interval $[a_1,a_2]$ is a finitary PL-homeomorphism with singularities in $A$ and  slopes in $P$.
Suppose now that $a_1 \in A$ is so small 
and $a_2 \in A$ is so large 
that $h$ is periodic with period $p-1$ 
to the left of $a_1$ and also to the right of $a_2$.
Then $h(a_1) - a_1$  is congruent to $h(a_2) - a_2$ \emph{modulo}  $IP \cdot A$
(by Theorem \ref{TheoremA}).
In view of the definitions of 
$\tilde{h}_\ell = \zeta_\ell(h)$ and of $\tilde{h}_r = \zeta_r(h)$
this finding can be restated by saying 
that 
\begin{align*}
\gamma_{p-1} (\zeta_\ell(h)) 
&=
\tilde{h}_\ell(a_1) - a_1 + (IP \cdot A)\\
&=  
\tilde{h}_r(a_2) - a_2 + (IP \cdot A)
=
\gamma_{p-1} (\zeta_r(h)).
\end{align*}
So the image of $\zeta$ is contained in the group
\begin{equation*}
\tilde{L} 
= \left\{
(\tilde{f_1}, \tilde {f_2}) \in G_\infty(\R;p-1;A,P)^2 
\mid 
\bar{\gamma}_{p-1}(\bar{f}_1) = \bar{\gamma}_{p-1}(\bar{f}_2)
\right\}.
\end{equation*}

Conversely, 
let  $\bar{h}_\ell$  and $\bar{h}_r$  be elements of  $T(\R/\Z(p-1);A,P)$
with $\bar{\gamma}_{p-1} (\bar{h}_\ell) = \bar{\gamma}_{p-1} (\bar{h}_r)$.
Let $\tilde{h}_\ell$  and $\tilde{h}_r$ 
be lifts of these elements in $G_\infty(\R; p-1;A,P)$.
Then $\gamma_{p-1} (\tilde{h}_\ell) = \gamma_{p-1} (\bar{h}_r)$.
Pick elements $a_1 < a_2$ in $A$.
Then $\tilde{h}_\ell (a_1) - a_1$ is congruent to $\tilde{h}_r (a_2) - a_2$,
and so $a_2 - a_1$ is congruent to $\tilde{h}_r (a_2)- \tilde{h}_\ell (a_1)$.
If the latter difference is positive,
Theorem \ref{TheoremA} allows one to find a PL-homeomorphism 
$f \in G(\R;A,P)$ with $f(a_1) = \tilde{h}_\ell(a_1)$ 
and $f(a_2) = \tilde{h}_\ell(a_2)$.
Let $h \colon \R \iso \R$ be the PL-homeomorphism 
that agrees with $h_\ell$ to the left of $a_1$,
with $f$ on the interval $[a_1, a_2]$ 
and with $\tilde{h}$ to the right of $a_2$.
Then $h$ is a PL-homeomorphism lying in $H$ 
that maps onto the given ordered pair $(\bar{h}_\ell, \bar{h}_r)$ in $\bar{L}$.
If, however, $\tilde{h}_r (a_2) \leq \tilde{h}_\ell (a_1)$
replace $\tilde{h}_r$ by a lift $\widetilde{h_1}_r$ of $\bar{h}_r$ 
with $\widetilde{h_1}_r (a_2) >\tilde{h}_\ell(a_1)$ and proceed as before.
We conclude, first, that $\bar{\zeta}$ maps $H$ onto $\bar{L}$
and  then that $\bar{\zeta}$ induces an isomorphism $H/\bar{G} \iso \bar{L}$.
The proof of Corollary \ref{crl:PropositionE20New-part-II}
is now complete.
\end{proof}
%
\subsubsection{Exotic automorphisms}
\label{sssec:19.3b}
%
We have not succeeded in determining 
whether the automorphism group $\Autfr \bar{G}$ is larger 
than its subgroup $\Autfr_{PL} \bar{G}$.
\footnote{See sections \ref{sssec:Notes-ChapterE-Brin96}
and  \ref{sssec:Notes-ChapterE-BrGu98} for updates.}
In this section, we prove, as a meagre substitute, Proposition \ref{prp:TheoremE21}
which brings to light 
that $\Autfr \bar{G}$ is generated by $\Autfr_{\PL} \bar{G}$ 
and a subgroup $\Autfr_{\per} \bar{G}$ consisting of periodic homeomorphisms.
This result implies that if $\Autfr \bar{G}$ contains an element outside of $\Autfr_{\PL} \bar{G}$,
dubbed \emph{exotic},
then the subgroup $\Autfr_{\per} \bar{G}$ will also contain such an element.
\index{Exotic automorphisms!definition|textbf}%

\begin{definition}
\label{definition:ssec-19.6}
Let $\Autfr_{\per}\bar G$ denote the subgroup of $\Autfr \bar{G}$ 
which consists of all homeomorphisms $\varphi \in \Autfr \bar G$
that satisfy the restrictions
\begin{equation}
\label{eq:19.8}
\varphi(0) = 0,\text{ and } \varphi(t+(p-1)) = \varphi(t) + (p-1)\text{ for all }t\in \R.
\end{equation}
\end{definition}

\begin{proposition}
\label{prp:TheoremE21}
As before,
let  $\bar G$ denote the subgroup of $G(\R;A,P)$ 
defined by the conditions \eqref{eq:19.7}.
The automorphism group $\Autfr \bar{G}$ is generated by its subgroups 
$\Autfr_{\PL}\bar G$ and  $\Autfr_{\per} \bar{G} $.
\end{proposition}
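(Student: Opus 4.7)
The plan is to factor any $\varphi \in \Autfr \bar G$ as $\varphi = \varphi_1 \circ \varphi_2$ with $\varphi_1 \in \Autfr_{\PL}\bar G$ and $\varphi_2 \in \Autfr_{\per}\bar G$. Since the reflection $t\mapsto -t$ of $\R$ lies in $\Autfr_{\PL}\bar G$, one may assume that $\varphi$ is orientation-preserving. Denote by $\tau_{p-1}$ the translation by $p-1$ and set $g_* = \varphi\circ \tau_{p-1} \circ \varphi^{-1}$; since $\tau_{p-1}\in\bar G$ and $\varphi$ normalizes $\bar G$, we have $g_*\in\bar G$.

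A key preliminary step is to show that $\tau_-(g_*) = \tau_+(g_*) = p-1$. Since conjugation by $\varphi$ sends elements of $\bar G$ that are the identity near $-\infty$ to elements with the same property, the homomorphism $g\mapsto \tau_-(\varphi g\varphi^{-1})$ factors through $\tau_-\colon\bar G\to\Z(p-1)$, inducing multiplication by an integer $k_\ell$ on $\Z(p-1)$. Applying the same reasoning to $\varphi^{-1}$ (which also normalizes $\bar G$) yields an inverse factor, so $k_\ell=\pm1$; orientation preservation of $\varphi$ then forces $k_\ell=1$. The argument for $\tau_+$ is identical. Theorem \ref{TheoremA}, applied to $g_*$ on the interval from a suitably small point of $A$ to $\varphi(0)$, then gives $\varphi(p-1)-\varphi(0)\equiv p-1\pmod{IP\cdot A}$.

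To construct $\varphi_1$, I use Theorem \ref{TheoremA} once more to obtain a finitary PL-homeomorphism $\psi_0\colon[0,p-1]\iso[\varphi(0),\varphi(p-1)]$ with slopes in $P$ and vertices in $A^2$, and extend it to $\varphi_1\colon\R\iso\R$ by the recursion $\varphi_1(t+k(p-1)) = g_*^k(\varphi_1(t))$ for $t\in[0,p-1]$ and $k\in\Z$. Since $g_*$ equals $\tau_{p-1}$ on sufficiently large half-lines near $-\infty$ and $+\infty$, the extended map $\varphi_1$ satisfies the periodicity conditions a1) and a2) defining the subgroup $H$ of section \ref{sssec:19.3a}; by Proposition \ref{PropositionE20New-part-I}, $\varphi_1\in H\subseteq\Autfr_{\PL}\bar G$. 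Setting $\varphi_2=\varphi_1^{-1}\circ\varphi$, one has $\varphi_2\in\Autfr\bar G$ (as a product of normalizers); by construction $\varphi_2(0) = \varphi_1^{-1}(\psi_0(0)) = 0$ and $\varphi_2\,\tau_{p-1}\,\varphi_2^{-1} = \varphi_1^{-1}g_*\varphi_1 = \tau_{p-1}$, so $\varphi_2\in\Autfr_{\per}\bar G$ and the factorization $\varphi=\varphi_1\circ\varphi_2$ is obtained.

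The main obstacle is the preliminary step establishing $\tau_-(g_*)=\tau_+(g_*)=p-1$. Without it, the recursion defining $\varphi_1$ would produce a map exhibiting quasi-periodicity with nontrivial integer amplitude near $\pm\infty$, violating a1) and a2); such a $\varphi_1$ would fail to normalize $\bar G$ despite having slopes in $P$. The subtlety is that this invariance genuinely requires the two-sided nature of normalization in $\Autfr\bar G$, ruling out examples like $t\mapsto t$ for $t\le0$ and $t\mapsto 2t$ for $t\ge0$, which send $\bar G$ into itself but whose inverses do not.
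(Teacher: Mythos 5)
Your proof is correct, but it takes a genuinely different route from the paper's. The paper first manufactures the \emph{periodic} factor: it periodizes the germ of $\varphi$ at $-\infty$ (using that $\act{\varphi}h$ is again a translation of amplitude $p-1$ near $-\infty$, the same observation as your preliminary step), proves by a conjugation argument that this periodization $\tilde{\varphi}$ normalizes $\bar{G}$, and then observes that the residual factor $\tilde{\varphi}_1^{-1}\circ\varphi$ is a translation on a half line $]-\infty,t_0]$, hence differentiable at a point of $A$, hence piecewise linear by Proposition \ref{PropositionE9} --- so the PL factor costs nothing to construct. You instead build the \emph{PL} factor explicitly: Theorem \ref{TheoremA} applied to $g_*$ yields the congruence $\varphi(p-1)-\varphi(0)\equiv p-1 \pmod{IP\cdot A}$ needed to interpolate on the fundamental domain $[0,p-1]$, and the $g_*$-equivariant extension lands directly in the subgroup $H$ of Proposition \ref{PropositionE20New-part-I}, whose containment in $\Autfr_{\PL}\bar{G}$ is already on record; the periodic factor $\varphi_1^{-1}\circ\varphi$ then comes for free from the intertwining relation $\varphi_1\circ(t\mapsto t+(p-1))\circ\varphi_1^{-1}=g_*$ together with $\varphi_1(0)=\varphi(0)$. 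Your route buys a constructive PL representative and avoids the paper's somewhat delicate verification that the periodized germ normalizes $\bar{G}$, at the price of invoking Theorem \ref{TheoremA} and the fact (from Theorem \ref{TheoremE04}) that $\varphi(A)=A$; the paper's route needs only the behaviour of $\varphi$ near one end plus the differentiability criterion. Both hinge on the same initial fact that conjugation sends the generator of the translation subgroup to an element of $\bar{G}$ with $\tau_\pm=p-1$, which you justify cleanly via the induced automorphism of $\bar{G}/\ker\tau_-\cong\Z$.
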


\begin{proof}
Let $\varphi$ be an element of $\Autfr \bar G$.  
Since $-\id_{\R}$ is in $\Autfr_{\PL} \bar G $ 
we may assume that $\varphi$ is increasing. 
It then follows,
as in the proof of Proposition \ref{PropositionE20New-part-I},  
that there is a real $t_0 \in A$ so that 
\begin{equation}
\label{eq:19.9}
\varphi(t-(p-1)) = \varphi (t) - (p-1) \text{ for all } t\leq t_0.
\end{equation}
Let $\tilde{\varphi}_\ell$ denote the unique homeomorphism of $\R$ 
which agrees with $\varphi$ on the half line $]-\infty, t_0]$ 
and is periodic with period $p-1$.

We claim that $\tilde\varphi$ is in $\Autfr \bar G$.  
To prove this, 
we bring into play the translation $\bar{h}= (t\mapsto t + (p-1) )$ with amplitude $p-1$.
This translation lies in $\bar{G}$
and it commutes with $\tilde{\varphi}$.
Consider now an element $\bar{g} \in \bar{G}$.
It satisfies the restrictions \eqref{eq:19.7}
and so $\tau_+(\bar{g}) = k \cdot (p-1)$ for some $k \in \Z$.
The translation $\bar{h}^k$ coincides therefore with $\bar{g}$ 
on a half line of the form $[t_1, \infty[$\,;
let $m$ be an integer with  $t_1- m(p-1) \leq t_0$.
The composition 
\[
\bar{g}_1 
= 
\bar{h}^{-m} \circ \left(\bar{g} \circ (\bar{h}^k)^{-1}\right) \circ  \bar{h}^{m}
\]
is then an element of $\bar{G}$ 
whose support is contained in the half line $]-\infty, t_0]$.
The calculation
\begin{equation*}
\act{\varphi}\bar{g}_1 
= 
\act{\tilde{\varphi}} \bar{g}_1
=
\act{\tilde{\varphi}} \left(\bar{h}^{-m} \circ \bar{g} \circ  \bar{h}^{ m-k}\right)
=
\bar{h}^{-m} \circ \act{\tilde{\varphi}}\bar{g}  \circ \bar{h}^{ m-k}
\end{equation*}
and the facts that $\act{\varphi} g_1$ and  $\bar{h}$ lie both in $\bar{G}$ 
imply then
that  $\act{\tilde{\varphi}}\bar{g} \in \bar{G}$.
It follows that $\act{\tilde{\varphi}} \bar{G} \subseteq \bar{G}$.
One verifies similarly, that $\act{\tilde{\varphi}^{-1}} \bar{G} \subseteq \bar{G}$,
and so $\bar{\varphi} \in \Autfr \bar{G}$, as claimed.

The proof is now quickly completed.
Let $h_0$ be the translation with amplitude $\bar{\varphi}(0)$.
Then $\bar{\varphi}(0) \in A$ by Theorem \ref{TheoremE04}
\index{Theorem \ref{TheoremE04}!consequences}%
and so $h_0 \in \Autfr \bar{G}$.
The composition 
$\tilde{\varphi}_1 = \tilde{\varphi}\circ h_0^{-1}$ fixes 0,
lies in $\Autfr \bar{G}$
and is periodic with period $p-1$,
and so it is an element of $\Autfr_{\per} \bar G $.
The composition 
$ \varphi_2 = \tilde{\varphi}_1^{-1} \circ \varphi$,
on the other hand,  is an element of $\Autfr \bar G$ 
which is a translation on the half line $]-\infty,t_0]$
and hence differentiable in a point of $A$. 
Proposition \ref{PropositionE9} thus allows us to infer
\index{Proposition \ref{PropositionE9}!consequences}%
that $\varphi_2$ lies in $\Autfr_{\PL} \bar{G}$,
and so $\varphi$,
being the product $\tilde{\varphi}_1 \circ \varphi_2$,
lies in  $\Autfr_{\per} \circ \Autfr_{\PL}$. 

All taken together, 
we have shown
that $\Autfr \bar{G} $ is  the complex product 
\[
\{\id_{\R}, -\id_{\R}\} \circ \Autfr_{\per} \circ \Autfr_{\PL}
\]
and so the proof is complete.
\end{proof}

\appendix
\setcounter{section}{0}
\renewcommand\descriptionlabel[1]%
{\hspace{\labelsep} \textit{#1}}
\thispagestyle{plain}
\renewcommand{\theequation}{\thesection.\arabic{equation}}
%
%
\chapter*{Notes}
\label{chap:Notes}
\markboth{Notes}{}
\addcontentsline{toc}{chapter}{Notes}

This chapter, written by Ralph Strebel, discusses two topics: 
the differences between the Bieri-Strebel memoir of 1985 and this monograph,
and a brief survey of newer articles related to the themes of the memoir.
\renewcommand{\thesection}{N\arabic{section}}

\section{Differences between memoir and monograph: summary}
\label{sec:Differences-summary}
%
When I started to revise the Bieri-Strebel memoir
I intended to produce a new version of the text with few changes,
limited to corrections of misprints and minor errors, 
and to elucidations of obscure passages.
Little by little, 
though, 
I detected that more profound modifications were called for.
%
\subsection{Numbering of results, remarks, equations and the likes}
\label{ssec:Numbering-results}
%
First, there were errors in the numbering of equations,
then comments and examples without a name.
In addition,
some names given to  results look strange for someone familiar with \LaTeX{}.
In Chapter A, for instance, there is a main result, 
called Theorem A
and a sequence of consequences, 
labelled Corollaries  A1, A1*,  A2,  A3 and A4, 
then a remark and finally Corollary A5.
I decided to use a sequential numbering throughout 
and so the names of the previously listed results are now:
Theorem \ref{TheoremA}, the first result in Section 4 inside Chapter A,
and then Corollaries 
\ref{CorollaryA1}, 
\ref{CorollaryA1*}, 
\ref{CorollaryA2}, 
\ref{CorollaryA3}, 
\ref{CorollaryA4},
and 
\ref{CorollaryA5}.
The item called ``remark'' in the original version is now 
Remark \ref{remark:5.3}. 
Section 5 contains also a new item, 
Remark \ref{remark:Usefulness-of-this-function}.

The previously listed results and remarks show
that their names refer both to the chapter 
and to the section in which they occur.
The reason for this double reference is this:
each of the five chapters of the memoir has a very specific theme.
I think it is useful for a reader to be reminded of this theme 
when encountering a reference to a result or remark 
and so I have included the names of the chapters into the names of the results.
On the other hand,
some of the chapters, in particular the last one, are quite long.
In locating a result referred to it is therefore useful 
to know in advance the section in which it is to be found.
\footnote{The counter for results and the likes is reset at the beginning of each section.}
%
\subsection{Subdivision of the memoir}
\label{ssec:Subdivision-etc}
%
I maintained the subdivision of the memoir into an Introduction followed by Chapters A through E,
and the subdivision of the chapters into sections, 
with numbers running from 1 to 19.
\footnote{The final Section 20 has not been included into the  monograph
for reasons adduced in section  \ref{ssec:Changes-E-Section20}.}
The results of these chapters are reproduced in the same order,
but I have inserted new results, remarks and examples.
The formulations of the results are mostly as before, 
disregarding some stylistic polishing, 
with two important exceptions:  
the statement of the core result Theorem E4 contained a hypothesis 
that is redundant;
I have removed it in Theorem \ref{TheoremE04}, 
the equivalent of Theorem E4,
 and in all of its corollaries;
secondly,
Supplement E11 states a claim that does not seem to follow from the offered proof;
it may be correct, but as I could not mend the original proof
this claim is no longer stated in Supplement \ref{SupplementE11New} 
and the results based on it.
%
\section{Differences between memoir and monograph: details}
\label{ssec:Changes-details}
%
In this section I list the more important changes, corrections as well as additions,
that distinguish this monograph from the memoir of 1985.
%
\subsection{Introduction}
\label{ssec:Changes-Intro}
%
There are few changes here, disregarding some rewordings.
The discussed results are, of course, cited in their corrected form; 
in addition,
I changed the names of the groups $G$ and $S$,
used in the memoir,  into $F$ and $T$, 
the names that have become standard for two of Richard Thompson's well-known groups.

The monograph has now two prologues,
the \emph{Introduction} and the \emph{Preface};
their subject matters overlap considerably.
I decided to maintain the old introduction as it reflects the outlook on the subject
Robert and I had when we wrote the memoir,
while the preface, besides surveying the results of the memoir,
tries also to assess its achievements.
%
\subsection{Chapter A}
\label{ssec:Changes-A}
%
The differences between the versions of Chapter A are quickly detailed.
Theorem A has been split into Theorem \ref{TheoremA} 
and Lemma \ref{lem:TheoremA}; 
the lemma states the basic step in the proof of Theorem A 
in a more memorable form.
The illustration in section 5.1 has been transferred to Section \ref{sec:4} 
and expanded into Illustration \ref{illustration:4.3}.
It prepares for the applications of Theorem  \ref{TheoremA} 
made in \cite{Ste92} and in \cite{Lio08}; 
see section \ref{sssec:Notes-Choices-of-A-and-P} for some comments.
\index{Stein, M.}%
\index{Liousse, I.}%
The proof of Corollary \ref{CorollaryA1*} contains now more details
than does the proof of Corollary A1$^*$,
Remark \ref{remark:Usefulness-of-this-function} has been added,
the statement of Corollary \ref{CorollaryA4} has been simplified, 
the illustration following Corollary A4 in the first version has been removed
and the proof of Corollary \ref{CorollaryA5} has been slightly expanded.
%
\subsection{Chapter B}
\label{sssec:Changes-B}
%
The changes in this chapter are of greater weight
than those in the Introduction and in Chapter \ref{chap:A};
they clarify proofs and correct errors.

\subsubsection{Section \ref{sec:7}} 
In the memoir,
formulae \eqref{eq:7.4n} and \eqref{eq:7.5n} 
are erroneously contracted into a single one,
namely
\[
\act{\aff(a,p)} g(a',p') = g(a + p(a'-a),p')
\]
and labelled (7-4). 
The lines following formula (7-7) 
become then incomprehensible.
In this monograph, 
the error has been corrected 
and the final part of the proof of Theorem \ref{TheoremB2} has been polished.

\subsubsection{Section \ref{sec:8}}
The original proof of Proposition \ref{PropositionB3} is complete, 
but the ideas behind its calculations do not stand out clearly.
I rearranged therefore the reasoning 
so as to make the strategy of the proof more transparent.

\subsubsection{Section \ref{sec:9}}
In the proof of the comparison result Lemma \ref{LemmaB6},
I replaced the original PL-homeomorphism by a PL-homeomorphism 
that is easier to define and which leads to a simpler justification.

$\PP$-regular subdivisions are finite sequences of points in $A \cap [0,1]$
of a particular kind and are constructed inductively.
They can be coded by a sequence of the form 
$(1,p_1;n_2.p_2; \ldots, n_\ell, p_\ell)$ 
(see the paragraph containing formula \eqref{eq:9.2} for explanation).
In the memoir, a subdivision is denoted by the letter $S$,
in this monograph,
it is denoted by $D$ and its code by $S$.
Illustration \ref{illustration:9.9.Example1} 
corresponds to the very short section 9.3 in the memoir.

In the memoir,
the proof of  Proposition \ref{PropositionB8}
and the Example \ref{example:Search-for-regular-subdivisions}
illustrating it are both  sketchy; 
the monographs  gives more details.
Remark \ref{remark:Number-of-singularities} does not figure in the memoir.

\subsubsection{Proof of Theorem \ref{TheoremB9}}
This proof is tricky. 
To help the reader in digesting it,
I reworked the description of the prospective set of generators 
(section \ref{ssec:9.5}),
added illustrations \ref{illustration:Prospective-generators} 
and \ref{illustration:9.9.Example1},
and rearranged the proof of Lemma \ref{LemmaB12}.

\subsubsection{Rectangle diagrams}
\index{Rectangle diagram!originator}%
Robert and I became acquainted 
with these diagrams through notes 
taken by Ross Geoghegan. 
\index{Geoghegan, R.}%
These notes recorded unpublished work of Matt Brin and Craig Squier
and contained examples of rectangle diagrams.
\index{Brin, M. G.}%
\index{Squier, C. C.}%
Brin and Squier, in turn, had learned the notion of rectangle diagram 
in an early version of the paper \cite{CFP96} by Cannon et al. 
The rectangle diagrams in the Bieri-Strebel memoir 
are thus also an offspring of Bill Thurston. 
\index{Thurston, W. P.}%
%
\subsection{Chapter C}
\label{ssec:Changes-C}
In the memoir,
Chapter \ref{chap:C} contains several passages 
which today seem rather sketchy, if not obscure;
some of them are erroneous.
Most of the results formulated in the chapter are, however, correct.
I have therefore decided to rewrite much of chapter 
and to round it off by several new examples.
Here is the list of the main changes and corrections. 

\subsubsection{Section \ref{sec:10}}
\index{Higman's simplicity result}%
In the memoir, 
a consequence of Higman's Theorem 1,
namely the first example of \cite[Section 3]{Hig54a},
is stated as ``Theorem'', 
but no proof is supplied.
As Higman's argument is fairly short, 
I decided to include a variant of Higman's Theorem 1 as 
Proposition \ref{prp:Higmans-Theorem1} and to explain 
how this result entails the simplicity of the derived group of $B(I;A,P)$.
(The proof of Proposition \ref{prp:Higmans-Theorem1} given in \cite{BiSt14}
has a gap, pointed out to me by Jakub Gismatullin.)
\index{Higman, G.}%
\index{Gismatullin, J.}%

\subsubsection{Section \ref{sec:11}}
In the 1985 text, 
the assertions stated now in Lemmata \ref{lem:Commutative-square-1}
and \ref{lem:Commutative-square-2} are given without proofs.
I decided to promote them into lemmata followed by proofs,
all the more so as the second assertion is erroneous:
the claim enunciated in the lower part of page C-5  
is that of Lemma \ref{lem:Commutative-square-2},
but with the hypothesis 
``$\varphi$ is a finitary PL-homeomorphism''
weakened to
``$\varphi$ is a PL-homeomorphism'';
this weaker hypothesis does not allow one to deduce the conclusion,
as is shown in section \ref{sssec:11.2b}.

\subsubsection{Section \ref{ssec:12.1}}
Lemma \ref{lem:Commutative-square-2} is used in the proof of Proposition \ref{PropositionC2}.
In the \emph{first part}
the PL-home\-omorphism $\varphi$ is affine and so the lemma applies.
This first part quotes also some details of the proof of Proposition \ref{PropositionB3}.
As this proof has been rearranged so as to become more readable,
I adopted the quotation accordingly.
The \emph{second part} of the proof 
relies in the memoir on the mistaken analogue of \ref{lem:Commutative-square-2}.
In this monograph, 
an entirely different approach is used. 
Corollary \ref{crl:PropositionC2} and 
Remarks \ref{remarks:Explicit-description-nu-ab}
do not figure in the memoir.

\subsubsection{Section \ref{ssec:12.2New}}
The second topic of Section \ref{sec:12}
is the \emph{abel\-ianization of the bounded group} $B = B([0,\infty[\,;A,P)$.
In a first step, $B_{\ab}$ is written as an extension of the cokernel of 
$H_2(\rho) \colon H_2(G_1) \to H_2(IP \cdot A \rtimes P)$ by an auxiliary  group $K(A,P)$;
here $G_1$ is the kernel of the surjection
\[
\sigma_- \colon G([0, \infty[\,; A, P) \to P.
\]
This first step relies on  Lemma \ref{LemmaC3New} 
which asserts that $G(I;A,P)$ 
acts by the identity on the abelian group $B(I;A,B)_{\ab}$
as long as $I\neq \R$.
The analogue of this lemma figuring in the memoir asserts more,
namely 
that the action of the group $G(I;A,P)$ 
on the homology groups  $H_j(B,\Z)$ is trivial in every dimension $j$.
This claim is correct, but its proof is rather sketchy.
As the general result is only needed in a minor point of the memoir
and as the length of a complete proof seems out of proportion 
with the interest in the claim,
I have omitted the more general assertion.

In the memoir, 
the first step is followed directly by a study of the cokernel,
the outcome being summarized in Lemma C5.
Here the study of the cokernel is given in section \ref{sssec:Analysis-coker-rho} 
and it is summarized by Proposition \ref{prp:LemmaC5}.
The description of $\coker H_2(\rho)$
calls for an investigation of the vanishing of the groups $H_j(P, IP \cdot A)$.
In the memoir, 
this analysis is the topic of section 12.4, 
here that of section \ref{sssec:12.2c}.
The findings are collected in Lemma \ref{LemmaC6};
this lemma improves on Lemma C6 in the memoir 
and is followed by a new illustration, 
Example \ref{example:LemmaC6}.

\subsubsection{Section \ref{ssec:12.3New}}
This section corresponds to section 12.4 in the memoir.
Its topic is the study of the abelian group $K(A, P)$.
The results obtained therein are then used 
in sections \ref{sssec:12.3c} and \ref{ssec:12.4New} of the monograph,
sections which replace sections 12.5 and 12.6 in the memoir.

The group $K(A,P)$ is (isomorphic to) the kernel of the homomorphism
\begin{equation}
\label{eq:12.5New-bis}
\index{Homomorphism!17-rhobar@$\bar{\rho}$}%
\bar \rho \colon \left\{ 
\begin{aligned}
\Z[(A \,\cap \;]0,\infty[)_\sim] \otimes P 
&\longrightarrow 
\left( (IP \cdot A)/(IP^2\cdot A) \right)\times P,\\
G_1 \cdot a \otimes p &\longmapsto ((1-p)a+IP^2\cdot A,p).
\end{aligned} \right.
\end{equation}
In this formula,
$(A\, \cap\, ]0, \infty[\,)_\sim$ denotes the space of orbits in  $A\, \cap\, ]0, \infty[$,
of the group $G_1 = \ker \left( \sigma_- \colon G([0, \infty[\;;A,P)\right)$.
This orbit space has the cardinality of $A/(IP \cdot A)$
and so the domain of the map $\bar{\rho}$ 
is written $\Z[A/(IP \cdot A)]$ in the memoir.
This is, however, a dangerous substitution: 
the elements of the orbit space are represented 
by \emph{positive} elements of $A$,
while the cosets in $A/(IP\cdot A)$ are represented by arbitrary elements of $A$.
The formula describing $\bar{\rho}$ in the memoir does not respect this fact.

In the monograph,
the domain of $\bar{\rho}$ is always written $\Z[(A \,\cap\,]0,\infty[\,)_\sim]$
and great care is exercised 
when the facts 
that $(A \,\cap\,]0,\infty[\,)_\sim$ and $A/(IP \cdot A)$ have the same cardinality 
and that $A/(IP \cdot A)$ has the structure of an abelian group, are used.

The proof of Proposition  \ref{prp:LemmaC5}
relies on an auxiliary result,
called Lemma \ref{lemma:Quotients-A/IPA-and-IPA/IP2A}.
The proof of this lemma makes use of an elementary argument
which has been communicated to me by J. R. J. Groves;
\index{Groves, J. R. J.}%
it replaces the more conceptual argument,
based on properties of the tensor product,  given in the memoir.
The second part of the proof of Proposition \ref{prp:LemmaC5} is new 
and relies on the explicit construction of non-trivial elements of $B_{\ab}$ 
carried out in Example \ref{example:Elements-in-B}.

Corollary \ref{crl:Bab-for-P-cyclic-and-A-locally-cyclic}
is an application of Proposition  \ref{prp:LemmaC5};
it is called Proposition C9 in the memoir.
Its proof establishes a claim 
that is part of Lemma C8.

Example \ref{example:P-cyclic-A-locally-cyclic} is an addition to this text;
it replaces Corollary D12,
a result stated in Remark 15.3 
which figures no longer in the monograph.
\footnote{The Note \ref{sssec:Remark-15.3} gives the reason for this deletion}
In this example I also point out a fact, 
stated by M. Stein on page 478 of her article \cite{Ste92}:
the derived group of $G([a,c];A,P)$ 
coincides with the derived group of $B([a,c];A,P)$, and so it is simple,
for every slope group $P \neq \{\id \}$ and every $\Z[P]$-module $A$.
\index{Stein, M.}%
\footnote{see also sections \ref{sssec:Notes-ChapterC-GaGi16}
and \ref{sssec:Notes-ChapterC-Brown-Stein}}

\subsubsection{Section \ref{ssec:12.4New}}
This section follows closely section 12.6 of the memoir,
but supplements the final result by a more detailed discussion of its utility.
Examples \ref{examples:TheoremC10} have no counterpart in the memoir.

\subsubsection{Section \ref{ssec:Action-TildeG}}
Let $\tilde{G}$ denote the group $G(\R;A,P)$ and $\tilde{B}$ its subgroup of bounded elements.
The aim of sections 12.7 and 12.8 in the memoir is to show,
by way of examples, 
that $\tilde{G}$ can act non-trivially on the abelianization of $\tilde{B}$,
in contrast to the conclusion of Lemma \ref{LemmaC3New}.
This aim is attained in a roundabout manner.
One considers first the extension $\tilde{B} \incl \tilde{G} \epi \tilde{G}/\tilde{B}$
and its associated 5-term sequence
\begin{equation*}
\label{eq:12.12}
H_2(\tilde G)  \xrightarrow{H_2(\lambda,\rho)} 
H_2(\tilde G/\tilde B)  \xrightarrow{d_2} 
\tilde B/[\tilde B,\tilde G]  \xrightarrow{\iota_*}
\tilde G_{\ab} \xrightarrow{(\lambda,\rho)_{\ab}} 
(\tilde{G}/\tilde B)_{\ab}  \to 1
\end{equation*}
and verifies then
that the map $(\lambda,\rho)_{\ab}$  is bijective.
The group $\tilde B/[\tilde B,\tilde G]$ is thus a quotient of the multiplicator 
of the quotient group $\tilde{G}/\tilde B$; 
this group is metabelian.
In the special case where $P$ is cyclic,
generated by an integer $p \geq 2$, and $A = \Z[ 1/p]$,
a presentation of $\tilde{G}/\tilde B$ is then constructed
that allows one to deduce
 that $H_2(\tilde G/ \tilde B)$, and hence $\tilde B / \tilde B , \tilde G]$, 
are generated by at most 2 elements.
The group ${\tilde B}_{\ab}$, on the other hand, is isomorphic to $B_{\ab}$ 
(by Proposition \ref{PropositionC1}),
hence free abelian of rank $p-2$ 
by Illustration \ref{illustration:4.3} and Corollary \ref{crl:Bab-for-P-cyclic-and-A-locally-cyclic}.
So the commutator subgroup $[\tilde B, \tilde B]$ of $\tilde{B}$
is a proper subgroup of $[\tilde B , \tilde G]$ whenever $p > 4$.

The proof given in the monograph is straightforward:
the orbits $\Omega$ of the action of $\tilde{B}$ on $A$ 
are the cosets of $IP \cdot A$ of $A$.
The homomorphism $\nu$ 
and the PL-homeomorphisms discussed in Example \ref{example:Elements-in-B}
imply then
that every translation with amplitude $a \in A \smallsetminus IP \cdot A$ 
acts non-trivially on $\tilde{B}_{\ab}$.
%
\subsection{Chapter D}
\label{ssec:Changes-D}
The changes in this chapter are less important 
than those in Chapter \ref{chap:C}.
They aim mainly at making the proofs more readable.

\subsubsection{Preamble to Chapter \ref{chap:D}} 
This introductory text is new.

\subsubsection{Section \ref{sec:13}}
This section studies presentations of the group $G(\R;A,P)$.
The starting point are relations
\eqref{eq:13.3p}, \eqref{eq:13.4p}, \eqref{eq:13.5p} and \eqref{eq:13.5pp}.
In the memoir,
few details about the derivation of these relations are given;
for the monograph,
I have therefore supplied some details.
Remark \ref{remark:PropositionD1} does not figure in the memoir.

Section \ref{ssec:13.3New} is a combination of sections 13.5 and 13.6 
in the memoir.
The proof of Proposition \ref{PropositionD4} is rather tricky.
I have therefore added an introductory section \ref{sssec:13.3aNew};
it explains where the crux of the matter lies.
Moreover,
I have replaced the term \emph{$\PP$-adic expansion}, 
used in the memoir,
by the locution \emph{generated as a semi-group with operators in $\PP$}.
The examples in section \ref{sssec:13.3cNew} 
are based on Proposition \ref{PropositionD4} mentioned before.
They are identical with those discussed in the memoir.

\subsubsection{Section \ref{sec:14}}
In the memoir, 
the analogue of Theorem \ref{TheoremD6} 
and the proof of this analogue make up sections 14.3 through 14.7.
For the monograph,
I have combined these sections into a single one 
and I have inserted an application into the proof;
it is given in section  \ref{sssec:14.3bNew}.
In addition,
I have expanded the third stage of the proof and corrected an error.

\subsubsection{Proofs of Propositions \ref{PropositionD8} and \ref{PropositionD10}}
The proofs of these results involve calculations that are rather similar. 
In the memoir, 
the two proofs are shown to be consequences of a single general argument.
Here I give, in the interest of clarity, direct proofs for each of the two cases.

\subsubsection{Remark 15.3 in the memoir} 
\label{sssec:Remark-15.3}%
This remark describes an approach to the computation of $B_{\ab}$
where $B$ is the group $B(I;A,P)$ and $I = [a,c]$ is a compact interval with endpoints in $A$.
The idea is to use the isomorphism 
\[
\alpha \colon  B([0,\infty[\, ; A, P) \iso B(I; A, P),
\]
afforded by Proposition \ref{PropositionC1}, 
and the monomorphism  
\[
\nu_{\ab} \colon B([0,\infty[\, ; A, P)_{\ab} \mono \Z[(A \,\cap \,]0,\infty[)_\sim] \otimes P,
\] 
studied in section \ref{ssec:12.2New},
to derive information on the similarly defined homomorphism 
\[
\nu_* \colon B([a,c]; A, P)_{\ab} \mono \Z[(A \,\cap \,]a,c[)_\sim] \otimes P.
\]
The isomorphism $\alpha$ is induced 
by conjugation by an infinitary PL-homeo\-mor\-phism $\varphi$,
whence the square 
occurring in the statement of Lemma \ref{lem:Commutative-square-2}
may not be commutative.
This property, however, is used in the analysis carried out in Remark 15.3.
So the conclusions of the remark are in doubt.

One of these conclusions is Corollary D12,
the statement of which is identical with 
that of Corollary \ref{crl:Bab-for-P-cyclic-and-A-locally-cyclic}.
%
%
\subsection{Chapter E: minor changes in Sections \ref{sec:16} and \ref{sec:17}}
\label{ssec:Changes-E-minor-in-16-and-17}
In Chapter \ref{chap:E} there are minor changes 
and two important ones which deserve detailed comments.
I begin with minor changes.

 \subsubsection{Preamble to Chapter \ref{chap:E}} 
 This preamble does not figure in the memoir.

 \subsubsection{Section \ref{sec:16}}
 The proof of the original version of Lemma \ref{LemmaE2} contains a gap.
In the monograph,
 it is closed by replacing the old hypothesis by a stronger one.
One arrives thus at the actual Lemma \ref{LemmaE2} and its proof.
This modification has no impact on the statement of 
Theorem \ref{TheoremE3} and its main application,
Theorem \ref{TheoremE04}.
The proof of the ancestor of Theorem \ref{TheoremE3},
given in the memoir, omits details of some verifications;
here, they have been filled in.

In the memoir,
the proof of the uniqueness part of the analogue of Proposition \ref{PropositionE6} 
is rather sketchy.
Here the missing bits have been added.
Similarly,
the proofs of Propositions \ref{PropositionE8} and \ref{PropositionE9}
have been expanded so as to render easier their comprehension.

\subsubsection{Section \ref{sec:17}}
The proof Corollary \ref{CorollaryE12} is the same as in the memoir,
but I changed the statement of the corollary slightly:
in the memoir, 
the isomorphism types of the groups in the family 
$\{G([0,b];A,P) \mid b \in A_{>0} \}$
are shown to correspond to the orbits of the group $\Aff(IP \cdot A, \Aut_o(A))$ acting on $A$;
here, 
the isomorphism types match the orbits of the group $\Aut_o(A)$
or, equivalently, the orbits of the group $\Aut_o(A)/P$
acting on the group $A/(IP \cdot A)$.
This second formulation shows more clearly than the previous one
that the determination of the isomorphism types often boils down
to the geometric problem of finding the connected components of a finite graph;
this happens for instance, 
if $A = \Z[P]$ and $P$ is generated by finitely many positive integers.
 
In the memoir,
the statement of  Corollary \ref{CorollaryE13} is rather condensed.
Here a more leisurely wording is given 
and details have been added to the proof.

Corollary \ref{CorollaryE13} 
provides explicit descriptions of the outer automorphism group of 
the group $G(I;A,P)$ 
with $P$ a non-cyclic group  and various types of intervals $I$.
It allows one to obtain a description of the automorphism group of $G(I;A,P)$.
It is given in Corollary \ref{crl:Explicit-description-of-Aut(G)}
that is an addition to the monograph.

\subsubsection{Group $\Aut(A)$}
\index{Group Aut(A)@Group $\Aut(A)$!significance}%
In the course of Section \ref{sec:17} a new concept comes to the fore,
the group 
$\Aut(A) = \{s \in \R^\times \mid s \cdot A = A \}$,
called the \emph{automorphism group} of $A$, 
and its subgroup $\Aut_o(A) = \{s \in \R^\times_{>0} \mid s \cdot A = A \}$.
To help the reader in getting familiar this new concept,
some examples of automorphism groups are given in section
\ref{sssec:Automorphism-groups-some-examples-non-cyclic-P},
along with examples illustrating part (v) of Corollary \ref{CorollaryE13}.

%
\subsection{Chapter E: Theorem \ref{TheoremE04} and its proof}
\label{ssec:Changes-E-TheoremE04}
%
Theorem \ref{TheoremE04}
is a consequence of Theorem \ref{TheoremE3}
and the transitivity properties of the derived group of $B = B(I;A,P)$ 
and those of the derived group of $\bar{B} = B(\bar{I};\bar{A},\bar{P})$, 
established in Section \ref{sec:5}, in particular, in Remark \ref{remark:5.3}.

The derived group of $B$ and that of $\bar{B}$ must satisfy a technical condition,
\emph{they must contain strictly positive elements}.
In 1985, 
the existence of such elements was not known to Bieri and Strebel;
accordingly,
one reads at the bottom of page E-8 of the memoir: 
``We do not know whether $B'$ contains always a strictly positive element''
and so the existence of such elements is a hypothesis 
in Theorem E4 and in its corollaries.
In \cite{McRu05}, S. H. McCleary and M. Rubin then point out
that the existence of such elements is a consequence of the remaining hypotheses.
On page 125 they write: 
\begin{quotation}
From a more general result [\ldots],
Bieri and Strebel \cite[Theorem E4]{BiSt85}
deduce part (2) of the present Theorem 7.6  [\ldots],
provided each group $H_i$ contains a positive element. 
Here, besides deducing the entire First Order Reconstruction package 
(see Theorem 7.6), 
we find that
the positivity hypothesis can be discarded.
\end{quotation}
\index{McCleary, S. H.}%
\index{Rubin, M.}%
I noticed at the beginning of the year 2014
that the existence of strictly positive elements can also be established 
by a direct construction.

In this monograph,
this construction is used twice in the proof of Theorem \ref{TheoremE04}: 
firstly to show 
that the existence of strictly positive elements with bounded support need not be postulated 
and secondly to establish 
that the homeomorphism $\varphi$ maps the subset $A\, \cap\, \Int(I)$ 
onto $\bar{A}\, \cap\, \Int(\bar{I})$. 
The proof of the preparatory Theorem \ref{TheoremE3} 
is an adaptation of the proof of the Main Theorem 4 in \cite{McC78b}.
\index{McCleary, S. H.}%
%
\subsection{Chapter E: Supplement \ref{SupplementE11New} and its proof}
\label{ssec:Notes-SupplementE11New}
%
Theorem \ref{TheoremE10} considers an isomorphism $\alpha \colon G \iso \bar{G}$
where $G$ is a subgroup of $G(I;A,P)$ containing the derived group of $B(I;A,P)$
and where $\bar{G}$ has analogous properties.
The theorem shows that $\alpha$ is induced by a PL-homeomorphism $\varphi \colon \Int(I) \iso \Int(\bar{I})$
with slopes in a single coset $s \cdot P$ of $P$ and that $\bar{P} = P$.
The aim of the supplement to Theorem \ref{TheoremE10} is to relate the intervals $I$ and $\bar{I}$ under the additional hypothesis that $G = G(I;A,P)$
and that $I$ is either a line, a half line with endpoint in $A$ 
or a compact interval with endpoints in $A$.

In the memoir, 
the result corresponding to Supplement  \ref{SupplementE11New} claims 
that $\bar{I}$ is then of the same type as $I$.
The supplement in this monograph asserts less:
it assumes at the outset that $I$ and $\bar{I}$ have the same type
and claims
that $\bar{G} = G(\bar{I};\bar{A},P)$ 
and that $\varphi$ is a finitary PL-homeomorphism.

If $I$ is the line $\R$,
the assumption that $I$ and $\bar{I}$ have the same type can be dispensed with
(see Lemma \ref{lem:Isomorphisms-and-type-of-intervals-2}).
But, as pointed out in Remark \ref{remarks:SupplementE11New}  (ii),
I do not know
whether or not there exists, \eg{}a monomorphism 
$\beta_b \colon G([0,b]; A, P) \mono  G([0,\infty[\;; A, P)$ 
\emph{with $P$ not cyclic}
and so that $\im \beta_b$ contains the derived group of $B([0,\infty[; A, P)$. 
(Such monomorphisms exist if $P$ is cyclic; 
see section \ref{ssec:18.2}.)
%
\subsection{Chapter E: minor changes in Sections \ref{sec:18} and \ref{sec:19}}
\label{ssec:Changes-E-Sections18and19}
%
\subsubsection{Section \ref{sec:18}} 
\label{sssec:Changes-E-Section-18}
In the memoir, 
the proof of the analogue of Theorem \ref{TheoremE14} relies on an argument
that is now used in Part 2 of the proof of Supplement \ref{SupplementE11New}.
The proof in the monograph refers to this Part 2.
No proof of Lemma \ref{LemmaE15} figures in the memoir
and so I have added one for this monograph.
In addition,
I have corrected the formula for $\bar{u}_{2,b}$
and the preamble to Lemma E18.8.
The claim of Proposition \ref{prp:Preimages-yi} is not justified in the memoir.
Here a rather lengthy proof is provided 
and several rectangle diagrams have been added.

Corollary \ref{crl:Respecting-attractors} is new.
It is a simple,  but useful observation on increasing isomorphisms induced 
by conjugation by homeomorphisms
that is used in the proof of Lemma \ref{lemma:The-three-groups-are-not-isomorphic}
and implicitly in the proof of Lemma \ref{lem:Isomorphisms-special-case}.
Lemmata \ref{lemma:The-three-groups-are-not-isomorphic}
and \ref{lem:Piecing-together} are likewise new.
The proofs of these results justify a claim 
voiced at the very end of section 18.6 in the memoir.

Section \ref{sssec:18.8b} is a thoroughly rewritten 
and greatly expanded version of section 18.8 in the memoir.
It embodies ideas 
that go back to the paper \cite{BlWa07} by  C. Bleak and B. Wassink
and it promotes the statement made in the last two lines of section 18.8 
of the memoir to Theorem \ref{thm:Isomorphism-types-class-II}.
Remark \ref{remark:Analogues-of-thm-ismorphism-types-class-II}
 is a reminder that seems called for at this juncture,
but which is not part of the memoir.

Example 18.9 in the memoir has been discarded.
There exists by now far more general results which bring home the fact 
that the integral homology groups of $G([0,b]; A, P)$ 
do not allow one to determine 
whether, 
in case of non-cyclic slope groups,
 intervals of different lengths give rise to non-isomorphic groups;
 see \cite[Lemma 4.1]{Ste92}.
\index{Bleak, C.}%
\index{Wassink, B.}%
\index{Stein, M.}%
%
\subsubsection{Section \ref{sec:19}}
\label{sssec:Changes-E-Section-19}
This section has been reworked thoroughly.
 I have combined sections 19.2, 19.3 and 19.4 
 into the new section \ref{ssec:19.2New}.
 More important, however, are the additions to the proofs and remarks.

Remark \ref{remark:Homomorphism-gamma} figures in the first version,  
but with fewer details. 
 I changed the name $G(\R/ \Z \pfr; A, P)$, employed in the memoir, 
 into $T(\R/ \Z \pfr; A, P)$ 
 so as to align it with the notation used by Melanie Stein in \cite{Ste92}.
 
The long section Section \ref{sssec:19.2b} discusses the automorphism group 
and the outer automorphism group of $G = G([0, \infty[\, ;A,P)$
in the case where $P$ is a cyclic group generated by a real number $p > 1$.
By Theorem \ref{TheoremE04} the determination of $\Aut G$ 
amounts to finding the normalizer $\Autfr G$ of $G$ in $\Homeo(\,]0,\infty[\;)$.
In the memoir $\Autfr G$ is described in Lemma E18
by exhibiting subgroups that generate it.
Here,
I pass from the very beginning to the copy $\bar{G}$ of $G$ in  $G(\R;A,P)$, 
afforded by Lemma \ref{LemmaE16},
and characterize then $\bar{G}$ 
by conditions involving the homomorphism $\lambda$ and $\rho$; 
see Proposition \ref{prp:LemmaE18New}.
A consequence of this characterization is Corollary \ref{crl:PropositionE19};
it describes $\Outfr \bar{G}$ in the same way 
as does Proposition E19 in the memoir.

Section \ref{ssec:19.3New},
the last part of Section  \ref{sec:19}
discusses the automorphism group of the group $G =G([0,1]; A, P)$,
assuming that the number 1 lies in  $A$.
 As before, one passes to the realization $\bar{G} \subset G(\R;A,P)$ 
 and then restricts attention to the subgroup $\Autfr_{\PL} \bar{G}$ of $\Autfr \bar{G}$ 
 made up by the elements which are PL.
 Two results are obtained: a characterization of the elements of $\Autfr_{\PL} \bar{G}$ 
 and a description of $\Outfr_{\PL} \bar{G}$ as an extension of two better known groups; 
 see Corollary \ref{crl:PropositionE20New-part-II}.
 (The corollary corrects an error contained 
 in the the final assertion of Proposition E20 in the memoir.)
%
\subsection{Chapter E: omitted Section 20}
\label{ssec:Changes-E-Section20}
%
Section \ref{sec:19} investigates the group of automorphism of a group 
$G = G(I;A,P)$ with cyclic slope group $P$.
It shows, in particular, that every automorphism of such a group is induced 
by conjugation by a PL-homeomorphism if $I$ is a line or a half line,
but it does not answer the question 
whether this conclusion continues to be valid
if $I$ is a compact interval with endpoints in $A$.
The memoir finishes therefore with a section
that tries to shed light on this unanswered question.

Closer scrutiny of this attempt discloses 
that the exposition is uneven and the results themselves are premature.
The only group to which the obtained results seem to apply 
is Thompson's group $F$;
for this group, however, far more detailed and complete results 
have since been obtained by Matthew Brin in \cite{Bri96}.
\index{Brin, M. G.}%
Brin proves, in particular,
that every automorphism of a group $G$ sandwiched between $F'$ and $F$
is induced by conjugation by a PL-homeomorphism;
see Theorem 1 in \cite{Bri96}.
\index{Brin, M. G.}%
In addition, it is now known,
thanks to the work of M. Brin and F. Guzm{\'a}n, 
\index{Guzm{\'a}n, F.}%
that the automorphism groups of the generalized Thompson groups 
$G[p] = G([0,1]; \Z[1/p], \gp(p))$
behave quite differently if the integer $p$ is greater than 2:
there exists then automorphisms which are \emph{exotic}, 
\index{Exotic automorphisms!existence}%
\ie{}induced by conjugation by a homeomorphism that is not piecewise linear;
see \cite[p.\,285]{BrGu98} and the comment on item (III) on page 289.

In view of the stated facts I have decided not to include Section 20,
or a slight amendment of it, into this monograph.
%
%
\section{Related articles}
\label{sec:Related-articles}
%
The Bieri-Strebel memoir studies a collection of groups that depend on three parameters, 
the interval $I$ containing the supports,
the group $P$ containing the slopes
and the $\Z[P]$-submodule $A$ of $\R_{\add}$
where the breaks lie.
For some of the discussed problems, 
the memoir, and hence this monograph, is best viewed 
as a complement to the more recent literature,
providing a background to more detailed results
on groups defined by more special values of $I$, $A$ and $P$.

In this section,
I survey some articles 
that use findings of the memoir or are related to them.
%
\subsection{Chapter A}
\label{ssec:Related-results-A}
%
Groups of PL-homeomorphisms of the real line are, 
of course, 
examples of groups of automorphisms of a linearly ordered set.
In the literature on such groups one often finds the hypothesis
that the group act $\ell$-fold transitively
(in the sense of order preserving permutation groups) 
on an orbit $\Omega$.
The first example in \cite[§ 3]{Hig54a} furnishes an illustration with $\ell=2$, 
\index{Higman's simplicity result}%
while Corollary 3 and the Main Theorem 4 in \cite{McC78b} 
provide examples with $\ell = 3$.
\index{Higman, G.}%
\index{McCleary, S. H.}%

In the monograph, 
the hypotheses are expressed in terms of parameters $I$, $A$, $P$,
and, in some cases, by the position of the group inside the group $G(I;A,P)$.
There exists a basic result,
namely Theorem \ref{TheoremA}, 
\index{Theorem \ref{TheoremA}!discussion}%
which brings to light that this kind of hypotheses has useful consequences;
it asserts 
that $G(\R;A,P)$ contains a PL-homeomorphism mapping an interval $[0,b]$ with endpoint in $A$ 
onto another such interval $[0,b']$ if,
and only if,  $b'-b $ lies in the submodule $IP \cdot A$.
\index{Submodule IPA@Submodule $IP \cdot A$!significance}%

This result is remarkable in several ways:
first of all, it holds for all choices of $A$ and $P$,
and it involves the submodule $IP \cdot A$
which is familiar from Homology Theory of Groups.
\index{Homology Theory of Groups!submodule IPA@submodule $IP \cdot A$}%
It implies 
that the orbits of $G = G(I;A,P)$ are dense in $A \cap I$
for all choices of $I$, $A$ and $P$,
and that $G$ acts $\ell$-fold transitively on every such orbit 
for any given $\ell \in \N$,
provided merely that $I \neq \R$ or that $IP \cdot A = A$.
The quotient module $A/(IP \cdot A)$ is involved in other questions 
treated in the monograph:
\index{Quotient group A/IPA@Quotient group $A/(IP \cdot A)$!significance}%
finite generation of $G(I;A,P)$ (see Proposition \ref{PropositionB1}),
\index{Group G([0,infty[;A,P)@Group $G([0, \infty[\;;A,P)$!finite generation}%
finite generation of $B_{\ab}$ (see Proposition \ref{prp:LemmaC5} and Lemma \ref{LemmaD3})
\index{Subgroup B(I;A,P)@Subgroup $B(I;A,P)$!finite generation of Bab@finite generation of $B_{\ab}$}%
and finite presentability of $G(\R;A,P)$ (see Proposition \ref{PropositionD2}).
\index{Group G(R;A,P)@Group $G(\R;A,P)$!finite presentation}%

It also sheds light on the abelianization of the generalized Thompson group 
$T(\R/\Z \pfr;A,P)$
(see Remark \ref{remark:Homomorphism-gamma}). 
\index{Group T(R/Zpfr;A,P)@Group $T(\R/\Z \pfr;A,P)$!abelianization}%
\index{Thompson, R. J.}%
A related use occurs in the paper \cite{BrGu98};
\index{Brin, M. G.}%
\index{Guzm{\'a}n, F.}%
see section \ref{sssec:Notes-ChapterE-remark-in section-19.1} below for details.
%
\subsubsection{Choices of $A$ and $P$}
\label{sssec:Notes-Choices-of-A-and-P}
\index{Module A@Module $A$!select examples}%
\index{Group P@Group $P$!select examples}%
Theorem \ref{TheoremA} holds for all choices of $A$ and $P$.
In the literature on generalized Thompson groups typically very special choices are considered.
Most prominent among them is the case 
where $A$ is the subring $\Z[P]$ generated by the group $P$;
the quotient module $A/(IP \cdot A)$ is then a cyclic group.

As regards $P$, 
several authors choose $P$ to be a group generated by finitely many positive integers $p_i > 1$, 
in particular Ken Brown in \cite[Section 4]{Bro87a}, \index{Brown, K. S.}%
Matt Brin in \cite{Bri96}, \index{Brin, M. G.}%
then M. Brin and F. Guzm{\'a}n in \cite{BrGu98}, \index{Guzm{\'a}n, F.}%
Isabelle Liousse in \cite{Lio08} \index{Liousse, I.}%
and Dongping Zhuang in \cite{Zhu08}. \index{Zhuang, D.}%
The article \cite{Ste92} by Melanie Stein is an exception: \index{Stein, M.}%
in some parts the author uses the special choice mentioned before,
but in the proof of Lemma 4.1 and in Section 5 the parameters $A$ and $P$ can be arbitrary.
 The papers \cite{Cle95} and \cite{Cle00} \index{Cleary, S.}%
 deal with another kind of subgroup $P$:
in the first of them, 
$P$ is the cyclic group generated by the algebraic integer $p = 1 + \sqrt{2}$,
in the second $P$ is generated by the algebraic integer $(1 + \sqrt{5}\,)/2$.

From the point of view of an analyst
the group $G(\R;\R, \R^\times_{>0})$ and its subgroups $G(I;\R, \R^\times_{>0})$ are also of interest.
These groups have been studied by C. G. Chehata in \cite{Che52};
see section \ref{sssec:Notes-ChapterC-Chehata} below.
\index{Chehata, C. G.}%
The module $A$ is then, once more, 
the ring generated by the group $P = \R^\times_{>0}$.
%
\subsection{Chapter C}
\label{ssec:Related-results-C}
%
 Let $B$ and $G$ denote the groups $B(I;A,P)$ and $G(I;A,P)$.
 According to Corollary \ref{crl:Simplicity-of-B'},
 the derived group of $B$ is simple.
The quotient group $G/B$, on the other hand, 
is metabelian but not abelian if $I$ is a line or a half line,
and isomorphic to $P \times P$, 
hence a non-trivial abelian group, 
if $I$ is a compact interval with end points in $A$.
\footnote{See Corollaries \ref{CorollaryA2} and \ref{CorollaryA3} 
for more precise statements.} 
Moreover,
the group $G$ acts by the identity on $B_{\ab}$ whenever $I \neq \R$
(see Lemma \ref{LemmaC3New}).
The quotient group $G/B'$ is thus always a soluble group 
of derived length at most 3;
it is center-by-metabelian if $I$ is a half line 
and nilpotent of class at most 2 if $I$ is a compact interval.
\footnote{The group is actually abelian; 
see sections \ref{sssec:Notes-ChapterC-Brown-Stein}
and \ref{sssec:Notes-ChapterC-Gal-Gismatullin}.}

The quotient group $G/B$ can be described explicitly in terms of the parameters $A$ and $P$,
but the situation is more involved for $B_{\ab}$.
Fortunately, $B$ and hence $B_{\ab}$, do not depend on $I$ (by Proposition \ref{PropositionC1}),
and one can describe a quotient $K(A,P)$ of $B_{\ab}$ rather well
(see Proposition \ref{PropositionC4}).
\index{Subgroup K(A,P)@Subgroup $K(A,P)$!significance}%
Proposition \ref{prp:LemmaC5} then states 
that this quotient  is trivial if, and only if, $A = IP \cdot A$
and that it is finitely generated if, and only, if $A= IP \cdot A$, 
or if $P$ is finitely generated and $IP \cdot A$ has finite index in $A$.
This proposition furnishes thus a necessary condition for $B_{\ab}$ to be trivial 
(and hence $B$ to be simple).
There exists also a sufficient condition for the vanishing of $B_{\ab}$
which, however,  is far stronger that the stated necessary condition;
see Theorem \ref{TheoremC10}.
\index{Subgroup B(I;A,P)@Subgroup $B(I;A,P)$!vanishing of Bab@vanishing of $B_{\ab}$}
%
\subsubsection{The paper \cite{GaGi16} by \'{S}. R. Gal and J. Gismatullin}
\label{sssec:Notes-ChapterC-GaGi16}
\index{Gal, \'{S}. R.}%
\index{Gismatullin, J.}%
\index{Simplicity result for [B,B]@Simplicity result for $[B,B]$}%
Corollary \ref{crl:Simplicity-of-B'} asserts 
that the derived group $B'$ of $B = B(I;A,P)$ is simple
for every choice of the parameters $I$, $A$ and $P$.
Its proof is a variation on a proof which goes back to Higman's paper \cite{Hig54a};
it consists of four steps, 
the first three of which constitute the proof of Proposition 
\ref{prp:Higmans-Theorem1}.
This result just mentioned deals with a group $B \neq \{1\}$ 
satisfying the following commutativity property:
\begin{equation}
\label{eq:Higmans-condition-bis}
\left\{
\begin{minipage}{9.5cm}
For every ordered pair $(x,y) \in B^2$ and every $z \in B \smallsetminus \{1\}$ \\
there exists $u \in B$ such that the equation $[\act{u}x, \act{zu}y] = 1$ holds.
\end{minipage}
\right.
\end{equation}
In a first step, one deduces from the above property 
that, given $(x,y,z) \in B^3$ with $z \neq 1$,
the commutator $[x,y]$ is a product of two conjugates of $z$ 
and of two conjugates of $z^{-1}$.
It follows, first of all, 
that every non-trivial normal subgroup of $B$ contains $B'$.
Assume now that $B$ is not metabelian.
Then $B''$ contains the minimal normal subgroup $B'$, 
and so $B'' = B'$ is a minimal normal subgroup of $B$.
In a third step one then shows 
that the element $u$ mentioned in condition \ref{eq:Higmans-condition-bis}
can be chosen inside $B'$ if the elements $x$, $y$ and $z \neq 1$ lie in $B'$.
It follows that $B'$ is a non-abelian simple group.

The proof actually shows more:
if $B$ is not metabelian and $z \in B'\smallsetminus \{1\}$,
one can find, given $(x,y) \in B' \times B'$, 
elements $b_1$, \ldots, $b_4$ in $B'$
so that 
\[
[x,y] = \act{b_1} z \cdot \act{b_2} z^{-1} \cdot \act{b_3} z \cdot\act{b_4} z^{-1}.
\]
For every conjugacy class $\CC \neq \{1\}$, 
and each couple $(x, y) \in (B')^2$,
there exists therefore elements $z_1$, \ldots, $z_4$ in $\CC \cup\CC^{-1}$ 
so that $[x,y] = z_1 \cdots z_4$.
This finding and the fact that $B' = B''$ prompt the question 
whether the group $B'$ is uniformly simple in the sense of the following 
\begin{definitionNN}[\protect{\cite{GaGi16}}]
\label{definition:Uniform-simplicity}
\index{Uniform simplicity!definition|textbf}%
Let $H$ be a group and $n$ a positive integer.
Then $H$ is called \emph{$n$-uniformly simple} 
if, for each non-trivial conjugacy class $\CC \subset H$, 
every $h \in H$  is a product of at most $n$ elements in $\CC \cup \CC^{-1}$.
The group is called \emph{uniformly simple} 
if it is $n$-uniformly simple for some positive integer $n$.
\end{definitionNN}

The question raised in the above has been answered by \'{S}. R. Gal and J. Gismatullin:
\index{Gal, \'{S}. R.}%
\index{Gismatullin, J.}%
according to \cite[Remark 3.5]{GaGi16} 
the derived group $B'$ of every bounded group $B(I;A,P)$ is 6-uniformly simple (see \cite[Remark 3.5]{GaGi16}). 
In the proof of the stated result,
the authors make heavy use of a technique developed 
by D. Burago et al. in \cite{BIP08}.
\index{Burago, D.}%
\index{Ivanov, S.}%
\index{Polterovich, L.}%

\subsubsection{The paper \cite{Che52} by C. G. Chehata}
\label{sssec:Notes-ChapterC-Chehata}
\index{Chehata, C. G.}%
\index{Simplicity result for [B,B]@Simplicity result for $[B,B]$}%
In \cite{Che52}, 
the author studies groups of  ``bounded'' PL-homeomorphisms 
of an ordered field $(F, <)$.
More precisely, given an ordered field and a closed interval $I \subseteq F$, 
Chehata considers the group that could be called $B(I; F_{\add}, F^\times_{>0})$,
on the understanding that the definition of intervals in the field of real numbers 
is generalized in the obvious way to intervals of ordered fields.
His findings include results 
that are familiar from Chapters \ref{chap:A} and \ref{chap:C}:
Lemma 1 in Chapter I is an analogue of Theorem \ref{TheoremA}
\index{Theorem \ref{TheoremA}!analogues}%
and Lemma 9 in that chapter is the special case of Proposition \ref{PropositionC1} 
with $(A,P) =(\R_{\add}, \R^\times_{>0})$. 
The main result of Chapter II  in \cite{Che52} claims 
that the group $B(I; F_{\add}, F^\times_{>0})$ is simple 
for every ordered field  $(F, <)$.
\footnote{In the special case 
where $F$ is a subfield of $\R$ with the induced order relation,
this result follows also from Corollary \ref{crl:Simplicity-of-B'} and Theorem \ref{TheoremC10}.}
This result has been improved by \'{S}. R. Gal and J. Gismatullin;
they show 
that the group $B(I; F_{\add}, F^\times_{>0})$ is 6-uniformly simple
(see \cite[Remark 3.5]{GaGi16}).
\index{Gal, \'{S}. R.}%
\index{Gismatullin, J.}%
%
\subsubsection{The papers \cite{Bro87a} by Ken Brown 
and \cite{Ste92} by Melanie Stein}
\label{sssec:Notes-ChapterC-Brown-Stein}
\index{Brown, K. S.}%
\index{Stein, M.}%
%
Two articles published after 1985 deal with the quotient group $G/B'$.
In \cite{Bro87a},
Ken Brown considers a subgroup $F_n$ of the group $G(\R; \Z[P], P)$ 
where $P$ is a cyclic group generated by an integer $n \geq 2$.
In the set-up of this monograph,
the subgroup $F_n$ can be described like this.
According to Lemma \ref{LemmaE17} 
the group $G = G([0,1];\Z[P], P)$ with $P = \gp(n)$
can be realized as the subgroup $\bar{G}$ of $G(\R;\Z[P],P)$ 
made up of all PL-homeomorphisms $\bar{g}$ 
which are translations near $-\infty$ and near $+\infty$ with amplitudes 
that are (uncoupled, integer) multiples of $n-1$.
Let $h \colon \R \iso \R$ be the translation with amplitude 1 and set $F_n = \bar{G}\cdot \gp(h)$.
Then $\bar{G}$ has index $n-1$ in $F_n$.
The reasoning in section \ref{sssec:images-yi} shows next 
that $\bar{G}$ is generated by the translation $z_0$ with amplitude $n-1$ 
and the PL-homeomorphisms
\begin{equation*}
\label{eq:18.8bis}
z_i(t) = 
\begin{cases}
t &\text{if $t < i-1$}\\
p(t-i)+i &\text{if $i-1\leq t \leq i$}\\
t+(p-1) &\text{if $t > i$}
\end{cases},
\end{equation*}
the index $i$ varying over $\N \smallsetminus \{0\}$.  
The relations $\act{h} z_i = z_{i+1}$ hold for every index $i >0$;
they show that $F_n$ is an ascending HNN-extension with base group $\gp(\{z_i \mid i > 0 \})$
(isomorphic to $G$ 
\footnote{This follows, \eg{}from Lemma \ref{lemma:New18.3}.}) 
and stable letter $h$. 
Ken Brown states in the Remark on page 63 of \cite{Bro87a}
that $F_n/[B,B]$ is metabelian, but not abelian, whenever $n > 2$. 

In \cite{Ste92}, M. Stein states Lemma 4.1, a result due to Ken Brown.
The proof of this lemma is quite general; 
it shows, 
in particular,  
that $G' = B'$ for every slope group $P$, 
module $A$ and   group $G = G([0,b];A,P)$ with $b \in A$
\index{Subgroup B([a,c];A,P)@Subgroup $B([a,c];A,P)$!derived group}%
\index{Group G([a,c];A,P)@Group $G([a,c];A,P)$!derived group}%
(disregard the statement in the second line of the proof
which asserts 
that the exact sequence
\[
0 \to B_i \xrightarrow{\Incl} F_i \xrightarrow{\rho} P \times P \to 0
\]
splits).
For a compact interval $I$,
the group $G/B'$ is thus always abelian, not merely nilpotent of class at most 2.
A special case of this conclusion follows also from the exact sequence \eqref{eq:Exact-abelianized-sequence-P-cyclic},
displayed in Example \ref{example:P-cyclic-A-locally-cyclic}.
%
\subsubsection{An observation due to \'{S}. R. Gal and J. Gismatullin}
\label{sssec:Notes-ChapterC-Gal-Gismatullin}
\index{Gal, \'{S}. R.}%
\index{Gismatullin, J.}%
\'{S}. R. Gal and J. Gismatullin rediscovered recently the result, 
mentioned in the previous paragraph,
according to which the derived group of $G= G([0,b];A,P)$ 
coincides with the derived group of $B([0,b]; A, P)$.
\index{Subgroup B([a,c];A,P)@Subgroup $B([a,c];A,P)$!derived group}%
Their proof is short and goes like this.

Let $g_1$, $g_2$ be elements of $G$. 
The support of the commutator $[g_1, g_2]$ is contained in an interval of the form $[b_1, b_2]$ with $b_1$, $b_2$ in $A$ and and $0 < b_1 < b_2 < b$.
Choose $b_1' \in IP \cdot A$ with $0 < b'_1 < b_1$ 
and $b_2' \in b + IP \cdot A$ so that $b_2 < b_2' < b$.
Theorem \ref{TheoremA} allows one then to construct an element 
\index{Theorem \ref{TheoremE04}!consequences}%
$h \in G(\R;A,P)$
\index{Construction of PL-homeomorphisms!applications}%
that is the translation with amplitude $b'_1$ on $]-\infty, 0]$, 
the identity on $[b_1, b_2]$,
the translation with amplitude $b'_2- b$ on $[b, \infty]$ 
and which maps 
the interval $[0, b_1]$ onto the smaller interval $[b'_1, b_1]$ 
and the interval $[b_2, b]$ onto $[b_2, b'_2]$.
Then
\[
[g_1, g_2] = \act{h}[g_1, g_2] = [\act{h}g_1, \act{h} g_2] 
\in G([b'_1, b'_2]; A, P) \subset B([0,b];A,P).
\]
%
\subsection{Finiteness properties --- $I$ a line or half line}
\label{ssec:Notes-Finiteness-propertiers-line-halfline}
\index{Thompson's group F@Thompson's group $F$!properties}%
%
Thompson's group $F$ has striking properties:
it is finitely generated,
has a presentation with infinitely many generators of a particular kind,
and a representation in $G([0,1]; \Z[1/2], \gp(2))$ 
that is is dense in the space of all increasing homeomorphisms of the unit interval
(see section \ref{ssec:1.1} and Corollary \ref{CorollaryA5}).
One of the aims of the Bieri-Strebel memoir is to find out 
which of these properties persist and, if so, in what form they do
if one passes from the group $G([0,1]; \Z[1/2], \gp(2))$ 
to the more general groups $G(I;A,P)$.

It turns out that there is a marked contrast, 
for results as well as techniques leading to them,
between groups where $I$ is a line or half line, 
and groups where $I =[0,b]$ is a compact interval with end point $b \in A$.
The discussion of the finiteness properties 
will therefore be spread over two sections.
In this one, results for intervals of infinite length will be discussed.
%
\subsubsection{Finite generation}
\label{sssec:Notes-finite-generation-line-half-line}
%
We start out with \emph{necessary conditions for $G = G(I;A,P)$ to be finitely generated}.
Clearly, a group can only be finitely generated if it is countable;
and a countable group is finitely generated if, and only if,
it cannot be written as union of an infinite, properly ascending chain of subgroups.
These observations are exploited in the proof of Proposition \ref{PropositionB1},
a result which holds for all types of intervals.

It shows
that $G$ can only be finitely generated if $P$ is finitely generated, 
$A$ is a finitely generated $\Z[P]$-module,
$A/(IP \cdot A)$ is finite or $I = \R$,
and if the (finite) endpoints of $I$ lie in $A$.

I turn now to groups which have been shown to be \emph{finitely generated}.
All the positive results I am aware of start out 
with an infinite sets of generators $\GG$ 
and prove then that a finite subset $\GG_f$ of $\GG$ is already generating.
The methods leading to such a generating set $\GG$ differ widely 
for intervals of infinite length and for those of finite length.
If $I$ is a \emph{half line}, the group $G = G(I;A,P)$ is generated by the elements with one singularity
and there is a very simple presentation in terms of these generators 
(see section \ref{ssec:8.1}; 
\cf{}\cite[Corollary (2.6)]{BrSq85}).
The situation for a \emph{line} is similar: 
then $G$ is generated by the elements with at most one singularity
and there exists an explicit presentation with these generators, 
due to Brin and Squier
(see \cite[Corollary (2.8)]{BrSq85} or Section \ref{sec:7}).
\index{Brin, M. G.}%
\index{Squier, C. C.}%

Starting with the specified infinite generating sets,
the relations in the Brin-Squier presentations allow one to infer 
that the stated necessary conditions are also sufficient.
If $I$ is a line, the proof of this claim is simple,
but if $I$ is a half line it is more involved;
see the proofs of Theorems \ref{TheoremB2} and \ref{TheoremB4}.
%
\subsubsection{Finite presentation}
\label{sssec:Notes-finite-presentation-line-half-line}
The results for finite presentations obtained so far 
are less satisfactory than those for finite generation:
all that is known are  necessary, and far more demanding sufficient,  conditions.

This state of affairs is even true if $I$ is the real line.
If $G$ admits a finite presentation then so does its metabelian quotient $\Aff(A,P) \cong  A \rtimes P$
and $A/(IP \cdot A)$ is finite (see Proposition \ref{PropositionD2}).
Note that $\Aff(A,P)$ is \emph{finitely generated} if, and only if, 
$P$ is a finitely generated group and $A$ is a finitely generated $\Z[P]$-module.
\footnote{In view of Theorem \ref{TheoremB2},
the group $G(\R;A,P) $ is thus finitely generated if, and only if, 
its quotient  group $\Aff(A,P)$ is so.}
The stated necessary condition for a finite presentation of $G$ 
is thus stronger than the condition for finite generation.
The only known sufficient condition for finite presentability is rather technical;
see Proposition \ref{PropositionD4} for details. 
This sufficient conditions holds, in particular, 
if $A = \Z[P]$ and $P$ is freely generated by a finite set of (positive) integers
(see Example 1 in section \ref{sssec:13.3cNew}).

Suppose now $I$ is a half line.
There exists then a necessary condition for finite presentability 
that is stronger than that for finite generation:
 $P$ must be finitely generated,
$A$ must be  finitely generated $\Z[P]$-module,
$A/(IP \cdot A)$ must be finite
and, in addition, \emph{$A \rtimes P$ must admit a finite presentation}
(see  Proposition \ref{PropositionD5}).
No satisfactory sufficient condition for finite presentability is known.
There exists, however, a collection of finitely presented examples
where $A = \Z[P]$ and $P$ is freely generated by a set of positive integers 
(Theorem \ref{TheoremD6}).
In case $P$ is cyclic, 
the finite presentability of these groups has been proved 
earlier on by M. Brin and C. Squier;
see Theorem (2.9) in \cite{BrSq85}.
\index{Brin, M. G.}%
\index{Squier, C. C.}%
%
\subsection{Finiteness properties --- $I = [0,b]$ with $b \in A$}
\label{ssec:Notes-Finiteness-conditions-compact-interval}
%
The situation for groups  with $I =[0,b]$ differs widely from 
that for groups with $I$ the line or a half line.
Here are some key facts about these groups:
\begin{itemize}
\item the known necessary conditions for finite generation and those for finite presentation coincide;
\item all groups known to be finitely generated have $A = \Z[P]$ 
and $P$ is either generated by positive integers, or cyclic generated by a quadratic algebraic integer;
\item all groups known to be finitely generated admit a finite presentation 
and are of type $\FP_\infty$.
\end{itemize}

So far the approach propounded by the Bieri-Strebel memoir --- study groups $G(I;A,P)$ 
with parameters $(I,A,P)$ varying in some fairly wide classes --- 
has not been fruitful for the study of finiteness properties of groups 
with $I$ a compact interval (having endpoints in $A$).
Indeed, 
only one general fact about finiteness properties of this kind of groups 
seems to be known, 
namely Lemma \ref{LemmaB6};
it asserts that whether or not a group is \emph{finitely generated}
does not depend on the length $b$ of the interval $I=[0,b]$.
%
\subsubsection{The papers \cite{Bro87a} by Ken S. Brown and \cite{Ste92} by Melanie Stein}
\label{sssec:Notes-finiteness-properties-Brown-Stein}
\index{Brown, K. S.|(}%
\index{Stein, M.|(}%
\index{Finiteness properties of!G([a,c];A,P)@$G([a,c];A,P)$|(}%
%
I begin by describing one kind of groups studied in these papers.
Let $\PP = \{p_1, \ldots, p_k\}$ be a finite set of integers greater than 1, 
put $P = \gp(\PP)$, $A = \Z[P]$ and choose a number $\ell \in A_{>0}$.
Set
\begin{equation}
\label{eq:Steins-definition-of-generalized-F}
F(\ell, A, P) = G([0,\ell]; A, P).
\end{equation}

 Melanie Stein investigates these generalized $F$-groups in \cite{Ste92}. 
Among them are the groups $F_{n, r}$ studied by Ken Brown in \cite[Section 4]{Bro87a}
\footnote{Here $k = 1$, $r = \ell \in \N$ and $n = p_1$.}
and the groups 
\[
G[\PP] = G([0,1]; \Z[1/(p_1 \cdots p_k)], \gp(\PP)] \quad \text{with } \PP = \{p_1, \ldots, p_k\}
\]
examined in Sections 9 and 15 of the Bieri-Strebel memoir.
Stein determines first when two $F$-groups are isomorphic to each other
for obvious reasons. 
The homothety $\theta_p  \colon t \mapsto p \cdot t$ with $p \in P$
allow one to replace $\ell$  by $p \cdot \ell$; 
so $\ell$ can be assumed to be a natural number.
Next, there exists a PL-homeomorphism $f \colon [0,\ell] \iso [0, \ell']$ in $G(\R; A, P)$ 
if, and only, if $\ell' - \ell$ is a multiple of $d = \gcd\{p_j-1 \mid 1 \leq j \leq k\}$
(see \cite[pp.\,479--480]{Ste92}, 
or Theorem \ref{TheoremA} and Illustration \ref{illustration:4.3}).
\index{Theorem \ref{TheoremA}!consequences}%

It suffices thus to consider $\ell \in \{1,2 , \ldots, d\}$.
\footnote{The results in Chapter \ref{chap:E} allow one to go further 
and to determine which pairs of lengths lead to isomorphic groups;
for $k = 1$ all groups are pairwise isomorphic 
(see \cite[Proposition 4.1]{Bro87a} or Theorem \ref{TheoremE07}; 
for $k > 1$ Corollary \ref{CorollaryE12} has the answer. 
Notice that for $d > 1$ (and $k > 1$),
the groups with $\ell = 1$ and with $\ell = d$ are never isomorphic.}
Stein gives a second preliminary result.
By assumption, the group $P$ is generated by finitely many integers $p_i > 1$ 
and it is free abelian; the given integers, however, need not form a basis.
Her Proposition 1.1, due to Ken Brown, now guarantees 
that one can always find a new set of integers which is multiplicatively independent.
This result implies, in particular, 
that the requirement that $\PP$ is a basis of $P$ 
can be dispensed with in Proposition \ref{PropositionD1}
and allows one to simplify the proof of Theorem \ref{TheoremD6}.

I continue with a word 
on the proofs of finiteness properties of the groups $F(\ell; A, P)$.
They involve the construction of a useful infinite generating set.
In the proof given in section \ref{ssec:14.3}, this fact is evident.
In the papers by Brown and Stein, 
it is hidden in the verification that a certain poset is directed 
(see the first paragraph of \cite[p.\,481]{Ste92}).
This poset gives rise to a contractible simplicial complex $X$ 
acted on by the group $F(\ell,A,P)$.
By exhibiting and analyzing suitable contractible subcomplexes of $X$,
M. Stein is able to establish that, 
for every slope group $P$ generated by finitely many positive integers and $A = \Z[P]$,
each of the groups $F(\ell,A,P)$ with $\ell \in \{1, \ldots, d\}$ admits a finite presentation 
and is of type $\FP_\infty$ (see \cite[Theorem 2.5]{Ste92}).
\index{Group G([a,c];A,P)@Group $G([a,c];A,P)$!finite presentation}%
\index{Group G([a,c];A,P)@Group $G([a,c];A,P)$!type FPinfty@type $\FP_\infty$}%

I conclude my sketch with a word on Lemma 4.1 in \cite{Ste92};
it holds for arbitrary values of $A$ and $P$. 
The lemma shows that the homology groups $H_j (F(\ell, A, P))$ 
depend only on those of $B([0,\ell]; A, P)$ and of $P \times P$;
more precisely, 
$H_*(F(\ell, A, P))$ is shown to be isomorphic to  the tensor product
$
B_*([0,\ell]; A, P) \otimes H_*(P\times P).
$

In conjunction with Proposition \ref{PropositionC1} 
this finding implies
that the homology groups $H_*(-; \Z)$ do not allow one to prove 
that intervals of different lengths $\ell \neq\ell'$ 
may lead to non-isomorphic groups $F(\ell, A, P)$ and $F(\ell', A, P)$. 
So Corollary \ref{CorollaryE12} maintains its merits.
\index{Group G([a,c];A,P)@Group $G([a,c];A,P)$!isomorphisms}%
\index{Brown, K. S.|)}%
\index{Stein, M.|)}%
\index{Finiteness properties of!G([a,c];A,P)@$G([a,c];A,P)$|)}%

%
\subsubsection{The papers \cite{Cle95} and \cite{Cle00} by Sean Cleary}
\label{sssec:Notes-Finiteness-properties-Cleary}
\index{Cleary, S.|(}%
\index{Finiteness properties of!G([a,c];A,P)@$G([a,c];A,P)$|(}%
%
I return to the concept of $\PP$-regular subdivision,
introduced in section \ref{ssec:9.2}.
Suppose $\PP$ is a singleton consisting of an integer $p \geq 2$;
set $P = \gp(p)$ and $A = \Z[P]$.
Then every affine interpolation of $\PP$-regular subdivisions of $I = [0,1]$ with the same number of points 
produces an element $f \in  G(I; A, P)$, 
and every element $f \in F$ arises in this way.
The proof of this claim is rather simple;
see, \eg{}\cite[Proposition 4.4]{Bro87a}.
\footnote{If $P$ is generated by several positive integers, 
the proof of the analogous statement is a bit more involved,
but still fairly short, see section \ref{ssec:9.4}.}

In \cite{Cle95}, 
Sean Cleary introduces $\PP$-regular subdivisions 
for $p = \omega =  \sqrt{2\,} + 1$.
They involve the minimal polynomial of $\omega^{-1} = \sqrt{2\,} - 1$,
namely $X^2 + 2 X - 1$. 
This minimal polynomial implies 
that 1, 
the length of the unit interval, 
is the sum of the lengths of three intervals 
with lengths $\omega^{-1}$, $\omega^{-1}$ and $\omega^{-2}$.
So the unit interval is the union 
of two longer subintervals of length $\omega^{-1}$ 
and a shorter interval of length $\omega^{-2}$.
A novel feature of this type of subdivision is that the subintervals have two lengths
and so there are actually three regular subdivisions of  $[0,1]$, 
each with three subintervals.
Each of the three subintervals can be subdivided in the same manner, 
leading to subintervals with lengths
that satisfy one of the equations 
\[
\omega^{-1} = 2 \omega^{-2} + \omega^{-3}\quad \text{or}\quad 
\omega^{-2} = 2 \omega^{-3} + \omega^{-4}. 
\]
So far I have described regular subdivisions of level 1 and 2;
by iterating one arrives at regular subdivisions of arbitrary level $\ell$.

Set $G = G([0,1]; \Z[\sqrt{2}\,], \gp(\omega))$ and consider the statement 
that \emph{every PL-homeo\-morphism in $G$
is the affine interpolation of two $\{\omega\}$-regular subdivisions with the same number of points}.
The results in Chapter \ref{chap:A}, in particular Corollary \ref{CorollaryA2}, 
imply 
that every point $ b  \in [0,1] \cap \Z[\sqrt{2}\,]$ 
is a break of a suitable element in $G$.
Since the additive group of the ring $\Z[\sqrt{2}\,]$ is equal to $\Z \cdot 1 + \Z \cdot \sqrt{2}$,
the number $b$ is a $\Z$-linear combination of the form $m_0  + n_0 \cdot \sqrt{2}$. 
If $b$ is neither 0 nor 1, exactly one of the integers $m_0$, $n_0$ will be positive.
On the other hand, if $b$ is a point in an $\{\omega\}$-regular subdivision of $[0,1]$ 
then it will also be a point of a regular subdivision with intervals of only two lengths $\omega^{-k}$ and $\omega^{-k-1}$,
as is not hard to show (see \cite[p.\,938]{Cle95}).
So $b$ will have a representation of the form 
$b = m_k \omega^{-k} + n_k \omega^{-k-1}$
with non-negative integers $m_k$ and $n_k$.
These considerations lead to a first problem; 
it is dealt with in \cite[Lemma 2]{Cle95}.
Note that the analogous problem is easily solved
if $A = \Z[1/p]$ and $p \geq 2$ is an integer;
then every element  $b \in [0,1] \cap \Z[1/p]$ 
is a fraction of the form $m / p^k$ with $0 \leq m \leq p^k$
and so it is a point of the uniform subdivision of $[0,1]$ into $p^k$ intervals 
and this uniform subdivision is $\{p\}$-regular.

A further difficulty lies ahead: 
if $b$ is a point in an $\{\omega\}$-regular subdivision of $[0,1]$
with intervals of length $\omega^{-k}$ and $\omega^{-k-1}$, 
then the longer and shorter subintervals will appear in a certain order 
and so it may not be possible to realize a linear combination
$b = m_k \omega^{-k} + n_k \omega^{-k-1}$  
as subdivision induced on the initial segment $[0,b]$ by a regular subdivision of the unit interval
with subintervals of lengths $\omega^{-k}$ and $\omega^{-k-1}$.
This new problem is tackled on pages 941--951 in \cite{Cle95}.

The number $\omega = \sqrt{2} + 1$ is an example of  quadratic algebraic integer;
Sean Cleary states on pages 954--955 
that his techniques work also for other quadratic  algebraic integers.
Moreover, in \cite{Cle00} he gives details for this assertion 
in the case of the algebraic integer $(\sqrt{5} + 1)/2$. 
\index{Cleary, S.|)}%
\index{Finiteness properties of!G([a,c];A,P)@$G([a,c];A,P)$|)}
%
\subsection{Chapter E}
\label{ssec:Related-results-E}
%
The pivotal result of Chapter \ref{chap:E} is Theorem \ref{TheoremE04}.
It asserts that \emph{every isomorphism $\alpha \colon G \iso \bar{G}$ is induced 
by conjugation by a unique homeomorphism $\varphi \colon  \Int(I) \iso \Int(\bar{I})$  
provided $G$ is a subgroup of $G(I; A, P)$ that contains the derived group of $B(I;A,P)$ 
and $\bar{G}$ is a subgroup of $G(\bar{I}; \bar{A}, \bar{P})$ with the analogous property}.
There are no restrictions on the parameters $(I, A, P)$ and  $(\bar{I}, \bar{A}, \bar{P})$.

Let $G$ and $\bar{G}$ be as in the statement of Theorem \ref{TheoremE04}.
The fact that every isomorphism $\alpha \colon G \iso \bar{G}$ is induced by conjugation 
by a homeomorphism has some straightforward consequences.
\index{Theorem \ref{TheoremE04}!consequences}
It implies, first of all, that $B = B(I;A,P)$  is a characteristic subgroup of $G$ 
whenever $G$ contains $B$ (see Corollary \ref{CorollaryE5}).
The fact that every homeomorphism of an interval is either increasing or decreasing
has another useful consequence:
if $\varphi$ is increasing then $\alpha$ induces isomorphisms
\[
\alpha_\ell \colon \im (\lambda \colon G \to \Aff(A, P)) 
\iso \im (\bar{\lambda} \colon  \bar{G} \to \Aff(\bar{A}, \bar{P}))
\]
and a similar statement holds if $\varphi$ is decreasing (again Corollary \ref{CorollaryE5}).
The stated result implies, in particular, 
that two groups $G(I; A,P)$ and $G(\bar{I}; \bar{A}, \bar{P})$
can only be isomorphic if $I$ and $\bar{I}$ are of the same type:
either both are lines, or both are half lines, 
or both are compact intervals with endpoints in $A$ and $\bar{A}$,
respectively (see Proposition \ref{PropositionE6}). 

The mentioned consequences are starters. 
The focus of Chapter \ref{chap:E} revolves around the question 
whether $\varphi$ is actually a PL-homeomorphism. 
Here the algebraic nature of $P$ plays a decisive rôle.
%
\subsubsection{Groups with non-cyclic slope groups}
\label{sssec:Notes-Chapter-E-non-cyclic-slope-group}
%
If $P$ \emph{is not cyclic} then $\bar{P} = P$, 
and there exists a real number $s_\alpha$ such that $\bar{A} = s_\alpha \cdot A$ 
and $\varphi$ is a PL-homeomorphism with slopes in the coset $s_\alpha \cdot P$.
The PL-homeomorphism $\varphi$ need not be finitary, but every compact subinterval of $\Int(I)$ contains only finitely many breaks of $\varphi$ 
(see Theorem \ref{TheoremE10}).

Sharper results hold if $G$ is all of $G(I;A,P)$ and if, in addition,  $I$ and $\bar{I}$ are both lines, or both half lines, or both compact intervals with end points in $A$ and $\bar{A}$, respectively.
Then an isomorphism $\alpha \colon G \iso \bar{G}$ can only exist 
if $\bar{G} = G(\bar{I}; \bar{A}, \bar{P})$ 
and a homeomorphism inducing an isomorphism $\alpha \colon G \iso \bar{G}$  is necessarily piecewise linear with finitely many breaks (see Supplement \ref{SupplementE11New}).
\index{Theorem \ref{TheoremE10}!consequences}%
This result has three noteworthy consequences. 

(i) It allows one to determine 
when two groups $G(I; A, P)$ and $G(\bar{I}; A, P)$ 
with compact intervals $I = [0,b]$ and $\bar{I} = [0, \bar{b}]$ of different lengths are isomorphic
(consult Corollary \ref{CorollaryE12} 
and part (iii) of section 
\ref{sssec:Automorphism-groups-some-examples-non-cyclic-P}
for more details).

(ii) It implies that the outer automorphism group of $G(I;A,P)$ is abelian 
and actually a subgroup of  $\Aut(A)/P$; 
here $\Aut(A)$ denotes the group of all non-zero reals $s$ with $s \cdot A = A$.
\index{Outer automorphism group of!G(R;A,P)@$G(\R;A,P)$}%
\index{Outer automorphism group of!G([0,infty[;A,P)@$G([0, \infty[\;;A,P)$}%
\index{Outer automorphism group of!G([a,c];A,P)@$G([a,c];A,P)$}%
\index{Group Aut(A)@Group $\Aut(A)$!significance}%
This group contains always $-1$, 
but is is typically larger then $P \times \{1, -1\}$;
see part (i) in section 
\ref{sssec:Automorphism-groups-some-examples-non-cyclic-P} 
for some examples.

(iii) It permits one to prove that the automorphism group of $G(I;A,P)$ 
has a subgroup of index at most 2 which is (isomorphic to)
an explicitly describable subgroup of $G(I;A, \Aut_o(A)$
(Corollary \ref{crl:Explicit-description-of-Aut(G)} has more details).
%
\subsubsection{Groups with cyclic slope groups}
\label{sssec:Notes-Chapter-E-cyclic-slope-group}
%
If the slope group $P$ is cyclic,
the properties of isomorphisms and automorphisms are typically more intricate 
than those described in the previous section, and often counter-intuitive. 
There is, however, one exception:
if $I$ and $\bar{I}$ are both lines, or both half lines 
and if, in addition, $G = G(I;A,P)$,
the situation is much as in the case where $P$ is not cyclic
(details are given by Theorem \ref{TheoremE14}).

A first indication of the surprises that lie ahead when $I$ is compact
is the fact known, 
in the case of Thompson's group $F$ since the 1970s, 
that the group $G =G([0,b]; A, P)$ (with $b \in A$) 
can be embedded into $G([0, \infty[\,;A,P)$ 
and also  into $G(\R;A,P)$ in such a way 
that the image contains the subgroup $B([0, \infty[\,;A,P)$, 
respectively the subgroup $B(\R;A,P)$ 
(details are spelled out in sections
\ref{sssec:Embedding-mu1}  and  \ref{sssec:Embedding-mu2-mu1}).
\index{Group G([a,c];A,P)@Group $G([a,c];A,P)$!endomorphisms}%

More astonishing is the fact that $G$ contains, for every integer $n >1$, 
a subgroup of index $n$ which is isomorphic to $G$
and also subgroups of finite index which are not isomorphic to $G$.
\footnote{The answers to both parts of Question 4.5 in \cite{CST04} 
are thus in the affirmative.}
\index{Brown, K. S.}%
\index{Guba, V.}%
Details of these assertions are provided by section \ref{ssec:18.4},
in particular by Theorem \ref{thm:Isomorphism-types-class-II}, 
and also by the next section.
\index{Group G([a,c];A,P)@Group $G([a,c];A,P)$!subgroups of finite index}%

%
\subsubsection{The papers \cite{BlWa07} by Bleak-Wassink and \cite{BCR08} by Burillo et al.}
\label{sssec:Notes-ChapterE-Bleak-Wassink}
\index{Bleak, C.}%
\index{Wassink, B.}%
\index{Burillo, J.}%
\index{Cleary, S.}%
\index{R{\"o}ver, C. E.}%
%
The papers \cite{BlWa07} and \cite{BCR08} study finite index subgroups of Thompson's group  $F = G([0,1];\Z[1/2], \gp(2))$. 
The subgroup $B = B([0,1];\Z[1/2], \gp(2))$ coincides with $F'$ and it is simple; 
so every subgroup with finite index in $F$ lies above $B$.
We are thus in the situation considered in section \ref{sssec:18.8b}.
Let $\pi \colon F \to \Z^2$ be the homomorphism 
which maps $f \in F$ to  
\[
\left(\log_2(f'(0_+)), \log_2(f'(1_-))\right) \in \Z^2;
\]
this homomorphism is surjective.
Given integers $m \geq 1$ and $n \geq 1$, 
consider the preimage $K_{m,n}$ of $\Z m \times \Z n$ under $\pi$;
it has index $m \cdot n$ in $F$.
Both papers mentioned above establish 
that each such ``rectangular'' subgroup is isomorphic to $F$
(see \cite[Theorem1]{BlWa07} and \cite[Corollary 3.3]{BCR08}); 
the given proofs use different techniques 
and none of them is similar to the approach taken in section \ref{sssec:18.5}.
The cited results show, in addition, that the rectangular subgroups 
are the only subgroups of finite index which are isomorphic to $F$.
\index{Group G([a,c];A,P)@Group $G([a,c];A,P)$!subgroups of finite index}%
\index{Thompson's group F@Thompson's group $F$!subgroups of finite index}%
Moreover, 
C. Bleak and B. Wassink give an algorithm 
that allows one to decide whether two subgroups of finite index in $F$ are isomorphic to each other
(see \cite[Theorem 1.8]{BlWa07}). 
Some ingredients of this algorithm have been used in the proof of Theorem
\ref{thm:Isomorphism-types-class-II}.
%
\subsubsection{Automorphism group of $G([0,b]; A, P)$}
\label{sssec:Notes-ChapterE-Automorphism-group-of-G-P-cyclic}
%
Let $p>1$ be a generator of the cyclic group $P$.
A first basic question concerning $G = G([0,b]; A, P)$ is 
whether or not every auto-homeomorphism $\varphi \colon ]0,b[\, \iso \,]0,b[$  
inducing an automorphism $\alpha$ of $G$ is piecewise linear, 
possibly with infinitely many breaks.
The Bieri-Strebel memoir does not answer this question;
as a substitute, it introduces the subgroup
\begin{equation}
\label{eq:Aut_PL-Notes}
\Aut_{\PL}G = \{\alpha \in \Aut(G) \mid \alpha \text{ is induced by a PL-homeomorphism } \varphi \}
\end{equation}
and describes the subgroup $\Aut_{\PL}G/\Inn G $ of the outer automorphism group of $G$ 
(details are spelled out in  Corollary \ref{crl:PropositionE20New-part-II}).
This description shows, in particular, 
that $\Aut_{\PL} G /\Inn G $ contains a subgroup of index $|A :( IP \cdot A)|$ 
in the square of the generalized Thompson group $T(\R/\Z(p-1); A, P)$.
The outer automorphism group of $G([0,b];A, P)$ is thus far from being abelian,
in contrast to what happens if $P$ is not cyclic 
(see statement (ii) in section \ref{sssec:Notes-Chapter-E-non-cyclic-slope-group}).
\index{Outer automorphism group of!G([a,c];A,P)@$G([a,c];A,P)$}%
%
\subsubsection{The paper \cite{Bri96} by Matthew Brin}
\label{sssec:Notes-ChapterE-Brin96}
\index{Brin, M. G.}%
%
The best known group with $P$ infinite cyclic and $I$ compact
is Thompson's group $F = G([0,1];\Z[1/2], \gp(2))$.
Its automorphism group did not yield to the techniques of \cite{BiSt85},
and has only been determined a decade later by Matthew Brin in \cite{Bri96}. 
He states his main result in Theorem 1 on page 9;
this result deals with subgroups of $F$, 
a supergroup $\tilde{F}$ of $F$ with index 2,
and also with Thompson's group $T$ and one supergroup $\tilde{T}$ of $T$.
In the language of this monograph,
the findings for $F$ and its subgroup $B$ can be summarized as follows:
\begin{thmN}[see \protect{\cite[Theorem 1]{Bri96}}]
\label{thm:Main-result-Bri96-part-F}
\index{Group G([a,c];A,P)@Group $G([a,c];A,P)$!exotic automorphisms}%
\index{Group G([a,c];A,P)@Group $G([a,c];A,P)$!automorphism group}%
\index{Thompson's group F@Thompson's group $F$!automorphism group}%
Let  $F$ denote $G([0,1];\Z[1/2],\gp(2))$ and set $B = B([0,1];\Z[1/2],\gp(2))$. 
Then the following statements hold:
\begin{enumerate}[(i)]
\item every automorphism  of $F$ is induced by a PL-homeomorphism;
the group $\Aut F$ is thus isomorphic to the group $ \Autfr_{\PL} F$ 
discussed in section \ref{sssec:19.3a};
\item every automorphism of $B$ is piecewise linear and so $\Aut B$ is isomorphic 
to the group $G_\infty([0,1]; \Z[1/2], \gp(2))$
made up of all (finitary or infinitary) PL-homeomorphism of $[0,1]$ 
with slopes in $\gp(2)$, breaks in $\Z[1/2]$
and a set of breaks that does not accumulate in $]0,1[$.
\end{enumerate}
\end{thmN}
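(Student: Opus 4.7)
The plan is to combine the general machinery of Chapter \ref{chap:E} with a local analysis at dyadic points. By Theorem \ref{TheoremE04} any automorphism $\alpha$ of $F$ (or of $B$) is induced by conjugation by a unique homeomorphism $\varphi \colon \,]0,1[ \,\iso \,]0,1[$, which maps $\Z[1/2]\cap\,]0,1[$ onto itself. Composing with the reflection $\imath\colon t \mapsto 1-t$ (a PL-auto-homeomorphism that normalises $F$) we may assume $\varphi$ is increasing. The task for (ii) reduces to showing $\varphi$ lies in $G_\infty(\,]0,1[\,;\Z[1/2],\gp(2))$, and for (i) to the stronger statement that $\varphi$ extends to a \emph{finitary} PL-homeomorphism of $[0,1]$. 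Surjectivity is immediate: any $\varphi \in G_\infty$ preserves bounded support in $\,]0,1[$, slopes in $\gp(2)$, and breaks in $\Z[1/2]$, so conjugation by $\varphi$ normalises $B$; and every finitary PL-auto-homeo\-mor\-phism of $[0,1]$ with slopes in $\gp(2)$ and vertices in $\Z[1/2]^2$ normalises $F$ by definition of $\Autfr_{\PL}F$. Note also that $B=F'$ is simple (Corollary \ref{crl:Simplicity-of-B'} combined with Theorem \ref{TheoremC10}), hence characteristic in $F$, so every $\alpha \in \Aut F$ restricts to an automorphism of $B$, letting us treat (i) and (ii) within a single geometric framework.

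The central step is a local analysis at an arbitrary dyadic point $a \in \Z[1/2]\cap\,]0,1[$. Using Theorem \ref{TheoremA} and the bump construction of formula \eqref{eq:5.1} one produces, for every $p \in \gp(2)$, an element of $B$ whose support lies in an arbitrarily small neighbourhood of $a$ and whose right-hand derivative at $a$ equals $p$; hence the right-hand germ homomorphism $\operatorname{St}_B(a) \to \gp(2)$ is surjective, and the analogous statement holds at $\varphi(a)$. Since $\varphi$ is increasing and conjugates $B$ onto $B$, Proposition \ref{PropositionE8} yields real numbers $r>0$ and $\delta>0$ with
\[
\varphi(a+2^{-k}t) \;=\; \varphi(a) + 2^{-kr}\bigl(\varphi(a+t)-\varphi(a)\bigr)
\quad\text{for all } k \geq 0 \text{ and } t\in[0,\delta],
\]
and the map $p\mapsto p^r$ restricts to an automorphism of $\gp(2)$. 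The only order-preserving such automorphism is the identity, so $r=1$. A symmetric analysis on the left-hand side of $a$ gives the analogous scaling identity there.

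Once the one-sided derivative of $\varphi$ at a single dyadic point is known to exist, Proposition \ref{PropositionE9} applies and upgrades $\varphi$ to a PL-homeo\-mor\-phism with slopes in a coset $s\cdot\gp(2)$, breaks in $\Z[1/2]$, and only finitely many breaks in every compact subinterval of $\,]0,1[$; the condition $\varphi(\Z[1/2]\cap\,]0,1[)=\Z[1/2]\cap\,]0,1[$ then forces $s\in\gp(2)$. This is exactly the content of (ii). To strengthen this to (i), note that $F$ contains elements with non-trivial affine germ at the endpoints (e.g.\ the generator $x$ with slope $2$ at $0_+$ and slope $1/2$ at $1_-$), and their $\alpha$-images must be of the same finitary form; this forces $\varphi$ to be affine in one-sided neighbourhoods of $0$ and $1$, so the breaks do not accumulate at the endpoints, and $\varphi$ is finitary PL on $[0,1]$. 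The surjective map $\alpha \mapsto \varphi$ thus identifies $\Aut F$ with $\Autfr_{\PL}F$ and $\Aut B$ with $G_\infty(\,]0,1[\,;\Z[1/2],\gp(2))$.

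The main obstacle — and the reason the proof cannot be completed using only the results of this monograph — is the upgrade from the functional equation produced by Proposition \ref{PropositionE8} to the existence-of-derivative hypothesis required by Proposition \ref{PropositionE9}. The identity $\varphi(a+2^{-k}t)-\varphi(a) = 2^{-k}(\varphi(a+t)-\varphi(a))$ only constrains the secant slope $s(t) = (\varphi(a+t)-\varphi(a))/t$ to satisfy $s(2^{-k}t)=s(t)$, which by itself does not force $s$ to be constant. In Theorem \ref{TheoremE10}, where $P$ is non-cyclic and dense in $\R^\times_{>0}$, the analogous scaling holds for $p$ in a dense set, automatically yielding differentiability; in the present case the set $\{2^{-k}\}$ is discrete and an \emph{ad hoc} argument is needed. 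Specifically, one must exploit that $\varphi$ simultaneously conjugates the stabiliser $\operatorname{St}_F(\{a,b\})$ onto $\operatorname{St}_F(\{\varphi(a),\varphi(b)\})$ for every dyadic $b\in(a,a+\delta)$ — each such stabiliser containing a copy of $F$ acting affinely on $[a,b]$ — so the scaling identities at the different points $b$ must be mutually compatible. Pinning down this compatibility until $s(t)$ is independent of $t$ is the technical heart of Brin's argument in \cite{Bri96}, and is precisely the step that the general theory of Chapter \ref{chap:E} fails to supply in the case of a compact interval with cyclic $P$.
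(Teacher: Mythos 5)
This statement is quoted from Brin's paper \cite{Bri96}; the monograph itself offers no proof and states explicitly (see the Preface, Case 3, and sections \ref{sssec:19.3b} and \ref{sssec:Notes-ChapterE-Brin96}) that its methods do not settle whether every automorphism of $G([0,1];A,P)$ with cyclic $P$ is piecewise linear. Your set-up is the right one and matches the monograph's framing: Theorem \ref{TheoremE04} produces the unique inducing homeomorphism $\varphi$, and you correctly locate the obstruction — Proposition \ref{PropositionE8} only yields the scaling identity for $p$ ranging over the discrete set $\gp(2)\,\cap\,]0,1]$, which constrains the secant slope $s(t)$ to satisfy $s(2^{-k}t)=s(t)$ without forcing it to be constant, so the differentiability hypothesis of Proposition \ref{PropositionE9} is not available. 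Since you yourself concede that closing this gap is "the technical heart of Brin's argument," what you have written is an accurate reduction plus an honest admission of incompleteness, not a proof; that assessment is consistent with the monograph's own account.

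There is, however, a genuine error beyond the acknowledged gap: your reduction of part (i) to "the stronger statement that $\varphi$ extends to a \emph{finitary} PL-homeomorphism of $[0,1]$" is false, and the endpoint argument offered for it does not work. The group $\Autfr_{\PL}F$ of section \ref{sssec:19.3a} is by definition $\Autfr F \cap G_\infty(\,]0,1[\,;A,P)$, which admits infinitely many breaks accumulating at the endpoints; Corollary \ref{crl:PropositionE20New-part-II} (with $p=2$, so $IP\cdot A=A$) shows that $\Out_{\PL}F$ contains the full square $T\times T$ of Thompson's group $T$, and these outer classes are represented precisely by infinitary $\varphi$ that are periodic near the ends of the line model. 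If every inducing homeomorphism were finitary, $\Out F$ would embed in the abelian group $\Aut(A)/P$ as in Corollary \ref{CorollaryE13}, contradicting this. Your claim that matching the affine germs of $x_0$ at $0_+$ and $1_-$ "forces $\varphi$ to be affine in one-sided neighbourhoods of $0$ and $1$" fails for the same reason: conjugation by an infinitary PL-homeomorphism with breaks accumulating at $0$ can carry a finitary element with germ of slope $2$ to a finitary element with germ of slope $2^m$ — this is exactly what the endomorphisms $\mu_m$ of section \ref{sssec:18.5} do. The correct content of (i) is only that $\varphi$ is (possibly infinitary) piecewise linear, i.e.\ lies in $\Autfr_{\PL}F$ as defined in the monograph.
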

Notice that the subgroup $B$ is characteristic in $F$ (by Corollary \ref{CorollaryE5});
so the first assertion of (i) is actually a consequence of part (ii).

As mentioned before,
Matt Brin determines also the automorphism group of Thompson's group 
$T = T(\R/Z; \Z[1/2], \gp(2))$;
his result is very easy to state: 
\begin{thmN}[see \protect{\cite[Theorem 1]{Bri96}}]
\label{thm:Main-result-Bri96-part-T}
\index{Thompson's group T@Thompson's group $T$!automorphism group}%
Every automorphism of $T$ is an inner automorphism 
or the composition of an inner automorphism and the automorphism induced by reflection at the origin.
\end{thmN}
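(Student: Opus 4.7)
The plan is to mimic, in the circle setting $S^{1}=\R/\Z$, the strategy used for the groups $G(I;A,P)$ in Sections \ref{sec:16}--\ref{sec:19}: first establish a representation theorem that every automorphism of $T$ is induced by conjugation by a homeomorphism $\varphi$ of $S^{1}$, then show that $\varphi$ must be piecewise linear with slopes in $\gp(2)$ and breaks in $\Z[1/2]/\Z$, and finally conclude that, after possibly composing with the reflection $\imath \colon t+\Z \mapsto -t+\Z$, the homeomorphism $\varphi$ lies in $T$ itself. The key observation is that the derived group $T'$ (which is $T$ itself, since $T$ is simple) acts very transitively on $\Z[1/2]/\Z$ and contains an abundant supply of elements with bounded support; once a circle is cut open at a point $\varphi$ fails to fix, the ambient situation reduces to that of the subgroup of $T$ fixing a dyadic point, which is essentially $F = G([0,1];\Z[1/2],\gp(2))$.

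First I would verify an analogue of Theorem \ref{TheoremE3} for faithful ordered permutation groups of $S^{1}$: given $\alpha \in \Aut T$, I want to produce a unique auto-homeomorphism $\varphi$ of $S^{1}$ such that $\alpha(g)=\varphi\circ g\circ \varphi^{-1}$ for all $g\in T$. The four axioms Ax1--Ax4 of section \ref{ssec:16.1} all hold locally on $S^{1}$ because the subgroup of $T$ fixing any dyadic point $b\in\Z[1/2]/\Z$ is (canonically) isomorphic to $F$ and therefore satisfies these axioms on the open arc $S^{1}\smallsetminus\{b\}$. Using the ``vertex subsets'' argument in the proof of Theorem \ref{TheoremE3}, plus the fact that the set $\Bfr$ of points admitting left/right witnesses $g_\ell,g_u\in T$ is $T$-invariant and dense, the homeomorphism $\varphi$ is manufactured in exactly the same way as there. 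A minor but necessary twist is that orientation on $S^{1}$ is only cyclic, not linear, so ``$\varphi$ is increasing'' has to be phrased as ``$\varphi$ preserves the cyclic order''; this is just the statement that $\varphi$ is orientation preserving.

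Next I would show that $\varphi(\Z[1/2]/\Z)=\Z[1/2]/\Z$ and that $\varphi$ is piecewise linear with slopes in $\gp(2)$ and finitely many breaks. The first claim is the circle analogue of the last assertion of Theorem \ref{TheoremE04}: the breakpoints of elements of $T$ are precisely the points of $\Z[1/2]/\Z$, and conjugation by $\varphi$ sends breakpoints to breakpoints. For the second claim I would cut $S^{1}$ at a dyadic point $b$ and at its image $\varphi(b)$, obtaining an induced isomorphism of the stabilizers of $b$ and $\varphi(b)$ in $T$; these stabilizers, viewed on the complementary open arcs, are isomorphic to $F = G([0,1];\Z[1/2],\gp(2))$. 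After the identification, I may apply Theorem \ref{TheoremE14}(i) (or rather, since the arc is an open interval, the combined information from Theorem \ref{TheoremE10} and Supplement \ref{SupplementE11New}) to deduce that $\varphi$, restricted to the open arc, is a finitary PL-homeomorphism with slopes in a coset $s\cdot\gp(2)$ of $\Aut(\Z[1/2])/\gp(2)=\{\pm1\}$.

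The hard part will be this last step: patching the PL description across the chosen basepoint $b$ to get a finitary PL-homeomorphism of the whole circle, and then showing that $s\in\gp(2)$ in the orientation-preserving case. The patching is a rigidity issue — one must rule out an accumulation of breaks at $b$ — and the natural way to do it is to apply the same cutting argument at a second dyadic point $b'$ on the circle distinct from $b$, so that the two resulting arc-descriptions of $\varphi$ overlap on a neighbourhood of $b$, forcing finitely many breaks there. Once $\varphi$ is known to be an element of the group $\Autfr T$ of all orientation-preserving PL-homeomorphisms of $S^{1}$ with slopes in $\{\pm1\}\cdot\gp(2)$ and breaks in $\Z[1/2]/\Z$, the conclusion follows: the orientation-preserving case yields $\varphi\in T$, i.e.\ an inner automorphism; the orientation-reversing case yields $\varphi=\imath\circ\varphi_{+}$ with $\varphi_{+}\in T$, i.e.\ an inner automorphism composed with the one induced by $\imath$. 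The fact that $T$ is simple (so that $\Inn T\cong T$) means no further quotient collapses the picture.
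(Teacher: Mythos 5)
There is a genuine gap, and it is worth noting first that the monograph does not prove this statement at all: it is quoted from Brin's paper \cite{Bri96}, and the surrounding text (sections \ref{sssec:19.3b} and \ref{sssec:Notes-ChapterE-Brin96}) explicitly concedes that the methods of the monograph were \emph{not} able to decide whether every automorphism of a group with compact interval and cyclic slope group is induced by a PL-homeomorphism. Your argument founders at exactly that point. After cutting the circle at a dyadic point you obtain stabilizers isomorphic to $F = G([0,1];\Z[1/2],\gp(2))$, i.e.\ a group with \emph{compact} interval and \emph{cyclic} slope group $P=\gp(2)$. You then invoke Theorem \ref{TheoremE10} and Supplement \ref{SupplementE11New}; but both carry the hypothesis that $P$ is \emph{not} cyclic, and Theorem \ref{TheoremE14} applies only when $I$ is the line or a half line. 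None of these results covers the configuration you have produced, so the step ``$\varphi$ restricted to the open arc is a finitary PL-homeomorphism'' is unsupported. This is not a repairable technicality: for cyclic $P$ and compact $I$ the homeomorphisms inducing isomorphisms genuinely can be infinitary PL or worse (cf.\ Theorem \ref{TheoremE07} and the endomorphisms $\mu_m$, $\nu_n$ of section \ref{sssec:18.5}), which is precisely why the monograph only describes the subgroup $\Aut_{\PL}G$ in Corollary \ref{crl:PropositionE20New-part-II} and leaves the existence of exotic automorphisms open.

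A decisive sanity check: your argument uses nothing specific to the base $2$. Run verbatim for $T_{n,n-1}=T(\R/\Z(n-1);\Z[1/n],\gp(n))$ with $n>2$, it would show that every automorphism is induced by a PL-homeomorphism of the circle. But Brin and Guzm{\'a}n prove in \cite{BrGu98} that $\Aut(T_{n,n-1})$ contains \emph{exotic} elements for every $n>2$ (see section \ref{sssec:Notes-ChapterE-BrGu98}). Hence the scheme cannot be correct as stated; the case $n=2$ requires the genuinely different and much finer analysis of germs and local rigidity carried out in \cite{Bri96}, which is why the monograph cites the result rather than proving it.
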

The outer automorphism group of $T$ is thus abelian of order 2, 
in sharp contrast to what happens with $F$ (\cf{}Corollary \ref{crl:PropositionE20New-part-II}).
\index{Brin, M. G.}%
\smallskip

The proofs of the stated results start out 
with variations on Theorem \ref{TheoremE04},
published by S. H. McCleary and M. Rubin in \cite{McRu96}. 
\index{McCleary, S. H.}%
\index{Rubin, M.}%
As these variations are also a stepping stone in \cite{BrGu98}, 
a follow-up of \cite{Bri96},
I continue with some words on the memoir \cite{McRu05},
a later version of \cite{McRu96}.

\subsubsection{The memoir \cite{McRu05} by Stephen H. McCleary and Matatyahu Rubin}
\label{sssec:Notes-McCleary-Rubin}
\index{McCleary, S. H.}%
\index{Rubin, M.}%
%
Recall the message of Theorem \ref{TheoremE04}:
each isomorphism $\alpha \colon G \iso \bar{G}$
of a subgroup $G \subseteq G(I;A,P)$ 
onto a subgroup $\bar{G} \subseteq G(\bar{I}; \bar{A}, \bar{P})$
is induced by conjugation by a unique homeomorphism $\varphi_\alpha \colon \Int(I) \iso \Int(\bar{I})$,
provided $G$ contains the derived group of $B(A;I,P)$ 
and $\bar{G}$ satisfies the analogous property.
Theorem \ref{TheoremE04} is a consequence of Theorem \ref{TheoremE3}.
That result deals with an isomorphism $\alpha \colon G \iso \bar{G}$
of a subgroup $G \subseteq \Homeo_o(J)$ onto a subgroup $\bar{G} \subseteq \Homeo_o(\bar{J})$;
here $J$ and $\bar{J}$ are open intervals of $\R$ 
and $G$, $\bar{G}$ are assumed to satisfy certain axioms, 
labelled \emph{Ax1}, \emph{Ax2}, \emph{Ax3} and \emph{Ax4}.

The ancestor of Theorem \ref{TheoremE3} 
is the Main Theorem 4 in S. McCleary's article \cite{McC78b}. 
\index{McCleary, S. H.}%
This result considers linear permutation groups $G_1$ and $G_2$ 
of (dense) chains $L_1$ and $L_2$,
respectively,
and shows that every isomorphism $\alpha \colon G_1 \iso G_2$ is induced by conjugation 
by a unique \emph{monotonic} (that is, order-preserving or order-reversing) automorphism 
$\varphi \colon L_1^{\cpl} \iso L_2^{\cpl}$ 
of the Dedekind completions of $L_1$ and of $L_2$,
provided the following requirements are satisfied:
\begin{enumerate}[1)]
\item $G_1$ and $G_2$ act 3-fold order-transitively on $L_1$, respectively on $L_2$; 
\item $G_1$ and $G_2$ contain both an element $g_i \neq \id$ of bounded support  with $t \leq g_i(t)$ for all $t \in L_i$.
\end{enumerate}

Suppose now that $G_1$ is the group $G(I_1;A_1,P_1)$ 
for certain parameters $I_1$, $A_1$ and $P_1$,
and that $G_2$ is the group $G(I_2;A_2,P_2)$ 
with parameters $I_2$, $A_2$ and $P_2$. 
Then each of the orbits $L_1$ of $G_1$ in $A_1 \cap \Int(I_1)$ is dense in $I_1$,
so $G$ acts faithfully on $L_1$ and the Dedekind completion  of $L_1$ is the interval $\Int(I_1)$.
Moreover, $G_1$ acts 3-fold order-transitively on $L_1$, unless $I_1 = \R$  
(see Corollary \ref{CorollaryA4}).
\index{Theorem \ref{TheoremE04}!consequences}%
\index{Group G(I;A,P)@Group $G(I;A,P)$!multiple transitivity}%
Similar statements hold for $G_2$ and $I_2$.
Requirement 1) holds therefore for  $(L_1, G_1)$ and $(L_2, G_2)$, 
unless one of the intervals $I_i$ is the line $\R$. 
Since $G_1$ and $G_2$ contain elements 
$g_1 \in G_1$ and $g_2 \in G_2$ that fulfill requirement 2),
 McCleary's Theorem applies, 
 except when $I_1 = \R$ or $I_2 = \R$.
 
 This \emph{Proviso} is one of the reasons 
 that incited Bieri and Strebel to look for a replacement of McCleary's Main Theorem 4
 that would allow them to establish the indicated result in full generality. 
 Their solution consisted in replacing 3-fold order-transitivity 
 by a version of approximate 6-fold transitivity,
 embodied in axiom \emph{Ax4}, 
 and in replacing the condition in statement 2) 
 by three conditions, dubbed axioms \emph{Ax1},  \emph{Ax2} and  \emph{Ax3}.
 Based on these hypotheses they proved Theorem \ref{TheoremE3},
 their substitute for the Main Theorem 4.
 The proof of the pivotal Theorem \ref{TheoremE04} amounted 
 then to a verification of axioms \emph{Ax1} through \emph{Ax4}.
 If $G_1$ is a subgroup of $G(I_1;A_2,P_1)$ containing $B_1 =B(I_1;A_1,P_1)$ 
 and if $G_2$ has analogous properties,
 this verification is  very easy thanks to the results of Chapter \ref{chap:A},
 but axiom \emph{Ax1} did not seem to follow from the remaining assumptions 
 when $G_1$  is taken to be the derived group of $B_1$ (and similarly for $G_2$).
 
 Here the results of the McCleary-Rubin memoir have been an eye-opener.
 The subject matter of this memoir and its proofs are intricate.
 One reason is that it deals not only 
 with groups of order-preserving automorphisms of linearly ordered sets,
 but also with groups of automorphisms of circularly ordered sets 
 and with groups of automorphisms of two further, related types of structures.
 In the sequel I shall mainly deal with \emph{linear permutation groups}.
 \footnote{Alias \emph{ordered permutation groups} $(L,G)$; 
 but note that in the present context it is not the group $G$ that is ordered, 
 but the orbit $L$.}
 I begin by stating Theorem 2 in \cite[p.\,2]{McRu05}
that is part of Theorem 7.6 in \cite{McRu05}:
 \begin{thmN}
 \label{thm:Linear-reconstruction-theorem}
 Let $(L_1,G_1)$ and $(L_2,G_2)$ be linear permutation groups 
 that are 2-interval-transitive and have each a non-identity bounded element.
 Then every isomorphism $\alpha \colon G_1 \iso G_2$ is induced by conjugation 
 by a unique monotonic bijection $\varphi \colon L_1^{\cpl} \iso L_2^{\cpl}$.
 \end{thmN}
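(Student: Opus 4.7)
The plan is to adapt the strategy used in the proof of Theorem \ref{TheoremE3}, but with the substantially weakened hypothesis of 2-interval-transitivity in place of axiom Ax4, and with the reduced support hypothesis (only one bounded non-identity element per side, rather than a strictly positive element plus a splitting pair). The uniqueness part should follow from an analogue of Lemma \ref{LemmaE2}: if a monotonic bijection $\varphi$ of $L_1^{\cpl}$ centralizes $G_1$ and moves some point $t$, one uses continuity to find an open interval $J' \ni t$ with $\varphi(J') \cap J' = \emptyset$, then invokes 2-interval-transitivity and the existence of a non-identity bounded element to shove the support of a conjugate into $J'$ but outside $\varphi(J')$, contradicting $\varphi$-equivariance.

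For existence, I would first establish a commutativity criterion playing the role of Lemma \ref{LemmaE1}: identify a purely group-theoretic condition on $(g_1,g_2) \in G_1^2$ that forces $\supp g_1$ and $\supp g_2$ to be order-separated in $L_1$. Concretely, one asks whether, for every $h \in G_1$, the elements $g_1$ and $\act{h}g_2$ commute once sufficiently displaced. Because the elements are bounded and the action is 2-interval-transitive, one can iterate the interval transitivity to simulate the effect of 6-fold approximate transitivity used in Lemma \ref{LemmaE1}. This criterion is preserved by the isomorphism $\alpha$.

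Next, I would carve out the dense set
$$\Bfr = \{b \in L_1^{\cpl} \mid \exists\, g_\ell, g_u \in G_1 \text{ with } \sup(\supp g_\ell) = b = \inf(\supp g_u)\}$$
and the analogous set $\bar{\Bfr}$ in $L_2^{\cpl}$. By 2-interval-transitivity applied to the bounded element, $\Bfr$ is dense in $L_1^{\cpl}$. Using the commutativity criterion, the images $\alpha(g_\ell)$ and $\alpha(g_u)$ for $b \in \Bfr$ are shown to have $\sup(\supp \alpha(g_\ell))$ and $\inf(\supp\alpha(g_u))$ either coincide at some $\bar b \in L_2^{\cpl}$ (monotonic-increasing case) or with roles exchanged (monotonic-decreasing case); after precomposing with an orientation reversal on one side if necessary, one may assume the former. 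The assignment $b \mapsto \bar b$ defines a strictly increasing partial function $\varphi_1 \colon \Bfr \to \bar{\Bfr}$, bijective by symmetry of the construction, which extends uniquely to a monotonic homeomorphism $\varphi \colon L_1^{\cpl} \iso L_2^{\cpl}$. Finally, the formula $\alpha(g) = \varphi \circ g \circ \varphi^{-1}$ is verified on $\Bfr$ by the identity $g(\sup\supp g_\ell) = \sup\supp \act{g}g_\ell$ and its $\alpha$-image, then extended by density.

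The main obstacle will be the commutativity criterion substituting for Lemma \ref{LemmaE1}. In that lemma, the four-tuple $(t_1,\ldots,t_6)$ of reference points is supplied directly by Ax4; here, with only 2-interval-transitivity, one must build the same control through a cascade of applications to short pairs of intervals, propagating the hypothesis that elements have bounded support. One must also orchestrate the choices of conjugators $h \in G_1$ so that finitely many supports are simultaneously separated — this takes more care in the weaker setting. A secondary difficulty is setting up $\varphi_1$ on $\Bfr$ when the two non-identity bounded elements in $G_1$ and $G_2$ need not be positive: one has to treat the orientation-reversing case by an auxiliary conjugation (by a decreasing homeomorphism of $L_2$, if one exists in $\Homeo(L_2^{\cpl})$, or more intrinsically by working with $\supp$ rather than with the relations $<$) before the monotonic bijection can be defined uniformly.
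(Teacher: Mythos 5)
This statement is not proved in the monograph: it is quoted verbatim as Theorem~2 (part of Theorem~7.6) of the McCleary--Rubin memoir \cite{McRu05}, and the text around it explains precisely why the in-house machinery does not reach it. Your plan is to rerun the proof of Theorem \ref{TheoremE3} under the weaker hypotheses, and the crux you flag as ``the main obstacle'' is in fact a genuine gap, not a technicality. Lemma \ref{LemmaE1} needs a \emph{single} conjugator $h_0$ satisfying six simultaneous inequalities, which is exactly what axiom Ax4 supplies. Two-interval-transitivity only lets you steer two ordered intervals at a time, and ``iterating'' it does not help: a second application of the hypothesis produces a second group element that disturbs whatever the first one arranged, and composing them gives no simultaneous control of six points. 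The monograph itself records how far interval transitivity can be bootstrapped: by \cite[Theorem 12.2]{McRu05}, \emph{three}-interval-transitivity together with a bounded non-identity element already yields 12-interval-transitivity and hence Ax4, so Theorem \ref{TheoremE3} covers the 3-interval-transitive case -- but that bootstrapping is itself a non-trivial theorem, and it does not descend to the 2-interval-transitive setting. The passage from 3 to 2 is exactly what requires the ``First Order Reconstruction package'' of \cite{McRu05}, i.e.\ a genuinely different method.

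A second, independent gap: your construction of $\varphi$ rests on the set $\Bfr$ of points $b$ admitting $g_\ell, g_u$ with $\sup(\supp g_\ell) = b = \inf(\supp g_u)$. In the proof of Theorem \ref{TheoremE3} the non-emptiness of $\Bfr$ is an explicit hypothesis (Ax3), and in the application to Theorem \ref{TheoremE04} it is verified by an explicit commutator construction inside $B(I;A,P)'$ that also produces the strictly positive element demanded by Ax1. Here you have only one non-identity bounded element and 2-interval-transitivity; conjugation moves its support around, but nothing forces the supremum of one support to coincide exactly with the infimum of another, so the assertion that $\Bfr$ is dense (or even non-empty) is unjustified. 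Both difficulties are real obstructions to adapting the paper's argument, which is why the monograph cites the result rather than proving it.
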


Some of the terms occurring in the above statements deserve a detailed explanation.
As before, $L_i^{\cpl}$ denotes the Dedekind completion of the chain $L_i$
(which is assumed to be dense).
Interval transitivity is a weak form of transitivity. 
In the sequel another weak form will play a rôle; 
so I give both definitions.
\begin{definitionN}
\label{definition:Weak-forms-of-transitivity}
\index{Interval transitivity}%
Let $(L, G)$ be a linear permutation group and $n \geq1$ an integer.
\begin{itemize}
\item $(L,G)$ is called \emph{approximately $n$-fold order-transitive}  
if, for every sequence of points $a_1 < \cdots < a_n$ in $L$ 
and every sequence of intervals $J_1 < \cdots < J_n$ of $L$ with non-empty interiors,
there exists $g \in G$ such that $g(a_i) \in J_i$ for every $i$.

\item $(L,G)$ is called \emph{$n$-interval-transitive}  
if, for every couple of sequences 
\[
I_1 < \cdots < I_n \quad \text{and} \quad
J_1 < \cdots < J_n
\] 
of intervals of $L$ with non-empty interiors,
there exists $g \in G$ such that $g(I_i) \cap J_i$ is non-empty for $i \in \{1, \ldots, n\}$.
\end{itemize}
\end{definitionN}

Axiom \emph{Ax4} states a rather non-intuitive condition (see section \ref{ssec:16.1}),
but it clearly holds  if $G$ is approximately 6-fold order-transitive;
the proof of \cite[Proposition 12.1]{McRu05} shows then 
that approximate 6-fold order transitivity is implied by 12-interval transitivity
and Theorem 12.2 in \cite{McRu05}, finally, reveals that
12-interval transitivity is implied by 3-interval transitivity,
provided $G$ contains a non-identity element with bounded support.
It follows that Theorem \ref{TheoremE3} holds if axiom \emph{Ax4} is replaced 
by the requirement that $G$ acts 3-interval-transitively.
\index{Interval transitivity}%
The hypotheses of Theorem \ref{thm:Linear-reconstruction-theorem} 
are still weaker:
$G_1$ and $G_2$ are merely required to be 2-interval transitive and to satisfy axiom \emph{Ax2}.
\footnote{See also the discussion on page 15 of \cite{McRu05}.}

One of the benefits of the McCleary-Rubin memoir is that its deals not merely 
with isomorphisms of linear permutation groups,
but also with isomorphisms of circular permutation groups.
The following result reproduces Theorem 3  
(this result is also part of Theorem 7.20) in \cite{McRu05}.
\begin{thmN}
 \label{thm:Circular-reconstruction-theorem}
 Let $(C_1,G_1)$ and $(C_2,G_2)$ be circular permutation groups 
 which are 3-interval-transitive and have each a non-identity bounded element.
 Then every isomorphism $\alpha \colon G_1 \iso G_2$ is induced by conjugation 
 by a unique monocircular bijection $\varphi \colon C_1^{\cpl} \iso C_2^{\cpl}$.
 \end{thmN}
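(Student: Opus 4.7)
The plan is to reduce the statement to its linear counterpart, Theorem \ref{thm:Linear-reconstruction-theorem}. Given an isomorphism $\alpha \colon G_1 \iso G_2$, my strategy is to cut each circle at a suitable point chosen intrinsically from the group structure, produce a linear reconstruction map, and then glue the cuts back together to obtain a monocircular bijection $\varphi \colon C_1^{\cpl} \iso C_2^{\cpl}$.

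First I would give a first-order algebraic characterization of ``bounded support'' in $G_i$. In a circularly ordered structure, an element $g\neq\id$ has proper support precisely if some open arc is pointwise fixed, equivalently (using 3-interval-transitivity) if there exists $h\in G_i$ with $\supp(\act{h}g)\cap\supp(g)=\emptyset$, equivalently that $g$ and $\act{h}g$ commute for some $h$ that separates the supports; this can be written as an intrinsic group-theoretic condition in the spirit of Proposition \ref{prp:Higmans-Theorem1}. Consequently, the subgroup $B_i \leq G_i$ of bounded elements is characteristic, so $\alpha(B_1)=B_2$. For each $p\in C_1^{\cpl}$, let $B_1^p\leq B_1$ denote the subgroup of elements whose support is contained in an arc missing $p$; then $B_1^p$ acts faithfully on the linearly ordered set $L_1 = C_1^{\cpl}\smallsetminus\{p\}$, and one checks, using 3-interval-transitivity of $G_1$ on $C_1$ together with the availability of bounded elements, that $(L_1, B_1^p)$ is itself 2-interval-transitive with a non-identity bounded element and thus satisfies the hypotheses of the linear reconstruction theorem. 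A parallel construction works for $C_2$.

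The crucial step is then to match cut points: the family $\{B_1^p \mid p \in C_1^{\cpl}\}$ should be intrinsically definable inside $B_1$, so that $\alpha$ permutes this family and induces a bijection $p\mapsto p'$ from $C_1^{\cpl}$ to $C_2^{\cpl}$. My approach is to identify $B_1^p$ as the union of the directed family of subgroups of $B_1$ that centralize some sufficiently large subgroup ``far from $p$'', a description which uses only the abstract lattice of subgroups of $B_1$. Once the matching is done, applying Theorem \ref{thm:Linear-reconstruction-theorem} to each pair $((L_1, B_1^p),(L_2, B_2^{p'}))$ produces a unique monotonic bijection $\varphi_p\colon L_1^{\cpl}\iso L_2^{\cpl}$ inducing $\alpha\restriction{B_1^p}$; the uniqueness clause forces the various $\varphi_p$ to agree on overlaps and hence to piece together to a single monocircular bijection $\varphi\colon C_1^{\cpl}\iso C_2^{\cpl}$. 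Extending the induced conjugation from $B_1$ to all of $G_1$ then uses that $G_1$ is generated by $B_1$ together with a single element performing a full rotation, and that the action of such a rotation on any $B_1$-orbit determines it; uniqueness of $\varphi$ follows by an adaptation of Lemma \ref{LemmaE2}, since any monocircular self-bijection of $C_1^{\cpl}$ centralizing $G_1$ must be the identity.

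The hard part will be the intrinsic identification of the family $\{B_1^p\}$ and the proof that it is preserved under $\alpha$; this is where the circular setting differs essentially from the linear one, because there is no distinguished ``end'' of $C_1$ to anchor the construction. A secondary technical issue is verifying that the local reconstructions $\varphi_p$ are genuinely compatible on their common domain $L_1^{\cpl}\cap L_1^{\cpl'}$, which requires that the algebraic matching $p\mapsto p'$ be order-preserving in an appropriate cyclic sense; this should follow from 3-interval-transitivity, which provides enough rigidity to pin down the circular order on cut points from the inclusion relations among the $B_1^p$.
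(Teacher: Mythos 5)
The statement you are proving is not proved in the monograph at all: it is quoted verbatim from the McCleary--Rubin memoir \cite{McRu05} (their Theorem 3, contained in their Theorem 7.20), and the text explicitly defers to that source. So there is no in-paper argument to compare yours against; your proposal has to stand on its own, and as it stands it has genuine gaps.

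The first gap is your algebraic characterization of bounded support. Commutation does not detect disjointness of supports: an element commutes with all of its own powers, and two elements supported on the same arc may well commute, so the condition ``$g$ and $\act{h}g$ commute for some $h$'' does not certify that some open arc is pointwise fixed. In the linear setting the paper circumvents exactly this difficulty (Lemma \ref{LemmaE1} and Theorem \ref{TheoremE3}) by requiring commutation with \emph{every} conjugate $\act{hfh^{-1}}g_2$ of a \emph{strictly positive} element $f$; it is the positivity of $f$ that converts the commutation relations into the order statement $\supp g_1 < \supp g_2$. A circle has no strictly positive elements (there is no global order, only a cyclic one), so this machinery does not transfer, and you have not supplied a replacement. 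The second, and decisive, gap is the ``intrinsic identification of the family $\{B_1^p\}$'': your proposed description via subgroups ``centralizing some sufficiently large subgroup far from $p$'' invokes the geometric notion ``far from $p$'', which is precisely what an abstract isomorphism does not see and what the theorem is supposed to reconstruct. Asserting that this family is definable in the abstract group is essentially assuming the theorem. Finally, even granting the cut, you never verify that $(C_1^{\cpl}\smallsetminus\{p\},\,B_1^p)$ satisfies the hypotheses of Theorem \ref{thm:Linear-reconstruction-theorem}: $3$-interval-transitivity of $G_1$ on the circle produces witnessing elements of $G_1$, not of the much smaller group $B_1^p$, and cutting down to elements supported away from $p$ requires a separate argument (compare how the paper handles the analogous point in Corollary \ref{CorollaryA4} via explicit splicing of PL-homeomorphisms, a tool unavailable for abstract circular permutation groups). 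The reduction-to-the-linear-case idea is a reasonable first instinct, but each of these three steps is where the actual work of \cite{McRu05} lies.
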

 
 The transitivity properties established in Chapter \ref{chap:A} allow one to deduce 
 from the above theorem the following companion to Theorem \ref{TheoremE04}.
 \index{Theorem \ref{TheoremE04}!consequences}
 \footnote{\cf{}Theorem 1.2.4 in \cite{BrGu98}}
 \begin{thmN}
\label{thm:Companion-TheoremE04}
For $i \in \{1,2\}$, let $P_i$ be a non-trivial slope group, 
$A_i$ a non-trivial $\Z[P_i]$-submodule of $\R_{\add}$
and $\pfr_i$ a positive element of $A_i$.
Then every isomorphism 
\[
\alpha \colon T(\R/\Z\pfr_1; A_1, P_1) \iso T(\R/\Z\pfr_2; A_2, P_2)
\]
of generalized $T$-groups
is induced by a homeomorphism $\varphi \colon \R/\Z \pfr_1 \iso  \R/\Z \pfr_2$ 
of a  circle of length $\pfr_1$ onto a circle of length $\pfr_2$.
The homeomorphism $\varphi$ is uniquely determined by $\alpha$ 
and it maps $A_1/\Z \pfr_1$ onto $A_2/\Z \pfr_2$.
\end{thmN}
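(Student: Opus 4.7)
The plan is to deduce Theorem \ref{thm:Companion-TheoremE04} from Theorem \ref{thm:Circular-reconstruction-theorem} of McCleary--Rubin, much as Theorem \ref{TheoremE04} is deduced from Theorem \ref{TheoremE3}. Set $C_i = \R/\Z\pfr_i$, view $T_i = T(C_i; A_i, P_i)$ as a circular permutation group on $C_i$, and note that $T_i$ acts faithfully on the dense $T_i$-invariant subset $A_i/\Z\pfr_i$. To apply the cited theorem one must verify (a) that $T_i$ is $3$-interval-transitive on $C_i$ and (b) that $T_i$ contains a non-identity element whose support is contained in a proper arc of $C_i$.

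For (a), one lifts to $\R$: given three pairwise disjoint arcs $J_1, J_2, J_3$ of $C_i$ with non-empty interiors, choose lifts $\tilde J_1 < \tilde J_2 < \tilde J_3$ inside a fundamental domain $[a_0, a_0+\pfr_i]$ and shrink them so their closures lie in the interior of the domain. A second such triple $\tilde J_1' < \tilde J_2' < \tilde J_3'$ is treated similarly. By Corollary \ref{CorollaryA4} applied to $G([a_0,a_0+\pfr_i];A_i,P_i)$ --- whose orbits in $A_i \cap \Int[a_0,a_0+\pfr_i]$ are dense --- one finds a PL-homeomorphism $f$ of $[a_0,a_0+\pfr_i]$, fixing both endpoints and with slopes in $P_i$ and breaks in $A_i$, that maps a chosen point of $\tilde J_k$ into $\tilde J_k'$ for $k = 1,2,3$. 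Extending $f$ periodically with period $\pfr_i$ to an element of $G_\infty(\R;\pfr_i;A_i,P_i)$ and projecting through $\kappa$ yields the required element of $T_i$. For (b), any non-trivial element $g \in B([a_0+\varepsilon, a_0+\pfr_i - \varepsilon]; A_i, P_i)$, extended by the identity and pushed through $\kappa$, gives a non-identity bounded element of $T_i$.

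Theorem \ref{thm:Circular-reconstruction-theorem} therefore produces a unique monocircular bijection $\varphi \colon C_1^{\cpl} \iso C_2^{\cpl}$ inducing $\alpha$ by conjugation; since the Dedekind completions coincide with the circles themselves, $\varphi$ is a homeomorphism $C_1 \iso C_2$. It remains to show $\varphi(A_1/\Z\pfr_1) = A_2/\Z\pfr_2$. Let $\bar a \in A_1/\Z\pfr_1$. Fix a lift $a \in A_1$ and choose $p \in P_1$ with $p > 1$ and a positive $\Delta \in A_1$ so small that $[a-p\Delta, a + (p+1)\Delta]$ projects injectively into $C_1$. The construction carried out in the proof of Theorem \ref{TheoremE04} furnishes a strictly positive PL-homeomorphism $h \in B(I;A_1,P_1)'$ on this interval whose support is a short arc ending at $\bar a$ on one side and starting at $\bar a$ on the other; its image under $\kappa$ is a non-trivial element $\bar h \in T_1$ whose support is a short arc with $\bar a$ as an endpoint, and at $\bar a$ its one-sided derivatives differ. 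Conjugating by $\varphi$ yields an element of $T_2$ whose support has $\varphi(\bar a)$ as a boundary point with differing one-sided derivatives, so $\varphi(\bar a)$ is a break of an element of $T_2$, whence $\varphi(\bar a) \in A_2/\Z\pfr_2$. Exchanging the roles of the two groups gives the reverse inclusion.

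The main obstacle is checking $3$-interval-transitivity: one needs simultaneous control over three disjoint arcs by a single circular PL-homeomorphism, and the natural argument via Corollary \ref{CorollaryA4} works cleanly only after one has arranged the arcs to sit inside a fundamental domain and bounded away from its endpoints; the periodic extension step must then be justified with care.
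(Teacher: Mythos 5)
Your overall route is the one the paper itself indicates: Theorem \ref{thm:Companion-TheoremE04} is given without a written proof, only the remark that the transitivity properties of Chapter \ref{chap:A} allow one to deduce it from the McCleary--Rubin result, Theorem \ref{thm:Circular-reconstruction-theorem}; your proposal supplies exactly those missing verifications. The bounded-element check, the passage through $\kappa$, and the final argument that $\varphi$ carries $A_1/\Z \pfr_1$ onto $A_2/\Z \pfr_2$ by detecting the break of the conjugate of a strictly positive element with prescribed support (mirroring the end of the proof of Theorem \ref{TheoremE04}) are all sound.

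One step, however, fails as written: the verification of $3$-interval-transitivity. You lift both triples of arcs into a single fundamental domain $[a_0, a_0+\pfr_i]$ in increasing order and then produce an $f$ fixing both endpoints. But two positively ordered triples on a circle need not admit a common cut point linearizing both: the cut must lie in the arc running from $J_3$ to $J_1$ and simultaneously in the arc running from $J_3'$ to $J_1'$, and these two arcs can be disjoint --- on a circle of circumference $1$, take small arcs centred at $0$, $0.1$, $0.9$ for the first triple and at $0.5$, $0.7$, $0.3$ for the second. In such a configuration the lifted targets appear in a cyclically permuted linear order inside the fundamental domain, so no increasing $f$ fixing the endpoints can send a point of $\tilde J_k$ into $\tilde J_k'$ for all $k$. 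The repair is cheap: $T(\R/\Z\pfr_i; A_i, P_i)$ contains the rotations by all elements of $A_i$ (the translations $t \mapsto t + a$ lie in $G_\infty(\R;\pfr_i;A_i,P_i)$), so first move a point of $J_1$ into $J_1'$ by such a rotation, and then apply Corollary \ref{CorollaryA4} inside the stabilizer of that point --- which contains the image of a group of the form $G([c, c+\pfr_i];A_i,P_i)$ acting on the cut-open circle --- to adjust the remaining two arcs; the sample points must be chosen in a common coset of $IP_i \cdot A_i$ so that the orbit hypothesis of Corollary \ref{CorollaryA4} is met, which the density of these cosets permits. With this two-step composition the hypotheses of Theorem \ref{thm:Circular-reconstruction-theorem} are verified and the rest of your argument goes through unchanged.
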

%
\subsubsection{Remark \ref{remark:Homomorphism-gamma} and \cite{BrGu98}}
\label{sssec:Notes-ChapterE-remark-in section-19.1}
\index{Brin, M. G.}%
\index{Guzm{\'a}n, F.}%
A rudimentary form of Remark \ref{remark:Homomorphism-gamma} 
occurs already in the Bieri-Strebel memoir of 1985.
\index{Quotient group A/IPA@Quotient group $A/(IP \cdot A)$!applications}%
More precisely,
the epimorphism 
\[
\gamma \colon G_\infty(\R;A,P) \epi A / (IP \cdot A)
\]
is not mentioned there
but  $\bar{\gamma}_{\pfr} \colon T(\R/\Z \pfr;A,P) \epi A/(IP \cdot A)$ occurs;
it is used to prove that the group $T(\R/\Z \pfr;A,P) $ need not be simple,
in contrast to Thompson's group $T = T(\R/\Z;\Z[1/2], \gp(2))$.
\index{Group T(R/Zpfr;A,P)@Group $T(\R/\Z \pfr;A,P)$!abelianization}%
The group $G_\infty(\R;A,P)$  
and the homomorphism $\gamma$ 
are both used in \cite{BrGu98} in the special case 
where $P$ is the cyclic group generated by the integer $n \geq 2$
and $A  = \Z[P] = \Z[1/n]$. 
In addition, 
the group $T(\R/\Z \pfr; A, P)$ and the epimorphism $\bar{\gamma}_\pfr$
occur in \cite{BrGu98};
here $\pfr$ denotes an element of $IP \cdot A$.

The parallels between the notation employed in \cite[Section 1.1]{BrGu98}
and that used in this monograph are given below;
in so doing I denote by $P$ the cyclic group generated by $n \geq 2$ 
and by $A$ the $\Z[P]$-submodule $\Z[1/n]$ of $\R_{\add}$.  
Notice that the quotient group $A /(IP \cdot A) = \Z[1/n] / (n-1) \Z[1/n]$ 
is a cyclic group with $n-1$ elements
(\cf{}Illustration \ref{illustration:4.3}).
\begin{align}
&\Delta_n = \ker\left( \Z[1/n] \epi \Z_{n-1}\right) = IP \cdot A,
\label{eq:IPA}\\
&A_n(\R) = G_\infty(\R; A, P),
\label{eq:Group-An(R)}\\
&\rho_n \colon A_n(\R) \epi \Z_{n-1} \text{ corresponds to } 
\gamma \colon G_\infty(\R; A, P) \epi A/(IP \cdot A),
\label{eq:Homomorphism-rho-n)}\\
&B_n(\R) 
=  \ker\left( \rho_n \colon  A_n(\R) \epi \Z_{n-1} \right),
\label{eq:Group-Bn(R)}\\
&A_n(S_r)  = T(\R/\Z r;A,P) \text{ with } r \in \Delta_n = IP \cdot A,\\
&\rho_n \colon A_n(S_r) \epi \Z_{n-1} \text{ corresponds to } 
\bar{\gamma}_r \colon T(\R/\Z r; A, P) \epi A/(IP \cdot A).
\end{align}
Lemma 1.1.3 in \cite{BrGu98} states, \emph{inter alia}, 
that $\rho_n \colon A_n(\R) \to \Z_{n-1}$  is a well-defined epimorphism, 
and Lemma 1.1.5 says that $\rho_n$ induces,
by passage to the quotients, an epimorphism
$\rho \colon A_n(\R/\Z r)  \epi \Z_{n-1}$;
it is the analogue of the epimorphism 
\[
\bar{\gamma}_r \colon T(R/\Z r; A, P) \epi A/(IP \cdot A)
\]
defined in Remark \ref{remark:Homomorphism-gamma}
which goes back to \cite{BiSt85}.
%
\subsubsection{The paper \cite{BrGu98} by M. Brin 
and F. Guzm{\'a}n}
\label{sssec:Notes-ChapterE-BrGu98}
\index{Brin, M. G.}%
\index{Guzm{\'a}n, F.}%
In \cite{Bri96}, 
the automorphism groups of Thompson's groups $F$ and $T$,
and the automorphism groups of some related groups, are determined 
(\cf{}section \ref{sssec:Notes-ChapterE-Brin96} above).
Each automorphism of one of these groups turns out to be induced by a PL-homeomorphism.
This prompts one to ask whether this feature persists
if  $F$ and $T$ are replaced by generalized Thompson groups.

Matthew Brin and Ferdinand Guzm{\'a}n address this question in \cite{BrGu98}.
They consider, \emph{inter alia},
two subgroups of $G_n =G(\R; \Z[1/n], \gp(n))$ with $n > 2$ an integer,
previously studied by K. S. Brown in \cite{Bro87a},  \index{Brown, K. S.}%
and denote these groups by $F_{n, \infty}$ and $F_n$.
Both groups are defined in section 2.1 of \cite{BrGu98} 
by giving explicit sets of generators
and then characterized in Proposition 2.2.6 as follows:
\begin{characterizationN}
\begin{enumerate}[(i)]
\item  The group $F_{n,\infty}$ consists of all elements $g \in G_n$ 
 that are, near $-\infty$ and near $\infty$, translations with amplitudes in 
 $\Z (n-1)$. 
 (So the group $F_{n,\infty}$ is isomorphic to $G([0,1]; \Z[1/n], \gp(n) )$
 by Lemma \ref{LemmaE17}.)

\item The group $F_n$ consists of all elements $g \in G_n$ 
that are, near $-\infty$ and near $\infty$, 
translations with integer amplitudes $a_{-\infty}$ and $a_{+\infty}$ 
satisfying the congruence 
\[
a_{-\infty} \equiv a_{+\infty}\pmod{n-1}.
\]
\end{enumerate}
\end{characterizationN}

The results in \cite{BrGu98} are numerous,
but the precise rendering of some of the most staggering findings 
needs quite a bit of auxiliary definitions and notations.
In the sequel I therefore content myself with a summary,
taken from the introduction to \cite{BrGu98},
and turn then to some of the relations between the Brin-Guzm{\'a}n paper 
and this monograph.

Here is a very brief summary 
(see \cite[p.\,285]{BrGu98}):
\begin{quote}
In this paper, 
we study $\Aut(F_{n, \infty})$, $\Aut(F_n)$ and $\Aut(T_{n,n-1})$ 
\footnote{The group $T_{n,n-1}$ is called $T(\R/\Z(n-1); \Z[1/n],\gp(n))$ 
in  section \ref{ssec:19.1}.}
for $n > 2$.
We find that all three contain exotic elements 
\footnote{Recall that an automorphism $\alpha$ of $G(I;A,P)$ is called \emph{exotic}
if it is induced by a homeomorphism $\varphi_\alpha$
that is \emph{not} piecewise linear; \cf.{}\subsection{sssec:19.3b}.},
and that the complexity increases as $n$ increases.
\index{Exotic automorphisms!existence}%
\index{Group G([a,c];A,P)@Group $G([a,c];A,P)$!exotic automorphisms}%
We also find 
that the structures of $\Aut(F_n)$ and $\Aut(T_{n, n-1})$ are closely related,
while differing significantly from the structure of $\Aut(F_{n, \infty})$.
This difference increases as $n$ increases.
Our analysis is only partial in that we discover large subgroups of these groups containing exotic elements,
but we do not know whether the subgroups are proper. 
\end{quote}

Now to some connections between \cite{BrGu98} and this monograph.

(i) It has been pointed out in section 
\ref{sssec:Notes-ChapterE-remark-in section-19.1}
that the epimorphism 
\[
\gamma \colon G_\infty(\R;A,P) \epi A/(IP \cdot A)
\]
with  $P = \gp(n)$ and $A = \Z[1/n]$ occurs
in section 1.1 of \cite{BrGu98} 
under the name of $\rho_n$.
Let $K = K(\R;A,P)$ denote the kernel of $\gamma$.
Then $K$ is a \emph{characteristic subgroup} of $G$
(this follows from the idea of the proof of Proposition 
\cite[Proposition 1.4.1]{BrGu98} 
and from Theorem \ref{TheoremA}).
\index{Theorem \ref{TheoremA}!consequences}%
Similarly, 
one sees that $F_{n,\infty}$ is a characteristic subgroup of $F_n$.

The group $G = G(\R;A,P)$ is a rather well-known object,
but this is no longer so if one passes from $G$ to the group 
$G_\infty = G_\infty(\R; A, P)$ introduced in section \ref{ssec:19.1};
recall that this group consists of all \emph{finitary, as well as infinitary}, PL-homeomorphisms of $\R$ 
which map $A$ onto itself, have slopes in $P$, breaks in $A$,
and only finitely many breaks in every compact subinterval of $\R$.

(ii) Let $I$, $A$ and $P$ be arbitrary. 
In section \ref{ssec:17.3},
the investigation of outer auto\-morphism groups leads one
to the introduction of the ``automorphism'' group 
\[
\Aut(A) =\{s \in \R^\times \mid s \cdot A = A \}
\] 
of a module $A$ 
and to that of its subgroup of positive elements $\Aut_o(A)$.

If $P$ is \emph{not} cyclic,  
the outer automorphism group of $G =G(I;A,P)$ is an explicitly describable subgroup of $\Aut(A)/P$
(see Corollary \ref{CorollaryE13} for details).
\index{Group Aut(A)@Group $\Aut(A)$!significance}%
If $P$ is \emph{cyclic}, the integer 1 is in $A$ and $I$ is the unit interval,
the subgroup $\Out_{\PL} G $ of $\Out G $ is described in Corollary  \ref{crl:PropositionE20New-part-II};
here $\Out_{\PL} G $ is made up of those classes of automorphisms
that can be represented by automorphisms 
induced by PL-auto-homeomorphisms.
The description reveals that $\Out_{\PL}G $ is far from being abelian.

Let's now consider the special case investigated in \cite{BrGu98} in more detail.
One has $(A,P) = (\Z[1/n], \gp(n))$ with $n \geq 2$ an integer.
The group $F_{n, \infty}$, which is isomorphic to $G([0,1];A, P)$,
is a subgroup of the huge group $B_n(\R)$
(defined by equations \eqref{eq:Group-An(R)} and \eqref{eq:Group-Bn(R)}).
Corollary 3.2.4.1 in \cite{BrGu98}  
and 
Corollary \ref{crl:PropositionE20New-part-II} in this monograph
provide  the following descriptions of the groups 
$\Out_{\PL} F_{n,\infty} $ and $\Out_{\PL} B_n(\R)$: 
\begin{align}
&\Outfr_{\PL} F_{n,\infty}  
\;\text{ is an extension of a group $\bar{L}$ by $\Aut(A)/P$},
\label{eq:Notes-Out-F-n-infty}\\
&\Outfr_{\PL}B_n(\R)= \Aut(A)/ P.  
\label{eq:Notes-Out-Bn(R)}
\end{align}
In the above, 
$A = \Z[1/n]$ and $P =\gp(n)$,
while $\bar{L}$ denotes a subgroup of index $n-1$ in the square of the group
$T_{n, n-1} = T(\R/\Z(n-1);\Z[1/n],\gp(n))$.
With the help of $\bar{\gamma}_{n-1} \colon T_{n,n-1} \epi  \Z_{n-1}$
the subgroup $\bar{L}$ can be described like this:
\[
\bar{L} = \{(\bar{f}_1, \bar{f}_2) \in T_{n, n-1} \times  T_{n,n-1} 
\mid \bar{\gamma}(f_1) = \bar{\gamma} (f_2) \}
\]
(see Corollary \ref{crl:PropositionE20New-part-II}).
This description shows  
that $\Out_{\PL}F_{n,\infty}$ contains a copy of $T_{n, n-1}$
 and so it is far from being abelian.
 On comparing formulae \eqref{eq:Notes-Out-F-n-infty} 
 and \eqref{eq:Notes-Out-Bn(R)} one sees
 that $\Outfr_{\PL} B_n(\R)$ is far simpler than $\Outfr_{\PL} F_{n, \infty} $.
 Notice also that $\Outfr_{\PL} B_n(\R) $ has the form of the outer automorphism group of $G(\R;A,P)$ 
 in case $P$ is not cyclic (see part (iii) of Corollary \ref{CorollaryE13}).
 \smallskip

(iii) I conclude with a word on a third link
between \cite{BrGu98} and this monograph.
Proposition \ref{PropositionE9}  investigates a homeomorphism  $\varphi \colon \Int(I_1) \iso \Int(I_2)$ 
inducing an isomorphism 
\[
\alpha \colon G(I_1;A_1,P_1) \iso G(I_2;A_2,P_2). 
\]
It assumes that there exists a point $a_0 \in A \cap \Int(I_1)$ 
at which $\varphi$ has a one-sided derivative
and shows that $\varphi$ is piecewise linear 
with slopes in a single coset $s_\varphi \cdot P$ of $P$.
The proposition applies, in particular, 
to the auto-homeomorphisms inducing automorphisms in $\Aut_{\PL} G(I; A,P)$
and implies that $\Aut_{\PL} G(I; A,P)$ maps into the abelian group $\Aut(A)/P$.
\index{Group Aut(A)@Group $\Aut(A)$!significance}%
 
In  Lemma 3.2.1 of \cite{BrGu98}, 
Brin and Guzm{\'a}n consider a situation 
that leads to a similar conclusion as the previously mentioned proposition.
Suppose $f$ is an element of 
$G_\infty(\R; \R_{\add}, \R^\times_{>0})$ 
which maps $A = \Z[1/n]$ onto itself.
The lemma then shows that the slopes $s$ of $f$ have the property 
that $s^{\pm 1} \cdot A \subseteq A$, \ie{} that $ s \in \Aut(A) \cap  \R^\times_{>0} = \Aut_o(A)$.
The authors denote $\Aut_o(\Z[1/n])$ by $\langle \langle n \rangle \rangle$ 
and state that it is generated by the prime divisors of $n$; 
\cf{}Example (i)a) in section 
\ref{sssec:Automorphism-groups-some-examples-non-cyclic-P}.
%
\subsubsection{The paper \cite{Lio08} by Isabelle Liousse}
\label{sssec:Notes-ChapterE-Lio08}
\index{Liousse, I.|(}%
%
The Thompson-Stein groups referred to in the title of Isabelle Liousse's paper
are the groups $F([0,r]; A, P)$ and $T(\R/\Z \cdot r; A, P)$ with parameters
\begin{gather*}
r \in \N \smallsetminus \{0\},\quad 
n_1, \ldots,n_p \text{ multiplicatively  independent integers $> 1$},\\
P = \gp( n_1, \ldots, n_p) \quad\text{and}\quad  A = \Z[P] = \Z[1/(n_1 \cdots n_p)].
\end{gather*}
The author denotes by $T_{r, (m)}$  the generalized $T$-group with $p = 1$ and $n_1 = m$,
and by $T_{r,(n_i)}$  the generalized $T$-group with $n_i > 1$;
see section 1.1 in \cite{Lio08}.
A main theme of her paper is the determination 
of the rotation numbers of the PL-homeomorphisms
in the group $T(\R/\Z \cdot r;A, P)$, 
but her paper discusses also isomorphisms among two groups 
$T(\R/\Z \cdot r; A, P)$ and $T(\R/\Z \cdot r'; A', P')$, 
and the automorphism group of $T([0,r]; A, P)$;
these latter investigations are related to themes in the Bieri-Strebel memoir.
In the sequel, 
I comment on answers to the following two questions, 
posed on p.52 in \cite[section 1.2]{Lio08}:
\begin{enumerate}
\item[(2)] What are the automorphisms of $T_{r, (n_i)}$?
\item[(3)]Is it possible to classify the groups $T_{r, (n_i)}$ up to isomorphism?
up to quasi-isometry?
\end{enumerate}

A first answer to (3) deals with isomorphisms among the groups  $T_{r, (n_i)}$ 
with $r$ varying in  $\N \smallsetminus \{0\}$. 
The reasonings in section \ref{ssec:17.2} of this monograph
show 
\index{Theorem \ref{TheoremA}!consequences}%
that there exists a finitary PL-homeomorphism $\varphi \colon [0,r] \iso [0,r']$ 
having slopes in a single coset $s \cdot P$ of $P$ with $s \in \Aut_o(A)$  
and singularities in $A= \Z[1/(n_1 \cdots n_p)]$ if, and only if,
$s \cdot r \equiv r'$  \emph{modulo} $IP \cdot A = d \cdot A$;
here $d = \gcd\{n_1-1, \ldots, n_p-1\}$ (\cf{}Illustration \ref{illustration:4.3}).
This result  implies, in particular, 
that the groups $T_{r, (n_i)}$ and $T_{r', (n_i)}$ are isomorphic 
whenever $r \equiv r' \pmod{d}$.
\footnote{This consequence is part of the the so-called \emph{Bieri-Strebel criterion for Thompson-Stein groups} formulated on page 55 in \cite{Lio08}.
Note, however, that the first two sentences of the comments 
following the statement of questions (1) through (4) are misleading:
\cite{BiSt85} studies isomorphisms and automorphisms of the groups 
$F_{r,\Lambda, A}= G([0,r]; A, \Lambda)$, 
but not those of the groups $T_{r, \Lambda, A}$;
in addition, the preceding paragraph shows 
that groups $T_{r, (n_i)}$ and $T_{r', (n_i)}$  are isomorphic if $r'-r \in \Z \cdot d$,
but it indicates also that the converse may be false.}

The answer, given above,
formulates a condition on $(r,r')$ 
that implies that two groups  $T_{r, (n_i)}$ and $T_{r', (n_i)}$ are isomorphic.
I. Liousse gives also a necessary condition 
for groups with a cyclic slope group $P = \gp(m)$; 
its justification relies on properties of rotation numbers, 
formulated in \cite[Theorem 1]{Lio08}. 
Her Corollary 2 can be stated like this:
\begin{prpN}
\label{prp:Corollary2-Lio98}
For $m > 2$ and $(r, r') \in \{1, \ldots, m-2\}$ one has:
\begin{enumerate}[(i)]
\item The group $T_{r, (m)}$ is not isomorphic to $T_{m-1,(m)}$;
\item if $m$ and $r$ are odd and $r'$ is even, 
$T_{r,(m)}$ is not isomorphic to  $T_{r', (m)}$.
\end{enumerate}
\end{prpN}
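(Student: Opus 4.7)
The plan is to combine two ingredients: Theorem \ref{thm:Companion-TheoremE04}, which guarantees that every isomorphism $\alpha \colon T_{r,(m)} \iso T_{r',(m)}$ is induced by conjugation by a homeomorphism $\varphi \colon \R/\Z r \iso \R/\Z r'$ of the ambient circles, and the characterization of the rotation set of $T_{r,(m)}$ provided by Theorem 1 of \cite{Lio08}. The key observation is that the (normalized) rotation number is a topological conjugacy invariant: after identifying $\R/\Z r$ and $\R/\Z r'$ with $\R/\Z$ via the rescalings $t\mapsto t/r$ and $t\mapsto t/r'$, the conjugacy $\varphi$ becomes a homeomorphism of $\R/\Z$, and such a homeomorphism preserves rotation numbers up to sign depending on its orientation.

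First I would make the invariance precise, defining
\[
\widehat{\mathrm{Rot}}(T_{r,(m)}) \;=\; \tfrac{1}{r}\,\{\rho(f)\bmod r \mid f\in T_{r,(m)}\}\;\subseteq\;\R/\Z,
\]
and deducing that the existence of an isomorphism $\alpha$ induced by $\varphi$ forces the equality
\[
\widehat{\mathrm{Rot}}(T_{r',(m)}) \;=\; \varepsilon\cdot\widehat{\mathrm{Rot}}(T_{r,(m)}) \quad\text{with } \varepsilon\in\{+1,-1\},
\]
where the sign records whether $\varphi$ preserves or reverses orientation. Thus $\widehat{\mathrm{Rot}}(T_{r,(m)})$, regarded as a subset of $\R/\Z$ up to the involution $x\mapsto -x$, is an isomorphism invariant.

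Next I would invoke Theorem 1 of \cite{Lio08} to describe $\widehat{\mathrm{Rot}}(T_{r,(m)})$ explicitly as a countable subset of $\Q/\Z$ whose structure is governed by $r$ and by the prime factorisation of $m$ and of $m-1$. For part (i), I would select a rotation number realised in $T_{m-1,(m)}$ whose denominator in lowest terms is incompatible with the denominators that can appear in $\widehat{\mathrm{Rot}}(T_{r,(m)})$ for $r\in\{1,\ldots,m-2\}$; the fact that $r\in IP\cdot A = (m-1)\Z[1/m]$ exactly when $(m-1)\mid r$ suggests that the obstruction is carried by the residue of $r$ modulo $m-1$. For part (ii), the distinction would come from a $2$-adic analysis: when $m$ is odd, $m-1$ is even, so $\gcd(r,m-1)$ is odd when $r$ is odd and even when $r'$ is even, and this parity difference should be visible at the level of the allowed denominators of rotation numbers, even after passage to the involution $x\mapsto -x$ induced by a possibly orientation-reversing $\varphi$.

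The main obstacle will be the explicit matching of the rotation sets once the form given by Theorem 1 of \cite{Lio08} is written down: one must verify that no symmetry $x\mapsto\pm x$ of $\R/\Z$ can identify the rotation sets in the two cases under consideration. Once this matching is carried out carefully, both non-isomorphism statements follow immediately. As a cross-check, I would note that the epimorphism $\bar\gamma_r \colon T_{r,(m)} \epi A/(IP\cdot A + \Z r) = \Z/\gcd(r,m-1)\Z$ of Remark \ref{remark:Homomorphism-gamma} already provides an abelianization invariant that distinguishes the groups in both (i) and (ii), since $\gcd(r,m-1)<m-1=\gcd(m-1,m-1)$ for $r\in\{1,\ldots,m-2\}$ and $\gcd(r,m-1)$ is odd while $\gcd(r',m-1)$ is even under the hypotheses of (ii); this agrees with Liousse's conclusion and suggests that the abelianization and the rotation number carry overlapping information.
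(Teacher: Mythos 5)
Be aware first that the monograph offers no proof of this proposition: it is a restatement of Corollary 2 of \cite{Lio08}, and the surrounding text merely records that Liousse's justification ``relies on properties of rotation numbers, formulated in \cite[Theorem 1]{Lio08}''. Your main line of attack --- pass via Theorem \ref{thm:Companion-TheoremE04} to a conjugating homeomorphism of circles, observe that the normalized rotation number is a topological conjugacy invariant up to the sign $\pm 1$ coming from orientation, and then compare the rotation sets supplied by \cite[Theorem 1]{Lio08} --- is therefore exactly the route taken in the source. The difficulty is that, as you yourself acknowledge, the decisive step (writing down the rotation sets and checking that no symmetry $x \mapsto \pm x$ of $\R/\Z$ identifies them) is deferred entirely to the cited theorem and to an unexecuted ``explicit matching''; until that is carried out, your text is an outline of Liousse's argument rather than a proof. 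Note also the remark following the proposition in the monograph: $T_{m-1,(m)}$ admits exotic automorphisms, so no argument resting on piecewise linearity of the conjugator can succeed here; your reliance on a purely topological invariant is consistent with this, which is a point in favour of the strategy.

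The cross-check in your last paragraph, however, overclaims. The homomorphism $\bar{\gamma}_r$ of Remark \ref{remark:Homomorphism-gamma} (suitably modified, since $r \notin IP \cdot A$ for $r \in \{1, \ldots, m-2\}$, so one must pass to the quotient $A/(IP \cdot A + \Z r)$ as you do) does yield an epimorphism of $T_{r,(m)}$ onto $\Z/\gcd(r,m-1)\Z$, and your arithmetic of the $\gcd$'s is correct. But an epimorphism onto a cyclic group of order $\gcd(r,m-1)$ does not preclude the existence of an epimorphism onto a larger group; to turn this into an ``abelianization invariant that distinguishes the groups'' you would have to prove that the abelianization of $T_{r,(m)}$ is \emph{exactly} $\Z/\gcd(r,m-1)\Z$, which neither the monograph nor your proposal establishes. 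As it stands, the cross-check shows only that $T_{m-1,(m)}$ (respectively $T_{r',(m)}$ with $r'$ even) possesses a quotient that $T_{r,(m)}$ is not known to lack, so it distinguishes nothing.
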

Note that the group $T_{m-1, (m)}$ is known to contain \emph{exotic} automorphisms,
\ie{}automorphisms induced by homeomorphisms of the circle $\R/\Z (m-1)$ 
that are \emph{not} piecewise linear (see \cite{BrGu98}). 
\index{Exotic automorphisms!existence}%
So it is unlikely that results similar to the above proposition can be obtained
by the methods developed in \cite{BiSt85}.
Things are different if one looks for isomorphisms of generalized $T$-groups with non-cyclic slope group $P$; see Proposition \ref{prp:Corollary3-Lio98-improved} below.

A third answer, both to questions (2) and (3),
is provided by Corollary 3 in \cite{Lio08}.
This corollary has 4 parts; parts 3 and 4  deal with isomorphisms and automorphisms;
they can be restated thus:
\begin{prpN}
\label{prp:Corollary3-Lio98}
Let $T_{r, (n_i)}$ be a Thompson-Stein group with rank $p > 2$ and set $A = \Z[1/(n_1 \cdots n_p)]$.
Then
\begin{enumerate}
\item[3.] the group $T_{r, (n_i)}$has no exotic automorphisms;
\index{Exotic automorphisms!existence}%
more precisely, its automorphisms are realized by conjugations by maps in the larger group 
\[
T(\R/\Z \cdot r;A, \Aut(A)).
\index{Group Aut(A)@Group $\Aut(A)$!significance}%
\footnote{Here I imitate the notation of I. Liousse;
it is at variance with the convention 
that generalized $F$- and $T$-groups contain only increasing PL-homeomorphisms.}
\]
In particular, 
the outer automorphism group $\Out(T_{r, (n_i)})$ has order 2 provided 
that the integers $n_i$ are co-prime;
\footnote{This addition is not quite correct:
it holds if, and only if $P = \Aut_o(A)$;
by section \ref{sssec:Automorphism-groups-some-examples-non-cyclic-P} (i)a)
this latter condition is valid precisely if the group $P$,
generated by the integers $n_1$, \ldots, $n_p$,
coincides with the group generated by the prime numbers 
which divide the product $n_1 \cdots n_p$.}; 
\item[4.] for every  $r \in \N \smallsetminus \{0 \}$ two Thompson-Stein groups 
\[
T([0,r]; \Z[P], P= \gp(n_1, \ldots, n_p))
\quad \text{and} \quad T([0,r]; \Z[P'], P')
\]
with $P$ of rank $p > 2$ and $P'$ of rank  $q \geq 1$,
are isomorphic if, and only if $P = P'$.
\end{enumerate} 
\end{prpN}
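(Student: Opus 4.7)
The plan is to reduce the statement to known results about groups on the line by lifting the problem to the universal cover $\R \to \R/\Z r$. Given an isomorphism $\alpha \colon T(\R/\Z r; A_1, P_1) \iso T(\R/\Z r; A_2, P_2)$ with $P_1 = \gp(n_1,\ldots,n_p)$ of rank $p > 2$, Theorem \ref{thm:Companion-TheoremE04} furnishes a unique homeomorphism $\varphi \colon \R/\Z r \iso \R/\Z r$ inducing $\alpha$ and sending $A_1/\Z r$ onto $A_2/\Z r$. Choose a lift $\tilde\varphi \colon \R \iso \R$. Since every element of $T(\R/\Z r; A_i, P_i)$ lifts to an element of the periodic subgroup $G_\infty(\R;r;A_i,P_i) \subset G_\infty(\R;A_i,P_i)$ defined in equation \eqref{eq:Definition-periodic-subgroup}, conjugation by $\tilde\varphi$ yields an isomorphism between these two periodic groups.

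The aim of the second step is to show $\tilde\varphi$ is piecewise linear with slopes in a single coset $s \cdot P_1$ for some $s \in \Aut(A_1)$, and that $A_2 = s \cdot A_1$, $P_2 = P_1$. The key observation is that the periodic groups $G_\infty(\R;r;A_i,P_i)$ contain all elements $g(a,p)$ with $a \in A_i$ and $p \in P_i$ (each with bounded support in a fundamental domain), and they act with multiple transitivity on their orbits in $A_i$ by Corollary \ref{CorollaryA4}. Hence the arguments of Proposition \ref{PropositionE8} transfer verbatim: fixing any $a_0 \in A_1$, we obtain a representation
\begin{equation*}
\tilde\varphi(a_0 + p \cdot t) = \tilde\varphi(a_0) + p^{\,\sigma}\cdot(\tilde\varphi(a_0 + t) - \tilde\varphi(a_0))
\end{equation*}
valid on some $[0,\delta]$ for all $p \in P_1 \cap\,]0,1]$, together with an order-preserving isomorphism $\vartheta \colon P_1 \iso P_2$ of the form $p \mapsto p^{\sigma}$. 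Because $P_1$ has rank $\geq 2$, it is dense in $\R^\times_{>0}$, so by continuity the representation extends to all $p \in \,]0,1[$ and forces $\tilde\varphi$ to be differentiable at $a_0$. Proposition \ref{PropositionE9} then gives that $\tilde\varphi$ is PL with slopes in $s \cdot P_1$, that $A_2 = s\cdot A_1$, and that $P_2 = P_1$; descending through the covering map, $\varphi$ lies in $T(\R/\Z r; A_2, \Aut(A_2))$.

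With this in hand, both parts follow by bookkeeping. For part~3, take $A_1 = A_2 = A = \Z[1/(n_1\cdots n_p)]$; the previous step shows every automorphism is realized by conjugation by an element of $T(\R/\Z r; A, \Aut(A))$. When the $n_i$ are co-prime, section \ref{sssec:Automorphism-groups-some-examples-non-cyclic-P}(i)(a) gives $\Aut_o(A) = P$, so $\Aut(A)/P = \{\pm 1\cdot P\}$ has order~2; the non-trivial outer class is represented by the reflection $t \mapsto -t$ on the circle, which normalizes $T$ but is not inner since inner automorphisms are orientation-preserving. For part~4, $A_i = \Z[P_i]$, so $\Z[P_2] = s\cdot \Z[P_1]$ with $P_2 = P_1$; since $1 \in \Z[P_2] \cap s\cdot\Z[P_1]$, this forces $s \in \Aut_o(\Z[P_1])$, and then $P_2 = P_1$ is the required conclusion.

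The hard part of this plan is transferring Proposition \ref{PropositionE8} from the $F$-group setting to the periodic group $G_\infty(\R;r;A,P)$: one must check that a lift of $\alpha$ really yields a well-defined group isomorphism of these (uncountable) periodic groups and that the proof of Proposition \ref{PropositionE8}, which is stated for subgroups of $G(I;A,P)$ containing $B(I;A,P)'$, carries over when the ambient group is instead a periodic subgroup of $G_\infty(\R;A,P)$ having the same multiple-transitivity properties on orbits in $A$. The remaining verifications are formal once the lifting step is justified.
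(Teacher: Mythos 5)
Your overall strategy --- reduce the circle case to the line/interval results via Theorem \ref{thm:Companion-TheoremE04} --- is the right one, but the step you yourself flag as ``the hard part'' is a genuine gap, and as set up it cannot be closed. The periodic group $G_\infty(\R;r;A,P)$ contains no non-trivial element of bounded support (a periodic homeomorphism with bounded support is the identity), so it does not contain $B(\R;A,P)'$ and is not even a subgroup of $G(\R;A,P)$, its elements being infinitary; the hypotheses of Propositions \ref{PropositionE8} and \ref{PropositionE9} therefore fail for it, and the proof of Proposition \ref{PropositionE8} --- which works with the subgroups $G_a = G\cap G(I\cap[a,\infty[\;;A,P)$ of elements supported in half lines and the homomorphisms $\sigma_-$ defined on them --- has no analogue inside the periodic group, since periodicity forbids non-trivial support in a half line. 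So ``transfer verbatim'' is not available; one would have to redo the local analysis from scratch.

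The paper closes exactly this gap by a different reduction: after composing with a rotation one may assume that the circle homeomorphism $\varphi$ furnished by Theorem \ref{thm:Companion-TheoremE04} fixes the point $x = 0 + \Z\cdot r$ (this point lies in $A/\Z\cdot r$, so the needed rotation belongs to $T$); conjugation by $\varphi$ then restricts to an isomorphism of point stabilizers, and the stabilizer of $x$ is literally a copy of $G([0,r];A,P)$. This places you in the setting of Theorem \ref{TheoremE10} and Supplement \ref{SupplementE11New}, which give $P' = P$, $A' = s\cdot A$ and --- crucially --- that the lift $\tilde\varphi\colon[0,r]\iso[0,r']$ is a \emph{finitary} PL-homeomorphism. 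Note that Proposition \ref{PropositionE9} alone only yields finitely many breaks in each compact subinterval of the \emph{open} interval, which does not rule out breaks accumulating at the endpoints and hence does not by itself put $\varphi$ into $T(\R/\Z\cdot r;A,\Aut(A))$; the Supplement is needed for that. Your bookkeeping in parts 3 and 4 is essentially fine once the reduction is repaired in this way.
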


In contrast to Proposition \ref{prp:Corollary2-Lio98},
the preceding Proposition \ref{prp:Corollary3-Lio98}
can also be derived by the techniques of this monograph
from a basic result of \cite{McRu05},
even in a stronger form:
\begin{prpN}
\label{prp:Corollary3-Lio98-improved}
\index{Group T(R/Zpfr;A,P)@Group $T(\R/\Z \pfr;A,P)$!automorphisms}%
\index{Group T(R/Zpfr;A,P)@Group $T(\R/\Z \pfr;A,P)$!isomorphisms}%
Assume  $P$ is \emph{not} cyclic, $A$ is a $\Z[P]$-module 
and $r$ is a positive element of $A$,
and that $P_1$ is a group, $A_1$ a $\Z[P_1]$-module and $r_1$ a positive element of $A_1$. 
Then
\begin{enumerate}[(i)]
\item the group $T(\R/\Z \cdot r; A, P)$ has no exotic automorphisms;
\index{Exotic automorphisms!existence}%
more precisely, 
its automorphisms are realized by conjugations by maps in the larger group 
\[
T(\R/\Z \cdot r;A, \Aut_o(A)) \circ \{ \id, t + \Z \cdot r \mapsto -t + \Z \cdot r\} ;
\]
\item the groups $T(\R/\Z \cdot r;  A, P)$ and $T(\R/\Z \cdot r_1; A_1, P_1)$ are isomorphic 
if, and only if, 
the groups $G([0,r]; A, P)$ and $G([0,r_1]; A_1, P_1)$ are so.
\end{enumerate} 
\end{prpN}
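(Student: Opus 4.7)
My plan is to reduce both parts of the proposition to the interval results of Section \ref{sec:17} via the natural correspondence between proper arcs of the circle and compact intervals of $\R$. Theorem \ref{thm:Companion-TheoremE04} already provides the essential foothold: any group isomorphism between two generalized $T$-groups is induced by a unique circle homeomorphism $\varphi$ which carries the dense orbit $A/\Z r$ onto $A_1/\Z r_1$. The crux, when $P$ is non-cyclic, is to promote this homeomorphism to a piecewise linear one with slopes in a single coset of $P$; both claims then follow with little extra work.

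For part (i), starting from an automorphism $\alpha$ of $T = T(\R/\Z r; A, P)$ and the circle homeomorphism $\varphi$ supplied by Theorem \ref{thm:Companion-TheoremE04}, I fix an arbitrary point $\bar a \in \R/\Z r$ and choose a proper closed sub-arc $J$ with endpoints in $A/\Z r$ whose interior contains $\bar a$ and whose image $J' = \varphi(J)$ is again a proper sub-arc (possible by continuity, since $\varphi$ preserves $A/\Z r$). The subgroup $T_J \subseteq T$ of elements whose supports lie in $J$ lifts canonically to the interval group $G([a_-, a_+]; A, P)$ obtained from a lift of $J$ to $\R$; the same applies to $T_{J'} \iso G([a'_-, a'_+]; A, P)$. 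Conjugation by $\varphi$ carries $T_J$ isomorphically onto $T_{J'}$, so Theorem \ref{TheoremE10} furnishes a PL-homeomorphism of the lifted open intervals with slopes in a single coset $s_J \cdot P$, $s_J \in \Aut(A)$, inducing the restricted isomorphism; by the uniqueness clause of Theorem \ref{TheoremE04} this map coincides with the lift of $\varphi$ to the interior of $J$. Varying $\bar a$ and noting that the slopes of a PL-homeomorphism on any non-empty overlap force agreement of cosets, the local data coalesce into a single global coset $s \cdot P$, so $\varphi$ is PL on the whole circle. Composing with the reflection $\bar t \mapsto -\bar t$, which lies in $T(\R/\Z r; A, \Aut(A))$, disposes of the orientation and completes (i).

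For part (ii), the $(\Rightarrow)$ direction repeats the argument above for an isomorphism $\alpha \colon T(\R/\Z r; A, P) \iso T(\R/\Z r_1; A_1, P_1)$: the local applications of Theorem \ref{TheoremE10} force $P_1 = P$, $A_1 = s \cdot A$ and produce a PL-homeomorphism $\varphi$ of circles with slopes in $s \cdot P$. After assuming $\varphi$ orientation-preserving (by precomposing with the reflection of $\R/\Z r$ if necessary) and composing with the rotation in $T(\R/\Z r_1; A_1, P_1)$ sending $\varphi(0 + \Z r) \in A_1/\Z r_1$ back to $0 + \Z r_1$, the modified map $\tilde\varphi$ takes $0 + \Z r$ to $0 + \Z r_1$. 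Its unique lift $\hat\varphi \colon \R \iso \R$ with $\hat\varphi(0) = 0$ then satisfies $\hat\varphi(t + r) = \hat\varphi(t) + r_1$, sends $A$ onto $A_1$ with slopes in $s \cdot P$, and conjugation by $\hat\varphi$ delivers an isomorphism $G([0, r]; A, P) \iso G([0, r_1]; A_1, P_1)$. The converse is routine: Supplement \ref{SupplementE11New} represents any isomorphism of these $G$-groups by a finitary PL-homeomorphism $\hat\varphi \colon [0, r] \iso [0, r_1]$ sending endpoints to endpoints, which descends to a PL-homeomorphism of circles and induces by conjugation the sought $T$-group isomorphism.

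The principal obstacle is the first, topological step: verifying carefully that the identification $T_J \iso G([a_-, a_+]; A, P)$ is genuinely compatible with the restriction of $\alpha$, and that the homeomorphism produced by Theorem \ref{TheoremE04} in the interval setting really is the restriction of a lift of $\varphi$. Once this bookkeeping is in place, the application of Theorem \ref{TheoremE10} is automatic and the patching of the local slope cosets into a global one is elementary.
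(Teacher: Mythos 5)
Your argument is correct, and it reaches the interval results of Section \ref{sec:17} by a genuinely different reduction than the one used in the text. The paper's proof normalizes once and globally: after composing $\varphi$ with a rotation of $T(\R/\Z \cdot r_1; A_1, P_1)$ so that the marked point $0 + \Z\cdot r$ is sent to $0 + \Z\cdot r_1$, it passes to the point stabilizers, which \emph{are} the interval groups $G([0,r];A,P)$ and $G([0,r_1];A_1,P_1)$, and then applies Theorem \ref{TheoremE10} and Supplement \ref{SupplementE11New} a single time to the lift $\tilde\varphi \colon [0,r] \iso [0,r_1]$; part (i) is the special case obtained by taking the two $T$-groups equal. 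You instead avoid any initial normalization and work locally, identifying the subgroups $T_J$ of elements supported in proper sub-arcs with interval groups $G([a_-,a_+];A,P)$, applying Theorem \ref{TheoremE10} on each arc, and patching the local slope cosets into a global one by connectedness and compactness of the circle. What the paper's route buys is brevity: one application of the interval theorem, no patching, and no need to verify that the restricted isomorphisms $T_J \iso T_{J'}$ are induced by the corresponding restrictions of $\varphi$ (which in your version rests on the uniqueness clause of Theorem \ref{TheoremE04} and deserves the care you flag at the end). What your route buys is a more local statement: it shows that piecewise linearity of $\varphi$ is already forced by the behaviour of $\alpha$ on the subgroups supported in arcs, and it produces the interval isomorphism of part (ii) as a consequence of the established PL structure rather than as an intermediate step. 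Both arguments rest on the same two pillars, Theorem \ref{thm:Companion-TheoremE04} and the non-cyclicity of $P$ via Theorem \ref{TheoremE10}, so the difference is one of organization rather than of substance.
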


\begin{proof}
Let $\alpha$ be an isomorphism of $T(\R/\Z \cdot r;  A, P)$ 
onto $T(\R/\Z \cdot r_1; A_1, P_1)$.
Theorem \ref{thm:Companion-TheoremE04} then guarantees
that $\alpha$ is induced by conjugation by a homeomorphism 
$\varphi \colon \R/\Z \cdot r \iso \R/\Z \cdot r_1$.
Since $T(\R/\Z \cdot r_1;  A_1, P_1)$ contains the rotation
which sends the point $\varphi(0 + \Z \cdot r)$  to the point $0 + \Z \cdot r_1$,
we may and shall assume 
that $\varphi$ maps the point $x = 0 + \Z \cdot r$ of the circle 
$\R/\Z \cdot r$ to the point $x_1 =0 + \Z \cdot r_1$ in $\R/\Z \cdot r_1$.
It follows 
that $\varphi$ induces by conjugation an isomorphism of the stabilizer
$G = \St_{T(\R/\Z \cdot r; A. P)}(x)$  
onto the stabilizer $G_1 =\St_{T(\R/\Z \cdot r_1;  A_1, P_1)}(x_1)$.
Let $\tilde{\varphi}$ be the lift of $\varphi$ 
which maps the interval $[0, r]$ homeomorphically onto $[0,r_1]$.
Conjugation by $\tilde{\varphi}$ induces then an isomorphism $\tilde{\alpha}$ of  the group
$G([0,r]; A, P)$ onto the group $G([0,r_1]; A_1, P_1)$.
Since $P$ is \emph{not} cyclic Theorem \ref{TheoremE10}
\index{Theorem \ref{TheoremE10}!consequences}%
and Supplement \ref{SupplementE11New}
apply and show that $P = P_1$, 
that $\tilde{\varphi}$ is a finitary PL-homeomorphism with slopes in some coset $s \cdot P$ of $P$
and that $A_1 = s \cdot A$. 

The previous argument establishes one implication of assertion (ii).
The converse follows from the fact that an isomorphism $G([0,r]; A, P) \iso G([0,r_1]; A_1, P_1)$
is induced, by Theorem \ref{TheoremE10} and Supplement \ref{SupplementE11New},
\index{Supplement \ref{SupplementE11New}!consequences}%
by a finitary PL-homeomor\-phism $\tilde{\varphi} \colon [0, r] \iso [0,r_1]$ 
with slopes in a coset $s \cdot P$ and that $A_1 = s \cdot A$.
The homeomorphism $\tilde{\varphi}$ descends to a homeomorphism 
$\varphi \colon \R/\Z \cdot r \iso \R/\Z \cdot r_1$
which induces an isomorphism of $T(\R/\Z \cdot r;  A, P)$ onto $T(\R/\Z \cdot r_1; A_1, P_1)$.

Now to assertion (i).
The argument in the first paragraph shows 
that, given a homeomorphism $\varphi \colon \R/\Z \cdot r \iso \R/\Z \cdot r$,
there is a rotation $f \in T(\R/\Z \cdot r;  A, P)$,
 so that the composition $f \circ \varphi$ 
 is a finitary PL-auto-homeomorphism of $\R/\Z \cdot r$ with slopes in $\Aut(A)$.
As $\varphi$  has the same properties as $f \circ \varphi$  
the argument given in the first paragraph of the proof 
implies claim (i).
\end{proof}
\index{Liousse, I.|)}%
%

\backmatter
\chapterstyle{back}
\pagestyle{backB}
\markboth{Bibliography}{Bibliography}
\addcontentsline{toc}{chapter}{Bibliography}
\bibliographystyle{amsalpha}%
\bibliography{Z.Main}
\markright{Bibliography}
\clearpage
%
%
%

%
\pagestyle{backSI}
\renewcommand{\indexname}{Subject Index}
\printindex

\end{document}